\newtheorem{thm}{Theorem}
\newtheorem{conj}[thm]{Conjecture}
\newtheorem{lemma}[thm]{Lemma}
\newtheorem{coro}[thm]{Corollary}
\newtheorem{definition}[thm]{Definition}
\newtheorem{prop}[thm]{Proposition}
\newtheorem{example}[thm]{Example}
\newtheorem*{Remark}{Remark}
\newtheorem{oq}[thm]{Open question}
\newtheorem*{oq2}{Question ouverte}
\newtheorem*{oq3}{Open question}
\newtheorem*{fakethm}{Th\'eor\`eme}
\newtheorem*{fakethm2}{Theorem}
\newtheorem*{fakeprop}{Proposition}
\newtheorem*{fakeconj}{Conjecture}
\newtheorem*{fakecoro}{Corollary}
\newtheorem*{fakecoro2}{Corollaire}
\numberwithin{thm}{section} % theorem labeling within sections
\numberwithin{equation}{section} % equation labeling within sections
\numberwithin{figure}{section} % figure labeling within sections
\newcommand*{\R}{\mathbb{R}}
\newcommand*{\C}{\mathbb{C}}
\renewcommand*{\H}{\mathbb{H}}
\renewcommand*{\S}{\Sigma}
\newcommand*{\g}{\mathfrak{g}}
\newcommand*{\h}{\mathfrak{h}}
\renewcommand*{\l}{\lambda}
\newcommand*{\mf}{\mathfrak}
\newcommand*{\mc}{\mathcal}
\newcommand*{\del}{\partial}
\newcommand*{\delbar}{\bar\partial}
\newcommand*{\cotang}{T^*\bm\hat{\mc{T}}^n}
\newcommand*{\T}{\bm\hat{\mc{T}}}
\DeclareMathOperator{\tr}{tr}
\DeclareMathOperator{\Sym}{Sym}
\DeclareMathOperator{\Symp}{Symp}
\DeclareMathOperator{\Hilb}{Hilb}
\DeclareMathOperator{\Diff}{Diff}
\DeclareMathOperator{\Vect}{Span}
\DeclareMathOperator{\Rep}{Rep}
\DeclareMathOperator{\diff}{Diff}
\DeclareMathOperator{\id}{id}
\DeclareMathOperator{\diag}{diag}
\DeclareMathOperator{\Comm}{Comm}
\DeclareMathOperator{\Hom}{Hom}
\DeclareMathOperator{\SU}{SU}
\DeclareMathOperator{\End}{End}
\DeclareMathOperator{\rk}{rk}
\DeclareMathOperator{\Stab}{Stab}
\DeclareMathOperator{\supp}{supp}
\DeclareMathOperator{\codim}{codim}
\DeclareMathOperator{\Span}{Span}
\DeclareMathOperator{\PSL}{PSL}
\DeclareMathOperator{\SL}{SL}
\DeclareMathOperator{\GL}{GL}
\DeclareMathOperator{\Res}{Res}
\DeclareMathOperator{\Lie}{Lie}
\DeclareMathOperator{\MCG}{MCG}
\DeclareMathOperator{\Vir}{Vir}
\begin{document}

\hypersetup{pageanchor=false}

%\includepdf[page={1}]{couv.pdf}

%\begin{titlepage}
%\hbox{}
%\end{titlepage}

\begin{titlepage}
\begin{center} 

\vspace*{3cm}

\hrulefill
 
\vspace*{0.3cm}
\Huge{\textbf{Higher Complex Structures and Higher Teichmüller Theory}}

\vspace*{-0.1cm}
\hrulefill

\vspace*{1.5cm}

\Large{Alexander Thomas} 

\vspace*{0.8cm}

\large{IRMA Strasbourg}

\vspace*{1.5cm}

\Large{Thèse de doctorat sous la direction de}

\vspace*{0.2cm}
\Large{Vladimir Fock}

\vspace*{1.2cm}
\large{10 Juin 2020}

\vspace*{0.6cm}
\large{Jury de thèse:}

\vspace*{0.6cm}

	\begin{tabular}{ll}
	Vladimir Fock & directeur de thèse \\
	Fran\c cois Labourie & rapporteur \\
	Richard Wentworth & rapporteur \\
	Nalini Anantharaman & examinatrice\\
	Steven Bradlow & examinateur \\
	Oscar García-Prada & examinateur \\
	Tamás Hausel & examinateur \\
	Nicolas Tholozan & examinateur
	\end{tabular}

\end{center}
\end{titlepage}

\begin{titlepage}

\hbox{}
	
\vfill

\noindent Alexander Thomas, \textit{Higher Complex Structures and Higher Teichmüller Theory}, PhD thesis, Université de Strasbourg, June 2020

\medskip
\noindent This work is licensed under the Creative Commons Attribution 4.0 International License. To view a copy of this license, please visit
\begin{center}
\url{https://creativecommons.org/licenses/by/4.0/}
\end{center}
or send a letter to Creative Commons, PO Box 1866, Mountain View, CA 94042, USA. 

\end{titlepage}

\begin{titlepage}
\vspace*{\fill}
\vspace*{-2cm}
\begin{flushright}
\Large{\textit{F\"ur Oma.}}
\end{flushright}
\vspace*{\fill}
\end{titlepage}

\begin{titlepage}
\hbox{}
\end{titlepage}

%\newpage
\hypersetup{pageanchor=true}
\pagenumbering{Roman}

\section*{Résumé court}

Dans cette thèse, on donne une nouvelle approche géométrique aux composantes des variétés de caractères. En particulier on construit une structure géométrique sur des surfaces, généralisant la structure complexe, et on explore son lien avec les composantes de Hitchin.

Cette structure, appelée structure complexe supérieure, est construite en utilisant le schéma de Hilbert ponctuel du plan.
Son espace des modules admet des propriétés similaires à la composante de Hitchin. On construit une courbe spectrale généralisée, une sous-variété (presque) Lagrangienne de l'espace cotangent complexifié de la surface.

Partant d'une structure complexe supérieure, on cherche à la déformer d'une fa\c con canonique en une connexion plate. 
%On cherche à déformer la 1-forme donnée par la structure complexe supérieure en une $h$-connexion.
L'espace de ces connexions plates, dites ``paraboliques'', s'obtient en imitant la réduction d'Atiyah--Bott. C'est un espace de paires d'opérateurs différentiels commutants. 
Sous une conjecture, on établit un difféomorphisme canonique entre l'espace des modules de notre structure géométrique et la composante de Hitchin.

Enfin, on généralise certaines constructions, comme le schéma de Hilbert ponctuel et la structure complexe supérieure, au cas d'une algèbre de Lie simple.

\vspace*{1cm}
%\clearpage
\section*{Abstract}

In this PhD thesis, we give a new geometric approach to higher Teichmüller theory. In particular we construct a geometric structure on surfaces, generalizing the complex structure, and we explore its link to Hitchin components.

The construction of this structure, called higher complex structure, uses the punctual Hilbert scheme of the plane. Its moduli space admits similar properties to Hitchin's component. We construct a generalized spectral curve, an (almost) Lagrangian subvariety of the complexified cotangent space of the surface.

Given a higher complex structure, we try to canonically deform it to a flat connection. The space of such connections, called ``parabolic'', is obtained by imitating the Atiyah--Bott reduction. It is a space of pairs of commuting differential operators. Under some conjecture, we establish a canonical diffeomorphism between our moduli space and Hitchin's component.

Finally, we generalize certain constructions, like the punctual Hilbert scheme and the higher complex structure, to the case of a simple Lie algebra.

\cleardoublepage
\section*{Remerciements}
\vspace*{0.5cm}
\begin{flushright}
\textit{Voici mon secret. Il est très simple : on ne voit bien qu'avec le c\oe ur. L'essentiel est invisible pour les yeux.}

Antoine de Saint-Exupéry, Le Petit Prince
\end{flushright}
\vspace*{1cm}

\noindent Voilà le secret pour bien vivre les années d'une thèse. Car une thèse, c'est avant tout une aventure humaine. Malheureusement cet aspect humain est peu visible dans le rendu final d'une thèse, son manuscrit. Ici je voudrais remercier tous les aventuriers qui m'ont entouré, encouragé et aidé dans les contrées mathématiques et en dehors.

%Lors du long et sinueux chemin de ma thèse, j'ai eu l'énorme chance d'être encouragé et aidé par beaucoup de gens.

\bigskip \noindent
Tout d'abord je voudrais exprimer ma plus grande gratitude envers mon directeur de thèse, \textit{Vladimir Fock}. Tu as partagé avec moi ton riche univers mathématique, dans lequel tout est lié et tout s'explique souvent si facilement (même si cela peut prendre des détours surprenants et des années de réflexions). Même si tu n'es pas toujours joignable, surtout pour des problèmes administratifs, tu as été un encadrant formidable pour moi !

\medskip \noindent
Then I would like to thank all the members of the jury for all your comments and remarks!
Thanks to \textit{Fran\c cois Labourie} (also for your warm reception in Nice in January 2018), to \textit{Richard Wentworth} (also for the discussions we had in Moscow and Stony Brook), to \textit{Oscar Garcia-Prada} (also for your invitation to Madrid in November 2018, such a wonderful city!), to \textit{Steven Bradlow} (also for your invitation to Stony Brook in February 2019), to \textit{Tamás Hausel}, to \textit{Nalini Anantharaman} and to \textit{Nicolas Tholozan} (also for our discussion in Nancy in November 2019 and your remarks on the manuscript).

\medskip \noindent
A special thank you goes to \textit{Georgios Kydonakis, Johannes Horn} and \textit{Andrea Bianchi}, for all the discussions, exchanges and finally for your comments on the manuscript!

\noindent
Further I would like to thank the researchers and professors with whom I interacted through these years of PhD. First the members of IRMA, especially \textit{Olivier Guichard, Charles Frances, Florent Schaffhauser, Dragos Fratila, Ana Rechtman} and \textit{Tatiana Beliaeva}. Then the researchers I met in various conferences and occasions (and which are not cited below): \textit{Ga\"etan Borot, Brian Collier, Andy Sanders, Evgenii Rogozinnikov, Andreas Ott} and \textit{Fanny Kassel}.

\bigskip \noindent
La fin d'une thèse permet aussi de regarder en arrière. J'ai été porté à travers ces années par des formidables amis, des camarades et des enseignants passionnés. C'est une bonne occasion ici de vous exprimer toute ma gratitude !

%Mon intérêt pour les mathématiques a commencé tôt et a été stimulé tout au long de ma vie par des professeurs passionnés :
\medskip\noindent
Mein Interesse an der Mathematik wurde schon früh geweckt, vor allem durch meine Lehrerin \textit{Frau Petra Wegert}. Ebenso danken möchte ich \textit{Herrn Stephan Hauschild} für das \glqq Drehtürmodell \grqq{} und \textit{Frank Göring} für das wunderbare Projekt rund um das Voderberg'sche Neuneck. 

\noindent
Danke an meine beiden \glqq Mitstreiter \grqq{}  \textit{Christoph und Florian}, und an meine Schulfreunde \textit{Nico, Max} und \textit{Iris}. 

\noindent
Durch die vielen Matheseminare und -olympiaden, unter den mysteriösen Zeichen MO, AIMO, MeMO, JuMa, BWM, JuFo, ITYM und LSGM abgekürzt, haben sich viele wertvolle Freundschaften ergeben. 
Ganz besonders denke ich an \textit{Kuno/Mani/Sebastian} (bitte eins auswählen) \textit{und Susi, Lukas, Xianghui} und \textit{Andrea}. Aber natürlich auch an \textit{Ilja, Jakob, Julius, Jean-Fran\c cois, Anne, Tim, Pham, Lisa, Fabian H, Achim, Michael}, ...

\medskip\noindent
La personne clé qui a fait germer en moi l'idée de faire mes études en France est \textit{Bodo Lass}. Un immense merci pour tes cours incroyables et passionnants sur la géométrie projective, ainsi que ton aide pour me retrouver en études supérieures en France !

\medskip \noindent
Après l'Abitur directement en classe préparatoire à Paris ! J'ai pu découvrir une nouvelle culture, si riche et fascinante, et approfondir toutes mes passions : les maths, les sciences, la musique et la langue fran\c caise.
Je suis particulièrement reconnaissant envers mes deux profs exceptionnels \textit{Yves Duval} et \textit{Ulrich Sauvage}.
Un grand merci à tous mes amis (HX1 - Torche !) : \textit{Paul, Andrei, Yuesong, Thomas, Théis, Hélicia, Sandrine, Lucas I, Jonathan, Leonard, Ulysse, Valère, Théodore, Lucas M, Diane}, ...

\medskip\noindent
Puis vient ``l'époque d'or'' à l'ENS de Lyon, un endroit idéal pour l'épanouissement personnel. Un grand merci à mes profs, surtout \textit{Jean-Claude Sikorav, Jean-Yves Welschinger} et \textit{Pascal Degiovanni}. Deux stages m'ont introduit à la recherche, un grand merci à mes encadrants : à \textit{Gwéna\"el Massuyeau} pour le stage autour des n\oe uds en 2014 à Strasbourg. Und an \textit{Prof. Carl-Friedrich Bödigheimer} für das Projekt über die Homologie der symmetrischen Gruppen in Bonn 2015.

\noindent
Les meilleurs moments à Lyon sont dus au fameux ``Bouquet'', le cercle secret des mélo\-manes. Impossible de formuler ces joies en mots ! Merci à tous les membres, surtout à \textit{Pierre} (le gourou), \textit{Louis, Emanuel, Célia, Flora, Laureena, Corentin} et \textit{Erwan}. A travers les départements scientifiques, pleins d'amitiés se sont nouées. Je pense avant tout à \textit{Vincent, Théo, Carine et Ruben, Laureline, Perrine, Garance, Keny, TPT, Lambert, Quentin, Leonard, Willy} et \textit{Edwin}.
Merci aussi à mes camarades \textit{Matthieu, Florent} et \textit{Benoît} pour les groupes de lecture.

\medskip\noindent
Vient enfin la période strasbourgeoise !

\noindent
Je remercie mes cobureaux \textit{Martin, Kien} et \textit{Amaury} pour l'ambiance et l'entraide. Puis mes ``frères académiques'' \textit{Nicolas} et \textit{Valdo} (combien de fois on s'est échangé pour savoir où était le ``chef'' ?) et mon futur frère Florian. 
J'ai été chaleureusement accueilli par les ``anciens doctorants''. Un grand merci à tous, en particulier à \textit{Audrey, Amandine} et \textit{le ``petit'}' et \textit{le ``grand'' Guillaume}.
Puis mes camarades doctorants formant la ``troupe du RU à midi''. Merci à \textit{Fred, Laura, Alix, Cuong, PA, Francisco, Leo} (le presque doctorant),  \textit{Lorenzo, Lukas, Djibril, l'autre Alexander, les deux Alexandre} (pas facile de nous distinguer !), \textit{Fran\c cois, Guillaume, Romane, Tsung-Hsuan} et \textit{Archia}.
Un point de rencontre important a été le séminaire doctorant et le comptoir (même si je n'ai pas été très assidu à ce dernier). Merci à tous les doctorants, en particulier (désolé si j'oublie quelques-uns !) \textit{Philippe, Thibault, Yohann, Xavier, les deux Maries, Manu, Gatien, Camille, Pierre, Luca, Greta, Antoine, Firat, Florian},...

\noindent Je remercie également l'équipe administrative (avant tout \textit{Madame Kostmann}), informatique et les bibliothécaires.

\noindent Une pensée va également aux étudiants auxquels j'ai donné des enseignements. Enseigner, c'est progresser soi-même dans la compréhension. Mon activité principale était le ``Cercle de maths de Strasbourg''. Merci à \textit{Tatiana, Alix, Laura} et \textit{Francisco} pour l'organisation du cercle et la participation au $\mathbb{T}\mathbb{F}\mathbb{J}\mathbb{M}^2$ !

%\noindent Quelques événements particuliers: TFJM, RJM, fête de la science, maths en jeans, ...

\medskip\noindent
Ces années à Strasbourg sont marquées par mes activités en dehors de l'université. Je pense surtout au cercle des mélomanes (un prolongement du Bouquet). Merci à \textit{Deniz, Pierre J, Alix et Chloé, Thérèse, Lara, Corentin} et \textit{Eva} (qui nous a rejoints virtuellement pendant le confinement !). Je pense également à l'EVUS (merci aux super chefs \textit{Rémi} et \textit{Clotilde} et à tous les Evusiens !), au club de Go, aux Jongleurs du SUAPS et aux danseurs folk.
Danke an meine Freunde \textit{Jule und Hannah}, die ich durch einen so tollen Zufall kennenlernte!

\medskip\noindent
Par ailleurs, l'organisation d'une école d'été ``L'Agape'' m'a beaucoup inspiré. Merci à \textit{Pierre} (encore le gourou !), \textit{Fede, Jérémy} (tes pizzas sont inoubliables !), \textit{Vaclav, Titouan, Robin} et \textit{Johannes}. Cet événement s'inscrit dans un mouvement plus large, le \href{https://basic-research.org/}{BRCP} (Basic Research Community for Physics).

\medskip \noindent 
Merci à \textit{Claude, Fabienne} et \textit{Jeanne}, pour tous les accueils chaleureux que ce soit à Chaville, Brian\c con ou Houlgate !

\bigskip\noindent
Ein besonderer Dank geht an meine Familie, für die Unterstützung und die vielen gemeinsamen Momente! Einen tiefen Dank an \textit{Mutti und Andreas, Papsi und Petra, Oma Gisela und Opa Volker} (wie hätte ich je Klavier spielen gelernt?), \textit{Oma Dagi und Bodo, Opa Thomas und Katja, Antje, Uwe und Moritz, Nora und Pauline, Paula und Alma} und \textit{Onkel Ulli}. Auch \textit{Shira, Louis} und \textit{Louise} haben viel zu meinem Glück beigetragen.

\medskip\noindent
Ein ganz besonderer Gedanke geht an meine \textit{Uroma}, die leider von uns gegangen ist. Danke für alles, was wir geteilt haben! Dir widme ich diese Arbeit!

\bigskip\noindent
Enfin, un dernier mot pour toi, \textit{ma chère Eve}. Ton amour me porte à chaque instant ! Je ne saurais exprimer tous mes sentiments et ma gratitude. Un immense merci pour tes superbes montages des ``\href{https://www.youtube.com/channel/UCLyeqTvdc-Yr9jUZhmH_kKA}{Thomaths}'', et pour ta relecture entière de mon manuscrit !

\cleardoublepage
\tableofcontents

\clearpage

\section*{Index of notations}
\label{notations}

We gather here the main notations used in this thesis. We regroup them in general notations, notations on Lie groups and Lie algebras and then more specific notations for parts \ref{part1}, \ref{part2} and \ref{part3}. We also have a special notation for partitions.
\vspace*{0.5cm}

\paragraph{\underline{General notations:}}

\begin{center} 
\begin{tabular}{p{0.18\textwidth}p{0.78\textwidth}} 
$\Sigma$ & smooth closed connected surface, of genus $g\geq 2$\\
$z, \bar{z}$ & complex coordinates on $\Sigma$ \\
$\del, \delbar$ & differential with respect to $z$ and $\bar{z}$ resp.\\
$p, \bar{p}$ & linear coordinates on $T^{*\C}\Sigma$ \\
$x, y$ & real variables, or complex variables for polynomials in $\C[x,y]$ \\
$K$ & canonical line bundle of $\Sigma$, equals $T^{*(1,0)}\Sigma$ \\
$n$ & integer, mainly used in matrix groups like $\PSL_n(\R)$ \\
$\Gamma(.), H^0(.)$ & space of all (resp. holomorphic) sections of some bundle \\
$[a]$ & equivalence class of some element $a$ \\
$X \sslash G$ & hamiltonian reduction (symplectic quotient, or Marsden-Weinstein quotient) over the 0-coadjoint orbit \\
$\omega$ & symplectic form \\
$\Rep(\pi_1(\S),G)$ & character variety for $G$ \\
$\mc{T}(\Sigma), \mc{T}^2$ & Teichmüller space \\
$\mc{T}^n$ & Hitchin's component for $\PSL_n(\R)$ \\
$\bm\hat{\mc{T}}^n$ & moduli space of $n$-complex structures \\
$I$ & ideal, often in a punctual Hilbert scheme \\
$\Hilb^n(\C^2)$ & punctual Hilbert scheme of $n$ points of $\C^2$\\
$\Hilb^n_0(\C^2)$ & zero-fiber of the punctual Hilbert scheme \\
$\Hilb^n_{red}(\C^2)$ & reduced Hilbert scheme of $n$ points with barycenter 0 \\
$\Phi$ & connection from $n$-complex structure $\Phi = \Phi_1+\Phi_2$, or Higgs field \\
$\Phi_1, \Phi_2$ & $(1,0)$- and $(0,1)$-part of $\Phi$, or locally the associated matrices \\
$\mu_k$ & higher Beltrami differentials, define $n$-complex structure \\
$\mu$ & set of all Beltrami differentials $(\mu_2,...,\mu_n)$\\
$t_k$ & differentials, dual to $\mu_k$\\
$t$ & set of all differentials $(t_2,...,t_n)$ \\
${}_k\mu, {}_kt$ & conjugated higher complex structure and its deformation \\
$\Symp_0(T^*\S)$ & group of higher diffeomorphisms \\
$H$ & Hamiltonian, function on $T^*\S$\\
$M_x$ & multiplication operator by $x$ in some quotient $\C[x,y]/I$ \\
$\varepsilon$ & small parameter \\
$\bm\tilde{\S}$ & spectral curve inside $T^{*\C}\S$, or universal cover of $\S$ \\
%$m$ & moment map \\
$s$ & generic section of a bundle \\
$\langle e_i \mid e_j \rangle$ & Dirac's ``bra-ket'' notation for linear forms and vectors \\
%$\mathbb{H}^2$ & hyperbolic plane \\
%$\H$ & quaternions 
 \end{tabular} 
\end{center}

\vspace*{0.5cm}

\pagebreak
\paragraph{\underline{Lie groups and Lie algebras:}}

\begin{center} 
\begin{tabular}{p{0.16\textwidth}p{0.8\textwidth}} 
$G$ & semisimple Lie group (complex or real) \\
$\g$ & semisimple Lie algebra (simple in part \ref{part3}) \\
$\mf{sl}_n, \mf{so}_n, \mf{sp}_{2n}$ & classical complex Lie algebras \\
$\h$ & Cartan subalgebra of $\g$ \\
$W$ & Weyl group of $\g$ \\
$\bm\hat{\g}$ & affine Lie algebra \\
%$\Comm(\g)$ & commuting variety \\
%$R$ & root system \\
$Z(A), Z(A, B)$ & centralizer of $A \in \g$, common centralizer of $A,B \in \g$
\end{tabular}
\end{center}
\vspace*{0.5cm}

\paragraph{\underline{Part 1:}}

\begin{center} 
\begin{tabular}{p{0.16\textwidth}p{0.8\textwidth}} 
$V$ & bundle induced by a higher complex structure\\
$P, Q$ & polynomials defining generators of an ideal $I\in \Hilb^n_{red}(\C^2)$\\
$v_k$ & coefficients of a Hamiltonian $H=v_2p+...+v_np^{n-1}$ \\
$\alpha_{i,j}$ & matrix entries of $M_{\bar{p}}$
\end{tabular}
\end{center}
\vspace*{0.5cm}

\paragraph{\underline{Part 2:}}

\begin{center} 
\begin{tabular}{p{0.16\textwidth}p{0.8\textwidth}} 
$\l$ & parameter in $\C P^1$ \\
$h$ & deformation parameter, equals $\l^{-1}$ \\
$\mc{O}(n)$ & holomorphic line bundle over $\C P^1$ with first Chern class $n$ \\
$\bm\hat{\mu}_k, \bm\hat{t}_k$ & coordinates of the space of parabolic connections \\
$\mc{A}$ & space of all connections \\
$\mc{G}$ & group of all gauge transformations \\
$\mc{P}$ & group of parabolic gauge transformations \\
$\xi_k$ & parabolic curvature\\
$\mc{M}_{H}(G)$ & moduli space of $G$-Higgs bundles on Riemann surface $\S$ \\
$\nabla, \bar{\nabla}$ & (1,0) and (0,1)-part of a covariant derivative\\
$A$ & a connection, often $A^*=-A$ \\
$A_1, A_2$ & (1,0) and (0,1)-part of $A$, or locally the associated matrices \\
$\mc{A}(\l)$ & affine connection $\mc{A}(\l) = \l\Phi + A+ \l^{-1}\Phi $\\
$\mc{A}_1(\l), \mc{A}_2(\l)$ & (1,0) and (0,1)-part of the connection $\mc{A}(\l)$\\
$\bm\hat{\alpha}_{i,j}$ & entries of matrix $\mc{A}_2(\l)$ \\
$\bm\hat{H}$ & quantum Hamiltonian \\
$\bm\hat{v}_k$ & coefficients of $\bm\hat{H}=\sum_k \bm\hat{v}_k\nabla^{k-1}$ \\
$\bm\hat{I}$ & left-ideal in the space of differential operators\\
$\bm\hat{P}, \bm\hat{Q}$ & polynomials defining $\bm\hat{I}$\\
$T$ & matrix defined by equation \eqref{matrixT}\\
$\varphi_i$ & parameters in $\Phi_1$, see Proposition \ref{phi1lower} \\
$\tau$ & Hitchin's split real form on $\mf{sl}_n$
\end{tabular}
\end{center}

\vspace*{0.5cm}

\paragraph{\underline{Part 3:}}

\begin{center} 
\begin{tabular}{p{0.16\textwidth}p{0.8\textwidth}} 
$\Hilb(\g)$ & $\g$-Hilbert scheme defined in \ref{def-g-hilb}\\
$\Hilb_0(\g)$ & zero-fiber of $\Hilb(\g)$ \\
$\Hilb^{reg}(\g)$ & regular part of $\Hilb(\g)$, see \ref{partshilb} \\
$\Hilb^{cycl}(\g)$ & cyclic part of $\Hilb(\g)$, see \ref{partshilb} \\
$\bm\hat{\mc{T}}_\g$ & moduli space of $\g$-complex structures \\
$I_\g$ & ideal of type $\g$\\
$\Symp(\g,\S)$ & group of higher diffeomorphisms of type $\g$\\
$e, f$ & nilpotent elements of a principal $\mf{sl}_2$-triple in $\g$\\
$ch$ & Chow map \\
$\rho$ & natural embedding of a classical $\g$ into some $\mf{gl}_n$\\
$S$ & matrix in $\mf{so}_{2n}$ defined in \eqref{matrixS}\\
$\sigma_n$ & extra Beltrami differential for $\mf{so}_{2n}$, coefficient of $S$\\
$\nu_{2n-2}$ & extra Beltrami differential for $\mf{so}_{2n}$, linked to $\sigma_n$\\
$\tau_n$ & extra holomorphic differential for $\mf{so}_{2n}$
\end{tabular}
\end{center}
\vspace*{0.5cm}

\paragraph{\underline{Partitions:}}
\label{notations2}

\begin{tabular}{p{0.16\textwidth}p{0.8\textwidth}} 
\end{tabular}
\vspace{0.5cm}

\noindent A partition $\pi$ of an integer $n$ is written $\pi \vdash n$. Further, decompose the partition into $\pi=(\pi_1,...,\pi_n)$ where $\pi_k$ is the number of $k$'s in the partition, i.e. $n=\sum_k k\pi_k$, and denote by $\lvert \pi \rvert =\pi_1+...+\pi_n$ the number of parts, also called the \textit{length} of $\pi$. In the Young diagram associated with $\pi$, the number $\pi_k$ counts the number of rows of length $k$ and $\lvert \pi \rvert$ is the total number of rows.

%\begin{center} 
%\begin{tabular}{l p{13cm}} 
%\end{tabular}
%\end{center}
%\vspace*{0.5cm}

\cleardoublepage

\pagenumbering{arabic}

\section{Introduction et résumé (en fran\c cais)}
\vspace*{0.5cm}
\begin{flushright}
\textit{On ne peut rien enseigner à autrui, on ne peut que}

\textit{l'aider à le découvrir lui-même.}

Galileo Galilée
\end{flushright}
\vspace*{1cm}

\noindent L'objectif principal de cette thèse est de donner une interprétation géo\-métrique à la composante de Hitchin, l'objet d'étude de la théorie de Teichmüller supérieure. Pour cette raison, nous commen\c cons par un aper\c cu de la théorie de Teichmüller et sa généralisation, l'étude des variétés de caractères. Après nous donnons quelques bases de la théorie des fibrés de Higgs. Nous finissons par un résumé détaillé de la structure et des résultats de la thèse.

\subsection{Théorie de Teichm\"uller}

On considère $\Sigma$ une surface lisse compacte orientée de genre $g \geq 2$. On peut l'enrichir d'une structure géométrique. L'étude globale d'une telle structure mène à la notion d'un espace des modules, l'espace de toutes les structures géométriques modulo une relation d'équivalence. Il est surprenant que dans le cas d'une surface, des structures géométriques très différentes définissent un même espace de modules, \textbf{l'espace de Teichmüller}, qu'on dénote par $\mc{T}(\Sigma)$.

\medskip
\noindent \textit{Structure complexe.}
Une \textbf{structure complexe} sur une surface est un atlas maximal de cartes dont les images sont des ouverts de $\C$ tel que les fonctions de transitions sont holomorphes. Une surface munie d'une structure complexe est appelée \textbf{surface de Riemann}.
L'espace de Teichmüller est l'espace de toutes les structures complexes (compatibles avec l'orientation sur $\S$) modulo $\diff_0(\Sigma)$, le groupe des difféomorphismes isotopes à l'identité. Cela signifie qu'on identifie deux structures complexes lorsque l'une est le tiré-en-arrière de l'autre par une isotopie.

\medskip
\noindent \textit{Structure presque-complexe.}
Une \textbf{structure presque-complexe} sur une variété $M$ est la donnée d'un endomorphisme $J(m)$ sur l'espace tangent $T_mM$ pour tout point $m \in M$ tel que $J(m)$ dépend d'une fa\c con lisse de $m$ et vérifie $J(m)^2=-\id$. Un tel endomorphisme simule la multiplication par $i$.

\noindent Une structure complexe induit toujours une structure presque-complexe en utilisant la multiplication par $i$. Réciproquement, pour aller d'une structure presque-complexe vers un atlas complexe, il y a une obstruction donnée par une condition d'intégrabilité (c'est le théorème de Newlander-Nirenberg). Pour une surface, cette obstruction est toujours nulle : \textit{toute structure presque-complexe sur une surface est intégrable}. Ce fait est dû à Gauss dans le cas analytique réel (existence de coordonnées ``isothermes'') et à Korn et Lichtenstein dans un cadre plus général.

\noindent Par conséquent, on peut voir $\mathcal{T}(\Sigma)$ comme l'espace de toutes les structures presque-complexes (compatibles avec l'orientation) modulo isotopies.

\medskip
\noindent \textit{Structure Riemannienne.}
Considérons l'espace de toutes les métriques Riemanniennes sur $\Sigma$. On quotiente cet espace par les isotopies et par l'équivalence conforme. Deux métriques $g_1$ et $g_2$ sont dites \textit{conformément équivalentes} s'il existe une fonction lisse positive $\lambda: \Sigma\rightarrow \R^{+}$ telle que $g_2 = \lambda g_1$. Une classe conforme de métriques donne la notion d'orthogonalité. Ainsi pour une telle classe donnée, il y a exactement deux structures presque-complexes compatibles (rotation de $\pm 90^\circ$) et un choix d'orientation sur $\S$ permet d'en choisir une canoniquement. Par conséquent, l'espace des classes conformes de métriques est encore l'espace de Teichm\"uller.

\medskip
\noindent \textit{Structure hyperbolique.}
Une \textbf{structure hyperbolique} sur une surface est une métrique de courbure constante égale à $-1$. 
%Un théorème dû à Gauss dit qu'étant donnée une métrique $g$, on peut \emph{localement} choisir un système de coordonnées (dites ``isothermes'') tel que $g$ s'écrit $g=e^{\varphi}dzd\bar{z}$. Le facteur conforme $e^{\varphi}$ peut être choisi de sorte que la courbure soit constante égale à -1. Pour cela il faut résoudre \emph{l'équation de Liouville}: $$\frac{1}{2}\del\delbar \varphi = e^{\varphi}.$$
Le fameux \emph{théorème d'uniformisation de Poincaré} implique que toute métrique est conformément équivalente à une une métrique de courbure -1 (car le genre de $\S$ est $\geq 2$).
%facteur conforme $\l$ qui rend $\l g$ de courbure constante
%qu'étant donnée une métrique $g$ quelconque, il existe un facteur conforme $\lambda: \Sigma\rightarrow \mathbb{R}^{+}$ tel que $\lambda g$ est de courbure constante -1 (car $\Sigma$ est de genre $g\geq 2$). 
C'est pourquoi l'espace de Teichmüller est aussi l'espace de toutes les structures hyperboliques modulo isotopies.

\medskip
\noindent \textit{Variété des caractères.}
Il y a encore une autre fa\c con de voir $\mathcal{T}(\Sigma)$ : comme une composante connexe d'une \textbf{variété des caractères}. 

Étant donné un groupe $\Gamma$, on l'étudie à travers ses actions sur des espaces vectoriels. Pour une dimension $n$ donnée, une classe d'isomorphisme de représentations est donnée par un élément de $$\Hom(\Gamma, \GL_n(\C))/\GL_n(\C)$$ où l'action de $\GL_n(\C)$ est par conjugaison. Il s'est avéré fructueux de se restreindre à un sous-groupe de Lie $G$ de $\GL_n(\C)$ ou plus généralement à un groupe de Lie semi-simple $G$ quelconque. L'espace $\Hom(\Gamma,G)/G$, noté $\Rep(\Gamma,G)$, est appelé variété des caractères (de $\Gamma$ dans $G$).
Pour $\Gamma = \pi_1(\Sigma)$ le groupe fondamental d'une surface, les variétés de caractères sont liées aux structures géométriques sur $\Sigma$.

Dans notre cas, prenons une structure complexe sur $\Sigma$. On peut la relever sur son revêtement universel $\bm\tilde{\Sigma}$ qui est topologiquement un disque (car $g>1$). D'après le théorème d'uniformisation de Poincaré, $\bm\tilde{\Sigma}$ avec la structure complexe venant de $\Sigma$ est biholomorphe au plan hyperbolique $\mathbf{H}$ avec sa structure complexe standard. Pour retrouver $\Sigma$, il suffit de quotienter son revêtement universel par l'action du groupe fondamental $\pi_1(\Sigma)$ qui agit par isométries préservant l'orientation. Comme le groupe des isométries préservant l'orientation du plan hyperbolique $\mathbf{H}$ est $\PSL_2(\R)$, une telle action est donnée par un morphisme de $\pi_1(\Sigma)$ vers $\PSL_2(\R)$. Ce morphisme doit être fidèle et discret pour obtenir une action libre et proprement discontinue. Un tel morphisme est appelé \textbf{fuchsien}. Si deux représentations fuchsiennes ne diffèrent que d'une homographie, les structures complexes sur $\Sigma$ sont isotopes et réciproquement. Ainsi, l'espace de Teichmüller est la composante connexe de $\Rep(\pi_1(\Sigma), \PSL_2(\R))$ des morphismes discrets et fidèles.

\bigskip
\noindent Pour résumer, nous avons
\begin{align*}
\mathcal{T}(\Sigma) &= \{\text{structures complexes compatibles}\} / \Diff_0(\Sigma) \\
&= \{\text{structures presque-complexes compatibles}\} / \Diff_0(\Sigma) \\
&= \{\text{structures Riemanniennes}\} / (\Diff_0(\Sigma)+\text{équivalence conforme}) \\
&= \{\text{structures hyperboliques}\} / \Diff_0(\Sigma) \\
&= \text{ composante connexe de } \Rep(\pi_1(\Sigma),\PSL_2(\mathbb{R}))
\end{align*}

Ces multiples points de vue expliquent le grand intérêt porté sur l'espace de Teichmüller. Il est lui-même riche en structures :  il admet par exemple une structure complexe et une structure symplectique. On résume quelques propriétés importantes de l'espace de Teichmüller dans le théorème suivant qui est dû à Teichmüller, Ahlfors, Bers, Fricke, Goldman et d'autres. 

\begin{fakethm}[Teichmüller, Ahlfors, Bers, Fricke, Goldman,...]
L'espace de Teichmüller $\mathcal{T}(\Sigma)$ est une variété réelle contractile de dimension $6g-6$. Il a une structure complexe et une structure symplectique qui donnent ensemble une structure de variété de Kähler.

\noindent En tout point $\mu$ de $\mathcal{T}(\Sigma)$, son espace cotangent est donné par $$T^*_{\mu}\mathcal{T}(\Sigma) \cong H^0(K^2)$$ où $H^0(K^2)$ est l'espace des différentielles quadratiques holomorphes (par rapport à la structure complexe $\mu$).

\noindent Le groupe des difféotopies de $\Sigma$ agit proprement discontinûment sur $\mathcal{T}(\Sigma)$ en préservant la structure Kählerienne. L'espace quotient est l'espace des modules de Riemann $\mathcal{M}(\Sigma)$. 
\end{fakethm}

Le groupe des difféotopies d'une surface (en anglais \textit{mapping class group}) est le quotient du group de tous les difféomorphismes par le sous-groupe des isotopies. L'espace des modules de Riemann $\mathcal{M}(\Sigma)$ est l'espace des structures complexes modulo tous les difféomorphismes.
Pour une exposition excellente de la théorie de Teichmüller, nous recommandons fortement le livre de Hubbard \cite{Hub}. En particulier, il y prouve chaque partie du théorème précédent (voir théorèmes 6.5.1, 6.6.2, 6.7.2, 7.1.1 et 7.7.2). Pour un traitement historique via les applications quasi-conformes, voir le livre d'Ahlfors \cite{Ahlfors}.

\subsection{Théorie de Teichm\"uller supérieure}\label{higherteichth}
Dans son article pionnier \cite{Hit.1}, Nigel Hitchin décrit une composante connexe d'autres variétés de caractères avec des propriétés similaires à l'espace de Teichmüller. Ces composantes sont maintenant appelées \textbf{composantes de Hitchin} et leur étude \textbf{théorie de Teichmüller supérieure}. On recommande l'article \cite{Wienhard} pour un aper\c cu détaillé de cette théorie.

Le point de départ de la théorie est de remplacer dans la variété des caractères le groupe $\PSL_2(\R)$ par un autre groupe de Lie $G$, i.e. d'étudier l'espace $\Rep(\pi_1(\S), G)$. La construction de Hitchin fonctionne pour tout groupe de Lie $G$ sans centre associé à une forme réelle déployée d'une algèbre de Lie simple. Un exemple typique est $\PSL_n(\R)$. Nous nous restreignons à ce groupe pour la suite.

Pour $\PSL_n(\R)$, la composante de Hitchin peut être décrite simplement : d'après la théorie des représentations, on sait que $\SL_2(\C)$ admet une unique représentation irréductible de dimension $n$. Par restriction à $\SL_2(\R)$, on obtient une application $\SL_2(\R) \rightarrow \SL_n(\R)$ qui factorise à travers $\{\id, -\id\}$. En fin de compte, on obtient une application $\PSL_2(\R)\rightarrow \PSL_n(\R)$, appelée \textbf{application principale}.
En composant une application fuchsienne $\pi_1(\Sigma)\rightarrow \PSL_2(\mathbb{R})$ avec l'application principale on obtient un morphisme du groupe fondamental vers $\PSL_n(\R)$. Une telle composition est appelée $\mathbf{n}$\textbf{-fuchsienne}. Un morphisme $\pi_1(\S) \rightarrow \PSL_n(\R)$ est appelé $\mathbf{n}$\textbf{-Hitchin} s'il est possible de le déformer continûment en un morphisme $n$-fuchsien. 

L'espace des représentations $n$-Hitchin forme une composante connexe de la variété des caractères $\Rep(\pi_1(\S),\PSL_n(\R))$. On dénote la composante de Hitchin par  $\mathcal{T}^n$. Pour $n=2$ on retrouve l'espace de Teichmüller. 
On résume ce qui est connu sur la composante de Hitchin :

\begin{fakethm}[Hitchin, Goldman, Labourie]
La composante de Hitchin $\mathcal{T}^n$ est une variété réelle contractile de dimension $(n^2-1)(2g-2)$. Elle admet une structure symplectique et une action proprement discontinue par le groupe des difféotopies de $\Sigma$.
\end{fakethm}

La structure symplectique est donnée par une construction générale, due à Goldman, de structures symplectiques sur des variétés de caractères (cf. \cite{Goldman.2} et \cite{AtBott}). L'action proprement discontinue du groupe des difféotopies sur la composante de Hitchin a été décrite par Labourie, voir \cite{Lab.3}.

Une question naturelle est de savoir si la composante de Hitchin admet une description géométrique tel l'espace de Teichmüller. Plus précisément :
\begin{oq2}
Y a-t-il une structure géométrique sur la surface $\S$ dont l'espace des modules est la composante de Hitchin ?
\end{oq2}
Cette question est la motivation principale de cette thèse. 

\medskip \noindent
De nombreuses approches ont été proposées pour donner une interprétation géométrique à la composante de Hitchin. Goldman, Guichard--Wienhard, Labourie et d'autres ont décrit la composante de Hitchin via des structures géométriques sur des fibrés sur la surface. Pour $\PSL_3(\R)$ cette structure géométrique est la structure projective convexe introduite par Goldman dans \cite{Goldman}. Pour $n=4$, Guichard et Wienhard décrivent des structures convexes feuilletées (en anglais \textit{convex foliated structure}) sur l'espace tangent unitaire de $\S$ (voir \cite{Guichard}). Labourie introduit la notion fructueuse d'une \emph{représentation Anosov} dans \cite{Lab}. 
L'inconvénient de ces constructions (pour $n>3$) est que le fibré sur lequel sont définies les diverses structures géométriques n'est pas canoniquement associé à la surface. 

Toutes ces généralisations sont des structures géométriques \emph{rigides} (ce qui signifie que le groupe local d'automorphismes est de dimension finie). On peut les considérer comme une généralisation de la structure hyperbolique sur la surface. 
Notre approche généralise la structure complexe et aboutit à une structure \emph{flexible} (le groupe local des automorphismes contient les fonctions analytiques, donc est de dimension infinie).

\subsection{Connexions plates et fibrés de Higgs}

L'approche originelle de Hitchin dans \cite{Hit.1} pour détecter les composantes qui portent son nom est analytique via la théorie des fibrés de Higgs. On donne ici un aper\c cu très bref de connexions plates et de la théorie des fibrés de Higgs. Pour plus de détails, nous recommandons \cite{Wentworth}, \cite{Garcia}, et le cours d'Andrew Neitzke \cite{Neitzke}.

Le point de départ est le lien important entre la variété des caractères et les connexions plates. Cette relation porte le nom de \textbf{correspondance de Riemann-Hilbert}. Étant donné un $G$-fibré sur une surface $\S$ (où $G$ est un groupe de Lie) avec une connexion plate, alors sa monodromie donne une application $\pi_1(\S) \rightarrow G$. Réciproquement, une telle représentation provient toujours d'une connexion plate dans un fibré. Remarquons qu'un fibré qui admet une connexion plate a toutes ses composantes irréductibles de degré zéro. La variété des caractères $\Rep(\pi_1(\S),G)$ est ainsi identifiée à \textit{l'espace des classes de jauge de $G$-connexions plates dans des fibrés sur $\S$}.

L'action par conjugaison de $G$ sur $\Hom(\pi_1(\S), G)$, dont le quotient donne la variété des caractères, n'est pas libre. Il est connu que les points réguliers de $\Hom(\pi_1(\S),G)$ sont ceux pour lesquels la représentation de $\pi_1(\S)$ est irréductible. 
En général, la variété des caractères est ainsi singulière et non-séparée. 
Il est pourtant possible de définir le quotient $\Hom(\pi_1(\S),G) /G$ autrement pour donner une variété séparée, qui est lisse aux points réguliers. Sans entrer dans les détails, citons seulement deux possibilités : le quotient géométrique invariant (en anglais \textit{GIT quotient}) ou bien la réduction Hamiltonienne (ou réduction symplectique). Ces procédés font apparaître des conditions dites de stabilité.  

L'exemple classique est la variété des caractères pour le groupe unitaire $G=U(n)$. La condition de stabilité se formule le mieux dans le langage des fibrés : étant donné un fibré $E$ sur la surface $\S$, on appelle \textbf{pente} de $E$ (en anglais \textit{slope}) le quotient du degré de $E$ par son rang : $$\mu(E) = \frac{\deg E}{\rk E}.$$ Un fibré est appelé \textbf{stable} si pour tout sous-fibré strict $F$ on a $\mu(F)<\mu(E)$. On l'appelle \textbf{semi-stable} si $\mu(F) \leq\mu(E)$. Enfin un fibré est \textbf{polystable} s'il est la somme de fibrés stables ayant tous la même pente.

Le théorème célèbre de Narasimhan--Seshadri affirme (voir \cite{NS}) :
\begin{fakethm}[Narasimhan--Seshadri]
Toute représentation unitaire du groupe fondamental $\pi_1(\S)$ provient d'une connexion plate sur un fibré de degré zéro polystable. 

Réciproquement, sur un fibré holomorphe de degré zéro stable, il existe une unique (à jauge unitaire près) connexion unitaire plate compatible avec la structure holomorphe.
%$$\Rep(\pi_1(\S), U(n)) \cong \Hol_{\deg = 0}^{ps}(\S, \SL_n(\C))$$ où $\Hol_{\deg = 0}^{ps}(\S, \SL_n(\C))$ est l'espace des $\SL_n(\C)$-fibrés holomorphes de degré zéro polystable.
\end{fakethm}
Ce théorème se généralise à toute forme compacte d'un groupe de Lie complexe simple.

\bigskip
\noindent Pour obtenir des informations pour d'autres groupes de Lie, en particulier les formes déployées, la notion de fibré de Higgs s'est avérée fructueuse. On fixe une surface de Riemann $\S$. Un \textbf{fibré de Higgs} est un fibré holomorphe $V$ sur $\S$ muni d'une 1-forme holomorphe $\Phi$ à valeurs dans $\End(V)$. Dans une écriture plus sophistiquée : $\Phi \in H^0(\End(V)\otimes K)$ où $K=T^{*(1,0)}\S$ dénote le fibré canonique. L'élément $\Phi$ est appelé \textbf{champ de Higgs}. On peut y penser comme un \textit{vecteur tangent à l'espace des connexions}. C'est un élément qu'on peut ajouter à une connexion donnée $\nabla$ pour obtenir une nouvelle connexion $\nabla+\Phi$. Si on écrit localement $\nabla = d+A$ avec $A$ une 1-forme à valeur dans $\End(V)$, la différence entre $A$ et $\Phi$ est que l'action d'une jauge $g$ sur $A$ s'écrit $g.A = gAg^{-1}+gdg^{-1}$ tandis que sur $\Phi$, on n'a que la conjugaison : $g.\Phi = g\Phi g^{-1}$.

La condition de stabilité pour un fibré de Higgs $(V,\Phi)$ est la suivante : il faut que pour tout sous-fibré $\Phi$-invariant, la pente soit plus petite que celle de $V$. 
L'espace des modules des $G$-fibrés de Higgs, noté $\mc{M}_{H}(G)$, est \textit{l'espace des classes de jauges de fibrés de Higgs polystables}.

Le résultat principal de la théorie des fibrés de Higgs est la \textbf{correspondance de Hodge non-abelienne}, fruit de plusieurs articles de Corlette \cite{Corlette}, Donaldson \cite{Donaldson}, Hitchin \cite{Hit.2} et Simpson \cite{Simpson}.
On peut l'énoncer en deux parties : étant donné un fibré de Higgs $(V, \Phi)$ stable tel que $\deg V=0$, alors il existe une unique (à jauge unitaire près) connexion unitaire $A$ compatible avec la structure holomorphe telle que 
\begin{align*}
\delbar \Phi+[A^{(0,1)},\Phi] = 0 \\
F_A + [\Phi, \Phi^*]  = 0
\end{align*}
où $F_A$ désigne la courbure de la connexion $A$ et l'opération $*$ est l'adjoint par rapport à la structure hermitienne sur $V$ définie par $A$. 
Ces équations, appelées \textbf{équations de Hitchin}, sont équivalentes à la platitude de la connexion $\nabla  = d+\Phi+A+\Phi^*$.

Réciproquement, si un fibré $V$ admet une connexion plate $\nabla$ qui est complètement réductible (i.e. la monodromie donne une représentation complètement réductible) alors il existe une métrique sur $V$ telle que $\nabla$ se décompose en $d+\Phi+A+\Phi^*$ satisfaisant les équations de Hitchin.

Les deux parties ensemble donnent une équivalence entre l'espace des modules des fibrés de Higgs et les représentations complètement réductibles du groupe fondamental : $$\Rep^{c.r.}(\pi_1(\S),G) \cong \mc{M}_{H}(G) $$ où $G$ est un groupe de Lie complexe semi-simple.
Une fa\c con équivalente d'exprimer la correspondance de Hodge non-abelienne est de dire que l'espace $\mc{M}_{H}(G)$ est un espace hyperkählerien. Plus de détails sur ce point de vue seront exposés dans les sections \ref{hkhilbertscheme} et \ref{bigpicture}.

Dans son article \cite{Hit.1}, Hitchin introduit une application $$\mc{M}_{H}(\SL_n(\C))\rightarrow \bigoplus_{i=2}^n H^0(K^i)$$ appelée \textbf{fibration de Hitchin}. Elle associe à $(V,\Phi)$ les coefficients du polynôme caractéristique de $\Phi$. En utilisant la forme de Frobenius d'une matrice, Hitchin construit une section à la fibration dont l'image, à travers la correspondance de Hodge non-abelienne, est à monodromie dans $\PSL_n(\R)$. Il prouve que cette image est une composante connexe de $\Rep(\pi_1(\S), \PSL_n(\R))$. Ainsi la composante de Hitchin est paramétrée par des différentielles holomorphes.

\subsection{Résumé}

Cette thèse est composée de quatre parties et d'un appendice. Le grand thème est de construire une nouvelle approche géométrique à la théorie de Teichmüller supérieure.

\bigskip
\noindent La partie \ref{part1} traite de la construction et des propriétés d'une nouvelle structure géométrique sur des surfaces, la structure complexe supérieure.

Une structure complexe sur une surface $\S$ peut être encodée par la \textit{différentielle de Beltrami} $\mu_2$, qu'on peut voir comme une direction dans l'espace cotangent complexifié $T^{*\C}\S$. L'idée de la généralisation est de remplacer cette direction linéaire par un $n$-jet de courbe.

Pour formaliser cette idée, nous utilisons un outil algébrique : le \textit{schéma de Hilbert ponctuel du plan} $\Hilb^n(\C^2)$ qu'on introduit et explique en détail dans la section \ref{Hilbertscheme}. Un point dans $\Hilb^n(\C^2)$ est un idéal de $\C[x,y]$ de codimension $n$. On peut y penser comme l'espace des $n$-uplets de points du plan $\C^2$ conservant une information supplémentaire quand plusieurs points co\"incident.
La \textit{zéro-fibre} $\Hilb^n_0(\C^2)$ est l'espace des idéaux supportés en l'origine, càd. que les $n$ points co\"incident tous avec l'origine. Un élément générique de la zéro-fibre peut être considéré comme un $(n-1)$-jet de courbe, la courbe le long de laquelle les $n$ points sont entrés en collision.

Le schéma de Hilbert ponctuel admet une description matricielle comme la variété des classes de conjugaison de paires de matrices commutantes. La zéro-fibre correspond aux matrices commutantes qui sont nilpotentes. Voir la sous-section \ref{hilbmatrixviewpoint}.

Dans la section \ref{Highercomplexsection} on définit la \textit{structure complexe supérieure} (ou \textit{structure $n$-complexe}). On applique le schéma de Hilbert point par point aux espaces cotangents complexifiés $T_z^{*\C}\S$ pour obtenir un fibré en schémas de Hilbert noté $\Hilb^n(T^{*\C}\S)$. La structure complexe supérieure est définie comme une section de la zéro-fibre $\Hilb^n_0(T^{*\C}\S)$. Cette structure généralise la structure complexe qu'on retrouve pour $n=2$. Une telle structure est caractérisée par des \textit{différentielles de Beltrami supérieures} $\mu_2, ..., \mu_n$.
On peut penser à la structure $n$-complexe comme une sorte de pelouse sur la surface (polynomiale dans l'espace cotangent), ou bien comme un $n$-jet de surface à l'intérieur de $T^{*\C}\S$ le long de la section nulle, ou bien comme certaines 1-formes à valeurs dans les matrices qu'on peut écrire localement sous la forme $\Phi_1dz+\Phi_2d\bar{z}$ où $\Phi_1$ et $\Phi_2$ sont des matrices nilpotentes commutantes.

Pour obtenir un espace des modules de dimension finie, on considère les structures $n$-complexes modulo une relation d'équivalence. Comme la structure $n$-complexe est polynomiale dans l'espace cotangent, il est naturel de considérer des transformations polynomiales de $T^*\S$. De telles transformations peuvent être obtenues par des symplectomorphismes de $T^*\S$ engendrés par des Hamiltoniens polynomiaux. Ces transformations sont appelées \textit{difféomorphismes supérieurs}, et leur groupe est noté $\Symp_0$. 

Le premier résultat de la thèse est que la théorie locale de la structure $n$-complexe est triviale (voir Théorème \ref{loctrivial}) :
\begin{fakethm}[Théorie locale]
La structure $n$-complexe peut être localement trivialisée par un difféomorphisme supérieur. Autrement dit, deux structures complexes supérieures sont localement équivalentes.
%qui, pour tout $z\in \C$ petit, envoie la structure à $(\mu_2(z,\bar{z}),...,\mu_n(z,\bar{z}))=(0,...,0)$.
\end{fakethm}

L'espace des modules $\T^n$ des structures $n$-complexes admet les propriétés suivantes (voir Théorème \ref{mainresultncomplex} et Proposition \ref{copyteich}) :

\begin{fakethm}[Théorie globale]
Pour une surface de genre au moins 2, l'espace des modules $\T^n$ est une variété contractile de dimension complexe $(n^2-1)(g-1)$. Son espace cotangent en un point $\mu$ est donné par (où $K$ désigne le fibré canonique)
$$T^*_{\mu}\bm\hat{\mathcal{T}}^n = \bigoplus_{m=2}^{n} H^0(K^m).$$
En outre, il y a une application d'oubli $\bm\hat{\mathcal{T}}^n \rightarrow \bm\hat{\mathcal{T}}^{n-1}$ et une copie de l'espace de Teichmüller $\mc{T}^2 \rightarrow \bm\hat{\mc{T}}^n$.
\end{fakethm}

Une propriété importante du schéma de Hilbert est qu'il admet une structure symplectique complexe et même \textit{hyperkählerienne}, qu'on étudie dans la section \ref{moreonhilbertschemes}. Ceci permet de décrire dans la section \ref{Spectralcurve1} l'espace cotangent total $\cotang$ qui va jouer un rôle important dans le lien avec la théorie de Teichmüller supérieure. Un point de $\cotang$ est caractérisé par des différentielles $t_k$ et des différentielles de Beltrami supérieures $\mu_k$ avec une condition sur $t_k$ qui généralise la condition d'holomorphicité. Dans le cas où $\mu_k=0$ pour tout $k$, on retrouve la condition $\delbar t_k=0$.

Un point de $\cotang$ est une section du fibré en schémas de Hilbert $\Hilb^n(T^{*\C}\S)$, ou bien un $n$-uplet de 1-formes ($n$ points dans chaque espace cotangent). Cette collection de 1-formes définit une \textit{courbe spectrale} $\bm\tilde{\S}$ dans $T^{*\C}\S$, qui est analysée dans \ref{sspectral}. La propriété principale de $\bm\tilde{\S}$ est qu'elle est Lagrangienne modulo $t^2$ (voir Théorème \ref{spectralcurveprop}).
Ainsi, les pé\-riodes de la 1-forme de Liouville de $T^{*\C}\S$ sont bien définies (modulo $t^2$). On conjecture que ces périodes donnent un système de coordonnées sur $\cotang$ et, avec une procédure limite adaptée, aussi sur $\T^n$.

Dans la section \ref{dualcomplexstructure}, on définit une structure conjuguée à la structure complexe supérieure ainsi qu'à un point de $\cotang$.
Pour $n=2$ l'espace $\cotang$ s'interprète comme l'ensemble des surfaces de demi-translation (\textit{half-translation surfaces} en anglais). Ainsi il admet une action naturelle de $\GL_2(\R)$. Dans la section \ref{gl2} nous montrons que $\GL_2(\R)$ agit également sur $\cotang$ pour tout $n$.

\bigskip
\noindent La partie \ref{part2} tisse un lien avec les composantes de Hitchin en exploitant la structure hyperkählerienne du schéma de Hilbert. Nous déformons $\cotang$ en un espace de connexions plates.
Un point de $\cotang$ étant décrit essentiellement par deux polynômes, leur déformation consiste en une paire d'opérateurs différentiels commutants à laquelle on peut associer une connexion plate. 
%L'espace des opérateurs différentiels commutants est obtenu par une double réduction hamiltonienne de l'espace des connexions : d'abord par les jauges fixant une direction, et ensuite par des difféomorphismes supérieurs. 
Pour faire le lien avec la théorie de Teichmüller supérieure, on cherche à associer \textit{canoniquement} une connexion plate à un point de $\cotang$. 
Nous présentons des résultats partiels dans cette direction.
Dans la section \ref{bigpicture}, on expose les grandes lignes de notre approche et on la compare à celle de Hitchin.

Du point de vue matriciel, une structure complexe supérieure est une 1-forme à valeurs dans $\mf{sl}_n(\C)$ de la forme $\Phi=\Phi_1+\Phi_2$. La déformation de cette structure consiste à l'inclure dans une famille de connexions à paramètre $\l$ de la forme $d+\l\Phi+A+\l^{-1}\Phi^*$. Dans la section \ref{holodiffs-flat-affine}, on extrait d'une connexion de ce type des différentielles holomorphes $t_k$.

Un théorème célèbre d'Atiyah--Bott établit une bijection entre l'espace des connexions plates et la réduction hamiltonienne de l'espace de toutes les connexions $\mc{A}$ par les jauges $\mc{G}$. La section \ref{parabolicreduction} étudie la réduction de $\mc{A}$ par un sous-groupe $\mc{P}$ des jauges, appelées \textit{jauges paraboliques}, qui fixent une direction donnée. Nous démontrons que la réduction $\mc{A}\sslash\mc{P}$ est un espace de paires d'opérateurs différentiels, paramétré par des variables $\bm\hat{t}_k$ et $\bm\hat{\mu}_k$ avec $2\leq k \leq n$.

Ces deux opérateurs différentiels commutent si les $\bm\hat{t}_k$ vérifient une certaine condition similaire à la condition des variables $t_k$ de $\cotang$. On peut obtenir cette condition sur $\bm\hat{t}_k$ comme l'application moment d'une deuxième réduction hamiltonienne. Dans \ref{symponconnections}, nous expliquons comment associer une jauge à un difféomorphisme supérieur. En effectuant une deuxième réduction par rapport à ces jauges, on obtient (voir Corollaire \ref{flatparaconnections}) :
\begin{fakethm}
La double réduction $\mathcal{A}\sslash\mathcal{P}\sslash\Symp_0$ est un espace de connexions plates.
\end{fakethm}

Dans la section \ref{parabolicwithlambda}, on généralise la réduction parabolique à l'espace des $h$-connexions. Ainsi on inclut $\mc{A}\sslash\mc{P}\sslash\Symp_0$ dans une famille à un paramètre d'espaces de même type. Appelons $\l \in \C^*$ ce paramètre ($\l=h^{-1}$). On obtient ainsi un espace de paires d'opérateurs différentiels dépendant d'un paramètre $\l$, paramétré par $\bm\hat{t}_k(\l)$ et $\bm\hat{\mu}_k(\l)$. Quand $\l$ tend vers l'infini on retrouve les deux polynômes décrivant l'espace $\cotang$. Plus précisément, dans un développement de Taylor, les plus hauts termes en $\l$ de $\bm\hat{t}_k(\l)$ et  $\bm\hat{\mu}_k(\l)$ donnent les coordonnées  $t_k$ et $\mu_k$ de $\cotang$. En particulier, on prouve dans le Théorème \ref{actionsymponlambdaconn} :
\begin{fakethm}
L'action infinitésimale de $\Symp_0$ sur les plus hauts termes $\mu_k$ des coordonnées $\bm\hat{\mu}_k$ de la réduction parabolique $\mc{A}(h)\sslash\mc{P}$
est identique à celle des difféomorphismes supérieurs sur la structure $n$-complexe.
\end{fakethm}
Par conséquent, l'espace $\mc{A}\sslash\mc{P}\sslash\Symp_0$ à paramètre $\l$ tend vers $\cotang$ quand $\l$ tend vers l'infini. Quand $\l$ tend vers 0, on retrouve la structure $n$-complexe conjuguée. 

Dans la section \ref{finalstep}, on essaie de démontrer qu'on peut associer canoniquement une connexion parabolique plate $\mc{A}(\l)$ à un point de $\cotang$. Ce serait un analogue à la correspondance de Hodge non-abélienne dans le contexte des fibrés de Higgs.
Nous obtenons des résultats partiels dans ce sens. Dans un cas particulier, nous retrouvons la correspondance de Hodge non-abelienne pour un champ de Higgs nilpotent.
Notre conjecture principale s'énonce ainsi (voir Conjecture \ref{nahc}) :
\begin{fakeconj}
Étant donné un élément $[(\mu_k, t_k)]\in T^*\bm\hat{\mc{T}}^n$ et une donnée finie (conditions initiales d'équations différentielles), il existe une unique (à jauge unitaire près) connexion plate $\mc{A}(\l)=\l\Phi+A+\l^{-1}\Phi^*$ vérifiant
\begin{enumerate}
\item Localement, $\Phi=\Phi_1 dz+\Phi_2 d\bar{z}$ avec $\Phi_1$ nilpotent principal et $\Phi_2=\mu_2\Phi_1+...+\mu_n\Phi_1^{n-1}$
\item $-\mc{A}(-1/\bar{\l})^*=\mc{A}(\l)$ (condition de réalité)
\item $t_k = \tr \Phi_1^{k-1}A_1$.
\end{enumerate}
De surcroît, si $t_k=0$ pour tout $k$, alors la monodromie de $\mc{A}(\l)$ est dans $\PSL_n(\R)$.
\end{fakeconj}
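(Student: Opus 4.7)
The plan is to read this as a non-abelian Hodge-type correspondence, using the parabolic reduction $\mc{A}(\l)\sslash\mc{P}$ of Part~\ref{part2} as the organizing tool. First I would reformulate conditions (1) and (3) on the parabolic side: condition~(1) pins $\Phi=\l\Phi_1+\l^{-1}\Phi_2$ into the companion-matrix (principal nilpotent) normal form associated with a higher complex structure, so that the residual gauge freedom is reduced to the stabilizer of $\Phi_1$ (commuting with the principal nilpotent), while condition~(3) identifies the prescribed $t_k$ with the leading $\l\to\infty$ terms of the parabolic coordinates $\bm\hat{t}_k(\l)$. Similarly, $\mu_k$ are the leading terms of $\bm\hat{\mu}_k(\l)$ by the theorem relating the $\Symp_0$-action on $\bm\hat{\mu}_k$ to the higher diffeomorphism action on $\mu_k$. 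Hence the problem becomes: find subleading coefficients of $\bm\hat{t}_k(\l),\bm\hat{\mu}_k(\l)$ so that the two differential operators commute for all $\l$, i.e.\ the full $\mc{A}(\l)$ is flat, and so that the reality constraint~(2) holds.

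Next I would exploit the reality condition to cut the unknowns in half. The involution $\l\mapsto-1/\bar\l$ combined with $A\mapsto -A^*$ pairs the coefficient at $\l^k$ with the coefficient at $\l^{-k}$, so that once one writes out $\mc{A}(\l)=\l\Phi+A+\l^{-1}\Phi^*$, the only free unknown is the anti-self-adjoint connection $A$. Matching powers of $\l$ in the flatness equation $d\mc{A}(\l)+\tfrac12[\mc{A}(\l),\mc{A}(\l)]=0$ yields the analogue of Hitchin's self-duality system: $\delbar_{A}\Phi_1+[A_2,\Phi_1]=0$ and its conjugate at orders $\l^{\pm1}$, together with $F_A+[\Phi,\Phi^*]=0$ at order $\l^0$. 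Given the normal form of $\Phi$ and condition~(3), these become a coupled elliptic system on $A$, and the ``finite data'' in the statement fix the initial conditions needed to integrate the holomorphic ODEs that arise from the parabolic reduction (essentially the recursive formulas determining the $\bm\hat\alpha_{i,j}$ from $\bm\hat v_k$).

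The real-monodromy statement when all $t_k=0$ should then drop out automatically: in that case the data $(\mu_k,0)$ is fixed by Hitchin's split real form $\tau$, so by uniqueness (up to unitary gauge) the constructed connection $\mc{A}(\l)$ must be $\tau$-invariant, which forces its monodromy into $\PSL_n(\R)$. This is parallel to how the Hitchin section lands in $\mc{T}^n$ inside $\Rep(\pi_1(\S),\PSL_n(\R))$.

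The hard part will be \emph{global existence}. Locally the recursive solvability of the parabolic flatness equations gives a formal solution, but passing from the formal/local picture to a genuine solution on the compact surface $\S$ amounts to solving a nonlinear elliptic PDE for $A$ in the spirit of the Hitchin equations. The essential difficulty is that $\Phi_1$ is principal nilpotent, hence the underlying Higgs bundle is \emph{unstable} in the usual sense, so the classical Corlette--Donaldson--Hitchin--Simpson existence theorems do not apply directly; one needs a parabolic or filtered variant, which is precisely why the reduction $\mc{A}\sslash\mc{P}$ is used rather than the full $\mc{A}\sslash\mc{G}$. A plausible route is a continuity method starting from the Fuchsian point $(\mu_k,t_k)=(0,0)$, where the solution is the uniformization connection, and deforming along $T^*\bm\hat{\mc{T}}^n$ while controlling the implicit function theorem by an a priori estimate adapted to the principal-nilpotent gauge $\mc{P}$. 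Establishing this estimate is the key missing analytic ingredient.
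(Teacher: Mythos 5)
The statement you are addressing is stated in the paper as Conjecture \ref{nahc}: the paper itself does not prove it, and only establishes partial results in section \ref{finalstep} (the cases $n=2$ and $n=3$, the case $\mu=t=0$ via the Toda system of Proposition \ref{linktohiggs}, the real-monodromy statement for $t=0$ in Proposition \ref{monodromyreal}, and the observation that the flatness system remains elliptic for small $t$). Your outline follows essentially the same route as that discussion: normal form for $\Phi_1$ (Lemmas \ref{phi1lower} and \ref{a1upper}), identification of $t_k$ and $\mu_k$ as the leading $\l$-terms of the parabolic coordinates (Proposition \ref{parametrisationlambda} and Theorem \ref{actionsymponlambdaconn}), matching powers of $\l$ in the flatness equation to obtain a generalized Toda / Hitchin-type system, and a deformation argument from the Fuchsian point $(\mu,t)=(0,0)$ using ellipticity. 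You also correctly isolate the genuinely missing ingredient, a global existence and uniqueness theorem for this system on the compact surface; neither you nor the paper supplies it, so your proposal is a strategy rather than a proof, exactly as in the paper.

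Two specific points need correction. First, your diagnosis that the classical non-abelian Hodge theorems fail because ``the underlying Higgs bundle is unstable'' is wrong: the principal nilpotent Higgs field on $K^{(n-1)/2}\oplus\cdots\oplus K^{(1-n)/2}$ is the uniformizing Higgs bundle and is stable; the paper uses precisely this in Proposition \ref{linktohiggs} to settle the case $\mu=t=0$. The actual obstruction is different: the holomorphic differentials sit in the connection $A$ rather than in the Higgs field, so the order-$\l^0$ equation is not the Hitchin equation of a Higgs pair, and the resulting generalized Toda system is only known to be elliptic for small $t$. Second, your argument for real monodromy when $t=0$ assumes that the data $(\mu_k,0)$ is fixed by Hitchin's involution $\tau$ and then invokes the (conjectural) uniqueness; but for $\mu\neq 0$ the $*$-operator, hence the reality condition (2), is not yet well defined --- the paper states explicitly that the real structure is only understood for the trivial $n$-complex structure and must still be made $\Symp_0$-equivariant. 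The paper instead proves Proposition \ref{monodromyreal} by locally trivializing the higher complex structure with a higher diffeomorphism (using Theorem \ref{actionsymponlambdaconn}), reducing to the case $\mu=0$ where Hitchin's local $\tau$-argument applies. Your global $\tau$-invariance argument would require the reality structure to be defined and compatible with the $\Symp_0$-action, which is itself one of the open points of the construction.
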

Admettant cette conjecture, nous en déduisons (voir Théorème \ref{mainthmm}) :
\begin{fakecoro2}
Admettant la conjecture précédente, il existe un isomorphisme canonique entre $\T^n$ et la composante de Hitchin $\mc{T}^n$.
\end{fakecoro2}

\bigskip
\noindent Dans la partie \ref{part3} du manuscrit, nous généralisons la partie \ref{part1} à un groupe de Lie simple $G$ quelconque. Nous définissons une généralisation du schéma de Hilbert ponctuel, associé à une algèbre de Lie simple $\g$. A l'aide de sa zéro-fibre nous construisons une nouvelle structure géométrique généralisant à la fois la structure complexe et la structure $n$-complexe (qui représente le cas $\g=\mf{sl}_n$), appelée structure $\g$-complexe.

Dans la section \ref{g-Hilbertscheme} nous introduisons le \textit{$\g$-schéma de Hilbert}, noté $\Hilb(\g)$, comme un sous-espace de la variété des éléments commutants $\{(A,B)\in \g^2 \mid [A,B]=0\}/G$. La zéro-fibre est constituée des éléments commutants nilpotents. Un premier résultat est (voir Corollaire \ref{hilbg0affine}) :
\begin{fakeprop}
La partie régulière de la zéro-fibre $\Hilb^{reg}_0(\g) = \Hilb^{reg}(\g)\cap \Hilb_0(\g)$ est une variété affine de dimension $\rk \g$.
\end{fakeprop}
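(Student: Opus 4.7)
Le plan consiste à fixer un représentant canonique sous l'action de $G$, puis à identifier le quotient obtenu. D'après la définition de $\Hilb^{reg}(\g)$ en \ref{partshilb}, un point de $\Hilb^{reg}_0(\g)$ est représenté par un couple $(A,B)\in \g^2$ d'éléments commutants nilpotents avec $A$ (ou $B$) nilpotent \emph{régulier} (principal). Les éléments nilpotents réguliers formant une seule orbite sous $G$ (l'orbite principale), on peut alors fixer $A=e$ pour $e$ un élément principal d'un $\mf{sl}_2$-triplet $(e,h,f)$.

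La condition $[A,B]=0$ devient alors $B\in Z(e)$. D'après un théorème classique de Kostant, $Z(e)$ est une sous-algèbre \emph{abélienne} de dimension $\rk\g$, entièrement contenue dans le cône nilpotent. Tout $B\in Z(e)$ est donc automatiquement nilpotent et commute avec $e$; l'ensemble des couples $(e,B)$ admissibles est paramétré par $Z(e)\cong \C^{\rk\g}$. En particulier, la régularité du couple $(e,B)$ est assurée puisque $Z(e,B)\supseteq Z(e)$ et que la dimension de $Z(e)$ atteint déjà la borne inférieure $\rk\g$ valable pour toute paire commutante.

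Il reste à quotienter par l'action résiduelle du stabilisateur $Z_G(e)$ sur $Z(e)$. Comme $Z(e)$ est abélien et que l'algèbre de Lie de $Z_G(e)$ coïncide avec $Z(e)$, la composante connexe $Z_G(e)^\circ$ agit trivialement par conjugaison sur $Z(e)$. L'action se factorise donc à travers le groupe fini $\pi_0(Z_G(e))$, isomorphe à un sous-groupe du centre de $G$. On obtient ainsi $\Hilb^{reg}_0(\g)\cong Z(e)/\pi_0(Z_G(e))$, quotient d'un espace vectoriel complexe de dimension $\rk\g$ par l'action linéaire d'un groupe fini, qui est une variété affine de dimension $\rk\g$ par le théorème de finitude de Hilbert. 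L'obstacle principal est de vérifier, en remontant à la définition précise de $\Hilb^{reg}(\g)$ donnée en \ref{partshilb}, que toute classe admet effectivement un représentant avec $A$ principal nilpotent, et de contrôler le groupe fini $\pi_0(Z_G(e))$ afin d'assurer l'affinité du quotient.
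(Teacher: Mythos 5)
Your proof is correct and follows the same basic route as the paper: reduce to a canonical representative with $A$ principal nilpotent, invoke Kostant's theorem that $Z(e)$ is abelian (and nilpotent) of dimension $\rk\g$, and observe that the minimality condition $\dim Z(e,B)=\rk\g$ is automatic since $Z(e,B)=Z(e)\cap Z(B)=Z(e)$. The one place where you diverge is precisely the point you flag as the ``obstacle principal'': the residual action of $\Stab(e)$. The paper resolves it by proving a stronger statement (Proposition \ref{paramit} and the two lemmas following it): for \emph{any} regular element $A$ of $\g$, the stabilizer $\Stab(A)$ in the adjoint group $G$ is connected — a fact checked case by case via the tables of Collingwood--McGovern — so that the whole stabilizer, not just its identity component, acts trivially on the abelian $Z(A)$, and one gets $\Hilb^{reg}_0(\g)\cong Z(f)$ as an honest vector space, with no finite quotient at all. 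Your weaker argument (identity component acts trivially because $Z(e)$ is abelian, then quotient by $\pi_0(Z_G(e))$ and invoke finiteness of invariants) still yields an affine variety of dimension $\rk\g$, so it suffices for the literal statement; but note that your own observation that $\pi_0(Z_G(e))$ embeds in the center of $G$ already closes the gap, since $G$ is the adjoint group and hence centerless — drawing that conclusion would recover the paper's identification $\Hilb^{reg}_0(\g)\cong Z(e)\cong\C^{\rk\g}$ and dispense with the finite quotient entirely.
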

Nous étudions en détail le cas d'une algèbre de Lie $\g$ classique.
Signalons que pour l'instant $\Hilb(\g)$ est un espace non-séparé (non-Hausdorff). On conjecture qu'on peut modifier sa définition pour obtenir un espace séparé. 

En utilisant $\Hilb_0(\g)$, nous construisons dans \ref{g-complexstructure} la \textit{structure $\g$-complexe}. On peut la voir comme une 1-forme à valeurs dans $\g$ avec certaines propriétés.
On démontre qu'une structure $\g$-complexe induit une structure complexe sur la surface. En plus, on paramètre une structure $\g$-complexe avec des différentielles de Beltrami supérieures.

Pour une algèbre de Lie $\g$ classique, on introduit des difféomorphismes supérieurs de type $\g$ qui agissent sur la structure $\g$-complexe. La théorie locale est donnée par le Théorème \ref{thm1} :
\begin{fakethm}
Pour $\g$ de type $A_n$, $B_n$ ou $C_n$, la structure $g$-complexe peut être trivialisée localement.

Pour $\g$ de type $D_n$, toutes les structures $\g$-complexes dont la différentielle supérieure $\sigma_n$ n'a pas de zéros sont localement équivalentes sous difféomorphismes supérieurs. Cependant, l'ensemble $\{z \in \C \mid\sigma_n(z)=0\}$ est un invariant.
\end{fakethm}

L'espace des modules $\bm\hat{\mc{T}}_\g$ de la structure $\g$-complexe admet les propriétés suivantes (voir Théorème \ref{thm2} et Proposition \ref{inclteich}) :
\begin{fakethm}
Pour $\g$ de type $A_n, B_n$ ou $C_n$, et une surface $\S$ de genre $g\geq 2$, l'espace $\bm\hat{\mathcal{T}}_{\g}$ est une variété contractile de dimension complexe $(g-1)\dim \g$. Son espace cotangent en un point $I$ est donné par 
$$T^*_{I}\bm\hat{\mathcal{T}}_{\g} = \bigoplus_{m=1}^{r} H^0(K^{m_i+1})$$ où $(m_1,...,m_r)$ désignent les exposants de $\g$ et $r=\rk \g$ le rang de $\g$.
De plus, l'application principale $\mf{sl}_2 \rightarrow \g$ induit une inclusion de l'espace de Teichmüller dans $\bm\hat{\mc{T}}_\g$.

\noindent Pour le type $D_n$, l'espace $\bm\hat{\mathcal{T}}_{\g}$ est un espace topologique contractile. Le lieu où les zéros de la différentielle de Beltrami supérieure $\sigma_n$ sont discrets dans $\S$ est une variété lisse avec les propriétés ci-dessus (dimension, espace cotangent et copie de l'espace de Teichmüller).
\end{fakethm}
Comme pour la structure $n$-complexe, on peut associer à un point de $T^*\bm\hat{\mc{T}}_\g$ une courbe spectrale $\bm\tilde{\S}\subset T^{*\C}\S$ qui est Lagrangienne.

\bigskip
\noindent Dans la dernière partie \ref{part4}, nous exposons les questions et conjectures qui restent pour l'instant ouvertes et nous discutons des liens possibles entre les structures complexes supérieures et d'autres sujets dans la théorie de Teich\-müller supérieure, comme par exemple les réseaux spectraux de Gaiotto-Moore-Neitzke, les opères, les variétés amassées et la symétrie miroir.

\bigskip
\noindent On inclut un appendice \ref{appendix:B} dans lequel on traite les éléments réguliers d'une algèbre de Lie semi-simple. Ce matériel est nécessaire uniquement pour la partie \ref{part3}.

Les sous-sections avec un astérisque ne sont pas essentielles pour la compréhension globale et peuvent être sautées dans une première lecture.

%In the section \ref{g-Hilbertscheme}, we define the generalized punctual Hilbert scheme, and some interesting subsets, for example the zero-fiber, the regular and the cyclic part. We analyze the relations between these parts and define a generalized Chow map. For classical $\g$ we also define a map to a space of ideals and we analyze in detail the regular part of the $\g$-Hilbert scheme. Finally, we study the topology of the $\g$-Hilbert scheme and formulate some conjectures.
%In section \ref{g-complexstructure}, we construct the $\g$-complex structure and show how it induces in a natural way a complex structure. We give an equivalent construction using ideals for classical $\g$.
%In section \ref{g-modulispace}, for $\g$ of classical type, we define the notion of higher diffeomorphism of type $\g$, which gives a definition of a moduli space of $\g$-complex structures. We then explore its properties which show its similarity to Hitchin's component. In particular we define a spectral curve in the complexified cotangent bundle $T^{*\C}\Sigma$.
%We include two appendices: In section \ref{Hilbertscheme}, we have seen the main properties of the punctual Hilbert scheme of the plane. In the appendix \ref{appendix:B}, we gather all properties we need in the paper of regular elements in semisimple Lie algebras and give precise references.

\newpage
\section{Introduction and summary (in English)}
\vspace*{0.5cm}
\begin{flushright}
\textit{Dass ich erkenne, was die Welt}

\textit{Im Innersten zusammenhält.}

\textit{[...]}

\textit{Wie alles sich zum Ganzen webt,}

\textit{Eins in dem andern wirkt und lebt!}\footnote{\textit{That I may understand whatever / Binds the world's innermost core together. / [...] /  How each to the Whole its selfhood gives, / One in another works and lives!} Translation by A.S. Kline}

Johann Wolfgang von Goethe, ``Faust''
\end{flushright}
\vspace*{0.3cm}

\subsection{Introduction}\label{introenglish}

The main goal of this PhD thesis is to construct a new geometric approach to higher Teichmüller theory. We give here a short introduction to this theory, selecting those issues which we need for our work.

\bigskip
\noindent Teichmüller and higher Teichmüller theory study global aspects of geometric structures on surfaces and their interaction with representations of the fundamental group of the surface into a Lie group.

Consider a smooth compact oriented surface $\S$ of genus at least 2. Equip $\S$ with a complex structure (compatible with the orientation of $\S$), so that $\S$ becomes a Riemann surface. The space of all complex structures is infinite-dimensional since we can locally change the complex structure by a holomorphic map. The group $\Diff_0(\S)$ of diffeomorphisms of $\S$ isotopic to the identity acts on complex structures. The quotient space, i.e. the space where we identify all complex structures which differ by an element of $\Diff_0(\S)$, is called \textbf{Teichmüller space}, denoted by $\mc{T}(\S)$.

Surprisingly, Teichmüller space is also the moduli space of other geometric structures on the surface. To a complex structure, you can associate in a unique way a hyperbolic structure, i.e. a Riemannian metric with constant curvature -1. It is the metric which in local complex coordinates $(z,\bar{z})$ can be written $e^{\varphi} dz d\bar{z}$ where $\varphi$ satisfies \textit{Liouville's equation} $\frac{1}{2}\del\delbar \varphi = e^{\varphi}$. Conversely you can associate a unique complex structure to a given hyperbolic metric. Therefore, Teichmüller space is also the space of all hyperbolic structures modulo $\Diff_0(\S)$.

There is also an important link to representations of the fundamental group $\pi_1(\S)$. A complex structure on $\S$ can be lifted to its universal cover $\bm\tilde{\Sigma}$ which is topologically the hyperbolic plane. By the uniformization theorem of Poincaré, $\bm\tilde{\Sigma}$ with the induced complex structure from $\S$ is biholomorphic to the hyperbolic plane $\mathbf{H}$. To get the Riemann surface $\S$ back, it is sufficient to quotient $\mathbf{H}$ by the action of the fundamental group $\pi_1(\S)$ which acts by isometries. The isometry group of $\mathbf{H}$ being $\PSL_2(\R)$, the action is given by a map $\pi_1(\S)\rightarrow \PSL_2(\R)$. This homomorphism has to be faithful and discrete in order to get a free and properly discontinuous action on $\mathbf{H}$. Such a morphism is called \textbf{fuchsian}. Two representations give the same complex structure on $\S$ iff they differ by a Möbius transformation. 
Therefore, Teichmüller space can be identified with the connected component of the \textbf{character variety} $$\Rep(\pi_1(\Sigma), \PSL_2(\R))=\Hom(\pi_1(\Sigma), \PSL_2(\R))/\PSL_2(\R)$$ whose morphisms are discrete and faithful.

Thanks to the different descriptions, Teichmüller space enjoys lots of interesting properties: it is a \textit{contractible differentiable manifold with a complex and a symplectic structure, which both together give a Kähler structure. Its cotangent space at a point $T^*_\mu\mc{T}(\S)$ is given by holomorphic quadratic differentials. The mapping class group $\MCG(\S)$ acts properly discontinuously on $\mc{T}(\S)$ preserving the Kähler structure}.
We warmly recommend Hubbard's book \cite{Hub} on Teichmüller theory. In particular he proves all the properties of Teichmüller space we just listed. For a historical account via quasiconformal mappings, we refer to Ahlfors' book \cite{Ahlfors}.

\bigskip
\noindent In his seminal paper \cite{Hit.1}, Nigel Hitchin describes a connected component of other character varieties with properties similar to Teichmüller space. These components are now called \textbf{Hitchin components} and their study \textbf{higher Teichmüller theory}. We refer to \cite{Wienhard} for an overview on this vast theory.

The starting point is to replace the group $\PSL_2(\R)$ by another Lie group $G$, i.e. to study the character variety $\Rep(\pi_1(\S), G)$. Hitchin's construction works for a Lie group associated to a split form of a complex simple Lie algebra. A good example is $\PSL_n(\R)$ to which we restrict in the sequel. We denote by $\mc{T}^n$ Hitchin's component for $\PSL_n(\R)$.

From the representation theory point of view, the Hitchin component consists of those morphisms $\pi_1(\S) \rightarrow \PSL_n(\R)$ which can be continuously deformed to a composition of the form $\pi_1(\S) \rightarrow \PSL_2(\R)\rightarrow \PSL_n(\R)$ where the first arrow is a fuchsian map and the second is the \textbf{principal map}. The principal map in our case is the unique irreducible representation of $\PSL_2(\R)$ of dimension $n$.

We summarize the main properties of Hitchin's component in the following theorem:
\begin{fakethm2}[Hitchin, Goldman, Labourie]
The Hitchin component $\mathcal{T}^n$ is a contractible real manifold of dimension $(n^2-1)(2g-2)$. It admits a symplectic structure and a properly discontinuous action of the mapping class group of $\S$.
\end{fakethm2}

The symplectic structure comes from a general construction due to Goldman (see \cite{Goldman.2} and \cite{AtBott}). The properly discontinuous action of the mapping class group was described by Labourie in \cite{Lab.3}.

\bigskip
\noindent A natural question to ask is whether Hitchin components allow a geometric description like Teichmüller space. More precisely:
\begin{oq3}
Is there a geometric structure on the surface $\S$ whose moduli space is Hitchin's component?
\end{oq3}
\noindent This question is the main motivation for this thesis.

The search for a geometric origin of Hitchin's component is not new. Goldman, Guichard--Wienhard, Labourie and others describe Hitchin's component via geometric structures on bundles over the surface. For $\PSL_3(\mathbb{R})$, this geometric structure is the convex projective structure described by Goldman in \cite{Goldman}. For $n=4$, Guichard and Wienhard describe convex foliated structures on the unit tangent bundle in \cite{Guichard}. Labourie introduces the fruitful concept of Anosov representations in \cite{Lab}. 
The drawback of these constructions (for $n>3$) is that the bundle on which the geometric structure is defined is not canonically associated to the surface. 

All these generalizations are rigid geometric structures (meaning that the local automorphism group is finite dimensional). Our generalization is not rigid in this sense but behaves as a generalization of complex structures (with local automorphism group holomorphic functions which are infinite dimensional).

\bigskip
\noindent Hitchin's original approach uses Higgs bundle theory, especially the hyperkähler structure of the moduli space of Higgs bundles giving the non-abelian Hodge correspondence. For more details on Higgs bundles, we recommend \cite{Wentworth}, \cite{Garcia}, and the lecture notes of Andrew Neitzke \cite{Neitzke}.

Given a Riemann surface $\S$, a \textbf{Higgs bundle} is a holomorphic bundle $V$ equipped with a holomorphic $\End(V)$-valued 1-form $\Phi$. In more sophisticated language, we have $\Phi \in H^0(\End(V)\otimes K)$ where $K=T^{*(1,0)}\S$ denotes the canonical bundle. The element $\Phi$ is called the \textbf{Higgs field}. You can think of it as a \textit{cotangent vector to the space of connections}. Given a connection $\nabla=d+A$, we can add the Higgs field $\Phi$ to $\nabla$. The only difference between $A$ and $\Phi$ is their transformation under a gauge $g$: whereas $g.A = gAg^{-1}+gdg^{-1}$ we have $g.\Phi=g\Phi g^{-1}$.

The moduli space of $G$-Higgs bundles $\mc{M}_H(G)$ is the space of all gauge classes of polystable Higgs bundles. One of its main properties is that $\mc{M}_H$ is a hyperkähler mani\-fold, i.e. it admits a 1-parameter family of Kähler structures. In one Kähler structure, we get the moduli space of Higgs bundles with its complex structure. In another complex structure, we get the character variety $\Rep(\pi_1(\S), G^{\C})$ for the complex Lie group $G^{\C}$. There is a way to describe all different Kähler structures in a hyperkähler manifold at once: the \textbf{twistor approach} from \cite{HKLR}.

For the moduli space of Higgs bundles, the twistor approach gives the \textbf{non-abelian Hodge correspondence}, fruit of several papers of Corlette 
\cite{Corlette}, Donaldson \cite{Donaldson}, Hitchin \cite{Hit.2} and Simpson \cite{Simpson}.
Given a stable Higgs bundle $(V, \Phi)$ with $\deg V=0$, there is a unique (up to unitary gauge) unitary connection $A$ compatible with the holomorphic structure such that 
\begin{align*}
\delbar \Phi+[A^{(0,1)},\Phi] = 0 \\
F_A + [\Phi, \Phi^*]  = 0
\end{align*}
where $F_A$ denotes the curvature of $A$ and the operation $*$ is the adjoint with respect to the Hermitian metric $V$ induced by $A$.
These equations, called \textbf{Hitchin equations}, are equivalent to the flatness of the connection $\nabla  = d+\Phi+A+\Phi^*$.

Conversely, if a bundle $V$ has a flat connection $\nabla$ which is completely reducible, there is a metric on $V$ such that $\nabla$ decomposes as 
$d+\Phi+A+\Phi^*$ satisfying Hitchin equations.

In his paper \cite{Hit3}, Hitchin introduces a map $$\mc{M}_{H}(\SL_n(\C))\rightarrow \bigoplus_{i=2}^n H^0(K^i)$$ called \textbf{Hitchin fibration}. It associates to $(V, \Phi)$ the coefficients of the characteristic polynomial of $\Phi$. Using a principal slice, Hitchin constructs in \cite{Hit.1} a section to the fibration whose image, through the non-abelian Hodge correspondence, has monodromy in $\PSL_n(\R)$.
He proves that this image is a connected component of $\Rep(\pi_1(\S), \PSL_n(\R))$. 

We will see lots of similarities but also differences between Hitchin's original approach and our approach.

\subsection{Summary}

As already pointed out, the main motivation of the thesis is to give a new geometric approach to Hitchin components. We construct a new geometric structure on a surface, called higher complex structure, which generalizes the complex structure. We define a moduli space of higher complex structures which shares numerous properties with Hitchin's component. 

To get a direct link to character varieties, we define a setting analogous to Higgs bundles, but on a smooth surface (without underlying complex structure). We replace the holomorphic Higgs field $\Phi$ by a matrix-valued 1-form $\Phi=\Phi_1+\Phi_2$ (decomposed into $(1,0)$- and $(0,1)$-part in some reference complex structure) where $\Phi\wedge \Phi=0$. So locally, $\Phi_1$ and $\Phi_2$ can be identified with two nilpotent commuting matrices. We analyze how to deform this 1-form to get a flat connection. We wish to establish a correspondence between flat connections and 1-forms of type $\Phi_1+\Phi_2$.
We present partial results in that direction. Assuming this correspondence, we get a canonical diffeomorphism between the moduli space of higher complex structures and Hitchin's component.

The manuscript is divided into four parts and one appendix.

\bigskip
\noindent Part \ref{part1} treats the construction and properties of the new geometric structure on surfaces, the higher complex structure.

A complex structure on a surface $\S$ can be encoded by the \textit{Beltrami differential} $\mu_2$, which determines a direction in each complexified cotangent space $T^{*\C}_z\S$. The idea of the generalization is to replace the linear direction by an $n$-jet of a curve.

In order to formalize this idea, we use the \textit{punctual Hilbert scheme of the plane} $\Hilb^n(\C^2)$ which we introduce and explain in detail in section \ref{Hilbertscheme}. The space $\Hilb^n(\C^2)$ is the space of all ideals of $\C[x,y]$ of codimension $n$. Roughly speaking it parameterizes $n$ points in the plane $\C^2$ and retains some extra information whenever several points coincide.
The \textit{zero-fiber} $\Hilb^n_0(\C^2)$ is the space of all ideals supported at the origin, i.e. when all $n$ points coincide with the origin. A generic element of the zero-fiber can be considered as an $(n-1)$-jet of a curve, the curve along which the $n$ points collapse into the origin.

The punctual Hilbert scheme admits another description, in purely linear algebraic terms. It is the variety of conjugacy classes of pairs of commuting matrices. The zero-fiber corresponds to nilpotent matrices. See subsection \ref{hilbmatrixviewpoint}.

In section \ref{Highercomplexsection} we define the \textit{higher complex structure} (also called \textit{$n$-complex structure}). We apply the Hilbert scheme pointwise to the complexified cotangent spaces $T_z^{*\C}\S$ to get a Hilbert scheme bundle $\Hilb^n(T^{*\C}\S)$. The higher complex structure is defined as a section of the zero-fiber $\Hilb^n_0(T^{*\C}\S)$. It is a generalization of complex structures which we recover for $n=2$. A higher complex structure is parameterized by \textit{higher Beltrami differentials} $\mu_2,...,\mu_n$.
We can think of the higher complex structure either as a kind of lawn on the surface (polynomial in the cotangent bundle), or as an $(n-1)$-jet of a surface inside $T^{*\C}\S$ along the zero-section, or in the matrix viewpoint as a subset of $\mf{sl}_n$-valued 1-forms locally of the form $\Phi_1dz+\Phi_2d\bar{z}$ where $\Phi_1$ and $\Phi_2$ are commuting nilpotent matrices.

To define a finite-dimensional moduli space, the space of $n$-complex structures has to be considered modulo some equivalence relation. Since the $n$-complex structure is polynomial in the cotangent space, it is natural to consider polynomial transformations of the cotangent bundle. These can be described by special symplectomorphisms of $T^*\S$ generated by polynomial Hamiltonians. These special symplectomorphisms are called \textit{higher diffeomorphisms}, and their group is denoted by $\Symp_0$.

The first result of this work is that the local theory of the $n$-complex structure is trivial (see Theorem \ref{loctrivial}):
\begin{fakethm2}[Local theory]
Any $n$-complex structure can be locally trivialized under a higher diffeomorphism.
In other words, any two higher complex structures are locally equivalent.
% which sends the structure to $(\mu_2(z,\bar{z}),...,\mu_n(z,\bar{z}))=(0,...,0)$ for all small $z\in \C$.
\end{fakethm2}

The moduli space $\T^n$ of $n$-complex structures has the following properties (see Theorem \ref{mainresultncomplex} and Proposition \ref{copyteich}):

\begin{fakethm2}[Global theory]
For a surface $\Sigma$ of genus $g\geq 2$ the moduli space $\bm\hat{\mathcal{T}}^n$ is a contractible manifold of complex dimension $(n^2-1)(g-1)$. In addition, its cotangent space at any point $\mu=(\mu_2,...,\mu_n)$ is given by (where $K$ denotes the canonical bundle)
$$T^*_{\mu}\bm\hat{\mathcal{T}}^n = \bigoplus_{m=2}^{n} H^0(K^m).$$
In addition, there is a forgetful map $\bm\hat{\mathcal{T}}^n \rightarrow \bm\hat{\mathcal{T}}^{n-1}$ and a copy of Teichmüller space $\mc{T}^2 \rightarrow \bm\hat{\mc{T}}^n$.
\end{fakethm2}

One important feature of the punctual Hilbert scheme is that it is a \textit{hyperkähler manifold}. In section \ref{moreonhilbertschemes} we explore the symplectic and hyperkähler structure of $\Hilb^n(\C^2)$. With this, in section \ref{Spectralcurve1}, we can describe the total cotangent space $\cotang$ which plays an important role for the link to higher Teichmüller theory. A point in $\cotang$ is characterized by differentials $t_k$ and higher Beltrami differentials $\mu_k$ with a condition on $t_k$ generalizing the holomorphicity condition. For $\mu_k=0$ for all $k$, we recover $\delbar t_k=0$.

A point in $\cotang$ can be seen as a section of the Hilbert scheme bundle $\Hilb^n(T^{*\C}\S)$, or equivalently as a collection of $n$ complex 1-forms ($n$ points in every cotangent space). All together, they give a spectral curve $\bm\tilde{\S}$ in $T^{*\C}\S$ which is described in \ref{sspectral}. The main property is that $\bm\tilde{\S}$ is Lagrangian modulo $t^2$ (see Theorem \ref{spectralcurveprop}).
Thus, the periods of the Liouville 1-form of $T^{*\C}\S$ are well-defined modulo $t^2$. We conjecture that these periods give a coordinate system on $\cotang$ and, by some limit procedure, also on $\T^n$.

In section \ref{dualcomplexstructure} we define a conjugated higher complex structure and a conjugated space to $\cotang$.
For $n=2$ the space $\cotang$ has an interpretation as space of half-translation surfaces. Thus there is a natural $\GL_2(\R)$-action. In section \ref{gl2} we show that $\GL_2(\R)$ acts on $\cotang$ for all $n$.

\bigskip
\noindent Part \ref{part2} ties a link to Hitchin components taking advantage of the hyperkähler structure of the Hilbert scheme. 
We deform $\cotang$ to a space of flat connections.
A point in $\cotang$ is essentially described by two polynomials, the deformation space consists of a pair of commuting differential operators to which we can associate a flat connection. 
To get a link to higher Teichmüller theory, we want to \textit{canonically} associate a flat connection to a point in $\cotang$. We have partial results in this direction. 
In section \ref{bigpicture}, we describe our approach and compare it to Hitchin's approach.

In the matrix viewpoint, a higher complex structure is a $\mf{sl}_n(\C)$-valued 1-form of the form $\Phi=\Phi_1+\Phi_2$. 
To deform this structure, we include it into a family of connections with parameter $\l$ of the form $d+\l\Phi+A+\l^{-1}\Phi^*$. In section \ref{holodiffs-flat-affine} we extract holomorphic differentials $t_k$ from connections of this type.

A famous theorem of Atiyah--Bott describes the space of flat connections as the hamiltonian reduction of the space of all connections $\mc{A}$ by the gauge group $\mc{G}$. Section \ref{parabolicreduction} studies the reduction of $\mc{A}$ by a subgroup $\mc{P}$ of all gauges, those fixing a given direction, which we call \textit{parabolic gauge transformations}.
We prove that the reduction $\mc{A}\sslash\mc{P}$ is a space of pairs of differential operators, parameterized by variables $\bm\hat{t}_k$ and $\bm\hat{\mu}_k$.

These differential operators commute under some condition on $\bm\hat{t}_k$ which is similar to the higher holomorphicity condition. We can obtain this condition on $\bm\hat{t}_k$ as the moment map of a second hamiltonian reduction. In \ref{symponconnections}, we explain how to associate a gauge to a higher diffeomorphism. We have to perform the reduction with respect to these gauges to obtain the following result (see Corollary \ref{flatparaconnections}):
\begin{fakethm2}
The double reduction $\mathcal{A}\sslash\mathcal{P}\sslash\Symp_0$ gives a space of flat connections.
\end{fakethm2}
In section \ref{parabolicwithlambda} we generalize the parabolic reduction to the space of $h$-connections. In this way we include $\mc{A}\sslash\mc{P}\sslash\Symp_0$ into a one-parameter family of spaces of the same type. Let us call $\l$ this parameter ($\l=h^{-1}$). Thus, we get a space of pairs of differential operators depending on $\l$, parameterized by $\bm\hat{t}_k(\l)$ and $\bm\hat{\mu}_k(\l)$. In the limit $\l \rightarrow \infty$ we get the two polynomials describing $\cotang$ back.
More precisely, in a Taylor development in $\l$, the highest terms of $\bm\hat{t}_k(\l)$ and $\bm\hat{\mu}_k(\l)$ are the coordinates $t_k$ and $\mu_k$ of $\cotang$. In particular, we prove in Theorem \ref{actionsymponlambdaconn}:
\begin{fakethm2}
The infinitesimal action of $\Symp_0(\Sigma)$ on the highest terms $\mu_k$ of the coordinates $\bm\hat{\mu}_k(\l)$ of the parabolic reduction $\mc{A}(h)\sslash\mc{P}$ is the same as the infinitesimal action of higher diffeomorphisms on the $n$-complex structure.
\end{fakethm2}
As a consequence, the space $\mc{A}\sslash\mc{P}\sslash\Symp_0$ with parameter $\l$ tends to $\cotang$ when $\l$ tends to infinity. When $\l$ tends to $0$, we recover the conjugated $n$-complex structure. 

In section \ref{finalstep}, we try to prove that one can canonically associate a flat parabolic connection $\mc{A}(\l)$ to a point in $\cotang$. This would be an analog to the non-abelian Hodge correspondence in the Higgs bundle setting.
We obtain partial results in this direction. In particular, we recover in a special case the non-abelian Hodge correspondence for a nilpotent Higgs field.
Our main conjecture can be formulated as follows (see Conjecture \ref{nahc}):
\begin{fakeconj}
Given an element $[(\mu_k, t_k)]\in T^*\bm\hat{\mc{T}}^n$ and some finite extra data (initial conditions to differential equations), there is a unique (up to unitary gauge) flat connection $\mc{A}(\l)=\l\Phi+A+\l^{-1}\Phi^*$ satisfying 
\begin{enumerate}
\item Locally, $\Phi=\Phi_1 dz+\Phi_2 d\bar{z}$ with $\Phi_1$ principal nilpotent and $\Phi_2=\mu_2\Phi_1+...+\mu_n\Phi_1^{n-1}$
\item $-\mc{A}(-1/\bar{\l})^*=\mc{A}(\l)$ (reality condition)
\item $t_k = \tr \Phi_1^{k-1}A_1$.
\end{enumerate}
In addition, if $t_k=0$ for all $k$ then the monodromy of $\mc{A}(\l)$ is in $\PSL_n(\R)$.
\end{fakeconj}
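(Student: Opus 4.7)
The plan is to mimic the non-abelian Hodge correspondence, but in the parabolic reduction framework developed in Part \ref{part2} rather than in the classical Higgs bundle setting. The idea is that the three conditions on $\mc{A}(\l) = \l\Phi + A + \l^{-1}\Phi^*$ together cut out exactly the image inside $\mc{A}\sslash\mc{P}\sslash\Symp_0$ of the canonical section one wants, so the work splits into (i) reducing the problem to a nonlinear PDE system in a single unknown, (ii) proving existence of a solution, and (iii) establishing uniqueness and the reality/monodromy statement.

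First I would translate each of the three conditions into concrete constraints. Condition (1) says that $\Phi_1$ is principal nilpotent and $\Phi_2$ lies in the commutant of $\Phi_1$, which, by the structure of $\Hilb^n_0(\C^2)$ via the matrix viewpoint of \ref{hilbmatrixviewpoint}, is precisely the data of the $n$-complex structure; this already determines $\Phi$ up to $\mc{P}$-gauge once the Beltrami differentials $\mu_k$ are fixed. Condition (3) is a normalization that pins down the ``diagonal'' part of $A_1$ in the principal grading of $\mf{sl}_n$: writing $A_1$ in the Frobenius-type basis adapted to $\Phi_1$, the traces $\tr \Phi_1^{k-1} A_1$ recover exactly the coordinates which were shown in \ref{holodiffs-flat-affine} to play the role of holomorphic differentials. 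Finally the reality condition (2) selects the real slice $-A^* = A$ and $\Phi_2^* = \Phi^*_2$-compatible decomposition, which turns $A(\l)$ into a unitary-real connection in the twistor sense.

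With these normalizations in place, flatness $d\mc{A}(\l) + \mc{A}(\l)\wedge\mc{A}(\l) = 0$ splits by powers of $\l$ into three equations: a $\l^2$-equation saying $\Phi\wedge\Phi = 0$ (automatic from the commutant condition), a $\l^0$-equation $F_A + [\Phi,\Phi^*] = 0$ of Hitchin type, and mixed equations $\bar{\nabla}_A \Phi = 0$ and its conjugate. After exploiting conditions (1) and (3) to fix the ``algebraic'' unknowns, the remaining freedom lives in a finite-dimensional torus of Cartan-valued gauge transformations $g = \exp(h)$ with $h \in \h$, and one obtains a system of elliptic PDEs for $h$ generalizing Liouville's equation. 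The main obstacle will be to prove existence of a smooth solution to this system on $\S$; I would attack it by either a direct variational argument (minimizing an energy functional built from $\|F_A + [\Phi,\Phi^*]\|^2$ together with convexity along the Cartan direction, as in Donaldson--Corlette) or by a continuity method starting from the known $n=2$ solution of Poincaré uniformization, with openness from the linearized operator being elliptic and closedness from a priori $C^0$ bounds on $h$.

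For uniqueness I would use the strict convexity of the energy functional along $\h$-valued perturbations; the ``finite extra data'' in the statement corresponds exactly to the choice of initial condition that kills the remaining Cartan ambiguity. The reality claim for $t_k = 0$ then follows by invoking Hitchin's involution $\tau$ on $\mf{sl}_n$: when every $t_k$ vanishes, $\mc{A}(\l)$ is fixed by the composition of $\tau$ with the twistor real structure $\l \mapsto -1/\bar\l$, and the fixed-point set in the character variety is precisely the real form $\PSL_n(\R)$, exactly as in Hitchin's original construction of the section. The hardest part will be the a priori estimates needed to close the continuity/variational argument, since the principal nilpotent structure makes $[\Phi,\Phi^*]$ degenerate along walls where higher differentials vanish, and some care with weighted Sobolev spaces around these loci seems unavoidable.
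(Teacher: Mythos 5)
This statement is Conjecture \ref{nahc}: the paper does not prove it, and offers only partial results (the cases $t=0$ and $\mu=0$, the explicit $n=2,3$ computations of subsection \ref{n2n3}, the reduction to a generalized Toda system, and Proposition \ref{monodromyreal} for the monodromy claim). Your proposal reproduces the paper's intended strategy in outline --- split the flatness equation by powers of $\l$, reduce to an elliptic system generalizing Liouville's equation, and use Hitchin's involution $\tau$ for the reality of the monodromy when $t=0$ --- and that last step does match the paper's actual argument. But the proposal is not a proof: it asserts precisely the points that make this a conjecture.

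The concrete gaps are these. First, your claim that after imposing conditions (1) and (3) ``the remaining freedom lives in a finite-dimensional torus of Cartan-valued gauge transformations'' is unjustified and contradicted by the paper's own $n=3$ computation: after the standard form of \ref{standard-form} and the normalization $t_k=\tr\Phi_1^{k-1}A_1$, the residual unknowns (the $b_0,b_2$ of subsection \ref{n2n3}) are genuine function-valued fields satisfying a coupled first-order PDE system, and the ``finite extra data'' is conjectured to be initial conditions for that system. Nothing in your argument shows this system is well-posed or that its solution set is finite-dimensional; that is the heart of the conjecture. Second, the variational and continuity methods you invoke are calibrated to the Hitchin equations, where the differentials sit in the Higgs field and $F_A+[\Phi,\Phi^*]=0$ is a moment-map equation; here the differentials sit in $A$, the term $\Phi_2\neq 0$ destroys the moment-map structure, and the resulting ``generalized Toda system'' (width equal to period in the loop-algebra picture of \ref{flatconnectionlambda}) has no known existence or uniqueness theory --- the paper explicitly defers its study to future work, and ellipticity is only known for small $t$ by perturbation from $t=0$. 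Third, the reality condition (2) that your argument takes as given is itself only defined for trivial higher complex structure; for $\mu\neq 0$ the paper notes that the $*$-operator must still be constructed so as to be compatible with the $\Symp_0$-action. Until these three points are addressed, the proposal is a restatement of the conjecture's difficulty rather than a resolution of it.
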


Admitting this conjecture, we deduce (see Theorem \ref{mainthmm}): 
\begin{fakecoro}
Admitting the previous conjecture, there is a canonical diffeomorphism between $\T^n$ and Hitchin's component $\mc{T}^n$.
\end{fakecoro}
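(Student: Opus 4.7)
The plan is to define a map $\Psi : \bm\hat{\mc{T}}^n \to \mc{T}^n$ by applying Conjecture \ref{nahc} along the zero section $t_k\equiv 0$ and taking the monodromy of the resulting flat connection $\mc{A}(\l)$ at any fixed value of $\l$ (say $\l=1$, using the reality condition to land in $\PSL_n(\R)$). First, given a class $[(\mu_k)]\in\bm\hat{\mc{T}}^n$, pick a representative; the conjecture produces a unique-up-to-unitary-gauge flat connection, and its monodromy is well-defined as a point in the character variety because different admissible initial data produce gauge-equivalent connections. Independence of the chosen representative of $[(\mu_k)]$ follows from Theorem \ref{actionsymponlambdaconn}: equivalent $n$-complex structures differ by an element of $\Symp_0$, whose action is matched by the parabolic gauge action on the connection side, so the monodromy is unchanged.

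Next I show $\Psi$ lands in the Hitchin component. Since $\bm\hat{\mc{T}}^n$ is contractible (global theory theorem), hence connected, $\Psi$ maps into a single connected component of $\Rep(\pi_1(\S),\PSL_n(\R))$. On the Teichmüller locus $\mc{T}^2\hookrightarrow\bm\hat{\mc{T}}^n$ of Proposition \ref{copyteich} (where $\mu_k=0$ for $k\geq 3$), the connection constructed by the conjecture reduces to the standard uniformizing connection coupled to the principal $\mf{sl}_2$-triple in $\mf{sl}_n$, whose monodromy is $n$-fuchsian by construction. Thus the entire image of $\Psi$ lies in the connected component of $n$-fuchsian representations, namely $\mc{T}^n$.

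For injectivity, suppose $\Psi([\mu])=\Psi([\mu'])$. Then the two flat connections associated by the conjecture have conjugate monodromies, hence are gauge-equivalent; the uniqueness clause in the conjecture, together with $t_k=t'_k=0$, then forces $[(\mu_k)]=[(\mu'_k)]$ after passing to $\Symp_0$-orbits. For smoothness, both spaces are smooth manifolds of the same real dimension $2(n^2-1)(g-1)$ (global theory theorem for the source, Hitchin--Goldman--Labourie theorem for the target), and the construction of $\mc{A}(\l)$ solves a smooth system of PDE in parameters, so $\Psi$ is smooth. A smooth injective map between connected manifolds of equal dimension is a local diffeomorphism onto its image, and $\Psi$ is a local diffeomorphism at every fuchsian point in particular, so its image is open in $\mc{T}^n$.

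The main obstacle is surjectivity of $\Psi$, equivalently closedness of the image inside $\mc{T}^n$. The natural approach is an open--closed argument combined with properness: openness is already established, and closedness should follow from the compactness properties of the non-abelian Hodge correspondence, since the connection in the conjecture is essentially a non-abelian-Hodge pair with nilpotent principal Higgs field, which is precisely the Hitchin section data. Concretely, given a convergent sequence of Hitchin representations in the image of $\Psi$, one extracts a limit Higgs bundle with nilpotent principal Higgs field by the standard compactness of Hitchin's equations, reads off a limit $n$-complex structure via the Frobenius-form description of $\Phi_2$ in condition (1) of the conjecture, and checks that $\Psi$ sends this limit to the limit representation. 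Together with openness and connectedness of $\mc{T}^n$, this yields surjectivity and the claimed canonical diffeomorphism.
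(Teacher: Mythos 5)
Your proof follows the same skeleton as the paper's — use Conjecture \ref{nahc} at $t=0$ to build a monodromy map, show its image is open and closed in $\Rep(\pi_1(\S),\PSL_n(\R))$, and identify the component by evaluating on a distinguished locus — but two steps do not hold up as written. First, you assert that on the whole Teichmüller locus $\mc{T}^2\hookrightarrow\T^n$ the construction yields $n$-fuchsian monodromy "by construction". The paper only establishes this at the single point $\mu=0$ (Proposition \ref{linktohiggs}: there $\mc{A}(\l)$ coincides with the connection given by the non-abelian Hodge correspondence applied to the principal nilpotent Higgs field); for $\mu_2\neq 0$ with $\mu_k=0$, $k\geq 3$, the $n$-fuchsian statement is explicitly left as a conjecture in subsection \ref{finalstep}. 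One point suffices for your connectedness argument, so this is repairable, but you are invoking an unproved claim.

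Second, and more seriously, your injectivity step does not follow from the uniqueness clause of Conjecture \ref{nahc}. That clause says the data $[(\mu_k,t_k)]$ determines $\mc{A}(\l)$ up to \emph{unitary} gauge; it does not say the assignment is injective. If $\Psi([\mu])=\Psi([\mu'])$ you only know the monodromies at $\l=1$ are conjugate, hence the two flat connections are related by \emph{some} $\PSL_n(\C)$-gauge, which need not be unitary and need not respect the decomposition $\l\Phi+A+\l^{-1}\Phi^*$; so you cannot conclude $[(\mu_k)]=[(\mu'_k)]$ by conjugating $\Phi$. Since your openness argument rests on injectivity plus invariance of domain, the gap propagates. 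The paper avoids this entirely: it argues the image of $\T^n$ is a closed submanifold (from closedness of the zero-section in $\cotang$) whose dimension equals that of the target (Theorem \ref{mainresultncomplex}), hence is open and closed, hence a connected component, identified as $\mc{T}^n$ via the point $\mu=0$. Your alternative closedness mechanism via compactness of solutions of Hitchin's equations is in the spirit of Hitchin's theorem 7.5 and is a reasonable substitute, but it is a different analytic input from the one the paper uses and would need to be carried out in the deformed ($\mu\neq 0$) setting where the standard Higgs bundle compactness results do not directly apply.
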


\bigskip
\noindent In part \ref{part3} of the manuscript, we generalize part \ref{part1} to other complex simple Lie groups $G$. We define a generalization of the punctual Hilbert scheme, associated to a simple Lie algebra $\g$. With its zero-fiber we construct a geometric structure, called $\g$-complex structure, generalizing both complex and $n$-complex structures (which are special cases with $\g=\mf{sl}_n$).

In section \ref{g-Hilbertscheme} we define the \textit{$\g$-Hilbert scheme}, denoted by $\Hilb(\g)$, as a subspace of the commuting variety $\{(A,B)\in \g^2 \mid [A,B]=0\}/G$. The zero-fiber consists of commuting nilpotent elements. A first result is (see Corollary \ref{hilbg0affine}):
\begin{fakeprop}
The regular zero-fiber $\Hilb^{reg}_0(\g) = \Hilb^{reg}(\g)\cap \Hilb_0(\g)$ is an affine variety of dimension $\rk \g$.
\end{fakeprop}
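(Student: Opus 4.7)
The plan is to reduce to a single representative in the regular nilpotent orbit and then identify the moduli space with the centralizer of that element. Since $\Hilb^{reg}_0(\g)$ parametrizes $G$-orbits of pairs $(A,B)\in \g^2$ of commuting nilpotents for which (at least) one factor, say $A$, is a regular element of $\g$, and since the regular nilpotent orbit is unique, I would first use a $G$-conjugation to bring $A$ to a fixed principal nilpotent $e$. The space $\Hilb^{reg}_0(\g)$ then becomes
\[
\bigl\{\,B\in \g \,\bigm|\, [e,B]=0,\ B \text{ nilpotent}\,\bigr\}\,/\,Z_G(e),
\]
where $Z_G(e)$ denotes the centralizer of $e$ in $G$, acting by the adjoint representation.

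The next step invokes Kostant's classical analysis of the principal nilpotent (recalled in Appendix \ref{appendix:B}): the centralizer $Z_\g(e)$ is an \emph{abelian} subalgebra of dimension $\rk\g$, spanned by homogeneous elements $e_1,\dots,e_r$ of positive degrees $m_1,\dots,m_r$ (the exponents of $\g$) with respect to the grading induced by the semisimple element $h$ of a principal $\mf{sl}_2$-triple. In particular every element of $Z_\g(e)$ lies in the positive part of the grading and is therefore automatically nilpotent. Hence the nilpotency condition on $B$ is vacuous, and the set of admissible $B$ is exactly the affine space $Z_\g(e)\cong \C^{\rk\g}$.

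It remains to deal with the residual action of $Z_G(e)$ on $Z_\g(e)$. Since $e$ is regular, $Z_G(e)$ is a commutative subgroup of $G$ whose identity component equals $\exp(Z_\g(e))$; because $Z_\g(e)$ is abelian in $\g$, this identity component acts trivially on $Z_\g(e)$ by the adjoint action. If the component group of $Z_G(e)$ is nontrivial, I would verify case-by-case in the classical types (using the explicit description of principal nilpotents from Part \ref{part3}) that the residual finite action on the Kostant slice $Z_\g(e)$ is still trivial, so that no quotient identification remains. Combining these observations yields the canonical isomorphism
\[
\Hilb^{reg}_0(\g)\;\cong\;Z_\g(e)\;\cong\;\A^{\rk \g},
\]
which is manifestly an affine variety of the claimed dimension.

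The main obstacle I anticipate is not the dimension count (which is essentially Kostant's theorem) but the bookkeeping around the quotient: first, ensuring that the definition of $\Hilb^{reg}(\g)$ used in \ref{partshilb} is compatible with ``one factor is a regular element of $\g$'' so that the $G$-reduction to a fixed principal nilpotent $e$ is well-defined and separated; second, controlling the component group of $Z_G(e)$ so that one really obtains an affine space and not a finite quotient of one. Once these two points are settled, affineness and the value $\rk\g$ of the dimension follow from Kostant's structural results without further computation.
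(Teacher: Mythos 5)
Your argument is essentially the paper's: the paper's Proposition \ref{paramit} conjugates any regular class uniquely into the principal slice $f+Z(e)$ (so $A=f$ in the nilpotent case), identifies the fiber with the abelian $\rk\g$-dimensional centralizer $Z(f)$ via Kostant's theorem, and disposes of the residual stabilizer action exactly as you propose — infinitesimally by abelianness of $Z(A)$, and globally by a lemma showing $\Stab(A)$ is \emph{connected} for regular $A$ (checked case-by-case from the Collingwood--McGovern tables, for exceptional as well as classical types, so your component-group worry is settled there rather than only in the classical cases).
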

We study the case of a classical Lie algebra in detail.
Note that for the moment $\Hilb(\g)$ is a topological space which is not Hausdorff. We conjecture that there exists a modified version of $\Hilb(\g)$ which gives a Hausdorff space.

By means of $\Hilb_0(\g)$, we construct in \ref{g-complexstructure} the \textit{$\g$-complex structure}. It can be seen as a special $\g$-valued 1-form on $\S$.
We prove that a $\g$-complex structure induces a complex structure on the surface. Furthermore, we parameterize a $\g$-complex structure by higher Beltrami differentials.

For a classical Lie algebra $\g$, we introduce higher diffeomorphisms of type $\g$ which act on $\g$-complex structures. The local theory is determined in Theorem \ref{thm1}:
\begin{fakethm2}
For $\g$ of type $A_n$, $B_n$ or $C_n$, the $\g$-complex structure can be locally trivialized, i.e. there is a higher diffeomorphism of type $\g$ which sends all higher Beltrami differentials to 0 for all small $z \in \C$.

For $\g$ of type $D_n$, all $\g$-complex structures with non-vanishing $\sigma_n$ are locally equivalent under higher diffeomorphisms. However, the zero locus of $\sigma_n$ is an invariant.
\end{fakethm2}

The moduli space $\bm\hat{\mc{T}}_\g$ of the $\g$-complex structure admits the following properties (see Theorem \ref{thm2} and Proposition \ref{inclteich}):
\begin{fakethm2}
For $\g$ of type $A_n, B_n$ or $C_n$, and a surface $\Sigma$ of genus $g\geq 2$, the moduli space $\bm\hat{\mathcal{T}}_{\g}$ is a contractible manifold of complex dimension $(g-1)\dim \g$. In addition, its cotangent space at any point $I$ is given by 
$$T^*_{I}\bm\hat{\mathcal{T}}_{\g} = \bigoplus_{m=1}^{r} H^0(K^{m_i+1})$$ where $(m_1,...,m_r)$ are the exponents of $\g$ and $r=\rk \g$ denotes the rank of $\g$.
In addition, the principal map $\mf{sl}_2 \rightarrow \g$ induces an inclusion of Teichmüller space into $\bm\hat{\mc{T}}_\g$.

\noindent For type $D_n$, the moduli space $\bm\hat{\mathcal{T}}_{\g}$ is a contractible topological space. The locus where the zero-set of the higher Beltrami differential $\sigma_n$ is a discrete set on $\S$ is a smooth manifold with the same properties as above (dimension, cotangent space and copy of Teichmüller space).
\end{fakethm2}
Like for the $n$-complex structure, we can associate to a point in $T^*\bm\hat{\mc{T}}_\g$ a spectral curve $\bm\tilde{\S}\subset T^{*\C}\S$ which is Lagrangian.

\bigskip
\noindent In the last part \ref{part4}, we expose questions and conjectures which remain open for the moment and we discuss connections between higher complex structures and other topics in higher Teichmüller theory, as for example spectral networks, opers, W-geometry, cluster varieties and SYZ-mirror symmetry.

\bigskip
\noindent We include an appendix \ref{appendix:B} on regular elements in semisimple Lie algebras. This material is only needed for part \ref{part3}.

The subsections marked with an asterisk are not essential for the global understanding and can be skipped in a first reading.
%\include{Intro-summary} equivalent way to include other tex document

%%%%%%%%%%%%%%%%%%%%%%%%%%%%%%%%%%%%%%%%%%%%%%%%%%
\cleardoublepage
\vspace*{3cm}
\part{Higher complex structures and punctual Hilbert schemes}\label{part1}
\vspace*{2cm}
\begin{flushright}
\textit{The problem of creating something which is new, but which is consistent}

\textit{with everything which has been seen before, is one of extreme difficulty.}

Richard Feynman, Lectures II, page 20-10
\end{flushright}
\vspace*{1cm}

\noindent In this first part, we construct a new geometric structure on a smooth surface generalizing the complex structure. To define this so-called higher complex structure we use the punctual Hilbert scheme of the plane. The moduli space of higher complex structures, a generalization of the classical Teichm\"uller space, is shown to have several properties in common with Hitchin's component.

\medskip
\noindent First, we review the complex structure on surfaces through the Beltrami differential. Then we introduce the main tool for the new geometric structure: the punctual Hilbert scheme of the plane. It admits several equivalent definitions: as space of ideals, as resolution of the configuration space and as space of pairs of commuting matrices.

Using the punctual Hilbert scheme, we define the higher complex structure. To define an equivalence relation on them, we introduce the notion of higher diffeomophisms which are special hamiltonian diffeomorphisms of the cotangent space $T^*\S$. We show that the local theory of higher complex structures is trivial, i.e. that locally any two higher complex structures are equivalent under higher diffeomorphisms.

We then define the moduli space of higher complex structures and show several theorems which indicate its similarity to Hitchin's component. In particular, it is a contractible manifold of the same dimension as Hitchin's component and admits a copy of Teichmüller space inside. Our moduli space has a natural complex structure, a proper discontinuous action of the mapping class group and we can describe its cotangent space.

We deepen the study of the punctual Hilbert scheme, especially its symplectic and hyperkähler structure. With this analysis, we describe the total cotangent bundle to our moduli space of higher complex structures and we construct a spectral curve.

Finally, we discuss a conjugated higher complex structure and an action of $\GL_2(\R)$ on the cotangent bundle to our moduli space, generalizing the action on half-translation surfaces.

\medskip
\noindent Some aspects from the construction of the higher complex structure were already done in my Master thesis. Since they form the starting point for the whole PhD thesis, we include them here. Most of the material in this part was published in \cite{FockThomas}. The Beltrami approach to Teichmüller space and the punctual Hilbert scheme are not new. Still, the parametrization of $\Hilb^n(\C^2)$ by coordinates $(t_k, \mu_k)_{1\leq k \leq n}$ has never been carried out to that extent.
\vspace*{\fill}

\cleardoublepage

\section{Complex structures on surfaces}
\label{Complexstructures}

In this section we review the approach to complex and almost complex structures on a surface via the Beltrami differential. This will prepare us for the generalization to higher complex structures which will follow in section \ref{Highercomplexsection}. 

Recall that a \textbf{complex structure} on a manifold is a complex atlas with holomorphic transition functions. Retaining only the information that every tangent space $T_z\Sigma$ has the structure of a complex vector space, i.e. is equipped with an endomorphism $J(z)$ whose square is $-\id$, we obtain an \textbf{almost complex structure}.
A theorem due to Gauss and Korn-Lichtenstein states that every almost complex structure on a surface comes from a complex structure (i.e. can be integrated to a complex structure). Thus, both notions are equivalent on a surface.

So the complex structure is encoded in the operators $J(z)$. These can be better understood by diagonalization. The characteristic polynomial of $J(z)$ being $X^2+1$, the eigenvalues are $\pm i$. So we need to complexify the tangent space to see the eigendirections: $$T^{\mathbb{C}}\Sigma = T^{1,0}\Sigma \oplus T^{0,1}\Sigma$$ where $T^{1,0}\Sigma$ is the eigenspace associated to eigenvalue $i$.
Furthermore, the eigendirections are conjugated to each other: $T^{1,0}\Sigma = \overline{T^{0,1}\Sigma}$.

Therefore the complex structure is entirely encoded by $T^{0,1}\Sigma$, i.e. a direction in the complexified tangent space. Thus, we can see a complex structure as a section of the projectivized  tangent bundle $\mathbb{P}(T^{\mathbb{C}}\Sigma)$.

Let us describe this viewpoint in coordinates. To do this, we fix a reference complex coordinate $z=x+iy$ on $\Sigma$. This gives a basis $(\partial, \bar{\partial})$ in $T^{\mathbb{C}}\Sigma$ where $\partial = \frac{1}{2}(\partial_x - i\partial_y) \text{ and } \bar{\partial} = \frac{1}{2}(\partial_x + i\partial_y)$. The generator of the linear subspace $T^{0,1}_z\Sigma$ can be normalized to be  $\bar{\partial}-\mu(z,\bar{z}) \partial$, (where $\mu$ is a coordinate on $\mathbb{C}P^1$, and thus can take infinite value). The coefficient $\mu$ is called the \textbf{Beltrami differential}. There is one condition on $\mu$, coming from the fact that the vector $\bar{\partial}-\mu \partial$ and its conjugate $\partial-\bar{\mu} \bar{\partial}$ have to be linearly independent since they are eigenvectors of $J(z)$ corresponding to different eigenvalues. A simple computation shows that the condition is equivalent to $\mu \bar{\mu} \neq 1$. If we restrict ourselves to complex structures compatible with the orientation of the surface (i.e. homotopy equivalent to the reference complex structure given by $\mu=0$). This means that $\mu \bar{\mu} < 1$.

We have only seen the Beltrami differential in a local chart. Changing coordinates $z \mapsto w(z)$ gives $\mu(z,\bar{z}) \mapsto \frac{d\bar{z}/d\bar{w}}{dz/dw}\mu(z,\bar{z})$, so $\mu$ is of type $(-1,1)$, i.e. a section of $K^{-1}\otimes \bar{K}$ where $K=T^{*(1,0)}\Sigma$ the canonical line bundle.

To sum up, we look at a complex structure on a surface as a given linear direction in every complexified tangent space, which is the same as a 1-jet of a curve at the origin. For the generalization, we need to consider rather the \textit{cotangent space} $T^*\Sigma$ (the operators $J(z)$ also act in $T^*_z\Sigma$). Higher complex structures will be given by an $n$-jet of a curve in the complexified cotangent space. To describe this idea precisely in geometric terms, we use the punctual Hilbert scheme of the plane.

\section{Punctual Hilbert scheme of the plane}\label{Hilbertscheme}

We present here the tool necessary for the higher complex structure: the punctual Hilbert scheme of the plane. The \textbf{Hilbert scheme} is the parameter space of all subschemes of an algebraic variety. In general this scheme can be quite complicated but here we are in a very specific case of zero-dimensional subschemes of $\mathbb{C}^2$. Nothing new is presented here, a classical reference is Nakajima's book \cite{Nakajima}. Details can also be found in Haiman's paper \cite{Haiman}.

\subsection{Definition}\label{hilbdef}
Consider $n$ points in the plane $\mathbb{C}^2$ as an algebraic variety, i.e. defined by some ideal $I$ in $\mathbb{C}[x,y]$. The function space $\mathbb{C}[x,y]/I$ is $n$-dimensional, since a function on $n$ points is defined by its $n$ values. So the ideal $I$ is of codimension $n$. 
This gives a simple example of a subscheme of dimension zero.
We define the \textbf{length} of a zero-dimensional subscheme to be the dimension of its function space. So the variety of $n$ distinct points is of length $n$. We will see that we get more interesting examples when two or several points collapse into one single point.
The moduli space of zero-dimensional subschemes of length $n$ is called the punctual Hilbert scheme:

\begin{definition}
The \textbf{punctual Hilbert scheme} $\Hilb^n(\mathbb{C}^2)$ of length $n$ of the plane is the set of ideals of $\mathbb{C}\left[x,y\right]$ of codimension $n$: 
$$\Hilb^n(\mathbb{C}^2)=\{I \text{ ideal of } \mathbb{C}\left[x,y\right] \mid \dim(\mathbb{C}\left[x,y\right]/I)=n \}.$$  
\noindent The subspace of $\Hilb^n(\mathbb{C}^2)$ consisting of all ideals supported at 0, i.e. whose associated algebraic variety is $(0,0)$, is called the \textbf{zero-fiber} of the punctual Hilbert scheme and is denoted by $\Hilb^n_0(\mathbb{C}^2)$.
\end{definition}

\begin{Remark}
In the literature the punctual Hilbert scheme of a space $X$ is often denoted by $X^{[n]}$. In this thesis we will deal with several notions of Hilbert schemes, so we prefer the notation $\Hilb^n(X)$.
\hfill $\triangle$
\end{Remark}

Let us try to get a feeling of the form of a generic ideal in the Hilbert scheme. Given $n$ generic distinct points $P_1, ..., P_n$ in $\mathbb{C}^2$, we consider the ideal $I$ whose algebraic variety is given by these points. There is a unique polynomial $Q$ of degree $n-1$ whose graph passes through all points $P_i$, the Lagrange interpolation polynomial (see figure \ref{Hilbertschemefig}). 

%\vspace*{3.5cm}
\begin{figure}[h]
\centering
\includegraphics[height=4cm]{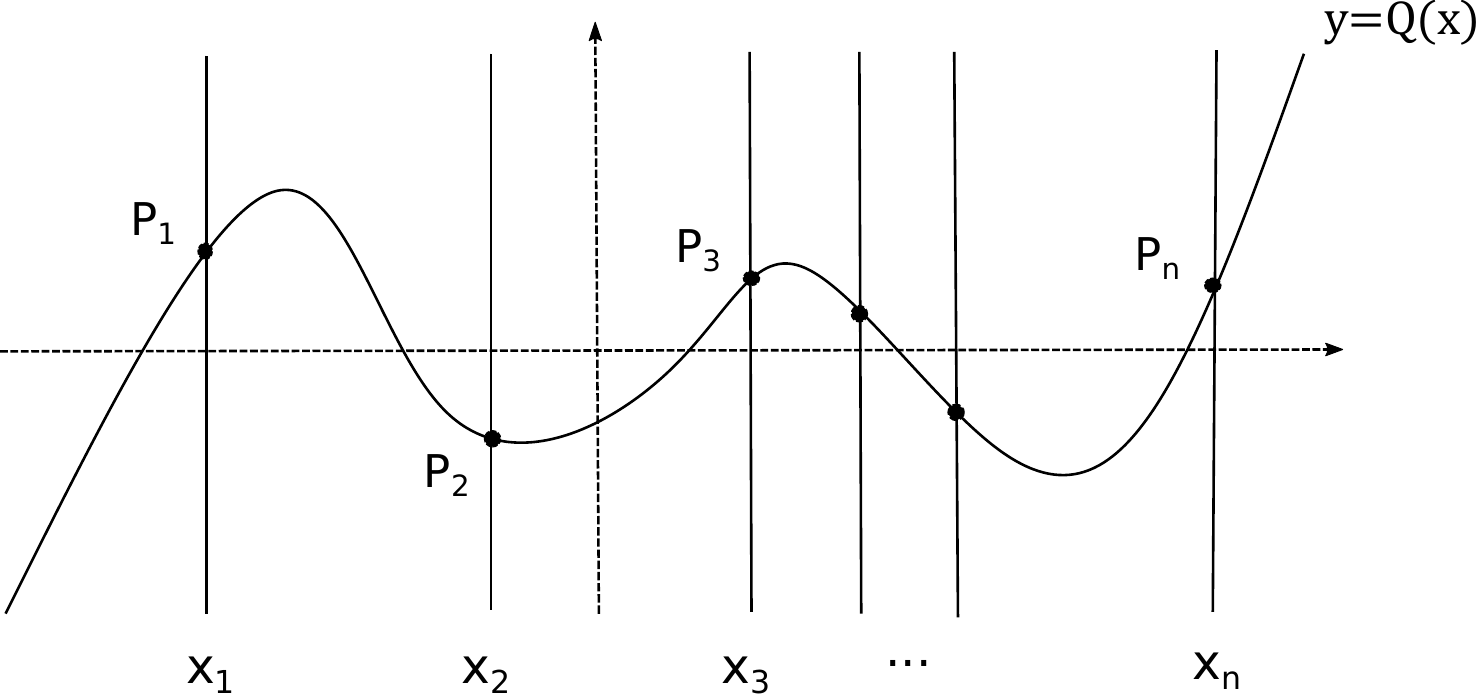}

\caption{Example of two generators}
\label{Hilbertschemefig}
\end{figure}

So we can choose $y=Q(x)$ to be in $I$. If we denote by $x_i$ the $x$-coordinate of the $i$-th point, we see that $\prod_i (x-x_i)$ is also in $I$. These two relations already determine the $n$ points. The ideal $I$ then has two generators and can be put into the form 
$$I=\left\langle x^n-t_1x^{n-1}-\cdots-t_n,-y+\mu_1+\mu_2x+...+\mu_nx^{n-1}\right\rangle.$$

\noindent A point in the zero-fiber of the Hilbert scheme is obtained by collapsing all $n$ points to the origin. In \cite{Iarrob}, it is shown that a generic point is obtained when the Lagrange interpolation polynomial $Q$ admits a limit (for example if all points glide along a given curve to the origin like in figure \ref{movingparticles} on the right). At the limit the constant term of $Q$, which is $\mu_1$, has to be zero. Since all $x_i$ become 0, all $t_i$ do as well. So we get an ideal of $\Hilb^n_0(\mathbb{C}^2)$ of the form
$$I=\left\langle x^n,-y+\mu_2x+...+\mu_nx^{n-1}\right\rangle.$$
There are other points in the zero-fiber if $n>2$. For instance in $\Hilb^3_0(\C^2)$ we find the ideal $\left\langle x^2,xy,y^2 \right\rangle$ which is not of the above form (it has three generators).

Notice that for $n=2$, the zero-fiber gives the projective line: $$\Hilb^2_0(\C^2) \cong \C P^1.$$

Roughly speaking, you can have the following picture in mind (see figure \ref{movingparticles}): moving around in the Hilbert scheme is the same as analyzing the movement of $n$ points in the plane. \textit{But whenever $k$ of them collide to a single point, the punctual Hilbert scheme retains an extra piece of information, which is the $(k-1)$-jet of the curve along which the $k$ points entered into collision.} For $k=2$, one can imagine one point fixed and the second hitting it. The Hilbert scheme retains the direction from which this second point came.

\begin{figure}[h]
\centering
\includegraphics[height=4cm]{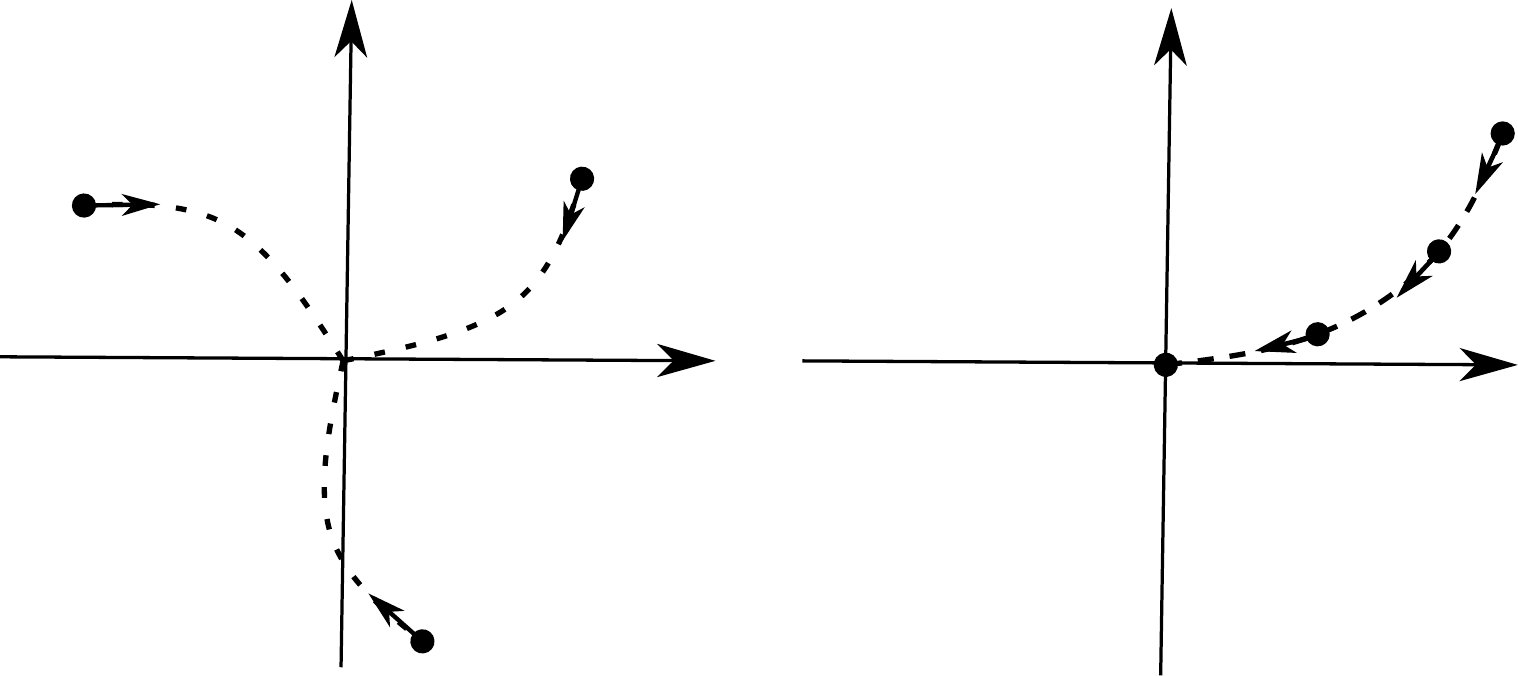}

\caption{Hilbert scheme as moving particles}
\label{movingparticles}
\end{figure}

\subsection{Structure}\label{structurehilbscheme}
We have the following theorem, due to Fogarty and Grothendieck (see \cite{Fogarty}, see also theorem 1.15 in Nakajima's book \cite{Nakajima}), giving the structure of the punctual Hilbert scheme:
\begin{thm}
The punctual Hilbert scheme $\Hilb^n(\mathbb{C}^2)$ is a smooth and irreducible variety of dimension $2n$. A generic point is given by an ideal defining $n$ distinct points in the plane.

The zero-fiber $\Hilb^n_0(\mathbb{C}^2)$ is an irreducible variety of dimension $n-1$, but it is in general not smooth. A generic element of the zero-fiber is of the form 
$$I=\left\langle x^n,-y+\mu_2x+...+\mu_nx^{n-1}\right\rangle.$$
\end{thm}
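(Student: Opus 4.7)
The plan is to prove the two halves of the theorem separately: first establish the structural properties of $\Hilb^n(\C^2)$ (smoothness, dimension, genericity of distinct-point configurations), and then restrict attention to the zero-fiber.

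\textbf{Step 1 (Smoothness and dimension of $\Hilb^n(\C^2)$).} I would begin with the standard deformation-theoretic identification of the Zariski tangent space at an ideal $I$:
\[
T_I\Hilb^n(\C^2) \;\cong\; \Hom_R(I, R/I), \qquad R = \C[x,y],
\]
arising because first-order deformations of $I$ inside $R$ are $R$-linear maps $I \to R/I$. The goal is to show (i) this $\Hom$-space has dimension exactly $2n$ at every $I$ and (ii) the obstruction group $\Ext^2_R(R/I, R/I)$ vanishes. The vanishing follows from $R$ being a two-dimensional regular ring: the minimal free resolution of $R/I$ has length $2$ and a self-duality argument (essentially local Serre duality, $\omega_R$ being trivial) kills the top Ext. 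The dimension computation can first be checked on the easy open subset of $n$ distinct points, where $R/I = \bigoplus_{i=1}^n \C_{P_i}$ and $\Hom_R(I/I^2, R/I)$ splits as $n$ copies of $T_{P_i}\C^2$, giving $2n$; upper-semicontinuity together with the obstruction vanishing then forces dimension $2n$ at every point, yielding smoothness.

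\textbf{Step 2 (Irreducibility and the generic ideal).} The key tool is the Hilbert--Chow morphism
\[
\pi : \Hilb^n(\C^2) \longrightarrow \Sym^n(\C^2),
\]
sending an ideal to its support counted with multiplicity. The target is irreducible of dimension $2n$, and over the open dense subset $U \subset \Sym^n(\C^2)$ of $n$-tuples of \emph{distinct} points, $\pi$ is an isomorphism (the unique preimage being the product of the $n$ maximal ideals). To conclude irreducibility of $\Hilb^n(\C^2)$ it remains to check that $\pi^{-1}(U)$ is dense; this follows by exhibiting every codimension-$n$ ideal as a flat limit of ideals of distinct points, which in turn is a consequence of Step 1 (smoothness forces connectedness of a neighborhood in which generic deformations split the scheme). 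Thus $\Hilb^n(\C^2)$ is irreducible of dimension $2n$, and the generic element has the claimed form.

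\textbf{Step 3 (The zero-fiber).} The zero-fiber $\Hilb^n_0(\C^2) = \pi^{-1}(n\cdot[0])$ is the scheme-theoretic fiber of $\pi$ over the most degenerate point. Its dimension is $n-1 = 2n - (n+1)$, which I would extract from a dimension count via the $\C^2$-translation action on $\Hilb^n(\C^2)$ that is free transversally to the zero-fibers of neighboring points. Irreducibility is the hardest point and is the theorem of Brian\c con: I would cite it and outline its strategy, constructing a dense open subscheme of $\Hilb^n_0(\C^2)$ smoothly parametrized by $(\mu_2,\dots,\mu_n)$ via the generators $x^n$ and $y - \mu_2 x - \cdots - \mu_n x^{n-1}$. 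This simultaneously identifies the generic element. Non-smoothness is exhibited by direct tangent space computations: e.g.\ at the monomial ideal $\langle x^2, xy, y^2\rangle \in \Hilb^3_0(\C^2)$ one verifies that $\dim \Hom_R(I, R/I)$ in the zero-fiber exceeds $n-1$.

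\textbf{Step 4 (Form of the generic element).} To justify the explicit two-generator form, I would run a Gröbner-basis argument with respect to the lexicographic order $y > x$: for generic $I \in \Hilb^n_0(\C^2)$ the initial ideal is $\langle x^n, y\rangle$, so $I$ admits a reduced Gröbner basis $\{x^n,\ y - \mu_2 x - \cdots - \mu_n x^{n-1}\}$, matching the statement.

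The main obstacle is undoubtedly Step 3: Brian\c con's irreducibility theorem is genuinely nontrivial and is not formally deducible from Steps 1--2; any self-contained proof requires an induction on $n$ using the stratification of $\Hilb^n_0(\C^2)$ by partitions of $n$ (according to the Hilbert function of $R/I$), and I would present it by citation rather than rederivation.
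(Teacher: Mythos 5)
The paper offers no proof of this theorem to compare against: it is quoted from Fogarty and Grothendieck, with Nakajima's book cited for smoothness and irreducibility and Iarrobino's paper for the generic form of a zero-fiber ideal. Your outline takes the classical Fogarty route (deformation theory plus the Hilbert--Chow morphism) rather than the commuting-matrix argument of Nakajima, which proves smoothness by checking that the equation $[B_1,B_2]=0$ is cut out transversally at stable (cyclic) triples and would sit more naturally beside the paper's ``matrix viewpoint''. Your treatment of the zero-fiber --- cite Brian\c con/Iarrobino for irreducibility and dimension, then read off the two-generator normal form from the lex-Gr\"obner chart --- is exactly the division of labour the paper itself makes, and Step 4 is correct as stated.

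There is, however, a genuine error in Step 1. The obstruction space for the Hilbert functor at $[R/I]$ is $\Ext^1_R(I,R/I)$, not $\Ext^2_R(R/I,R/I)$, and neither group vanishes: on affine space $\Ext^{\geq 1}_R(R,R/I)=0$, so the long exact sequence for $0\to I\to R\to R/I\to 0$ gives $\Ext^1_R(I,R/I)\cong\Ext^2_R(R/I,R/I)$, and local duality on the two-dimensional regular ring $R$ (trivial canonical module) gives $\Ext^2_R(R/I,R/I)\cong\Hom_R(R/I,R/I)^\vee\cong(R/I)^\vee$, which is $n$-dimensional. So the problem is \emph{not} unobstructed, and ``obstruction vanishing plus semicontinuity'' cannot force the dimension. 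What does work is computing the tangent space directly: the same long exact sequence gives $\dim\Hom_R(I,R/I)=\dim\Hom_R(R,R/I)-\dim\Hom_R(R/I,R/I)+\dim\Ext^1_R(R/I,R/I)$; since the Euler form of two finite-length modules over a two-dimensional regular ring vanishes and $\dim\Ext^2_R(R/I,R/I)=\dim\Hom_R(R/I,R/I)=n$, one gets $\dim\Ext^1_R(R/I,R/I)=2n$ and hence $\dim\Hom_R(I,R/I)=n-n+2n=2n$ at \emph{every} point. Constant tangent dimension $2n$, together with connectedness of $\Hilb^n(\C^2)$ --- which your Step 2 uses implicitly but never establishes (degenerate to monomial ideals by the torus action, or cite Hartshorne's connectedness theorem) --- and the existence of the $2n$-dimensional distinct-points component then yield smoothness and irreducibility simultaneously. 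With Step 1 repaired along these lines the rest of your outline goes through.
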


To get a better understanding about the structure of the punctual Hilbert scheme, we shortly describe how to get a set of charts. All the details are in Haiman's paper \cite{Haiman}.
An atlas of $\Hilb^n(\C^2)$ can be given in terms of Young diagrams. A \textbf{Young diagram} $D$ is a finite subset of $\mathbb{N}\times \mathbb{N}$ such that whenever $(i,j) \in D$ then the rectangle defined by $(i,j)$ and $(0,0)$ is entirely in $D$. We use matrix-like notation such that $(0,0)$ is in the upper left corner.
These diagrams play an important role for visualizing partitions. The set of all Young diagrams with $n$ squares is in bijection with the partitions of $n$. Indeed, given a Young diagram, you can read off the partition by adding the rows.

Now, to any Young diagram $D$, we can associate $$\mathcal{B}_D=\{x^iy^j \mid (i,j) \in D\}$$ a subset of the standard basis of the polynomial ring $\mathbb{C}\left[x,y \right]$. We then set 
$$U_D=\{I \in \Hilb^n(\mathbb{C}^2) \mid \mathcal{B}_D \text{ spans } \mathbb{C}\left[x,y \right]/I \}.$$
These $U_D$ are open affine subvarieties covering $\Hilb^n$.

Given an ideal $I \in \Hilb^n(\C^2)$, there is a simple algorithm to get a chart which covers $I$ (see also figure \ref{running-columns}): Since $1 \notin I$, we can take 1 as the first basis vector of $\mathbb{C}\left[x,y \right]/I$. Then, you run through $\left[0,n\right]\times \left[0,n\right]$ by columns, starting at $(0,0)$. Every time the vector $x^iy^j$ is linearly independent from those visited before, select it as an element for our basis. If it is not, then you get a relation and you can jump to the next column. 
The set of all relations is a generating set for $I$. See subsection \ref{hilbertschemesympl} for a description of coordinates due to Haiman.

\begin{figure}[h]
\centering

\begin{tikzpicture}[scale=0.4]

\begin{scope}[xshift=10cm, yshift=20cm]

\draw (0,0)--(8,0);
\draw (0,-2)--(8,-2);
\draw (0,-4)--(4,-4);
\draw (0,-6)--(4,-6);
\draw (0,-8)--(2,-8);

\draw (0,0)--(0,-8);
\draw (2,0)--(2,-8);
\draw (4,0)--(4,-6);
\draw (6,0)--(6,-2);
\draw (8,0)--(8,-2);

\draw [thick] (0,0)--(8,0)--(8,-2)--(4,-2)--(4,-6)--(2,-6)--(2,-8)--(0,-8)--cycle;

\draw [dotted] (8,0)--(10,0);
\draw [dotted] (8,-2)--(10,-2);
\draw [dotted] (4,-4)--(10,-4);
\draw [dotted] (4,-6)--(10,-6);
\draw [dotted] (2,-8)--(10,-8);
\draw [dotted] (0,-10)--(10,-10);
\draw [dotted] (0,-8)--(0,-10);
\draw [dotted] (2,-8)--(2,-10);
\draw [dotted] (4,-6)--(4,-10);
\draw [dotted] (6,-2)--(6,-10);
\draw [dotted] (8,-2)--(8,-10);
\draw [dotted] (10,0)--(10,-10);

\draw (1,-9) node {$\star$}; 
\draw (3,-7) node {$\star$}; 
\draw (5,-3) node {$\star$}; 
\draw (7,-3) node {$\star$}; 
\draw (9,-1) node {$\star$}; 

\draw (1,-1) node {$1$}; 
\draw (3,-1.12) node {$x$}; 
\draw (5,-1) node {$x^2$}; 
\draw (7,-1) node {$x^3$}; 
\draw (1,-3.12) node {$y$}; 
\draw (3,-3.12) node {$xy$}; 
\draw (1,-5) node {$y^2$}; 
\draw (3,-5) node {$xy^2$}; 
\draw (1,-7) node {$y^3$}; 
\end{scope}

\end{tikzpicture}
\caption{Chart for Hilbert scheme}
\label{running-columns}
\end{figure}

There are several equivalent ways to look on the punctual Hilbert scheme. We next discuss three of them: a blow-up of the configuration space, the set of pairs of commuting matrices and a dual viewpoint.

\subsection{Resolution of singularities*}\label{resofsing}

When we consider the set of $n$ points as an algebraic variety, we do not see their order. So we see them as a point in the configuration space of (not necessarily distinct) $n$ points, which is the quotient of $(\mathbb{C}^2)^n$ by the symmetric group $\mathcal{S}_n$. This quotient space, denoted by $\Sym^n(\mathbb{C}^2)$, is singular since the action of the symmetric group is not free.
%(and $\mathcal{S}_n$ does not act by pseudo-reflections, see Chevalley-Shephard-Todd theorem). 
There is a map, called the \textbf{Chow map}, from $\Hilb^n(\mathbb{C}^2)$ to $\Sym^n(\mathbb{C}^2)$ which associates to an ideal $I$ its support (the algebraic variety associated with $I$, i.e. a collection of points with multiplicities).
\begin{thm}
The punctual Hilbert scheme is a minimal resolution of the configuration space . In addition, you can get the punctual Hilbert scheme as a successive blow-up of the configuration space.
\end{thm}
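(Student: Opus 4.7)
The plan is to establish first that the Chow map $ch:\Hilb^n(\C^2)\to \Sym^n(\C^2)$ defines a resolution of singularities, and then address the minimality and iterated blow-up assertions separately.

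First I would check that $ch$ is well-defined, proper, and birational. Well-definedness is clear: to an ideal $I$ of codimension $n$ we associate its support counted with multiplicities, i.e.\ the cycle $\sum_P \dim_{\C}(\mc{O}_P/I_P)\,[P]$, which is an element of $\Sym^n(\C^2)$. Properness follows from the projectivity of $\Hilb^n(\C^2)$ over $\Sym^n(\C^2)$ (which in turn follows from $\Hilb^n$ being constructed inside a Grassmannian of quotients of $\C[x,y]/\mathfrak{m}^N$ for large $N$ on each affine piece of $\Sym^n$). Birationality reduces to the observation that on the open locus $U\subset \Sym^n(\C^2)$ of \emph{distinct} $n$-tuples $\{P_1,\ldots,P_n\}$ the radical ideal $I=\bigcap \mathfrak{m}_{P_i}$ is the unique element of $\Hilb^n(\C^2)$ with that support, so $ch^{-1}(U)\to U$ is bijective. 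Combined with Fogarty's theorem (cited in the previous subsection) that $\Hilb^n(\C^2)$ is smooth and irreducible of dimension $2n$, and the fact that $\Sym^n(\C^2)$ is singular along the big diagonal (since the $\mc{S}_n$-action has non-trivial stabilizers there), this shows $ch$ is a resolution.

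Next, for minimality I would use the symplectic structure. The plane $\C^2$ carries the standard holomorphic symplectic form $\omega=dx\wedge dy$, which induces a holomorphic symplectic form on $(\C^2)^n$ and descends to a form on the smooth locus of $\Sym^n(\C^2)$. The content to verify (which I would address in the subsequent section on the hyperkähler structure of $\Hilb^n$) is that $ch$ pulls this form back to a symplectic form on all of $\Hilb^n(\C^2)$; in particular $ch$ is \emph{crepant}, i.e.\ $K_{\Hilb^n(\C^2)/\Sym^n(\C^2)}=0$. General principles of the minimal model program then imply that a crepant resolution of a variety with symplectic singularities admits no divisorial contraction to a smooth variety, hence is minimal in the birational sense: any other resolution $X'\to \Sym^n(\C^2)$ factors (birationally) through $\Hilb^n(\C^2)$.

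Finally, for the iterated blow-up description, I would argue by induction on $n$. For $n=2$ one has $\Sym^2(\C^2)=(\C^2\times \C^2)/\mc{S}_2$, and the singular locus is the image of the diagonal $\Delta\subset \C^2\times \C^2$; a direct computation in the charts $U_D$ from subsection \ref{structurehilbscheme} (taking $D=\{(0,0),(1,0)\}$ and $D=\{(0,0),(0,1)\}$) shows that $\Hilb^2(\C^2)$ is the blow-up of $\Sym^2(\C^2)$ along this singular locus, since the exceptional fiber $\Hilb^2_0(\C^2)\cong \C P^1$ parametrizes precisely the normal directions along which two points may collide. For general $n$ the idea is to stratify $\Sym^n(\C^2)$ by partitions $\pi\vdash n$ (the stratum $\Sigma_\pi$ being the locus of cycles with multiplicity type $\pi$), and to successively blow up the closed strata in increasing depth order. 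The main obstacle here, which I would flag rather than carry out in detail, is that a naive iterated blow-up of the diagonals in $(\C^2)^n$ does not immediately descend to $\Sym^n(\C^2)$ nor agree scheme-theoretically with $\Hilb^n$: one must pass through the isospectral Hilbert scheme and use Haiman's identification (the $n!$-theorem) to match the resulting variety with $\Hilb^n(\C^2)$. For the purposes of this section I would state the iterated blow-up description as a consequence of this identification and refer to the literature (Haiman, Nakajima) for the finer scheme-theoretic statement.
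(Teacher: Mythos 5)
Your proposal is correct in substance, and it is considerably more detailed than what the paper actually does: the paper's ``proof'' of this theorem is a pure citation, namely that Fogarty proved $\Hilb^n(\C^2)$ is smooth and birational to $\Sym^n(\C^2)$ (which gives the resolution) and that Haiman proved the blow-up description. Your route agrees with the paper in deferring the two hard inputs (Fogarty's smoothness theorem and Haiman's blow-up theorem) to the literature, but you add genuine content on top: a self-contained verification that the Chow map is proper and birational over the locus of distinct points, an explicit check of the $n=2$ case in the Young-diagram charts, and a crepancy argument for minimality via the holomorphic symplectic form. The last point is a nice anticipation of material the paper only develops later (Beauville's theorem in the subsection on the hyperkähler structure), and it is the honest way to give ``minimal'' a precise meaning here, since the paper's gloss of minimality as a universal property is never justified.

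One step you should be more careful about: from crepancy you conclude that ``any other resolution $X'\to\Sym^n(\C^2)$ factors (birationally) through $\Hilb^n(\C^2)$.'' As written this is either vacuous (any two resolutions of the same variety are birational to each other) or an overstatement: in dimension $>2$ a crepant resolution does not in general enjoy the surface-style universal property that every resolution factors through it by a \emph{morphism}, and crepant resolutions of symplectic singularities are in general only unique up to flops. What crepancy does give you is that $\Hilb^n(\C^2)$ is a minimal model over $\Sym^n(\C^2)$ (relative canonical class trivial, hence nef), and for $\Sym^n(\C^2)$ one can in fact show uniqueness of the symplectic resolution; but that requires a separate argument and is not a formal consequence of ``no divisorial contraction.'' Since the paper itself leaves ``minimal'' at the level of a slogan, this does not invalidate your proof, but you should either weaken the conclusion to ``crepant, hence a relative minimal model'' or cite the uniqueness of symplectic resolutions explicitly.
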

A minimal resolution of a singular algebraic space $X$ is the ``smallest'' desingularization, i.e. a smooth space with a surjective map to $X$ satisfying a universal property. Fogarty proved in \cite{Fogarty} that $\Hilb^n(\C^2)$ is smooth and birational to $\Sym^n(\C^2)$. Haiman in \cite{Haiman} proved the blow-up, see also the account of Bertin \cite{Bertin}.

\begin{Remark}
In order to get a feeling of what happens in a general Lie algebra, notice that $n$ points of $\mathbb{C}^2$ is the same as two points in the Cartan $\mathfrak{h}$ of $\mathfrak{gl}_n$, and that the symmetric group is the Weyl group $W$ of $\mathfrak{gl}_n$. So the configuration space equals $\h^2/W$ for $\g=\mf{gl}_n$. This will be helpful in part \ref{part3}.
\hfill $\triangle$
\end{Remark}

\subsection{Matrix viewpoint}\label{hilbmatrixviewpoint}

To an ideal $I$ of codimension $n$, we can associate two matrices: the multiplication operators $M_x$ and $M_y$, acting on the quotient $\C[x,y]/I$ by multiplication by $x$ and $y$ respectively. To be more precise, we can associate a conjugacy class of the pair: $[(M_x,M_y)]$.

The two matrices $M_x$ and $M_y$ commute and they admit a cyclic vector, the image of $1 \in \C[x,y]$ in the quotient (i.e. 1 under the action of both $M_x$ and $M_y$ generate the whole quotient).

\begin{prop}[Matrix viewpoint]\label{bijhilbert}
There is a bijection between the Hilbert scheme and conjugacy classes of certain commuting matrices: $$\Hilb^n(\mathbb{C}^2) \cong \{(A,B) \in \mf{gl}_n^2 \mid [A,B]=0, (A,B) \text{ admits a cyclic vector}\} / GL_n.$$
\end{prop}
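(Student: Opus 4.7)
The plan is to produce explicit maps in both directions and verify they are mutual inverses, the content being essentially standard dictionary between quotient rings and linear endomorphisms.

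For the forward map, starting from $I \in \Hilb^n(\mathbb{C}^2)$, I would set $V_I := \mathbb{C}[x,y]/I$, an $n$-dimensional vector space by definition of the Hilbert scheme. On $V_I$, multiplication by $x$ and by $y$ gives two linear operators $M_x$ and $M_y$. They commute because multiplication in $\mathbb{C}[x,y]$ is commutative, and the class $[1] \in V_I$ is cyclic since $[P] = P(M_x, M_y)[1]$ for every $[P] \in V_I$. Choosing a basis of $V_I$ converts the pair into actual matrices in $\mf{gl}_n^2$, unique up to simultaneous $\GL_n$-conjugation. This defines the map $\Hilb^n(\mathbb{C}^2) \to \{[(A,B)]\}$.

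For the inverse, given a pair $(A,B)$ of commuting matrices with a cyclic vector $v \in \mathbb{C}^n$, the assignment $\varphi_v : \mathbb{C}[x,y] \to \mathbb{C}^n$, $P \mapsto P(A,B)v$, is a well-defined algebra homomorphism (it is here that commutativity $[A,B]=0$ is essential, so that evaluating polynomials makes sense) and is surjective by cyclicity. The ideal $I := \ker \varphi_v$ therefore has codimension $n$, defining an element of $\Hilb^n(\mathbb{C}^2)$. I need two well-definedness checks: independence of the cyclic vector and independence of the representative of the conjugacy class. The second is immediate, since replacing $(A,B,v)$ by $(gAg^{-1}, gBg^{-1}, gv)$ multiplies $\varphi_v$ by $g$ and leaves the kernel unchanged. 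For the first, if $v,v'$ are both cyclic and $P(A,B)v = 0$, then for every $Q$ one has $P(A,B)Q(A,B)v = Q(A,B)P(A,B)v = 0$ using $[A,B]=0$; as the $Q(A,B)v$ span $\mathbb{C}^n$, this forces $P(A,B)=0$ and in particular $P(A,B)v'=0$. By symmetry $\ker \varphi_v = \ker \varphi_{v'}$.

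Finally I would check that the two constructions are mutually inverse. Starting from $I$ and building $(M_x, M_y)$ on $V_I$ with cyclic vector $[1]$, the evaluation map is $\varphi_{[1]}(P) = P(M_x,M_y)[1] = [P]$, whose kernel is exactly $I$. Starting from $(A,B,v)$ and producing $I = \ker \varphi_v$, the induced isomorphism $\mathbb{C}[x,y]/I \xrightarrow{\sim} \mathbb{C}^n$, $[P] \mapsto P(A,B)v$, intertwines multiplication by $x$ on $V_I$ with left multiplication by $A$ on $\mathbb{C}^n$, and similarly for $y$ and $B$, so the pair $(M_x, M_y)$ is conjugate to $(A,B)$.

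The only substantive step is the independence of the cyclic vector in the inverse map, where commutativity must genuinely be used; the rest is routine book-keeping.
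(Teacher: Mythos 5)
Your proposal is correct and follows the same route as the paper: multiplication operators $M_x,M_y$ on $\mathbb{C}[x,y]/I$ with cyclic vector $[1]$ in one direction, and the annihilator ideal of the pair in the other (your $\ker\varphi_v$ coincides with the paper's $\{P \mid P(A,B)=0\}$ precisely by your cyclic-vector-independence argument). The paper only sketches these constructions and defers details to Nakajima, so your verification of well-definedness and of the two maps being mutually inverse supplies exactly the omitted routine steps.
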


The inverse construction goes as follows: to a conjugacy class $[(A,B)]$, associate the ideal $I=\{P \in \C[x,y] \mid P(A,B)=0\}$, which is well-defined and of codimension $n$ (using the fact that $(A,B)$ admits a cyclic vector). For more details see \cite{Nakajima}.

Notice that the \textit{zero-fiber of the Hilbert scheme corresponds to nilpotent commuting matrices}.

Let us see how our coordinates for generic points $(t_i, \mu_i)_{1\leq i\leq n}$ are linked to the matrix point of view: generically the set $\mathcal{B}=(1,x,x^2,...,x^{n-1})$ forms a basis of the quotient $\C[x,y]/I$. In that generic case, the matrix of $M_x$ is a companion matrix:
$$M_x =  \begin{pmatrix} 
&  & & & t_n\\
1 &   & & & t_{n-1}\\
& 1 &  &  & t_{n-2} \\
 & & \ddots & & \vdots\\
 &  &  & 1 & t_1 
\end{pmatrix}.$$
It is uniquely determined by the equation $x\times x^{n-1} = x^n = \sum_{i=1}^n t_ix^{n-1}$.

The matrix of $M_y$ is now uniquely determined by its first column, i.e. by $y\times 1=\sum_{i=1}^{n-1} \mu_i x^{i-1}$. Indeed, we get the image of the basis $\mc{B}$ using the commutativity: the second column, the image of $x$, is given by $$xy = x\sum_{i=1}^{n-1} \mu_i x^{i-1} = \sum_{i=1}^{n-1} \mu_i x^{i} + \mu_n \sum_{i=1}^n t_ix^{n-1}.$$
The $k$\textsuperscript{th} column can be computed by $yx^{k-1}=x^{k-1}\sum_{i=1}^{n-1} \mu_i x^{i-1}$ where you have to express $x^m$ for $m\geq n$ in the basis $\mc{B}$. The first three columns are:
\begin{equation}\label{matrixtwo}
M_y =  \begin{pmatrix} 
\mu_1& \mu_n t_n & \mu_{n-1}t_n& \cdots \\
\mu_2 & \mu_1+\mu_nt_{n-1}  & \mu_n t_n+\mu_{n-1}t_{n-1} &\cdots \\
\mu_3& \mu_2+\mu_nt_{n-2} & \mu_1+\mu_n t_{n-1}+\mu_{n-1}t_{n-2}& \cdots \\
\vdots & \vdots & \vdots &\cdots \\
\mu_n & \mu_{n-1}+\mu_nt_1 & \mu_{n-2}+\mu_n t_2+\mu_{n-1}t_{1}& \cdots 
\end{pmatrix}.
\end{equation}
Another way to compute $M_y$ is to use the Lagrange interpolation polynomial $y=Q(x)$: $$M_y=M_{Q(x)}=Q(M_x)=\mu_1\id+\mu_2M_x+...+\mu_nM_x^{n-1}.$$

\subsection{Dual viewpoint*}\label{ddual}

In this subsection, we describe a dual viewpoint of the punctual Hilbert scheme using a pairing on $\C[x,y]$.

\medskip \noindent
We start with the definition of the pairing:
$$\left( P, Q \right) := P\left(\frac{\partial}{\partial x},\frac{\partial}{\partial y}\right).Q \Big|_{x=y=0}$$ where the little point means ``applied to''.

Computing the pairing in the standard basis $\{x^ny^m \mid n,m \in \mathbb{N}\}$, we get $\left( x^ny^m, x^{n'}y^{m'}\right) = n!m!\delta_{n,n'}\delta_{m,m'}.$
Thus, we see that the pairing is nothing else than the standard inner product of $\mathbb{R}\left[x,y \right]$ with weights $n!m!$ for $x^ny^m$ extended by $\mathbb{C}$-bilinearity. This shows in particular that $\left( ., . \right)$ is symmetric and non-degenerate.

Once we have a pairing, we can define the orthogonal complement $S^{\perp}$ of any subset $S$ of $\mathbb{C}\left[x,y \right]$. In the case where $S$ is an ideal, its orthogonal has special properties:

\begin{prop}
Let $I$ be an ideal of $\mathbb{C}\left[x,y \right]$. Then $I^{\perp}$ is a vector space stable under derivation and translation.
\end{prop}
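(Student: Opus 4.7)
The plan rests on a single adjunction identity: under the pairing $(P,Q) = P(\partial_x,\partial_y).Q\,|_{x=y=0}$, multiplication by $x$ is adjoint to $\partial_x$, and multiplication by $y$ is adjoint to $\partial_y$. First I would verify this by a direct computation: for any $P,Q \in \C[x,y]$, the operator $(xP)(\partial_x,\partial_y)$ equals $\partial_x \circ P(\partial_x,\partial_y)$, hence
\[
(xP,Q) = \partial_x\bigl[P(\partial_x,\partial_y)Q\bigr]\Big|_0 = P(\partial_x,\partial_y)(\partial_x Q)\Big|_0 = (P,\partial_x Q),
\]
and symmetrically $(yP,Q) = (P,\partial_y Q)$. (One could alternatively read this off from the fact that in the monomial basis the pairing is $(x^ny^m, x^{n'}y^{m'}) = n!m!\delta_{nn'}\delta_{mm'}$.) The vector space structure of $I^\perp$ is immediate from the bilinearity of the pairing, so there is nothing to do there.

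Stability under derivation is then a direct consequence of $I$ being an ideal, which is where the hypothesis actually enters. Fix $Q \in I^\perp$. For any $P \in I$, one has $xP \in I$, hence $(P,\partial_x Q) = (xP,Q) = 0$; since this holds for all $P \in I$, we conclude $\partial_x Q \in I^\perp$. The same argument with $y$ in place of $x$ gives $\partial_y Q \in I^\perp$. Iterating, every partial derivative $\partial_x^i\partial_y^j Q$ lies in $I^\perp$.

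Stability under translation follows by expanding along the Taylor series. For $(a,b) \in \C^2$ and $Q \in \C[x,y]$ (a polynomial, so the sum below is finite),
\[
Q(x+a, y+b) = \sum_{i,j \geq 0} \frac{a^i b^j}{i!\,j!}\,\partial_x^i\partial_y^j Q(x,y).
\]
Each summand lies in $I^\perp$ by the preceding step, and $I^\perp$ is a vector space, so the translate lies in $I^\perp$ as well. I expect no real obstacle: the only thing to be careful about is that no convergence question arises, precisely because $Q$ is a polynomial. The conceptual content is entirely in the adjunction $(xP,Q)=(P,\partial_x Q)$, which converts the ideal property of $I$ (stability under multiplication) into the derivation stability of $I^\perp$.
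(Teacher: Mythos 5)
Your proposal is correct and follows essentially the same route as the paper: the adjunction $(xP,Q)=(P,\partial_x Q)$ is exactly the special case $Q=x$ of the paper's key identity $(PQ,R)=(P,(Q,R))$, and both arguments then convert the ideal property of $I$ into derivation-stability of $I^\perp$ and deduce translation-invariance from the (finite) Taylor expansion $P(x+a,y+b)=\exp(a\partial_x+b\partial_y).P$. No gaps.
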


\begin{proof}
For any subset $S$, it is easy to check that $S^{\perp}$ is a vector space, using the $\mathbb{C}$-bilinearity of the pairing. For the invariance, notice the following fundamental identity: 
\begin{equation}\label{pairing}
\left( PQ, R \right) = \left( P, \left(Q, R\right) \right).
\end{equation}
Thus, if $P$ is an element of $I$, $Q$ any polynomial and $R$ in $I^\perp$, we get that $\left(Q,R\right) = Q(\frac{\partial}{\partial x},\frac{\partial}{\partial y}).R $ also belongs to $I^\perp$. Therefore $I^{\perp}$ is stable under derivation. 
Finally, since $$P(x+a,y+b) = \exp\left(a\frac{\partial}{\partial x}+b\frac{\partial}{\partial y}\right).P(x,y) $$ we see that $I^\perp$ is also invariant under all translations.
\end{proof}

\begin{Remark}
The invariance of $I^\perp$ under translation shows that $I^\perp$ is a \textbf{subcoalgebra} of $\mathbb{C}\left[x,y \right]$ in the following sense: If $P \in I^{\perp}$, we have $\Delta P \in I^{\perp}\otimes I^{\perp}$ where $\Delta P (x_1,y_1,x_2,y_2)=P(x_1+x_2,y_1+y_2)$ is the dual operation to addition.
\hfill $\triangle$
\end{Remark}

\noindent Now, we can explicitly describe the orthogonal of the zero-fiber of the punctual Hilbert scheme, defined by taking the orthogonal to every ideal $I \in \Hilb^n_0$:
\begin{prop}
The orthogonal of $\Hilb^n_0(\mathbb{C}^2)$ is the space of all vector subspaces of $\mathbb{C}\left[x,y \right]$ of dimension $n$ which are invariant under translations. The same holds true when you replace ``translation'' by ``derivation''.
\end{prop}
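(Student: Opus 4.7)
The plan is to establish $I \mapsto I^\perp$ as a bijection between $\Hilb^n_0(\C^2)$ and $n$-dimensional derivation-invariant (equivalently translation-invariant) subspaces of $\C[x,y]$. This is a form of Macaulay's inverse systems duality.

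The crucial observation, which carries most of the argument, is that any ideal $I$ of codimension $n$ supported at $0$ contains some power $(x,y)^m$ of the maximal ideal (the image of $(x,y)$ in the finite-dimensional quotient $\C[x,y]/I$ is nilpotent). This localizes the orthogonality problem to the finite-dimensional space $\C[x,y]_{<m}$ of polynomials of degree less than $m$, where the standard basis $\{x^iy^j\}$ is orthogonal with non-zero norms $i!j!$, making the restricted pairing non-degenerate. In particular $((x,y)^m)^\perp = \C[x,y]_{<m}$, since $(x^ay^b, R)=a!b![x^ay^b]R$ vanishes as soon as $a+b \geq m$ and $\deg R < m$.

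From this, the forward direction and injectivity follow cleanly: $I^\perp \subset \C[x,y]_{<m}$, and $I^\perp$ coincides with the orthogonal of $W := I/(x,y)^m \subset \C[x,y]_{<m}$ in the non-degenerate restricted pairing, yielding $\dim I^\perp = n$ and $I^{\perp\perp} = W + (x,y)^m = I$. Invariance of $I^\perp$ under derivation and translation is the content of the previous proposition.

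For surjectivity, given a finite-dimensional derivation-invariant subspace $V$ of dimension $n$, I would first note that $V \subset \C[x,y]_{\leq d}$ for some $d$ (the maximum degree of elements of any basis), whence $V^\perp \supset (x,y)^{d+1}$, so $V^\perp$ has support at $0$. The ideal property of $V^\perp$ I would deduce from the identity $(PQ,R)=(P,(Q,R))$ together with $Q(\partial).R \in V$ when $R \in V$, since $Q(\partial)$ is a constant-coefficient polynomial in $\partial_x, \partial_y$ and $V$ is derivation-invariant. The same finite-dimensional duality, applied in reverse to $V$ inside $\C[x,y]_{<d+1}$, gives $(V^\perp)^\perp = V$ and codimension $n$. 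Equivalence of translation- and derivation-invariance for finite-dimensional subspaces is a standard limit/Taylor argument: derivation is a limit of translations (staying in $V$ by closedness, automatic in finite dimension), while translation is a finite Taylor sum of derivatives (finite precisely because elements of $V$ have bounded degree). The main obstacle will be establishing the containment $(x,y)^m \subset I$; once secured, the whole bijection reduces to non-degenerate pairings on finite-dimensional spaces.
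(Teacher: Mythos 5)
Your proposal is correct and follows essentially the same route as the paper: both arguments hinge on the containment of a power of the maximal ideal in $I$ (so that the pairing question reduces to a non-degenerate pairing on a finite-dimensional truncation, giving $\dim I^\perp = n$ and biduality), use the previous proposition for invariance, and in the converse direction use the identity $\left(PQ,R\right)=\left(P,\left(Q,R\right)\right)$ to show $J^\perp$ is an ideal supported at the origin. Your write-up is merely more explicit than the paper's about the biduality $I^{\perp\perp}=I$ and about the two directions of the translation/derivation equivalence.
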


\begin{proof}
The orthogonal complement sends vector spaces of codimension $n$ to vector spaces of dimension at most $n$. In fact, if we work in the ring of formal power series $\mathbb{C}\left[\left[x,y\right]\right]$ then the orthogonal is of dimension exactly $n$. But for the zero-fiber $\Hilb^n_0$, we cut at level $n$, that is $\left\langle x,y\right\rangle^n=0$. Thus, for $I \in \Hilb^n_0(\mathbb{C}^2)$, we see that $I^\perp$ is of dimension $n$ and by the previous proposition is invariant under translations and derivations. Conversely, if $J$ is an $n$-dimensional vector space invariant under all translations, it is in particular invariant under all derivations (=infinitesimal translations). Then formula \eqref{pairing} shows that $J^\perp$ is an ideal. Finally, since $J$ is finite-dimensional, there is an integer $m$ such that $\left\langle x,y\right\rangle^m \subset J^\perp$ showing that $J^\perp$ is supported on 0.
\end{proof}

Since $\left( ., . \right)$ is an inner product, we can identify the Hilbert scheme with the space of translation-invariant finite-dimensional subspaces.

%We can easily describe the link to the associated Young diagram to $I$ (given by the proof of proposition 1): the length of the first column of the Young diagram corresponds to the maximal degree of $x$ appearing in a monomial of $I^\perp$. In fact, a monomial with maximal degree in $x$ corresponds to the last square of the first column and the other squares of the first column correspond to derivatives of that monomial with respect to $x$. The second column corresponds to the monomial with maximal degree in $x$ which is linear independent to the first one, etc.

The most important example is the dual of $I=\langle p^n, -\bar{p}+\mu_2p+...+\mu_np^{n-1}\rangle$, ideal of $\C[p,\bar{p}]$. The dual $I^{\perp}$ by the definition of the pairing is the space of all polynomials $P$ solving the system of differential equations
$$ \left \{\begin{array}{cl}
0 &= \; \del^n P \\
0 &= \; (-\delbar+\mu_2\del+...+\mu_n\del^{n-1})P.
\end{array}\right. $$

\section{Higher complex structures}\label{Highercomplexsection}

In this section, we define the higher complex structure using the punctual Hilbert scheme and explore its main properties. In order to define a moduli space of higher complex structures, we need to enlarge the group of diffeomorphisms of $\Sigma$ to the space of hamiltonian diffeomorphisms of the cotangent space $T^*\Sigma$ preserving the zero section. We then explore the local and global theory of that new structure.

\subsection{Definition and basic properties}\label{basichighercomplex}

In section \ref{Complexstructures}, we saw that a complex structure on a surface $\Sigma$ is uniquely given by a section $\sigma$ of $\mathbb{P}(T^{*\mathbb{C}}\Sigma)$, the (pointwise) projectivized complexified cotangent space, such that at any point $z \in \Sigma$, $\sigma(z)$ and $\bar{\sigma}(z)$ are linearly independent.
In the previous section, we saw that the projectivization is a special case of the zero-fiber of the punctual Hilbert scheme for $n=2$: $\mathbb{P}(T^{*\mathbb{C}}\Sigma) = \Hilb^2_0(T^{*\mathbb{C}}\Sigma)$. It is now easy to guess the generalization. 

\begin{definition}\label{highercomplexdef}
A \textbf{higher complex structure} of order $n$ on a surface $\Sigma$, in short $\mathbf{n}$\textbf{-complex structure}, is a section $I$ of $\Hilb^n_0(T^{*\mathbb{C}}\Sigma)$ such that at each point $z\in \S$ we have that the sum $I(z)+\overline{I}(z)$ is the maximal ideal supported at zero of $T_z^{*\mathbb{C}}\Sigma$.
\end{definition}

\begin{Remark} The space $\Hilb^n(T^{*\mathbb{C}}\Sigma)$ is the pointwise application of $\Hilb^n$ to $T^{*\mathbb{C}}_z\Sigma$ at every point $z$ of $\Sigma$. So it is a bundle of Hilbert schemes. We call it \textbf{Hilbert scheme bundle}. 
\end{Remark}

\noindent For $n=2$, the condition that $I + \overline{I}$ is maximal simply reads $\mu_2\bar{\mu}_2 \neq 1$ which is exactly the condition on the Beltrami differential (see section \ref{Complexstructures}). So we recover the complex structure.

In order to write our structure in coordinates, we fix a reference complex structure on $\S$ given by local coordinates $(z, \bar{z})$. We stress that the higher complex structure is \emph{independent} of this choice, only our coordinates depend on it. The linear coordinates on $T^{*\C}\S$ induced by $(z, \bar{z})$ are denoted by $(p, \bar{p})$. We can identify $p=\frac{\del}{\del z}$ and $\bar{p}=\frac{\del}{\del \bar{z}}$.

In the previous section, we saw that not all points in the zero-fiber can be written in the form of a Lagrange interpolation polynomial passing through the origin. Another important consequence of the extra condition is that it rules out non-generic ideals:

\begin{prop}\label{genericideal}
For an $n$-complex structure $I$, we can write at a point $z$ either $I(z, \bar{z})$ or its conjugate $\overline{I}(z, \bar{z})$ as $$\left\langle p^n, -\bar{p}+\mu_2(z,\bar{z})p+...+\mu_n(z,\bar{z})p^{n-1}  \right\rangle  \text{ with } \mu_2\bar{\mu}_2<1.$$ 
\end{prop}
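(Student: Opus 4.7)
The plan is to extract the structure of $I$ from the associated local Artinian ring $R := \C[p, \bar{p}]/I$, which is automatically local because $I$ is supported at the origin. The central invariant is the embedding dimension $e := \dim_{\C} \mathfrak{n}/\mathfrak{n}^2$ of $R$, where $\mathfrak{n} = \mathfrak{m}/I$ is the maximal ideal. Since $R$ is a quotient of a $2$-dimensional regular local ring we have $e \leq 2$, and since $R$ has length $n \geq 2$ Nakayama forces $\mathfrak{n} \neq \mathfrak{n}^2$, hence $e \geq 1$. The bridge to the hypothesis $I + \overline{I} = \mathfrak{m}$ is the ``space of linear parts'' $V_I \subseteq \mathfrak{m}/\mathfrak{m}^2 = \C p \oplus \C \bar{p}$, defined as the image of $I \cap \mathfrak{m}$. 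A routine identification yields $\mathfrak{n}/\mathfrak{n}^2 \cong (\mathfrak{m}/\mathfrak{m}^2)/V_I$, so $\dim V_I + e = 2$, forcing $\dim V_I \in \{0, 1\}$. Since $V_{I + \overline{I}} = V_I + V_{\overline{I}}$, the hypothesis becomes $V_I + V_{\overline{I}} = \mathfrak{m}/\mathfrak{m}^2$; combined with the bound, both $V_I$ and $V_{\overline{I}}$ must be $1$-dimensional and transverse lines.

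From this the presentation should drop out. A $1$-dimensional $V_I \subseteq \C p \oplus \C \bar{p}$ has the form $\C(\bar{p} - \mu p)$ for a unique $\mu \in \C P^1$. In the case $\mu \in \C$ the element $p$ lies outside $V_I$, so its image generates the $1$-dimensional space $\mathfrak{n}/\mathfrak{n}^2$; by Nakayama, $p$ alone generates $\mathfrak{n}$, and the induced surjection $\C[p] \twoheadrightarrow R$ has kernel $(p^n)$ by a length count. Thus $R \cong \C[p]/(p^n)$, so $p^n \in I$ and $\bar{p} \equiv q(p) \pmod{I}$ for a unique polynomial $q \in \C[p]$ of degree $< n$ with $q(0) = 0$ (the latter because $\bar{p} \in \mathfrak{m}$). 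Writing $q(p) = \mu_2 p + \ldots + \mu_n p^{n-1}$ gives the advertised normal form, and matching linear parts in $\mathfrak{m}/\mathfrak{m}^2$ identifies the coefficient $\mu_2$ with the parameter $\mu$ of the line $V_I$.

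Finally, to isolate the inequality $\mu_2 \bar{\mu}_2 < 1$, I observe that $V_{\overline{I}} = \C(p - \bar{\mu} \bar{p}) = \C(\bar{p} - \bar{\mu}^{-1} p)$, so $V_I = V_{\overline{I}}$ precisely when $\mu \bar{\mu} = 1$—which is excluded by transversality. The two remaining open components $\{\mu \bar{\mu} < 1\}$ and $\{\mu \bar{\mu} > 1\}$ are interchanged by $I \leftrightarrow \overline{I}$ (conjugation sends $\mu \mapsto 1/\bar{\mu}$), so by passing to $\overline{I}$ when necessary we may always arrange $\mu_2 \bar{\mu}_2 < 1$; the edge case $\mu = \infty$ falls on the $\overline{I}$-side and yields $\mu_2 = 0$. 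The step I expect to require the most care is the precise identification $\mathfrak{n}/\mathfrak{n}^2 \cong (\mathfrak{m}/\mathfrak{m}^2)/V_I$, together with the verification that the parameter $\mu$ of $V_I$ really coincides with the leading coefficient $\mu_2$ in the final presentation—i.e. that the abstract ``linear part'' extracted from $\mathfrak{m}/\mathfrak{m}^2$ matches the concrete coefficient of $p$ in the relation $\bar{p} \equiv q(p)$.
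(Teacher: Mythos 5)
Your proof is correct and follows essentially the same route as the paper: your $V_I$ is exactly the paper's space $I_1$ of linear parts, the hypothesis $I+\overline{I}=\langle p,\bar p\rangle$ is used in the same way to force $\dim V_I=\dim V_{\overline{I}}=1$ with transverse lines, and the dichotomy $\mu\bar\mu<1$ versus $\mu\bar\mu>1$ is resolved identically by swapping $I$ and $\overline{I}$. The only difference is cosmetic: where you invoke the embedding-dimension bound, Nakayama, and a length count to get $R\cong\C[p]/(p^n)$ and hence $\bar p\equiv q(p)$, the paper reaches the same conclusion by explicitly iterating the relation $\bar p=\mu_2 p+\text{higher terms}$ until only monomials in $p$ remain, using $p^n\in I$.
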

We call the coefficients $\mu_k$ \textbf{higher Beltrami differentials}.
In the case where $I$ is of the form given in the proposition, we call the $n$-complex structure \textbf{compatible}. If $I$ is not compatible, then $\overline{I}$ is. This is analogous to the case of the Beltrami differential whose norm is either smaller than 1 or bigger than 1. 

\begin{proof}
The proposition concerns a cotangent fiber of one point $z$. So we can really work on $\mathbb{C}^2$ with coordinates $(p,\bar{p})$. 
Let $I_1$ be the set of all degree 1 polynomials which appear as elements of $I$. It is clear that $I_1$ is a vector subspace of $\mathbb{C}^2$ since $I$ is a vector space. We show that $I_1$ is of dimension 1. 

\noindent If $I_1=\{0\}$, then so is $\overline{I}_1=\{0\}$. But by $I\oplus \overline{I} = \left\langle p,\bar{p} \right\rangle$, we get $I_1\oplus\overline{I}_1 = \mathbb{C}^2$ which contradicts the hypothesis on $I$. 
If $I_1=\mathbb{C}^2$ then $I=\left\langle p,\bar{p}\right\rangle$ which contradicts the fact that it is of codimension $n\geq 2$. 
% Indeed, take any polynomial $P$ without constant term. Since we assume $I_1=\mathbb{C}^2$, we can eliminate the homogenous part of degree 1 of $P$, introducing only terms of higher degree. By multiplying elements of $I_1$ by $p$ or $\bar{p}$, we can also eliminate all terms of degree 2 in $P$ introducing only terms of degree at least 3 and so on. Since we have $p^k\bar{p}^{n-k}=0 \mod I$ for all $k$, this process will stop and $P$ will be in $I$.

\noindent Therefore $I_1=\Vect(ap + b\bar{p})$ is of dimension 1. So $\overline{I}_1=\Vect(\bar{a} \bar{p} + \bar{b} p)$ and the condition $I\oplus \overline{I} = \left\langle p,\bar{p} \right\rangle$ is equivalent to 
$a\bar{a}\neq b\bar{b}$. Assume $a\bar{a}< b\bar{b}$ (the other case being similar and leads to $\overline{I}$ instead of $I$), then $I_1=\Vect(-\bar{p}+\mu_2p)$ with $\left|\mu_2\right|=\left|a/b\right|<1$.

\noindent Finally, since $-\bar{p}+\mu_2p \in I_1$, there is a relation of the form $\bar{p}=\mu_2p+\text{higher terms}$ in $I$. Iterating this equality by replacing it in any $\bar{p}$ appearing in the higher terms, we get an expression of $\bar{p}$ in terms of monomials in $p$. %(this procedure will stop). 
Since $p^n=0$ in $I$, we get $$\bar{p}=\mu_2p+\mu_3p^2+...+\mu_np^{n-1} \mod I.$$
To give an example, for $n=4$ and $\bar{p}=ap+bp\bar{p}$ we get $$\bar{p}=ap+bp(ap+b(ap+b\bar{p}))=ap+abp^2+ab^2p^3.$$ 
\end{proof}

Let us compute the global nature of the higher Beltrami differentials. We will see that $\mu_2$ is just the usual Beltrami differential, so of type $(-1,1)$.
Under a holomorphic coordinate transform $z \to z(w)$, we have $p = \frac{\partial}{\partial z} \mapsto \frac{dw}{dz}\frac{\partial}{\partial w}$ and similarly for $\bar{p}$. Hence, the first generator $p^n$ of $I$ just gets a linear factor of $(\frac{dw}{dz})^n$, so we can drop it in the ideal. The transformation of the second generator gives (where $\propto$ stands for ``proportional to''):
\begin{align*}
& -\bar{p}+\mu_2(z,\bar{z})p+...+\mu_n(z,\bar{z})p^{n-1}   \\
& \mapsto  -\frac{d\bar{w}}{d\bar{z}}\frac{\partial}{\partial \bar{w}}+\frac{dw}{dz}\mu_2(z,\bar{z})\frac{\partial}{\partial w}+...+\left(\frac{dw}{dz}\right)^{n-1}\mu_n(z,\bar{z})\left(\frac{\partial}{\partial w}\right)^{n-1}  \\
& \propto -\frac{\partial}{\partial \bar{w}}+\frac{d\bar{z}/d\bar{w}}{dz/dw}\mu_2(z,\bar{z})\frac{\partial}{\partial w}+...+\frac{d\bar{z}/d\bar{w}}{(dz/dw)^{n-1}}\mu_n(z,\bar{z})\left(\frac{\partial}{\partial w}\right)^{n-1} .
\end{align*}

\noindent Thus, we see that for $m=2,...,n$ we get $$\mu_{m}(w,\bar{w}) = \frac{d\bar{z}/d\bar{w}}{(dz/dw)^{m-1}}\mu_{m}(z,\bar{z}).$$
So $\mu_{m}$ is of type $(1-m,1)$, i.e. a section of $K^{1-m}\otimes \bar{K}$.

The various viewpoints of the punctual Hilbert scheme allow several interpretations of the higher complex structure:
\begin{itemize}
\item[$\bullet$] The previous Proposition \ref{genericideal} allows to think of a higher complex structure as a polynomial curve in the cotangent fiber attached to each point of the surface. Thus we get a ``hairy'' surface as in figure \ref{hairysurface} (with polynomial curved hair).

\item[$\bullet$] A section of $\Hilb^n(T^{*\C}\S)$ is generically an $n$-tuple of 1-forms, or equivalently an $n$-fold cover $\bm\tilde{\S}\subset T^{*\C}\S$. Going to the zero-fiber we can consider the $n$-complex structure as the collapse of this $n$-fold cover to the zero-section, or as the $(n-1)$-jet of a complex surface along the zero-section inside $T^{*\C}\S$.

\item[$\bullet$] In the matrix viewpoint of the punctual Hilbert scheme, you can see a higher complex structure as a conjugation class of a matrix-valued 1-form which can be locally written as $\Phi_1(z)dz+\Phi_2(z)d\bar{z}$ where $[(\Phi_1(z), \Phi_2(z))]$ is a point in $\Hilb^n_0(T_z^{*\C}\S)$ for all $z\in \S$, i.e. a pair of commuting nilpotent matrices with $\Phi_1(z)$ principal nilpotent. Globally, we have a matrix-valued 1-form $\Phi$ satisfying $\Phi\wedge \Phi=0$ and its $(1,0)$-component is principal nilpotent.

\item[$\bullet$] In the dual viewpoint (see \ref{ddual}), we can see the $n$-complex structure as the space of local functions $f$ on $\S$ satisfying $\del^nf=0$ and $(-\delbar+\mu_2\del+...+\mu_n\del^{n-1})f=0$. A complex structure on a surface is characterized by the notion of local holomorphic functions $f$. In a chart they are characterized by $(\delbar-\mu_2\del)f=0$ where $\mu_2$ is the usual Beltrami differential. The $n$-complex structure generalizes this functional approach. For $n=\infty$ and $\mu_k=0$ for all $k>n$, we get the space of functions satisfying only the second equation $(-\delbar+\mu_2\del+...+\mu_n\del^{n-1})f=0.$
\end{itemize}
%The proposition allows us to think of a higher complex structure as a polynomial curve in the cotangent fiber attached to each point of the surface. It is useful to either think of a "hairy" surface as in figure \ref{hairysurface} (with polynomial curved hairs), or a complex surface germ along the zero-section $\Sigma$ in $T^{*\mathbb{C}}\Sigma$. In the matrix viewpoint of the punctual Hilbert scheme, you can see a higher complex structure as a matrix-valued 1-form of type $\Phi_1dz+\Phi_2d\bar{z}$ where $[(\Phi_1, \Phi_2)]$ is a point in $\Hilb^n_0(T_z^{*\C}\S)$ for all $z\in \S$, i.e. a pair of commuting nilpotent matrices.

\begin{figure}[h]
\centering
\includegraphics[height=4cm]{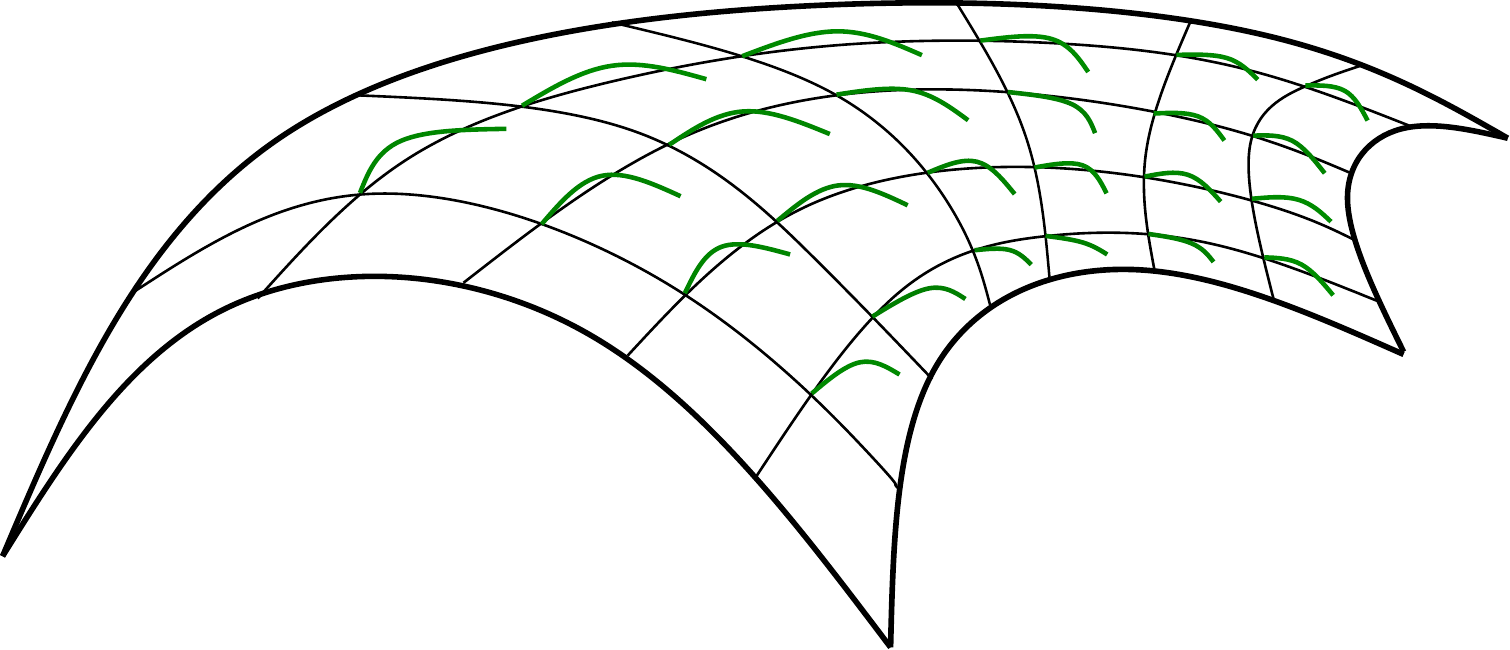}

\caption{Illustration of a higher complex structure}
\label{hairysurface}
\end{figure}

\subsection{Higher diffeomorphisms}\label{higherdiffs}

We wish to define a moduli space of higher complex structures which is finite-dimensional. Higher complex structures ``live'' in a neighborhood of the zero-section of $T^{*\mathbb{C}}\Sigma$. A diffeomorphism of $\Sigma$ extends linearly to $T^*\Sigma$ and its complexification. Thus the extension acts linearly on the polynomial curve (given by the higher complex structure). So the quotient of all $n$-complex structures by diffeomorphisms of $\Sigma$ is infinite-dimensional. Therefore, we have to enlarge the equivalence relation and quotient by a larger group. What we need is polynomial transformations in the cotangent bundle. These can be obtained by symplectomorphisms of $T^*\Sigma$ generated by a Hamiltonian (a function on $T^*\Sigma$):

\begin{definition}
A \textbf{higher diffeomorphism} of a surface $\Sigma$ is a hamiltonian diffeomorphism of $T^*\Sigma$ preserving the zero-section $\Sigma \subset T^*\Sigma$ setwise. The group of higher diffeomorphisms is denoted by $\Symp_0(T^*\Sigma)$.
A higher diffeomorphism is \textbf{of order} $\mathbf{n}$ if its Hamiltonian $H$ is a polynomial in $p$ and $\bar{p}$ of degree $n$.
\end{definition}

A higher diffeomorphism of order 1 is a usual diffeomorphism of $\Sigma$ linearly extended to the cotangent space.
Preserving the zero-section means that the Hamiltonian $H$ of a higher vector field can be chosen to vanish on the zero-section. In coordinates, this means that one can write the Hamiltonian as $H(z,\bar{z},p,\bar{p}) = \sum_{k,l} v_{k,l}(z,\bar{z})p^k\bar{p}^l$ with $v_{0,0}= 0$. Furthermore, since the Hamiltonian is a real function written in complex coordinates, we have the condition $v_{l,k} = \overline{v_{k,l}}$.

\medskip
\noindent Let us see how a higher diffeomorphism acts on the $n$-complex structure. Roughly speaking, a diffeomorphism of the cotangent bundle acts on the space of sections, and so also on the space of $n$-tuples of sections (corresponds to $n$ points in each fiber), so also on the zero-fiber of the Hilbert scheme (which can be seen as the limit when all $n$ points collapse to the origin). In this way, a higher diffeomorphism acts on an $n$-complex structure.

To be more precise, we need first to understand the variation of an ideal in the space of all ideals (the proof is direct and left to the reader):

\begin{prop}\label{idealvariation}
The space of infinitesimal variations of an ideal $I$ in a ring $A$ is the set of all $A$-module homomorphisms from $I$ to $A/I$.
\end{prop}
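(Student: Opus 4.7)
The plan is to identify an infinitesimal variation of $I$ with a first-order deformation $I_\varepsilon \subset A_\varepsilon := A\otimes \C[\varepsilon]/(\varepsilon^2)$ of the ideal $I$, i.e.\ an $A_\varepsilon$-submodule whose reduction modulo $\varepsilon$ is $I$ and which is flat over $\C[\varepsilon]/(\varepsilon^2)$. The flatness condition is equivalent to $I_\varepsilon\cap \varepsilon A_\varepsilon=\varepsilon I$; it simply says that lifting and reducing modulo $\varepsilon$ are compatible operations on $I_\varepsilon$. The proposition is then obtained by constructing mutually inverse maps between such deformations and $\Hom_A(I,A/I)$.

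First I would go from a deformation $I_\varepsilon$ to a homomorphism. For each $g \in I$, flatness provides a lift $\tilde g = g + \varepsilon\,\psi(g) \in I_\varepsilon$, and any two such lifts differ by an element of $\varepsilon I$; hence the assignment $\phi(g) := \psi(g) \bmod I$ is a well-defined element of $A/I$. Additivity of $\phi$ follows because $\tilde g_1+\tilde g_2$ is a lift of $g_1+g_2$. For $A$-linearity, note that $I_\varepsilon$ is an ideal of $A_\varepsilon$, so for $a\in A$ we have $a\tilde g=ag+\varepsilon a\psi(g)\in I_\varepsilon$, which is a lift of $ag$; comparing with an arbitrary lift $\widetilde{ag}$ of $ag$ shows $a\phi(g)=\phi(ag)$ in $A/I$.

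Next I would construct the inverse. Given $\phi\in \Hom_A(I,A/I)$, pick any set-theoretic lift $\tilde\phi:I\to A$ of $\phi$ and set
\[
I_\varepsilon := \{\, g+\varepsilon(\tilde\phi(g)+h) \,\mid\, g\in I,\ h\in I\,\}\subset A_\varepsilon.
\]
Stability under multiplication by $A_\varepsilon$ uses the $A$-linearity of $\phi$ modulo $I$: for $a\in A$,
\[
a\cdot\bigl(g+\varepsilon(\tilde\phi(g)+h)\bigr) = ag+\varepsilon\bigl(\tilde\phi(ag)+\bigl[a\tilde\phi(g)-\tilde\phi(ag)\bigr]+ah\bigr),
\]
and the bracketed term lies in $I$. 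Multiplication by $\varepsilon$ lands in $\varepsilon I\subset I_\varepsilon$. By construction $I_\varepsilon$ reduces modulo $\varepsilon$ to $I$, and $I_\varepsilon\cap\varepsilon A_\varepsilon=\varepsilon I$, so the flatness condition holds. Changing the choice of lift $\tilde\phi$ by an element of $\Hom(I,I)$ changes $I_\varepsilon$ by an automorphism of $A_\varepsilon$ of the form $\mathrm{id}+\varepsilon(\text{something})$, which is the correct ambiguity in a deformation, so the construction is canonical at the level of isomorphism classes.

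Finally I would verify that the two constructions are mutually inverse, which is essentially tautological: starting from $\phi$, building $I_\varepsilon$, and then extracting the homomorphism returns $\phi$ because the distinguished lifts $g+\varepsilon\tilde\phi(g)$ exhibit $\psi=\tilde\phi$; conversely, starting from $I_\varepsilon$, any family of lifts $\{g+\varepsilon\psi(g)\}_{g\in I}$ generates $I_\varepsilon$ modulo $\varepsilon I$, so the associated ideal reconstructed from $\phi$ coincides with $I_\varepsilon$. The only mildly delicate point, which I would take care to state cleanly, is the well-definedness of $\phi(g)$ as an element of $A/I$ (independent of the lift) and the corresponding well-definedness of $I_\varepsilon$ up to isomorphism (independent of $\tilde\phi$); everything else is a direct check using that $I_\varepsilon$ is simultaneously an $A_\varepsilon$-submodule and a first-order deformation of $I$.
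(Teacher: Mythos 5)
Your argument is correct, and in fact the paper gives no proof to compare against: it states the proposition with the parenthetical remark that ``the proof is direct and left to the reader,'' and then only uses the consequence that a variation of $I$ is determined by the variations of its generators taken modulo $I$. What you have written is the standard deformation-theoretic identification of first-order deformations of $I$ inside $A\otimes\C[\varepsilon]/(\varepsilon^2)$ with $\Hom_A(I,A/I)$ (i.e.\ the computation of the tangent space to the Hilbert scheme at $[I]$), with the flatness condition $I_\varepsilon\cap\varepsilon A_\varepsilon=\varepsilon I$ correctly identified as the source of well-definedness of $\phi(g)$ in $A/I$; this is surely the intended content, just made intrinsic rather than phrased in terms of generators. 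One small remark: in the inverse construction the dependence on the set-theoretic lift $\tilde\phi$ is even milder than you claim --- since you already allow an arbitrary $h\in I$ in the $\varepsilon$-term, replacing $\tilde\phi$ by another lift changes nothing, so $I_\varepsilon$ is literally equal for any two choices and there is no need to invoke an automorphism of $A_\varepsilon$. With that simplification the bijection is exact on the nose, and your proof is a complete and correct substitute for the omitted one.
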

%\begin{proof}
%Let $F:I \to A$ be an additive map such that $F(I)$ is an ideal. The condition for being an ideal gives that for all $a \in A$ and $x \in I$, there is a $z \in I$ with $$aF(x) = F(z).$$
%For an infinitesimal $F$, we can write $F=\id+\varepsilon f$. So we get $$ax+\varepsilon af(x) = z+\varepsilon f(z).$$ So $z=ax$ by taking $\varepsilon=0$ and thus $$af(x)=f(z)=f(ax)$$ which shows that $f$ is an $A$-module homomorphism. Conversely, any such morphism gives a variation.

%\noindent Since we do not change $I$ when $F$ stays in $I$, we have to consider morphisms modulo $I$.
%\end{proof}

To compute the variation of an ideal, all we need is to compute the variation of its generators modulo $I$.
These generators are polynomial functions. A general fact of symplectic geometry asserts that the variation of a function $f$ under a flow generated by a Hamiltonian $H$ is given by the Poisson bracket $\{H,f\}$.
Therefore to compute the action of a higher diffeomorphism generated by a Hamiltonian $H$, we only have to compute the Poisson bracket of $H$ with the generators of $I$ and to take the result modulo $I$ (by Proposition \ref{idealvariation}).

%In coordinates, we find that the variation of the generators of the ideal $I$ is given by $\{H,p^n\} \mod I$ and $\{H,-\bar{p}+\mu_2p+...+\mu_np^{n-1}\} \mod I$.

Since we mod out by $I$, a Hamiltonian of order $n$ or higher has no effect on the $n$-complex structure. The Hamiltonians of degree $\geq n$ generate a normal subgroup in $\Symp_0(T^*\Sigma)$. The effective action is by the quotient group, consisting of higher diffeomorphisms of degree at most $n-1$. 

In the following subsection we compute the variation of the higher Beltrami differentials under a Hamiltonian and we deduce the local theory of higher complex structures.

\subsection{Local theory}

In this subsection, we restrict attention to an open neighborhood of the origin 0 in $\mathbb{C}$. We prove that any two higher complex structures are locally equivalent under higher diffeomorphisms. Before doing so, we have to compute the variation of the higher complex structure by a higher diffeomorphism.

As seen in the previous subsection, we have to compute Poisson brackets modulo the ideal $I$. A small argument simplifies the computations a lot. We call it the ``simplification lemma'':

\begin{lemma}[Simplification lemma]\label{simplificationlemma}
Let $I=\left\langle f_1, ..., f_r \right\rangle$ be an ideal of $\mathbb{C}[z,\bar{z},p,\bar{p}]$ such that $\{f_i, f_j\} = 0 \mod I$ for all $i$ and $j$. Then for all polynomials $H$ and all $k \in \{1,...,r\}$ we have $$\{H, f_k\} \mod I = \{H \mod I, f_k\} \mod I.$$
\end{lemma}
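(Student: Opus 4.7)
The plan is to write $H = H_0 + \sum_{i=1}^r g_i f_i$ where $H_0 = H \bmod I$ (here the $g_i \in \C[z,\bar z,p,\bar p]$ witness the fact that $H - H_0 \in I$), and then expand $\{H, f_k\}$ using the Leibniz rule for the Poisson bracket. The two ``defects'' one has to control are exactly the two hypotheses in the lemma: the ideal structure, and the weaker commutativity condition $\{f_i, f_j\} \equiv 0 \bmod I$.

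Concretely, I would first recall that the Poisson bracket on $\C[z,\bar z,p,\bar p]$ is a biderivation, so
\begin{equation*}
\{H, f_k\} \;=\; \{H_0, f_k\} \;+\; \sum_{i=1}^{r} \{g_i f_i,\, f_k\} \;=\; \{H_0, f_k\} \;+\; \sum_{i=1}^{r} g_i \{f_i, f_k\} \;+\; \sum_{i=1}^{r} \{g_i, f_k\}\, f_i.
\end{equation*}
The third sum is manifestly in $I$, since each summand carries an explicit factor of $f_i$. For the middle sum, the hypothesis $\{f_i, f_k\} \equiv 0 \bmod I$ says exactly that each $\{f_i, f_k\}$ lies in $I$, hence so does $g_i \{f_i, f_k\}$ for every $i$. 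Reducing modulo $I$ kills both sums, giving $\{H, f_k\} \equiv \{H_0, f_k\} \bmod I$, which is the claim.

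The only ingredient beyond formal manipulation is the existence of the decomposition $H = H_0 + \sum g_i f_i$, which is just the definition of ``$H_0 = H \bmod I$'' together with the fact that the $f_i$ generate $I$. I do not expect any real obstacle: the lemma is essentially the statement that, modulo the ideal $I$, the Poisson bracket descends to a well-defined operation as soon as the generators of $I$ Poisson-commute modulo $I$ themselves. In particular, no assumption on $H$ (such as being in the ideal, or of bounded degree) is needed — the proof is a one-line application of the Leibniz rule once the two hypotheses are packaged correctly.
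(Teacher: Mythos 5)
Your proof is correct and is essentially the paper's own argument: the paper checks that replacing $H$ by $H+gf_l$ does not change $\{H,f_k\} \bmod I$ via the same Leibniz expansion $\{H+gf_l,f_k\}=\{H,f_k\}+g\{f_l,f_k\}+\{g,f_k\}f_l$, which is your computation applied one generator at a time. No gap.
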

\begin{proof}
The only thing to show is that if we replace $H$ by $H+gf_l$ for some polynomial $g$ and some $l \in \{1,...,r\}$, the expression does not change. Indeed, 
$\{H+g f_l, f_k\}=\{H, f_k\}+g\{f_l,f_k\}+\{g,f_k\}f_l = \{H, f_k\}  \mod I$ using the assumption.
\end{proof}

For our ideal $I = \left\langle p^n,-\bar{p}+\mu_2p+...+\mu_np^{n-1} \right \rangle$, we have $\{p^n,-\bar{p}+\mu_2 p+...+\mu_n p^{n-1}\} = np^{n-1}(\partial\mu_2 p+...+\partial\mu_n p^{n-1})=0 \mod I$, so we can use the lemma. Therefore, we can reduce a Hamiltonian $H$ modulo $I$. So it can always be written as $H=\sum_k v_k p^{k-1}$.

Another small argument is that the first generator, $p^n$, does not change : $\{H,p^n\} \mod I = np^{n-1}\partial H \mod I = 0$ since there is no constant term in $H$. So we only have to settle the second generator.

Let us compute the variation of the second generator under the Hamiltonian $H=v_kp^{k-1}$. We get:
\begin{align*}
\{v_kp^{k-1}, -\bar{p}+\textstyle\sum_i \mu_ip^{i-1}\} =& \; (k-1)v_kp^{k-2}(\partial\mu_2p+...+\partial\mu_np^{n-1})+p^{k-1}\bar{\partial}v_k  \\
& -p^{k-1}(\mu_2+2\mu_3p+...+(n-1)\mu_np^{n-2})\partial v_k \\
=& \; p^{k-1}(\bar{\partial}v_k-\mu_2\partial v_k+(k-1)v_k\partial\mu_2) \\
&+\sum_{l=1}^{n-1-k} p^{k-1+l} ((k-1)v_k\partial\mu_{l+2}-(l+1)\mu_{l+2}\partial v_k) \mod I.
\end{align*}

\noindent Thus we obtain:
\begin{prop}[Variation of higher Beltrami differentials]\label{varmu}
The variation $\delta \mu_l$ under a Hamiltonian $H=v_kp^{k-1}$ is given by 
$$\delta \mu_l = \left \{ \begin{array}{cl}
(\bar{\partial}-\mu_2\partial+(k-1)\partial\mu_2)v_k & \text{ if  } l=k \\
((k-1)\partial\mu_{l-k+1}-(l-k)\mu_{l-k+1}\partial)v_k &\text{ if  } l>k \\
0 & \text{ if  } l<k.
\end{array} \right.$$
\end{prop}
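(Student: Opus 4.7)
The proof combines three tools developed earlier in this section. First, by Proposition \ref{idealvariation}, an infinitesimal deformation of the ideal $I$ is a module morphism $I \to \C[z,\bar z, p, \bar p]/I$, and such a morphism is determined by its values on the two generators $p^n$ and $g := -\bar p + \sum_{i=2}^n \mu_i p^{i-1}$. Second, a short direct check shows that the two generators Poisson-commute modulo $I$, so Lemma \ref{simplificationlemma} lets us reduce the Hamiltonian modulo $I$ before evaluating brackets. Third, by bilinearity of $\{\cdot,\cdot\}$, it suffices to treat a single monomial $H = v_k p^{k-1}$; the general formula then follows by summation over $k$.

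For the first generator, a quick computation gives $\{H, p^n\}$ proportional to $p^{n-1}\cdot\del v_k\cdot p^{k-1}$, which for $k \geq 2$ is divisible by $p^n$ and hence vanishes modulo $I$. So the first generator is rigid, and all of $\delta I$ is carried by $\delta g$.

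For the second generator, expanding the canonical Poisson bracket on $T^*\S$ yields three nontrivial contributions: the piece $\del_p H \cdot \del g$ produces $(k-1)v_k\sum_i \del\mu_i\, p^{k+i-3}$; the piece $-\del H\cdot\del_p g$ produces $-\del v_k \sum_i (i-1)\mu_i\, p^{k+i-3}$; and the piece involving $\delbar H$ together with $\del_{\bar p}(-\bar p) = -1$ contributes the single term $\delbar v_k \cdot p^{k-1}$. Gathering these and reducing $p^m \equiv 0$ for $m \geq n$ gives a polynomial in $p$ whose coefficient of $p^{l-1}$ is $\delta \mu_l$ by definition. One then reads off: the coefficient vanishes for $l < k$ (the lowest power appearing is $p^{k-1}$); the three contributions combine for $l = k$ into $(\delbar - \mu_2\del + (k-1)\del\mu_2)v_k$; and for $l > k$ only the first two sources contribute, producing the stated off-diagonal formula.

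The argument is elementary; the only real obstacle is bookkeeping. One must carefully track the index shift between the power of $p$ in the bracket and the subscript of $\mu$ (since $\mu_i$ multiplies $p^{i-1}$), as well as the Poisson-bracket sign conventions. I would verify the $l > k$ case on a small example such as $k=2$, $n=4$ before trusting the indexing across the board.
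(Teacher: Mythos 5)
Your approach is exactly the paper's: reduce $H$ modulo $I$ via the simplification lemma, note that $\{H,p^n\}\equiv 0 \bmod I$ because $H$ has no constant term, and read off $\delta\mu_l$ as the coefficient of $p^{l-1}$ in $\{H,\,-\bar p+\sum_i\mu_ip^{i-1}\}$. Your three Poisson-bracket contributions are also correct as written.

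However, your final sentence for the case $l>k$ — that the read-off ``produc[es] the stated off-diagonal formula'' — does not survive the bookkeeping you yourself flagged as the danger spot. From your own expressions, the coefficient of $p^{l-1}$ comes from the index $i=l-k+2$ (solve $k+i-3=l-1$), giving
$$\delta\mu_l=\bigl((k-1)\,\partial\mu_{l-k+2}-(l-k+1)\,\mu_{l-k+2}\,\partial\bigr)v_k,$$
whereas the proposition as printed has $\mu_{l-k+1}$ with coefficient $(l-k)$. The two differ by a shift of one in both the subscript and the numerical factor. A sanity check with $k=2$, $l=3$ (the one you proposed but did not carry out) gives $\delta\mu_3=v_2\,\partial\mu_3-2\mu_3\,\partial v_2$, i.e.\ the $\mu_{l-k+2}$ version; and this version is the one that the paper's own displayed bracket computation produces and that is actually used later (the proof of Theorem \ref{conditionC} takes $\delta\mu_{k+l}=((k-1)\partial\mu_{l+2}-(l+1)\mu_{l+2}\partial)w$). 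So the discrepancy is a misprint in the statement of the proposition rather than an error in your calculation — but you cannot claim agreement with the printed formula without noticing this, and the example you deferred is precisely what would have exposed it.
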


Now, we are ready to state the local triviality of higher complex structures:

\begin{thm}[Local theory]\label{loctrivial}
Any two higher complex structures are locally equivalent under higher diffeomorphisms.
Using our coordinates this is equivalent to: any $n$-complex structure can be locally trivialized, i.e. there is a higher diffeomorphism which sends the structure to $(\mu_2(z,\bar{z}),...,\mu_n(z,\bar{z}))=(0,...,0)$ for all small $z\in \C$.
\end{thm}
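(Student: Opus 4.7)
The strategy is to trivialize the higher Beltrami differentials $\mu_2,\ldots,\mu_n$ inductively, one at a time in increasing order of the index. The crucial structural input is a triangular feature of Proposition~\ref{varmu}: a Hamiltonian of the form $H=v_k p^{k-1}$ fixes $\mu_l$ for every $l<k$. Consequently, once one has arranged $\mu_2=\cdots=\mu_{k-1}=0$, one may try to kill $\mu_k$ with such a Hamiltonian without re-introducing any lower-index differentials. Concretely, I will produce higher diffeomorphisms $\varphi_2,\varphi_3,\ldots,\varphi_n$ such that $\varphi_k\circ\cdots\circ\varphi_2$ sends $(\mu_2,\ldots,\mu_k)$ to $(0,\ldots,0)$ on a fixed small disk around $0\in\C$, and the composition of all of them yields the theorem.

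\medskip\noindent\emph{Base case ($k=2$).} A higher diffeomorphism of order $1$ is an ordinary diffeomorphism of $\Sigma$ lifted linearly to $T^{*}\Sigma$. Finding such a diffeomorphism that trivializes $\mu_2$ is precisely the local Beltrami problem. Solvability is guaranteed, thanks to the admissibility condition $\mu_2\bar\mu_2<1$ from Proposition~\ref{genericideal}, by the classical theorem of Gauss/Korn--Lichtenstein.

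\medskip\noindent\emph{Inductive step.} Assume $\mu_2=\cdots=\mu_{k-1}=0$ on a small disk. Consider the flow of the real Hamiltonian $H=v_k p^{k-1}+\overline{v_k}\,\bar p^{k-1}$. It preserves the zero section because $H$ vanishes on $\{p=\bar p=0\}$, so it is a legitimate higher diffeomorphism. By Proposition~\ref{varmu}, the lower differentials $\mu_2,\ldots,\mu_{k-1}$ remain $0$ along the entire flow, and the evolution equation for $\mu_k$ simplifies (using $\mu_2\equiv 0$) to
\[
\frac{d\mu_k}{dt}=\bar\partial v_k.
\]
Integrating from $t=0$ to $t=1$ yields $\mu_k(1)=\mu_k(0)+\bar\partial v_k$. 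It therefore suffices to pick $v_k$ solving the inhomogeneous Cauchy--Riemann equation $\bar\partial v_k=-\mu_k(0)$, which is always locally solvable (for instance by the Cauchy--Pompeiu kernel on a small disk). This completes the step. The higher differentials $\mu_l$ with $l>k$ may well be modified by the flow, but this is harmless: they are killed in later steps, which by the same triangular property leave the already-trivialized $\mu_2,\ldots,\mu_k$ alone.

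\medskip\noindent\emph{Main obstacle.} All the genuine analytic difficulty is concentrated in the base case, where the variation $\delta\mu_2=(\bar\partial-\mu_2\partial+\partial\mu_2)v_2$ is nonlinear in $\mu_2$ and integrates to the full Beltrami equation, requiring classical elliptic PDE input. From $k\ge 3$ onwards the nilpotent/triangular structure of the higher-complex setting linearizes everything into a sequence of plain inhomogeneous $\bar\partial$-equations, which are routine. The only bookkeeping point is to verify compatibility of transformation weights (indeed $v_k$ is a local section of $K^{1-k}$, so $\bar\partial v_k$ and $\mu_k$ are both of type $(1-k,1)$).
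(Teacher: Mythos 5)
Your proof is correct and follows essentially the same route as the paper: induction on the index, the base case settled by Gauss/Korn--Lichtenstein, and each subsequent $\mu_k$ killed by a Hamiltonian $v_kp^{k-1}$ whose effect reduces (thanks to the triangularity of Proposition~\ref{varmu} and the vanishing of the lower $\mu$'s) to a plain inhomogeneous $\bar\partial$-equation solved by the Cauchy--Pompeiu kernel. The only detail the paper adds that you elide is cutting off the Hamiltonian by a bump function in $(p,\bar p)$ so that the generated vector field is complete and the flow genuinely exists up to time $t=1$.
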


The proof is in the spirit of the classical proof of Darboux theorem on local theory of symplectic structures.
\begin{proof}
The proof is by induction. For $n=2$, we already know the result which is Gauss and Korn-Lichtenstein's theorem on the existence of isothermal coordinates. So suppose that the statement is true for $n \geq 2$ and we show it for $n+1$.

By induction hypothesis, there is a higher diffeomorphism which makes $\mu_2(z)=...=\mu_n(z)=0$ for all $z$ near the origin. We construct a higher diffeomorphism generated by a Hamiltonian of degree $n$ giving $\mu_{n+1}(z)=0$ for all $z$ near 0. Since a Hamiltonian of degree $n$ does not affect the $\mu_k$ with $k\leq n$ (see previous proposition), we are done.

Let us try a Hamiltonian of the form $$H(z, \bar{z},p,\bar{p})=v_n(z,\bar{z},p,\bar{p})p^n$$ generating a flow $\phi_t$. 
We denote by $\mu_{n+1}^t(z,\bar{z})$ the image of $\mu_{n+1}(z,\bar{z})$ by $\phi_t$ (note that $\phi_t$ fixes the zero-section pointwise). The variation formula \ref{varmu}  for $\mu_2=0$ then reads
$$\frac{d}{dt}\mu_{n+1}^{t}(z,\bar{z}) = \bar{\partial}v_{n}(z,\bar{z},0,0)$$
Thus, the variation does not depend on time. We wish to have $\frac{d}{dt}\mu_{n+1}^{t}(z,\bar{z})=-\mu_{n+1}^{t=0}(z,\bar{z}).$ So we have to solve 
$$\bar{\partial}v_{n}(z,\bar{z},0,0)=-\mu_{n+1}^{0}(z,\bar{z}).$$
The inversion of the Cauchy-Riemann operator $\bar{\partial}$ is well-known. We denote its inverse by $T$. Explicitly, we have 
$$Tf(z)=\frac{1}{2\pi i}\int_{\mathbb{C}}\frac{f(\zeta)}{\zeta-z}d\zeta \wedge d\bar{\zeta}$$ for any square-integrable function $f$.

Therefore, on the zero-section we set $v_n(z,\bar{z},0,0)=-T\mu_{n+1}^{0}(z,\bar{z})$ (since $\mu_{n+1}$ is smooth, it is locally square-integrable). To define it everywhere, we choose a bump function $\beta$, in our case a function on $\mathbb{C}^2$ which is 1 in a neighborhood of the origin and 0 outside a bigger neighborhood of the origin, and we put $$v_n(z,\bar{z},p,\bar{p})=-\beta(p,\bar{p}) T\mu_{n+1}^{0}(z,\bar{z}).$$ So the Hamiltonian is defined everywhere and gives a compactly supported vector field which therefore can be integrated for all times. We then get $$\mu_{n+1}^t(z,\bar{z})=(1-t)\mu_{n+1}^{0}(z,\bar{z})$$
Therefore, at time $t=1$, $\mu_{n+1}$ vanishes everywhere.
\end{proof}

So as for symplectic structures, there is no local invariant for higher complex structures. The only interesting properties can appear in the global theory.

\subsection{Moduli space}

We are finally ready to define and study the moduli space of higher complex structures. We then show that it is a contractible ball of dimension $(n^2-1)(g-1)$ and describe its tangent and cotangent space.

\begin{definition}
The \textbf{moduli space of higher complex structures}, denoted by $\mathbf{\T^n}$, is the space of all compatible $n$-complex structures modulo higher diffeomorphisms. 
In formula: $$\bm\hat{\mathcal{T}}^n = \Gamma(\Hilb^n_0(T^{*\mathbb{C}}\Sigma)) / \Symp_0(T^*\Sigma).$$
\end{definition}

Recall that an $n$-complex structure is compatible if the Beltrami differential satisfies $\mu_2\bar{\mu}_2<1$. Using complex conjugation we get another copy of our moduli space for which $\mu_2\bar{\mu}_2>1$.

Since a higher diffeomorphism of order 1 is a usual diffeomorphism and only Hamiltonians of order at most $n-1$ act non-trivially on $n$-complex structures, we recover for $n=2$ the usual Teichm\"uller space: $$\bm\hat{\mathcal{T}}^2=\mathcal{T}(\Sigma).$$
The moduli space $\T^n$ has the following properties:

\begin{thm}[Global theory]\label{mainresultncomplex}
For a surface $\Sigma$ of genus $g\geq 2$ the moduli space $\bm\hat{\mathcal{T}}^n$ is a contractible manifold of complex dimension $(n^2-1)(g-1)$. Its cotangent space at any point $\mu=(\mu_2,...,\mu_n)$ is given by 
$$T^*_{\mu}\bm\hat{\mathcal{T}}^n = \bigoplus_{m=2}^{n} H^0(K^m).$$
In addition, there is a forgetful map $\bm\hat{\mathcal{T}}^n \rightarrow \bm\hat{\mathcal{T}}^{n-1}$.
\end{thm}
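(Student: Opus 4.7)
My approach is deformation-theoretic, following the pattern for Teichm\"uller space. Let $V = \bigoplus_{k=2}^n \Gamma(K^{1-k} \otimes \bar K)$ be the space of compatible Beltrami data and, for each $\mu \in V$, let $\rho_\mu \colon \bigoplus_{k=2}^n \Gamma(K^{1-k}) \to V$ denote the linear infinitesimal action of higher diffeomorphisms of order $\leq n-1$, given explicitly by Proposition \ref{varmu}. I would define $T_{[\mu]} \bm\hat{\mathcal{T}}^n := V / \mathrm{Im}(\rho_\mu)$ and identify the cotangent space via the natural pairing $\langle t, \delta\mu \rangle = \sum_{k=2}^n \int_\S t_k \, \delta\mu_k$, where $t_k \in \Gamma(K^k)$ and the integrand $t_k \, \delta\mu_k$ is a $(1,1)$-form.

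By definition, $T^*_{[\mu]} \bm\hat{\mathcal{T}}^n$ consists of those $t$ annihilating $\mathrm{Im}(\rho_\mu)$. At $\mu = 0$, Proposition \ref{varmu} simplifies drastically: only $\delta\mu_k = \bar\partial v_k$ survives, so integration by parts reduces the annihilation condition to $\bar\partial t_k = 0$. Thus $T^*_0 \bm\hat{\mathcal{T}}^n \cong \bigoplus_{k=2}^n H^0(K^k)$. For general $\mu$, the same procedure yields a first-order elliptic perturbation of $\bar\partial$ on each $K^k$; since $\deg K^k = k(2g-2) > 2g-2$ for $k \geq 2$, $g \geq 2$, the higher cohomology vanishes and the kernel dimension is invariant under such small deformations by semicontinuity. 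Riemann--Roch gives $h^0(K^k) = (2k-1)(g-1)$, whence $\sum_{k=2}^n (2k-1)(g-1) = (n^2-1)(g-1)$, the claimed complex dimension.

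For contractibility I would argue by induction on $n$, using the forgetful map $\pi \colon \bm\hat{\mathcal{T}}^n \to \bm\hat{\mathcal{T}}^{n-1}$ defined by dropping $\mu_n$. This is well-defined on quotients because (by Proposition \ref{varmu}) Hamiltonians of order $\leq n-2$ act on $\mu_l$ with $l \leq n-1$ without involving $\mu_n$, and Hamiltonians of order exactly $n-1$ (i.e.\ $H = v_n p^{n-1}$) affect only $\mu_n$. The fiber of $\pi$ over $[\nu]$ is $\Gamma(K^{1-n} \otimes \bar K)$ modulo the image of the twisted Dolbeault-type operator $v_n \mapsto (\bar\partial - \mu_2 \partial + (n-1)\partial\mu_2) v_n$, whose cokernel is $H^{0,1}_{\mu_2}(K^{1-n}) \cong H^0(K^n)^*$ by Serre duality in the complex structure determined by $\mu_2$. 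This is a finite-dimensional complex vector space, hence contractible, of dimension $(2n-1)(g-1)$. With base case $\bm\hat{\mathcal{T}}^2 = \mathcal{T}(\S)$, classical contractibility of Teichm\"uller space plus the fibration by contractible fibers gives contractibility of $\bm\hat{\mathcal{T}}^n$.

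The main obstacle is upgrading this infinitesimal calculus to a genuine smooth manifold structure, i.e.\ showing that the $\Symp_0(T^*\S)$-action admits a slice at each point. The local triviality Theorem \ref{loctrivial} is the crucial input: it shows the gauge action is transitive on germs, and one wants to combine this with a properness argument (extending the classical one for $\Diff_0(\S)$ on complex structures) to produce global slices. The infinite-dimensional nature of $\Symp_0(T^*\S)$ and the fact that its Lie algebra is graded by the polynomial degree in $p,\bar p$ make this strictly harder than the $n=2$ case; a careful Fr\'echet or Sobolev setup, together with a degree-by-degree induction matching the filtration by Hamiltonian order, seems necessary to avoid non-Hausdorff phenomena in the quotient.
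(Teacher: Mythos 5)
Your computation of the cotangent space and the dimension is essentially the paper's argument: the same pairing $\sum_k\int_\S t_k\,\delta\mu_k$, integration by parts against the infinitesimal action from Proposition \ref{varmu}, and Riemann--Roch. Where you genuinely diverge is in two places. First, for general $\mu$ the paper does not run an elliptic-perturbation/semicontinuity argument; it invokes the local triviality Theorem \ref{loctrivial} to pass to an atlas in which $\mu=0$ and computes there (the price being that at general $\mu$ the holomorphicity condition becomes the twisted condition $(\mc{C})$ of Theorem \ref{conditionC}, with respect to the induced complex structure). Your semicontinuity argument is a reasonable substitute and arguably more self-contained, though you should check that the relevant operator really is a \emph{small} perturbation of $\bar\partial$ only after trivializing $\mu_2$. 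Second, for contractibility the paper uses a one-line scaling retraction: equivalence of $\mu$ and $\mu'$ implies equivalence of $t\mu$ and $t\mu'$, so $[\mu]\mapsto[(1-t)\mu]$ retracts $\T^n$ onto the class of the trivial structure. Your induction via the forgetful fibration is more elaborate and has a gap as stated: the fiber of $\pi$ over $[\nu]$ is not just $\Gamma(K^{1-n}\otimes\bar K)$ modulo the image of the order-$(n-1)$ Hamiltonians, because lower-order Hamiltonians stabilizing $[\nu]$ can still move $\mu_n$, and you would also need $\pi$ to be a fibration (not merely a surjection with contractible fibers) to conclude contractibility of the total space. The scaling retraction buys you contractibility without any of this. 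Finally, your definition of the forgetful map by dropping $\mu_n$ agrees with the paper's, but the paper's intrinsic formulation $I\mapsto I+\langle p,\bar p\rangle^{n-1}$ makes coordinate-independence immediate, whereas you must verify it by hand from the variation formulas. Your closing remarks on the slice problem correctly identify the analytic issue that the paper explicitly declines to address.
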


\noindent We essentially show the existence of a cotangent space at every point, so we have a manifold. The dimension of the cotangent space is computed by Riemann-Roch formula. We do not enter into details on issues about infinite-dimensional manifolds and quotients. 

\begin{proof}
To see that $\T^n$ is a manifold, we examine the infinitesimal variation around any point. This will also give a description of the tangent and cotangent space. By definition, we have $$\bm\hat{\mathcal{T}}^n = \{(\mu_2,...,\mu_n) \mid \mu_m \in  \Gamma(K^{1-m}\otimes \bar{K}) \; \forall \, m \text{ and } \left|\mu_2\right| < 1\} / \Symp_0(T^*\Sigma)$$
The infinitesimal variation around $\mu=(\mu_2,...,\mu_n)$ is then given by 
$$T_\mu\bm\hat{\mathcal{T}}^n = \{(\delta\mu_2,...,\delta\mu_n) \mid \delta\mu_m \in  \Gamma(K^{1-m}\otimes \bar{K}) \;\forall \, m \} / \Lie(\Symp_0(T^*\Sigma)).$$

\noindent In the previous subsection, we have seen that every $n$-complex structure is locally trivializable. So there is an atlas in which $\mu = (\mu_2,...,\mu_n)=0$. The Lie algebra of $\Symp_0(T^*\S)$ is given by Hamiltonians (functions on $T^*\S$). We can decompose the Hamiltonian into homogeneous parts of degrees 1 to $n-1$. All higher terms do not affect the $n$-complex structure. With the simplification lemma \ref{simplificationlemma} we can reduce $H$ to $\sum_{k=2}^n v_kp^{k-1}$. By Proposition \ref{varmu} (with $\mu_k=0$ for all $k$), we get 
$$T_\mu\bm\hat{\mathcal{T}}^n = \{(\delta\mu_2,...,\delta\mu_n) \} / (\bar{\partial}v_2,...,\bar{\partial}v_{n})$$ where $v_m$ is a section of $K^{m-1}$. Thus, the tangent space splits into parts: $$T_\mu\bm\hat{\mathcal{T}}^n = \{\delta\mu_2 \in \Gamma(\bar{K}\otimes K^{-1})\} / \bar{\partial}v_2 \oplus ... \oplus \{\delta\mu_n \in \Gamma(K^{-n+1}\otimes \bar{K})\} / \bar{\partial}v_{n}.$$
To compute the cotangent space, we use the pairing between differentials of type $(1-k,1)$ and of type $(k,0)$ given by integration over the surface. We get 
\begin{align*}
(\{\delta\mu_m\} / \bar{\partial}v_{m})^* &=  \{t_m \in \Gamma(K^m) \mid \textstyle\int t_m \bar{\partial}v_{m} = 0 \; \forall \, v_{m} \in \Gamma(K^{m-1}) \} \\
&= \{t_m \in \Gamma(K^m) \mid \textstyle\int \bar{\partial}t_m v_{m} = 0 \; \forall \, v_{m} \in \Gamma(K^{m-1}) \} \\
&=  \{t_m \in \Gamma(K^m) \mid \bar{\partial}t_m = 0 \} \\
&=  H^0(K^m).
\end{align*}

\noindent Therefore $$T^*_\mu\bm\hat{\mathcal{T}}^n = \bigoplus_{m=2}^{n}H^0(K^m).$$

\noindent Now, a standard computation using the Riemann-Roch formula shows that (for genus $g\geq 2$)
$$\dim H^0(K^m) = (2m-1)(g-1).$$
Therefore $$\dim{\bm\hat{\mathcal{T}}^n} = \dim{T^*_\mu\bm\hat{\mathcal{T}}^n } = \sum_{m=2}^{n} \dim{H^0(K^m)} = \sum_{m=2}^{n} (2m-1)(g-1) = (n^2-1)(g-1).$$

\noindent
The fact that $\T^n$ is contractible is direct: if two $n$-complex structures $\mu$ and $\mu'$ are equivalent, so are $t\mu$ and $t\mu'$ for $t\in \R_+$. Thus we can retract a given class $[\mu]$ to the class of the trivial $n$-complex structure by $[(1-t)\mu]=(1-t)[\mu]$, where $t$ varies from 0 to 1.
%\footnote{We have to be more careful here because of the equivalence relation: $\delbar$ is a Fredholm operator, so its image is closed. So there is a continuous section from $\T^n$ to the space of all higher Beltrami differentials which is clearly contractible.}.

\noindent The forgetful map is simply given by the following: to the equivalence class of an $n$-complex structure given by an ideal $I$, we associate the equivalence class of $I + \left\langle p,\bar{p} \right\rangle^{n-1}$. This is independent of coordinates since $\left\langle p,\bar{p} \right\rangle$ is the maximal ideal supported at the origin. In coordinates, the map just forgets $\mu_n$.
\end{proof}

\noindent Composing the forgetful maps, we get a map from $\bm\hat{\mathcal{T}}^n$ to Teichm\"uller space. Therefore to any $n$-complex structure is associated a complex structure. The cotangent space of $\T^n$ at $I$ is the Hitchin base of holomorphic differentials where the holomorphicity is with respect to the associated complex structure of $I$.

Since the cotangent space is a complex vector space, we automatically have an almost complex structure on $\T^n$. Furthermore, since it is the moduli space of a geometric structure on the surface, the mapping class group acts naturally on it.

From the previous theorem, we see that our moduli space $\bm\hat{\mc{T}}^n$ shares a lot of properties with Hitchin's component, in particular the dimension and contractibility.
There is another common property to notice:
\begin{prop}\label{copyteich}
There is an injection from Teichm\"uller space into our moduli space.
\end{prop}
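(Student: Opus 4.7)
My plan is to exhibit the injection explicitly and then verify well-definedness and injectivity separately, the latter by composing with the forgetful map already constructed in Theorem \ref{mainresultncomplex}.

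First I would define the candidate map $\iota : \mathcal{T}(\Sigma) \to \bm\hat{\mathcal{T}}^n$. Given a compatible complex structure represented by a Beltrami differential $\mu_2$, I send its class to the class of the $n$-complex structure represented by the ideal
\[
I_{\mu_2} = \left\langle p^n,\, -\bar p + \mu_2\, p + 0\cdot p^2 + \cdots + 0\cdot p^{n-1}\right\rangle,
\]
i.e.\ the higher Beltrami differentials $(\mu_2,0,\ldots,0)$. The condition $\mu_2\bar\mu_2<1$ is preserved, so $I_{\mu_2}$ is a compatible $n$-complex structure.

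Next I would check that $\iota$ is well-defined, which is the main technical step. Suppose $\mu_2$ and $\mu_2'$ represent the same class in $\mathcal{T}(\Sigma)$, so they differ by the pullback of an element of $\Diff_0(\Sigma)$. Any diffeomorphism of $\Sigma$ extends linearly (by inverse transpose of its differential) to a symplectomorphism of $T^*\Sigma$ preserving the zero section; this is precisely the time-one flow of a Hamiltonian of the form $H = v_2\,p + \overline{v_2}\,\bar p$, i.e.\ a first-order higher diffeomorphism. To see that this extension carries $I_{\mu_2}$ to $I_{\mu_2'}$, I apply the simplification lemma \ref{simplificationlemma} to reduce $H$ modulo $I$ to $v_2\,p$, and then invoke Proposition \ref{varmu} with $k=2$. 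Starting from $(\mu_2,0,\ldots,0)$, the formula gives
\[
\delta\mu_2 = (\bar\partial - \mu_2\partial + \partial\mu_2)\,v_2, \qquad \delta\mu_l = (\partial\mu_{l-1} - (l-2)\mu_{l-1}\partial)\,v_2 = 0 \;\;\text{for } l>2,
\]
since $\mu_{l-1}=0$ for $l>2$. Hence the higher differentials stay zero along the entire flow, while $\mu_2$ transforms by the classical Teichmüller variation formula. So the orbit of $I_{\mu_2}$ under $\Symp_0(T^*\Sigma)$ contains $I_{\mu_2'}$, proving $\iota$ is well-defined.

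Finally, for injectivity I use the forgetful map $\bm\hat{\mathcal{T}}^n \to \bm\hat{\mathcal{T}}^{n-1} \to \cdots \to \bm\hat{\mathcal{T}}^2 = \mathcal{T}(\Sigma)$ from Theorem \ref{mainresultncomplex}, which in coordinates forgets $\mu_3,\ldots,\mu_n$ in turn. The composition of $\iota$ with this forgetful tower is tautologically the identity on $\mathcal{T}(\Sigma)$, since $\iota$ places $\mu_2$ in the bottom slot and fills the rest with zeros. An injection post-composed with a retraction onto its source is necessarily injective, so $\iota$ is injective. The main obstacle I foresee is exactly the well-definedness step: one must confirm that no non-linear higher diffeomorphism can be needed, and that is precisely what Proposition \ref{varmu} guarantees, because a first-order Hamiltonian acts only on $\mu_2$ when the higher $\mu_l$'s vanish.
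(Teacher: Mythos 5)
Your construction and your well-definedness argument coincide with the paper's: the paper defines the same map $\psi([\mu_2])=[(\mu_2,0,\dots,0)]$ and verifies, by the same Poisson-bracket computation, that a linear Hamiltonian moves only $\mu_2$ and does so by the classical Teichm\"uller variation formula. Your injectivity argument is a mild variant: you post-compose with the forgetful tower of Theorem \ref{mainresultncomplex} and note that the composite is the identity on $\mathcal{T}(\Sigma)$, whereas the paper argues directly that any Hamiltonian realizing an equivalence between $(\mu_2,0,\dots,0)$ and $(\mu_2',0,\dots,0)$ can be replaced by its linear part, since terms of degree at least $2$ do not affect $\mu_2$. Both are valid; yours leans on the already-established forgetful map and is slightly slicker.

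One slip in your well-definedness step: you justify $\delta\mu_l=0$ for $l>2$ by asserting ``$\mu_{l-1}=0$ for $l>2$'', but at $l=3$ this reads $\mu_2=0$, which is false. The conclusion is nevertheless correct. The displayed computation immediately preceding Proposition \ref{varmu} shows that for $H=v_2p$ the coefficient of $p^{l-1}$ in $\{v_2p,\,-\bar p+\sum_i\mu_ip^{i-1}\}$ is $v_2\partial\mu_l-(l-1)\mu_l\partial v_2$ --- it involves $\mu_l$ itself, not $\mu_{l-1}$ (the index in the statement of Proposition \ref{varmu} appears to be shifted by one relative to that computation) --- and this vanishes once $\mu_3=\dots=\mu_n=0$. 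Equivalently, computing $\{v_2p,\,-\bar p+\mu_2p\}$ directly yields $p\,(\bar\partial v_2-\mu_2\partial v_2+v_2\partial\mu_2)$ with no higher powers of $p$, which is exactly how the paper's proof proceeds.
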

\begin{proof}
Conceptually this injection follows from the fact that there is a canonical injection of $\Hilb^2_0(\C^2)$ into $\Hilb^n_0(\C^2)$ which is preserved under hamiltonian symplectomorphisms of $\C^2$ fixing the origin. The image of the injection are those points for which the associated jet of a curve is linear.

In coordinates, this map is simply given by $$\psi([\mu_2]) = [(\mu_2,0,...,0)].$$
We have to check that $\psi$ is well-defined, independent of the reference complex structure and injective. Take $\mu_2$ and $\mu_2'$ two equivalent complex structures. This means that there is a diffeomorphism, generated by a linear Hamiltonian $H=pv+\bar{p}\bar{v}$, identifying $\mu_2$ as the pullback of $\mu_2'$.
The action of this Hamiltonian on the $n$-complex structure $(\mu_2,0,...,0)$ affects only the first term: 
$$\{H,-\bar{p}+\mu_2p\} = (\delbar -\mu_2\del+\del\mu_2)v \times p.$$ Thus, the higher diffeomorphism generated by $H$ gives an equivalence between $(\mu_2,0,...,0)$ and $(\mu_2',0,...,0)$. Hence, the map $\psi$ is well-defined and independent of the reference complex structure.

For injectivity, suppose $[(\mu_2,0,...,0)]$ is equivalent to $[(\mu_2',0,...,0)]$ via a higher diffeomorphism generated by $H$. Since terms of degree 2 or more do not affect $\mu_2$ (see Proposition \ref{varmu}), the equivalence is already obtained by the linear part of $H$, which is the extension of a diffeomorphism of $\S$. This diffeomorphism of $\S$ sends $\mu_2$ to $\mu_2'$, so they are equivalent.
\end{proof}

The same property holds for the Hitchin component which can be defined as the deformation space of representations of the form $\pi_1(\Sigma) \rightarrow \PSL_2(\R) \rightarrow \PSL_n(\R)$ where the first map is a fuchsian representation and the second one is the principal map (see introduction \ref{higherteichth}).

We conjecture the equivalence of Hitchin's component and the moduli space of higher complex structures:

\begin{conj}\label{mmaincon}
The moduli space of higher complex structures $\bm\hat{\mathcal{T}}^n$ is canonically diffeomorphic to Hitchin's component $\mathcal{T}^n$.
\end{conj}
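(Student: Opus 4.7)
The plan is to construct a canonical map $\Psi \colon \bm\hat{\mc{T}}^n \to \mc{T}^n$ by adapting the non-abelian Hodge correspondence to our setting, then show $\Psi$ is a diffeomorphism by exploiting the structural parallels already established: equal real dimension $2(n^2-1)(g-1)$, contractibility, $\MCG(\S)$-equivariance, and a common copy of Teichmüller space inside both moduli spaces (Proposition \ref{copyteich}).

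First I would associate to a representative $\mu=(\mu_2,\ldots,\mu_n)$ of a class in $\bm\hat{\mc{T}}^n$ the matrix-valued $1$-form $\Phi = \Phi_1\,dz + \Phi_2\,d\bar z$ with $\Phi_1$ principal nilpotent and $\Phi_2 = \mu_2\Phi_1 + \cdots + \mu_n\Phi_1^{n-1}$, i.e.\ the realization of the $n$-complex structure through the matrix viewpoint of the zero-fiber. Plugging this into the one-parameter family $\mc{A}(\l) = \l\Phi + A + \l^{-1}\Phi^{*}$ from section \ref{parabolicwithlambda}, the goal is to produce, canonically from $[\mu]$ alone (so with vanishing holomorphic differentials $t_k = 0$), a unitary $A$ such that $\mc{A}(\l)$ is flat for all $\l \in \C^{*}$ and satisfies the reality condition $-\mc{A}(-1/\bar\l)^{*} = \mc{A}(\l)$. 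Evaluating the resulting flat connection at $\l = 1$ then yields a representation $\pi_1(\S) \to \PSL_n(\R)$, which I would take as the definition of $\Psi([\mu])$.

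The main obstacle is precisely this existence and uniqueness statement, which is the content of the non-abelian Hodge-type conjecture stated in section \ref{finalstep}. My approach would be threefold: (i)~reduce the flatness of $\mc{A}(\l)$ to a hierarchy of Hitchin-type equations by collecting coefficients of each power of $\l$, where the $\l^2$ term forces $\Phi \wedge \Phi = 0$ (automatic from the higher complex structure), the $\l$ term gives $d_A\Phi = 0$, and the $\l^0$ term gives $F_A + [\Phi,\Phi^{*}] = 0$; (ii)~use parabolic gauge (section \ref{parabolicreduction}) to fix the remaining freedom in $A$, reducing the system to a scalar elliptic PDE of affine Toda type, with the Hermitian metric encoded in the diagonal of $A$; (iii)~for $\mu = 0$ this recovers the Fuchsian/Toda solution underlying the principal embedding, and a continuity and openness argument along the straight path $t \mapsto t\mu$, $t \in [0,1]$, extends the solution to all $[\mu]$, provided elliptic a priori estimates prevent escape to the boundary. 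For $n = 2$ this reduces to Wang's theorem on harmonic maps to $\mathbf{H}^2$, and for general $n$ with underlying Fuchsian complex structure it recovers Hitchin's solution of his equations along his section.

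Once $\Psi$ is constructed, diffeomorphy should follow structurally. The map $\Psi$ is $\MCG(\S)$-equivariant and sends the distinguished copy $\mc{T}(\S) \hookrightarrow \bm\hat{\mc{T}}^n$ of Proposition \ref{copyteich} to the Fuchsian locus inside $\mc{T}^n$. The differential of $\Psi$ at a Fuchsian point can be identified via the cotangent identifications $T^{*}_{\mu}\bm\hat{\mc{T}}^n = \bigoplus_{m=2}^n H^0(K^m)$ (Theorem \ref{mainresultncomplex}) and the analogous Hitchin base description of $T^{*}\mc{T}^n$, yielding the identity on holomorphic differentials. Since both spaces are contractible manifolds of the same real dimension, it remains to verify properness: this I would deduce from the fact that the spectral curve of section \ref{Spectralcurve1} matches Hitchin's spectral curve in the limit $\l \to \infty$, so the Hitchin base provides a common proper target over which both moduli spaces fiber compatibly. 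A proper local diffeomorphism between connected smooth manifolds is a global diffeomorphism, completing the proof modulo the conjectural correspondence.
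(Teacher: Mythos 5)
First, be aware that the statement you are proving is stated in the paper only as a conjecture: the paper never establishes it unconditionally. What it proves is Theorem \ref{mainthmm}, which derives the diffeomorphism \emph{assuming} Conjecture \ref{nahc} (existence and uniqueness, up to unitary gauge and finite extra data, of the flat family $\mc{A}(\l)=\l\Phi+A+\l^{-1}\Phi^*$ with the reality condition, together with real monodromy when all $t_k=0$). Your construction of $\Psi$ is exactly this conjecture restricted to the zero-section $t=0$, and your sketch (i)--(iii) for attacking it --- $\l$-graded Hitchin-type equations, parabolic gauge, the Toda system at $\mu=0$, continuation in $\mu$ --- reproduces the partial results of section \ref{finalstep} (Propositions \ref{linktohiggs} and \ref{monodromyreal} and the generalized Toda discussion). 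So up to that point you are reconstructing the paper's conditional argument, and, like the paper, you have not closed the analytic gap; your proof is a proof modulo the same conjecture.

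Where you genuinely diverge is the endgame, and there your argument has gaps the paper's version avoids. The paper concludes by showing that the image of $s:\T^n\to\Rep(\pi_1(\S),\PSL_n(\R))$ is closed and, by equality of dimensions, open, hence an entire connected component, which is then identified with $\mc{T}^n$ because at $\mu=0$ one recovers the connection of the non-abelian Hodge correspondence for the principal nilpotent Higgs field, i.e.\ Hitchin's section. You instead invoke ``a proper local diffeomorphism between connected manifolds is a global diffeomorphism''. Two problems: (a) you only examine $d\Psi$ at Fuchsian points, which does not make $\Psi$ a local diffeomorphism everywhere --- you would need invertibility of $d\Psi$ at every $[\mu]$, and nothing in your argument supplies it; (b) your properness claim rests on both spaces ``fibering compatibly over the Hitchin base'', but $\bm\hat{\mc{T}}^n$ does not fiber over a fixed Hitchin base: the space $\bigoplus_{m=2}^n H^0(K^m)$ of Theorem \ref{mainresultncomplex} is the cotangent \emph{fibre} at $\mu$, holomorphicity being taken with respect to the complex structure induced by $\mu$, and the spectral-curve comparison of section \ref{Spectralcurve1} is only valid modulo $t^2$. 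The paper's open-and-closed argument needs neither properness nor a pointwise computation of $d\Psi$; you should either adopt it in place of your final paragraph, or supply the missing local injectivity everywhere together with an honest properness proof.
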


The whole second part of this thesis is devoted to attack this conjecture.
The basic idea is that an $n$-complex structure naturally gives a bundle with a matrix-valued 1-form which can be deformed to a flat connection.

\subsection{Induced bundle}\label{indbundle}

A higher complex structure, and more generally a point in $\cotang$, naturally gives a bundle with some extra structure on $\S$. 

To any point in the Hilbert scheme bundle $\Hilb^n(T^{*\C}\S)$, we can canonically associate a vector bundle $V$ of rank $n$ whose fiber over a point $z$ is $\mathbb{C}[p,\bar{p}]/I(z, \bar{z})$.
We can glue these fibers together which gives an $n$-dimensional vector bundle over $\Sigma$. Locally, there is basis of the form $(s,ps,...,p^{n-1}s)$ for some generic section $s$ (the vector $s(z)$ has to be a cyclic vector for $(M_p, M_{\bar{p}})$ for all $z$). Under a coordinate change $z\mapsto w(z)$, this basis transforms in a diagonal way since $p^k \mapsto (\frac{dw}{dz})^kp^k$, so we get a bundle which is a direct sum of line bundles.

The matrix viewpoint of the punctual Hilbert scheme gives a $\mf{sl}_n$-valued 1-form locally of the form $M_pdz+M_{\bar{p}}d\bar{z}$ which acts on this bundle (locally $M_p$ by multiplication by $p$, $M_{\bar{p}}$ by multiplication by $\bar{p}$).
If $I$ is a higher complex structure, then we get the 1-form locally expressed as $$\Phi_1(z, \bar{z})dz+\Phi_2(z, \bar{z})d\bar{z}$$ where $(\Phi_1, \Phi_2)$ is a pair of commuting nilpotent matrices. In that case there is a preferred direction in each fiber: the common kernel of both $\Phi_1$ and $\Phi_2$. In our local basis it is generated by $p^{n-1}s$.

Since higher diffeomorphisms act on the ideals $I(z, \bar{z})$, they also act on $V$. This action does not preserve the direct sum decomposition into line bundles. If $V$ comes from a higher complex structure, we show that there is a flag structure on $V$ which is preserved. Put $I_k=I+\langle p,\bar{p} \rangle^k$, so we have $I_0=\C[p, \bar{p}] \supset I_1 \supset ...\supset I_n = I$. Define $$F_{n-k}=\ker (\mathbb{C}[p,\bar{p}]/I(z, \bar{z}) \rightarrow \C[p, \bar{p}]/I_k).$$ Then, the $F_k$ form an increasing complete flag with $\dim F_k=k$.
Locally take a basis $(s,ps,...,p^{n-1}s)$ and a symplectomorphism generated by $H$. An element $p^ks$ changes by $$\{H, p^ks\}=p^k\{H,s\}+kp^{k-1}(\del H)s.$$ Since $H$ has no constant terms, we get only terms with factor $p^l$ with $l\geq k$. So the action of $\Symp_0$ on $V$ is lower triangular, the flag structure is preserved.

\medskip
\noindent In order to exploit the full strength of the definition of a higher complex structure using the punctual Hilbert scheme, we analyze the latter in more detail.

\section{More on punctual Hilbert schemes}\label{moreonhilbertschemes}

In this section we describe the symplectic structure of $\Hilb^n(\C^2)$, study an interesting subspace, the reduced Hilbert scheme, and we discuss the hyperkähler structure of $\Hilb^n(\C^2)$.

\subsection{Symplectic structure}\label{hilbertschemesympl}

The punctual Hilbert scheme inherits a complex symplectic structure from the space of $n$ points $(\mathbb{C}^2)^n$. Denoting by $(x_i,y_i)$ the coordinates of the $i$-th point, the symplectic structure is simply $\omega=\sum_i dx_i\wedge dy_i$. Let us show that this expression extends to the Hilbert scheme. For this, we express $\omega$ in terms of the coordinates $t_k$ and $\mu_k$.
Denote by $M_x$ and $M_y$ the multiplication operators by $x$ and $y$ respectively in $\mathbb{C}[x,y]/I$ where $I$ is a generic element of $\Hilb^n(\mathbb{C}^2)$: $$I=\left\langle x^n-t_1x^{n-1}-...-t_n, -y+\mu_1+\mu_2x+...+\mu_nx^{n-1} \right\rangle.$$
Since $x^n-t_1x^{n-1}-...-t_n = \prod_i (x-x_i)$, we see that $M_x$ can be diagonalized to $\diag(x_1,...,x_n)$. Since $y=Q(x)$, we get $M_y=Q(M_x)$. So its diagonalized form is $\diag(Q(x_1),...,Q(x_n)) = \diag(y_1,...,y_n)$ since $Q$ is the interpolation polynomial. Since the trace is unchanged by conjugation, we get
$$\omega = \tr \diag(dx_1,...,dx_n)\wedge\diag(dy_1,...,dy_n) = \tr dM_x\wedge dM_y.$$
We have seen in the matrix viewpoint of the Hilbert scheme \ref{hilbmatrixviewpoint}, that in the basis $(1,x,x^2,...,x^{n-1})$, $M_x$ is a companion matrix, so $dM_x$ has only non-zero elements in the last column. We have also seen that $M_y$ is uniquely determined by its first column. Denote by $\alpha_{i,j}$ the matrix elements of $M_y$ in this basis. These can be expressed in terms of $\mu$ and $t$. Then we have 
\begin{equation}\label{symphilbb}
\omega = \tr dM_x\wedge dM_y = \sum_i dt_i\wedge d\alpha_{n,n+1-i}.
\end{equation}
This gives the complex symplectic structure on $\Hilb^n(\mathbb{C}^2)$. We see in particular that $t_i$ is not conjugated to $\mu_i=\alpha_{i,1}$ but to $\alpha_{n,n+1-i}$.

\begin{Remark}
An explicit formula for $\omega$ in coordinates $(t_i, \mu_i)$ can be given, using the notation for partitions (see page \pageref{notations2}): $$\omega = \sum_{i=1}^n \sum_{k=0}^{n-i} \left(\sum_{\nu \vdash k}(-1)^{\lvert \nu \rvert}\frac{\lvert \nu \rvert !}{\prod_j \nu_j !}t_1^{\nu_1}\cdots t_k^{\nu_k} \right) d\mu_{i+k}\wedge dt_i.$$
\hfill $\triangle$
\end{Remark}

\begin{Remark}
The symplectic structure on the Hilbert scheme can also be described in terms of the Poisson bracket between the coordinates. Computations give the surprisingly easy brackets: $$\{t_i,t_j\} = 0 = \{\mu_i,\mu_j\} \text{ and } \{\mu_i,t_j\} = t_{j-i}$$ with $t_0=1$ and $t_i=0$ for $i<0$.

To prove these expressions, one first computes the Poisson brackets of the two polynomials $P(z)=\sum_{i=1}^n t_iz^{n-i}$ and $Q(z)=\sum_{i=1}^{n}\mu_iz^{i-1}$. One gets $$\{P(z),P(w)\}=0=\{Q(z),Q(w)\} \text{ and } \{ P(z),Q(w)\} =-\frac{P(z)-P(w)}{z-w}.$$
\hfill $\triangle$
\end{Remark}

Haiman \cite{Haiman} described a way to find coordinates of $\Hilb^n(\C^2)$ in the chart associated with a Young diagram $D$ (see subsection \ref{structurehilbscheme}). For each box $B_x \in D$ consider the rightmost box $B_r \in D$ in the same row as $B_x$ and the bottommost box $B_b \in D$ in the same column as $B_x$ (see figure \ref{Haimancoo}). The box $B_{r+1}$ to the right of $B_r$ is not in $D$, so gives a linear combination of boxes in $D$. Denote by $b_{x,r}$ the coefficient of $B_b$ in this linear combination. Similarly, denote by $b_{x,b}$ the coefficient of $B_r$ in the linear combination associated to the box $B_{b+1}$ at the bottom of $B_b$. Haiman shows that the set $\{b_{x,r}, b_{x,b}\}_{x\in D}$ is a coordinate system. For example the Young diagram associated to the basis $(1,x,...,x^{n-1})$ has Haiman coordinates $(t_k, \alpha_{n, n+1-k})_{1\leq k \leq n}$.
\begin{figure}[h]
\centering
\includegraphics[height=2cm]{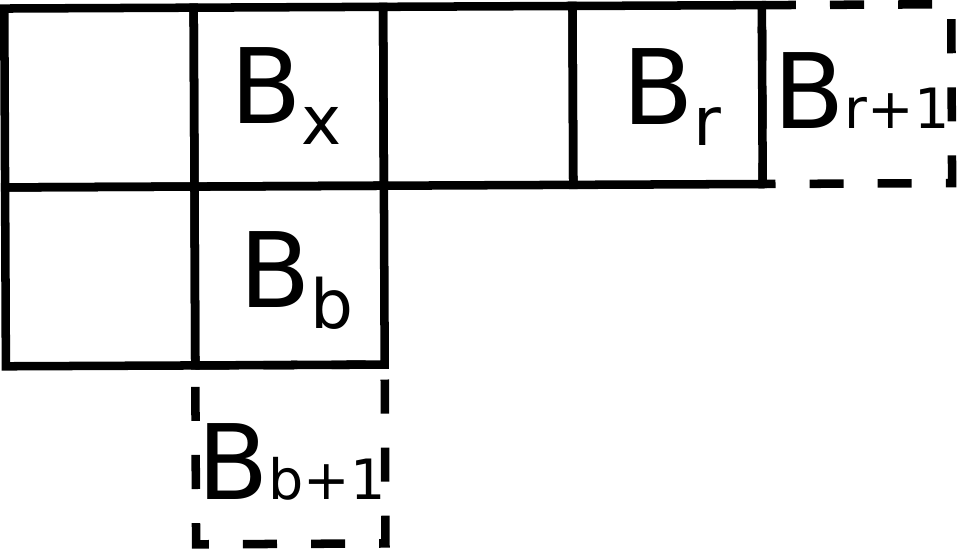}

\caption{Haiman's coordinates}
\label{Haimancoo}
\end{figure}
We have even more:
\begin{prop}
Haiman coordinates $\{b_{x,r}, b_{x,b}\}_{x\in D}$ are canonical coordinates with respect to the symplectic structure of the punctual Hilbert scheme.
\end{prop}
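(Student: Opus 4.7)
The plan is to evaluate $\omega = \tr(dM_x \wedge dM_y)$ from \eqref{symphilbb} directly in the chart $U_D$ and match it with $\sum_{x \in D} db_{x,r} \wedge db_{x,b}$. First, I would describe the combinatorial structure of the matrices in the basis $\mathcal{B}_D = \{e_{(i,j)} : (i,j) \in D\}$: the column of $M_x$ indexed by $(i,j)$ encodes the expansion of $e_{(i+1,j)} = x \cdot e_{(i,j)}$ in $\mathcal{B}_D$, so it is rigid (a single $1$ in row $(i+1,j)$) whenever $(i+1,j) \in D$ and is a ``boundary'' column carrying all the varying entries whenever $(i+1,j) \notin D$. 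Hence the only nontrivial columns of $dM_x$ are those indexed by the rightmost boxes $B_r$ of the rows of $D$, and symmetrically the nontrivial columns of $dM_y$ are indexed by the bottommost boxes $B_b$ of the columns.

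Expanding $\tr(dM_x \wedge dM_y) = \sum_{p,q} (dM_x)_{p,q} \wedge (dM_y)_{q,p}$, only pairs with $q$ a rightmost box and $p$ a bottommost box can survive. For each box $B_x \in D$, the definition of the Haiman coordinates immediately identifies the pivot entry $(M_x)_{B_b,B_r}$ (with $B_r$ rightmost in the row of $B_x$ and $B_b$ bottommost in the column of $B_x$) as $b_{x,r}$, and symmetrically $(M_y)_{B_r,B_b} = b_{x,b}$. Collecting these diagonal pivot contributions produces exactly the desired Darboux sum $\sum_{x \in D} db_{x,r} \wedge db_{x,b}$, which already gives the claim on the tangent space at the monomial ideal $I_D \in U_D$, where all non-pivot entries of boundary columns vanish identically.

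The main technical obstacle is handling the ``off-diagonal'' pairs $(B_r,B_b)$ whose combinatorial intersection box $(\operatorname{col}(B_b),\operatorname{row}(B_r))$ lies outside $D$, together with the fact that, away from $I_D$, the non-pivot entries of the boundary columns become nontrivial polynomials in the Haiman coordinates imposed by the constraint $[M_x,M_y]=0$. To resolve this I would proceed as follows: linearize $[M_x,M_y]=0$ to solve for each non-pivot entry as a polynomial in the Haiman coordinates, substitute those expressions back into the trace, and verify, either by direct combinatorial bookkeeping or by a weight argument under the natural $(\C^*)^2$-action on $\Hilb^n(\C^2)$ by rescaling $(x,y)$, that all cross-terms cancel pairwise (possibly after a triangular redefinition of the Haiman coordinates absorbing lower-order corrections). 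The torus-equivariance of $\omega$ and of the Haiman coordinates, each of definite bi-weight determined by the relative positions of $B_{r+1},B_b$ or $B_r,B_{b+1}$, tightly constrains the possible cross-terms and provides the conceptual guide through the bookkeeping; combined with the evaluation at the torus-fixed point $I_D$ it propagates the Darboux identity to the whole chart $U_D$.
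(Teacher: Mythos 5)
Your reduction of $\omega=\tr(dM_x\wedge dM_y)$ to a sum over pairs (rightmost box of a row, bottommost box of a column) is exactly the computation the paper performs in its proof, and you have correctly isolated the one step that the paper's three-line calculation passes over in silence: the pairs $(B_r,B_b)$ whose intersection box lies outside $D$ contribute cross-terms that are not of the form $db_{x,r}\wedge db_{x,b}$. The gap is that these cross-terms do \emph{not} cancel, and no torus-weight argument can make them: two boxes of $D$ lying on the same diagonal produce coordinate pairs with identical $(\C^*)^2$-bi-weights, so equivariance only constrains $\omega$ block-by-block on weight spaces that are generally of dimension greater than one, and inside such a block the pairing need not be diagonal in the Haiman basis.

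Concretely, take $n=3$ and $D=\{1,x,y\}$, with $x^2\equiv a_1+a_2x+a_3y$, $xy\equiv b_1+b_2x+b_3y$, $y^2\equiv c_1+c_2x+c_3y$ modulo $I$. The Haiman pairs are $(a_3,c_2)$, $(a_2,b_2)$, $(b_3,c_3)$, while the trace gives
\begin{equation*}
\tr(dM_x\wedge dM_y)=da_3\wedge dc_2+da_2\wedge db_2+db_3\wedge dc_3+db_2\wedge db_3,
\end{equation*}
and the extra term $db_2\wedge db_3$ is a nonvanishing $2$-form because $b_2$ and $b_3$ are independent coordinates on this chart (both $(a_2,b_3)$ and $(b_2,c_3)$ have weights $(q,t)$, which is why the weight argument cannot exclude this term). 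One can check against $\omega=\sum_i dx_i\wedge dy_i$ at the three points $(0,0),(1,0),(0,1)$: there $\omega(\partial_{x_3},\partial_{y_2})=0$, while $\sum_{x\in D} db_{x,r}\wedge db_{x,b}$ evaluates to $1$ on the same pair of tangent vectors. So the trace computation confirms \eqref{symphilbb} but refutes the claimed Darboux identity in this chart (in the single-row chart no cross-pairs occur, which is why the principal-chart example works). Your parenthetical escape route, a triangular redefinition of the coordinates (here $a_2\mapsto a_2-b_3$ absorbs the cross-term), is where the truth actually lies, but then the statement proved concerns modified coordinates rather than the Haiman coordinates themselves. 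As written, neither your argument nor the paper's establishes the proposition, and the example above shows the diagonal identity cannot be obtained by cancellation alone.
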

\begin{proof}
We have seen in equation \eqref{symphilbb} that the symplectic structure on the Hilbert scheme reads $\omega = \tr dM_x\wedge dM_y$.
Changing to the base adapted to the Young diagram $D$ (basis generated by monomials $x^iy^j$ where $(i,j) \in D$), the matrix $M_x$ becomes a matrix $N_x$ with entries 1 on the line under the main diagonal, apart from some columns where the linear combination associated to some $B_{r+1}$ is written. Similarly, the matrix $M_y$ becomes a matrix $N_y$ with entries 1 on the line under the main diagonal, apart from some columns where the linear combination associated to some $B_{b+1}$ is written. Finally, we compute
\begin{align*}
\omega &= \tr dM_x\wedge dM_y \\
&= \tr dN_x \wedge dN_y \\
&= \sum_{x\in D} db_{x,r}\wedge db_{x,b}.
\end{align*}
\end{proof}
\begin{Remark}
Using Haiman coordinates, the homology of $\Hilb^n(\C^2)$ can be computed easily.
\end{Remark}

\subsection{Reduced Hilbert scheme}\label{reduced-hilb-scheme}

It is easy to show that the zero-fiber $\Hilb^n_0(\C^2)$ is isotropic in $\Hilb^n(\C^2)$, but it cannot be Lagrangian because of the dimensions (the zero-fiber is of dimension $n-1$ whereas the Hilbert scheme is of dimension $2n$). We wish to define a symplectic subspace of $\Hilb^n(\C^2)$ such that $\Hilb^n_0(\C^2)$ becomes Lagrangian. This will be achieved with the reduced Hilbert scheme.

\begin{definition}
The \textbf{reduced Hilbert scheme} $\Hilb^n_{red}(\C^2)$ is the space of all elements of $\Hilb^n(\C^2)$ whose image under the Chow map ($n$ points counted with multiplicity modulo order) has barycenter the origin.
\end{definition}

Since the barycenter is independent of the order of the points, the punctual Hilbert scheme $\Hilb^n(\C^2)$ is topologically a direct product of $\mathbb{C}^2$ and the reduced Hilbert scheme. Thus the dimension of $\Hilb^n_{red}(\mathbb{C}^2)$ is $2n-2$. 

In the matrix viewpoint, the reduced Hilbert scheme gives matrices in $\mf{sl}_n$:

\begin{prop}[Matrix viewpoint]
$$\Hilb^n_{red}(\mathbb{C}^2) \cong \{(A,B) \in \mf{sl}_n^2 \mid [A,B]=0, (A,B) \text{ admits a cyclic vector}\} / \SL_n.$$
\end{prop}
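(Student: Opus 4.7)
The plan is to deduce this matrix description directly from the matrix viewpoint for the full Hilbert scheme (Proposition \ref{bijhilbert}) by translating the condition ``barycenter at the origin'' into linear-algebraic terms. Since the bijection $I \mapsto [(M_x, M_y)]$ of Proposition \ref{bijhilbert} already identifies $\Hilb^n(\C^2)$ with conjugacy classes of commuting pairs with cyclic vector, it suffices to check two things: that the reduced condition on $I$ is equivalent to $M_x, M_y \in \mf{sl}_n$, and that one can replace the $\GL_n$-quotient by an $\SL_n$-quotient without changing the orbit space.

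For the first point, I would show that $\tr M_x$ equals the sum (with multiplicity) of the $x$-coordinates of the support of $I$, and similarly for $M_y$. In the generic chart with basis $(1,x,\ldots,x^{n-1})$ the matrix $M_x$ is the companion matrix of the polynomial $x^n - t_1 x^{n-1} - \cdots - t_n = \prod_i(x-x_i)$, so $\tr M_x = t_1 = \sum_i x_i$, and $M_y = Q(M_x)$ gives $\tr M_y = \sum_i Q(x_i) = \sum_i y_i$. For arbitrary (possibly non-generic) $I$, the same identity holds because the characteristic polynomial of $M_x$ equals the product $\prod_i (x - x_i)^{m_i}$ over support points with multiplicities $m_i = \dim \C[x,y]/(I + \langle x-x_i, y-y_i\rangle^{N})$ for $N$ large, so the sum of eigenvalues with multiplicity recovers the $x$-barycenter times $n$. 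Hence $I \in \Hilb^n_{red}(\C^2)$ iff $\tr M_x = \tr M_y = 0$, i.e.\ $(M_x, M_y)\in \mf{sl}_n^2$.

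For the quotient, I would note that the cyclic vector hypothesis implies the stabilizer of $(M_x, M_y)$ in $\GL_n$ is exactly the scalars $\C^*\cdot \id$, so the action of $\GL_n$ factors through $\PGL_n$. Given any $g \in \GL_n$, write $g = \lambda h$ with $\lambda^n = \det g$ and $h = \lambda^{-1} g \in \SL_n$; since scalars act trivially by conjugation, the $\GL_n$-orbit and the $\SL_n$-orbit of a pair agree. Combined with the previous paragraph, this yields the claimed bijection.

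The only delicate step is the trace-equals-sum-of-support-points assertion in the non-generic case, which could alternatively be handled by a density argument: both sides define $\GL_n$-invariant continuous functions on $\Hilb^n(\C^2)$ agreeing on the dense open stratum of $n$ distinct points, hence everywhere. I expect this to be the main obstacle in making the argument fully rigorous, but it is essentially routine.
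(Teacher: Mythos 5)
Your argument is correct and is essentially the one the paper has in mind: the paper states this proposition without proof, immediately recording only the coordinate consequences ($t_1=\tr M_p=0$ and the resulting formula for $\mu_1$), so your reduction to Proposition \ref{bijhilbert} plus the two trace conditions and the $\GL_n$-versus-$\SL_n$ comparison fills in exactly the intended reasoning. The identification of $\tr M_x$ with the sum of the $x$-coordinates of the support (with multiplicity) is fine both in the generic chart and in general, via the decomposition of $\C[x,y]/I$ into its local pieces at the support points, or by your density argument. One assertion you make along the way is false, though harmlessly so: the stabilizer in $\GL_n$ of a commuting pair $(A,B)$ admitting a cyclic vector is not $\C^*\cdot\id$ but the group of units of the $n$-dimensional algebra $\C[A,B]\cong\C[x,y]/I$ (for $n$ distinct points it is the full diagonal torus in the common eigenbasis). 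Your conclusion that $\GL_n$-orbits coincide with $\SL_n$-orbits does not need this claim --- it follows, as you then say, from writing any $g\in\GL_n$ as $\lambda h$ with $\lambda^n=\det g$ and $h\in\SL_n$ and using that central scalars act trivially under conjugation --- so the proof stands once the stabilizer remark is deleted.
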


In our coordinates, we get $t_1=0$ since $0=\tr M_p = t_1$. Further, since $M_{\bar{p}}=\mu_1 \id+\mu_2M_p+...+\mu_nM_p^{n-1}$ (see equation \eqref{matrixtwo}), we deduce
\begin{equation}\label{mu1value}
\mu_1 = -\frac{1}{n}\sum_{k=2}^{n-1}\mu_{k+1}\tr M_p^k = -\sum_{k=2}^{n-1} \frac{k}{n}t_k \mu_{k+1} \mod t^2.
\end{equation}

There is a third way to get the reduced Hilbert scheme: by a symplectic quotient. This construction has the advantage to directly provide that $\Hilb^n_{red}(\C^2)$ is symplectic and that the zero-fiber $\Hilb^n_0(\C^2)$ is a Lagrangian subspace.

Consider the action of $\mathbb{C}$ on $\mathbb{C}^2$ given by translation in the $y$-direction. This action induces an action on $(\mathbb{C}^2)^n$, which is hamiltonian with moment map $H(x,y)=x_1+\cdots+x_n$. Let us do the hamiltonian reduction: first we restrict to $H^{-1}(\{0\})$ which corresponds to $t_1=0$ since by Vieta's correspondence $t_1=x_1+\cdots x_n=H(x,y)$. Then we have to quotient out the action which means that we have to identify $y$ with the shifts $y+y_0$ for $y_0\in \C$.
Geometrically, we can use this shift to set the barycenter of the $n$ points to the origin. With this $\mu_1$ becomes a function of the other $\mu$'s and the $t$'s given by equation \eqref{mu1value}.

Therefore, we can define the reduced punctual Hilbert scheme of the plane to be the hamiltonian reduction $\Hilb^n(\mathbb{C}^2)\sslash \mathbb{C}$.
Hence, it inherits a symplectic structure. This symplectic structure is simply the one from $\Hilb^n(\mathbb{C}^2)$ with $t_1=0$ ($\mu_1$ only appears together with $t_1$ so also disappears). 

Now we can announce the main property:
\begin{prop}\label{lagr}
The zero-fiber $\Hilb^n_0(\mathbb{C}^2)$ is Lagrangian in $\Hilb^n_{red}(\mathbb{C}^2)$.
\end{prop}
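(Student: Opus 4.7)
The plan is to verify the two defining conditions for a Lagrangian subvariety: \emph{half dimension} and \emph{isotropy} of the symplectic form.

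First I would settle the dimension count. From the discussion before the proposition, $\Hilb^n(\C^2)$ has dimension $2n$, and the hamiltonian reduction by the one-dimensional group $\C$ (translation in the $y$-direction) drops the dimension by $2$, giving $\dim \Hilb^n_{red}(\C^2) = 2n-2$. On the other hand, the zero-fiber $\Hilb^n_0(\C^2)$ has dimension $n-1$ by the structure theorem for the punctual Hilbert scheme. Moreover, the zero-fiber automatically lies inside $\Hilb^n_{red}(\C^2)$: when all $n$ points collapse to the origin, the barycenter is the origin, and in coordinates this corresponds to $t_k=0$ for all $k$ (and $\mu_1=0$ by formula \eqref{mu1value}). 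So $\dim \Hilb^n_0 = \tfrac{1}{2}\dim \Hilb^n_{red}$, as needed.

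Next, for isotropy, I would use the explicit expression of the symplectic form obtained in \eqref{symphilbb}, namely $\omega = \sum_i dt_i \wedge d\alpha_{n,n+1-i}$. This expression descends from $\Hilb^n(\C^2)$ to the symplectic quotient $\Hilb^n_{red}(\C^2) = \Hilb^n(\C^2)\sslash \C$ simply by restricting to $t_1=0$ and noting that $\mu_1$ only appears paired with $t_1$. Hence on the reduced space the form reads
\[
\omega_{red} = \sum_{i=2}^n dt_i \wedge d\alpha_{n,n+1-i}.
\]
The zero-fiber $\Hilb^n_0(\C^2)$ is cut out inside $\Hilb^n_{red}(\C^2)$ by the equations $t_2=\cdots=t_n=0$. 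Therefore, for any pair of tangent vectors $v,w$ to $\Hilb^n_0(\C^2)$, one has $dt_i(v) = dt_i(w) = 0$ for $i\geq 2$, so $\omega_{red}(v,w)=0$. This gives isotropy.

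Putting the two steps together, $\Hilb^n_0(\C^2)$ is an isotropic submanifold of $\Hilb^n_{red}(\C^2)$ of half the dimension, hence Lagrangian. The main subtle point that would need care is the first step: making sure the zero-fiber genuinely sits inside the reduced space (i.e.\ that the barycenter condition is automatic on the zero-fiber) and that the formula \eqref{symphilbb} for $\omega$, derived in a \emph{generic} chart, is valid along the zero-fiber where coordinates $(t_k,\mu_k)$ are only valid on the open set of cyclic ideals. For the generic (Gorenstein) points of $\Hilb^n_0$ everything is fine; at the non-generic points of the zero-fiber the statement then follows by continuity of $\omega_{red}$ and density of these generic points in $\Hilb^n_0(\C^2)$ (which is irreducible by the structure theorem recalled earlier).
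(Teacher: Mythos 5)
Your proposal is correct and follows essentially the same route as the paper's proof: the form $\omega=\sum_i dt_i\wedge d\alpha_{n,n+1-i}$ vanishes on the zero-fiber because all $t_i$ do, and the dimension $n-1$ is half of $2n-2$. The extra care you take about non-generic points of the zero-fiber (where the $(t_k,\mu_k)$ chart fails), handled by density and continuity, is a reasonable refinement that the paper leaves implicit.
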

\begin{proof}
Since on the zero-fiber, all $t_i$ vanish, we see that $\omega= \sum_i dt_i\wedge d\alpha_{n,n+1-i}$ vanishes as well. Since its dimension is $n-1$ which is half the dimension of the reduced Hilbert scheme, we are done.
\end{proof}

\noindent As a corollary, we get that the cotangent space to the zero-fiber $\Hilb^n_0(\mathbb{C}^2)$ is given by its normal bundle inside the reduced Hilbert scheme (that is a general property for Lagrangians using the symplectic form). At first order, i.e. modulo $t^2$ (terms which are at least quadratic in the $t_i$'s), the normal bundle coincides with the whole space $\Hilb^n_{red}(\mathbb{C}^2)$. Thus we can write:
\begin{equation}\label{hilbmodt2}
T^*\Hilb^n_0(\mathbb{C}^2) \cong \Hilb^n_{red}(\mathbb{C}^2) \mod t^2.
\end{equation}

\subsection{Hyperk\"ahler structure}\label{hkhilbertscheme}

The punctual Hilbert scheme inherits from $\C^2$ a hyperkähler structure, a very rich geometric structure. We give first a small overview of hyperkähler geometry using references \cite{Hit.4} and \cite{Nakajima}. Then we analyze the case of the punctual Hilbert scheme.

\subsubsection{Generalities on hyperk\"ahler structures}\label{generalitiesHK}

Roughly speaking, a hyperkähler manifold is modeled on the quaternions. In particular it has a 1-parameter family of Kähler structures. More precisely, a \textbf{hyperk\"ahler manifold}, or in short HK-manifold, is a Riemannian manifold $M$ equipped with three covariant constant (with respect to the Levi-Civita connection) orthogonal automorphisms $I, J$ and $K$ of the tangent bundle which satisfy the quaternionic identities $I^2 = J^2 = K^2 = IJK = -1.$

Any combination $I(\alpha, \beta, \gamma) = \alpha I +\beta J + \gamma K$ satisfies $I(\alpha, \beta, \gamma)^2=-\id$ iff $\alpha^2+\beta^2+\gamma^2 = 1$. So we have a 1-parameter family of complex structures on $M$, parameterized by a sphere. We write this parameter as $\l = (\alpha, \beta, \gamma) \in \C P^1$. Using the Riemannian metric, we get also a family of symplectic structures which combine with the complex structures to a family of Kähler structures.  The three symplectic structures associated with $I, J$ and $K$ are denoted by $\omega_1, \omega_2$ and $\omega_3$. 
In complex structure $I$, the complex form $\omega_\C = \omega_2+i\omega_3$ is holomorphic. In this complex structure, we write $\omega_\R = \omega_1$.

Hyperkähler structures are quite rigid. On a compact manifold, there can be at most a finite-dimensional space of hyperkähler structures. A theorem of Gray asserts that a HK-submanifold of a HK-manifold is totally geodesic (see \cite{Hit.4} and references therein). In particular no subvariety of $\H P^n$ is hyperkähler (neither $\H P^n$). It is also known that every compact complex manifold with holomorphic symplectic structure is HK.
The only known examples of compact HK-manifolds are complex tori and the punctual Hilbert scheme of a K3-surface.
A HK-manifold has \emph{vanishing Ricci curvature tensor}, so it is a solution to Einstein's vacuum equations which makes them interesting for theoretical physics.

We have seen that it is difficult to get examples of HK-manifolds. There are two general constructions: the hyperkähler quotient construction and the twistor approach, both developed in \cite{HKLR}. The first is a generalization of a symplectic quotient: starting with a group $G$ acting on a HK-manifold $M$, under some assumptions there is a moment map $\mu: M \rightarrow \g\otimes \R^3$ and the quotient $\mu^{-1}(\{0\})/G$ inherits a HK-structure from $M$.
We describe the second with more detail (see also \cite{HKLR}).

The twistor approach allows to put all complex structures together. For that, take a hyperk\"ahler manifold $M$ and consider $X_M = \C P^1\times M$ endowed with the complex structure at the point $(\l, m)$ given by $I_{\l, m}=(I_0, I_\l)$ where $I_0$ is the standard structure of $\C P^1$ and $I_\l$ is the complex structure of $M$ associated to $\l \in \C P^1$. The space $X_M$ is called the \textbf{twistor space}.
The projection $X_M \rightarrow \C P^1$ is holomorphic. A holomorphic section $s$ of this projection is called a \textbf{twistor line}. Adding the reality constraint that $s$ should take complex conjugate values on diametral opposite points gives the notion of a real twistor line.

The essential theorem in the twistor approach to hyperk\"ahler manifolds is that \textit{the space of all real twistor lines is isomorphic as HK-manifold to the initial space $M$}. See at the end of subsection \ref{bigpicture} for a precise statement.
Roughly speaking, any pair of points in different fibers determine a twistor line. So a single point determines a unique real twistor line passing through it and its conjugated point. So we can identify the space of real twistor lines with a fiber, i.e. $M$.
%More precisely, any pair of points in different fibers determine a twistor line. So a single point determines a unique real twistor line passing through it and its conjugated point.

\subsubsection{Hyperk\"ahler structure of the punctual Hilbert scheme}
Let us turn to the punctual Hilbert scheme $\Hilb^n(\C^2)$ which inherits a HK-structure from $\C^2 \cong \H$. 
A general theorem of Beauville for two-dimensional manifolds $X$ asserts that if $X$ admits a holomorphic symplectic structure, so does $\Hilb^n(X)$ (see \cite{Nakajima} theorem 1.10). So we have a holomorphic symplectic structure on $\Hilb^n(\C^2)$.
If in addition $X$ is compact, then both $X$ and $\Hilb^n(X)$ are HK-manifolds. This does not directly apply to $\C^2$.

Nakajima's book \cite{Nakajima} (chapter 3) explains in detail the construction of $\Hilb^n(\C^2)$ as a hyperkähler quotient, using the matrix viewpoint.
We describe here shortly the twistor picture using references \cite{Etingof} and \cite{Boalch}. 

The punctual Hilbert scheme has a natural complex structure coming from $\C^2$. We denote this complex structure by $I$. Using the complex conjugation, we get another complex structure which is $-I$. In any other complex structure, we get the so-called \textbf{Calogero--Moser space} denoted here by $\Hilb^{n,q}(\C^2)$: $$\Hilb^{n,q}(\C^2) = \{(A,B)\in \GL_n(\C)^2\mid \rk ([A,B]-q\id)\leq 1\}/\GL_n(\C).$$ 
This definition is quite similar to the matrix description of the punctual Hilbert scheme in Proposition \ref{bijhilbert}. Indeed for $q=0$, it is an exercise that $\rk [A,B] \leq 1$ implies $[A,B]=0$ (see proposition 2.8 in \cite{Nakajima}, or exercise 2.2 in \cite{Boalch}). So we see that
$\Hilb^{n,0}(\C^2)$ is the punctual Hilbert scheme $\Hilb^n(\C^2)$.

There are multiple other ways to define the Calogero--Moser system. Is is the hamiltonian reduction of $T^*\mf{gl}_n$ by the $\GL_n$-conjugation action over the coadjoint orbit $\{q\id+L \mid \rk L \leq 1\}$. Still another way is to consider the non-commutative polynomial space $\C\langle x,y \rangle$ and quotient by the left-ideal generated by $yx=xy-q$. The multiplication operators $(M_x,M_y)$ are then in $\Hilb^{n,q}(\C^2)$.

There is a holomorphic $\C^*$-action on the Calogero-Moser space by scaling the matrices by a non-zero complex number $\l$. This changes $q$ to $q/\l^2$ so $\Hilb^{n,q}$ and $\Hilb^{n,q'}$ are biholomorphic for $q, q' \in \C^*$.  
Therefore, the twistor picture of the HK-structure on $\Hilb^n(\C^2)$ is as follows: at every complex structure apart from $\pm I$ we see the Calogero--Moser space, and at $\pm I$ we see the punctual Hilbert scheme (or its complex conjugate).

\subsubsection{Real symplectic structure of the punctual Hilbert scheme*}\label{realsympstructure}

In subsection \ref{hilbertschemesympl}, we computed the complex symplectic structure $\omega_\C$ of $\Hilb^n(\C^2)$ using $y_i=Q(x_i)$ (Lagrange interpolation) and the complex symplectic structure of $\C^2$.

Here we describe with more detail the real symplectic structure $\omega_\R$. It comes from the real symplectic structure of $\C^{2n}$. Using $(x_i, y_i)_{1\leq i \leq n}$ as complex coordinates of $\C^{2n}$ we have 
$$\omega_\R = \frac{i}{2}\sum_i (dx_i\wedge d\bar{x}_i + dy_i\wedge d\bar{y}_i).$$
This 2-forms is constant, so invariant under the action of the symmetric group $\mc{S}_n$. To see why it lifts to $\Hilb^n(\C^2)$, the blow-up of the configuration space, we express $\omega_\R$ in coordinates $(\mu_k, t_k)_{1\leq k \leq n}$ of $\Hilb^n(\C^2)$.

The same strategy as for $\omega_\C$ applies, but we have to push it further. We get a description for $\omega_\R$ modulo $t^2$. The reader might skip the description of $\omega_\R$ since we will not use it in the sequel.

\medskip
\noindent A generic point $I=\langle x^n+t_1x^{n-1}+...+t_n, -y+\mu_1+\mu_2x+...+\mu_nx^{n-1}\rangle$ of the Hilbert scheme admits $(1,x,x^2,...,x^{n-1})$ as basis for $\C[x,y]/I$. So we get a link between $x$ and $y$ via the Lagrange interpolation polynomial $Q$: $y=Q(x)=\sum_i \mu_ix^{i-1}$. The complex conjugates $\bar{x}$ and $\bar{y}$ can also be expressed in this basis. We define $m_i$ to be the coefficient of $\bar{x}$ for the basis element $x^{i-1}$, i.e.
$$\bar{x}=\sum_i m_ix^{i-1}.$$
We then get $\bar{y}=\sum_i \bar{\mu}_i \bar{x}^{i-1} = \sum_i \bar{\mu}_i (\sum_j m_j x^{j-1})^{i-1}$ which can be computed modulo $I$. In order to get $m_i$ as function of the coordinates $(\mu_k, t_k)_{1\leq k \leq n}$, we use the resultant.

Recall that the resultant of two polynomials $P_1$ and $P_2$ is a polynomial expression of their coefficients which is zero iff $P_1$ and $P_2$ have a common root. The resultant has an explicit expression given by the determinant of a matrix.
In our setting, consider the system
$$ \left \{\begin{array}{cl}
0 &= x^n-t_1x^{n-1}-...-t_n  \\
0 &= m_n x^{n-1}+...+m_2x+m_1-\bar{x}
\end{array}\right. $$
as a polynomial system in $x$ with coefficients in $\C [\bar{x}]$. So the first equation is of degree $n$ and the second of degree $n-1$. Since both equations have a common zero (any value for $x$), the resultant is 0:

$$0= \begin{vmatrix} 1 &  & & m_n & &  \\ -t_1 & \ddots & & \vdots & \ddots & \\ -t_2 & & 1 & m_2 & & \\ \vdots & \ddots & -t_1 & m_1-\bar{x} & & m_n \\ -t_n & &-t_2 & &\ddots & \vdots \\ &\ddots & \vdots & & & m_2 \\ & & -t_n & & & m_1-\bar{x} \end{vmatrix}.$$

The first column is repeated $n-1$ times (with a shift) and the $n$-th column is repeated $n$ times, so in total we get a matrix of size $2n-1$. It is clear that we get a polynomial equation in $\bar{x}$ of degree $n$. This equation has to be $$\bar{x}^n+\bar{t}_1\bar{x}^{n-1}+...+\bar{t}_n = 0$$ so we can compare coefficients to solve for  the $m_i$. An explicit computation gives $$\bar{t}_k = \sum_{l=k}^n t_l \left(\sum_{\substack{\pi \vdash l \\ \left| \pi \right| = k}}l\times\frac{(k-1)!}{\prod_j \pi_j!}m_2^{\pi_1}\cdots m_{l+1}^{\pi_l} \right) \mod t^2$$ where we use the notation for partitions (see page \pageref{notations2}).
Details for a completely analogous computation are given in section \ref{dualcomplexstructure} below.
We see in particular that $\bar{t}_n = t_nm_2^n$, so $$m_2 = \sqrt[n]{\bar{t}_n/t_n}.$$
The first appearance of $m_i$ (with $i>2$) is in the relation for $\bar{t}_{n+2-i}$ as a linear term, so we can solve for it. With this procedure, we can get an explicit expression for all $m_i \mod t^2$.

Then, we can compute $\bar{y}_i$. Since it appears via $d\bar{x}_i\wedge d\bar{y}_i$ in $\omega_\R$, we only need to compute $\bar{y}_i$ modulo $t$. We get 
\begin{align*}
\bar{y}_i =& \sum_{k=1}^n \bar{\mu}_k x^{k-1} = \sum_{k=1}^n \bar{\mu}_k (m_2x_i+...+m_nx_i^{n-1})^{k-1} \\
=& \sum_{j=1}^{n-1} x_i^j \left( \sum_{\pi \dashv j} \bar{\mu}_{\left| \pi \right|+1} \frac{\left| \pi \right|!}{\prod_m \pi_m!} m_2^{\pi_1}\cdots m_{j+1}^{\pi_j} \right) \; \mod t.
\end{align*}

To finish, put the expressions for $\bar{x}_i$ and $\bar{y}_i$ in $\omega_\R = \frac{i}{2}\sum_i dx_i\wedge d\bar{x}_i + dy_i\wedge d\bar{y}_i$ and write all symmetric expressions in $x_i$ with the coordinates $t_i$, the elementary symmetric polynomials in $x_i$.

For example take $n=2$. We get $m_2 = \sqrt{\bar{t}_2/t_2}$ and after some computation 
$$(-2i)\omega_\R = 2\left| t_2 \right| d\bar{\mu}_2\wedge d\mu_2 + \frac{1+\left| \mu_2 \right|^2}{2\left| t_2 \right|}dt_2\wedge d\bar{t}_2 + \frac{1}{\left| t_2 \right|}(\bar{\mu}_2t_2 \;d\bar{t}_2\wedge d\mu_2 + \mu_2\bar{t}_2 \;dt_2\wedge d\bar{\mu}_2).$$

\begin{Remark}
The knowledge of the coefficients $m_i \mod t^2$ allows us to compute any polynomial $P(x_i, \bar{x}_i, y_i, \bar{y}_i)$ which is invariant under the symmetric group $\mc{S}_n$ (acting separately on $x, \bar{x}, y$ and $\bar{y}$) in coordinates $(\mu_i, t_i)$ modulo $t^2$.

For example, take a polynomial $Q(x)=\sum_i t_i x^{n-i}$ of degree $n$ with roots $x_1, ..., x_n$. Of course there is no general formula for $x_i$ in terms of the coefficients $t_i$, but we can compute the sum of the distances to the origin of the roots modulo $t^2$ (where we use $\sum_i x_i^k = kt_k \mod t^2$):
$$\sum_i x_i\bar{x}_i = \sum_i x_i(m_1+m_2x_i+...+m_nx_i^{n-1}) = \sum_{j=2}^n j m_j t_j \mod t^2. $$
\hfill $\triangle$
\end{Remark}

\section{A spectral curve}\label{Spectralcurve1}

In this section, we exploit the symplectic structure of the Hilbert scheme in order to describe the total cotangent bundle $\cotang$, which is of paramount importance in part \ref{part2}, and to construct a spectral curve associated to a point in $\cotang$.

\subsection{Cotangent bundle of the moduli space of \texorpdfstring{$n$}{n}-complex structures}\label{cotangsection}
We already described the cotangent space at one point $I$ to $\T^n$ in Theorem \ref{mainresultncomplex}. Here we describe the total cotangent bundle $T^*\bm\hat{\mathcal{T}}^n$.

First we show that the Hilbert scheme bundle $\Hilb^n_{red}(T^{*\C}\S)$ inherits a symplectic structure from the punctual Hilbert scheme.
Recall that the symplectic structure of $\Hilb^n(\C^2)$ is given by $\sum_i dt_i\wedge d\alpha_{n,n+1-i}$ (see equation \eqref{symphilbb}).
Analyzing the global nature of the matrix elements $\alpha_{i,j}$ of $M_y$, one can easily show that $t_i(z)\alpha_{n,n+1-i}(z)$ is of type (1,1), so can readily be integrated over $\Sigma$. Thus, the symplectic structure of the Hilbert scheme naturally extends to a symplectic structure of the Hilbert scheme bundle $\Hilb^n(T^{*\mathbb{C}}\Sigma)$ (more precisely to the space of its sections) given by $$\omega = \int_{\Sigma} \sum_i dt_i(z)\wedge d\alpha_{n,n+1-i}(z).$$

\noindent By equation \eqref{hilbmodt2}, we see that near the zero-section, a point in $T^*\Hilb^n_{0}(T^{*\C}\S)$ can be identified with a section of the reduced Hilbert scheme bundle $\Hilb^n_{red}(T^{*\C}\S)$, so an ideal of the form $$\left\langle p^n+t_2(z)p^{n-2}+...+t_n(z),-\bar{p}+\mu_1(z)+\mu_2(z)p+...+\mu_n(z)p^{n-1} \right\rangle.$$
To go to $\cotang$ we have to quotient out the action by higher diffeomorphisms.

We already computed the variation of $\mu_k$ under a Hamiltonian in Proposition \ref{varmu}. Imitating the same computation as for the cotangent space at one point, but now around an arbitrary $n$-complex structure $(\mu_2,...,\mu_n)$, we get the following:

\begin{thm}[Condition $(\mc{C})$]\label{conditionC}
The cotangent bundle of the moduli space $\T^n$ is given by 
\begin{align*}
T^*\bm\hat{\mathcal{T}}^n= \Big\{& [(\mu_2, ..., \mu_n, t_2,...,t_n)] \mid  \mu_k \in \Gamma(K^{1-k}\otimes \bar{K}), t_k \in \Gamma(K^k) \text{ and } \; \forall k\\
& (-\bar{\partial}\!+\!\mu_2\partial\!+\!k\partial\mu_2)t_{k}+\sum_{l=1}^{n-k}((l\!+\!k)\partial\mu_{l+2}+(l\!+\!1)\mu_{l+2}\partial)t_{k+l}=0 \Big\}.
\end{align*}
\end{thm}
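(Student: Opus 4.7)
My plan is to carry out a Hamiltonian reduction at an arbitrary base point $\mu = (\mu_2,\ldots,\mu_n)$, generalising the computation at $\mu=0$ performed in the proof of Theorem~\ref{mainresultncomplex}. First I would identify tangent vectors at $[\mu]$ with tuples $\delta\mu = (\delta\mu_2,\ldots,\delta\mu_n)$, $\delta\mu_k\in\Gamma(K^{1-k}\otimes \bar K)$, considered modulo the infinitesimal $\Symp_0$-action. Because $t_k\,\delta\mu_k$ is of type $(1,1)$, every linear functional on this tangent space can be represented by a tuple $t=(t_2,\ldots,t_n)$ with $t_k\in\Gamma(K^k)$ via the natural pairing $\langle t,\delta\mu\rangle = \sum_{k=2}^n \int_\Sigma t_k\,\delta\mu_k$, and the task is to determine when this pairing descends to the quotient.

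To express the gauge-invariance condition I would invoke the simplification lemma \ref{simplificationlemma}: any Hamiltonian generating an infinitesimal higher diffeomorphism reduces modulo $I$ to $H = \sum_{k=2}^n v_k\,p^{k-1}$ with $v_k\in\Gamma(K^{k-1})$. Since the $v_k$ can be chosen independently, the vanishing of $\langle t,\delta\mu\rangle$ along gauge directions decouples into $n-1$ separate identities, one for each $k\in\{2,\ldots,n\}$:
\[
\sum_{l=k}^n \int_\Sigma t_l\,\delta\mu_l^{(k)} \;=\; 0 \qquad\forall\, v_k\in\Gamma(K^{k-1}),
\]
where $\delta\mu_l^{(k)}$ is the variation produced by the monomial Hamiltonian $v_k\,p^{k-1}$, read off from Proposition \ref{varmu}.

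I would then substitute the formula for $\delta\mu_l^{(k)}$ and integrate by parts on the closed surface $\Sigma$, so that all boundary terms vanish and the derivatives are moved off $v_k$ onto the $t_l$ and $\mu_\bullet$. The $l=k$ summand produces the operator $(-\bar\partial + \mu_2\,\partial + k\,\partial\mu_2)t_k$ in front of $v_k$. Each $l>k$ summand produces, after the index shift $l\mapsto k+l$ (so that $l$ now runs from $1$ to $n-k$), a term of the form $\bigl((l+k)\,\partial\mu_{l+2} + (l+1)\,\mu_{l+2}\,\partial\bigr)\,t_{k+l}$ acting on $v_k$. Since $v_k\in\Gamma(K^{k-1})$ is arbitrary, the coefficient must vanish identically, giving exactly condition $(\mc{C})$.

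The hardest part will be the careful bookkeeping of indices and global degrees. The upper bound $n-k$ of the sum originates from the reduction $p^n\equiv 0\pmod I$, which kills all contributions for $l>n$; the index shift to $\mu_{l+2}$ and $t_{k+l}$ must be tracked through the Poisson bracket with the monomial $v_k\,p^{k-1}$; and the precise coefficients $l+k$ and $l+1$ emerge only after combining the two terms produced by integrating the $\mu_{l+2}\,\partial$ component by parts (namely $\partial t_{k+l}\cdot\mu_{l+2}$ and $t_{k+l}\cdot\partial\mu_{l+2}$) with the original $\partial\mu_{l+2}$ contribution already present in the variation formula. Checking that each integrand is a genuine $(1,1)$-form on $\Sigma$ (using $t_k\in\Gamma(K^k)$, $v_k\in\Gamma(K^{k-1})$ and $\mu_k\in\Gamma(K^{1-k}\otimes\bar K)$) is routine but essential to ensure the reduction is globally well defined.
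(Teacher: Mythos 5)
Your proposal is correct and follows essentially the same route as the paper's proof: represent a cotangent vector by a tuple $t=(t_2,\dots,t_n)$ via the pairing $\sum_k\int_\Sigma t_k\,\delta\mu_k$, test against the monomial Hamiltonians $v_k p^{k-1}$ (after the simplification lemma), substitute the variation formula of Proposition \ref{varmu}, and integrate by parts to extract condition $(\mc{C})$ as the annihilation of the gauge directions. Your accounting of how the coefficients $l+k$ and $l+1$ arise from combining the integrated-by-parts terms matches the paper's computation exactly.
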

Notice that for $\mu=0$, we get $\bar{\partial}t_k=0$, i.e. $t_k \in H^0(K^k)$ a holomorphic differential as previously computed. The condition on $t_k$, to which we refer to as ``\textbf{condition} $\mathbf{(\mc{C})}$'', can be seen as a \textit{generalized holomorphicity condition}.
Notice further that we have to take the equivalence class $[(\mu_2, ..., \mu_n, t_2,...,t_n)]$ since we have to quotient out the action of higher diffeomorphisms.
\begin{proof}
We proceed as in the proof of Theorem \ref{mainresultncomplex}. We have $$T^*_\mu\bm\hat{\mathcal{T}}=\{t_2,...,t_n \mid t_i\in \Gamma(K^i) \text{ and } \int_{\Sigma}\sum_l t_l \delta \mu_l =0 \}.$$
To single out the condition on $t_k$, we choose a Hamiltonian $H=wp^{k-1}$. Using the formula for $\delta\mu_l$ given in Proposition \ref{varmu} and integration by parts, we compute:
\begin{align*}
\int_\S \sum_l t_l \delta \mu_l =& \int_\S t_{k}(\bar{\partial}\!-\!\mu_2 \partial\!+\!(k\! -\!1)\partial\mu_2)w+\sum_{l=1}^{n-k}t_{k+l}(-(l\!+\!1)\mu_{l+2}\partial+(k\!-\!1)\partial \mu_{l+2})w \\
=& \int_\S w \left((-\bar{\partial}\!+\!\mu_2\partial\!+\!k\partial\mu_2)t_{k}+\sum_{l=1}^{n-k}((l\!+\!k)\partial\mu_{l+2}+(l\!+\!1)\mu_{l+2}\partial)t_{k+l}  \right)  
\end{align*}

\noindent This has to be zero for all $w$, therefore we precisely get condition $(\mc{C})$.
\end{proof}

There is an alternative computation of the total cotangent space $T^*\bm\hat{\mathcal{T}}^n$ as hamiltonian reduction. 
We use the property that $T^*(X/G)=  (T^*X)\sslash G$ for a hamiltonian action of $G$ on some symplectic space $X$. Recall that $\Hilb^n_0(\C^2)$ is Lagrangian in $\Hilb^n_{red}(\C^2)$, so the cotangent space to the zero-fiber is the whole space modulo $t^2$ (see equation \eqref{hilbmodt2}).
\begin{align*}
T^*\bm\hat{\mathcal{T}}^n &= T^*\left(\Gamma(\Hilb^n_0(T^{*\mathbb{C}}\Sigma))/\Symp_0(T^*\Sigma)\right) & \\
&= \Gamma(T^*\Hilb^n_0(T^{*\mathbb{C}}\Sigma)) \sslash \Symp_0(T^*\Sigma)  \\
&= \Gamma(T^{normal}\Hilb^n_0(T^{*\mathbb{C}}\Sigma))\sslash \Symp_0(T^*\Sigma)   \\
&= \Gamma(\Hilb^n_{red}(T^{*\mathbb{C}}\Sigma)) \sslash \Symp_0(T^*\Sigma) & \mod t^2.
\end{align*}

Therefore, we see that $T^*\bm\hat{\mathcal{T}}^n$ is the hamiltonian reduction of the reduced Hilbert scheme bundle by higher diffeomorphisms:
\begin{equation}\label{hkquotientofmodulispace}
\cotang \cong \Gamma\left(\Hilb^n_{red}(T^{*\mathbb{C}}\Sigma)\right) \sslash \Symp_0(T^*\Sigma)  \mod t^2.
\end{equation}
%Indeed, the higher diffeomorphisms act on the complexified cotangent space $T^{*\mathbb{C}}\Sigma$ and so on the punctual Hilbert scheme bundle $\Hilb^n(T^{*\mathbb{C}}\Sigma)$. 
In the sequel, whenever we write an element of $\cotang$ as a $\Symp_0$-equivalence class of an ideal $I \in \Hilb^n_{red}(T^{*\mathbb{C}}\Sigma)$, we have to compute at first order in $t$. So we can consider \textit{the $t$'s to be infinitesimal small compared to the $\mu$'s}.

The space $\cotang$ is the main actor in part \ref{part2} because of the following conjecture:
\begin{conj}\label{hkcotang}
The cotangent bundle $\cotang$ admits a hyperkähler structure near the zero-section.
\end{conj}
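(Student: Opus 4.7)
The natural strategy is hyperkähler reduction, building on the HK-structure of $\Hilb^n(\C^2)$ from Section~\ref{hkhilbertscheme} and on the identification~\eqref{hkquotientofmodulispace}. The plan has three steps, followed by an identification of the complex moment map with condition~$(\mc{C})$.

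First, I would globalize the fiberwise HK-structure. Since $\Hilb^n(\C^2)$ inherits its HK-structure from $\C^2 \cong \H$, applying this pointwise to the cotangent spaces $T^{*\C}_z\Sigma$ (using a reference hermitian metric on $\Sigma$ to normalize scalings) formally equips the bundle $\Hilb^n(T^{*\C}\Sigma)$ with three fiberwise symplectic forms, which integrate over $\Sigma$ to three symplectic forms $\omega_1, \omega_2, \omega_3$ on the space of sections. The complex form $\omega_{\C} = \omega_2 + i\omega_3$ is the global extension of~\eqref{symphilbb}. The fiberwise HK-reduction by the abelian translation action of Section~\ref{reduced-hilb-scheme} then endows $\Gamma(\Hilb^n_{red}(T^{*\C}\Sigma))$ with its own formal HK-structure.

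Second, I would perform a HK-quotient by $\Symp_0(T^*\Sigma)$. This requires: (a) that $\Symp_0$ preserves all three symplectic forms; (b) the explicit computation of the three moment maps $\mu_{\R}, \mu_{\C}$; (c) the identification of $\mu_{\C} = 0$ with condition~$(\mc{C})$ of Theorem~\ref{conditionC}, so that the vanishing locus modulo $\Symp_0$ coincides with $\cotang$ via~\eqref{hkquotientofmodulispace}. Point (c) should be essentially tautological, since~\eqref{hkquotientofmodulispace} is already a holomorphic symplectic reduction. The ``mod $t^2$'' in that identification is precisely why the conjecture is stated only in a neighborhood of the zero-section.

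The main obstacle is the verification that $\Symp_0$ acts preserving all three Kähler structures, not just the holomorphic symplectic one. Invariance of $\omega_{\C}$ is transparent, but the action of a higher diffeomorphism in a generic alternative complex structure corresponds to an action on the Calogero--Moser space (Section~\ref{hkhilbertscheme}), where even a natural $\Symp_0$-action is not manifest; secondary issues are the usual infinite-dimensional analytic subtleties (slice theorems, smoothness of the quotient). An alternative and possibly more tractable route is the twistor construction from Section~\ref{generalitiesHK}: build a $\C P^1$-family of moduli spaces in which $\l = \infty$ recovers $\cotang$ and generic $\l$ produces the spaces of flat parabolic connections developed in part~\ref{part2}. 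From this viewpoint, Conjecture~\ref{hkcotang} becomes the geometric shadow of the non-abelian Hodge-like correspondence proposed in Conjecture~\ref{nahc}.
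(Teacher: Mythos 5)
You should be clear about the status of what you are proving: in the paper this statement is Conjecture \ref{hkcotang}, and the text offers exactly the lines of evidence you rehearse (hyperkähler quotient, Feix--Kaledin, twistor space) as \emph{motivation}, not as a proof. Your proposal is therefore not a proof but an expanded restatement of the paper's own programme, and the difficulties you flag are precisely the points at which the argument is genuinely open.

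Concretely, the step that is nowhere established is stage two, parts (a)--(b): you need $\Symp_0(T^*\Sigma)$ to act by isometries of the \emph{full} hyperkähler structure on $\Gamma(\Hilb^n_{red}(T^{*\C}\Sigma))$, i.e. to preserve $\omega_\R$ as well as $\omega_\C$, and to admit a real moment map. Only the invariance of the complex form is known; that is what makes \eqref{hkquotientofmodulispace} a holomorphic symplectic reduction, and your identification of $\mu_\C=0$ with condition $(\mc{C})$ of Theorem \ref{conditionC} is indeed tautological. But the real form $\omega_\R$ is computed only modulo $t^2$ in \ref{realsympstructure}, it depends on the auxiliary hermitian metric you introduce to identify each fiber $T^{*\C}_z\Sigma$ with $\H$ (so the putative HK structure upstairs is not even canonical), and no invariance under higher diffeomorphisms is proved; the general principle that a complex symplectic reduction agrees with a hyperkähler quotient presupposes exactly this, together with the infinite-dimensional analytic hypotheses you mention. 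The alternative twistor route reduces the statement to the existence and uniqueness of real twistor lines, which is Conjecture \ref{nahc} and equally open (the reality structure for nontrivial $\mu$ and the solvability of the generalized Toda system are the missing ingredients). So both of your routes terminate at the same unproved inputs as the paper's, and neither closes.
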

There are three good reasons to believe in the conjecture:
\begin{itemize}
\item[$\bullet$] \textit{Construction by hyperkähler quotient}: Equation \ref{hkquotientofmodulispace} points towards a possible hyperkähler reduction. 
Indeed under some mild conditions a complex symplectic reduction $X\sslash G^\C$ is isomorphic to a hyperkähler quotient $X/// G^\R$. In our case $X=\Gamma(\Hilb^n_{red}(T^{*\mathbb{C}}\Sigma))$ is hyperkähler, since $\Hilb^n_{red}(\C^2)$ is. So it is plausible that $\cotang$ can be obtained as hyperkähler quotient of $\Gamma(\Hilb^n_{red}(T^{*\mathbb{C}}\Sigma))$ by the real group $\Symp_0(T^*\S)$. Notice that the complexified Lie algebra of $\Symp_0(T^*\S)$, i.e. the space of smooth complex-valued functions on $T^*\S$, has the same action on $X$ as the real Lie algebra since a Hamiltonian $H$ acts the same as $H\mod I$ (simplification lemma \ref{simplificationlemma}).

\item[$\bullet$] \textit{Feix--Kaledin structure}: If Hitchin's component and our moduli space $\T^n$ are diffeomorphic, Hitchin's component gets a complex structure. With its Goldman symplectic structure, there is good hope to get a Kähler structure. A general result of Feix and Kaledin (see \cite{Feix} and \cite{Kaledin}) asserts that for a Kähler manifold $X$, there is a neighborhood of the zero-section in $T^*X$ which admits a hyperkähler structure. 

\item[$\bullet$] \textit{Construction by twistor approach}: In part \ref{part2} of this thesis we describe a 1-parameter deformation of $\cotang$ which is a good candidate to be the twistor space of $\cotang$.
\end{itemize}

\subsection{Spectral curve}\label{sspectral}
In this subsection, we construct a ramified cover $\bm\tilde{\Sigma}$ over the surface $\Sigma$ inside its complexi\-fied cotangent bundle $T^{*\mathbb{C}}\Sigma$ associated to a point of $\cotang$.

Define polynomials $P(p)=t_2p^{n-2}+...+t_n$ and $Q(p)=\mu_1+\mu_2 p+...+\mu_n p^{n-1}$ where $\mu_1$ is known from equation \eqref{mu1value}: $\mu_1=-\sum_{k=2}^{n-1} \frac{k}{n}t_k \mu_{k+1} \mod t^2$. Put $I= \left\langle -p^n+P(p), -\bar{p}+Q(p) \right\rangle$. Define $\bm\tilde{\Sigma} \subset T^{*\mathbb{C}}\Sigma$ by the equations $p^n=P$ and $\bar{p}=Q$. This curve $\bm\tilde{\Sigma}$ is called \textbf{spectral curve}. This is a ramified covering space with $n$ sheets.

\begin{thm}\label{spectralcurveprop}
We have $\{ -p^n+P,-\bar{p}+Q\} = 0 \mod I \mod t^2$ iff $I \in T^*\bm\hat{\mathcal{T}}^n$.
\end{thm}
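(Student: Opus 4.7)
The plan is to compute the Poisson bracket directly and show that vanishing of its reduction modulo $I$ and modulo $t^2$, coefficient by coefficient in $p$, is equivalent to the system of PDEs making up condition $(\mathcal{C})$ from Theorem \ref{conditionC}. The canonical symplectic form on $T^{*\C}\Sigma$ gives, with $f=-p^n+P$ and $g=-\bar{p}+Q$,
\begin{equation*}
\{f,g\}=(-np^{n-1}+P')\,\partial Q-\partial P\cdot Q'+\bar{\partial}P,
\end{equation*}
since $f$ is independent of $\bar{p}$ and the only $\bar{p}$-dependence of $g$ is linear. All terms but the first are $O(t)$ because $P$ is linear in $t$; the term $-np^{n-1}\partial Q$ is $O(1)$, but the apparent obstruction disappears after using $\mu_1=-\sum_{k=2}^{n-1}\frac{k}{n}t_k\mu_{k+1}\mod t^2$, which forces $\partial\mu_1=O(t)$ as well.

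First I would reduce every monomial $p^m$ with $m\geq n$ using $p^n\equiv P\mod I$; since $P=O(t)$, any such reduction produces an $O(t^2)$ term that can be discarded. Likewise the substitution $\bar p\equiv Q$ is needed only in the first-order $\bar p$-dependence of $g$, which is built into our expression above. So modulo $I$ and $t^2$ the bracket becomes a genuine polynomial of degree at most $n-1$ in $p$, with coefficients that are local expressions in $\mu_k,t_k$ and their derivatives $\partial,\bar\partial$.

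Next I would read off the coefficient of each $p^{m}$. For $m=n-1$, the contributing terms from the four pieces of $\{f,g\}$ combine to
\begin{equation*}
-n\partial\mu_1-n\sum_{k\geq 3}\partial\mu_k t_{k-1}+\sum_{j=2}^{n-1}(n-j)t_j\partial\mu_{j+1}-\sum_{j=2}^{n-1}j\,\partial t_j\,\mu_{j+1},
\end{equation*}
which I expect to vanish identically once the formula \eqref{mu1value} for $\mu_1$ is inserted: this is the compatibility check that guarantees we really are working in the reduced Hilbert scheme bundle $\Hilb^n_{red}$. For $m=n-k$ with $k\in\{2,\dots,n\}$, collecting contributions (and rewriting the boundary term $l=n-k$ of the sum $-n\sum_l \partial\mu_{l+2}t_{k+l}$ together with $\sum_l(n-k-l)t_{k+l}\partial\mu_{l+2}$) should yield exactly the negative of the left-hand side of condition $(\mathcal{C})$ at index $k$. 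By Theorem \ref{conditionC}, this is precisely what characterizes membership of $I$ in $T^{*}\bm\hat{\mathcal{T}}^n$.

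The main obstacle will not be conceptual but combinatorial: the four terms of $\{f,g\}$ produce overlapping double sums indexed by pairs $(j,k')$, and one must be careful with the range of indices after reduction and with the $k$-independent cancellations forced by the choice of $\mu_1$. The simplification lemma \ref{simplificationlemma} guarantees that these computations are well-defined modulo $I$; apart from that, the proof is a direct verification of the identification between the coefficients and condition $(\mathcal{C})$.
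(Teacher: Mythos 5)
Your proposal is correct and follows essentially the same route as the paper: a direct expansion of the Poisson bracket, reduction modulo $I$ and $t^2$, and identification of the coefficient of $p^{n-k}$ with (the negative of) condition $(\mathcal{C})$ for $t_k$, with the $p^{n-1}$ coefficient cancelling thanks to the value of $\mu_1$ from \eqref{mu1value}. The only organizational difference is that the paper splits the bracket as $\{P,-\bar{p}+Q-\mu_1\}+\{P,\mu_1\}-\{p^n,-\bar{p}+Q\}$ so as to reuse the variation formula of Proposition \ref{varmu} for the first piece, whereas you expand all four terms from scratch; the resulting bookkeeping is the same.
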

The theorem states that the spectral curve is Lagrangian ``near the zero-section'' iff the ideal comes from the cotangent bundle of the moduli space of the $n$-complex structure. Concretely this means that the $t_k$ satisfy condition $(\mc{C})$ appearing in Theorem \ref{conditionC}.

The proof is a direct computation. Expanding the result in $p$ gives the conditions of the previous theorem as coefficients. The coefficient of $p^{n-1}$ vanishes because of the special value of $\mu_1$. 

\begin{proof}
We decompose the Poisson bracket $\{-p^n+P,-\bar{p}+Q\}$ into three parts: $$\{-p^n+P,-\bar{p}+Q\} = \{P,-\bar{p}+Q-\mu_1 \}+\{P, \mu_1\}-\{p^n,-\bar{p}+Q\}.$$
To compute the first term, we can interpret $P$ as a Hamiltonian and use the variation formula for the Beltrami differentials. To be precise, set $I_0=\langle p^n, -\bar{p}+Q-\mu_1 \rangle$, the typical ideal from the zero-fiber of the Hilbert scheme. 
Since $P=0 \mod t$ and since we compute modulo $t^2$ we can reduce $I$ modulo $t$ which gives $I_0$. Hence from Proposition \ref{varmu} with $H=P$, we see that the first terms equals
$$\sum_{m=0}^{n-1}p^m\left((\bar{\partial}\!-\!\mu_2\partial\!+\!m\partial\mu_2)t_{n-m} +\sum_{l=0}^{m-1}(l\partial\mu_{m+2-l}-(m\!+\!1\!-\!l)\mu_{m+2-l}\partial)t_{n-l}  \right).$$

\noindent The second term $\{P,\mu_1\}$ simply vanishes modulo $t^2$ since $\mu_1 = 0 \mod t$. 

\noindent The third and last term $\{p^n, -\bar{p}+Q\}$ gives
\begin{align*}
\{p^n, -\bar{p}+Q\}  &=  np^{n-1}(\partial\mu_1+\partial\mu_2p+...+\partial\mu_np^{n-1})  \\
&= np^{n-1}\partial\mu_1+n\sum_{m=0}^{n-1}p^m\sum_{l=0}^m \partial\mu_{m+2-l}t_{n-l} \mod I \mod t^2
\end{align*}
with $t_1=0$ and $\mu_{n+1}=0$. Adding up with the term above concludes: The coefficient of $p^{m}$ (with $0\leq m <n-1$) is precisely condition $(\mc{C})$ for $t_{n-m}$ and the coefficient of $p^{n-1}$ vanishes with the special value of $\mu_1$.
\end{proof}

The theorem allows to associate a spectral curve to a point in $\cotang$. Notice that the spectral curve here is independent of a complex structure on the surface $\Sigma$, but it lies in the complexified cotangent bundle. Hitchin's spectral curve depends on the complex structure and lies in the holomorphic cotangent bundle (so it is trivially Lagrangian as a one-dimensional subspace of a two-dimensional symplectic space). For $\mu_k=0$ for all $k$, our spectral curve $\bm\tilde{\Sigma}$ lies in the holomorphic cotangent bundle and can be identified with Hitchin's spectral curve (with complex structure $\mu_2=0$ on $\Sigma$). So $\bm\tilde{\Sigma}$ can be seen as Hitchin's spectral curve deformed into the $\bar{p}$-direction of $T^{*\mathbb{C}}\Sigma$ by the $n$-complex structure.

On the spectral curve $\bm\tilde{\Sigma}$, there is a line bundle $L$ with fiber the eigenspace of $M_p$, the multiplication operator by $p$ in $\mathbb{C}[p,\bar{p}]/I$, since the characteristic polynomial of $M_p$ is given by $P$. The pushforward of $L$ to $\Sigma$ by the covering map gives the rank $n$ bundle with fiber $\mathbb{C}[p,\bar{p}]/I$. So we get a similar spectral data as for Hitchin's spectral curve.

Since $\bm\tilde{\Sigma}$ is Lagrangian to order 1, we can compute the periods of the Liouville form $\alpha=pdz+\bar{p}d\bar{z}$ restricted to $\bm\tilde{\S}$ modulo $t^2$.
The map $\cotang \rightarrow H^1(\bm\tilde{\S}, \C)/H^1(\S, \C)$ which sends a point $[(\mu_k, t_k)]$ to $[\alpha \mid_{\bm\tilde{\S}}]$ should be a local diffeomorphism, i.e. the periods of $\alpha$ should give coordinates on $\cotang$. 
Let us check that both spaces have the same dimension: with the Riemann-Hurwitz theorem one computes the genus of $\bm\tilde{\S}$ to be $$g(\bm\tilde{\S})=n^2(g-1)+1$$ where $g$ is the genus of $\S$. Hence we get $$\dim H^1(\bm\tilde{\S})-\dim H^1(\S)= 2g(\bm\tilde{\S})-2g = (2g-2)(n^2-1) = \dim \cotang.$$
The ratios of periods at the limit when all $t$'s go to 0 (so $\bm\tilde{\Sigma}$ collapses to $\Sigma$) should give coordinates on the moduli space $\bm\hat{\mathcal{T}}^n$. The main problem is that the periods are only defined up to modulo $t^2$.

\section{Conjugated higher complex structures}\label{dualcomplexstructure}
A higher complex structure is a section of $\Hilb^n_0(T^{*\C}\Sigma)$. So it is given at any point $z\in \Sigma$ by an ideal 
$$I=\langle p^n, -\bar{p}+\mu_2p+...+\mu_np^{n-1} \rangle .$$

There is a natural notion of conjugated structure. Of course we cannot take $\bar{\mu}_k$ since we do not have any complex structure on $\Sigma$. But we can use the natural complex conjugation on the complexified cotangent bundle $T^{*\C}\Sigma$ to get 
$$\overline{I}=\langle \bar{p}^n, -p+\bar{\mu}_2\bar{p}+...+\bar{\mu}_n\bar{p}^{n-1} \rangle .$$
To get the conjugated structure, we have to express $\overline{I}$ in the same form as $I$, i.e. as 
$$\overline{I}=\langle p^n, -\bar{p}+{}_2\mu p+...+{}_n\mu p^{n-1} \rangle$$
where $({}_2\mu,...,{}_n\mu)$ are the parameters of the conjugated $n$-complex structure.

For instance for $n=2$, we get $\overline{I}=\langle \bar{p}^2, -p+\bar{\mu}_2\bar{p}\rangle = \langle p^2, -\bar{p}+\frac{1}{\bar{\mu}_2}p\rangle$, so we get ${}_2\mu=\frac{1}{\bar{\mu}_2}$. 
With similar explicit computations, we get
\begin{align*}
{}_3\mu &= -\frac{\bar{\mu}_3}{\bar{\mu}_2^3} \\
{}_4\mu &= \frac{-\bar{\mu}_2\bar{\mu}_4+2\bar{\mu}_3^2}{\bar{\mu}_2^5}.
\end{align*}

To get a general formula for ${}_k\mu$ we use formal series reversion.
This means that we consider $\bar{p}=\mu_2p+...+\mu_np^{n-1}$ as a function in $p$ which we want to revert, i.e. we have $\bar{p}=f(p)$ and we wish to develop $p=f^{-1}(\bar{p})$ in a power series. Since we work modulo $I$, so in particular modulo $p^n$ and $\bar{p}^n$, we get a finite polynomial. This reversion is known as \textit{Lagrange's inversion}. To write down explicit formulas, we use the notation for partitions, see page \pageref{notations2}.

The Lagrange inversion formula gives:
\begin{prop}\label{dualhighercomplex}
$${}_k\mu = \frac{1}{\bar{\mu}_2^{2k-3}}\sum_{\pi \vdash k-2} \frac{(-1)^{\lvert \pi \rvert}(\lvert \pi \rvert +k-2)!}{n! \pi_1!\hdots \pi_{k-2}!}\bar{\mu}_2^{k-2-\lvert \pi \rvert}\bar{\mu}_3^{\pi_1}\hdots\bar{\mu}_n^{\pi_{n-2}}.$$
\end{prop}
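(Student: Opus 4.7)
The plan is to interpret the change of presentation as an instance of Lagrange's series reversion, as indicated in the discussion preceding the statement. Writing $F(u) := \bar{\mu}_2 u + \bar{\mu}_3 u^2 + \cdots + \bar{\mu}_n u^{n-1}$, the second generator of $\overline{I}$ imposes the relation $p \equiv F(\bar{p}) \pmod{\overline{I}}$. Re-expressing $\overline{I}$ in the target form $\langle p^n, -\bar{p} + G(p)\rangle$ with $G(p) = {}_2\mu\, p + \cdots + {}_n\mu\, p^{n-1}$ amounts to finding a polynomial $G$ of degree $\leq n-1$ such that $F(G(p)) \equiv p \pmod{p^n}$. The compatibility condition $|\mu_2|<1$ ensures $\bar{\mu}_2 \neq 0$, so $F$ is invertible as a formal series at $0$ and such a $G$ exists and is uniquely determined.

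Lagrange's inversion formula then produces
\[
{}_k\mu \;=\; [p^{k-1}]G(p) \;=\; \frac{1}{k-1}\,[u^{k-2}]\!\left(\frac{u}{F(u)}\right)^{k-1}
\]
for $k=2,\dots,n$. To evaluate the right-hand side I would factor $u/F(u) = \bar{\mu}_2^{-1}(1+r)^{-1}$ with $r := \sum_{j=1}^{n-2}(\bar{\mu}_{j+2}/\bar{\mu}_2)\,u^j$, expand by the generalized binomial series
\[
(1+r)^{-(k-1)} \;=\; \sum_{m\geq 0}(-1)^m\binom{k+m-2}{m} r^m,
\]
and then expand each $r^m$ by the multinomial theorem. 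Extracting the coefficient of $u^{k-2}$ selects tuples of non-negative integers $(\pi_1,\dots,\pi_{k-2})$ with $\sum_j j\pi_j = k-2$ and $\sum_j \pi_j = m$, which are precisely the partitions $\pi \vdash k-2$ of length $|\pi|=m$. Combining the sign, binomial coefficient, and multinomial factor $m!/\prod_j\pi_j!$ collapses the sum over $m$ into a single sum over partitions; regrouping the powers of $\bar{\mu}_2$ as $\bar{\mu}_2^{k-2-|\pi|}/\bar{\mu}_2^{2k-3}$ then yields the displayed expression.

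The coefficient computation is essentially mechanical once Lagrange inversion is invoked. The step that deserves some care is verifying that the two presentations of $\overline{I}$ are equal as ideals, not merely as formal congruences to leading order. One inclusion is immediate, since both ideals share the same degree-one-in-$\bar{p}$ (respectively $p$) generator after the identification $\bar{p} = G(p)$. For the reverse inclusion one must check $\bar{p}^n \in \langle p^n, -\bar{p}+G(p)\rangle$, which is automatic because $G$ has no constant term and hence $G(p)^n$ is divisible by $p^n$. The low-order cases $k=2,3,4$ listed immediately above the proposition then provide a direct sanity check on the bookkeeping of indices and signs produced by the inversion.
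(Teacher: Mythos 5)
Your proposal is correct and follows essentially the same route as the paper, which simply invokes Lagrange's inversion formula for the reversion of $p=F(\bar p)$ inside $\overline{I}$ without writing out the coefficient extraction; your version supplies the binomial/multinomial bookkeeping and the check that the two presentations agree as ideals. One useful by-product of actually carrying out your computation: the factor $\frac{1}{k-1}\binom{k-2+|\pi|}{|\pi|}|\pi|!\,\frac{1}{(k-2)!}$ collapses to $\frac{(|\pi|+k-2)!}{(k-1)!\prod_j\pi_j!}$, i.e.\ the denominator should read $(k-1)!$ rather than the $n!$ printed in the proposition — a typo confirmed by the displayed special cases ${}_3\mu=-\bar{\mu}_3/\bar{\mu}_2^3$ and ${}_4\mu=(-\bar{\mu}_2\bar{\mu}_4+2\bar{\mu}_3^2)/\bar{\mu}_2^5$.
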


\bigskip
\noindent In the sequel, we also need the conjugated notion to a point in $\cotang$ near the zero-section. By equation \eqref{hkquotientofmodulispace}, this is given by a $\Symp_0$-equivalence class of a section of $\Hilb^n_{red}(T^{*\C}\Sigma)$. Without the action of higher diffeomorphisms, we have an ideal 
$$I(z, \bar{z})=\langle p^n-t_2p^{n-2}-...-t_n, -\bar{p}+\mu_1+\mu_2p+...+\mu_np^{n-1} \rangle $$
where all $t_k$ are small. This means that we compute to order 1 in $t$'s (i.e. modulo $t^2$).

The conjugated structure comes from $\overline{I}$ which is expressed in the above form:
\begin{align*}
\overline{I} &=\langle \bar{p}^n-\bar{t}_2\bar{p}^{n-2}-...-\bar{t}_n, -p+\bar{\mu}_1+\bar{\mu}_2\bar{p}+...+\bar{\mu}_n\bar{p}^{n-1} \rangle \\
&= \langle p^n-{}_2tp^{n-2}-...-{}_nt, -\bar{p}+{}_1\mu+{}_2\mu p+...+{}_n\mu p^{n-1} \rangle
\end{align*}
where $({}_kt, {}_k\mu)$ are the parameters of the conjugated structure.

Since the $t_k$ are infinitesimal small compared to the $\mu_k$, the conjugated structure ${}_k\mu$ is still given by Proposition \ref{dualhighercomplex} above. 
For the formula for ${}_kt \mod t^2$, we get the following result:

\begin{prop}\label{dualdeformedstructure}
The conjugated structure $({}_kt, {}_k\mu)$ is given by ${}_k\mu$ as in  the previous Proposition \ref{dualhighercomplex} and ${}_kt$ is given by $${}_kt = \sum_{l=k}^{n} \bar{t}_l \left( \sum_{\substack{\pi \vdash l \\ \left| \pi \right| = k}}l\times\frac{(k-1)!}{\prod_j \pi_j!}\bar{\mu}_2^{\pi_1}\cdots\bar{\mu}_{l+1}^{\pi_l} \right).$$
\end{prop}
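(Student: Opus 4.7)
The formula for ${}_k\mu$ follows immediately from Proposition \ref{dualhighercomplex}: at leading order in $\bar{t}$, the ideal $\overline{I}$ reduces to the zero-fiber case, so the Lagrange-inversion computation is unchanged. All the real work lies in the formula for ${}_kt$, which I would extract from the characteristic polynomial of the multiplication operator $M_p$ in the quotient $\C[p,\bar{p}]/\overline{I}$, working modulo $\bar{t}^2$.

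In the basis $(1,\bar{p},\ldots,\bar{p}^{n-1})$ of $\C[p,\bar{p}]/\overline{I}$, the first generator of $\overline{I}$ makes $M_{\bar{p}}$ the companion matrix of $X^n-\bar{t}_2 X^{n-2}-\cdots-\bar{t}_n$, while the second generator yields $M_p=Q(M_{\bar{p}})$ with $Q(X)=\bar{\mu}_1+\bar{\mu}_2 X+\cdots+\bar{\mu}_n X^{n-1}$. Denoting the eigenvalues of $M_{\bar{p}}$ by $\lambda_1,\ldots,\lambda_n$, the eigenvalues of $M_p$ are $Q(\lambda_i)$, and ${}_kt$ is (up to sign) the degree-$k$ coefficient of $\prod_i(X-Q(\lambda_i))$.

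First I would record the Newton power sums $p_r=\sum_i \lambda_i^r$: Newton's identities applied to $M_{\bar{p}}$ give $p_r\equiv r\,\bar{t}_r\pmod{\bar{t}^2}$ for $2\le r\le n$, with $p_0=n$, $p_1=0$, and $p_r\equiv 0\pmod{\bar{t}^2}$ for $r>n$. Then I would expand
\[
\tr(M_p^k)=\sum_{i=1}^n Q(\lambda_i)^k=\sum_{|\sigma|=k}\binom{k}{\sigma_1,\ldots,\sigma_n}\bar{\mu}_1^{\sigma_1}\cdots\bar{\mu}_n^{\sigma_n}\,p_{m(\sigma)},
\]
where $m(\sigma)=\sum_j(j-1)\sigma_j$. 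Since $\bar{\mu}_1=O(\bar{t})$ by \eqref{mu1value}, the only surviving terms modulo $\bar{t}^2$ are those with $\sigma_1=0$ and $k\le m(\sigma)\le n$. Reindexing via the partition $\pi\vdash l$ of $l=m(\sigma)$ with $\pi_i=\sigma_{i+1}$ (so $|\pi|=k$) yields
\[
\tr(M_p^k)\;\equiv\;\sum_{l=k}^n l\,\bar{t}_l\sum_{\substack{\pi\vdash l\\ |\pi|=k}}\frac{k!}{\prod_j\pi_j!}\,\bar{\mu}_2^{\pi_1}\cdots\bar{\mu}_{l+1}^{\pi_l}\pmod{\bar{t}^2}.
\]

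The last step is to relate $\tr(M_p^k)$ to ${}_kt$. Running Newton's identities on $M_p$ in the reverse direction, I note that every power sum $\tr(M_p^r)$ is already of order $\bar{t}$ by the expansion above, so every cross term $E_i\,\tr(M_p^{k-i})$ in Newton's relation is of order $\bar{t}^2$; only the term $(-1)^{k-1}k\,E_k$ survives, giving $\tr(M_p^k)\equiv k\cdot{}_kt\pmod{\bar{t}^2}$. Dividing the displayed formula by $k$ yields the claim. The work is not conceptual but bookkeeping: matching multinomial coefficients, carefully translating between the exponent vector $\sigma$ and the author's partition convention $(\pi_1,\ldots,\pi_n)$, and tracking the sign conventions in Newton's identities. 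An alternative route, parallel to subsection \ref{realsympstructure}, would be to expand the resultant in $\bar{p}$ of the two generators of $\overline{I}$ and read off the coefficients directly; the combinatorics work out identically.
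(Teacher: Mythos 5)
Your argument is correct, but it takes a genuinely different route from the paper's. The paper computes the resultant of the two generators of $\overline{I}$, viewed as polynomials in $\bar{p}$ over $\C[p]$, expands the resulting $(2n-1)\times(2n-1)$ determinant, and extracts the coefficient of $p^{n-k}$ by enumerating increasing sequences in $\{1,\dots,l\}$ up to cyclic shift; the multiplicity $l\,(k-1)!/\prod_j \pi_j!$ attached to each partition comes out of that enumeration. You instead diagonalize: $M_{\bar{p}}$ is the companion matrix, $M_p=Q(M_{\bar{p}})$, Newton's identities give $p_r\equiv r\bar{t}_r$ (and $p_r\equiv 0$ for $r>n$), the multinomial theorem turns $\tr Q(M_{\bar{p}})^k$ directly into a partition-indexed sum, and a second, reversed application of Newton's identities — where every cross term $e_i\,\tr(M_p^{k-i})$ dies because each factor is already $O(\bar{t})$ — converts power sums back into the coefficients, with the signs ${}_kt=(-1)^{k+1}e_k$ and $(-1)^k k e_k\equiv -\tr(M_p^k)$ combining to the clean ${}_kt=\tr(M_p^k)/k$. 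Your route replaces the paper's sequence-versus-partition count by a multinomial coefficient, which is more transparent, and it isolates all the mod-$\bar{t}^2$ simplification in the two Newton steps; the trade-off is that it only yields the answer to first order in $\bar{t}$, whereas the resultant gives, in principle, the exact coefficients before truncation, and the same resultant machinery is reused elsewhere in the paper (the real symplectic structure, the $\GL_2(\R)$-action). One small caveat: your claim that only $\sigma_1=0$ terms survive requires $k\ge 2$; for $k=1$ the term $n\bar{\mu}_1$ survives at order $\bar{t}$, and the identity $\tr M_p=0$ is precisely the trace-zero relation fixing $\bar{\mu}_1$, so nothing is lost.
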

Some special values can be easily computed:
\begin{align*}
{}_nt &= \bar{\mu}_2^n\bar{t}_n \\
{}_2t &= \sum_{l=2}^{n}\bar{t}_l\sum_{m=1}^{l-1}\frac{l}{2}\bar{\mu}_{m+1}\bar{\mu}_{l-m+1}.
\end{align*}
The hurried reader may skip the proof.

\begin{proof}
Our approach is to use the resultant. Recall that the resultant of two polynomials $P$ and $Q$, denoted here by $\Res(P,Q)$, is a polynomial expression of their coefficients which is zero iff $P$ and $Q$ have a common root. The resultant has an explicit expression given by the determinant of a matrix.
In our setting, consider the system
$$ \left \{\begin{array}{cl}
0 &= \bar{p}^n-\bar{t}_2\bar{p}^{n-2}-...-\bar{t}_n  \\
0 &= \bar{\mu}_n\bar{p}^{n-1}+...+\bar{\mu}_2 \bar{p}+\bar{\mu}_1-p
\end{array}\right. $$
as a polynomial system in $\bar{p}$ with coefficients in $\C [p]$. So the first equation is of degree $n$ and the second of degree $n-1$. Since both equations have a common zero (any value for $\bar{p}$), the resultant is 0:

\begin{equation}\label{resultant}
0= \begin{vmatrix} 1 &  & & \bar{\mu}_n & &  \\ 0 & \ddots & & \vdots & \ddots & \\ -\bar{t}_2 & & 1 & \bar{\mu}_2 & & \\ \vdots & \ddots & 0 & \bar{\mu}_1-p & & \bar{\mu}_n \\ -\bar{t}_n & &-\bar{t}_2 & &\ddots & \vdots \\ &\ddots & \vdots & & & \bar{\mu}_2 \\ & & -\bar{t}_n & & & \bar{\mu}_1-p \end{vmatrix}
\end{equation}

The first column is repeated $n-1$ times (with a shift) and the $n$-th column is repeated $n$ times, so in total we get a matrix of size $2n-1$. It is clear that we get a polynomial equation in $p$ of degree $n$. The coefficient of this polynomial give the exact expression for the conjugated deformed structure. Computing modulo $t^2$ gives the same result for ${}_kt$ as above in Proposition \ref{dualdeformedstructure}.

We give some details since this involves some interesting combinatorics.
To compute ${}_kt$, we need the coefficient of $p^{n-k}$ in the determinant. For this, we take all $p$'s on the diagonal, apart from those with positions $n-1+i_1, n-1+i_2, ..., n-1+i_k$ where $i_1<i_2<...<i_k$. Further, we see that apart from the term $p^n$, every other term has at least one $t$, since the matrix modulo $t$ is upper triangular. Once we have chosen some $\bar{t}_l$ in line some line $L$, we have to choose all $p$'s in the lines bigger than $L$ (if not we get another $t$-contribution), so $L=n-1+i_k$ and $l\geq k$. We also get $i_k\leq l$, so we gave an increasing sequence $(i_m)_{1\leq m \leq k}$ in the interval $\{1,...,l\}$. 

To such a sequence, we can associate a partition $\pi$ of $l$ in $k$ parts as follows: take the differences $d_m= i_{m+1}-i_m$ for $m=1,...,k$ where we write $i_{k+1}=i_1+l$. We then have $d_m>0$ and $\sum_m d_m = l$. Thus, the differences define a partition.

The other way around, given a partition $\pi \vdash l$ with $k$ parts, the number of sequences which gives $\pi$ via their differences is 
\begin{equation}\label{partitiondiff}
l\times \frac{k!}{\prod_j \pi_j!}\times \frac{1}{k}.\end{equation}
This goes as follows: to get the sequence modulo $l$, choose a starting point ($l$ choices), choose a sum representation of the partition $\pi$, i.e. a representation of $l$ as sum of $k$ positive integers ($\frac{k!}{\prod_j \pi_j!}$ choices), then you get a sequence by adding the terms to the starting point. You have to divide by $k$ since by cyclicity, every sequence is counted $k$ times.

Coming back to the computation of the determinant, to compute the coefficient of $p^{n-k}$, we choose some $l\geq k$ and a sequence $0<i_1<...<i_k<l+1$, we take all $p$'s apart those in position $n-1+i_1, n-1+i_2, ..., n-1+i_k$ and we choose the $\bar{t}_l$ in line $n-1+i_k$. Eliminating the corresponding lines and columns, we are left with a diagonal matrix with $\pi_m$ times $\bar{\mu}_{m+1}$ on the diagonal for all $m$ (and some 1's). Using the formula \eqref{partitiondiff} for the number of sequences giving the same partition, we directly get the formula of the proposition. 
\end{proof}

\begin{Remark}
As a consequence of the relationship between $k$-subsets of $\{1,...,l\}$ and partitions of $l$ with $k$ parts, we get 
$$\binom{l}{k} = \frac{l}{k}\sum_{\substack{\pi \vdash l \\ \left| \pi \right| = k}}\frac{k!}{\prod_j \pi_j!}.$$
\hfill $\triangle$
\end{Remark}

\section{\texorpdfstring{$\GL_2(\R)$}{GL(2,R)}-action on \texorpdfstring{$T^*\bm\hat{\mc{T}}^n$}{cotangent moduli space}}\label{gl2}

There is a well-known $\GL_2(\R)$-action on the cotangent bundle of Teichmüller space, coming from its interpretation as half-translation surfaces. We describe here a generalization to the cotangent bundle $T^*\bm\hat{\mc{T}}^n$.

%\textcolor{red}{Difference between $\SL_2$ and $\GL_2$ ?}

\subsection{Half-translation surfaces}\label{half-translation}

A point in the cotangent bundle to Teichmüller space $T^*\mc{T}^2$ is given by a couple $(\S, t)$ where $\S$ is a Riemann surface and $t$ a holomorphic quadratic differential $t\in H^0(K^2)$. We assume that $t\neq 0$. Around a point $z_0 \in \S$, the map $z\in \S \mapsto \int_{z_0}^{z} \sqrt{t}$ gives a well-defined local chart and the transition functions between two such charts are translations or $-\id$. 
Indeed the map is well-defined since $t$ is holomorphic, so $\sqrt{t}$ is closed. The transition between a chart centered at $z_0$ and a chart centered at $z_1$ is given by a translation by the complex number $\int_{z_0}^{z_1}\sqrt{t}$. Finally, there is a sign ambiguity in choosing the square root of $t$, so we can get the transition function $-\id$.

A couple $(\S, t)$ is called a \textbf{half-translation surface}. The ``half'' in the terminology comes from the sign ambiguity. A half-translation surface can always be represented by a polygon in $\R^2$ where every edge is identified to another edge by a translation or a composition of translation and $-\id$.

The group $\GL_2(\R)$ acts naturally on the space of polygons, so on the space of half-translation surfaces. This action plays an important role in the theory of billiards, which is equivalent to the study of the geodesic flow on half-translation surfaces. We recommend \cite{Wright} and the chapter \cite{Masur} of Masur and Tabachnikov for an introduction to billiards and half-translation surfaces.

\subsection{Description of the generalization}

To generalize the action to higher complex structures, recall from \ref{sspectral} that a point in $T^*\bm\hat{\mc{T}}^n$ defines a spectral curve $\bm\tilde{\S} \subset T^{*\C}\Sigma$. We have seen that $\bm\tilde{\S}$ is Lagrangian modulo $t^2$ (Theorem \ref{spectralcurveprop}). Thus the Liouville form $\alpha = pdz + \bar{p} d\bar{z}$ is closed on $\bm\tilde{\S}$ modulo $t^2$. In this section, it is important that $p$ and $\bar{p}$ are independent variables (since a fiber of the complexified cotangent bundle $T^{*\C}\S$ is of complex dimension 2). We write $x=p$ and $y=\bar{p}$ to mark this independence. This notation is necessary since the complex conjugates of $x$ and $y$ will appear.

Heuristically, the periods of the spectral curve, i.e. the integrals of $\alpha$ along closed paths, should give coordinates on $T^*\bm\hat{\mc{T}}^n$. We can change $\alpha$ to $\alpha' = a\alpha + \bar{b}\bar{\alpha}$ with $a,b \in \C$. The form $\alpha'$ is still closed so its periods should give another point of $\cotang$. 
Changing $\alpha$ to $\alpha'$ gives an action on $(x,y)$ which reads
\begin{align*}
x &\mapsto x' = ax+\bar{b}\bar{y} \\
y &\mapsto y' = \bar{b}\bar{x}+a y.
\end{align*}
In coordinates $(x,\bar{y})$ this transformation if given by a matrix $\left(\begin{smallmatrix} a & \bar{b} \\ b & \bar{a} \end{smallmatrix}\right)$.
The group $$\left\{\begin{pmatrix} a & \bar{b} \\ b & \bar{a} \end{pmatrix} \mid a\bar{a}-b\bar{b}\neq 0\right\}$$ is isomorphic to $GL_2(\R)$. Indeed, the base change from real coordinates $(X,Y)$ to complex conjugated coordinates  $(X+iY, X-iY)$ (or light-cone coordinates) transforms a matrix $\left(\begin{smallmatrix} A & B \\ C & D \end{smallmatrix}\right)$ in $\GL_2(\R)$ to a matrix of the above form with $2a=A+D-i(B-C)$ and $2b=A-D+i(B+C)$.

\begin{prop}\label{gl2action}
There is a $\GL_2(\R)$-action on $\cotang$.
\end{prop}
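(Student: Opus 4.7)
My plan is to define the action by the fiberwise real-linear transformation of $T^{*\C}\Sigma$ induced by $M = \begin{pmatrix} a & \bar{b} \\ b & \bar{a}\end{pmatrix}$, namely $\phi_M\colon (p, \bar{p}) \mapsto (ap + \bar{b}\bar{p}, bp + \bar{a}\bar{p})$, and then verify three properties: (i) $\phi_M$ preserves $\Hilb^n_{red}(T^{*\C}\Sigma)$ pointwise, (ii) it descends to the quotient by $\Symp_0(T^*\Sigma)$, and (iii) it preserves condition $(\mathcal C)$ modulo $t^2$.

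For (i), I would use the matrix viewpoint from Proposition \ref{bijhilbert}: a section of $\Hilb^n_{red}(T^{*\C}\Sigma)$ is a family of conjugacy classes of commuting traceless matrix pairs $(M_p(z), M_{\bar{p}}(z))$ admitting a cyclic vector, and $\phi_M$ sends this to $(aM_p + \bar{b}M_{\bar{p}},\, bM_p + \bar{a}M_{\bar{p}})$. Commutativity is immediate, tracelessness is preserved, and since $\det M = |a|^2 - |b|^2 \neq 0$ the original pair is recoverable from the new one, so the subalgebra generated is unchanged and cyclicity persists; on the zero-fiber (where the matrices are commuting nilpotents) any linear combination is again nilpotent.

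For (ii), the key structural fact is that $\phi_M$ normalizes $\Symp_0(T^*\Sigma)$: conjugation by $\phi_M$ sends a higher diffeomorphism generated by a polynomial Hamiltonian $H(z, \bar{z}, p, \bar{p})$ to one generated by $H \circ \phi_M^{-1}$, which is still a polynomial in $(p, \bar{p})$ of the same degree vanishing on the zero section. Although $\phi_M$ is not itself a symplectomorphism of the canonical form on $T^*\Sigma$, the $\Symp_0$-action on ideals modulo $I$ is given by the purely algebraic Poisson-bracket formula of Proposition \ref{varmu}, and this algebra is intertwined by the linear change of variables induced by $\phi_M$; hence $\phi_M$ descends to the $\Symp_0$-orbit space described in \eqref{hkquotientofmodulispace}. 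Property (iii) follows from Theorem \ref{spectralcurveprop}: the transformed spectral curve $\phi_M(\bm\tilde{\Sigma})$ carries the $1$-form $\alpha' = a\alpha + \bar{b}\bar{\alpha}$, a $\C$-linear combination of $1$-forms closed modulo $t^2$, hence itself closed modulo $t^2$; thus the image again satisfies condition $(\mathcal C)$. As a sanity check one can derive explicit formulas for $(\mu'_k, t'_k)$ in terms of $(\mu_k, t_k, a, b)$ by a resultant computation analogous to Proposition \ref{dualdeformedstructure}.

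Group axioms are automatic from the matrix viewpoint, since composition of fiberwise linear transformations corresponds to matrix multiplication in $\GL_2(\R)$. The main obstacle is property (ii): the non-symplectic nature of $\phi_M$ forbids the standard argument that fiberwise linear maps descend to symplectic quotients, so the descent must be verified directly via the algebraic normalizing identity above, consistently working to first order in $t$, which is the order at which $\cotang$ is defined by \eqref{hkquotientofmodulispace}.
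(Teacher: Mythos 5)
Your definition of the action is not the one the proposition is about, and the discrepancy is fatal rather than cosmetic. You take the \emph{complex-linear} fiberwise map $(p,\bar p)\mapsto(ap+\bar b\bar p,\,bp+\bar a\bar p)$, mixing the two \emph{independent} coordinates $x=p$ and $y=\bar p$ of the complexified fiber $T^{*\C}_z\S\cong\C^2$. The action that generalizes half-translation surfaces is only \emph{real}-linear on that fiber: it sends $x\mapsto ax+\bar b\,\bar y$ and $y\mapsto ay+\bar b\,\bar x$, where $\bar x,\bar y$ are the honest complex conjugates of $x,y$ (this is forced by $\alpha\mapsto a\alpha+\bar b\bar\alpha$, since $\bar\alpha=\bar y\,dz+\bar x\,d\bar z$). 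The difference shows up exactly where your step (iii) needs it: under the correct map the symplectic form pulls back to $a\omega+\bar b\bar\omega$, and $\bm\tilde{\S}$ is isotropic for $\bar\omega$ because it is isotropic for $\omega$ (conjugate the identity $\omega|_{\bm\tilde{\S}}=0$), so condition $(\mc{C})$ survives by Theorem \ref{spectralcurveprop}. Under your map the form pulls back to $a\omega+\bar b\,(d\bar p\wedge dz+dp\wedge d\bar z)$, and the spectral curve has no reason to be isotropic for the second summand; equivalently, the $1$-form your image curve carries is $a\alpha+\bar b(\bar p\,dz+p\,d\bar z)$, which is \emph{not} $a\alpha+\bar b\bar\alpha$ and is not closed on $\bm\tilde{\S}$ modulo $t^2$. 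So your transformed data need not satisfy condition $(\mc{C})$, i.e.\ need not lie in $\cotang$ at all. (A symptom: your action is holomorphic in $(\mu_k,t_k)$, whereas already for $n=2$ the Teichm\"uller $\GL_2(\R)$-action is not, and the paper's formulas $m_2=\sqrt[n]{\bar t_n/t_n}$, $\mu_2'=(a\mu_2+\bar b m_2)/(a+\bar b\bar\mu_2 m_2)$ visibly involve $\bar t_n$ and $\bar\mu_2$.)

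Because you never introduce $\bar x$ and $\bar y$, you also bypass the actual technical content of the proof: one must express $\bar x=\sum_i m_i x^{i-1}$ in the basis $(1,x,\dots,x^{n-1})$ of $\C[x,y]/I$ via the resultant (as in subsection \ref{realsympstructure} and Proposition \ref{dualdeformedstructure}), which requires $t_n\neq 0$ --- the analogue of excluding $t=0$ for half-translation surfaces --- and it is this computation that produces the formulas for $(\mu_k',t_k')$. Finally, your descent argument (ii) rests on the claim that conjugation by $\phi_M$ sends the flow of $H$ to the flow of $H\circ\phi_M^{-1}$; since $\phi_M$ is not (even conformally) symplectic, this is false, and the Poisson-bracket formulas are not intertwined by such a substitution. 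The equivariance is instead obtained directly: the correct action is a constant-coefficient linear transformation of the quantities $(x,\bar y)$, the infinitesimal $\Symp_0$-action is $v\mapsto v+\varepsilon\{H,v\}$ on each of them, and linearity of $\{H,\cdot\}$ makes the two commute --- no normalization of the group is needed.
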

To proof the proposition, we describe the action in our coordinates. Recall that $T^*\bm\hat{\mc{T}}^n$ is parameterized by a $\Symp_0$-equivalence class of coordinates $[(\mu_k, t_k)]_{2\leq k\leq n}$. Recall that we obtain these coordinates by the relations 
\begin{align*}
x^n &= t_2 x^{n-2}+...+t_{n-1}x+t_n \\
y &=\mu_1+\mu_2x+...+\mu_nx^{n-1} 
\end{align*}
which generate an ideal $I$ in the reduced Hilbert scheme.
We show that the $\GL_2(\R)$-action on $x$ and $y$ gives an action on $\mu_k$ and $t_k$, equivariant with respect to higher diffeomorphisms. 
We first compute this action for $n=2$.

\begin{example}
For $n=2$, we start with $x^2=t_2$ and $y=\mu_2x$. We can express $\bar{x}$ as $\bar{x}=m_2x$ where $m_2=\sqrt{\bar{t}_2/t_2}$. We see that we have to assume $t_2\neq 0$ like for half-translation surfaces.

We then get $x'=ax+\bar{b}\bar{y} =(a+\bar{b}\bar{\mu}_2m_2)x$ and thus $$t_2' = (x')^2 = (a+\bar{b}\bar{\mu}_2m_2)^2t_2.$$
Further, we have $y'=ay+\bar{b}\bar{x}=(a\mu_2+\bar{b}m_2)x$ and we wish to have $y'=\mu_2'x'=\mu_2'(a+\bar{b}\bar{\mu}_2m_2)x$. Hence we get
$$\mu_2' = \frac{a\mu_2+\bar{b}m_2}{a+\bar{b}\bar{\mu}_2m_2}$$ which is nearly a Möbius transformation.
\hfill $\triangle$
\end{example}

\begin{proof}
First, we show that the action lifts to the Hilbert scheme bundle $\Hilb^n_{red}(T^{*\C}\S)$ modulo $t^2$, i.e. to $T^*\Hilb^n_{0}(T^{*\C}\S)$.
The philosophy is to express every variable in the basis $(1,x,...,x^{n-1})$ of $\C[x,y]/I$. The coefficients of $x^n$ are the $t_i$, and the one of $y$ are the $\mu_i$, which are our coordinates. The coefficients for $\bar{x}$ are the $m_i$ which can be computed by an algorithm described in \ref{realsympstructure}. We have to assume $t_n\neq 0$.
From this we can compute the coefficients of $\bar{y}$ and then also for $x'=ax+\bar{b}\bar{y}$. We write $x'=u_1+u_2x+...+u_nx^{n-1}$.
Then consider the system
$$ \left \{\begin{array}{cl}
0 &= x^n-t_2x^{n-2}-...-t_n \\
0 &= u_nx^{n-1}+u_{n-1}x^{n-2}+...+u_1-x'
\end{array}\right. $$
as a system in $x$ over $\C[x']$. The resultant is a polynomial in $x'$ of degree $n$ which is 0 since there is a common solution to the system:
$$0= \begin{vmatrix} 1 &  & & u_n & &  \\ 0 & \ddots & & \vdots & \ddots & \\ -t_2 & & 1 & u_2 & & \\ \vdots & \ddots & 0 & u_1-x' & & u_n \\ -t_n & &-t_2 & &\ddots & \vdots \\ &\ddots & \vdots & & & u_2 \\ & & -t_n & & & u_1-x' \end{vmatrix}.$$
This has the same form as equation \eqref{resultant}, so we know the formula for the coefficients of the polynomial in $x'$ modulo $t^2$. These coefficients have to be $t_i'$, so we get an expression for them. 
For example we get for the highest term $$t_n'=u_2^nt_n=(a+\bar{b}\bar{\mu}_2m_2)^nt_n$$ where $m_2=\sqrt[n]{\bar{t}_n/t_n}$.

Finally we get two expressions for the coefficients of $y'$ in our basis. The first comes from $y'=ay+\bar{b}\bar{x}$, the second from $y'=\mu_1'+\mu_2'x'+...+\mu_n'(x')^{n-1}$. Comparing coefficients, we get a linear system in $\mu_i'$ which is already in Gauss triangular form, so we can successively solve for $\mu_2', \mu_3'$ and so on up to $\mu_n'$.
We get for example $$\mu_2'=\frac{a\mu_2+\bar{b}m_2}{a+\bar{b}\bar{\mu}_2m_2}$$ which is the same expression as for the case $n=2$.
Notice that $t_1'=0$ and that $\mu_1'$ is an explicit expression in the other variables, given by equation \eqref{mu1value}. 

Finally we have to prove the equivariance with respect to higher diffeomorphisms. We have to prove that the $\GL_2(\R)$-action commutes with the action of $\Symp_0(T^*\S)$. For this, it is sufficient to show that the $\GL_2(\R)$-action commutes with the infinitesimal action of higher diffeomorphisms which is described by Poisson brackets. The fact that both actions commute on variables $(x,\bar{y})$ is easy:
the $\GL_2(\R)$-action is linear and the infinitesimal action of a Hamiltonian $H$ is given by $x\mapsto x+\varepsilon \{H,x\}$ and $\bar{y}\mapsto \bar{y}+\varepsilon\{H,\bar{y}\}$. Since the Poisson bracket is bilinear, the two actions commute.

Since a generic point of $\Hilb^n_{red}(T^{*\C}\S)$ can be described by variables $(x, \bar{y})$ and since the commutativity is a closed condition, the actions commute on the Hilbert scheme bundle. Thus, the action of $\GL_2(\R)$ descends on $\cotang$.
\end{proof}

%%%%%%%%%%%%%%%%%%%%%%%%%%%%%%%%%%%%%%%

\cleardoublepage
%\vspace*{\fill}
\vspace*{3cm}
\part{Link to higher Teichmüller theory}\label{part2}
\vspace*{2cm}
\begin{flushright}
\textit{Quels que soient les jeux de mots et les acrobaties de la logique,}

\textit{comprendre c'est avant tout unifier. [...] }

\textit{Comprendre le monde pour un homme, c'est le réduire à l'humain,}

\textit{le marquer de son sceau. [...]}

\textit{Cette nostalgie d'unité, cet appétit d'absolu illustre}

\textit{le mouvement essentiel du drame humain.}

% \newline comprendre c'est avant tout unifier. [...] \newline
%Comprendre le monde pour un homme, c'est le réduire à l'humain, le marquer de son sceau. [...]
%Cette nostalgie d'unité, cet appétit d'absolu illustre le mouvement essentiel du drame humain.}

Albert Camus, Le mythe de Sisyphe
\end{flushright}
\vspace*{1cm}

\noindent In this part we tie a link to Hitchin components taking advantage of the hyperkähler structure of the Hilbert scheme. We deform the space $\cotang$, including it into a one-parameter family of Kähler manifolds.
The generic fiber of the deformation is a space of pairs of commuting differential operators to which we can associate a flat connection. We construct this space of flat connections by a double hamiltonian reduction.
To get a link to higher Teichmüller theory, we try to canonically associate a flat connection to a point of $\cotang$. Adding a reality constraint gives flat connections with real monodromy associated with the zero-section $\T^n\subset \cotang$.

To read this part, you need to know some basics of the punctual Hilbert scheme (subsections \ref{hilbdef} and \ref{hilbmatrixviewpoint}), the construction of the higher complex structure and its moduli space (section \ref{Highercomplexsection}) and the more detailed description of the cotangent bundle $\cotang$ (subsection \ref{cotangsection}).

\medskip
\noindent We first specify our strategy to get a link from higher complex structures to character varieties using a hyperkähler picture analogous to the moduli space of Higgs bundles. We compare our approach to Hitchin's approach.

In the matrix viewpoint of $\cotang$, we start with a $\mf{sl}_n$-valued 1-form $\Phi=\Phi_1+\Phi_2$ (decomposed into its $(1,0)$ and $(0,1)$-part) where $\Phi \wedge \Phi = 0$. Locally this allows to consider $\Phi_1$ and $\Phi_2$ as a pair of nilpotent commuting matrices. We deform this 1-form into some $h$-connection of the form $\Phi+hd+hA+h^2\Phi^*$. Dividing by $h$, we get a connection of the form $d+\l \Phi+A+\l^{-1}\Phi^*$ (where $\l=h^{-1}$).
We recover holomorphic differentials in flat connections of this form.

In the Hilbert scheme viewpoint, a point of $\cotang$ is essentially described by two polynomials. So the deformation space consists of a pair of commuting differential operators to which we can associate a flat connection. 
We describe the space of commuting differentials operators by a two-step hamiltonian reduction. Starting with the space of all connections $\mc{A}$, we consider a parabolic subgroup $\mc{P}$ of all gauge transformations, those which fix a given direction. The reduction $\mc{A}\sslash\mc{P}$ consists of pairs of differential operators which commute only under some extra condition. This condition can be obtained as the moment map of a second reduction. The group needed for this is the group of higher diffeomorphisms which act by gauge on the reduction $\mc{A}\sslash\mc{P}$.

We then perform the parabolic reduction on $h$-connections. This corresponds to including a parameter $\l$ into the differential operators. We show that when $\l$ tends to infinity, we get our space $\cotang$. For $\l \rightarrow 0$, we get the conjugated higher complex structure.

Finally, we try to prove an analog to the non-abelian Hodge correspondence: given a point in $\cotang$, show existence and uniqueness of a real twistor line passing through this point.
We are able to give some partial results and conjectures in this direction. Assuming the existence of real twistor lines we show that our moduli space $\T^n$ is diffeomorphic to Hitchin's component $\mc{T}^n$.
%We accomplish this proof up to some undetermined finite data, which are initial conditions for ordinary differential equations. If these intial conditions can be chosen canonically, we get a bijection between the moduli space of higher complex structures and Hitchin's component.

\medskip \noindent 
\textbf{Warning.} Whereas all propositions and theorems in this part do not rely on conjectures (whenever it is not explicitly stated otherwise), the interpretation of the results is conjectural, especially the interpretation in the twistor approach. We use this interpretation in analogy with the Higgs bundle moduli space. It helps to motivate and understand our methods and gives a big picture which makes it clear what has still to be done. Our interpretation relies on the conjecture that $\cotang$ is hyperkähler near the zero-section and that the space of flat parabolic $h$-connections is its twistor space.

\medskip \noindent
Most of the material is published in the preprint \cite{FockThomas2}. The idea of the parabolic reduction was introduced in \cite{BFK}.
\vspace*{\fill}

\cleardoublepage

\section{Introduction: two hyperk\"ahler pictures}\label{bigpicture}

In this section, we describe our approach and compare it to Hitchin's approach since they both use a hyperkähler manifold.

\medskip
\noindent Recall from the introduction \ref{introenglish} that in \cite{Hit.1}, Hitchin's approach to construct components in the character variety is to use the hyperkähler structure of the moduli space of Higgs bundles $\mc{M}_H$. The non-abelian Hodge correspondence allows to canonically associate a flat connection to a Higgs bundle $(V,\Phi)$. %The monodromy of a connection in the image of the Hitchin section is in $\PSL_n(\R)$. 

In subsection \ref{generalitiesHK} we have seen the twistor description of a hyperkähler manifold $M$. The twistor space $X_M$ allows to describe the whole 1-parameter family of complex structures at once.

To compare Hitchin's approach to ours, we give a picture of the twistor space of $\mc{M}_H$ in figure \ref{HK}. Denote by $\l$ the coordinate on $\C P^1$. On the left hand side, the twistor space is in one complex structure, say at $\l=\infty$, the moduli space of Higgs bundles $\mc{M}_{H}$. For $\l=0$, we see the conjugated complex structure. In all other $\l$, we see the complex structure of the character variety $\Rep(\pi_1(\S), G^{\C})$, which can be seen as hamiltonian reduction of the space of all connections $\mc{A}$ by all the gauge transformations $\mc{G}$ (Atiyah--Bott reduction for unitary gauge). 
Going from $\l=0$ to $\l=1$ is the non-abelian Hodge correspondence. Finally, there is the Hitchin fibration going from $\mc{M}_{H}$ to a space of differentials. This fibration admits a section whose monodromy, via the non-abelian Hodge correspondence, is in the split real form. For $G=\SL_n(\C)$, we get flat $\PSL_n(\R)$-connections.

%We start with the hyperk\"ahler structure of the moduli space of polystable Higgs bundles $\mc{M}_{H}$. So we have a $\C P^1$ of complex structures, which we describe by a coordinate $\l$. In one complex structure, say $\l=\infty$, we see the complex structure of the moduli space $\mc{M}_{H}$. For $\l=0$, we see the conjugated structure. In all other $\l$, we see the complex structure of the character variety $\Rep(\pi_1(\Sigma), G^{\C})$, which can be seen as Hamiltonian reduction of the space of all gauge $\mc{A}$ by all the gauge $\mc{G}$ (see figure \ref{HK}). The passage from $\l=0$ to $\l=1$ (a diffeomorphism) is the non-abelian Hodge correspondence. Finally, there is the Hitchin fibration going from $\mc{M}_{H}$ to a space of differentials. This fibration admits a section whose monodromy, via the non-abelian Hodge correspondence, is in $\PSL_n(\R)$.

%\vspace*{0.6cm}
%\begin{figure}[h]
%\centering
%\hspace*{-2cm}
%\includegraphics[width=\textwidth]{Higgs-bundle-vs-higher-structure-HK-2.png}

%\caption{Hyperk\"ahler structure for Higgs bundles and $T^*\mc{T}^n$}
%\label{HK}
%\end{figure}

\begin{figure}[h]
\centering
\begin{tikzpicture}[scale=1.5]
	\draw (0,0) circle (1cm);
	%\draw [domain=0:1] plot (\x, {\x /3}) ; 
\draw [domain=180:360] plot ({cos(\x)},{sin(\x)/3});
	\draw [domain=0:180, dotted] plot ({cos(\x)},{sin(\x)/3});
	\draw [fill=white] (0,1) circle (0.04);
	\draw [fill=white] (0,-1) circle (0.04);
	\draw (0,1.2) node {$\overline{\mc{M}}_H$};
	\draw[below] (0,1) node {$0$};
	\draw (0,-1.2) node {$\mc{M}_H$};
	\draw[above] (0,-1) node {$\infty$};
	\draw (0,-1.5) node {$\downarrow$};
	\draw (0,-1.9) node {$\bigoplus_{i=2}^n H^0(K^i)$};
	\draw [domain=-30:32, dashed, ->] plot ({0.33*cos(\x)}, {0.33*sin(\x)-1.5});
	
	\draw (1,-1.4) node {Hitchin};
	\draw (1,-1.62) node {section};
	\draw (-1.25,-1.4) node {Hitchin};
	\draw (-1.25,-1.65) node {fibration};
	\draw (1.85,0.15) node {$\Rep(\pi_1\S, G^{\C})$};
	\draw (1.65,-0.18) node {$\cong \mc{A}\sslash \mc{G}$};
	
	\begin{scope}[xshift=4.6cm]
	\draw (0,0) circle (1cm);
	%\draw [domain=0:1] plot (\x, {\x /3}) ; 
	\draw [domain=180:360] plot ({cos(\x)},{sin(\x)/3});
	\draw [domain=0:180, dotted] plot ({cos(\x)},{sin(\x)/3});
	\draw [fill=white] (0,1) circle (0.04);
	\draw [fill=white] (0,-1) circle (0.04);
	\draw (0,1.2) node {$\overline{T^*\mc{T}^n}$};
	\draw[below] (0,1) node {$0$};
	\draw (0,-1.2) node {$\cotang$};
	\draw[above] (0,-1) node {$\infty$};
	\draw (0,-1.5) node {$\downarrow$};
	\draw (0,-1.8) node {$\T^n$};
	\draw [domain=-40:24, dashed, ->] plot ({0.33*cos(\x)}, {0.33*sin(\x)-1.5});
	
	\draw (1,-1.4) node {zero-};
	\draw (1,-1.6) node {section};
	\draw (-1.25,-1.4) node {canonical};
	\draw (-1.25,-1.65) node {projection};
	\draw (2,0.15) node {$\Rep(\pi_1\S, G^{\C}) \cong$};
	\draw (2,-0.18) node {$(\mc{A}\sslash\mc{P})\sslash\Symp_0$};
	\end{scope}
	\end{tikzpicture}
	\caption{Twistor space for Higgs bundles and $\cotang$}\label{HK}
\end{figure}

In our approach, the role of $\mc{M}_H$ is played by the cotangent space $\cotang$ which is conjecturally hyperkähler near the zero-section (see discussion around Conjecture \ref{hkcotang}).
An element of $\cotang$ is characterized by an equivalence class of higher Beltrami differentials $\mu_k$ and holomorphic differentials $t_k$.
We have seen in \ref{indbundle} that there is a bundle associated with a point in $\cotang$. The higher complex structure given by the $\mu_k$ gives a 1-form locally of the form $\Phi=\Phi_1dz+\Phi_2d\bar{z}$ where $(\Phi_1, \Phi_2)$ is a pair of commuting nilpotent matrices. 
Hitchin chooses for the Higgs field a point in a principal slice of the Lie algebra, which is a variety of dimension $\rk \g$. In our setting, we choose a principal nilpotent $\Phi_1$, and $\Phi_2$ in the centralizer of $\Phi_1$ which is also of dimension $\rk \g$.
The Hitchin fibration simply becomes the projection map $\cotang \rightarrow \T^n$ and the Hitchin section is the zero-section $\T^n\subset \cotang$.

In the twistor space, in complex structure at $\l=\infty$, we see the cotangent bundle $\cotang$. At the opposite point $\l=0$ we see the conjugated structure $\overline{T^*\mc{T}^n}$ characterized by $[({}_k\mu,{}_kt)]$ (see section \ref{dualcomplexstructure}). We will show that in all other complex structures, we see the character variety $\Rep(\pi_1(\S), G^{\C})$, this time seen as the double reduction of the space of connections  $\mc{A}$ by parabolic gauge $\mc{P}$ and then by higher diffeomorphisms $\Symp_0$.
An analog of the non-abelian Hodge correspondence would allow to canonically associate a flat connection to a point in $\cotang$.
%We will argue that the zero-section $\hat{\mc{T}}^n \rightarrow  T^*\hat{\mc{T}}^n$ gives a connection with monodromy in $\PSL_n(\R)$.
%Therefore, from a higher complex structure, we can get, via an analog of the non-abelian Hodge correspondence, a flat connection with real monodromy.

%In both approaches there is a $\C^*$-action given by scaling $\Phi$ (where $\Phi$ has two different meanings in the two approaches).
Finally, there are holomorphic differentials $t_k$ in both approaches, but they appear differently. In Hitchin's approach, we start from a fixed Riemann surface $\S$ and then consider holomorphic differentials $t_k$ which determine the Higgs field. In our approach, we start from a smooth surface $\S$ and then equip it with a higher complex structure (which varies). The differentials appear in the connection $A$ of the deformation $\l \Phi+A+\l^{-1}\Phi^*$ and are holomorphic with respect to the higher complex structure. Only for trivial higher complex structures we recover the usual notion of holomorphicity. 
In the next section \ref{holodiffs-flat-affine}, the appearance of holomorphic differentials is explained with more detail.

\medskip \noindent
We stress that the twistor picture for $\cotang$ is still conjectural. To prove its validity, we have to apply theorem 3.3 from \cite{HKLR}, giving the twistor approach to HK manifolds. We need a complex manifold $Z$ of dimension $2m+1$ (where $m=\dim \T^n$) such that	
\begin{enumerate}
\item $Z$ is a holomorphic fiber bundle over $\C P^1$,
\item The bundle admits a family of holomorphic sections with normal bundle isomorphic to $\C^{2m}\otimes \mc{O}(1)$,
\item There is a holomorphic section $\omega$ of $\Lambda^2T^*F\otimes \mc{O}(2)$ which gives a symplectic form on each fiber $F$,
\item $Z$ admits a real structure compatible with the other structures and inducing the antipodal map on $\C P^1$.
\end{enumerate}

Our candidate for $Z$ is the space of flat parabolic $h$-connections. The map to $\C P^1$ is given by $h$. The quadratic symplectic structure (quadratic since twisted by $\mc{O}(2)$) is described in \ref{sympstrconnections}. The real structure is described at the beginning of section \ref{finalstep}, but is still problematic since we only know it for trivial $n$-complex structure. The existence of twistor lines is also discussed in \ref{finalstep}, but only partial results are obtained.

\section{Holomorphic differentials in flat affine connections}\label{holodiffs-flat-affine}

Given a higher complex structure in the matrix viewpoint, which can be locally written as $\Phi(z)=\Phi_1(z)dz+\Phi_2(z) d\bar{z}$ with $\Phi_2(z) =\mu_2\Phi_1+...+\mu_n\Phi_1^{n-1}$, suppose we have a flat connection of the form 
$$\mc{A}(\l)=\l \Phi + A+ \l^{-1}\Phi^*$$
where $A^*=-A$ and the $*$-operator is the hermitian conjugate $A^{\dagger}$. Decompose $A$ into its $(1,0)$- and $(0,1)$-part: $A=A_1 + A_2$.

One can consider $\mc{A}(\l)$ as a connection $\mc{A}$ with values in the affine Lie algebra $\widehat{\mf{sl}}_n$.
We want to know which data uniquely determines such a flat connection.  In this section, we first describe a standard form for $\mc{A}(\l)$. Then we show that in a quite general setting, we can explicitly extract holomorphic differentials out of a flat connection of this kind. These differentials should belong to the data we need to determine $\mc{A}(\l)$.

%Using a unitary gauge, we can suppose $\Phi_1$ lower triangular. Suppose further that the $n$-complex structure is trivial, i.e. $\mu=0$. Then $A_1$ is upper triangular.

\subsection{Standard form}\label{standard-form}

We start with $\mc{A}(\l)=\l\Phi + A + \l^{-1}\Phi^*$ as above and reduce it to a standard form.

\begin{lemma}\label{phi1lower}
There is a unitary gauge such that $\Phi_1$ becomes lower triangular with entries of coordinates $(i+1,i)$ given by positive real numbers of the form $e^{\varphi_i}$ for all $i=1,...,n-1$.
\end{lemma}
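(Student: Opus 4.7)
The plan is to exhibit an orthonormal basis of $\mathbb{C}^n$ adapted to the kernel filtration of the principal nilpotent $\Phi_1$, and then fix phases so that the subdiagonal entries of $\Phi_1$ in that basis are real and positive. Since $\Phi_1$ is principal nilpotent, the filtration $K_k := \ker \Phi_1^k$ satisfies $\dim K_k = k$ for $0\le k \le n$, and $\Phi_1$ maps $K_k$ onto $K_{k-1}$ with $1$-dimensional kernel. First I would set $e_n$ to be any unit vector spanning $K_1$, then recursively, for $i=n-1,n-2,\ldots,1$, choose $e_i$ to be a unit vector in $K_{n+1-i}$ orthogonal to $K_{n-i}=\operatorname{span}(e_{i+1},\ldots,e_n)$; such a vector exists and is unique up to a phase because $K_{n+1-i}$ contains $K_{n-i}$ with codimension $1$.

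The unitary change of basis $u \in U(n)$ sending the standard basis to $(e_1,\ldots,e_n)$ is then a candidate gauge. In this new basis, $\Phi_1$ sends $K_{n+1-i}=\operatorname{span}(e_i,\ldots,e_n)$ into $K_{n-i}=\operatorname{span}(e_{i+1},\ldots,e_n)$, so $u^{-1}\Phi_1 u$ is automatically strictly lower triangular. The subdiagonal entry $c_{i+1,i}=\langle e_{i+1},\Phi_1 e_i\rangle$ is nonzero: if it vanished, $\Phi_1 e_i$ would belong to $K_{n-i-1}$, hence $e_i$ would lie in $\Phi_1^{-1}(K_{n-i-1})=K_{n-i}$, contradicting $e_i\perp K_{n-i}$. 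This is the place where the principal nilpotency is essential, and it is the main (small) obstacle to address carefully.

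Finally, to turn each $c_{i+1,i}\in\mathbb{C}^*$ into a positive real number $e^{\varphi_i}$, I would exploit the remaining $U(1)$ ambiguity in each $e_i$. Replacing $e_i$ by $\zeta_i e_i$ with $|\zeta_i|=1$ multiplies the $(i+1,i)$-entry by $\bar\zeta_{i+1}\zeta_i$. Normalizing $\zeta_1=1$ and then choosing $\zeta_{i+1}$ recursively so that $\bar\zeta_{i+1}\zeta_i c_{i+1,i}\in\mathbb{R}_{>0}$ gives the desired form, writing the result as $e^{\varphi_i}$ with $\varphi_i\in\mathbb{R}$. The product of all phases can additionally be fixed to make $u \in SU(n)$ if one prefers to work in the special unitary gauge. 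A quick dimension count supports that nothing further is forced: the orbit of a principal nilpotent under $SU(n)$-conjugation is of real dimension $(n-1)^2$, which matches the dimension of the space of strictly lower triangular nilpotents with positive real subdiagonal (the below-subdiagonal entries remain free complex numbers). No additional structure beyond what is described is required, so this proof plan yields the lemma.
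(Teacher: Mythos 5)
Your proof is correct and follows essentially the same route as the paper: both produce the unitary gauge by orthonormalizing a flag adapted to the principal nilpotent (the paper by writing an arbitrary triangularizing matrix as $G=TU$ via Gram--Schmidt, you by directly orthonormalizing the kernel filtration $\ker\Phi_1^k$, which is the same flag) and then kill the phases of the subdiagonal entries with a diagonal unitary; your verification that these entries are nonzero is in fact more detailed than the paper's one-line appeal to principal nilpotency. The only slip is the closing sanity check: the $\SU(n)$-orbit of a principal nilpotent has real dimension $n^2-1$ (its stabilizer is finite), and it is the orbit \emph{space} that has dimension $(n-1)^2$ --- but this aside plays no role in the argument.
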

\begin{proof}
The gauge acts by conjugation on $\Phi_1(z)$. Since $\Phi_1(z)$ is nilpotent, for every $z\in \S$, there is an invertible matrix $G(z)\in \GL_n(\C)$ such that $G\Phi_1G^{-1}$ is strictly lower triangular. Since $\Phi(z)$ varies smoothly with $z$, so does $G(z)$. We omit the dependence in $z$ in the sequel of the proof.

We decompose $G$ as $G=TU$ where $T$ is lower triangular (not strict) and $U$ is unitary (Gram-Schmidt). Then the matrix $U\Phi_1U^{-1} = T^{-1}(G\Phi_1G^{-1})T$ is already lower triangular. So we have conjugated $\Phi_1$ to a lower triangular matrix via a unitary gauge.

Finally, we use a diagonal unitary gauge to change the arguments of the matrix elements with coordinates $(i+1,i)$ to zero. Since $\Phi_1$ is principal nilpotent, all these elements are non-zero, so strictly positive real numbers which can be written as $e^{\varphi_i}$ with $\varphi_i \in \R$.
\end{proof}

Notice that the unitary gauge preserves the operation $*$, so the form $\l\Phi+A+\l^{-1}\Phi^*$ is preserved. 
Now, we show that for $\mu=0$, the matrix $A_1$ is upper triangular. Notice the importance of $\Phi_1$ being principal nilpotent.

\begin{lemma}\label{a1upper}
For $\Phi_2=0$ (trivial higher complex structure) and $\Phi_1$ lower triangular, the flatness of $\mc{A}(\l)$ implies that $A_1$ is upper triangular.
\end{lemma}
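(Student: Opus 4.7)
My plan is to extract the content of the lemma from the $\l^{-1}$-coefficient of the flatness equation for $\mc{A}(\l)$, and then read off triangularity column by column.

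First I would write out the curvature $F_{\mc{A}(\l)} = d\mc{A}(\l) + \mc{A}(\l)\wedge \mc{A}(\l)$ using the decomposition $\mc{A}(\l)_1 = \l\Phi_1 + A_1$ and $\mc{A}(\l)_2 = A_2 + \l^{-1}\Phi_1^*$ (this is where $\Phi_2 = 0$ enters). The $dz\wedge d\bar z$ component is a Laurent polynomial in $\l$ of degrees $-1,0,1$, and flatness forces each coefficient to vanish separately. The coefficients of $\l$ and $\l^0$ give the expected Hitchin-type equations, but the relevant one for us is the coefficient of $\l^{-1}$, which reads
\begin{equation*}
\del \Phi_1^* \;=\; [A_1,\Phi_1^*].
\end{equation*}
This is the only equation I would actually use, and it is algebraic in $A_1$.

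Next I would unpack this matrix equation entry by entry. Because $\Phi_1$ is strictly lower triangular with entries $e^{\varphi_i}$ at positions $(i+1,i)$, its adjoint $\Phi_1^*$ is strictly upper triangular with entries $e^{\varphi_i}$ at positions $(i,i+1)$. A direct computation then yields, writing $A_1 = (a_{ij})$:
\begin{equation*}
(\del\Phi_1^*)_{ij} \;=\; a_{i,j-1}\,e^{\varphi_{j-1}}\mathbf{1}_{j\geq 2} \;-\; e^{\varphi_i}\, a_{i+1,j}\,\mathbf{1}_{i\leq n-1}.
\end{equation*}
The left-hand side is nonzero only on the superdiagonal $j=i+1$. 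I expect this to yield two kinds of relations: a diagonal relation $a_{i,i} - a_{i+1,i+1} = \del\varphi_i$ (which determines the diagonal of $A_1$ up to a constant), and, crucially, the \emph{off-diagonal} relations $a_{i+1,j} = e^{\varphi_{j-1}-\varphi_i}\,a_{i,j-1}$ for all $(i,j)$ with $j\neq i+1$.

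Finally I would show $a_{ij}=0$ for $i>j$ by induction on the column index $j$. For $j=1$, specializing the off-diagonal relation (and using the convention $a_{i,0}=0$) gives $a_{i+1,1}=0$ for $i=1,\dots,n-1$. For the inductive step, the shift relation $a_{i,j} = e^{\varphi_{j-1}-\varphi_{i-1}}\,a_{i-1,j-1}$ propagates the vanishing from column $j-1$ to column $j$: if $i>j$ then $i-1>j-1$, so the induction hypothesis kills $a_{i-1,j-1}$, hence $a_{i,j}=0$. This exhausts the strict lower triangle of $A_1$, giving the claim. The only mild subtlety, and the one I would watch out for, is the boundary bookkeeping at $j=1$ and $i=n$ where one of the indicator factors drops out, but this actually works in our favor and makes the induction start with no extra input.
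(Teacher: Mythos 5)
Your argument is correct, and it is the mirror image of the paper's. The paper works with the $\l^{+1}$-coefficient of flatness, $0=\delbar\Phi_1+[\Phi_1,A_2]$, splits $A_1=A_l+A_u$ into strictly lower and upper parts (so $A_2=-A_l^*-A_u^*$), observes that $\delbar\Phi_1$ and $[\Phi_1,A_u^*]$ are lower triangular, and concludes $A_l=0$ from the fact that a commutator of a principal nilpotent lower triangular matrix with a non-zero strictly upper triangular matrix is never lower triangular. Your $\l^{-1}$-equation is precisely the hermitian conjugate of that one (via $A_2=-A_1^\dagger$ and $(\delbar M)^\dagger=\del M^\dagger$), so the two carry identical information; what you do differently is to unpack the commutator entry by entry and run an explicit induction on the column index, seeded by the boundary relations at $j=1$. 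In effect you have written out the ``simple computation'' that the paper leaves implicit: the paper's structural claim about commutators, when verified, is exactly your diagonal-shift recursion $a_{i+1,j}=e^{\varphi_{j-1}-\varphi_i}a_{i,j-1}$ with vanishing seed. The paper's version is shorter and emphasizes where principal nilpotency enters (all subdiagonal entries of $\Phi_1$ are non-zero, which is also what lets you divide by $e^{\varphi_i}$); yours is self-contained and checkable. One harmless slip: the $\l^{-1}$-coefficient of $\del\mc{A}_2-\delbar\mc{A}_1+[\mc{A}_1,\mc{A}_2]=0$ gives $\del\Phi_1^*=[\Phi_1^*,A_1]$ rather than $[A_1,\Phi_1^*]$; the sign does not affect any of the vanishing conclusions.
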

\begin{proof}
We write $A_1=A_l + A_u$ where $A_l$ and $A_u$ are respectively the strictly lower and the (not strictly) upper part of $A_1$. Thus we have $A_2 = -A_l^{*}-A_u^{*}$.

The flatness condition at the term $\l$ gives $$0=\bar{\partial}\Phi_1 + [\Phi_1,A_u^{*}]+[\Phi_1,A_l^{*}].$$
Since the first two terms are lower triangular (the operation $*$ exchanges upper and lower triangular matrices), so is the third term $[\Phi_1,A_l^{*}]$.

A simple computation shows that a commutator between a principal nilpotent lower triangular matrix and a non-zero strictly upper triangular matrix can never be strictly lower triangular. Thus, $A_l=0$.
\end{proof}

\subsection{Holomorphic differentials}
We show that from a flat connection of the form $\mc{A}(\l)$ above we can extract holomorphic differentials.

\begin{example}\label{n3example}
Consider the case with $\mf{sl}_3$. Suppose further $\Phi_2 = 0$, so we can put the connection into the standard form:
$$\mc{A}(\l)=\mc{A}_1(\l)+\mc{A}_2(\l)=\begin{pmatrix} a_0 & b_0 & c_0 \\ \l c_1 & a_1 & b_1 \\ \l b_2 & \l c_2 & a_2 \end{pmatrix}dz +\begin{pmatrix} -\bar{a}_0 & \l^{-1}\bar{c}_1 & \l^{-1}\bar{b}_2 \\ -\bar{b}_0 & -\bar{a}_1 & \l^{-1}\bar{c}_1 \\ -\bar{c}_0 & -\bar{b}_1 & -\bar{a}_2\end{pmatrix}d\bar{z}$$
where $\mc{A}_1(\l)$ and $\mc{A}_2(\l)$ are the $(1,0)$ and $(0,1)$-part respectively.

Put $T=\left(\begin{smallmatrix}  & 1 & \\  & &1 \\ \l &  &\end{smallmatrix}\right)$, then we can write $\mc{A}_1=a+bT+cT^2$ where $a, b$ and $c$ are diagonal matrices with entries $a_0, a_1, a_2$ for $a$ and the same for $b$ and $c$. The second matrix can be written\footnote{There is a sign problem, but this does not matter. In fact we can treat $\bar{a}, \bar{b}$ and $\bar{c}$ as independent variables, only the form of $\mc{A}(\l)$ is important.} $\mc{A}_2 = \bar{a}+T^{-1}\bar{b}+T^{-2}\bar{c}$. We denote by $a'$ the diagonal matrix $a$ shifted by 1, i.e. $(a')_i = a_{i+1}$. Notice the relation $aT = Ta''.$

The flatness condition gives
\begin{align*}
0=\del \mc{A}_2-\delbar\mc{A}_1+[\mc{A}_1,\mc{A}_2] &= T^{-2}(\del \bar{c}+\bar{c}(a''-a))+T^{-1}(\del \bar{b}+\bar{b}(a'-a)+b'\bar{c}-b''\bar{c}'') \\
&+ (\del \bar{a} - \delbar a+b\bar{b}-b''\bar{b}''+c\bar{c}-c'\bar{c}') \\
&+ (-\delbar b +b(\bar{a}'-\bar{a})+c\bar{b}'-c''\bar{b}'')T+(-\delbar c+c(\bar{a}''-\bar{a}))T^2. 
\end{align*}
Define $t_3=cc'c''$ and $t_2=bc'+b'c''+b''c$. We directly get from the flatness equations that
$$\delbar t_3 = cc'c''(\bar{a}''-\bar{a})+cc'c''(\bar{a}-\bar{a}')+cc'c''(\bar{a}'-\bar{a}'') = 0$$
and since $\delbar(bc') = cc'\bar{b}'-c'c''\bar{b}''$, we also get $$\delbar t_2 = 0.$$
We notice that $t_3=\tr \Phi_1^2 A_1$ and $t_2=\tr \Phi_1A_1$.
\hfill $\triangle$
\end{example}

This phenomenon generalizes to $\mf{sl}_n$. Suppose $\Phi_2=0$ (i.e. trivial higher complex structure). Then by the standard form from Lemma \ref{phi1lower} and \ref{a1upper} we can assume $\Phi_1$ strictly lower diagonal and $A_1$ upper diagonal. 

\begin{prop}\label{holodifff}
Let $\mc{A}(\l) = \l\Phi_1+A+\l^{-1}\Phi_1^*$ be a flat affine connection with $\Phi_2=0$ and $\Phi_1$ strictly lower triangular. Then $t_k := \tr \Phi_1^{k-1}A_1$ is a holomorphic $k$-differential.
\end{prop}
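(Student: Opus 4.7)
The plan is to extract three structural equations from the flatness of $\mc{A}(\l)$ by comparing powers of $\l$, and then differentiate $t_k$ in the $\bar{z}$-direction.

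First I would decompose $\mc{A}(\l)$ as $\mc{A}_1(\l)dz + \mc{A}_2(\l)d\bar{z}$ with $\mc{A}_1(\l) = \l\Phi_1 + A_1$ and $\mc{A}_2(\l) = A_2 + \l^{-1}\Phi_1^*$. The flatness condition $\del\mc{A}_2(\l) - \delbar\mc{A}_1(\l) + [\mc{A}_1(\l),\mc{A}_2(\l)] = 0$ must hold for all $\l\in \C^*$, hence for each power of $\l$ separately. Collecting coefficients yields
\begin{align*}
\l^{1}:& \quad \delbar\Phi_1 = [\Phi_1,A_2], \\
\l^{0}:& \quad \delbar A_1 = \del A_2 + [A_1,A_2] + [\Phi_1,\Phi_1^*], \\
\l^{-1}:& \quad \del\Phi_1^* = [\Phi_1^*,A_1].
\end{align*}

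Next, I would compute $\delbar t_k = \tr(\delbar\Phi_1^{k-1}\, A_1) + \tr(\Phi_1^{k-1}\,\delbar A_1)$. A telescoping Leibniz expansion combined with the $\l^{1}$-equation gives $\delbar\Phi_1^{k-1} = [\Phi_1^{k-1},A_2]$. Using cyclicity of the trace, one gets $\tr([\Phi_1^{k-1},A_2]A_1) = \tr(\Phi_1^{k-1}[A_2,A_1])$. Substituting the $\l^{0}$-equation into $\tr(\Phi_1^{k-1}\delbar A_1)$ and again using cyclicity, the $[A_1,A_2]$-term cancels the contribution above, and $\tr(\Phi_1^{k-1}[\Phi_1,\Phi_1^*]) = 0$ (both terms equal $\tr(\Phi_1^k\Phi_1^*)$ by cyclicity). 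The result is the single remaining identity
\begin{equation*}
\delbar t_k = \tr(\Phi_1^{k-1}\,\del A_2).
\end{equation*}

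The final step uses the triangular structure provided by Lemmas \ref{phi1lower} and \ref{a1upper}. Since $\Phi_1$ is strictly lower triangular, the entry $(\Phi_1^{k-1})_{ij}$ vanishes unless $i-j\geq k-1$. Since $A_1$ is upper triangular, the reality condition $A_2 = -A_1^*$ forces $A_2$ (and hence $\del A_2$) to be lower triangular, so $(\del A_2)_{ji} = 0$ unless $j\geq i$. The diagonal of the product $\Phi_1^{k-1}\,\del A_2$ therefore vanishes identically for $k\geq 2$, giving $\tr(\Phi_1^{k-1}\,\del A_2) = 0$. Combined with the transformation law $t_k \in \Gamma(K^k)$, this shows $t_k$ is a holomorphic $k$-differential.

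The computation is essentially algebraic once the three flatness equations and the triangular structure are in hand; no step should present a genuine obstacle. The most delicate point is merely to verify carefully that all the non-holomorphic terms cancel in the middle step, which is really an exercise in cyclicity of trace. The triangular cancellation at the end is the conceptual reason why $\Phi_1$ being \emph{principal} nilpotent (forcing $A_1$ upper triangular via Lemma \ref{a1upper}) is essential: a generic nilpotent $\Phi_1$ would not give this clean statement.
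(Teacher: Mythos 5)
Your proposal is correct and follows essentially the same route as the paper: extract $\delbar\Phi_1=[\Phi_1,A_2]$ and the $\l^0$-flatness equation, use cyclicity of the trace to cancel the $[A_1,A_2]$ and $[\Phi_1,\Phi_1^*]$ contributions, and kill the remaining term $\tr\Phi_1^{k-1}\del A_2$ by the triangularity of $\Phi_1^{k-1}$ and $A_2=-A_1^*$. The only cosmetic difference is that you record the (unused) $\l^{-1}$-equation explicitly; no gap.
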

\begin{proof}
The flatness of $\mc{A}(\l)$ gives $\delbar\Phi_1 = [\Phi_1, A_2]$ and $\del A_2-\delbar A_1 + [A_1,A_2]+[\Phi_1,\Phi_1^*]=0$. We deduce $\delbar (\Phi_1^{k-1})=[\Phi_1^{k-1}, A_2]$. We then compute:
\begin{align*}
\delbar t_k &= \delbar \tr\Phi_1^{k-1}A_1 \\
%&= \tr (\sum_{l=0}^{k-2}\Phi_1^l\delbar \Phi_1\Phi_1^{k-2-l}A_1 + \Phi_1^{k-1}\delbar A_1) \\
%&= \tr (\sum_l \Phi_1^l[\Phi_1, A_2])\Phi_1^{k-2-l}A_1 + \Phi_1^{k-1}\delbar A_1) \\
&= \tr ([\Phi_1^{k-1}, A_2]A_1+ \Phi_1^{k-1}\delbar A_1) \\
&= \tr \Phi_1^{k-1}([A_2,A_1]+\delbar A_1) \\
&= \tr \Phi_1^{k-1}(\del A_2+[\Phi_1, \Phi_1^*]) \\
&= \tr [\Phi_1^{k-1}, \Phi_1]\Phi_1^* + \tr \Phi_1^{k-1}\del A_2 \\
&= 0.
\end{align*}
The last line comes from the fact that $\Phi_1^{k-1}$ is strictly lower triangular and $A_2=-A_1^*$ is lower triangular, so their pairing is zero.
\end{proof}
\begin{Remark}
In the proof, we see that the proposition stays true for any operation $*$ which is linear, involutive and exchanges upper and lower triangular matrices.
\hfill $\triangle$
\end{Remark}
\begin{Remark}
The previous proposition and its proof are quite similar to Hitchin's setting: there the holomorphic differentials are in the Higgs field $\Phi$ and we can get them via $$t_k=\tr \Phi^k.$$ We can then directly check their holomorphicity using the flatness:
$$\delbar t_k = \delbar \tr\Phi^k = \tr [\Phi^{k-1},A_2] = 0.$$
\hfill $\triangle$
\end{Remark}

For both, our setting and Hitchin's setting, the holomorphic differentials allow another description in terms of a characteristic polynomial. In Hitchin's case, the coefficients of the characteristic polynomial of $\Phi$ are holomorphic differentials (not the same as $\tr \Phi^k$ but closely linked).

In our setting, we will now see that $\tr \Phi_1^{k-1}A_1$ is the highest $\l$-term of the coefficient of $X^{n-k}$ in the characteristic polynomial $\chi_{\mc{A}_1(\l)}(X)=\det (\l\Phi_1+A_1-X\id).$

Consider for example $k=2$. The coefficient of $X^{n-2}$ is given by $$\tr \Lambda^2(\l\Phi_1+A_1) = \l^2 \tr \Lambda^2\Phi_1 + \l \tr T + \tr \Lambda^2 A_1$$ where $\Lambda^2$ is the second exterior product and $T$ is a term mixing both $A_1$ and $\Phi_1$.
 Since $\Phi_1$ is nilpotent, the $\l^2$-term vanishes. Using Dirac's ``bra-ket'' notation (see page \pageref{notations}), the mixed term $T$ is given by
$$\sum_{i<j} \langle e_i\wedge e_j \mid \Phi_1e_i \wedge A_1e_j \rangle$$ where $(e_i)$ is the standard basis of $\C^n$.
Since $\Phi_1$ is strictly lower triangular it can only strictly increase the index of $e_i$, and since $A_1$ is upper triangular, it can only lower the indices. Thus, $\langle e_i\wedge e_j \mid \Phi_1e_i \wedge A_1e_j \rangle = -\Phi_{i,j}A_{j,i}$ (where $\Phi_{i,j}$ and $A_{i,j}$ denote the entries of $\Phi_1$ and $A_1$) and therefore the highest $\l$-term of the coefficient of $X^{n-2}$ is given by $-\l\tr \Phi_1 A_1$.

The same reasoning holds for any $k$: the coefficient of $X^{n-k}$ is given by $$\tr \Lambda^k(\l\Phi_1+A_1) = \l^k \tr \Lambda^k\Phi_1 + \l^{k-1} \tr \text{ mixed term} + \text{ lower terms}.$$ The $\l^k$-term vanishes since $\Phi_1$ is nilpotent. The mixed term $T$ is given by 
$$T=\sum_{i_1<\cdots <i_k} \langle e_{i_1}\wedge ...\wedge e_{i_k} \mid \Phi_1e_{i_1}\wedge ...\wedge \Phi_1e_{i_{k-1}}\wedge A_1e_{i_k}\rangle.$$
Since $\Phi_1$ strictly increases the index of $e_i$ and $A_1$ lowers it, we get 
$$T= \sum_{i_1<\cdots <i_k} (-1)^{k-1}\Phi_{i_1,i_2}\cdots\Phi_{i_{k-1},i_{k}}A_{i_k,i_1} = (-1)^{k-1} \tr \Phi_1^{k-1}A_1.$$
Using Proposition \ref{holodifff} we have proven:
\begin{prop}
The highest $\l$-terms in the coefficients of the characteristic polynomial of $\mc{A}_1(\l)=\l\Phi_1+A_1$ are holomorphic differentials.
\end{prop}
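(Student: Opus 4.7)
The plan is to combine two ingredients: the expansion of the characteristic polynomial via exterior powers, and the holomorphicity result already established in Proposition \ref{holodifff}. First I would write $\det(\mc{A}_1(\l) - X\id) = \sum_{k=0}^n (-X)^{n-k} \tr \Lambda^k(\l\Phi_1 + A_1)$ and expand each $\tr \Lambda^k(\l \Phi_1 + A_1)$ as a polynomial in $\l$. The candidate top term is $\l^k \tr \Lambda^k \Phi_1$, but this vanishes because $\Phi_1$ is nilpotent (so all eigenvalues are zero and every elementary symmetric function of the eigenvalues is zero). Thus the genuine highest $\l$-contribution is the $\l^{k-1}$ term, obtained by inserting $A_1$ into one of the $k$ slots of the wedge product and $\Phi_1$ in the remaining $k-1$ slots.

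Next I would compute this $\l^{k-1}$ coefficient in the standard form provided by Lemmas \ref{phi1lower} and \ref{a1upper}, where $\Phi_1$ is strictly lower triangular and $A_1$ is upper triangular. Writing the coefficient as
\[
\sum_{i_1<\cdots<i_k} \sum_{j=1}^k \langle e_{i_1}\wedge \cdots \wedge e_{i_k} \mid \Phi_1 e_{i_1}\wedge \cdots \wedge A_1 e_{i_j} \wedge \cdots \wedge \Phi_1 e_{i_k}\rangle,
\]
the triangularity constraints force all but one orientation of the cycle to give zero: each $\Phi_1$ must strictly raise the index, while $A_1$ must lower it, so the only surviving configuration is when $A_1$ sits in the last slot and each $\Phi_1$ moves one step up. This collapses the sum to $(-1)^{k-1} \tr \Phi_1^{k-1} A_1$, as already indicated in Example \ref{n3example} and the discussion preceding the statement.

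Finally I would invoke Proposition \ref{holodifff}, which gives directly that $t_k = \tr \Phi_1^{k-1} A_1$ is a holomorphic $k$-differential whenever $\mc{A}(\l) = \l\Phi_1 + A + \l^{-1}\Phi_1^*$ is flat with $\Phi_2 = 0$. Combining steps, the highest $\l$-term of the coefficient of $X^{n-k}$ in $\chi_{\mc{A}_1(\l)}$ is (up to an explicit sign) the holomorphic differential $\l^{k-1} t_k$, proving the proposition.

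The only non-routine step is the cycle-counting argument in the middle paragraph, ensuring that after imposing the triangularity of $\Phi_1$ and $A_1$ the sum really telescopes to a single trace. Everything else is either a direct expansion of a determinant or a reference to results already proved in the excerpt, so there is no serious obstacle.
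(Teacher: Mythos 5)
Your proposal is correct and follows essentially the same route as the paper: expand the coefficients of the characteristic polynomial as $\tr\Lambda^k(\l\Phi_1+A_1)$, kill the $\l^k$ term by nilpotency of $\Phi_1$, collapse the mixed $\l^{k-1}$ term to $(-1)^{k-1}\tr\Phi_1^{k-1}A_1$ using the triangularity of $\Phi_1$ and $A_1$, and conclude via Proposition \ref{holodifff}. Your version is in fact slightly more careful than the paper's, since you sum over all $k$ positions of the $A_1$ insertion before observing that the cycle structure forces a single surviving configuration, whereas the paper writes only the term with $A_1$ in the last slot.
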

\begin{Remark}
The description of the holomorphic differentials with the characteristic polynomial is more general than the direct formula $\tr\Phi_1^{k-1}A_1$ since it can be generalized to Lie algebras other than $\mf{sl}_n$.
\hfill $\triangle$
\end{Remark}

Finally, we will see in Proposition \ref{parametrisationlambda}, that the formula $t_k=\tr \Phi_1^{k-1}A_1$ stays true even if $\Phi_2 \neq 0$, but the holomorphicity condition for $t_k$ becomes the condition $(\mc{C})$ from the cotangent bundle $T^*\bm\hat{\mc{T}}^n$ (see Theorem \ref{conditioncinconnection}).

\section{Parabolic connections and reduction}\label{parabolicreduction}

In this section, we describe the generic fiber of the twistor space of $\cotang$ from figure \ref{HK} which is a space of flat connections. 
The deformation of the Higgs moduli space turns out to be $\Rep(\pi_1(\S), G^{\C})$, which can also be written as a hamiltonian reduction of all connections by all gauge (the famous Atiyah--Bott reduction). In our setting, there is another reduction, in two steps, which also gives flat connections. 

The idea about the deformation of $\cotang$ is to replace the polynomial functions on $T^*\S$ by differential operators. The higher complex structure is given by two polynomials (the generators of $I$), so in the deformation one gets a pair of differential operators.

%deform the two generators of an ideal $I \in \Hilb^n_{red}(T^{*\C}\S)$, which are simply two polynomials, to a pair of differential operators. Taking the quotient by higher diffeomorphisms gives a pair of commuting differential operators.

The space of pairs of differential operators can be obtained by a reduction of all connections by some specific parabolic gauge. This procedure was first introduced by Bilal, Fock and Kogan in \cite{BFK}. In that paper, the authors also describe some ideas for generalized complex and projective structures. Our higher complex structures are the mathematically rigorous version of their ideas. Our treatment of the parabolic reduction is independent of their paper and follows some other notation. The question about how to impose the commutativity condition on the differential operators remained open in their paper. We show that the answer is given by a second reduction with respect to the group of higher diffeomorphisms.

\subsection{Atiyah--Bott reduction}
Before going to the parabolic reduction, we recall the classical reduction of connections by gauge transforms, developed by Atiyah and Bott in their famous paper \cite{AtBott}.

Let $\Sigma$ be a surface and $G$ be a semisimple Lie group with Lie algebra $\g$. Let $E$ be a trivial $G$-bundle over $\Sigma$. Denote by $\mathcal{A}$ the space of all $\g$-connections on $E$. It is an affine space modeled over the vector space of $\g$-valued 1-forms $\Omega^1(\Sigma, \g)$. Further, denote by $\mathcal{G}$ the space of all gauge transforms, i.e. bundle automorphisms. We can identify the gauge group with $G$-valued functions: $\mathcal{G}=\Omega^0(\Sigma,G)$.

On the space of all connections $\mathcal{A}$, there is a natural symplectic structure given by $$\bm\hat{\omega} = \int_{\Sigma} \tr \;\delta A \wedge \delta A$$ where tr
denotes the Killing form on $\g$ (the trace for matrix Lie algebras).
Since $\mathcal{A}$ is an affine space, its tangent space at every point is canonically isomorphic to $\Omega^1(\Sigma, \g)$. So given $A \in \mathcal{A}$ and $A_1, A_2 \in T_A\mathcal{A} \cong \Omega^1(\Sigma, \g)$, we have $\bm\hat{\omega}_A(A_1, A_2) = \int_{\Sigma} \tr \; A_1\wedge A_2$. Note that $\bm\hat{\omega}$ is constant (independent of $A$) so $d\bm\hat{\omega} = 0$. Further, the 2-form $\bm\hat{\omega}$ is clearly antisymmetric and non-degenerate (since the Killing form is). Remark finally that this construction only works on a surface.

Now we can state the famous and surprising theorem due to Atiyah and Bott (see end of chapter 9 in \cite{AtBott} for unitary case, see section 1.8 in Goldman's paper \cite{Goldman.2} for the general case):
\begin{thm}[Atiyah, Bott 1983]
The action of gauge transforms on the space of connections is a Hamiltonian action and the moment map is the curvature. Thus, the Hamiltonian reduction $\mathcal{A}\sslash\mathcal{G}$ gives the moduli space of flat connections.
\end{thm}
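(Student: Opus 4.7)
The plan is to verify the three claims separately: invariance of $\bm\hat\omega$, identification of the moment map with the curvature, and characterization of the zero-level set. None of the steps is very deep; the theorem is really a clever packaging of standard gauge theory identities once one notices that the symplectic structure on $\mathcal{A}$ is constant.

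First I would write down the infinitesimal gauge action. For $g \in \mathcal{G}$, the transformation is $g \cdot A = gAg^{-1} + g\, dg^{-1}$; differentiating at $g = \id$ in direction $X \in \Omega^0(\Sigma, \g) = \Lie(\mathcal{G})$ gives the fundamental vector field $X_{\mathcal{A}}(A) = -d_A X$, where $d_A X = dX + [A, X]$. Under the full (non-linearised) action, tangent vectors transform as $\delta A \mapsto g(\delta A)g^{-1}$; since the Killing form is $\Ad$-invariant and $\bm\hat\omega = \int_\Sigma \tr\, \delta A \wedge \delta A$ is constant on the affine space $\mathcal{A}$, the 2-form is automatically preserved. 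Equivalently, one checks that the Lie derivative $\mathcal{L}_{X_\mathcal{A}}\bm\hat\omega = 0$, which follows from $d\bm\hat\omega = 0$ together with the moment-map identity below.

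Next I would propose $\mu(A) = F_A := dA + \tfrac{1}{2}[A, A]$ as the moment map, paired with $X \in \Lie(\mathcal{G})$ through $\langle \mu(A), X\rangle = \int_\Sigma \tr(F_A \cdot X)$. The moment-map identity $d\langle \mu, X\rangle = \iota_{X_\mathcal{A}} \bm\hat\omega$ reduces to a one-line integration by parts: differentiating $F_A$ gives $\delta F_A = d_A(\delta A)$, so
\[
d\langle \mu, X\rangle(\delta A) = \int_\Sigma \tr\bigl(d_A(\delta A) \wedge X\bigr) = \int_\Sigma \tr\bigl(\delta A \wedge d_A X\bigr),
\]
using Stokes' theorem (no boundary since $\Sigma$ is closed) and the $\ad$-invariance of $\tr$. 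The right-hand side equals $\bm\hat\omega(-d_A X, \delta A) = \bm\hat\omega(X_\mathcal{A}(A), \delta A)$, as required. Equivariance of $\mu$ under $\mathcal{G}$ is the standard identity $F_{g\cdot A} = g F_A g^{-1}$, which, via the invariance of the trace, corresponds exactly to the coadjoint action of $\mathcal{G}$ on $\Lie(\mathcal{G})^*$.

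Finally, to identify the reduction, I would observe that $\mu^{-1}(\{0\}) = \{A \in \mathcal{A} \mid F_A = 0\}$ is by definition the space of flat connections, and the hamiltonian quotient $\mathcal{A} \sslash \mathcal{G} = \mu^{-1}(0)/\mathcal{G}$ is the moduli space of flat connections modulo gauge. The main subtlety, which I would flag rather than resolve in detail, is that both $\mathcal{A}$ and $\mathcal{G}$ are infinite-dimensional, so one must be careful about what kind of quotient is being formed (typically one restricts to an irreducible or stable locus to obtain a smooth Hausdorff manifold); this is a standard issue treated in Atiyah--Bott \cite{AtBott} and Goldman \cite{Goldman.2}, and is the only genuinely non-formal ingredient of the proof.
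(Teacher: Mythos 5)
Your proposal is correct and follows essentially the same route as the paper's proof: both exploit the constancy of $\bm\hat{\omega}$ on the affine space $\mathcal{A}$ and the conjugation action on tangent vectors to get invariance, compute the same fundamental vector field $[\varepsilon,A]-d\varepsilon = -d_A\varepsilon$, and identify the moment map with the curvature via the same integration by parts (you run the computation from $\delta F_A = d_A(\delta A)$ toward $\iota_{X_\mathcal{A}}\bm\hat{\omega}$, the paper runs it in the opposite direction). Your explicit remarks on equivariance and on the infinite-dimensional quotient are reasonable additions but do not change the argument.
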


Let us explain the moment map with more detail: the moment map $m$ is a map from $\mathcal{A}$ to $\Lie(\mathcal{G})^*$. The Lie algebra $\Lie(\mathcal{G})$ is equal to $\Omega^0(\Sigma, \g)$, so its dual is isomorphic to $\Omega^2(\Sigma, \g)$ via the pairing $\int_{\Sigma} \tr$. 
On the other hand, given a connection $A$, its curvature $F(A)$ is a $\g$-valued 2-form, i.e. an element of $\Omega^2(\Sigma,\g)$. Hence, the map $m$ is well-defined.

\begin{proof}[Idea of proof]
The action of a gauge transform $g$ on a connection $A$ is given by $g.A = gAg^{-1}+gdg^{-1}$ coming from expanding $g(d+A)g^{-1}$.
So the action on a tangent vector $\delta A$ is given by $g.\delta A = g\delta A g^{-1}$. So we have $g^*\bm\hat{\omega} = \int \tr g\delta A\wedge \delta A g^{-1} = \bm\hat{\omega}$. Thus, we see that $\bm\hat{\omega}$ is invariant under the action, i.e. the action is symplectic.

Let us compute the infinitesimal action by an element $g=1+\varepsilon$. We get $(1+\varepsilon).A = A +[\varepsilon,A]-d\varepsilon$. So we have a vector field $A_{\varepsilon}=[\varepsilon,A]-d\varepsilon$ on $\mathcal{A}$. Denote by $\iota$ the interior product on differential forms. Now we compute
\begin{align*}
\iota_{A_{\varepsilon}}\bm\hat{\omega} &= \int_{\Sigma} \tr \; A_{\varepsilon} \wedge \delta A = \int_{\Sigma} \tr \;([\varepsilon,A]-d\varepsilon) \wedge \delta A \\
&= \int_{\Sigma}\tr [\delta A,A]\varepsilon +\int_{\Sigma} \tr \; \varepsilon \; d\delta A \\
&= \int_{\Sigma} \tr \left(\varepsilon \; \delta(dA+A\wedge A)\right)
\end{align*}
where we used an integration by parts and the cyclicity property of the Killing form. The moment map appears in the last line: $m(A) = dA+A\wedge A = F(A)$ the curvature.
\end{proof}

\subsection{Parabolic reduction}

\subsubsection{Setting and coordinates}\label{settingparab}

In subsection \ref{indbundle} we have seen how to associate a rank $n$-bundle $V$ over $\S$ to a higher complex structure. Moreover we have seen that there is a privileged direction in each fiber, the common kernel of $\Phi_1$ and $\Phi_2$. This gives a line-subbundle $L$ in $V$. We want to mimic the Atiyah--Bott reduction with the extra constraint of fixing $L$. That is why we consider the subspace of gauge transformations fixing the subbundle $L$ (more precisely its dual). 

Let us take the same setting as for the Atiyah--Bott reduction with $G=\SL_n(\mathbb{C})$. But instead of all gauge transforms $\mathcal{G}$, we consider the subgroup $\mathcal{P}\subset \mathcal{G}$ consisting of matrices of the form 
$$\begin{pmatrix}
* & \cdots & * & * \\
\vdots & & \vdots & \vdots \\
* & \cdots & *& * \\
0 & \cdots& 0 & * \\
\end{pmatrix}$$
i.e. preserving the last direction in the dual space. We want to compute and analyze the hamiltonian reduction $\mathcal{A}\sslash\mathcal{P}$, which we call \textbf{space of parabolic connections}.

\begin{Remark}
The reason to consider a fixed direction in the dual bundle and not in the bundle itself is purely of technical advantage.
\hfill $\triangle$
\end{Remark}

Since $\mathcal{P}\subset \mathcal{G}$, we know by the Atiyah--Bott theorem that the action of $\mathcal{P}$ on the space of connections $\mathcal{A}$ is hamiltonian with moment map $m: A\mapsto i^*F(A)$ where $i: \mathcal{P} \hookrightarrow \mathcal{G}$ is the inclusion and $i^*: \Lie(\mathcal{G})^* \twoheadrightarrow \Lie(\mathcal{P})^*$ the induced surjection on the dual Lie algebras. Since $G=\SL_n(\mathbb{C})$, the map $i^*$ is explicitly given by forgetting the first $n-1$ entries in the last column. This means that $m^{-1}(\{0\})$ is the space of all $A \in \mathcal{A}$ such that the curvature $F(A)$ is concentrated in the last column: 
$$\begin{pmatrix}
 0& \cdots & 0 & \xi_n\\
 \vdots &  & \vdots &\vdots\\
 \vdots &  & \vdots & \xi_2\\
 0& \cdots &0 & 0 \\
\end{pmatrix}.$$

In order to give a description in coordinates of the hamiltonian reduction $\mathcal{A}\sslash\mathcal{P}$, we fix a reference complex structure on the surface $\Sigma$. We take a connection $A\in \mathcal{A}$ and decompose it into its holomorphic and anti-holomorphic parts: $A=A_1 + A_2$. As a covariant derivative, we set $\nabla = \partial + A_1$ and $\bar{\nabla} = \bar{\partial}+ A_2$.
Using the parabolic gauge, it is possible to reduce $A_1$ locally to a companion matrix:
\begin{equation}\label{firstmatrix}
A_1 \sim
\begin{pmatrix} 
&  &  & \bm\hat{t}_n\\
1 &   &  &\vdots \\
& \ddots & &\bm\hat{t}_2 \\
 &  & 1 & 0 \\
\end{pmatrix}dz.
\end{equation}
The existence of such a gauge is proven in subsection \ref{existence-para-gauge}. Reducing $A_1$ to the above form means that we choose a basis of the form $B= (s, \nabla s, \nabla^2 s,..., \nabla^{n-1}s)$. This takes all the gauge freedom. A connection in $\mathcal{A}\sslash\mathcal{P}$ verifies $[\nabla, \bar{\nabla}]\nabla^i s = 0$ for $i=0,1,...,n-2$ since $[\nabla, \bar{\nabla}] = F(A)$ is the curvature which is concentrated on the last column. It follows that $\bar{\nabla}\nabla^i s = \nabla^i \bar{\nabla} s$ for all $i=1,...,n-1$. Thus, the connection is fully described by $\nabla^n s$ and $\bar{\nabla}s$. We can write these expressions in the basis $B$:
\begin{equation}\label{eqqq1}
\nabla^n s = \bm\hat{t}_{n}s + \bm\hat{t}_{n-1} \nabla s+...+\bm\hat{t}_{2}\nabla^{n-2}s 
\end{equation}
\begin{equation}\label{eqqq2}
\bar{\nabla}s = \bm\hat{\mu}_1 s + \bm\hat{\mu}_2 \nabla s + ... + \bm\hat{\mu}_n \nabla^{n-1}s.
\end{equation}
Notice that $\bm\hat{t}_1=0$ since $\tr A_1=0$.

The second part $A_2$ is uniquely determined by its first column given by equation \eqref{eqqq2}. Since $\bar{\nabla}\nabla^i s = \nabla^i \bar{\nabla} s$ for $i=1,...,n-1$, the $i$-th column of $A_2$ is given by applying $(i-1)$ times $\nabla$ to the first column.
We get a 1-form of the following type:
\begin{equation}\label{secondmatrix}
A_2 \sim
\begin{pmatrix} 
\bm\hat{\mu}_1& \partial \bm\hat{\mu}_1+\bm\hat{\mu}_n \bm\hat{t}_n & \cdots\\
\bm\hat{\mu}_2 &  \bm\hat{\mu}_1+\partial \bm\hat{\mu}_2+\bm\hat{\mu}_n \bm\hat{t}_{n-1} & \cdots \\
\vdots & \vdots & \vdots \\
\bm\hat{\mu}_{n-1} & \bm\hat{\mu}_{n-2}+\partial \bm\hat{\mu}_{n-1}+\bm\hat{\mu}_n \bm\hat{t}_2 & \cdots \\
\bm\hat{\mu}_n & \bm\hat{\mu}_{n-1} +\partial \bm\hat{\mu}_n & \cdots 
\end{pmatrix}d\bar{z}.
\end{equation}

\begin{Remark} Notice that modulo $\partial$ (meaning that you drop all terms with a partial derivative), equations \eqref{eqqq1} and \eqref{eqqq2} become the relations of $p^n$ and $\bar{p}$ in a generic ideal of $\Hilb^n_{red}(\C^2)$. So $A_1$ and $A_2$ become the multiplication operators by $p$ and $\bar{p}$ respectively.
\hfill $\triangle$
\end{Remark}

The functions $(\bm\hat{\mu}_2, ..., \bm\hat{\mu}_n, \bm\hat{t}_{2}, ..., \bm\hat{t}_{n})$ completely parameterize $\mathcal{A}\sslash\mathcal{P}$ since it is possible to express $\bm\hat{\mu}_{1}$ in terms of these using that the second matrix is traceless. 
We call an element of $\mathcal{A}\sslash\mathcal{P}$ a \textbf{parabolic connection}. 
We consider $\mc{A}\sslash\mc{P}$ as a subspace of $\mc{A}$ by using the representative $A_1 + A_2$ with $A_1$ of the local form \ref{firstmatrix} and $A_2$ like in \ref{secondmatrix}.
Its parabolic curvature is concentrated on the last column: $[\nabla, \bar{\nabla}]\nabla^{n-1}s = \xi_n s + \xi_{n-1} \nabla s + ...+ \xi_2 \nabla^{n-2}s$. The following proposition allows to compute the parabolic curvature easily.

\begin{prop}[Parabolic curvature]\label{thmcourbure}
$[\nabla^n,\bar{\nabla}]s = \sum_{k=2}^n \xi_k\nabla^{n-k}s.$
\end{prop}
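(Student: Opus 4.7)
The plan is to reduce the proposition to a single application of the fact that the parabolic curvature $[\nabla,\bar\nabla]$ already annihilates $s,\nabla s,\dots,\nabla^{n-2}s$. The whole content of the statement is that iterating $\nabla$ on the left of $[\nabla,\bar\nabla]$ does not introduce any extra correction terms when evaluated on $s$, and this follows by a standard telescoping identity.

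First, I would establish by induction on $n$ the Leibniz-type identity
\[
[\nabla^n,\bar\nabla] \;=\; \sum_{k=0}^{n-1} \nabla^{n-1-k}\,[\nabla,\bar\nabla]\,\nabla^{k},
\]
which is immediate from $[\nabla^{k+1},\bar\nabla]=\nabla\,[\nabla^k,\bar\nabla]+[\nabla,\bar\nabla]\nabla^k$. Applying this operator identity to the section $s$ gives
\[
[\nabla^n,\bar\nabla]s \;=\; \sum_{k=0}^{n-1}\nabla^{n-1-k}\bigl([\nabla,\bar\nabla]\nabla^{k}s\bigr).
\]

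Next, I would invoke the defining property of $\mc{A}\sslash\mc{P}$ recalled just above the statement: because the moment map condition forces $F(A)=[\nabla,\bar\nabla]$ to be concentrated in the last column in the basis $B=(s,\nabla s,\dots,\nabla^{n-1}s)$, one has $[\nabla,\bar\nabla]\nabla^{k}s=0$ for every $k=0,1,\dots,n-2$, while $[\nabla,\bar\nabla]\nabla^{n-1}s=\sum_{k=2}^{n}\xi_k\,\nabla^{n-k}s$ by the very definition of the $\xi_k$. Substituting into the telescoping sum kills all terms except the one with $k=n-1$, for which the prefactor $\nabla^{n-1-k}=\nabla^{0}=\mathrm{id}$ leaves the expression untouched. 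This yields
\[
[\nabla^n,\bar\nabla]s \;=\; [\nabla,\bar\nabla]\nabla^{n-1}s \;=\; \sum_{k=2}^{n}\xi_k\,\nabla^{n-k}s,
\]
which is exactly the claim.

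There is no real obstacle here: the only point that deserves attention is the bookkeeping in the telescoping identity, and the fact that one is genuinely allowed to use the vanishing $[\nabla,\bar\nabla]\nabla^{k}s=0$ for $k\le n-2$, which is guaranteed by our reduction step (and not by flatness of $A$, which is not assumed at this stage). In particular, this computation shows that a parabolic connection is flat if and only if all $\xi_k$ vanish, which is the statement that will be used in the sequel to recover flat connections through a second reduction.
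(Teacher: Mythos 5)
Your proof is correct and is essentially the paper's argument: both rest on the recursion $[\nabla^{k+1},\bar{\nabla}]=\nabla[\nabla^k,\bar{\nabla}]+[\nabla,\bar{\nabla}]\nabla^k$ together with the fact that the curvature kills $s,\nabla s,\dots,\nabla^{n-2}s$, the only cosmetic difference being that you unroll the recursion into a full telescoping sum while the paper runs an induction showing $[\nabla^k,\bar{\nabla}]s=0$ for $k\leq n-1$ before applying the recursion one last time.
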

\begin{proof}
Since the first $n-1$ columns of the curvature $F(A)$ are 0, we have $[\nabla, \bar{\nabla}]\nabla^is = 0$ for $i=0,1,...,n-2$. Using Leibniz's rule and induction on $k$, we can prove that $[\nabla^k, \bar{\nabla}]s = 0$ for $k=1,...,n-1$. Indeed, it is true for $k=1$ and we have $[\nabla^{k+1}, \bar{\nabla}]s = \nabla [\nabla^k,\bar{\nabla}]s + [\nabla,\bar{\nabla}]\nabla^k s=0$ whenever $k\leq n-2$.

\noindent Therefore, we get $$[\nabla^n, \bar{\nabla}]s = \nabla[\nabla^{n-1}, \bar{\nabla}]s + [\nabla, \bar{\nabla}]\nabla^{n-1}s = [\nabla, \bar{\nabla}]\nabla^{n-1}s = \sum_{k=2}^n \xi_k\nabla^{n-k}s$$ 
by the last column of the curvature.
\end{proof}

Inside the non-commutative ring of differential operators, we define the left-ideal $\bm\hat{I}=\langle \nabla^n-\bm\hat{P}, -\bar{\nabla}+\bm\hat{Q} \rangle$ where $\bm\hat{P}$ and $\bm\hat{Q}$ are defined in equations \eqref{eqqq1} and \eqref{eqqq2} respectively. We can express the previous proposition as $$[\nabla^n,\bar{\nabla}]=\sum_{k=2}^n \xi_k\nabla^{n-k} \mod \bm\hat{I}.$$

Notice finally that our coordinates $(\bm\hat{\mu}_2, ..., \bm\hat{\mu}_n, \bm\hat{t}_{2}, ..., \bm\hat{t}_{n})$ do not behave like tensors under coordinate change $z\mapsto w(z)$.
We will see in the following section \ref{parabolicwithlambda} that if we introduce a parameter $\l$ we get at the semiclassical limit tensors out of our coordinates.

\subsubsection{Example \texorpdfstring{$n=2$}{n=2}}\label{casen2}

Consider a parabolic $\SL(2, \C)$-connection locally written as $A=A_1dz+A_2d\bar{z}$. 
The first matrix $A_1$ is a companion matrix of the form $\left( \begin{smallmatrix} 0 & \bm\hat{t}_2 \\ 1 & 0 \end{smallmatrix} \right)$.

Let us compute the transformed matrix $A_2$. It is the image of  the operator $\bar{\nabla}$ in a basis $(s,\nabla s)$. Put $\bar{\nabla}s = \bm\hat{\mu}_1 s+ \bm\hat{\mu}_2 \nabla s.$
The second column can be computed using $\bar{\nabla}\nabla s = \nabla \bar{\nabla}s - [\nabla,\bar{\nabla}]s = \nabla \bar{\nabla}s$ and $\nabla^2 s = \bm\hat{t}_2s$.
Since the trace of the matrix is zero, we get $\bm\hat{\mu}_1 = -\frac{1}{2}\partial \bm\hat{\mu}_2$. Hence
$$A_2 = \left( \begin{array}{cc}
	-\frac{1}{2}\partial \bm\hat{\mu}_2 & -\frac{1}{2}\partial^2 \bm\hat{\mu}_2+\bm\hat{t}_2\bm\hat{\mu}_2 \\
	\bm\hat{\mu}_2 & \frac{1}{2}\partial \bm\hat{\mu}_2 
\end{array} \right).$$
The curvature is of the form $\left( \begin{smallmatrix} 0 & \xi_2 \\ 0 & 0 \end{smallmatrix} \right)$
where
$$\xi_2 = (\bar{\partial}-\bm\hat{\mu}_2\partial-2\partial\bm\hat{\mu}_2)\bm\hat{t}_2+\frac{1}{2}\partial^3\bm\hat{\mu}_2.$$
Suppose that the curvature $\xi_2$ is 0. We can then look for flat sections $\Psi = (\psi_1,\psi_2)$. The first condition $(\partial+A_1)\Psi=0$ gives $\psi_1=-\partial \psi_2$ and $$(\partial^2-\bm\hat{t}_2)\psi_2 = 0.$$
The second condition $(\bar{\partial}+A_2)\Psi= 0$ only gives one extra condition: $$(\bar{\partial}-\bm\hat{\mu}_2\partial+\frac{1}{2}\partial\bm\hat{\mu}_2)\psi_2 = 0.$$
For $\bm\hat{\mu}_2=0$ this just means that $\psi_2$ is holomorphic and we get an ordinary differential equation $(\partial^2-\bm\hat{t}_2)\psi_2 = 0$. For $\bm\hat{\mu}_2\neq 0$, the second condition is still a holomorphicity condition, but with respect to another complex structure. 

For general $n$, a flat section $\Psi=(\psi_k)_{1\leq k \leq n}$ is of the form $\psi_{n-k}=\del^k\psi_n$ and there are two equations on $\psi_n$. The first equation comes from the last column in $A_1$, so directly generalizes to $(\partial^n-\bm\hat{t}_1\partial^{n-1}-...-\bm\hat{t}_n)\psi_n = 0$. The generalized holomorphicity condition comes from the last row in $A_2$:
\begin{equation}\label{diffops}
(-\bar{\partial}+\bm\hat{\alpha}_{nn}+\bm\hat{\alpha}_{n,n-1}\partial+...+\bm\hat{\alpha}_{n,1}\partial^{n-1})\psi_n=0
\end{equation}
where $\bm\hat{\alpha}_{ij}$ denote the entries of $A_2$ which have an explicit but complicated expression in terms of the $\bm\hat{\mu}_k$ and $\bm\hat{t}_k$.

\subsubsection{Existence of parabolic gauge*}\label{existence-para-gauge}

In this rather technical subsection (the hurried reader could skip it), we prove the existence of a parabolic gauge:
\begin{prop}[Existence of parabolic gauge]
For a generic connection $A=A_1+A_2$, there is a gauge $P\in \mathcal{C}^{\infty}(\Sigma,\SL(n,\mathbb{C}))$ with last row zero except for the last entry (parabolic gauge) such that $A_1$ is locally transformed into a companion matrix.
\end{prop}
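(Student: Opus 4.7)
The strategy is to produce the parabolic gauge by finding a cyclic section $s$ adapted to the parabolic structure. Putting $A_1$ into the companion form \eqref{firstmatrix} amounts to choosing a local frame of the form $(s, \nabla s, \ldots, \nabla^{n-1}s)$ in which $\nabla e_i = e_{i+1}$; for the change of frame to be realized by a parabolic gauge, i.e.\ to preserve the codimension-one subbundle $V_0 \subset V$ dual to the fixed line in $V^*$, I additionally need $s, \nabla s, \ldots, \nabla^{n-2}s$ to span $V_0$ while $\nabla^{n-1}s$ has a nonvanishing component modulo $V_0$.

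To construct such $s$, I would introduce the iterated ``second fundamental forms''
$$\Pi^{(k)} \colon W_{k-1} \longrightarrow (V/V_0) \otimes K^{k}, \qquad \Pi^{(k)}(s) := [\nabla^{k} s],$$
where $W_0 := V_0$ and $W_k := \ker \Pi^{(k)}|_{W_{k-1}}$ for $k \geq 1$. The key computation is that $\Pi^{(k)}$ is $\mathcal{O}_\Sigma$-linear on $W_{k-1}$: Leibniz's rule gives $\nabla^{k}(fs) = \sum_{j=0}^{k}\binom{k}{j}(\partial^{j} f)\,\nabla^{k-j}s$, and for $s \in W_{k-1}$ every term with $j \geq 1$ satisfies $\nabla^{k-j}s \in V_0$ and hence projects to zero in $V/V_0$. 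So only $f\,[\nabla^k s]$ survives, yielding $\mathcal{O}$-linearity. On the open locus where every $\Pi^{(k)}$ has rank one, the $W_k$ form a strictly decreasing filtration of subbundles of ranks $n-1, n-2, \ldots, 1$.

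The ``generic'' hypothesis of the proposition is precisely that all $\Pi^{(k)}$ are of rank one in a neighborhood of the base point: then $W_{n-2}$ is a line subbundle and $\Pi^{(n-1)}|_{W_{n-2}}$ is nonzero. Any local section $s$ of $W_{n-2}$ with $\Pi^{(n-1)}(s) \neq 0$ supplies a local frame $(s, \nabla s, \ldots, \nabla^{n-1}s)$ whose first $n-1$ members lie in $V_0$ and whose last member has nonzero image in $V/V_0$. The change-of-basis matrix $P$ from the original trivialization to this frame therefore has last row $(0,\ldots,0,\ast)$ with $\ast \neq 0$; this is exactly the parabolic form, and by construction $P\cdot A_1$ takes the companion form \eqref{firstmatrix}.

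The main obstacle is controlling the nongeneric locus where some $\Pi^{(k)}$ drops rank. For $n=2$ there is a single condition $\Pi^{(1)} \neq 0$, which recovers the elementary observation in subsection~\ref{casen2} that the $(2,1)$-entry of $A_1$ must be nonvanishing in order to reach the companion representative $\bigl(\begin{smallmatrix} 0 & \bm\hat{t}_2 \\ 1 & 0 \end{smallmatrix}\bigr)$. For larger $n$ the nongeneric locus is a proper closed analytic subset of $\mathcal{A}$, on whose complement the parabolic reduction to companion form is possible---and this complement is what ``generic connection'' in the proposition refers to.
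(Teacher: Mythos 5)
Your proof is correct, but it takes a genuinely different route from the paper's. The paper argues by direct computation: it writes out the matrix equation $PA_1-\partial P = CP$ (with $C$ the companion matrix), observes that the rows of $P$ are forced to be built from the last row $a=(a_{n1},\dots,a_{n,n-1})$ of $A_1$ and its covariant derivatives $\nabla a = -\partial a + aA_1$, proves the determinant identity $\det P_0 = p_{nn}^{n-1}\det(\nabla^{n-2}a,\dots,\nabla a,a)$, and then uses $\det P=1$ to solve for $p_{nn}$ and successively for all remaining entries. Its genericity condition, $\det(\nabla^{n-2}a,\dots,\nabla a,a)\neq 0$, is exactly the dual formulation of your condition that every $\Pi^{(k)}$ has rank one: both say that the covector $e_n^*$ cutting out $V_0$ is cyclic for $\nabla$. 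Your filtration by kernels of iterated second fundamental forms is more intrinsic, avoids the entry-by-entry bookkeeping, and makes the genericity locus conceptually transparent; the paper's computation buys explicit formulas for the gauge (in particular $p_{nn}=A^{-1/n}$ and the concrete Wronskian-type genericity determinant), which are what get used downstream. Two small points you should make explicit to close the argument: first, the pointwise linear independence of $(s,\nabla s,\dots,\nabla^{n-1}s)$, which you assert but do not prove --- it follows because $\nabla^j s$ lies in $W_{n-2-j}\setminus W_{n-1-j}$ for $0\le j\le n-2$, so the first $n-1$ vectors are adapted to the complete flag $W_{n-2}\subset\cdots\subset W_0=V_0$ while $\nabla^{n-1}s\notin V_0$; second, the normalization $\det P=1$ required by the statement ($P\in\SL(n,\mathbb{C})$, and needed so that the companion matrix has vanishing bottom-right entry), which you obtain by rescaling $s$ by an $n$-th root of the frame determinant, a rescaling that preserves both the companion form and membership of $s$ in $W_{n-2}$.
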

\begin{proof}
We begin by setting up notations. The matrix $P$ we look for, is of the following form:

$$P= \left( \begin{array}{cccc}
	p_{11} & ... & p_{1,n-1} &p_{1n} \\
	\vdots & & \vdots & \vdots \\
	p_{n-1,1} & ...& p_{n-1,n-1}& p_{n-1,n} \\
	0 & 0 & 0 & p_{nn}
\end{array} \right) = \left( \begin{array}{cc}
	L_1 & p_{1n} \\
	\vdots & \vdots \\
	L_{n-1} & p_{n-1,n} \\
	0 & p_{nn}
\end{array} \right)$$
where $L_k$ denotes the $k$\textsuperscript{th} line in the matrix without the last entry (i.e. a row vector of length $n-1$).
Denote the entries of $A_1$ by $a_{ij}$. We adopt Einstein's summation convention in this section (automatic summation over repeated indices).

We want that $P$ transforms $A_1$ into a companion matrix under gauge transform $PA_1P^{-1}+P\partial(P^{-1})$.
Since $P$ is of determinant 1, $A_1$ stays traceless.
Using $P\partial{P^{-1}} = -\partial P P^{-1}$, we get
\begin{equation}\label{ppara}
PA_1-\partial P = \left( \begin{array}{cccc}
	 & &  & *\\
	1 & & & \vdots \\
	& \ddots & & * \\
  & & 1& 0
\end{array} \right) P = \left( \begin{array}{cc}
	0 & *\\
	L_1 & * \\
	\vdots & \vdots \\
  L_{n-1}& *
\end{array} \right).
\end{equation}
This gives $n^2-n$ equations by the first $n-1$ columns.

Our strategy is the following: we express $p_{ij}$ for $1\leq i \leq n-1$ and $1\leq j \leq n-1$ in terms of $a_{ij}$ (the ``constants'') and the $(p_{kn})_{k=1,...,n}$ (and their derivatives). Then we get an expression of $p_{nn}$ in terms of $a_{ij}$. Finally we compute $p_{kn}$ for $k=1,...,n-1$.

The matrix equation \eqref{ppara} above gives 
\begin{equation}\sum_{k=1}^n p_{ik}a_{kj}-\partial p_{ij} = p_{i-1,j} \label{auxmat} \end{equation} 
$\forall 1\leq i \leq n$ and $\forall 1\leq j\leq n-1$ where we have put $p_{0j}= 0$.

Setting $i=n$ (and $j<n$), we get $p_{n-1,j} = a_{nj}p_{nn}$, i.e. $L_{n-1}$ is $p_{nn}$ times the last row of $A_1$.
Setting $i=n-1$ (and $j<n$), we get $p_{n-2,j}=p_{n-1,k}a_{kj}-\partial p_{n-1,j} = p_{nn}(a_{nk}a_{kj}-\partial a_{nj})-a_{nj}\partial p_{nn} +p_{n-1,n}a_{nj}$.

By continuing, we see that for $2\leq i \leq n$, we get our first goal: the equations \eqref{auxmat} express the $p_{ij}$ for $1\leq i \leq n-1$ and $1\leq j \leq n-1$ in terms of $a_{ij}$ and the $(p_{kn})_{k=1,...,n}$.

To achieve our second goal, we prove the following:
\begin{lemma}
Denote by $P_0$ the square-submatrix of the $p_{ij}$ for $1\leq i \leq n-1$ and $1\leq j \leq n-1$. We then have $$\det(P_0) = A p_{nn}^{n-1}$$ where $A$ is some constant only depending on the $a_{ij}$.
\end{lemma}
\begin{proof}
We interpret the equation for $PA_1-\partial P$ as a condition on the covariant derivative $\nabla = (-\partial+A_1)$ on $P$, acting from the right. The factor $p_{nn}$ is interpreted as a scalar denoted by $f$.
Put $a=(a_{n1},a_{n2},...,a_{n,n-1})$ the last row of $A_1$ which we consider as a row vector. We already noticed that $L_{n-1}=fa$. In what follows we write $\nabla a$ for $-\del a + aA_1$.
The other equations of \eqref{auxmat} now successively give 
\begin{align}
L_{n-2} &= \nabla(fa)+p_{n-1,n}a \nonumber \\
L_{n-3} &= \nabla L_{n-2} + p_{n-2,n}a = \nabla(\nabla(fa)+p_{n-1,n}a ) + p_{n-2,n}a \nonumber\\
L_{n-k} &= \nabla L_{n-k+1}+p_{n-k+1,n}a  \label{parabdetail}
\end{align}
Thus, we can write $L_{n-k} = \sum_{l=0}^{k-1} \alpha_{l,k}\nabla^l a$ with $\alpha_{k-1,k} = f$. The other $\alpha_{l,k}$ are functions of $a_{ij}$ and the $(p_{kn})_{k=1,...,n-1}$.

With this expression for $L_{n-k}$, we see that $P_0$ in the basis $(\nabla^{n-2}a,..., \nabla a, a)$ is upper-triangular with $f$ on the diagonal. Hence, its determinant is 
$$\det(P_0) = f^{n-1}\det(\nabla^{n-2} a, ..., \nabla a,a) = p_{nn}^{n-1} A.$$
\end{proof}

\begin{coro}
Since $1=\det P = p_{nn} \det P_0 $, we get $$p_{nn} = A^{-\frac{1}{n}}.$$
\end{coro}

We see in particular the condition under which the parabolic gauge exists: we need that $A=\det(\nabla^{n-2} a,...,\nabla a, a) \neq 0$.

To finish, take equations \eqref{parabdetail} for $i=1$ which give \begin{equation} 0=\nabla L_1 + p_{1,n}a = f\nabla^{n-1}a + \sum_{l=0}^{n-2}\alpha_{l,n}\nabla^l a \label{auxmat2} \end{equation}
with $\alpha_{l,n} = p_{l+1,n} +$ terms with $p_{k,n}$ with $k>l+1$.
Then, express $\nabla^{n-1}a$ in the basis $(a,\nabla a, ..., \nabla^{n-2}a)$: $\nabla^{n-1}a = \beta_0 a+\beta_1\nabla a +...+\beta_{n-2}\nabla^{n-2}a$ (thus the $\beta_k$ depend only on the $a_{ij}$). By the freedom of $(a,\nabla a, ..., \nabla^{n-2}a)$, we get out of \eqref{auxmat2} $$\alpha_{l,n}+f\beta_l = 0.$$
Therefore, we can successively express $p_{1n}, p_{2n}$, ... up to $p_{n-1,n}$ in terms of $a_{ij}$ and $p_{nn}$ (which we already expressed in terms of the $a_{ij}$).

This proves the existence of the parabolic gauge.
\end{proof}

\subsection{Higher diffeomorphisms and flat connections}\label{symponconnections}
To get from parabolic connections to flat connections, we define an action of higher diffeomorphisms on the space of parabolic connections $\mathcal{A}\sslash\mathcal{P}$. We prove that this action is hamiltonian and show that the double reduction $\mathcal{A}\sslash\mathcal{P}\sslash\Symp_0(T^*\Sigma)$ is a space of flat connections.

\subsubsection{Action of higher diffeomorphisms}
Recall that the description in coordinates of the space of parabolic connections relies on a basis $B$ of the form $(s, \nabla s, ..., \nabla^{n-1}s)$. A variation $\delta s$ of the section $s$ can be expressed in this basis: $$\delta s = v_1 s+v_2\nabla s +...+v_{n}\nabla^{n-1}s = \bm\hat{H}s$$ where $\bm\hat{H}=v_1+v_2\nabla+...+v_{n}\nabla^{n-1}$ is a differential operator of degree $n-1$.

The Lie algebra of higher diffeomorphisms $\Lie(\Symp_0(T^*\Sigma))$ is the space of functions on $T^*\S$ which can be deformed to differential operators on $\S$. The infinitesimal action of higher diffeomorphisms on parabolic connections is given by a base change induced by $s \mapsto s+\varepsilon \delta s$ such that the basis $B$ preserves its form.
More specifically, to a higher diffeomorphism generated by $H=v_2p+v_3p^2+...+v_{n}p^{n-1}$ we associate the variation $\bm\hat{H}=v_1+v_2\nabla+v_3\nabla^2+...+v_{n}\nabla^{n-1}$ where $v_1$ is uniquely determined by the other $v_i$ by the condition that the infinitesimal gauge transform is of trace zero.
%Adding a constant $v_1$ to $\bm\hat{H}$ corresponds to the fact that the Hamiltonian $H$ is only defined up to a global constant. 

\begin{Remark}
This only defines the infinitesimal action. The question about how to integrate the action to the whole group $\Symp_0$, or maybe to a deformation of it, has to be worked out.
\end{Remark}

Let us describe how to compute the matrix $X$ describing the infinitesimal base change induced by a higher diffeomorphism. 
Write the base change as $$(s,\nabla s,...,\nabla^{n-1}s) \mapsto (s,\nabla s,...,\nabla^{n-1}s)+\varepsilon (\delta s,\nabla \delta s,...,\nabla^{n-1}\delta s).$$ So the first column of $X$ is just given by $Xs = \delta s = v_1s+v_2\nabla s+...+v_n\nabla^{n-1}s$. The second is given by $X\nabla s = \nabla \delta s = \nabla(v_1s+v_2\nabla s+...+v_n\nabla^{n-1}s)$. 
We notice that the construction of this matrix $X$ is exactly the same as for the matrix $A_2$ with the only difference that the variables in $A_2$ are called $\bm\hat{\mu}_k$ instead of $v_k$. Since both matrices are traceless, even the terms $v_1$ and $\bm\hat\mu_1$ coincide. 
Notice that if $X$ is a parabolic gauge, i.e. the $(n-1)$ first entries of the last column are zero, then $v_k=0 \;\forall k$, so $X=0$.
\begin{prop}\label{computeX}
The matrix $X$ of the gauge coming from a higher diffeomorphism is given by $$X=A_2 \mid_{\bm\hat{\mu}_k\mapsto v_k}.$$
\end{prop}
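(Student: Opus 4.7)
The plan is to show that both matrices $X$ and $A_2$ are built by the exact same algorithmic recipe applied to their respective first columns, so whenever those first columns agree (under the substitution $\bm\hat{\mu}_k \mapsto v_k$), the full matrices agree.

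First I would unpack both constructions in parallel. For $A_2$, the definition fixes the first column by the expansion $\bar{\nabla}s = \bm\hat{\mu}_1 s + \bm\hat{\mu}_2 \nabla s + \dots + \bm\hat{\mu}_n \nabla^{n-1}s$, and each subsequent column $i+1$ encodes $\bar{\nabla}\nabla^i s$. Since we are working on $\mc{A}\sslash\mc{P}$, the curvature is concentrated in the last column, so $[\nabla,\bar{\nabla}]\nabla^j s = 0$ for $j = 0,\dots,n-2$; this yields $\bar{\nabla}\nabla^i s = \nabla^i \bar{\nabla} s$ for $i=1,\dots,n-1$. Hence the $(i+1)$-th column of $A_2$ is obtained from the first by applying $\nabla^i$ and then re-expressing the result in the basis $B$ using Leibniz (to pick up derivatives of the $\bm\hat{\mu}_k$) and the companion relation $\nabla^n s = \bm\hat{t}_n s + \dots + \bm\hat{t}_2 \nabla^{n-2}s$ to reduce powers $\nabla^m s$ with $m \geq n$.

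Next I would carry out the same analysis for $X$. By definition $X$ is the matrix of the infinitesimal base change sending $\nabla^i s \mapsto \nabla^i \delta s$, where $\delta s = v_1 s + v_2 \nabla s + \dots + v_n \nabla^{n-1}s$. Thus the first column of $X$ coincides with the first column of $A_2$ under the substitution $\bm\hat{\mu}_k \mapsto v_k$. The key observation is that $X$'s remaining columns are produced by the identical procedure: apply $\nabla^i$ to $\delta s$ and re-expand in the basis $B$. Since $\delta s$ is a linear combination of $s, \nabla s, \dots, \nabla^{n-1}s$ with scalar coefficients $v_k$, this re-expansion uses exactly the same Leibniz expansions (now producing derivatives of $v_k$) and the same companion relation for $\nabla^n s$ to fold down high powers. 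Column by column, every entry is obtained from the corresponding entry in $A_2$ by the substitution $\bm\hat{\mu}_k \mapsto v_k$.

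Finally I would handle the trace-zero normalization that determines $v_1$ (and, symmetrically, $\bm\hat{\mu}_1$). The constraint for $A_2$ is $\tr A_2 = 0$, which uniquely fixes $\bm\hat{\mu}_1$ as an explicit combination of the other $\bm\hat{\mu}_k$, $\bm\hat{t}_k$ and their $\del$-derivatives. Imposing tracelessness on $X$ produces the same formula with $\bm\hat{\mu}_k$ replaced by $v_k$, which is exactly the condition stated before the proposition. The computation is entirely formal and should present no obstacle; the only mild subtlety is checking that the reductions using $\nabla^n s$ produce the same pattern in both matrices, but this follows because in both cases one is reducing inside the same left-ideal generated by $\nabla^n - \bm\hat{P}$, independently of the names of the coefficients in the first column. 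Hence $X = A_2\big|_{\bm\hat{\mu}_k \mapsto v_k}$.
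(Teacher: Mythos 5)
Your proof is correct and follows essentially the same route as the paper: the paper's justification (given in the text immediately preceding the proposition) is exactly the observation that the columns of $X$ are $\nabla^{i-1}\delta s$ with $\delta s=\sum_k v_k\nabla^{k-1}s$, built by the same reduction procedure (Leibniz plus the relation $\nabla^n s=\bm\hat{P}s$) as the columns $\nabla^{i-1}\bar{\nabla}s$ of $A_2$, and that tracelessness fixes $v_1$ and $\bm\hat{\mu}_1$ by the same formula. Your write-up is, if anything, more detailed than the paper's.
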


Let us indicate how to compute the action of a higher diffeomorphism on our coordinates $(\bm\hat{t}_k, \bm\hat{\mu}_k)$. The coordinates $\bm\hat{t}_k$ are given by the relation $\nabla^n s = \bm\hat{P}s$ where $\bm\hat{P}=\bm\hat{t}_2\nabla^{n-2}+...+\bm\hat{t}_n$. The variation $\delta \bm\hat{P}$ satisfies 
$$\nabla^n (s+\varepsilon \bm\hat{H}s) = (\bm\hat{P}+\varepsilon \delta \bm\hat{P})(s+\varepsilon \bm\hat{H}s)$$
which gives 
\begin{equation}\label{variation-hat-t}
\delta \bm\hat{P} = [\bm\hat{H}, -\nabla^n+\bm\hat{P}] \mod \bm\hat{I}
\end{equation}
where $\bm\hat{I} = \langle \nabla^n-\bm\hat{P}, -\bar{\nabla}+\bm\hat{Q} \rangle$ is a left ideal of differential operators.

Similarly, the coordinates $\bm\hat{\mu}_k$ are given by $\bar{\nabla}s = \bm\hat{Q}s$ where $\bm\hat{Q} = \bm\hat{\mu}_1+\bm\hat{\mu}_2\nabla...+\bm\hat{\mu}_n\nabla^{n-1}$. We can easily compute the variation of $\bm\hat{Q}$ to be 
\begin{equation}\label{variation-hat-mu}
\delta \bm\hat{Q} = [\bm\hat{H}, -\bar{\nabla}+\bm\hat{Q}] \mod \bm\hat{I}.
\end{equation}

\begin{Remark}
In the case of the Hilbert scheme, we used the variation formula from symplectic geometry $\delta f = \frac{df}{dt} = \{H,f\}$ (see equation \eqref{idealvariation}). Here we find the deformed version of this: $\delta \bm\hat{P} = [\bm\hat{H}, \bm\hat{P}]$ where $\bm\hat{H}$ is the quantum Hamiltonian and $\bm\hat{P}$ some operator. In the next section, we introduce a deformation parameter $h$ and we get $\delta \bm\hat{P}(h) = \frac{1}{h}[\bm\hat{H}(h), \bm\hat{P}(h)]$.
\hfill $\triangle$
\end{Remark}

\subsubsection{Double reduction to flat connections}

We have just seen that higher diffeomorphisms act on the space of parabolic connections by gauge transforms. Since we see $\mc{A}\sslash\mc{P}$ as a subset of $\mc{A}$ and since the gauge action on $\mc{A}$ is hamiltonian, we see that the action of higher diffeomorphisms on $\mathcal{A}\sslash\mathcal{P}$ is also hamiltonian. It is not surprising that the moment map is nothing else than the parabolic curvature:

\begin{thm}
The infinitesimal action of higher diffeomorphisms $\Symp_0(T^*\Sigma)$ on the space of parabolic connections $\mathcal{A}\sslash\mathcal{P}$ is hamiltonian with moment map
$$m(\bm\hat{t}_i,\bm\hat{\mu}_j).(v_2,...,v_n) = \int_{\Sigma} \sum_{i=1}^n x_{n,n+1-i}\xi_i$$ where $x_{i,j}$ are the matrix elements of the gauge $X$ and $\xi_i$ is the parabolic curvature of the parabolic connection described by $(\bm\hat{t}_i,\bm\hat{\mu}_i)_{2\leq i \leq n}$.
\end{thm}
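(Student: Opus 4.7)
The plan is to deduce this moment-map formula directly from the Atiyah--Bott theorem for the full gauge group $\mc{G}$, then descend and pull back along the identification $H\mapsto X$ of Proposition \ref{computeX}. Recall that on $(\mc{A},\bm\hat\omega)$ the $\mc{G}$-action is hamiltonian with moment map $F(A)$, so the $\mc{P}$-action has moment map equal to the projection $i^*F(A)$ of the curvature onto the annihilator of $\Lie(\mc{P})$. This annihilator is exactly the space of matrices supported in the last column with vanishing $(n,n)$-entry, and by Proposition \ref{thmcourbure} a representative connection in $\mc{A}\sslash\mc{P}$ has precisely this shape for its curvature, with entries $F_{j,n}=\xi_{n+1-j}$ for $j=1,\ldots,n-1$ and $F_{n,n}=0$.

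The first step is to check that the infinitesimal $\Symp_0$-action descends to the quotient. By Proposition \ref{computeX}, the Hamiltonian $H=v_2p+\cdots+v_np^{n-1}$ acts on the chosen slice of $\mc{A}\sslash\mc{P}$ by the infinitesimal gauge $X=A_2|_{\bm\hat\mu_k\mapsto v_k}$; by construction $X$ preserves the companion-matrix normal form of $A_1$, and any residual ambiguity in $X$ (coming from composing with a parabolic gauge to restore the normal form) lies in $\Lie(\mc{P})$. The second step is to compute the pullback moment map:
\[
m(\bm\hat t,\bm\hat\mu).(v_2,\ldots,v_n)\;=\;\int_\Sigma \tr\bigl(X\,F(A)\bigr)\;=\;\int_\Sigma \sum_{j=1}^{n-1} x_{n,j}\,\xi_{n+1-j}\;=\;\int_\Sigma \sum_{i=1}^{n} x_{n,n+1-i}\,\xi_i,
\]
where the second equality uses that $F(A)$ is supported in the last column, so only the last row of $X$ contributes, and the third sets $\xi_1:=0$. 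The $\Lie(\mc{P})$-ambiguity of $X$ drops out because $\tr(\eta\cdot F)=0$ for any $\eta\in\Lie(\mc{P})$ paired with a matrix in the annihilator of $\Lie(\mc{P})$.

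The main obstacle will be the final compatibility check: that this function is indeed a moment map for the symplectic form on $\mc{A}\sslash\mc{P}$ inherited from $\bm\hat\omega$, i.e.\ that $dm_H=\iota_{V_H}\omega_{\mathrm{red}}$ where $V_H$ is the infinitesimal $\Symp_0$-vector field. The subtlety is that the assignment $H\mapsto X$ is not a fixed subalgebra morphism but depends on the base point $A$, so reduction in stages cannot be invoked naively. The cleanest route is a direct coordinatewise verification: use the explicit variation formulas \eqref{variation-hat-t}--\eqref{variation-hat-mu} for the infinitesimal action on $(\bm\hat t_k,\bm\hat\mu_k)$, together with the symplectic structure on $\mc{A}\sslash\mc{P}$ (computed in section \ref{sympstrconnections}), and mimic the Atiyah--Bott calculation $\iota_{V_X}\bm\hat\omega=d\int\tr(X\,F)$ restricted to the slice, integrating by parts so that the ``bulk'' contributions cancel modulo $\bm\hat I$ and only the parabolic-curvature boundary terms $\int\sum x_{n,n+1-i}\xi_i$ survive.
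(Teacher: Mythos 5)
Your proposal matches the paper's proof: the paper likewise treats the infinitesimal $\Symp_0$-action as the gauge action by $X$, repeats the Atiyah--Bott computation to get $\iota_\chi\omega_{\mathcal{A}\sslash\mathcal{P}}=\delta\int\tr F(A)X$ on the slice, and then uses the concentration of the parabolic curvature in the last column to reduce $\tr(F(A)X)$ to $\sum_i x_{n,n+1-i}\xi_i$. The one subtlety you flag --- that $X$ depends on the base point, so passing from $\int\tr(\delta F(A))X$ to $\delta\int\tr(F(A))X$ requires an argument --- is in fact passed over silently in the paper's computation, so your caution identifies a real (if minor) gap in the published argument rather than a divergence of method.
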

Some explanation for the moment map is necessary: $m$ goes from the space $\mathcal{A}\sslash\mathcal{P}$, which is described by coordinates $(\bm\hat{t}_i,\bm\hat{\mu}_j)$, into $\Lie(\Symp_0)^*$, the dual to the Lie algebra of higher diffeomorphisms. The Lie algebra of higher diffeomorphisms is described by Hamiltonians of the form $v_2p+...+v_np^{n-1}$. To such a function, we compute the associated matrix $X$ (see Proposition \ref{computeX}) from which we take the last row for computing $m$. All elements of $X$ are functions depending on the $v_k$ and the $\bm\hat{t}_k$. The parabolic curvature described by the $\xi_i$ is a function of $(\bm\hat{t}_i,\bm\hat{\mu}_j)$.

\begin{proof}
Our computation is analogous to the Atiyah--Bott reduction. 
An infinitesimal gauge transform given by $X$ affects $A_1$ and $A_2$ by
\begin{align*}
\chi(A_1) = [X,A_1]-\partial X \\
\chi(A_2) = [X,A_2] -\bar{\partial}X.
\end{align*}

The symplectic form on $\mathcal{A}\sslash\mathcal{P}$ is the restriction of the one on $\mathcal{A}$, so we can compute 
\begin{align*}
\iota_{\chi}\omega_{\mathcal{A}\sslash\mathcal{P}} &= \int \tr \left(\chi(A_1)\delta A_2-\chi(A_2)\delta A_1 \right)\\
&= \int \tr ([X,A_1]-\partial X )\delta A_2-([X,A_2] -\bar{\partial}X)\delta A_1 \\
&= \int \tr ([A_1,\delta A_2]+\delta\partial A_2-[A_2,\delta A_1]-\delta\bar{\partial}A_1)X \\
&= \int \tr \delta (\partial A_2-\bar{\partial}A_1+[A_1,A_2])X \\
&= \delta \int \tr F(A)X \\
&= \delta \int \sum_{i=1}^n x_{n,n+1-i}\xi_i.
\end{align*}

Therefore $$m= \int_{\Sigma} \sum_{i=1}^n x_{n,n+1-i}\xi_i.$$
\end{proof}
\begin{coro}\label{flatparaconnections}
The double reduction $\mathcal{A}\sslash\mathcal{P}\sslash\Symp_0$ gives a space of flat connections:
$$\mathcal{A}\sslash\mathcal{P}\sslash\Symp_0 \cong \{A_1+A_2\in \mathcal{A}\sslash\mathcal{P} \mid \xi_i=0 \; \forall i\} / \Symp_0$$
with $A_1$ locally of the form \ref{firstmatrix} and $A_2$ like in \ref{secondmatrix}.
\end{coro}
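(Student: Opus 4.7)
The claim is an immediate consequence of the previous theorem once we combine it with the structure of the parabolic curvature established in Proposition \ref{thmcourbure}. By definition of hamiltonian reduction,
$$\mathcal{A}\sslash\mathcal{P}\sslash\Symp_0 \; = \; m^{-1}(0)\,/\,\Symp_0,$$
so the only substantial point is to identify $m^{-1}(0)$ as the vanishing locus of the parabolic curvatures $\xi_2,\ldots,\xi_n$. Combined with the fact (from subsection \ref{settingparab}) that for a parabolic connection $A\in\mathcal{A}\sslash\mathcal{P}$ the full curvature $F(A)$ is concentrated in the last column with entries exactly $\xi_n,\ldots,\xi_2$, the equality $\xi_i=0$ for all $i$ is the same as flatness of $A$, and the corollary follows.

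The key step is therefore to prove the equivalence
$$\Bigl(\int_{\Sigma}\sum_{i=2}^{n} x_{n,n+1-i}\,\xi_i \;=\; 0\ \text{ for all } H=v_2p+\cdots+v_np^{n-1}\Bigr) \quad\Longleftrightarrow\quad \xi_i=0 \ \ \forall\,i.
$$
The right-to-left implication is trivial. For the converse I exploit the triangular nature of $X$ supplied by Proposition \ref{computeX}: since $X$ has the same shape as $A_2$ with $\bm\hat{\mu}_k$ replaced by $v_k$, its last row satisfies
$$x_{n,j} \;=\; v_{n+1-j} \;+\; \bigl(\text{differential polynomial in } v_{n+2-j},\ldots,v_n\bigr),\qquad j=1,\ldots,n-1.$$
Pairing against $\xi_{n+1-j}$, integrating by parts, and collecting terms by each independent function $v_k$, the vanishing of $m(v_2,\ldots,v_n)$ for \emph{all} test data is equivalent to a system
$$\xi_k + \sum_{\ell>k} D_{k,\ell}\,\xi_\ell \;=\;0, \qquad k=2,\ldots,n,$$
where $D_{k,\ell}$ are certain differential operators (involving the $\bm\hat{t}_i$). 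This system is lower triangular: the equation indexed by $k=2$ reads $\xi_2=0$; plugging this into the equation for $k=3$ gives $\xi_3=0$; and so on by descending induction. Hence $\xi_i=0$ for every $i$.

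The bulk of the work in a complete write-up would be a clean bookkeeping of this triangular system, tracking in particular the corrections coming from the $\bm\hat{t}$-contributions in the entries of $X$. The main obstacle is precisely this verification: one must confirm that the coefficient of the "leading" test function $v_{n+1-j}$ in the pairing is exactly $\xi_j$ (modulo derivatives of strictly higher-indexed $\xi$'s), so that the induction closes. This is straightforward but notationally heavy; it amounts to computing, column by column, the last row of the matrix built by the algorithm of subsection \ref{settingparab} and matching it against the expression for $A_2$. Once that is done, both implications of the equivalence above are established and the corollary drops out from the standard hamiltonian-reduction recipe.
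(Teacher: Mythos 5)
Your proposal is correct and follows essentially the same route as the paper: the paper's own proof is the single sentence that $m(\bm\hat{t}_i,\bm\hat{\mu}_j)(v_2,\ldots,v_n)=0$ for all $v_2,\ldots,v_n$ implies $\xi_i=0$ for all $i$, and your triangular-system argument via the last row of $X$ is exactly the justification of that implication which the paper leaves implicit. One small indexing slip: since $x_{n,n+1-i}=v_i+(\text{corrections in }v_{i+1},\ldots,v_n)$, integration by parts makes the coefficient of $v_k$ equal to $\xi_k+\sum_{\ell<k}D_{k,\ell}\xi_\ell$ (corrections from \emph{lower}-indexed $\xi$'s, not $\ell>k$ as displayed), which is consistent with the ascending induction starting at $\xi_2=0$ that you then describe.
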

The corollary directly follows from the previous theorem since $m(\bm\hat{t}_i, \bm\hat{\mu}_j)(v_2,...,v_n)=0$ for all $v_2,...,v_n$ implies $\xi_i=0$ for all $i$. 

We call the double reduction space $\mathcal{A}\sslash\mathcal{P}\sslash\Symp_0$ the space of \textbf{flat parabolic connections}. 
For $n=2$ we get those flat $\SL_2(\C)$-connections whose monodromy is the developing map of a complex projective structure on $\S$. For general $n$, we probably get a complicated subset of the space of all flat connections.

%\begin{proof}
%We already have a description of $\mathcal{A}\sslash\mathcal{P}$ as the space of parabolic connections. By the previous theorem, the inverse image $m^{-1}\{0\}$ is given by a parabolic connection $A_1 dz+ A_2 d\bar{z}$ such that its parabolic curvature vanishes: $\xi_i = 0$ for all $i$. This gives conditions on $\hat{t}_i$ in terms of the $\bm\hat{\mu}_k$. Finally, we have to quotient by the remaining gauge.
%\end{proof}

\section{Parabolic reduction of \texorpdfstring{$h$}{h}-connections}\label{parabolicwithlambda}

In this section, we study the parabolic reduction on $h$-connections to get the twistor space description from figure \ref{HK}.
%The correspondence between polynomials in $p$ and $\bar{p}$ and differential operators is refined to $p \mapsto h\del$ and $\bar{p} \mapsto h\delbar$ where $h$ is some small formal parameter. We have $\l=h^{-1}$.
The main idea is the following: a point in $\cotang$ is a $\Symp_0$-equivalence class of ideals of the form $$I=\langle -p^n+t_2p^{n-2}+...+t_n, -\bar{p}+\mu_1+\mu_2 p+...+\mu_n p^{n-1} \rangle.$$ Replace the polynomials by $h$-connections using the rule $p \mapsto \nabla=h\del + A_1(h)$ and $\bar{p} \mapsto \bar{\nabla}=h\delbar+A_2(h)$ where $h$ is a formal parameter. This corresponds to the deformation of a higher complex structure $\Phi$ to $\Phi+hd+hA+h^2\Phi^*=h(d+\l\Phi+A+\l^{-1}\Phi^*)$ where $\l=h^{-1}$.
%Thus, we get a pair of differential operators and look for solutions $\psi$ to them:
%\begin{align*}
%(h^n\del^n+h^{n-2}\bm\hat{t}_2\del^{n-2}+...+\bm\hat{t}_n)\psi =& 0\\
%(-h\delbar+\bm\hat{\mu}_1+h\bm\hat{\mu}_2\del+...+h^{n-1}\bm\hat{\mu}_n\del^{n-1})\psi =& 0.
%\end{align*}

For $h\neq 0$ we divide the connection by $h$ to get a usual connection with parameter $\l$.
For all $\l \in \C^*$ fixed, we get the same space as described in the previous section \ref{parabolicreduction}, i.e. the space of flat parabolic connections. For $\l \rightarrow \infty$ we get the cotangent space $\cotang$. For $\l \rightarrow 0$ we get the space of conjugated structures $[({}_kt, {}_k\mu)]$ (see subsection \ref{dualcomplexstructure}).

%We perform the parabolic reduction on the space of affine connections of the form 
%$$\mc{A}(\l)=\l \Phi + A + \l^{-1}\Phi^*$$ where $\Phi= \Phi_1dz + \Phi_2d\bar{z}$ is in the Hilbert scheme bundle $\Hilb^n_0(T^{*\C}\Sigma)$. This means that $\Phi_1 \in \mf{sl}_n$ is a principal nilpotent element and $\Phi_2 \in Z(\Phi_1)$, i.e. $[\Phi_1,\Phi_2]=0$.

\subsection{Parametrization}

Take $\mc{A}(\l)=\l \Phi + A + \l^{-1}\Phi^*$ where $\Phi$ is in the Hilbert scheme bundle $\Hilb^n_0(T^{*\C}\Sigma)$ and $\Phi^*$ in the conjugated bundle. This means that locally $\Phi(z,\bar{z})=\Phi_1(z,\bar{z})dz+\Phi_2(z,\bar{z})d\bar{z}$ with $\Phi_1 \in \mf{sl}_n$ is a principal nilpotent element and $\Phi_2$ is in the centralizer of $\Phi_1$, i.e. $[\Phi_1,\Phi_2]=0$.
We define $\mc{A}_1(\l)=\l \Phi_1 + A_1 + \l^{-1}\Phi_2^*$ and $\mc{A}_2(\l)=\l \Phi_2 + A_2 + \l^{-1}\Phi_1^*$, i.e. the $(1,0)$-part and $(0,1)$-part of $\mc{A}(\l)$. We also define $\nabla=\partial + \mc{A}_1(\l)$ and $\bar{\nabla}=\bar{\partial}+\mc{A}_2(\l)$.

\begin{Remark}
For the moment $\Phi^*$ is an arbitrary conjugated higher complex structure. We will later impose a reality condition (in section \ref{finalstep}) such that the $*$-operator becomes the hermitian conjugate. In the twistor language, $\mc{A}(\l)$ is a general twistor line and later on we will focus on \emph{real} twistor lines.
\hfill $\triangle$
\end{Remark}

As for the case without parameter, there is a parabolic gauge which transforms $\mc{A}(\l)$ locally to 
\begin{equation}\label{paragauge}
\left(\begin{array}{cccc}
	 & & & \bm\hat{t}_n(\l) \\
	1& & & \vdots \\
	 & \ddots & & \bm\hat{t}_2(\l) \\
	 & & 1& 0 
\end{array}\right)
dz + \left(\begin{array}{cc}
	 \bm\hat{\mu}_1(\l) &  \\
	 \bm\hat{\mu}_2(\l)&    \bm\hat{\alpha}_{ij}(\l)\\
	 \vdots &  \\
	 \bm\hat{\mu}_n(\l) & 
\end{array}\right) d\bar{z}
\end{equation}
where $\bm\hat{\alpha}_{ij}(\l)$ and $\bm\hat{\mu}_1(\l)$ are explicit functions of the other variables. So we can take $(\bm\hat{t}_i(\l), \bm\hat{\mu}_i(\l))_{i=2,...,n}$ as coordinates.

This local representative comes from a basis of the form $(s, \nabla s, ..., \nabla^{n-1}s)$ for some section $s$. We then get our coordinates by
\begin{equation}\label{paraboliccoord1}
\nabla^n s = \bm\hat{t}_{n}(\l)s + \bm\hat{t}_{n-1}(\l) \nabla s+...+\bm\hat{t}_{2}(\l)\nabla^{n-2}s 
\end{equation}
\begin{equation}\label{paraboliccoord2}
\bar{\nabla}s = \bm\hat{\mu}_1(\l) s + \bm\hat{\mu}_2(\l) \nabla s + ... + \bm\hat{\mu}_n(\l) \nabla^{n-1}s.
\end{equation}

You can compute the $\bm\hat{\alpha}_{ij}(\l)$ using $\bar{\nabla}\nabla^ks = \nabla^k\bar{\nabla}s$ for $k\leq n-1$ which holds since the curvature $[\nabla, \bar{\nabla}]$ is concentrated in the last column.

\begin{example}\label{examplen2}
Take $n=2$ and consider $\Phi_1 = \left(\begin{smallmatrix} 0 & 0 \\ b_1 & 0\end{smallmatrix}\right)$, $A_1 = \left(\begin{smallmatrix} a_0 & a_1 \\ a_2 & -a_0\end{smallmatrix}\right)$, $\Phi_2=\mu_2\Phi_1$ and $A_2 = -A_1^{\dagger}$. So we have 
$$\mc{A}_1(\l)=\begin{pmatrix} a_0 & a_1+\l^{-1}\bar{\mu}_2\bar{b}_1 \\ a_2+\l b_1 & -a_0\end{pmatrix}\; \text{ and } \;\mc{A}_2(\l)=\begin{pmatrix} -\bar{a}_0 & -\bar{a}_2+\l^{-1}\bar{b}_1 \\ -\bar{a}_1+\l\mu_2b_1 & \bar{a}_0\end{pmatrix}.$$
We look for $P=\left(\begin{smallmatrix} p_1 & p_2 \\ 0 & 1/p_1\end{smallmatrix}\right)$ such that 
$$P\mc{A}_1(\l)P^{-1}+ P\partial P^{-1} = \begin{pmatrix} 0 & \bm\hat{t}_2(\l) \\ 1 & 0\end{pmatrix}.$$
Multiplying by $P$ from the right, one can solve the system. One finds $p_1=(\l b_1+a_2)^{1/2}$ and $p_2=-\frac{a_0}{p_1}+\frac{\partial p_1}{p_1^2}$. Hence 
$$\bm\hat{t}_2(\l)=\l a_1b_1+ \text{ constant term }+\l^{-1}\bar{\mu}_2 a_2\bar{b}_1.$$ Transforming $\mc{A}_2(\l)$ with $P$ we get 
$$\bm\hat{\mu}_2(\l) = \frac{-\bar{a}_1+\l\mu_2 b_1}{\l b_1+a_2} = \frac{-\bar{a}_1+\l\mu_2 b_1}{\l b_1-\bar{\mu}_2\bar{a}_1}$$ where we used $a_2=-\bar{\mu}_2\bar{a}_1$ coming from the flatness of $\mc{A}(\l)$.

\noindent For $\l\rightarrow \infty$, we can develop the rational expression of $\bm\hat{\mu}_2(\l)$ to get 
$$\bm\hat{\mu}_2(\l)=\mu_2+(\mu_2\bar{\mu}_2-1)\sum_{k=1}^\infty \frac{\bar{\mu}_2^{k-1}\bar{a}_1^k}{b_1^k}\l^{-k}.$$
For $\l \rightarrow 0$, we get 
$$\bm\hat{\mu}_2(\l)=\frac{1}{\bar{\mu}_2}+(1-\mu_2\bar{\mu}_2)\sum_{k=1}^\infty \frac{b_1^k}{\bar{\mu}_2^{k+1}\bar{a}_1^k}\l^{k}.$$
Notice that we get ${}_2\mu=1/\bar{\mu}_2$ as leading term (see section \ref{dualcomplexstructure}).
\hfill $\triangle$
\end{example}

The example shows several phenomena which are true in general: 

\begin{prop}\label{parametrisationlambda}
The $\bm\hat{\mu}_k(\l)$ are rational functions in $\l$. The highest term in $\l$ when $\l \rightarrow \infty$ is $\l^{2-k}\mu_k$ where $\mu_k$ is the higher Beltrami differential from the $n$-complex structure. For $\l\rightarrow 0$ we get as lowest term $\l^{k-2}{}_k\mu$ where  ${}_k\mu$ is the conjugated $n$-complex structure. 

The $\bm\hat{t}_k(\l)$ are also rational functions in $\l$. For $\l \rightarrow \infty$, the highest term is given by $\l^{k-1}t_k$, and the lowest term for $\l \rightarrow 0$ is given by $\l^{1-k}{}_kt$ where 
$$t_k=\tr A_1\Phi_1^{k-1} \text{ and } {}_kt = \tr A_1 (\Phi_2^*)^{k-1}.$$
%$$t_k=\tr (\del+A_1)\circ\Phi_1^{k-1} \text{ and } {}_kt = \tr (\del+A_1)\circ (\Phi_2^*)^{k-1}.$$
\end{prop}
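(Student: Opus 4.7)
The plan splits into two parts, treating $\bm\hat{t}_k(\l)$ and $\bm\hat{\mu}_k(\l)$ separately.

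\textbf{Part 1: the coordinates $\bm\hat{t}_k(\l)$.} The key observation is that after the parabolic gauge, $\mc{A}_1(\l)$ takes the companion form \eqref{paragauge}, whose characteristic polynomial equals $X^n - \bm\hat{t}_2(\l)X^{n-2} - \cdots - \bm\hat{t}_n(\l)$. Since $\mc{A}_1(\l)=\l\Phi_1+A_1+\l^{-1}\Phi_2^*$ has entries that are Laurent polynomials in $\l$, so does its characteristic polynomial, hence $\bm\hat{t}_k(\l)$ is rational (in fact Laurent-polynomial). To extract the leading/trailing terms, I would use the exponential formula
\[\det(1+zM)=\exp\!\Bigl(\sum_{i\geq 1}\frac{(-1)^{i+1}z^i\,\tr M^i}{i}\Bigr)\]
applied to $M=\mc{A}_1(\l)$. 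Since $\Phi_1$ is nilpotent, $\tr\Phi_1^i=0$, so $\tr\mc{A}_1(\l)^i = i\l^{i-1}\tr\Phi_1^{i-1}A_1+O(\l^{i-2})$ as $\l\to\infty$. Following the computation already carried out after Proposition \ref{holodifff}, the highest $\l$-term of the coefficient of $X^{n-k}$ equals $-\l^{k-1}\tr\Phi_1^{k-1}A_1$, yielding $\bm\hat{t}_k(\l)=\l^{k-1}t_k+O(\l^{k-2})$ with $t_k=\tr\Phi_1^{k-1}A_1$. The symmetric argument using $\l^{-1}\Phi_2^*$ (also nilpotent) as the dominant term as $\l\to 0$ gives $\bm\hat{t}_k(\l)=\l^{1-k}{}_kt+O(\l^{2-k})$ with ${}_kt=\tr(\Phi_2^*)^{k-1}A_1$.

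\textbf{Part 2: the coordinates $\bm\hat{\mu}_k(\l)$.} For rationality, I view the defining equation $\bar\nabla s = \sum_k\bm\hat{\mu}_k(\l)\nabla^{k-1}s$ as a linear system whose coefficients, when expressed in a fixed (i.e.\ $\l$-independent) reference frame, are Laurent polynomials in $\l$; Cramer's rule then yields rational $\bm\hat{\mu}_k(\l)$. Equivalently, as the example $n=2$ already illustrates, the potentially algebraic factors (such as the $n$-th root appearing as $p_{nn}=A^{-1/n}$ in subsection \ref{existence-para-gauge}) cancel in the gauge-invariant final expressions for $\bm\hat{\mu}_k(\l)$. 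To compute the leading term as $\l\to\infty$, I would build the parabolic gauge in two stages: first, a constant-in-$\l$ gauge $g$ conjugating the principal nilpotent $\Phi_1$ to the standard form $N_0$ with $1$'s on the subdiagonal; second, a diagonal rescaling $D=\operatorname{diag}(1,\l^{-1},\ldots,\l^{-(n-1)})$, which converts $\l N_0$ into $N_0$. Under this leading-order gauge, $\l\Phi_2 = \l\sum_k\mu_k\Phi_1^{k-1}$ becomes $\sum_k\l^{2-k}\mu_k N_0^{k-1}$, whose first column has $\l^{2-k}\mu_k$ at position $k$. The contributions from $A_2$ and $\l^{-1}\Phi_1^*$ produce strictly smaller powers of $\l$ in the first column (an entry-by-entry check using $D\,(\cdot)\,D^{-1}$ and the upper-triangular nature of $\Phi_1^*$ shows this), and subleading corrections to the gauge affect only subleading $\l$-terms in $\bm\hat{\mu}_k$. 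Hence $\bm\hat{\mu}_k(\l)=\l^{2-k}\mu_k+O(\l^{1-k})$. The case $\l\to 0$ is completely analogous, with the roles of $\l\Phi_1$ and $\l^{-1}\Phi_2^*$ swapped: the dominant term $\l^{-1}\Phi_2^*$ replaces $\l\Phi_1$, and the expansion of $\l^{-1}\Phi_1^*$ in powers of $\Phi_2^*$ (which is precisely the defining relation of the conjugated higher complex structure from section \ref{dualcomplexstructure}) yields $\bm\hat{\mu}_k(\l)\sim\l^{k-2}{}_k\mu$.

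\textbf{Main obstacle.} The most delicate step is the rigorous verification of rationality of $\bm\hat{\mu}_k(\l)$: although the parabolic gauge itself may involve $n$-th roots of Laurent polynomials, the final coordinates $\bm\hat{\mu}_k(\l)$ are rational. The cleanest way around this is the intrinsic Cramer's-rule argument; exhibiting the algebraic cancellations directly by running the algorithm of subsection \ref{existence-para-gauge} is longer and less transparent but would give more explicit control on the denominators of $\bm\hat{\mu}_k(\l)$.
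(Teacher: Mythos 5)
Your Part 2 is essentially the paper's argument: the paper also obtains rationality of $\bm\hat{\mu}_k(\l)$ by expanding $\nabla^k s$ and $\bar\nabla s$ in the basis $(s,\Phi_1 s,\dots,\Phi_1^{n-1}s)$ and solving the resulting linear system, and it extracts the leading terms by comparing highest coefficients of $\bar\nabla s=\sum_k\bm\hat{\mu}_k(\l)\nabla^{k-1}s$ in that basis (resp.\ in $(s,\Phi_2^*s,\dots,\Phi_2^{*(n-1)}s)$ for $\l\to 0$). Your explicit two-stage gauge, a constant conjugation followed by $D=\diag(1,\l^{-1},\dots,\l^{1-n})$, is a concrete repackaging of that same comparison, and the hand-waved step ``subleading corrections to the gauge affect only subleading terms'' is exactly what the basis comparison makes precise.

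Part 1 contains a genuine gap. You identify $\bm\hat{t}_k(\l)$ with coefficients of the characteristic polynomial of $\mc{A}_1(\l)=\l\Phi_1+A_1+\l^{-1}\Phi_2^*$. But the parabolic gauge acts on the $(1,0)$-part by $P\mc{A}_1P^{-1}+P\del P^{-1}$, not by conjugation alone, so the characteristic polynomial of the companion form is \emph{not} the characteristic polynomial of $\mc{A}_1(\l)$. The inhomogeneous term $P\del P^{-1}$ genuinely contributes to $\bm\hat{t}_k(\l)$: for $n=2$ with $\mu=0$ the paper computes $\bm\hat{t}_2(\l)=\l t_2+(\del\varphi)^2-\del^2\varphi$, and the term $-\del^2\varphi$ is invisible to $\det(\l\Phi_1+A_1-X\id)$. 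In particular your parenthetical claim that $\bm\hat{t}_k(\l)$ is a Laurent polynomial is false; the denominators produced by $P$ (the roots appearing in subsection \ref{existence-para-gauge} and their derivatives) cancel only in the leading term, which is why the proposition asserts rationality and nothing more. To salvage the leading-order statement you must show that the derivative contributions enter at order strictly below $\l^{k-1}$. The paper does this by working directly with $\nabla^n s=\sum_k\bm\hat{t}_k(\l)\nabla^{n-k}s$: the coefficient of $\l^{n-1}$ in $\nabla^n s$ is $\sum_l\Phi_1^l\circ(\del+A_1)\circ\Phi_1^{n-1-l}s$, and upon pairing against the basis the differential part contributes $\tr\,\del\circ\Phi_1^{k-1}=0$ because $\Phi_1$ is strictly lower triangular and this is preserved under derivation; only then does $\l^{k-1}\tr A_1\Phi_1^{k-1}$ survive. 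Your characteristic-polynomial computation arrives at the correct expression but does not by itself justify discarding the gauge's derivative term, so this step must be added (or the argument replaced by the direct computation). The same caveat applies to your $\l\to 0$ analysis of ${}_kt$.
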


Notice the appearance of the holomorphic differentials $t_k = \tr A_1\Phi_1^{k-1}$ which we extracted from an affine connection in Theorem \ref{holodifff} (for $\Phi_2=0$). 
%We believe that in some special cases (with reality constraint and in symmetric gauge, see \ref{Higgsgauge} below) the contribution $\tr \del \circ \Phi_1^{k-1}$ vanishes.
We will see later that $(\mu_k, t_k)$ is a point of the cotangent bundle $\cotang$, and that $({}_k\mu, {}_kt)$ is the conjugated structure, which justifies the notation.
\begin{proof}
The whole point is to analyze equations \eqref{paraboliccoord1} and \eqref{paraboliccoord2} in detail.
Let us start with $$\bar{\nabla}s = \bm\hat{\mu}_1(\l) s + \bm\hat{\mu}_2(\l) \nabla s + ... + \bm\hat{\mu}_n(\l) \nabla^{n-1}s.$$
Since $\bar{\nabla}s=(\bar{\partial}+\l\Phi_2+A_2+\l^{-1}\Phi_1^*)s$ the highest $\l$-term is $\l\Phi_2s=\l\mu_2\Phi_1s+...+\l\mu_n\Phi_1^{n-1}s$. On the other side, the highest term of $\nabla^ks$ is $\l^k\Phi_1^ks$ for $0\leq k \leq n-1$. For generic $s$ the set $(s, \Phi_1s, ..., \Phi_1^{n-1}s)$ is a basis. Hence, we can compare the highest terms and deduce that for $\l\rightarrow \infty$: $$\bm\hat{\mu}_k(\l) = \l^{2-k}\mu_k+\text{ lower terms}.$$ 

Similarly, the set $(s, \Phi_2^*s, ..., \Phi_2^{*(n-1)}s)$ is generically a basis. Comparing highest terms and using $\Phi_1^*={}_2\mu \Phi_2^*+...+{}_n\mu\Phi_2^{*(n-1)}$, we get for $\l\rightarrow 0$: $$\bm\hat{\mu}_k(\l) = \l^{k-2}{}_k\mu+\text{ higher terms}.$$

In any case, we can decompose $\nabla^ks$ and $\bar{\nabla}s$ in the basis $(s,\Phi_1s,...,\Phi_1^{n-1}s)$ and notice that the defining equations for $\bm\hat{\mu}_k$ is a quotient of two polynomials in $\l$, i.e. $\bm\hat{\mu}_k$ is a rational function in $\l$.
The same decomposition gives that $\bm\hat{t}_k$ is a rational function in $\l$.

The last thing is to study the asymptotic behavior of $\bm\hat{t}_k$. For that, we have to study
$$\nabla^n s = \bm\hat{t}_{n}(\l)s + \bm\hat{t}_{n-1}(\l) \nabla s+...+\bm\hat{t}_{2}(\l)\nabla^{n-2}s.$$ 
The highest term of $\nabla^ns$ is not $\l^n \Phi_1^n$ since $\Phi_1^n=0$. The next term is given by $$\l^{n-1}\sum_{l=0}^{n-1}\Phi_1^l \circ(\partial+A_1)\circ\Phi_1^{n-1-l}s$$ where $\circ$ denotes the composition of differential operators.
On the other side, the highest terms are given by $\bm\hat{t}_k\l^{n-k}\Phi_1^{n-k}s$. When $\l$ goes to infinity, we compare coefficients in the basis $(s, \Phi_1s, ..., \Phi_1^{n-1}s)$ as before. Using Dirac's ``bra-ket'' notation (see page \pageref{notations}), we get
\begin{align*}
\l^{n-k}\bm\hat{t}_k &= \l^{n-1} \langle \Phi_1^{n-k}s \mid \sum_{l=0}^{n-1}\Phi_1^l \circ(\partial+A_1)\circ\Phi_1^{n-1-l} \mid s \rangle \\
&= \l^{n-1}\sum_{l=0}^{n-k} \langle \Phi_1^{n-k-l}s\mid (\partial +A_1)\circ\Phi_1^{n-1-l}\mid s\rangle \\
&= \l^{n-1}\sum_{l=0}^{n-k} \langle \Phi_1^{n-k-l}s\mid (\partial +A_1)\circ\Phi_1^{k-1}\mid \Phi_1^{n-k-l}s\rangle \\
&= \l^{n-1}\tr(\partial+A_1)\circ\Phi_1^{k-1}\\
&= \l^{n-1}\tr A_1 \Phi_1^{k-1}.
\end{align*}
In the last line, we used that $\tr \del \circ \Phi_1^{k-1} = 0$ since $\Phi_1$ is strictly lower triangular which is preserved under derivation.
This precisely gives the expression for $t_k$ as stated in the proposition. 
The same analysis goes through for $\l \rightarrow 0$.
\end{proof}

\begin{Remark}
We will see in Theorem \ref{conditioncinconnection} that the parameters $(\mu_k, t_k)$ can be identified with those from $\cotang$. From equation \eqref{hkquotientofmodulispace} we know that the $t$'s can be considered as infinitesimal small compared to $\mu$. That is why the $t_k$ appear in $A$ and not in $\Phi$.
Using a diagonal gauge of the form $\diag(1,\l, \l^2, ..., \l^{n-1})$, one can get for $\l\rightarrow \infty$ the asymptotic behavior $\bm\hat{t}_k(\l)=t_k+\text{ lower terms}$ and $\bm\hat{\mu}_k(\l)=\l\mu_k+\text{ lower terms}$. 
This illustrates that $t_k$ is infinitesimal smaller than $\mu_k$.
\hfill $\triangle$
\end{Remark}

At the end of subsection \ref{settingparab} we have noticed that $\bm\hat{t}_k$ and $\bm\hat{\mu}_k$ do not transform as tensors. We now show that the highest terms, $t_k$ and $\mu_k$, are tensors. Recall that $K=T^{*(1,0)}\Sigma$ is the canonical bundle and that $\Gamma(.)$ denotes the space of sections.
\begin{prop}\label{highesttermtensor}
We have $t_i \in \Gamma(K^i)$ and $\mu_i \in \Gamma(K^{1-i}\otimes \bar{K})$.
\end{prop}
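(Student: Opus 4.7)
My plan is to treat the two claims separately, exploiting the intrinsic formulas $t_k=\tr A_1\Phi_1^{k-1}$ and the identification of $\mu_k$ with the higher Beltrami differential provided by Proposition \ref{parametrisationlambda}.

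For the statement about $\mu_i$: Proposition \ref{parametrisationlambda} asserts that the $\l\to\infty$ leading coefficient $\mu_k$ of $\bm\hat\mu_k(\l)$ is precisely the higher Beltrami differential appearing in $\Phi_2=\mu_2\Phi_1+\cdots+\mu_n\Phi_1^{n-1}$, i.e.\ the coefficients of the $n$-complex structure that underlies our parabolic $h$-connection. But in Section \ref{basichighercomplex} (the tensor-type computation right after Proposition \ref{genericideal}) we already established that under a holomorphic coordinate change $z\mapsto w(z)$ these coefficients transform by
\begin{equation*}
\mu_k(w,\bar w)=\frac{d\bar z/d\bar w}{(dz/dw)^{k-1}}\,\mu_k(z,\bar z),
\end{equation*}
which is exactly the transformation law of a section of $K^{1-k}\otimes\bar K$. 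So this half is immediate from the earlier computation.

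For the statement about $t_i$: I would use the intrinsic formula $t_k=\tr A_1\Phi_1^{k-1}$ from Proposition \ref{parametrisationlambda} and show it transforms as a section of $K^k$. Set $f=dz/dw$. In a fixed local trivialization of $V$, the $\End(V)$-valued 1-form $\Phi=\Phi_1\,dz+\Phi_2\,d\bar z$ simply transforms as a 1-form, so its matrix coefficient scales by $\Phi_1\mapsto f\Phi_1$. Likewise, if we do not alter the trivialization, the coefficient $A_1$ of the connection 1-form scales by $A_1\mapsto fA_1$ (the Christoffel-type correction $g\,dg^{-1}$ only appears when the trivialization changes, not when coordinates alone change). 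Hence
\begin{equation*}
t_k=\tr(A_1\Phi_1^{k-1})\;\longmapsto\;\tr\bigl((fA_1)(f\Phi_1)^{k-1}\bigr)=f^k t_k,
\end{equation*}
which is exactly the transformation law for a section of $K^k$.

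The one point I expect will demand care is to justify using the formula $t_k=\tr A_1\Phi_1^{k-1}$ without having to compare the parabolic gauge in $z$-coordinates with the parabolic gauge in $w$-coordinates: the basis $(s,\nabla s,\ldots,\nabla^{n-1}s)$ that defines the parabolic form depends on the coordinate through $\nabla=\partial+\mc A_1(\l)$, so in $w$-coordinates one obtains a different basis. However, in the $\l\to\infty$ regime the leading behaviour of $\nabla$ is $\l\Phi_1$, so the two bases agree to leading order up to the diagonal rescaling $\diag(1,f,\ldots,f^{n-1})$; the corrections coming from derivatives of $f$ acting on lower powers of $\l\Phi_1$ are strictly subleading in $\l$ and hence do not affect the coefficient of $\l^{k-1}$ in $\bm\hat t_k(\l)$, which is $t_k$. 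The hard part of the write-up will be this asymptotic comparison: one has to expand $\nabla^j s$ as a polynomial in $\l$ and check that every non-tensorial term (involving $\partial f$, $\partial^2 f$, etc.) is accompanied by a power of $\l$ strictly smaller than $k-1$, so that only the tensorial scaling $f^k$ survives at leading order.
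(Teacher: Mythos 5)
Your proof is correct and follows essentially the same route as the paper: the $\mu_i$ half reduces to the transformation law already computed for the higher Beltrami differentials via $\Phi_2\,d\bar z=\mu_2\Phi_1\,dz+\cdots+\mu_n\Phi_1^{n-1}(dz)^{n-1}$, and the $t_i$ half observes that $\Phi_1$ and $A_1$ are both $(1,0)$-forms, so $\tr(\Phi_1^{i-1}A_1)$ is an $(i,0)$-form. The asymptotic gauge comparison you flag as the hard part is not carried out in the paper either; it simply takes $t_k=\tr\Phi_1^{k-1}A_1$ from Proposition \ref{parametrisationlambda} as given and checks its tensorial type directly.
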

\begin{proof}
Consider a holomorphic coordinate change $z\mapsto w(z)$. We compute how $\mu_i(z)$ and $t_i(z)$ change.

For $\mu_i$, notice that $\Phi_1dz \mapsto \Phi_1 \frac{dz}{dw}dw$, so using $$\Phi_2d\bar{z} = \mu_2(z)\Phi_1dz+...+\mu_n\Phi_1^{n-1}dz^{n-1}$$ we easily get $\mu_i(z)=\frac{d\bar{z}/d\bar{w}}{(dz/dw)^{i-1}}\mu_i(w)$.

For $t_i$, we use $t_i=\tr(\Phi_1^{i-1}A_1)$ where $\Phi_1$ and $A_1$ are both $(1,0)$-forms, thus $t_i$ is a $(i,0)$-form, i.e. a section of $K^i$.
\end{proof}

\subsection{Symplectic structure}\label{sympstrconnections}
In this subsection, we explicitly describe the symplectic structure of the space of parabolic $h$-connections $\mathcal{A}(h)\sslash\mathcal{P}$.

We know that on the space of all connections $\mathcal{A}$, the symplectic structure is simply given by $$\omega_{\mathcal{A}} = \frac{1}{2}\int_{\Sigma} \tr \delta A \wedge \delta A.$$

Let us describe $\omega_{\mathcal{A}(h)\sslash\mathcal{P}}$ in our coordinates. We have seen that the reduction $\mathcal{A}(h)\sslash\mathcal{P}$ is a subspace of $\mathcal{A}$, consisting of those connections locally of the form 
$$
\left(\begin{array}{cccc}
	 & & & \bm\hat{t}_n(\l) \\
	1& & & \vdots \\
	 & \ddots & & \bm\hat{t}_2(\l) \\
	 & & 1& 0 
\end{array}\right)
dz + \left(\begin{array}{cc}
	 \bm\hat{\mu}_1(\l) &  \\
	 \bm\hat{\mu}_2(\l)&    \bm\hat{\alpha}_{ij}(\l)\\
	 \vdots &  \\
	 \bm\hat{\mu}_n(\l) & 
\end{array}\right) d\bar{z}
$$
where $\bm\hat{\alpha}_{ij}(\l)$ and $\bm\hat{\mu}_1(\l)$ are explicit functions of the other variables.
The symplectic form $\omega_{\mathcal{A}(h)\sslash\mathcal{P}}$ is thus the restriction of $\omega_{\mathcal{A}}$ to this subspace. 

In the space of affine connections we consider, we have $\delta A = \l \delta \Phi + \delta A +\l^{-1}\delta \Phi^*$. Since $\delta \Phi\wedge \delta \Phi = 0 = \delta \Phi^*\wedge \delta \Phi^*$, we see that the symplectic structure $\l\omega_{\mathcal{A}(h)\sslash\mathcal{P}}$ is \emph{quadratic} in $\l$.

Now, we know that $\mc{A}_1(\l)$ is a companion matrix, so the entries of the first $(n-1)$ columns of $\delta \mc{A}_1(\l)$ are zero and its last column is given by $\delta \bm\hat{t}_k(\l)$. In addition, $\tr(AB)$ is the usual pairing on matrices, so we get 
$$\omega_{\mathcal{A}(h)\sslash\mathcal{P}} = \int_{\Sigma} \sum_{k=2}^n \delta \bm\hat{t}_k(\l)\wedge\delta \bm\hat{\alpha}_{n,n-k+1}(\l) \; dz\wedge d\bar{z} .$$
For instance for $n=2$ ($G=\SL(2, \mathbb{C})$), we simply get $\omega=\int \delta \bm\hat{t}_2(\l)\wedge \delta \bm\hat{\mu}_2(\l) \; dz\wedge d\bar{z}$. 
%For $n=3$, one can compute that $\alpha_{32} = \partial \hat{\mu}_3+\hat{\mu}_2$, so $$\omega=\int (\delta \hat{t}_3\wedge \delta\hat{\mu}_3 + \delta \bm\hat{t}_2\wedge \delta\bm\hat{\mu}_2 + \delta \bm\hat{t}_2\wedge \delta\partial\bm\hat{\mu}_3) dz\wedge d\bar{z}.$$ We see that "modulo $\partial$", we get rid of the annoying term $\delta \bm\hat{t}_2\wedge \delta\partial\bm\hat{\mu}_3$. 
%In general working modulo $\partial$ gives a direct link to the Hilbert scheme bundle.

\subsection{Action of higher diffeomorphisms}

In \ref{symponconnections} we have described an infinitesimal action of $\Symp_0(T^*\Sigma)$ on the space of parabolic connections $\mc{A}\sslash\mc{P}$. 
Recall that to write a representative of an element of $\mc{A}\sslash\mc{P}$, we use a basis of the form $(s,\nabla s,..., \nabla^{n-1}s)$. A higher diffeomorphism changes the section $s$ and thus the whole basis. 
The same action holds for the parabolic $h$-connections. In particular, Corollary \ref{flatparaconnections} about flat parabolic connections stays true.
%The infinitesimal action is to change $s$ to $s+\varepsilon \delta s$ where $\delta s = v_1s+v_2\nabla s+...+v_{n}\nabla^{n-1}s$ comes from a differential operator, the Lie algebra of higher diffeomorphisms.

Here we analyze the infinitesimal action of $\Symp_0$ on $\mc{A}(h)\sslash\mc{P}$, in particular what it does on the highest terms $\mu_k$ and $t_k$. There are two steps: a local analysis and a global analysis.

\subsubsection{Local analysis}
We prove that the action of higher diffeomorphisms on the highest terms $\mu_k$ of the parabolic reduction is precisely the action on the $n$-complex structure. So we can trivialize it locally.

Take a change of section $\delta s = \bm\hat{v}_1s+\bm\hat{v}_2\nabla s+...+\bm\hat{v}_{n}\nabla^{n-1}s=\bm\hat{H}s$. We have previously seen in equation \eqref{variation-hat-mu} that the change of coordinates $\delta \bm\hat{\mu}_k$ can be computed by $$\delta \bm\hat{Q}=[\bm\hat{H}, \bm\hat{Q}] \mod \bm\hat{I}$$
%\begin{equation}\label{variation-mu}
%\delta \hat{\mu}_1+\delta \hat{\mu}_2\nabla+...+\delta \hat{\mu}_n\nabla^{n-1}=[\hat{v}_1+\hat{v}_2\nabla +...+\hat{v}_{n}\nabla^{n-1}, -\bar{\nabla}+\hat{\mu}_1+\hat{\mu}_2\nabla+...+\hat{\mu}_n\nabla^{n-1}] \mod \bm\hat{I}
%\end{equation}
where $\bm\hat{I}=\langle -\nabla^n+\bm\hat{t}_2\nabla^{n-2}+...+\bm\hat{t}_n, -\bar{\nabla}+\bm\hat{\mu}_1+\bm\hat{\mu}_2\nabla+...+\bm\hat{\mu}_n\nabla^{n-1} \rangle$ is a left-ideal in the space of differential operators.
%\begin{Remark}
%If we consider $h$-connections $hd+\Phi+hA+h^2\Phi^*$ the variation induced by a higher diffeomorphism would give $\delta \hat{P}=[\hat{H}, \hat{P}] \mod \hat{I}$ for an operator $\hat{P}$ and a hamiltonian operator $\hat{H}$. Since we divide the connections by $h$, i.e. we work with $d+\l\Phi+A+\l^{-1}\Phi^*$, we get $\delta \hat{P}=\frac{1}{h}[\hat{H}, \hat{P}] \mod \bm\hat{I}$ which is the deformation of the Poisson bracket.
%\hfill $\triangle$
%\end{Remark}

Since we have a parameter $\l$ in our setting, the variations $\bm\hat{v}_k$ also depend on $\l$. More precisely, for $k\geq	 2$ we have that $\bm\hat{v}_k(\l)$  is a rational function in $\l$ with highest term $\l^{2-k}v_k$ when $\l \rightarrow \infty$. 
%and with lowest term ${}_kv\l^{k-2}$ when $\l \rightarrow 0$
Notice that $\bm\hat{v}_1$ is not a free parameter, but depends on the others. It assures that the trace of the gauge transform is zero. One can compute that $\bm\hat{v}_1$ has highest term of degree 0.

\begin{Remark}
It is not clear for the moment how to determine the precise expression for $\bm\hat{v}_k(\l)$ from a higher diffeomorphism generated by some Hamiltonian $H$. The highest $\l$-terms in $\bm\hat{v}_k(\l)$ are given by the coefficients of $H=v_2p+...+v_np^{n-1}$.
\hfill $\triangle$
\end{Remark}

%There is a blatant similarity between $\hat{v}_k$ and $\hat{\mu}_k$. Computing the matrix $P$ of the gauge transform associated to the change of the section $s$, we see why: the first column is given by $\delta s = \hat{v}_1s+...+\hat{v}_{n}\nabla^{n-1}$, the second by $\delta \nabla s=\nabla \delta s$, so by the derivative of the first column. In general, the $k$-th column is the $\nabla^{k-1}$ derivative of the first column. This is precisely the same pattern as the matrix $\mc{A}_2(\l)$ in parabolic gauge, where the $\hat{\mu}_k$ appear in the first column. The $\hat{v}_k$ and $\hat{\mu}_k$ share the same asymptotic behavior, and $\bm\hat{v}_1=v_1+\text{lower terms}$, where $v_1$ is given as $\mu_1$ by $$v_1=\sum_{k=2}^{n-1}\frac{k}{n}t_kv_{k+1}.$$

We can now state:
\begin{thm}\label{actionsymponlambdaconn}
The infinitesimal action of $\Symp_0(T^*\Sigma)$ on the highest terms $\mu_k$ of the coordinates $\bm\hat{\mu}_k(\l)$ of the space of parabolic connections with parameter is the same as the infinitesimal action of higher diffeomorphisms on the $n$-complex structure.
\end{thm}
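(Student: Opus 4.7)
The plan is to use the variation formula $\delta\bm\hat{Q}=[\bm\hat{H},-\bar\nabla+\bm\hat{Q}]\mod\bm\hat{I}$ from equation \eqref{variation-hat-mu}, extract the highest $\l$-coefficient of $\delta\bm\hat{\mu}_m$, which by Proposition \ref{parametrisationlambda} is $\l^{2-m}\delta\mu_m$, and compare with Proposition \ref{varmu}. By linearity it suffices to treat a single Hamiltonian $H=v_kp^{k-1}$ with its lift $\bm\hat{H}=\bm\hat{v}_1+\bm\hat{v}_k\nabla^{k-1}$, where $\bm\hat{v}_k(\l)=\l^{2-k}v_k+O(\l^{1-k})$. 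Since $\bm\hat{v}_1$ has $\l$-degree at most $0$ while $\bm\hat{v}_k\nabla^{k-1}$ has $\l$-degree $1$, every contribution of $\bm\hat{v}_1$ to $\delta\bm\hat{\mu}_m$ sits at least one order below the target $\l^{2-m}$ and may be discarded.

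First I would split $[\bm\hat{v}_k\nabla^{k-1},-\bar\nabla+\bm\hat{Q}]$ into two pieces. For the first,
\[
[\bm\hat{v}_k\nabla^{k-1},-\bar\nabla]=(\bar\partial\bm\hat{v}_k)\nabla^{k-1}+\bm\hat{v}_k[\bar\nabla,\nabla^{k-1}],
\]
and the second summand lies in $\bm\hat{I}$ for $k\leq n$ by the inductive argument in the proof of Proposition \ref{thmcourbure}, leaving $(\bar\partial\bm\hat{v}_k)\nabla^{k-1}$ whose leading $\l$-coefficient contributes $\bar\partial v_k$ to $\delta\mu_k$. For the second piece, the Leibniz rule $[\nabla^{k-1},\bm\hat{\mu}_l]=\sum_{j\geq 1}\binom{k-1}{j}\partial^j\bm\hat{\mu}_l\,\nabla^{k-1-j}$ yields
\[
[\bm\hat{v}_k\nabla^{k-1},\bm\hat{\mu}_l\nabla^{l-1}]=\sum_{j\geq 1}\Bigl[\tbinom{k-1}{j}\bm\hat{v}_k\partial^j\bm\hat{\mu}_l-\tbinom{l-1}{j}\bm\hat{\mu}_l\partial^j\bm\hat{v}_k\Bigr]\nabla^{k+l-2-j}.
\]
The decisive step is an $\l$-degree count: every summand above has leading $\l$-degree $(2-k)+(2-l)=4-k-l$ independent of $j$, while the target coefficient of $\nabla^{m-1}$ in $\delta\bm\hat{Q}$ has degree $2-m$; matching $m-1=k+l-2-j$ with $2-m=4-k-l$ forces $j=1$ and $m=k+l-2$. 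The surviving leading coefficient at $j=1$ is $(k-1)v_k\partial\mu_l-(l-1)\mu_l\partial v_k$, and setting $l=m-k+2$ reproduces verbatim the expression for $\delta\mu_m$ computed in the proof of Proposition \ref{varmu} (for $m=k$ one adds the $\bar\partial v_k$ contribution from the first piece).

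The main obstacle is the reduction modulo $\bm\hat{I}$: terms with $\nabla^{k+l-2-j}$ of exponent $\geq n$ must be rewritten using $\nabla^n\equiv\bm\hat{P}\mod\bm\hat{I}$, and one has to verify that this substitution does not spoil the leading-$\l$ matching in the physical range $m\leq n$. The key fact is that $\bm\hat{P}$ has top $\l$-degree $n-1$ whereas $\nabla^n$ formally carries $\l$-degree $n$ but its leading symbol $\Phi_1^n$ vanishes, so each application of the substitution drops the effective $\l$-degree by exactly one and pushes any correction strictly below the leading order. Conceptually, the whole calculation is the semiclassical limit $h=\l^{-1}\to 0$ with $h\nabla,h\bar\nabla$ quantizing $p,\bar p$: the renormalized commutator $h^2[\bm\hat{H},-\bar\nabla+\bm\hat{Q}]$ converges to $\tfrac{h}{i}\{H,-\bar p+Q\}$, and the quantum ideal $\bm\hat{I}$ limits to the classical ideal $I$, so the theorem reduces to Proposition \ref{varmu} applied to the symbols.
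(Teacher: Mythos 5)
Your proof is correct and follows essentially the same route as the paper's: the variation formula \eqref{variation-hat-mu}, term-by-term bookkeeping of leading $\l$-degrees in the commutator, and the observation that each reduction $\nabla^n\equiv\bm\hat{P}\bmod\bm\hat{I}$ loses (at least) one unit of $\l$-degree because the symbol $\Phi_1^n$ vanishes, so all reduced terms are subleading. Your version is in fact slightly more complete: you recover the full formula of Proposition \ref{varmu} for arbitrary $\mu$, whereas the paper only treats $\delta\mu_2$ in general and then $\delta\mu_k$ ($k>2$) after locally trivializing $\mu_2=\dots=\mu_{k-1}=0$. One loose end to close: your blanket count assigns $\bm\hat{\mu}_l$ leading degree $2-l$, which for $l=1$ would permit a leading-order contribution $(k-1)v_k\partial\mu_1$ to $\delta\mu_{k-1}$, contradicting the vanishing $\delta\mu_m=0$ for $m<k$ in Proposition \ref{varmu}; you need the separate (easy) observation that $\deg_\l\bm\hat{\mu}_1\le 0$ --- the $\l^1$-part of $\bar\nabla s$ is $\Phi_2 s$, which has no component on $s$ in the basis $(s,\nabla s,\dots,\nabla^{n-1}s)$ --- a point the paper makes explicitly when it notes that the contributions of $\bm\hat{\mu}_1$ and $\bm\hat{v}_1$ have degree at most $-1$.
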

The reason for the theorem to be true is roughly speaking that the Poisson bracket is the semi-classical limit of commutators of differential operators. 
The strategy of the proof is the following: we prove the theorem first for $\mu_2$, and then for $\mu_k$ ($k>2$) supposing $\mu_2=...=\mu_{k-1}=0$ which simplifies the computations.
From Proposition \ref{varmu}, we know that the infinitesimal action of a Hamiltonian $H=v_2p+...+v_np^{n-1}$ on the higher Beltrami differentials is given by 
$$\delta \mu_2 = (\bar{\partial}-\mu_2\partial+\partial\mu_2)v_2$$
for $\mu_2$ and for $\mu_k$, supposing $\mu_2=...=\mu_{k-1}=0$,  we simply have $$\delta \mu_k = \bar{\partial}v_{k}.$$

\begin{proof}
First, we compute the variation of $\mu_2$ using equation \eqref{variation-hat-mu}:
$$\delta \bm\hat{\mu}_1+\delta \bm\hat{\mu}_2\nabla+...+\delta \bm\hat{\mu}_n\nabla^{n-1}=[\bm\hat{v}_1+\bm\hat{v}_2\nabla +...+\bm\hat{v}_{n}\nabla^{n-1}, -\bar{\nabla}\!+\!\bm\hat{\mu}_1\!+\!\bm\hat{\mu}_2\nabla+...+\bm\hat{\mu}_n\nabla^{n-1}] \mod \bm\hat{I}.$$
 Since the highest $\l$-term of $\bm\hat{\mu}_2$ is of degree 0, we are interested in the part of degree $0$ of the coefficient of $\nabla$ in $[\bm\hat{v}_1+\bm\hat{v}_2\nabla +...+\bm\hat{v}_{n}\nabla^{n-1}, -\bar{\nabla}+\bm\hat{\mu}_1+\bm\hat{\mu}_2\nabla+...+\bm\hat{\mu}_n\nabla^{n-1}] \mod \bm\hat{I}$.

We first look on contributions coming from $[\bm\hat{v}_k\nabla^{k-1}, \bm\hat{\mu}_l\nabla^{l-1}]$ for $k, l\geq 2$:
If $k+l-3<n$ then we do not reduce modulo $\bm\hat{I}$, so the highest term in $\l$ is of degree $4-(k+l)$. Since we have $k, l\geq 2$, the highest term comes from $k=l=2$, which gives $v_2\del\mu_2-\mu_2\del v_2$.
If $k+l-3\geq n$, we can have terms with $\nabla^m$ with $n\leq m \leq k+l-3$. So we have to use $\bm\hat{I}$ to reduce it. This reduction gives $\nabla^m = c(\l)\nabla+\text{ other terms}$, and the highest term of $c(\l)$ is of degree $m-2 \leq k+l-5$. Hence, the highest term for $[\bm\hat{v}_k\nabla^{k-1}, \bm\hat{\mu}_l\nabla^{l-1}]$ is $4-(k+l)+k+l-5=-1$.

The contributions from $\bm\hat{\mu}_1$ and $\bm\hat{v}_1$ also have degree at most -1. There is one more contribution in degree 0 coming from $[\bm\hat{v}_2\nabla, -\bar{\nabla}]$, which gives $\delbar v_2$. Therefore, we have $$\delta \mu_2 = (\bar{\partial}-\mu_2\partial+\partial\mu_2)v_2.$$

Now, suppose $\mu_2=...=\mu_{k-1}=0$ and compute the variation $\delta \mu_k$ under an action generated by $\bm\hat{v}_k\nabla^{k-1}+...+\bm\hat{v}_n\nabla^{n-1}$. From $$\delta \bm\hat{\mu}_k\nabla^{k-1}+...+\delta \bm\hat{\mu}_n\nabla^{n-1}=[\bm\hat{v}_k\nabla^{k-1} +...+\bm\hat{v}_{n}\nabla^{n-1}, -\bar{\nabla}+\bm\hat{\mu}_1+\bm\hat{\mu}_2\nabla+...+\bm\hat{\mu}_n\nabla^{n-1}] \mod \bm\hat{I}$$ we can analyze as above the contribution to the term of degree $2-k$ of the coefficient of $\nabla^{k-1}$. Since $\bm\hat{v}_l$ is of degree at most $2-l$ and $\bm\hat{\mu}_l$ of degree at most $1-l$ for $l<k$ (since we suppose that $\mu_l=0$), we can see that $[\bm\hat{v}_l\nabla^{l-1}, \bm\hat{\mu}_m\nabla^{m-1}]$ cannot contribute to the highest degree. The only contribution comes from the term with $-\bar{\nabla}$. Thus, $$\delta \mu_k = \bar{\partial}v_k.$$
This concludes the proof since the action of higher diffeomorphisms on the $n$-complex structure has the same expression.
\end{proof}
\begin{coro}
With the action of higher diffeomorphisms, we can locally render $\Phi_2=0$.
\end{coro}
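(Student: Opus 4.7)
The plan is to combine two results already established in the thesis: the local trivialization of higher complex structures (Theorem \ref{loctrivial}) and the compatibility of the $\Symp_0$-action with the action on the $n$-complex structure (Theorem \ref{actionsymponlambdaconn} just proven).

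First, I would recall the structural fact that $\Phi_2$ is determined by the higher Beltrami differentials $\mu_k$ via the relation $\Phi_2 = \mu_2\Phi_1 + \mu_3\Phi_1^2 + \cdots + \mu_n\Phi_1^{n-1}$, which is built into the very definition of a higher complex structure in its matrix viewpoint (cf.\ Proposition \ref{genericideal} and the parametrization in Proposition \ref{parametrisationlambda}, where $\mu_k$ appears as the leading term of $\bm\hat{\mu}_k(\l)$ as $\l\to\infty$). Consequently, to render $\Phi_2 = 0$ locally it suffices to produce a higher diffeomorphism whose action kills all the $\mu_k$ simultaneously in a neighborhood of a chosen point.

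Next, I would invoke Theorem \ref{loctrivial}: any $n$-complex structure $(\mu_2,\ldots,\mu_n)$ admits, on a sufficiently small neighborhood of any point, a higher diffeomorphism $\varphi \in \Symp_0(T^*\Sigma)$ trivializing it, i.e.\ transforming $(\mu_2,\ldots,\mu_n)$ into $(0,\ldots,0)$. By Theorem \ref{actionsymponlambdaconn}, the action of $\varphi$ on the leading $\l$-terms of the coordinates $\bm\hat{\mu}_k(\l)$ of the parabolic reduction is precisely this same action on the $n$-complex structure. Hence, after applying $\varphi$, every $\mu_k$ vanishes locally, which forces $\Phi_2 = \sum_k \mu_k\Phi_1^{k-1} = 0$ on that neighborhood.

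There is no genuine obstacle here: the corollary is essentially the translation of the local triviality theorem (Theorem \ref{loctrivial}) from the higher-complex-structure side to the parabolic-connection side, which is exactly the content of Theorem \ref{actionsymponlambdaconn}. The only point that would deserve a line of justification is that $\varphi$ is a genuine higher diffeomorphism (and not just an infinitesimal one) — but this is already guaranteed by the construction in the proof of Theorem \ref{loctrivial}, where the Hamiltonian is multiplied by a bump function and integrated for all times to give a compactly supported element of $\Symp_0(T^*\Sigma)$. Therefore the corollary is a direct consequence of the two theorems cited, with no additional estimate or computation required.
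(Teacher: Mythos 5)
Your proof is correct and follows essentially the same route as the paper: the author likewise combines Theorem \ref{loctrivial} (local trivialization of the $\mu_k$) with Theorem \ref{actionsymponlambdaconn} (the action on the highest terms agrees with the action on the $n$-complex structure) and concludes via $\Phi_2=\mu_2\Phi_1+\cdots+\mu_n\Phi_1^{n-1}$. Your extra remark on integrating the infinitesimal action to a genuine higher diffeomorphism is a reasonable point the paper glosses over, but it does not change the argument.
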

The corollary directly follows from the previous theorem and the fact that the higher complex structure can be locally trivialized (Theorem \ref{loctrivial}), i.e. we can render $\mu_2=...=\mu_n=0$ locally and since $\Phi_2=\mu_2\Phi_1+...+\mu_n\Phi_1^{n-1}$ this implies $\Phi_2=0$.

\begin{Remark}
We see that a term $\bm\hat{v}_k\nabla^{k-1}$ can influence $\bm\hat{\mu}_i$ with $i<k$ (unlike the case higher complex structures where the simplification lemma \ref{simplificationlemma} holds), but it does not influence the highest term $\mu_i$. In the same vein, a term $\bm\hat{v}_k\nabla^{k-1}$ with $k>n$ acts on parabolic connections, but not on the highest terms.
\hfill $\triangle$
\end{Remark}
\begin{Remark}
We can also compute the action of $\Symp_0(T^*\Sigma)$ on the coordinates $\bm\hat{t}_k$ and their highest terms $t_k$. The computation gives that $t_k$ transforms as a cotangent vector to the $n$-complex structure, i.e. like $I=\langle -p^n+t_2p^{n-2}+...+t_n, -\bar{p}+\mu_2p+...+\mu_np^{n-1} \rangle$ under the action of higher diffeomorphisms, modulo $t^2$.
\hfill $\triangle$
\end{Remark}

\subsubsection{Global analysis}
We show that the highest term in $\l$ in the zero-curvature condition relates $(\mu_k, t_k)$ to the cotangent bundle $T^*\bm\hat{\mc{T}}^n$.

We know that the moment map of the hamiltonian action of $\Symp_0(T^*\Sigma)$ on $\mc{A}\sslash\mc{P}$ is given by $\xi_k=0$, i.e. the remaining curvature of a parabolic connection has to vanish. For connections with parameter $\l$, this gives $\xi_k(\l)=0$.

\begin{thm}\label{conditioncinconnection}
The highest term in $\l$ of $\xi_k(\l)=0$ gives the condition $(\mc{C})$ of the cotangent bundle $T^*\bm\hat{\mc{T}}^n$ (see Theorem \ref{conditionC}).
\end{thm}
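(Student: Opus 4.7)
The statement is a direct computation combined with a degree analysis in $\l$. From Proposition \ref{parametrisationlambda} the coordinates on $\mc{A}(h)\sslash\mc{P}$ expand, for $\l\to\infty$, as
\[
\bm\hat{t}_k(\l)=\l^{k-1}t_k+O(\l^{k-2}),\qquad \bm\hat{\mu}_k(\l)=\l^{2-k}\mu_k+O(\l^{1-k}),
\]
while $\nabla=\del+\l\Phi_1+A_1+\l^{-1}\Phi_2^{*}$ and $\bar\nabla=\delbar+\l\Phi_2+A_2+\l^{-1}\Phi_1^{*}$. Since $\Phi_1^n=0$ but $\Phi_1^{n-1}\neq 0$, the top-degree term of $\nabla^{n-k}$ acting on a generic cyclic section $s$ is $\l^{n-k}\Phi_1^{n-k}s$, and $[\nabla^n,\bar\nabla]s$ sits at degree $\l^n$. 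Matching in Proposition \ref{thmcourbure} forces
\[
\xi_k(\l)=\l^{k}\eta_k+O(\l^{k-1}),
\]
so the condition ``highest $\l$-term of $\xi_k(\l)=0$'' is exactly $\eta_k=0$.

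The first step is the explicit extraction of $\eta_k$. I would work modulo the left-ideal $\bm\hat{I}=\langle \nabla^n-\bm\hat{P},-\bar\nabla+\bm\hat{Q}\rangle$, so that $[\nabla^n,\bar\nabla]\equiv[\bm\hat{P},\bar\nabla]+[\nabla^n,\bm\hat{Q}]-[\bm\hat{P},\bm\hat{Q}]\mod\bm\hat{I}$, and then expand each commutator. The key point is that at the leading order in $\l$ the Weyl-algebra commutator degenerates to the Poisson bracket on $\Hilb^n_{red}(T^{*\C}\S)$ used in part \ref{part1}: the symbol of $\nabla$ is $\l p$, the symbol of $\bar\nabla$ is $\l\bar p$, and the symbol identity $\bar p\equiv \mu_2 p+\dots+\mu_n p^{n-1}\mod I$ is exactly the leading part of $\bar\nabla\equiv \bm\hat{Q}\mod\bm\hat{I}$. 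Thus the $\l^k$-coefficient of $\xi_k(\l)$ equals the Poisson-bracket computation already carried out in the proof of Theorem \ref{spectralcurveprop}, up to the extra $\del\mu_k$ and $\mu_k\del$ corrections produced by the non-commutativity. Collecting these terms gives
\[
\eta_k=(-\delbar+\mu_2\del+k\del\mu_2)t_k+\sum_{l=1}^{n-k}\bigl((l+k)\del\mu_{l+2}+(l+1)\mu_{l+2}\del\bigr)t_{k+l},
\]
which is precisely the LHS of condition $(\mc{C})$ from Theorem \ref{conditionC}.

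A conceptual cross-check avoids most of the computation and I would present it as a second proof. By section \ref{sympstrconnections} the symplectic form $\l\,\omega_{\mc{A}(h)\sslash\mc{P}}$ is quadratic in $\l$, with top coefficient dual to the pairing $\sum_k dt_k\wedge d\alpha_{n,n+1-k}$ on the Hilbert scheme bundle. By Theorem \ref{actionsymponlambdaconn}, the infinitesimal $\Symp_0(T^{*}\S)$-action on the highest terms $\mu_k$ of the coordinates $\bm\hat{\mu}_k(\l)$ coincides with the action on $n$-complex structures used in the computation of $T^*\bm\hat{\mc{T}}^n$. Equivariance of the moment map with respect to matching symplectic structures and matching actions then forces the leading $\l$-part of the parabolic moment map $\xi_k(\l)=0$ to coincide with the leading moment map of Theorem \ref{conditionC}, which is exactly $(\mc{C})$.

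\textbf{Main obstacle.} The computational bottleneck in the first route is controlling the cross-contributions: in the commutator expansion, a product of a subleading $O(\l^{k-2})$ piece of $\bm\hat{t}_k$ with a leading $O(\l)$ piece of $\bm\hat{\mu}_2$ (and symmetric combinations) can in principle contribute to the $\l^k$-coefficient, so one must verify that all such cross-terms either cancel or recombine precisely into the $\del\mu_{l+2}$ and $\mu_{l+2}\del$ terms of $(\mc{C})$. For the conceptual route, the obstacle is to make fully rigorous the matching of the two symplectic structures at the leading order in $\l$, independently of the conjectural hyperkähler interpretation of $\cotang$ discussed in section \ref{bigpicture}.
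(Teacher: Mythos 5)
Your first route is, in outline, the same as the paper's: Proposition \ref{thmcourbure} expresses $\xi_k$ through $[\nabla^n,\bar\nabla]\bmod\bm\hat{I}$, and the semiclassical degeneration of the commutator to the Poisson bracket lets you import the computation of Theorem \ref{spectralcurveprop} to land on condition $(\mc{C})$ (this is exactly the content of the paper's Lemma \ref{curvaturemodmod}, whose proof combines Proposition \ref{thmcourbure}, the $h\to 0$ limit of commutators, the identification of $\lim_{h\to 0}\frac1h[\cdot,\cdot]$ with the computation $\bmod\,\partial^2$, and Theorem \ref{spectralcurveprop}). But the step you flag as the ``main obstacle'' --- ruling out contributions to the top $\l$-coefficient from products of subleading pieces of $\bm\hat{t}_k$ with leading pieces of $\bm\hat{\mu}_l$, and more generally from terms quadratic in the $\bm\hat{t}$'s or carrying two derivatives --- is precisely where the paper's proof does its real work, and you have not supplied an argument for it. The paper resolves it by a tensorial weight count based on Proposition \ref{highesttermtensor}: after substituting $\bm\hat{t}_k\mapsto\l^{k-1}t_k$ and $\bm\hat{\mu}_k\mapsto\l^{2-k}\mu_k$, the highest term of $\xi_k(\l)$ must be a section of $K^k\otimes\bar K$; the $\bar K$-factor forces exactly one $\mu_m$ per term, a factor $t_it_j$ contributes $\l^{i+j-2}$ instead of the optimal $\l^{i+j-1}$, and a second $\partial$ likewise costs a power of $\l$; hence the highest term of $\xi_k(\l)$ coincides with that of $\xi_k\bmod\bm\hat{t}^2\bmod\partial^2$. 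Without this (or an equivalent) argument your proof is incomplete at its decisive point.

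There is also a concrete error in your degree bookkeeping: you assert $\xi_k(\l)=\l^k\eta_k+O(\l^{k-1})$, but the correct leading order is $\l^{k-1}$ (the term $\bar\partial\bm\hat{t}_k$ contributes $\l^{k-1}\bar\partial t_k$, and the paper shows nothing exceeds this). Already for $n=2$ one sees $\xi_2=(\bar\partial-\bm\hat\mu_2\partial-2\partial\bm\hat\mu_2)\bm\hat{t}_2+\tfrac12\partial^3\bm\hat\mu_2$ with $\bm\hat{t}_2=\l t_2+\dots$ and $\bm\hat\mu_2=\mu_2+\dots$, so the top coefficient sits at $\l^1=\l^{k-1}$. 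If you extract the coefficient of $\l^k$ as you propose, you get identically zero and the argument collapses. Your second, ``conceptual'' route via equivariance of moment maps is a genuinely different idea, but as you concede it rests on matching the two symplectic structures and the two actions at leading order in $\l$; the symplectic form of subsection \ref{sympstrconnections} is written in the variables $\bm\hat\alpha_{n,n-k+1}(\l)$ rather than $\bm\hat\mu_k(\l)$, and passing to highest terms is not obviously a Poisson map, so this route would require substantial additional work to be made rigorous.
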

The proof strategy is to reduce the analysis of the highest term in the parabolic curvature to the expression $\xi_k \mod \bm\hat{t}^2 \mod \partial^2$. The following lemma shows that we then get condition $(\mc{C})$. The hurried reader may skip the proof of this technical lemma.

%The conditions on $\hat{t}_i$ in terms of the $\hat{\mu}_k$ coming from the zero curvature condition are crucial. For $n=2$ for instance, we have seen in \ref{casen2} that 
%$$\xi_2 = (\bar{\partial}-\hat{\mu}_2\partial-2\partial\hat{\mu}_2)\hat{t}_2+\frac{1}{2}\partial^3\hat{\mu}_2 = 0.$$
%We notice that apart from the higher order term $\frac{1}{2}\partial^3\hat{\mu}_2$, we find the condition for $t$ in the cotangent bundle of geometric higher Teichm\"uller space $T^*\hat{\mathcal{T}}^n$ if we identify $\hat{t}_k$ with $t_k$ and $\bm\hat{\mu}_k$ with $\mu_k$. This is true in general:

\begin{lemma}\label{curvaturemodmod}
The parabolic curvature modulo $\bm\hat{t}^2$ and $\del^2$ gives condition $(\mathcal{C})$ on $T^*\bm\hat{\mathcal{T}}^n$:
$$\xi_k = (\bar{\partial}\!-\!\bm\hat{\mu}_2\partial\!-\!k\partial\bm\hat{\mu}_k)\bm\hat{t}_k-\sum_{l=1}^{n-k}\left((l\!+\!k)\partial\bm\hat{\mu}_{l+2}+(l\!+\!1)\bm\hat{\mu}_{l+2}\del\right)\bm\hat{t}_{k+l} \mod \bm\hat{t}^2 \mod \partial^2.$$
\end{lemma}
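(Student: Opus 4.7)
My plan is to compute the commutator $[\nabla^n,\bar\nabla]$ directly by applying it to the cyclic section $s$ and expanding in the basis $(s,\nabla s,\ldots,\nabla^{n-1}s)$, using the defining relations $\nabla^n s = \bm\hat{P}s$ and $\bar\nabla s=\bm\hat{Q}s$. This is the quantum counterpart of the computation already carried out classically in Theorem \ref{spectralcurveprop}: working modulo $\partial^2$ turns the commutator into a Poisson bracket, working modulo $\bm\hat{t}^2$ mirrors the reduction of the spectral-curve condition modulo $t^2$, and working modulo $\bm\hat{I}$ matches working modulo $I$. Accordingly I expect the end result to be condition $(\mathcal{C})$ with $(\mu_k,t_k)$ replaced by $(\bm\hat{\mu}_k,\bm\hat{t}_k)$.

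Concretely, I would write $[\nabla^n,\bar\nabla]s=\nabla^n(\bm\hat{Q}s)-\bar\nabla(\bm\hat{P}s)$ and expand each term. On the one hand, using $\bar\nabla(fv)=(\bar\partial f)v+f\bar\nabla v$ and the key identity $\bar\nabla\nabla^{n-k}s=\nabla^{n-k}\bar\nabla s=\nabla^{n-k}\bm\hat{Q}s$ for $k\geq 2$ (which holds because $[\nabla^l,\bar\nabla]s=0$ for $l\leq n-2$, by the proof of Proposition \ref{thmcourbure}), one gets
\[
\bar\nabla(\bm\hat{P}s)\;=\;\sum_{k=2}^{n}(\bar\partial\bm\hat{t}_k)\nabla^{n-k}s\;+\;\bm\hat{P}\bm\hat{Q}s.
\]
On the other hand, expanding $\nabla^n(\bm\hat\mu_j\nabla^{j-1}s)$ by Leibniz and discarding $O(\partial^2)$ gives $\bm\hat\mu_j\nabla^{n+j-1}s+n(\partial\bm\hat\mu_j)\nabla^{n+j-2}s$, so that
\[
\nabla^n(\bm\hat{Q}s)\;\equiv\;\bm\hat{Q}\bm\hat{P}s\;+\;\sum_{j\geq 1}\bigl(n(\partial\bm\hat\mu_j)\nabla^{n+j-2}s+[\nabla^n,\bm\hat\mu_j\nabla^{j-1}]s-\bm\hat\mu_j[\nabla^n,\nabla^{j-1}]s\bigr)\pmod{\partial^2},
\]
the point being that the principal $\bm\hat\mu_j\nabla^{n+j-1}s$ piece reassembles into $\bm\hat{Q}\bm\hat{P}s$. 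Subtracting, the two $\bm\hat{P}\bm\hat{Q}$-pieces combine into $[\bm\hat{Q},\bm\hat{P}]s$, leaving $\bar\partial\bm\hat{t}_k$-contributions plus Leibniz derivative terms.

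Next I would reduce everything modulo $\bm\hat{I}$ and $\bm\hat{t}^2$. Any power $\nabla^m$ with $m\geq n$ is reduced by $\nabla^n\equiv\bm\hat{P}\pmod{\bm\hat{I}}$, which introduces a factor of $\bm\hat{t}$. Since the expressions we carry are already linear in $\bm\hat{t}$ (either through an explicit $\bm\hat{t}_k$ or through $\bm\hat\mu_1$, which satisfies $\bm\hat\mu_1=O(\bm\hat{t}\bm\hat\mu)$ by the analog of equation \eqref{mu1value} forced by the trace-free condition), a second reduction is $O(\bm\hat{t}^2)$ and drops out. Thus at most one $\bm\hat{P}$-substitution is performed, and modulo $\partial^2$ this substitution is multiplicative: $\nabla^{n+r}s\equiv\sum_i\bm\hat{t}_i\nabla^{n-i+r}s$. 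Applying this to the two surviving sums $\sum_{j\geq 2}n(\partial\bm\hat\mu_j)\nabla^{n+j-2}s$ and the derivative tail of $[\bm\hat{Q},\bm\hat{P}]s$ produces explicit coefficients of $\nabla^{n-k}s$ indexed by pairs $(i,j)$ with $n-i+j-2=n-k$, i.e.\ $i=k+l$, $j=l+2$ for $l\geq 0$.

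The last step is bookkeeping: collecting the coefficient of $\nabla^{n-k}s$ from (i) the $(\bar\partial\bm\hat{t}_k)$-piece of $\bar\nabla(\bm\hat{P}s)$, (ii) the $j=2$, $l=0$ contribution yielding the $-\bm\hat\mu_2\partial\bm\hat{t}_k$ and $-k(\partial\bm\hat\mu_2)\bm\hat{t}_k$ terms, and (iii) the $l\geq 1$ contributions giving $-(l+k)(\partial\bm\hat\mu_{l+2})\bm\hat{t}_{k+l}-(l+1)\bm\hat\mu_{l+2}\partial\bm\hat{t}_{k+l}$. The numerical factors $(l+k)$ and $(l+1)$ arise exactly as in Proposition \ref{varmu}, with $(l+k)$ coming from the combination of the Leibniz coefficient $n$ reduced by the substitution $\nabla^n\to\bm\hat{P}$ and $(l+1)$ coming from the derivative produced by commuting $\nabla^{j-1}$ past $\bm\hat{t}_{k+l}$. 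Comparing with Theorem \ref{conditionC} identifies this with condition $(\mathcal{C})$. The main obstacle I anticipate is the combinatorial bookkeeping in step three — in particular, confirming that the two sources of derivative terms assemble into the precise coefficients $(l+k)$ and $(l+1)$ — but since the classical Poisson-bracket version in Theorem \ref{spectralcurveprop} produces exactly the same combinatorics with identical coefficients, the semiclassical reduction must reproduce them.
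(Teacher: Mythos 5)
Your proposal is correct and follows essentially the same route as the paper: both arguments reduce the commutator modulo $\partial^2$ to the classical Poisson bracket (the semiclassical limit) and delegate the coefficient bookkeeping to Theorem \ref{spectralcurveprop}, the only organizational difference being that you expand $[\nabla^n,\bar{\nabla}]s$ directly through $\bm\hat{P}$ and $\bm\hat{Q}$ acting on the cyclic section, whereas the paper works with the scalar operators \eqref{diffops} whose coefficients are the entries $\bm\hat{\alpha}_{n,j}$ of $A_2$ and therefore needs the additional observation that $\bm\hat{\alpha}_{n,n+1-l}$ reduces to $\bm\hat{\mu}_l$ modulo $\bm\hat{t}^2$ and $\partial^2$ --- a step your formulation bypasses. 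One small correction: $\bm\hat{\mu}_1$ is not $O(\bm\hat{t}\bm\hat{\mu})$ (for $n=2$ the trace-free condition gives $\bm\hat{\mu}_1=-\tfrac{1}{2}\partial\bm\hat{\mu}_2$), but its non-$O(\bm\hat{t})$ part is itself a $\partial$-derivative of the $\bm\hat{\mu}_j$, so every contribution it makes to $\xi_k$ with $k\geq 2$ carries either two $\partial$'s or two $\bm\hat{t}$'s and is killed by the reduction, leaving your conclusion unaffected.
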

\begin{proof}
The proof is a combination of several formulas:

\begin{enumerate}
\item Proposition \ref{thmcourbure} together with the expression of the differential operators (see \eqref{diffops}) give 
$$[\partial^n-\bm\hat{t}_2\partial^{n-2}-...-\bm\hat{t}_n,-\bar{\partial}+\bm\hat{\alpha}_{nn}+\bm\hat{\alpha}_{n,n-1}\partial+...+\bm\hat{\alpha}_{n,1}\partial^{n-1}] = \sum_{k=2}^n \xi_k\partial^{n-k} \mod \bm\hat{I}$$
where $\bm\hat{\alpha}_{ij}$ are the entries of the matrix $A_2$.

\item Link between Poisson bracket and commutator: 
\begin{align*}
& \{p^n-\bm\hat{t}_2p^{n-2}-...-\bm\hat{t}_n,-\bar{p}+\bm\hat{\mu}_1+\bm\hat{\mu}_2p+...+\bm\hat{\mu}_np^{n-1}\} \\
 &= \lim_{h \rightarrow 0}\frac{1}{h}[h^n\partial^n-\bm\hat{t}_2h^{n-2}\partial^{n-2}-...-\bm\hat{t}_n,-h\bar{\partial}+\bm\hat{\mu}_1+\bm\hat{\mu}_2h\partial+...+\bm\hat{\mu}_nh^{n-1}\partial^{n-1}]\Big|_{\substack{h\partial\mapsto p \\ h\bar{\partial}\mapsto \bar{p}}}.
\end{align*}

\item Link between $\mod \partial^2$ and brackets: for $h$-connections $D_1(h)$ and $D_2(h)$, we have
$$\lim_{h \rightarrow 0} \frac{1}{h}[D_1(h),D_2(h)] = \frac{1}{h}[D_1(h),D_2(h)] \mod \partial^2.$$

\item The formula from Proposition \ref{spectralcurveprop} linking the Poisson bracket to condition $(\mc{C})$ (asserting that the spectral curve is Lagrangian):
\begin{align*}
& \{p^n-\bm\hat{t}_2p^{n-2}-...-\bm\hat{t}_n,-\bar{p}+\bm\hat{\mu}_1+\bm\hat{\mu}_2p+...+\bm\hat{\mu}_np^{n-1}\} \\
= &\sum_{k=2}^n\left( (\bar{\partial}\!-\!\bm\hat{\mu}_2\partial\!-\!k\partial\bm\hat{\mu}_k)\bm\hat{t}_k-\sum_{l=1}^{n-k}\left((l\!+\!k)\partial\bm\hat{\mu}_{l+2}+(l\!+\!1)\bm\hat{\mu}_{l+2}\del\right)\bm\hat{t}_{k+l} \right)p^{n-k} \mod \bm\hat{t}^2, I.
\end{align*}
\end{enumerate}

Now, we are ready to conclude.
By a direct computation, we can see that modulo $\bm\hat{t}^2, \partial^2$ we can replace $\bm\hat{\alpha}_{n,n+1-l}$ by $\bm\hat{\mu}_l$ in point 2. Define $D_1(h) = h^n\partial^n-\bm\hat{t}_2h^{n-2}\partial^{n-2}-...-\bm\hat{t}_n$ and $D_2(h)=-h\bar{\partial}+\bm\hat{\mu}_1+\bm\hat{\mu}_2h\partial+...+\bm\hat{\mu}_nh^{n-1}\partial^{n-1}$. Then using 1. to 4. and computing modulo $\bm\hat{t}^2$ and $\del^2$, we get:
\begin{align*}
\sum_{k=2}^n \xi_k p^{n-k} &= \sum_{k=2}^n \xi_k (h\partial)^{n-k} \Big|_{h\partial\mapsto p} \\
%&=\frac{1}{h}[h^n\partial^n-\hat{t}_2h^{n-2}\partial^{n-2}-...-\hat{t}_n,-h\bar{\partial}+\hat{\alpha}_{nn}+...+\bm\hat{\alpha}_{n,1}h^{n-1}\partial^{n-1}]\mid_{\substack{h\partial\mapsto p \\h\bar{\partial}\mapsto\bar{\partial}}} \\
&= \left(\frac{1}{h}[D_1(h),D_2(h)] \right)\Big|_{\substack{h\partial\mapsto p \\h\bar{\partial}\mapsto\bar{\partial}}} \mod \bm\hat{I}\\
&= \lim_{h \rightarrow 0}\frac{1}{h}[D_1(h),D_2(h)]\Big|_{\substack{h\partial\mapsto p \\h\bar{\partial}\mapsto\bar{\partial}}} \mod \bm\hat{I}\\
&=  \{p^n-\bm\hat{t}_2p^{n-2}-...-\bm\hat{t}_n,-\bar{p}+\bm\hat{\mu}_1+\bm\hat{\mu}_2p+...+\bm\hat{\mu}_np^{n-1}\} \mod I\\
&= \sum_{k=2}^n \left((\bar{\partial}\!-\!\bm\hat{\mu}_2\partial\!-\!k\partial\bm\hat{\mu}_k)\bm\hat{t}_k-\sum_{l=1}^{n-k}\left((l\!+\!k)\partial\bm\hat{\mu}_{l+2}+(l\!+\!1)\bm\hat{\mu}_{l+2}\del\right)\bm\hat{t}_{k+l} \right)p^{n-k}
\end{align*}
Comparing coefficients, we get the lemma.
\end{proof}

We can now give the proof of Theorem \ref{conditioncinconnection}:
\begin{proof}
From the explicit expression of $\xi_k(\l)$, we know that only derivatives, $\bm\hat{t}_k$'s and $\bm\hat{\mu}_k$'s appear. Since we are only interested in the highest term, we can replace $\bm\hat{t}_k$ by $\l^{k-1}t_k$ and $\bm\hat{\mu}_k$ by $\l^{2-k}\mu_k$. Hence, we get an expression which is a tensor, since both $t_k$ and $\mu_k$ are tensors (by Proposition \ref{highesttermtensor}). Since one term is $\bar{\partial}t_k$, we know that the highest term of $\xi_k(\l)$ is a section of $K^k\otimes \bar{K}$ and is of degree $k-1$ in $\l$.

In addition, we know that every term in $\xi_k$, apart from $\bar{\partial}t_k$, has at least one partial derivative $\partial$, which adds a $K$-factor to the tensor. The rest is thus at most of type $K^{k-1}\otimes \bar{K}$. The $\bar{K}$-factor comes from a unique $\mu_m$ in each term. Once this $\mu_m$ fixed, only partial derivatives $\partial$ and $t_k$'s contribute to the $K$-factor.

Since $t_k$ comes with a factor $\l^{k-1}$, we see that whenever there is a term with a factor $t_it_j$, the contribution in $\l$ is $\l^{i+k-2}$ which is not optimal, since $t_{i+j}$ would contribute with $\l^{i+j-1}$. In the same vein, whenever there is a term with at least two $\partial$, so that the rest is a tensor of type at most $K^{k-2}\otimes \bar{K}$, this term does not have an optimal contribution in $\l$.

Therefore, the highest term in $\xi_k(\l)$ is the same as in $\xi_k(\l) \mod \bm\hat{t}^2 \mod \partial^2$. Finally, the statement of the previous Lemma \ref{curvaturemodmod} concludes the proof of Theorem \ref{conditioncinconnection}.
\end{proof}

With the previous theorem, we now understand the global meaning of the highest terms $(\mu_k, t_k)$: the $\mu_k$ are the higher Beltrami differentials coming from the higher complex structure, whereas the $t_k$ are a cotangent vector to that higher complex structure. 
We can say that the \textit{semi-classical limit of $\mc{A}\sslash\mc{P}\sslash\Symp_0$ is $\cotang$}, which confirms the twistor space picture \ref{HK}.

The question remains how to determine the coefficients of lower degree in $\bm\hat{\mu}_k$ and $\bm\hat{t}_k$. This will be discussed in the next section.

%Without the action of higher diffeomorphisms, we can compare the space of parabolic connections $\mathcal{A}\sslash\mathcal{P}$ to the Hilbert scheme bundle $\Hilb^n_{red}(T^{*\mathbb{C}}\Sigma)$. We conjecture the following:

%\begin{conj}
%The space of parabolic connections $\mathcal{A}\sslash\mathcal{P}$ and the space of sections of the Hilbert scheme bundle $\Hilb^n_{red}(T^{*\mathbb{C}}\Sigma)$ are isomorphic as hyperk\"ahler manifolds.
%\end{conj}

\section{Conjectural geometric approach to Hitchin components}\label{finalstep}

In this section, we try to construct an analog to the non-abelian Hodge correspondence in our setting: the existence and uniqueness of real twistor lines. We give partial results and conjectures. Assuming the existence of real twistor lines, we prove a canonical diffeomorphism between higher complex structures and Hitchin components. 

Consider $\mc{A}(\l)=\l\Phi + A + \l^{-1}\Phi^*$ where $\Phi=\Phi_1+\Phi_2$ is given by an $n$-complex structure. Now, we look at $\mc{A}(\l)$ as a twistor line, i.e. a section of the twistor space.
We impose the reality condition $$-\mc{A}(-1/\bar{\l})^*=\mc{A}(\l).$$
Notice that $-1/\bar{\l}$ is the diametrically opposed point of $\l$ in $\C P^1$. For trivial $n$-complex structure the $*$-operator is the hermitian conjugate $M^*=M^\dagger=\bar{M}^\top$. Intrinsically, the operation $A \mapsto -A^*$ is an antiholomorphic involution which corresponds to the compact real form of $\mf{sl}_n$.
\begin{Remark}
For general higher complex structure, the real structure $*$ has to be defined in such a way that $\Symp_0$ preserves it. It might be necessary to have a hermitian structure on the bundle.
\hfill $\triangle$
\end{Remark}

\subsection{Case \texorpdfstring{$n=2$}{n=2} and \texorpdfstring{$n=3$}{n=3}}\label{n2n3}
Let us study the examples of smallest rank, those with $n=2$ and $n=3$. We work locally, so we can suppose that the $n$-complex structure is trivial, i.e. $\mu_k=0$ for $k=2, 3$. We use the standard form from subsection \ref{standard-form}.

For $n=2$, write $\Phi_1 = \left(\begin{smallmatrix} 0 & 0 \\ e^{\varphi} & 0\end{smallmatrix}\right)$, $A_1 = \left(\begin{smallmatrix} a_0 & a_1 \\ a_2 & -a_0\end{smallmatrix}\right)$ and $A_2 = -A_1^{\dagger}$. So we have 
$$\mc{A}(\l)=\begin{pmatrix} a_0 & a_1 \\ a_2+\l e^{\varphi} & -a_0\end{pmatrix}dz+ \begin{pmatrix} -\bar{a}_0 & -\bar{a}_2+\l^{-1}e^{\varphi} \\ -\bar{a}_1 & \bar{a}_0\end{pmatrix}d\bar{z}.$$
Notice that this is example \ref{examplen2} with $\mu_2=0$ and $b_1=e^{\varphi}$. The flatness equation gives 
$$ \left \{\begin{array}{cl}
a_2 e^{\varphi} &= \; 0 \\
\bar{\partial}\varphi &= \; -2\bar{a}_0 \\
\bar{\partial}a_1 &=\;  2\bar{a}_0a_1 \\
\partial \bar{a}_0+\bar{\partial}a_0 &= \; -a_1\bar{a}_1-e^{2\varphi}.
\end{array}\right. $$
The first equation gives $a_2=0$, the second $a_0=-\frac{\del \varphi}{2}$, the third is automatic once we write $a_1=t_2e^{-\varphi}$, where $t_2$ is the holomorphic quadratic differential from the affine connection (see section \ref{holodiffs-flat-affine}). Finally, the last equation gives $$\partial\bar{\partial} \varphi = e^{2\varphi} + t_2\bar{t}_2e^{-2\varphi}$$ which is the so-called $\mathbf{\cosh}$\textbf{-Gordon equation}, which is elliptic for small $t_2$. So we see that the flat connection is uniquely determined by $\mu_2=0, t_2$ and a solution to the $\cosh$-Gordon equation. More details for this case can be found in \cite{Fock}, in particular a link to minimal surface sections in $\S\times \R$.

\medskip
\noindent For $n=3$, take $\Phi_1 = \left(\begin{smallmatrix}  & & \\ c_1 & & \\ b_2 & c_2 &\end{smallmatrix}\right)$. As for $n=2$ the matrix $A_1$ is upper triangular. Thus, we get 
$$\mc{A}(\l)=\begin{pmatrix} a_0 & b_0 & c_0 \\ \l c_1 & a_1 & b_1 \\ \l b_2 & \l c_2 & a_2 \end{pmatrix}dz+ \begin{pmatrix} -\bar{a}_0 & \l^{-1}\bar{c}_1 & \l^{-1}\bar{b}_2 \\ -\bar{b}_0 & -\bar{a}_1 & \l^{-1}\bar{c}_2 \\ -\bar{c}_0 & -\bar{b}_1 & -\bar{a}_2\end{pmatrix}d\bar{z}.$$
Notice that we get the same expression as in example \ref{n3example}.
With a diagonal gauge, we can suppose $c_1=e^{\varphi_1}, c_2=e^{\varphi_2} \in \R_+$. Further, we know the expressions for the holomorphic differentials to be $t_3=c_0c_1c_2$ and $t_2=b_0c_1+b_1c_2+b_2c_0$, hence $c_0=t_3e^{-\varphi_1-\varphi_2}$ and $b_1=-e^{\varphi_1-\varphi_2}b_0-b_2t_3e^{-2\varphi_2-\varphi_1}$.

The flatness condition and the zero trace condition then give $a_0=-\frac{2}{3}\partial\varphi_1-\frac{1}{3}\partial\varphi_2$, $a_1=\frac{1}{3}\partial\varphi_1-\frac{1}{3}\partial\varphi_2$ and $a_2=-a_0-a_1$.

Let us consider the case where $t_2=t_3=0$. Then $c_0=0$ and $b_1=-e^{\varphi_1-\varphi_2}b_0$. The remaining equations of the flatness are
$$ \left \{\begin{array}{cl}
\bar{\partial}b_2 &= \; b_2(\bar{\partial}\varphi_1+\bar{\partial}\varphi_2)-\bar{b}_0(e^{\varphi_2}+e^{2\varphi_1-\varphi_2}) \\
-\bar{\partial}b_0 &= \;  b_0\bar{\partial}\varphi_1+\bar{b}_2e^{\varphi_2} \\
2\partial\bar{\partial}\varphi_1 &= \;  2e^{2\varphi_1}-e^{2\varphi_2}+b_2\bar{b}_2+b_0\bar{b}_0(2-e^{2\varphi_1-2\varphi_2}) \\
2\partial\bar{\partial}\varphi_2 &= \;  2e^{2\varphi_2}-e^{2\varphi_1}+b_2\bar{b}_2+b_0\bar{b}_0(-1+2e^{2\varphi_1-2\varphi_2}). 
\end{array}\right. $$

For $b_0=b_2=0$ we get the \textbf{Toda integrable system} for $\mf{sl}_3$. This is the same solution as the one obtained from the non-abelian Hodge correspondence applied to the principal nilpotent Higgs field.
%For invariance under the real form $\tau$, we need $\varphi_1=\varphi_2$ ($b_0$ and $b_2$ can be arbitrary).
We see that we need some extra data in order to impose $b_0=b_2=0$. The two variables $b_0$ and $b_2$ are solutions to a system of differential equations. Thus, we only need some initial conditions. 

For $t_2=0$ and $t_3\neq 0$, if we impose $b_0=b_1=b_2=0$ and $\varphi_1=\varphi_2=\varphi$, the flatness becomes \textbf{\c{T}i\c{t}eica's equation} 
\begin{equation}\label{Titeica}
2\del\delbar \varphi = e^{2\varphi}+t_3\bar{t}_3e^{-4\varphi}.
\end{equation}
From \cite{Loftin}, we know that \c{T}i\c{t}eica's equation is linked to affine spheres, minimal embeddings and Hitchin representations.

Before going to the general case, we push the similarity to Higgs bundles further by choosing a special gauge.

\subsection{Higgs gauge}\label{Higgsgauge}

Up to now, we have seen the flat connection $\mc{A}(\l)$ in two gauges. The first, which we call \textit{symmetric gauge}, is the form $\mc{A}(\l)=\l\Phi+A+\l^{-1}\Phi^*$ where $A_2=-A_1^*$ and the $*$-operator is the hermitian conjugate. The second, which we call \textit{parabolic gauge} and which in the literature is sometimes called \textit{$W$-gauge} or \textit{Drinfeld--Sokolov gauge}, is the form described in equation \eqref{paragauge} where our parameters $\tilde{t}_k(\l)$ and $\tilde{\mu}_k(\l)$ appear. The existence of parabolic gauge (see subsection \ref{existence-para-gauge}) assures that one can go from the symmetric to the parabolic gauge. In Higgs theory, there is a third gauge used, which we call \textit{Higgs gauge}, characterized by $A_2=0$ and by the fact that $\Phi_1$ is a companion matrix. 
Here we show that for trivial higher complex structure, there exists the Higgs gauge in our setting. This allows to compare even closer the Higgs bundle setting and our setting.

\medskip\noindent
We start with the existence of the Higgs gauge for trivial higher complex structure. We denote by $\mc{E}_-$ the sum of the negative simple roots, i.e. $\mc{E}_-=\left(\begin{smallmatrix} 0&&& \\ 1 &0&&\\ &\ddots &\ddots& \\ &&1& 0\end{smallmatrix}\right)$.
\begin{prop}
For $\mu=0$ and a flat connection $\l\Phi+A+\l^{-1}\Phi^*$ in symmetric gauge, there is a gauge $P$ which is lower triangular transforming $\Phi_1$ to $\mc{E}_-$ and $A_2$ to 0.
\end{prop}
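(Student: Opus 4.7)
The plan is to split $P$ into two factors $P=QP_0$, each of which accomplishes one of the two tasks separately, using the flatness of $\mc{A}(\l)$ to guarantee their compatibility. First I would construct a purely algebraic lower-triangular $P_0(z,\bar z)$ with $P_0\Phi_1 P_0^{-1}=\mc{E}_-$; then I would multiply on the left by a gauge $Q$ taking values in the centralizer $Z(\mc{E}_-)$, which preserves the normal form of $\Phi_1$, and choose $Q$ so as to kill the remaining antiholomorphic part.

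For the first step, recall that Lemma \ref{phi1lower} puts $\Phi_1$ in lower-triangular form with subdiagonal entries $e^{\varphi_i}$. I would take $P_0=U\,D$, where $D=\diag(d_1,\dots,d_n)$ is the diagonal normalization satisfying $d_{i+1}/d_i=e^{-\varphi_i}$ (so that the subdiagonal of $D\Phi_1 D^{-1}$ becomes identically $1$), and $U$ is lower-unipotent, inductively solving off the below-subdiagonal entries of $D\Phi_1D^{-1}$; such a $U$ exists at each point by pure linear algebra since any principal nilpotent lower-triangular matrix with unit subdiagonal is conjugate to $\mc{E}_-$ by a lower-unipotent transformation. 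This $P_0$ is smooth in $z,\bar z$ and lower triangular.

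For the second step, under the gauge $P_0$ the $(0,1)$-part becomes $A'=P_0 A_2 P_0^{-1}+P_0\bar\partial P_0^{-1}$, and the component of the flatness $\bar\partial\Phi_1+[A_2,\Phi_1]=0$ transforms into $[A',\mc{E}_-]=0$. Hence $A'$ is pointwise valued in $Z(\mc{E}_-)$, which in $\mf{sl}_n$ is the abelian subalgebra of polynomials $\sum_k c_k\mc{E}_-^k$ (lower-triangular Toeplitz). Writing $Q=\sum_{k=0}^{n-1}q_k\mc{E}_-^k$ with $q_0$ nowhere vanishing, the gauge equation $\bar\partial Q=-QA'$ reduces to the triangular system $\bar\partial q_0=-c_0 q_0$, $\bar\partial q_1=-c_0 q_1-c_1 q_0$, and so on, which are solved successively by local inversion of $\bar\partial$ (as in the proof of Theorem \ref{loctrivial}). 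Because $Z(\mc{E}_-)$ is abelian, there is no integrability obstruction; a normalization fixes $\det P=1$.

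The combined gauge $P=QP_0$ is lower triangular (a product of lower-triangular matrices), satisfies $P\Phi_1P^{-1}=Q\mc{E}_-Q^{-1}=\mc{E}_-$, and by construction $PA_2P^{-1}+P\bar\partial P^{-1}=0$. The only step that relies on more than elementary linear algebra and local $\bar\partial$-theory is the verification that $A'\in Z(\mc{E}_-)$, and this is precisely where the flatness of $\mc{A}(\l)$ enters; if this identification failed, the system for $Q$ would leave the abelian subalgebra and could obstruct solvability. Thus compatibility of the two steps is exactly the content of the Higgs-bundle half of the flatness equation, confirming the expected parallel with the non-abelian Hodge correspondence.
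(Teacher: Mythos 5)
Your proof is correct and rests on the same mechanism as the paper's: the $\l$-component of flatness forces the transformed $(0,1)$-part into $Z(\mc{E}_-)=\C[\mc{E}_-]$, and the residual gauge freedom (your $Q$; the paper's free last row of $P$, which determines all of $P$ via $P\Phi_1=\mc{E}_-P$) is exactly a $Z(\mc{E}_-)$-valued map, so killing $A_2$ reduces to a triangular system of scalar $\delbar$-equations. The factorization $P=QP_0$ is only a different bookkeeping of that same parameterization (one harmless slip: with the convention $g.A=gAg^{-1}+gdg^{-1}$ the equation for $Q$ is $\delbar Q=QA'$, not $\delbar Q=-QA'$, which does not affect solvability).
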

\begin{proof}
The statement is equivalent to the following two equations: $$P\Phi_1=\mc{E}_-P \;\text{ and }\; PA_2-\delbar P = 0.$$
The first matrix equation allows to express all entries $p_{i,j}$ of $P$ in terms of the last row $(p_{n,k})_{1\leq k \leq n}$.

We then put $\Phi_1 = P^{-1}\mc{E}_-P$ into the flatness equation $0=\delbar\Phi_1+[A_2, \Phi_1]$. After some manipulation, we get 
$$0=[\mc{E}_-, (\delbar P)P^{-1}-PA_2P^{-1}].$$
We know that the centralizer of $\mc{E}_-$ are polynomials in $\mc{E}_-$. Hence we get $$\delbar P-PA_2=\begin{pmatrix} 0&&&\\ w_2&0&&\\ \vdots&\ddots&\ddots&\\ w_n &\cdots &w_2 &0 \end{pmatrix}P.$$
Looking at the $n$ equations given by the last row, we can choose $(p_{n,k})_{1\leq k \leq n}$ such that $w_2=...=w_n=0$. Therefore $\delbar P =PA_2$, i.e. $A_2$ is transformed to 0.
\end{proof}

In the Higgs gauge, our flat connection takes the following form:
\begin{prop}
We suppose $\mu=0$. The flat connection $\mc{A}(\l)$ in Higgs gauge is locally given by $$(\l\mc{E}_-+A)dz+\l^{-1}\mc{E}_-^*d\bar{z}$$ where the $*$-operation is given by $M^*=HM^{\dagger}H^{-1}$ for some hermitian matrix $H$. Further, we have $\tr \mc{E}_-^kA = t_{k+1}$ and $A=-(\del H) H^{-1}$.
\end{prop}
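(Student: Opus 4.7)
The plan is to apply the gauge $P$ constructed in the previous proposition to the symmetric-gauge connection $\mc{A}(\l)=\l\Phi_1\,dz+A_1\,dz+A_2\,d\bar{z}+\l^{-1}\Phi_1^\dagger\,d\bar{z}$ (with $\mu=0$, so $\Phi_2=0$) and then translate everything into the matrix $H=PP^\dagger$. First I would check the $d\bar{z}$-component: by construction of $P$ in the previous proposition, $\delbar P=PA_2$, so the $A_2$ part together with $P\delbar P^{-1}$ cancels exactly, and for the $\l^{-1}\Phi_1^\dagger$ part I use $\Phi_1=P^{-1}\mc{E}_-P$ to rewrite $\Phi_1^\dagger=P^\dagger\mc{E}_-^\dagger(P^\dagger)^{-1}$, so that conjugation by $P$ gives $PP^\dagger\,\mc{E}_-^\dagger\,(PP^\dagger)^{-1}=H\mc{E}_-^\dagger H^{-1}$. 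Setting $M^*:=HM^\dagger H^{-1}$ then yields the $(0,1)$-part $\l^{-1}\mc{E}_-^*\,d\bar{z}$ as claimed.

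For the $(1,0)$-part, writing $A_{\mathrm{new}}=PA_1P^{-1}-(\del P)P^{-1}$, the task is to identify $A_{\mathrm{new}}$ with $-(\del H)H^{-1}$. The key algebraic input is the reality relation $A_2=-A_1^\dagger$ of the symmetric gauge, combined with $\delbar P=PA_2$; taking adjoints gives $\del P^\dagger=-A_1P^\dagger$, i.e.\ $A_1=-(\del P^\dagger)(P^\dagger)^{-1}$. Substituting into the expression for $A_{\mathrm{new}}$ and expanding $\del H=(\del P)P^\dagger+P\,\del P^\dagger$ produces the identity $A_{\mathrm{new}}=-(\del H)H^{-1}$ after using $P^\dagger H^{-1}=P^{-1}$. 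This is a short, essentially bookkeeping computation.

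For the trace formula, I use gauge invariance of the trace together with $P\Phi_1^kP^{-1}=\mc{E}_-^k$:
\begin{equation*}
\tr\mc{E}_-^kA_{\mathrm{new}}=\tr\Phi_1^kA_1-\tr\mc{E}_-^k(\del P)P^{-1}=t_{k+1}-\tr\mc{E}_-^k(\del P)P^{-1},
\end{equation*}
using Proposition \ref{holodifff}'s formula $t_{k+1}=\tr\Phi_1^kA_1$. Because $P$ is lower triangular (previous proposition), so are $\del P$, $P^{-1}$, and their product; but for $k\ge 1$, $\mc{E}_-^k$ only pairs with the strict upper-triangular $k$-th superdiagonal via the trace, so $\tr\mc{E}_-^k M=0$ for any lower-triangular $M$. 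This kills the unwanted term and gives $\tr\mc{E}_-^kA_{\mathrm{new}}=t_{k+1}$ for $k=1,\dots,n-1$.

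The main obstacle is essentially notational: one must be scrupulous in distinguishing $P$, $P^\dagger$, $P^{-1}$ and their derivatives, and keep the two gauge-fixing equations ($P\Phi_1=\mc{E}_-P$ and $\delbar P=PA_2$) straight in order to see the reality relation propagate into the clean expression $-(\del H)H^{-1}$. A minor side-issue is the $k=0$ case, which requires normalizing $P$ so that $\det P=1$ (achievable since the proposition lives in $\SL_n$), ensuring $\tr A_{\mathrm{new}}=0$; once this is done, the $\l^0$-component of the flatness of the new connection becomes $\delbar\bigl((\del H)H^{-1}\bigr)+[\mc{E}_-,H\mc{E}_-^\dagger H^{-1}]=0$, the expected Hitchin-type equation for the metric $H$ in the nilpotent-Higgs case, which will be useful for the sequel.
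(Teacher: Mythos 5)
Your proposal is correct and follows essentially the same route as the paper's proof: apply the gauge $P$ from the existence proposition, use $\delbar P = PA_2$ together with $A_2=-A_1^\dagger$ to get $A_1=-(\del P^\dagger)(P^\dagger)^{-1}$ and hence $A=-(\del H)H^{-1}$ with $H=PP^\dagger$, and use lower-triangularity of $P$ for the trace identity. You simply spell out a few steps (the vanishing of $\tr\mc{E}_-^k(\del P)P^{-1}$, the $k=0$ normalization) that the paper leaves implicit.
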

\begin{proof}
From the existence of Higgs gauge, we know that $\Phi_1=\mc{E}_-$ and $A_2=0$. Since $\mu=0$, we also have $\Phi_2=0$. A direct computation shows that if $P$ denotes the matrix from the Higgs gauge, the matrix $\Phi_1^*$ transforms to $PP^{\dagger}\mc{E}_-^{\dagger}(PP^{\dagger})^{-1}$. So $H=PP^{\dagger}$ which is indeed a hermitian matrix. 

\noindent Since $P$ is lower triangular, $t_{k+1}=\tr \Phi_1^kA_1$ transforms to $t_{k+1}=\tr \mc{E}_-^kA$.

\noindent Finally, since $A_2=P^{-1}\delbar P$ and $A_2=-A_1^{\dagger}$, we get $A_1=-(\del P^{\dagger})P^{\dagger \;-1}$ which transforms under $P$ to $A=-\del(PP^{\dagger})(PP^{\dagger})^{-1}=-(\del H)H^{-1}$.
\end{proof}

We see that $\mc{A}(\l)$ in the Higgs gauge becomes close to a Higgs bundle. But in our setting the holomorphic differentials are in $A$, and not in the Higgs field.
We illustrate the similarity for $n=2$.

\begin{example}
For $n=2$ and $\mu=0$, we have seen in the previous subsection \ref{n2n3} that in symmetric gauge, our connection reads
$$\mc{A}(\l)=\begin{pmatrix} -\frac{\del \varphi}{2} & t_2e^{-\varphi} \\ \l e^\varphi & \frac{\del \varphi}{2} \end{pmatrix}dz+\begin{pmatrix} \frac{\delbar \varphi}{2} & \l^{-1} e^{\varphi} \\ -\bar{t}_2 e^{-\varphi} & -\frac{\delbar \varphi}{2} \end{pmatrix}d\bar{z}.$$
The flatness condition is equivalent to the $\cosh$-Gordon equation $\del\delbar\varphi = e^{2\varphi}+t_2\bar{t}_2e^{-2\varphi}$ and $\delbar t_2 = 0$.

A direct computation gives the form in parabolic gauge:
$$\mc{A}(\l)=\begin{pmatrix} 0 & \bm\hat{t}_2(\l) \\ 1 & 0 \end{pmatrix}dz+\begin{pmatrix} -\frac{1}{2}\del \bm\hat{\mu}_2 & -\frac{1}{2}\del^2 \bm\hat{\mu}_2+\bm\hat{t}_2\bm\hat{\mu}_2 \\ \bm\hat{\mu}_2(\l) & \frac{1}{2}\del \bm\hat{\mu}_2 \end{pmatrix}d\bar{z}$$
where $\bm\hat{t}_2(\l)=\l t_2 + (\del \varphi)^2-\del^2\varphi$ and $\bm\hat{\mu}_2(\l)=-\l^{-1}\bar{t}_2e^{-2\varphi}$.

In Higgs gauge, we get 
$$\mc{A}(\l)=\begin{pmatrix} -\del\varphi-t_2p_2e^{-\varphi/2} & t_2 \\ \l-a_1 &  \del\varphi+t_2p_2e^{-\varphi/2}\end{pmatrix}dz+\begin{pmatrix} -\l^{-1}p_2e^{3\varphi/2} & \l^{-1} e^{2\varphi} \\ -\l^{-1} p_2^2 e^{\varphi} & \l^{-1}p_2e^{3\varphi/2} \end{pmatrix}d\bar{z}$$
where $a_1=(\del p_2+\frac{3}{2}p_2\del\varphi+t_2p_2^2e^{-\varphi/2}) e^{-\varphi/2}$ and $p_2$ comes from the matrix of the Higgs gauge and satisfies $\delbar p_2=-\bar{t}_2e^{-3\varphi/2}+p_2\frac{\delbar \varphi}{2}$. 

Finally, we can compare to the non-abelian Hodge correspondence which gives
\begin{equation}\label{nahc-connection}
\mc{A}(\l)=\begin{pmatrix} -\del\varphi & \l t_2 \\ \l &  \del\varphi\end{pmatrix}dz+\begin{pmatrix} 0 & \l^{-1} e^{2\varphi} \\ \l^{-1}\bar{t}_2e^{-2\varphi}\end{pmatrix}d\bar{z}.
\end{equation}
The flatness condition is equivalent to the $\sinh$-Gordon equation $\del\delbar\varphi=e^{2\varphi}-t_2\bar{t}_2e^{-2\varphi}$, which is elliptic, and $\delbar t_2 = 0$.
Notice that for $t_2=0$ (in both approaches), the connection \eqref{nahc-connection} is the same as our connection in the Higgs gauge (since $p_2=0$ for $t_2=0$).
\hfill $\triangle$
\end{example}

For non-trivial $n$-complex structure $\mu\neq 0$, there is no Higgs gauge. Even for $n=2$, one can check that there is no $P$ satisfying $P\Phi_1=\mc{E}_-P$ and $PA_2-\delbar P = 0$.

\subsection{General case}\label{flatconnectionlambda}
Set $t=(t_2,...,t_n)$ and $\mu=(\mu_2,...,\mu_n)$. To examine the existence of an analog to the non-abelian Hodge correspondence, we discuss the cases when $t=0$ or $\mu=0$.

\bigskip
\noindent \textbf{\textit{Case $t=0$ and $\mu=0$.}}
For the trivial structure we find the following result, generalizing the observations for $n=2$ and $n=3$ from the previous subsection \ref{n2n3}.
\begin{prop}\label{linktohiggs}
For $\Phi_2=0$ and $t=0$, the flat connection $\mc{A}(\l)$ is uniquely determined up to some finite initial data. There is a choice of initial data such that the flatness equations are equivalent to the Toda integrable system. In particular $\mc{A}(\l)$ is the same as the connection given by the non-abelian Hodge correspondence applied to a principal nilpotent Higgs field.
\end{prop}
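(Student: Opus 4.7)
The plan is to combine the two standard-form lemmas from Section \ref{standard-form} with the assumption $t_k=0$ to pin $\mc{A}(\l)$ down up to a system of PDEs whose $(0,0)$-data constitute the ``finite initial data'', and then to exhibit the Toda choice among them. First, by Lemma \ref{phi1lower}, apply a unitary gauge so that $\Phi_1=\sum_{i=1}^{n-1}e^{\varphi_i}E_{i+1,i}$ is strictly lower triangular with positive subdiagonal entries $e^{\varphi_i}$. By Lemma \ref{a1upper}, the flatness of $\mc{A}(\l)$ together with $\Phi_2=0$ forces $A_1$ to be upper triangular, so $A_2=-A_1^{\dagger}$ is lower triangular. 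Writing $A_1=D+U$ with $D$ diagonal and $U$ strictly upper triangular, the tracelessness of $A_1$ and the assumptions $t_k=\tr\Phi_1^{k-1}A_1=0$ for $k=1,\dots,n$ give $n-1$ independent linear relations on the $n-1$ entries of the superdiagonal of $U$ (since $\Phi_1^{k-1}$ is concentrated on the $(k-1)$-th subdiagonal). Because the superdiagonal of $\Phi_1^{k-1}A_1$ pairs $e^{\varphi_i}\cdots e^{\varphi_{i+k-2}}$ with the superdiagonal entries, the resulting linear system is triangular and determines the entire superdiagonal of $U$ to be zero; an induction on the distance from the diagonal extends this, using the relations at orders $k=2,\dots,n$, so that the condition $t=0$ is compatible with $U=0$.

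Second, I would decompose the flatness $d\mc{A}(\l)+\mc{A}(\l)\wedge\mc{A}(\l)=0$ according to its root components. The coefficient of $\l$ gives $\delbar\Phi_1+[A_2,\Phi_1]=0$, and computing the $(i{+}1,i)$-entry of this identity yields
\[
\delbar\varphi_i=-(\bar D_{i}-\bar D_{i+1}),
\]
so the diagonal entries of $D$ are determined by $\partial\varphi_i$ modulo the initial data at one point. The $\l^0$-component together with $U=0$ reduces, after substituting $D_i=-\del\varphi_i+c_i$ for constants of integration $c_i$ (the finite initial data), to the system
\[
2\,\del\delbar\varphi_i \;=\; e^{2(\varphi_i-\varphi_{i-1})}-e^{2(\varphi_{i+1}-\varphi_i)}\quad\text{(with boundary conventions)},
\]
which is the $\mf{sl}_n$ affine Toda system. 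The consistency of the ansatz $U=0$ propagates: the off-diagonal components of the flatness equation, viewed as a linear first-order system in the entries of $U$, admit $U\equiv 0$ as a solution, and by a uniqueness argument for this linear system, a single point-choice of $U=0$ forces $U=0$ everywhere. The ``finite initial data'' are therefore the integration constants $c_i$ (equivalently a choice of hermitian metric on the reference bundle at one point) together with the vanishing of the off-diagonal entries of $A_1$ at one point.

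Third, I would identify this with the Hitchin--Simpson construction. For the principal nilpotent Higgs field on the bundle $V=K^{(n-1)/2}\oplus\cdots\oplus K^{-(n-1)/2}$ with $\Phi=\mc{E}_-$ and all Hitchin differentials $t_k=0$, the harmonic metric splits as a diagonal metric $h=\diag(e^{\psi_1},\dots,e^{\psi_n})$, Hitchin's equation $F_A+[\Phi,\Phi^{*_h}]=0$ reduces precisely to the Toda system above, and the resulting flat connection $\nabla=d+\Phi+A+\Phi^{*_h}$ coincides, after the gauge equivalence set up in Section \ref{Higgsgauge}, with our $\mc{A}(\l)$. Uniqueness of the harmonic metric in the non-abelian Hodge correspondence, and uniqueness of the Toda solution with prescribed periodicity on $\Sigma$, then give the final statement.

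The main obstacle will be the second step: showing rigorously that the ansatz $U=0$ is self-consistent, i.e. that the full flatness system in the off-diagonal entries is linear and homogeneous in $U$, so that zero initial data propagate globally. In the $n=3$ case this is visible in the equations for $b_0$ and $b_2$ displayed in Subsection \ref{n2n3}, where the right-hand sides are linear in $(b_0,b_2,\bar b_0,\bar b_2)$; for general $n$ this structural feature needs to be extracted from the bracket $[A_1,A_2]+[\Phi,\Phi^*]$ by a careful weight argument on the principal $\mf{sl}_2$-decomposition of $\mf{sl}_n$.
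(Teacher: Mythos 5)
Your overall route --- normalize $\Phi_1$ and $A_1$ by Lemmas \ref{phi1lower} and \ref{a1upper}, determine the diagonal of $A_1$ from the $\varphi_i$, observe that the vanishing of the remaining off-diagonal data is consistent with flatness and yields the $\mf{sl}_n$ Toda system, then match with the harmonic metric for the principal nilpotent Higgs field --- is essentially the paper's. But your first step contains a genuine error. Each condition $t_k=\tr\Phi_1^{k-1}A_1=0$ is a \emph{single} scalar relation, and since $\Phi_1^{k-1}$ is supported on the $(k-1)$-th subdiagonal it constrains only the $(k-1)$-th superdiagonal of $A_1$: one relation on its $n-k+1$ entries. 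So there is no triangular system forcing the superdiagonal of $U$ to vanish; only the corner entry $(A_1)_{1,n}$ is killed (by $t_n=0$). The paper's own $n=3$ computation is the counterexample: with $t_2=t_3=0$ one gets $c_0=0$ and the single relation $b_1=-e^{\varphi_1-\varphi_2}b_0$, but $b_0$ survives as a free function subject to a first-order PDE. What is true --- and all the proposition asks for --- is that setting the off-diagonal unknowns to zero is \emph{compatible} with the flatness equations (they form a quadratic first-order system admitting the zero solution), and selecting that solution is precisely the ``choice of initial data'' in the statement; you should drop the claim that $t=0$ forces $U=0$ and argue only consistency.

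A second gap: after the unitary normalization of Lemma \ref{phi1lower}, $\Phi_1$ is lower triangular with subdiagonal entries $e^{\varphi_i}$, but it is \emph{not} concentrated on the subdiagonal --- a unitary gauge cannot in general remove the entries strictly below it (this is the $b_2$ in the paper's $3\times 3$ example). Your decomposition $A_1=D+U$ with $\Phi_1=\sum_i e^{\varphi_i}E_{i+1,i}$ silently discards these unknowns; they enter the flatness system on the same footing as the entries of $U$ and must likewise be set to zero by the choice of initial data. The paper's bookkeeping device is to expand $\mc{A}_1(\l)=a_0+a_1T+\cdots$ in powers of the cyclic matrix $T$ of \eqref{matrixT}, so that each coefficient $a_k$ packages the $k$-th superdiagonal of $A_1$ together with the $(n-k)$-th subdiagonal of $\l\Phi_1$, and the quadratic system is in the diagonal matrices $a_1,\dots,a_{n-1}$. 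Finally, your appeal to a uniqueness/propagation argument (``$U=0$ at a point forces $U=0$ everywhere'') is both more than the statement requires and more delicate than you suggest for a $\bar\partial$-type system on a closed surface; exhibiting the zero solution suffices.
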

\begin{proof}
Using Lemmas \ref{phi1lower} and \ref{a1upper}, we can write $\mc{A}_1(\l)$ in the following form: $$\mc{A}_1(\l)=a_0+a_1T+...+a_nT^n$$ where $a_i$ are diagonal matrices and $T$ is given by 
\begin{equation}\label{matrixT}
T=\begin{pmatrix} & 1 & & \\ &&\ddots & \\ &&& 1 \\ \l &&& \end{pmatrix}.
\end{equation}
We denote by $a_{i,j}$ the $j$-th entry of the diagonal matrix $a_i$ and $a_i'$ the shifted matrix with $a'_{i,j} = a_{i,j+1}$. We write $a^{(k)}$ for the shift applied $k$ times. Notice that $aT=Ta^{(n-1)}$. We can then write 
$$\mc{A}_2(\l)= a_0^*+T^{-1}a_1^*+...+T^{-n}a_n^*$$ where $a^*_{i,j}=\pm \bar{a}_{i,j}$, the sign depends on whether the coefficient comes with a $\l$ or not in $\mc{A}_2(\l)$.

By the standard form (Lemma \ref{phi1lower}) we can further impose $a_{n,i}=e^{\varphi_i}$ for $i=1,...,n-1$ and $a_{n,0}=0$ since $0=t_n=\prod_i a_{n,i}$. 
One of the flatness equations gives $\delbar a_n = a_n (a_0^{(n-1)}-a_0)$. Together with the condition that the trace is 0, we can compute $a_0$. We get
\begin{equation}\label{a0i}
a_{0,i}= \sum_{k=1}^{i-1}\frac{k}{n}\del\varphi_k-\sum_{k=i}^{n-1}\frac{n-k}{k}\del\varphi_k.
\end{equation}
The other equations give a system of differential equations in $a_1, ..., a_{n-1}$ which is quadratic. It allows the solution $a_i=0$ for all $i=1,...,n-1$. 
In that case, using a diagonal gauge $\diag (1,\lambda, ..., \lambda^{n-1})$ the connection $\mc{A}(\l)$ becomes 
\begin{equation}\label{mu0t0}
\mc{A}(\l)=\begin{pmatrix} *&&& \\ e^{\varphi_1} &* && \\ & \ddots &* & \\ && e^{\varphi_{n-1}} & * \end{pmatrix}dz+\begin{pmatrix}* & e^{\varphi_1} && \\ &* & \ddots & \\ &&* & e^{\varphi_{n-1}} \\ &&&* \end{pmatrix}d\bar{z}
\end{equation}
where on the diagonals are the $a_{0,i}$ and $-\bar{a}_{0,i}$ given by equation \eqref{a0i}.
This is precisely the form of the Toda system. It is known that the Hitchin equations for a principal nilpotent Higgs field are the Toda equations for $\mf{sl}_n$ (see \cite{AF}, proposition 3.1).
\end{proof}
Notice that in particular the gauge class of the connection $\mc{A}(\l)$ is independent of $\l\in \C^*$ (i.e. we have a variation of Hodge structure). This is an intrinsic property which might be used to fix the initial data.

Putting \eqref{mu0t0} in parabolic gauge, we get the following explicit formula for our coordinates $\tilde{t}(\l)$ and $\tilde{\mu}(\l)$ (see also proposition 3.1 and 4.4 in \cite{AF}):
\begin{prop}
For $\mu=0$ and $t=0$, one can choose initial conditions such that $\tilde{\mu}_k(\l)=0$ and $\tilde{t}_k(\l)=w_k$ for all $k$, where the $w_k$ are given by $\det (\del-A_1)=\prod_i(\del-a_{0,i}) = \del^n+w_2\del^{n-2}+...+w_n$ (a ``Miura transform''). Furthermore, $A_1$ is diagonal given by equation \eqref{a0i} and the parabolic gauge is upper triangular.
\end{prop}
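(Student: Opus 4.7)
The plan is to construct the parabolic gauge explicitly from a well-chosen cyclic section, rather than to invoke the general existence statement of subsection \ref{existence-para-gauge}. After the diagonal gauge $\diag(1,\l,\ldots,\l^{n-1})$ has made $\mc{A}(\l)$ $\l$-independent of the form \eqref{mu0t0}, I would take as cyclic section $s := s_1 e_1$, where $s_1$ is any nonzero local solution of the scalar equation $(\delbar - \bar{a}_{0,1}) s_1 = 0$; the ``finite extra data'' is simply its value at a base point. Since $\mc{A}_2 e_1 = -\bar{a}_{0,1} e_1$ and the only coupling of $\mc{A}_2$ to the higher slots is through its first superdiagonal, which acts trivially on $e_1$, a direct componentwise check gives $\bar{\nabla} s = 0$.

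Two consequences follow. First, the resulting parabolic gauge is upper triangular: $\nabla$ sends $\Span\{e_1,\ldots,e_k\}$ into $\Span\{e_1,\ldots,e_{k+1}\}$ because the only off-diagonal contribution of $\mc{A}_1$ is the subdiagonal with entries $e^{\varphi_k}$, so by induction $\nabla^{k-1}s \in \Span\{e_1,\ldots,e_k\}$ with nonzero $k$-th component $s_1\prod_{j<k}e^{\varphi_j}$, and the matrix with columns $s,\nabla s,\ldots,\nabla^{n-1}s$ is upper triangular and invertible. Second, the higher Beltrami differentials vanish: flatness $[\nabla,\bar{\nabla}] = 0$ combined with $\bar{\nabla} s = 0$ gives $\bar{\nabla}(\nabla^k s) = 0$ for all $k$, hence in the new basis every basis vector is $\bar{\nabla}$-parallel, $\mc{A}_2^{new}\equiv 0$, and comparison with \eqref{paraboliccoord2} yields $\bm\hat{\mu}_k(\l) = 0$ for every $k$, manifestly $\l$-independent.

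For $\bm\hat{t}_k$ the plan is to project the defining relation $\nabla^n s = \sum_{k=2}^n \bm\hat{t}_k\,\nabla^{n-k} s$ onto the first coordinate of the original basis. Using that $[\nabla^k s]_1 = D_1^k s_1$ with $D_1 := \del + a_{0,1}$ (the subdiagonal contribution never feeds back into the first coordinate), this gives the scalar ODE $\bigl(D_1^n - \bm\hat{t}_2 D_1^{n-2} - \cdots - \bm\hat{t}_n\bigr)s_1 = 0$. I would then bring this operator to normal form (monic, vanishing $\del^{n-1}$-coefficient, which is automatic from $\sum_i a_{0,i} = 0$) by the standard scalar conjugation that sends $s_1$ to a constant, and identify the resulting coefficients with the Miura product $\prod_i(\del - a_{0,i})$; for $n = 2$ a direct check with $s_1 \propto e^{-\varphi_1/2}$ confirms $\bm\hat{t}_2 = w_2$. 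The main obstacle is exactly this last identification, namely the Drinfeld--Sokolov/Miura identity matching the scalar operator coming from the parabolic reduction with the non-commutative product $\prod_i(\del - a_{0,i})$: it is classical in Toda field theory but sensitive to conventions (ordering of the factors, a possible diagonal recalibration absorbing the $e^{\varphi_k}$, and the precise normalization \eqref{a0i}). Once this combinatorial identity is settled, $\bm\hat{t}_k = w_k$ follows and is manifestly $\l$-independent; via Proposition \ref{linktohiggs} the resulting flat parabolic connection coincides with the non-abelian Hodge correspondence output for a principal nilpotent Higgs field.
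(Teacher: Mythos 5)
The paper itself gives no proof of this proposition (it is stated with a pointer to propositions 3.1 and 4.4 of \cite{AF}), so your attempt is really measured against the standard Drinfeld--Sokolov/Toda computation. Your first half is correct and clean, and is more than the text provides: with $s=s_1e_1$ and $(\delbar-\bar{a}_{0,1})s_1=0$ one indeed gets $\bar{\nabla}s=0$, since the first column of $\mc{A}_2$ in the form \eqref{mu0t0} is purely diagonal; flatness then propagates $\bar{\nabla}$-parallelism to the whole basis $(s,\nabla s,\dots,\nabla^{n-1}s)$, which forces $\bm\hat{\mu}_k(\l)=0$ for all $k$, and the basis matrix is upper triangular with diagonal entries $s_1\prod_{j<k}e^{\varphi_j}$, giving the upper-triangularity of the gauge.

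The second half has two genuine gaps. First, ``any nonzero local solution of $(\delbar-\bar{a}_{0,1})s_1=0$'' is too much freedom: the solution space is a torsor over nonvanishing \emph{holomorphic functions}, not over $\C^*$, and a generic choice yields $\det P=s_1^n\,e^{\sum_k(n-k)\varphi_k}$ non-constant, hence a companion matrix with $\bm\hat{t}_1=-\del\log\det P\neq 0$ and wrong $\bm\hat{t}_k$. You must additionally impose $\det P=\mathrm{const}$, i.e. $s_1=c\,e^{-\frac{1}{n}\sum_k(n-k)\varphi_k}$; this is compatible with the $\delbar$-equation precisely because of \eqref{a0i}, and your $n=2$ check $s_1\propto e^{-\varphi/2}$ is exactly this normalization, but your general recipe does not enforce it. Second, the first-coordinate projection cannot do the job you assign to it: the identity $[\nabla^k s]_1=D_1^ks_1$ is correct, but it produces a \emph{single} scalar relation $\bigl(D_1^n-\sum_k\bm\hat{t}_kD_1^{n-k}\bigr)s_1=0$ in the $n-1$ unknown functions $\bm\hat{t}_k$, so it determines nothing for $n>2$ (the first component of a flat section only ever satisfies a first-order equation). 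The $\bm\hat{t}_k$ are read off from the top component of $\nabla^ns$ downwards, equivalently from the $n$-th order operator obtained by eliminating through the \emph{last} coordinate, which is the composite $(\del+a_{0,1})e^{-\varphi_1}(\del+a_{0,2})\cdots e^{-\varphi_{n-1}}(\del+a_{0,n})$; commuting the $e^{-\varphi_j}$ through and renormalizing is precisely the Miura identity you explicitly defer. Since $\bm\hat{t}_k=w_k$ is the substantive content of the proposition, deferring that identification leaves the proof incomplete.
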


\bigskip
\noindent \textbf{\textit{Case $t=0$.}}
We get the following result:
\begin{prop}\label{monodromyreal}
For $t=0$, the connection $\mc{A}(\l)$ is determined by the flatness condition and by some initial conditions. Its monodromy is in $\PSL_n(\R)$.
\end{prop}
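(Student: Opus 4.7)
Plan: I would establish the first claim (determination up to initial data) by extending the argument of Proposition \ref{linktohiggs}. Put $\Phi_1$ in the lower-triangular standard form from Lemma \ref{phi1lower}, and decompose $\mc{A}_1(\l) = a_0 + a_1 T + \cdots + a_n T^n$ in the basis of the shift matrix $T$ of \eqref{matrixT} with diagonal $a_i$. The hypothesis $t_k = \tr \Phi_1^{k-1} A_1 = 0$ for $2 \leq k \leq n$ forces $a_1 = \cdots = a_{n-1} = 0$ (these diagonal matrices are, up to non-zero constants, governed by the vanishing traces), and $a_0$ is recovered from the traceless condition together with the first flatness equation as in \eqref{a0i}. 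The leftover flatness conditions produce a quasilinear elliptic system for the potentials $\varphi_k$ (logarithms of the subdiagonal entries of $\Phi_1$), with $\mu$-dependent coefficients that reduce to the Toda system when $\mu = 0$. On a compact surface of genus $\geq 2$, this system admits a unique solution modulo a finite-dimensional choice of initial data, extending the $\cosh$-Gordon and \c{T}i\c{t}eica-type existence discussed in Section \ref{n2n3}.

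For the monodromy claim I would exploit the split-form anti-involution $\tau$ on $\mf{sl}_n(\C)$, defined via the principal $\mf{sl}_2$-triple, whose fixed set is $\mf{sl}_n(\R)$. When $\mu = 0$, Proposition \ref{linktohiggs} identifies $\mc{A}(\l)$ with the non-abelian Hodge connection attached to the principal-nilpotent Higgs bundle, and Hitchin's theorem directly gives monodromy in $\PSL_n(\R)$. For $\mu \neq 0$ with $t = 0$, I would show that the combined antiholomorphic map $\mc{A}(\l) \mapsto -\tau(\mc{A}(\bar\l))^*$ fixes our connection up to a gauge: the unitary reality condition $-\mc{A}(-1/\bar\l)^* = \mc{A}(\l)$ handles the $*$-part, and the vanishing of all $t_k = \tr \Phi_1^{k-1}A_1$ eliminates precisely those pieces of $A_1$ on which $\tau$ acts as $+1$ (the principal-slice directions corresponding to the Hitchin base). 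Composing this involution with the reality condition yields a holomorphic involution of the $\C P^1$-family with fixed locus $\R^* \subset \C P^1$; at any real $\l$ the gauge-fixed connection takes values in $\mf{sl}_n(\R)$, forcing its monodromy into $\PSL_n(\R)$.

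The main obstacle lies in the precise definition of $\tau$ and $*$ for non-trivial $\mu$, since the excerpt itself notes that $*$ must be adapted so that $\Symp_0$ preserves it. I expect the compatibility $\tau \circ * = * \circ \tau$ to hold because the centralizer powers $\Phi_1^{k-1}$ sit in $(-1)^{k-1}$-eigenspaces of $\tau$ (weight $2(k-1)$ for the principal $\mf{sl}_2$-action), and the higher Beltrami differentials $\mu_k \in \Gamma(K^{1-k}\otimes\bar K)$ transform exactly so as to respect this weight grading when paired with $\bar K$. Making this rigorous across the entire $\l$-family, rather than only fiberwise, will likely require either a direct computation in the Higgs gauge of Section \ref{Higgsgauge} or an appeal to the conjectural twistor picture of Section \ref{bigpicture}, in which the $\tau$-symmetry should be manifest as an involution of real twistor lines complementary to the compact-form involution.
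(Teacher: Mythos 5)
There is a genuine gap, and it sits exactly where you flag ``the main obstacle.'' The paper's proof of the monodromy claim for $\mu \neq 0$ does not construct a global $\tau$-equivariance of the $\l$-family at all; it observes instead that the whole question is local. By Theorem \ref{actionsymponlambdaconn} (and its corollary) a higher diffeomorphism locally trivializes the $n$-complex structure, i.e.\ renders $\Phi_2 = 0$ on a chart, so that locally, with suitable initial conditions, $\mc{A}(\l)$ coincides with the connection produced by the non-abelian Hodge correspondence for the principal nilpotent Higgs field (Proposition \ref{linktohiggs}). Hitchin's identities $\tau^*\Phi_1 = \Phi_1^*$ and $\tau^*\mc{A}(\l) = \mc{A}(\l)$ for the split real form $\tau$ are local statements, so they hold chart by chart and force the monodromy into the fixed-point set of $\tau$, hence into $\PSL_n(\R)$. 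Your proposed route --- proving $\tau \circ * = * \circ \tau$ and a weight-graded cancellation ``across the entire $\l$-family'' for nontrivial $\mu$ --- is precisely what you concede you cannot complete with the available tools, and it is not needed: the local reduction to $\mu = 0$ replaces it. Without that reduction, your argument for $\mu \neq 0$ does not close.

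There is also a concrete error in your first paragraph: you assert that $t_k = \tr \Phi_1^{k-1}A_1 = 0$ for $2 \leq k \leq n$ forces $a_1 = \cdots = a_{n-1} = 0$. Each $t_k$ is a single scalar, while each $a_i$ is a diagonal matrix with $n$ entries, so these hypotheses impose only one linear relation per $k$. In the $n=3$ computation of subsection \ref{n2n3}, $t_2 = t_3 = 0$ yields $c_0 = 0$ and $b_1 = -e^{\varphi_1-\varphi_2}b_0$, but $b_0$ and $b_2$ survive as solutions of a first-order system; it is exactly the choice of initial conditions, not the vanishing of the $t_k$, that kills them (the paper makes this point explicitly). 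Misplacing the role of the initial data in this way undermines the first claim of the proposition as well, since ``determined by flatness and some initial conditions'' refers precisely to fixing these residual unknowns.
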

The idea of the proof is the following: locally, one can trivialize the higher complex structure, so we are led to $\mu=0$ and $t=0$. Thus $\mc{A}(\l)$ is given by the non-abelian Hodge correspondence and we can apply Hitchin's strategy to prove real monodromy, which is a local argument.

\begin{proof}
By Theorem \ref{actionsymponlambdaconn}, we know that we can locally render $\Phi_2=0$ by trivializing the $n$-complex structure. Thus we can choose the initial conditions such that $\mc{A}(\l)$ is given by the non-abelian Hodge correspondence applied to the nilpotent Higgs field $\Phi_1$ (see Proposition \ref{linktohiggs}).

In \cite{Hit.1}, Hitchin constructs a real form $\tau$, associated with the split real form, which for $\mf{sl}_n$ is given by a rotation of the matrix by $180$ degrees composed with complex conjugation. He shows that $\tau^*\Phi_1=\Phi_1^*$ and that $\tau^*\mc{A}(\l)=\mc{A}(\l)$. This is a local statement, therefore the monodromy of $\mc{A}(\l)$ has to be in the fixed point set of $\tau$, so in $\PSL_n(\R)$.
\end{proof}
%\begin{proof}
%Locally there is a gauge $S \in \Symp_0$ taking $\mu=0$. So we know how to choose the initial conditions locally (such that we get the Toda integrable system locally). Applying the inverse gauge and gluing together the pieces over $\S$, we get a flat connection $\mc{A}(\l)$. The monodromy follows from proposition \ref{monodromyreal}. 
%\textcolor{red}{Justify glueing?}
%\end{proof}
%\begin{Remark}
%The gluing has to be carried out more in detail.
%\hfill $\triangle$
%\end{Remark}

\bigskip
\noindent \textbf{\textit{Case $\mu=0$.}}
For trivial $n$-complex structure, the standard form from Lemmas \ref{phi1lower} and \ref{a1upper} allow to consider $\mc{A}(\l)$ as an affine connection with special properties. We denote by $\mc{L}(\mf{sl}_n)$ the loop algebra of $\mf{sl}_n$. It is defined by $\mc{L}(\mf{sl}_n) = \mf{sl}_n \otimes \C[\lambda,\lambda^{-1}]$, the space of Laurent polynomials with matrix coefficients.
There is another way to think of elements of $\mc{L}(\mf{sl}_n)$: as an infinite periodic matrix $(M_{i,j})_{i,j \in \mathbb{Z}}$ with $M_{i,j}=M_{i+n,j+n}$ and finite width (i.e. $M_{i,j}=0$ for all $\left| i+j \right|$ big enough).
The isomorphism is given as follows: to $\sum_{i=-N}^N N_i\l^i$ we associate $M_{i,j}=(N_{k_j-k_i})_{r_i,r_j}$ where $i=k_in+r_i$ and $j=k_jn+r_j$ are the Euclidean divisions of $i$ and $j$ by $n$ (so $0\leq r_i,r_j <n$), see also figure \ref{affine-matrix}.
\begin{figure}[h]
\centering
%\hspace*{-2cm}
%\includegraphics[width=5cm]{affine-matrix.png}
\begin{tikzpicture}[scale=1]

\draw (0,0)--(3,0);
\draw (0,1)--(3,1);
\draw (0,2)--(3,2);
\draw (0,3)--(3,3);
\draw (0,0)--(0,3);
\draw (1,0)--(1,3);
\draw (2,0)--(2,3);
\draw (3,0)--(3,3);

\draw (-0.3,1.5) node {$\hdots$};
\draw (3.3,1.5) node {$\hdots$};
\draw (1.5,-0.3) node {$\vdots$};
\draw (1.5,3.3) node {$\vdots$};
\draw (-0.3,3.3) node {$\ddots$};
\draw (3.3,-0.3) node {$\ddots$};

\draw [dashed, gray] (-0.3,2.3)--(2.3,-0.3);
\draw [dashed, gray] (0.7,3.3)--(3.3,0.7);

\draw [white, fill=white] (0.5,1.5) circle (0.3);
\draw [white, fill=white] (1.5,0.5) circle (0.3);
\draw [white, fill=white] (1.5,2.5) circle (0.3);
\draw [white, fill=white] (2.5,1.5) circle (0.3);

\draw (1.5,1.5) node {$N_0$};
\draw (2.5,0.5) node {$N_0$};
\draw (0.5,2.5) node {$N_0$};
\draw (1.5,2.5) node {$N_1$};
\draw (2.5,1.5) node {$N_1$};
\draw (2.5,2.5) node {$N_2$};
\draw (0.5,1.5) node {$N_{-1}$};
\draw (1.5,0.5) node {$N_{-1}$};
\draw (0.5,0.5) node {$N_{-2}$};

\draw [domain=-20:20] plot ({0.2+3.5*cos(\x)},{1.4+5.5*sin(\x)});
\draw [domain=159:199] plot ({2.7+3.5*cos(\x)},{1.4+5.5*sin(\x)});

\end{tikzpicture}

\caption{Affine matrix as infinite periodic matrix}
\label{affine-matrix}
\end{figure}
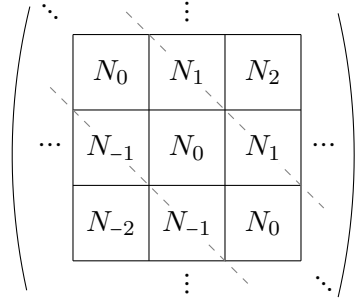

In the second viewpoint, a connection $\l\Phi+A+\l^{-1}\Phi^*$ with $\Phi_1$ lower triangular, $\Phi_2=0$ and thus $A_1$ upper triangular, is precisely an infinite matrix with period $n$ and width $n$ (shown in figure \ref{affine-matrix} by dashed lines). The $(1,0)$-part $\mc{A}_1(\l)$ is upper triangular ($\Phi_1$ is lower triangular but $\l\Phi_1$ is upper triangular in the infinite matrix) and the $(0,1)$-part $\mc{A}_2(\l)$ is lower triangular.

Thus, the flatness of $\mc{A}(\l)$ is a \textit{generalized Toda system}, replacing the tridiagonal property by ``width equal to periodicity''. For $t_i=0$ for $i=2,...,n-1$ but $t_n\neq 0$, we should get the usual affine Toda system for $\mc{L}(\mf{sl}_n)$ as described in \cite{Baraglia}. 

\begin{Remark}
In order to describe $h$-connections, we can include parameters into $\mc{L}(\mf{sl}_n)$ by considering its central extension $\widehat{\mf{sl}}_n$ or central coextension.
\hfill $\triangle$
\end{Remark}

Since for $t=0$ we get an elliptic system, the system stays elliptic for at least small $t\neq 0$, since ellipticity is an open condition (Cauchy-Kowalewskaya theorem). So the generalized Toda system can be solved for small $t$.

The study of this generalized Toda system is subject of future research.

%\bigskip
%\noindent \textbf{\textit{Case $t=0$.}}
%We get the following result:
%\begin{prop}
%For $t=0$, the connection $\mc{A}(\l)$ is determined by the flatness condition and by some initial conditions. Its monodromy is in $\PSL_n(\R)$.
%\end{prop}
%\begin{proof}
%Locally there is a gauge $S \in \Symp_0$ taking $\mu=0$. So we know how to choose the initial conditions locally (such that we get the Toda integrable system locally). Applying the inverse gauge and gluing together the pieces over $\S$, we get a flat connection $\mc{A}(\l)$. The monodromy follows from proposition \ref{monodromyreal}. 
%%\textcolor{red}{Justify glueing?}
%\end{proof}
%\begin{Remark}
%The gluing has to be carried out with more detail.
%\hfill $\triangle$
%\end{Remark}

\bigskip
\noindent \textbf{\textit{General case.}}
For $\mu\neq 0$ and $t\neq 0$, the system is still elliptic at least for small $t$, since it is for $t=0$.
We should get a generalized Toda system with differentials $t_k$ satisfying the higher holomorphicity condition $(\mc{C})$.

We conjecture that the connection $\mc{A}(\l)=\l\Phi+A+\l^{-1}\Phi^*$ is uniquely determined by $\mu$ and $t$.
To be more precise:

\begin{conj}\label{nahc}
Given an element $[(\mu_k, t_k)]\in T^*\bm\hat{\mc{T}}^n$ and some finite extra data (initial conditions to differential equations), there is a unique (up to unitary gauge) flat connection $\mc{A}(\l)=\l\Phi+A+\l^{-1}\Phi^*$ satisfying
\begin{enumerate}
\item Locally, $\Phi=\Phi_1 dz+\Phi_2 d\bar{z}$ with $\Phi_1$ principal nilpotent and $\Phi_2=\mu_2\Phi_1+...+\mu_n\Phi_1^{n-1}$
\item $-\mc{A}(-1/\bar{\l})^*=\mc{A}(\l)$ (reality condition)
\item $t_k = \tr \Phi_1^{k-1}A_1$.
\end{enumerate}
In addition, if $t_k=0$ for all $k$, then the monodromy of $\mc{A}(\l)$ is in $\PSL_n(\R)$.
\end{conj}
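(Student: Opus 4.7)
The plan is to convert the existence-and-uniqueness statement into the solvability of an elliptic PDE system on $\S$, solve it first at the trivial point $(\mu,t)=(0,0)$ where everything reduces to the classical Toda system, and then propagate to a neighborhood via the implicit function theorem. The reality assertion when $t=0$ is then a consequence of Proposition \ref{monodromyreal} combined with uniqueness.

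First, I would fix the symmetric gauge of Lemma \ref{phi1lower} so that $\Phi_1$ is lower triangular with subdiagonal entries $e^{\varphi_i}$, and use the reality constraint to write $\mc{A}_2(\l)=-\mc{A}_1(-1/\bar\l)^*$. The unknowns are then $\Phi_2=\mu_2\Phi_1+\cdots+\mu_n\Phi_1^{n-1}$ (prescribed by $\mu$), the $n-1$ real functions $\varphi_i$, and the matrix $A_1$, the latter subject to the $n-1$ trace constraints $\tr\Phi_1^{k-1}A_1=t_k$. Expanding $d\mc{A}(\l)+\mc{A}(\l)\wedge\mc{A}(\l)=0$ by powers of $\l$ gives four groups of equations: $\Phi\wedge\Phi=0$ (automatic), $\bar\partial\Phi_1+[A_2,\Phi_1]=0$ together with its conjugate (the $\l^{\pm 1}$-equations, which encode the higher holomorphicity condition $(\mc C)$ via Theorem \ref{conditioncinconnection} and Proposition \ref{holodifff}), and the $\l^0$-equation $dA+A\wedge A+[\Phi,\Phi^*]=0$. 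The residual freedom in $A_1$ after imposing the trace constraints accounts for the ``finite extra data''; dimension counting matches the number of diagonal entries $\varphi_i$ and off-diagonal components not fixed by $t$.

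Next, I would establish ellipticity. At $(\mu,t)=(0,0)$ the system becomes, after the Higgs gauge of subsection \ref{Higgsgauge}, exactly the Toda system \eqref{mu0t0} for $\mf{sl}_n$, which is well known to be elliptic (it is the Hitchin equation for the principal nilpotent Higgs field, compare \cite{Hit.1, AF}). The leading symbol of the linearization at a solution is controlled by the $[\Phi_1,\Phi_1^*]$ term, which on the compact surface $\S$ of genus $g\geq 2$ gives a strictly positive self-adjoint Laplace-type operator on $(\varphi_i,\text{free part of }A_1)$; hence the linearization is Fredholm of index zero with trivial kernel. Here I would use that on a closed hyperbolic surface the Toda solution is rigid up to the global unitary gauge, which kills the cokernel. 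The implicit function theorem then produces, for $(\mu,t)$ in a neighborhood of $0$ in $T^*\bm\hat{\mc T}^n$, a unique (up to unitary gauge and the prescribed finite data) flat connection $\mc{A}(\l)$ of the required form.

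To extend beyond a neighborhood I would use the continuity method: the set of $[(\mu,t)]$ for which existence and uniqueness hold is open by the previous paragraph; it is closed because ellipticity is preserved along any path with bounded $C^k$ data and standard a priori estimates (of Hitchin--Simpson type, adapting the Donaldson/Corlette approach to this generalized Toda setting) prevent bubbling; and it is non-empty by the base point. Finally, for the reality statement when $t=0$: by Theorem \ref{actionsymponlambdaconn} a higher diffeomorphism can locally trivialize the $n$-complex structure, so on small charts $\mc{A}(\l)$ coincides with the non-abelian Hodge solution for a principal nilpotent Higgs field, which by Hitchin's involution $\tau$ has monodromy in the split real form $\PSL_n(\R)$; uniqueness forces this local $\tau$-invariance to be compatible on overlaps, yielding global monodromy in $\PSL_n(\R)$, exactly as in the proof of Proposition \ref{monodromyreal}. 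The main obstacle is the a priori estimate step: the classical Hitchin--Simpson estimates rely on a fixed complex structure and the harmonic metric variational principle, while here both the complex structure and the Higgs-like field vary with $\mu$, so one must develop estimates intrinsic to the twistor connection $\l\Phi+A+\l^{-1}\Phi^*$ that degenerate only when $\mu_2\bar\mu_2\to 1$; proving closedness along the continuity path is where the argument will truly have to be earned.
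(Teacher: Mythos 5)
You are attempting to prove a statement that the paper itself leaves open: Conjecture \ref{nahc} has no proof in the thesis, only partial results (the $n=2,3$ computations in \ref{n2n3}, Proposition \ref{linktohiggs} for $\mu=t=0$, Proposition \ref{monodromyreal} for $t=0$, and the generalized Toda discussion for $\mu=0$). Your outline is consistent in spirit with that partial progress --- base case equals the Toda system, local trivialization of $\mu$ plus Hitchin's involution $\tau$ for the reality of the monodromy --- but it is a strategy, not a proof, and you concede this yourself when you say the closedness step ``will truly have to be earned.''

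Beyond the acknowledged gap in the a priori estimates, two points deserve to be flagged concretely. First, your opening move ``use the reality constraint to write $\mc{A}_2(\l)=-\mc{A}_1(-1/\bar\l)^*$'' presupposes that the $*$-operator is defined, but the paper explicitly notes that for non-trivial higher complex structure the real structure $*$ is only known in the trivial case and must still be constructed so that $\Symp_0$ preserves it (possibly requiring an auxiliary hermitian structure on the bundle); without this, condition 2 of the conjecture is not even well-posed away from $\mu=0$, and your PDE system cannot be written down. Second, your claim that the linearization at the Toda base point is ``Fredholm of index zero with trivial kernel'' is in tension with the paper's own observation (and with the statement of the conjecture) that even at $\mu=t=0$ the flatness equations admit a family of solutions in the off-diagonal variables $a_1,\dots,a_{n-1}$ (e.g.\ $b_0,b_2$ for $n=3$) cut out only by extra initial conditions: the kernel of the linearization is precisely this ``finite extra data,'' so the implicit function theorem can only be applied transversally to it, and showing that this defect space is finite-dimensional, varies continuously with $(\mu,t)$, and can be consistently normalized (perhaps by the $\l$-independence of the gauge class noted after Proposition \ref{linktohiggs}) is itself a substantial part of what the conjecture asks. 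The final paragraph on real monodromy for $t=0$ is fine and matches the paper's Proposition \ref{monodromyreal}.
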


Another formulation for the conjecture, which would generalize lemma 3 in \cite{Fock}, is the following:
\begin{conj}
Let $\mc{A}(\l)$ be a family of $\PSL_n(\C)$-connections on $\Sigma$ satisfying
\begin{enumerate}
\item Flatness: $d\mc{A}(\l)+\mc{A}(\l)\wedge \mc{A}(\l)=0$
\item Polynomiality: $\mc{A}(\l)=\l\Phi+A+\l^{-1}\Phi^*$
\item Non-degeneracy: $\Phi_1$ is principal nilpotent
\item Reality: $-\mc{A}(-1/\bar{\l})^*=\mc{A}(\l)$.
\end{enumerate}
Then there is an open dense subset of such connections, which are, modulo a choice of unitary gauge and a choice of a higher complex structure on $\Sigma$, parameterized by holomorphic differentials $t_k\in H^0(K^k)$ for $k=2,...,n$ and some finite data.
\end{conj}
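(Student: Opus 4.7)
\medskip\noindent
The plan is to construct the inverse to the map from Conjecture \ref{nahc}, extracting the data $[(\mu_k, t_k)] \in T^*\bm\hat{\mc{T}}^n$ together with a finite amount of initial data from a connection family $\mc{A}(\l)=\l\Phi+A+\l^{-1}\Phi^*$ satisfying (1)--(4), and conversely building the family from the data. First I would use a unitary gauge to put $\Phi_1$ into the lower triangular standard form of Lemma \ref{phi1lower} with strictly positive sub-diagonal entries $e^{\varphi_i}$; by assumption (3) this is possible on the open dense subset where $\Phi_1$ is regular. Expanding the flatness condition in powers of $\l$, the $\l^2$ coefficient yields $\Phi\wedge\Phi=0$, i.e.\ $[\Phi_1,\Phi_2]=0$. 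Since $\Phi_1$ is principal nilpotent, its centralizer is spanned by $\{\Phi_1^k\}_{k=1}^{n-1}$, so $\Phi_2 = \mu_2\Phi_1 + \cdots + \mu_n\Phi_1^{n-1}$ for uniquely determined coefficients $\mu_k$. On the open dense subset where $\mu_2\bar{\mu}_2<1$, the tuple $(\mu_2, \ldots, \mu_n)$ is a compatible higher complex structure on $\S$, defining a point in $\bm\hat{\mc{T}}^n$.

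\medskip\noindent
Second, I would set $t_k := \tr \Phi_1^{k-1}A_1$ and show that $[(\mu_k,t_k)]$ defines a point in $T^*\bm\hat{\mc{T}}^n$ by verifying condition $(\mc{C})$ of Theorem \ref{conditionC}. The argument extends Proposition \ref{holodifff} to the case $\Phi_2\neq 0$: using the $\l^1$ and $\l^0$ coefficients of flatness to rewrite $\delbar\Phi_1^{k-1}$ and $\delbar A_1$ respectively, then tracing against $\Phi_1^{k-1}$, the cyclic property of the trace annihilates the contributions from $[\Phi_1,\Phi_1^*]$ and $[A_1,A_2]$, leaving precisely the $\mu$-dependent terms that together with $\delbar t_k$ reconstruct condition $(\mc{C})$. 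This produces the extraction map $\mc{A}(\l)\mapsto[(\mu,t)]$ and identifies its target with the cotangent bundle of $\bm\hat{\mc{T}}^n$.

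\medskip\noindent
Third, I would invert the map. After the normalizations above, the reality condition (4) forces $A_2=-A_1^*$ and determines $\Phi^*$ from $\Phi$, so the whole family is encoded in the real-valued functions $\varphi_i$ together with the strictly upper triangular part of $A_1$. The $n-1$ specifications $t_k=\tr\Phi_1^{k-1}A_1$ fix $n-1$ linear combinations of the upper triangular entries, and the remaining flatness equations become a quasilinear PDE system for the $\varphi_i$ and the remaining free entries. At $(\mu,t)=(0,0)$ this system reduces to the classical Toda equations for $\mf{sl}_n$ (Proposition \ref{linktohiggs}), which are elliptic; for $\mu=0$, $t\neq 0$ it becomes an affine Toda-type system on the loop algebra $\mc{L}(\mf{sl}_n)$; for general $(\mu,t)$ satisfying $(\mc{C})$ the operators $\del, \delbar$ are replaced by the $\mu$-twisted analogues already appearing in $(\mc{C})$. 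Ellipticity is an open condition, so the system remains elliptic in a neighborhood of the trivial solution. The ``finite data'' of the conjecture is then the choice of initial conditions needed to pin down the kernel of the linearization, equivalently the choice of a harmonic-type metric on the induced bundle $V$ of Subsection \ref{indbundle}.

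\medskip\noindent
The hard part will be proving global existence and uniqueness for the $\mu$-twisted generalized Toda system for arbitrary $(\mu,t)\in T^*\bm\hat{\mc{T}}^n$, i.e.\ the genuinely analytic content of the correspondence. When $\mu=0$ this is precisely the non-abelian Hodge correspondence of Corlette--Donaldson--Hitchin--Simpson applied to the principal nilpotent Higgs bundle; existence proceeds by a continuity method and uniqueness by the Donaldson harmonic metric. For $\mu\neq 0$, a substitute harmonic metric adapted to higher complex structures must be constructed, presumably as the minimizer of an energy functional on $V$ twisted by the higher Beltrami differentials. The main technical obstacle is to show that the corresponding $\mu$-twisted Laplace operator remains Fredholm and self-adjoint on suitable Sobolev spaces, so that Uhlenbeck--Yau type continuity arguments produce a unique solution modulo the finite initial data. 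Until such an analytic framework is built, the conjecture can be verified only perturbatively, as a formal power series in $(\mu,t)$ around the trivial solution whose leading term is furnished by the classical Hitchin equations.
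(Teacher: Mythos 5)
The statement you are treating is stated in the paper as a \emph{conjecture} (it is the second, reformulated version of Conjecture \ref{nahc}), and the paper offers no proof of it --- only the partial results of section \ref{finalstep}. Your outline reproduces essentially those partial results and the same overall strategy: normalize $\Phi_1$ by Lemma \ref{phi1lower}, read off $[\Phi_1,\Phi_2]=0$ from the $\l^2$-coefficient of flatness and hence $\Phi_2=\mu_2\Phi_1+\dots+\mu_n\Phi_1^{n-1}$ since the centralizer of a principal nilpotent is spanned by its powers, set $t_k=\tr\Phi_1^{k-1}A_1$, and reduce the remaining flatness equations to a generalized Toda system that is elliptic near $t=0$ because it degenerates to the $\mf{sl}_n$ Toda equations there (Proposition \ref{linktohiggs}). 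The gap you flag at the end --- global existence and uniqueness for the $\mu$-twisted system, i.e.\ the construction of a harmonic-metric analogue adapted to a nontrivial higher complex structure --- is precisely the open analytic content of the conjecture. Your text is therefore a faithful proof \emph{strategy} matching the paper's own partial progress, not a proof, and should be presented as such.

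Two places where your sketch is looser than what the paper actually establishes. First, you claim that the holomorphicity-type condition $(\mc{C})$ on $t_k$ for $\Phi_2\neq0$ follows from the same trace computation as Proposition \ref{holodifff}; but that proposition, and the upper-triangularity of $A_1$ from Lemma \ref{a1upper} on which its cancellations rely, are only established for $\Phi_2=0$. The paper instead obtains condition $(\mc{C})$ for general $\mu$ by passing to the parabolic gauge and extracting the highest $\l$-term of the parabolic curvature (Proposition \ref{parametrisationlambda}, Lemma \ref{curvaturemodmod} and Theorem \ref{conditioncinconnection}); if you want the direct symmetric-gauge computation you must actually carry it out, since the pairing argument you invoke fails once $A_1$ acquires a strictly lower-triangular part. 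Second, ``ellipticity is an open condition'' only yields solvability for small $t$ near a known solution; it does not by itself produce the open dense subset asserted in the conjecture, nor uniqueness modulo the finite data, which is exactly why the statement remains conjectural.
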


%In the language of affine connections, and on a local scale (such that we can suppose $\Phi_2=0$), this says that an open subset of special affine connections (with width equals to the period $n$) modulo unitary gauge is parameterized by holomorphic differentials and some finite data.

The open subset consists of those connections where $\Phi_1$ is principal nilpotent and such that $\mu_2\bar{\mu}_2 \neq 1$.

\subsection{Link to Hitchin's component}

Assuming the existence and uniqueness of real twistor lines, we get the desired link to Hitchin's component:
\begin{thm}\label{mainthmm}
If Conjecture \ref{nahc} holds true, there is a canonical diffeomorphism between our moduli space $\T^n$ and Hitchin's component $\mc{T}^n$.
\end{thm}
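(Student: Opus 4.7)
The plan is to use Conjecture \ref{nahc} to construct a map $F : \bm\hat{\mathcal{T}}^n \to \mathcal{T}^n$ and then verify that it is a diffeomorphism by combining a deformation argument with a dimension count. First, given a class $[\mu] \in \bm\hat{\mathcal{T}}^n$, we view it as the point $[(\mu, 0)]$ of $T^*\bm\hat{\mathcal{T}}^n$ lying in the zero section. Applying the conjecture produces, up to unitary gauge and a choice of initial data, a flat connection $\mathcal{A}(\lambda) = \lambda \Phi + A + \lambda^{-1}\Phi^*$ whose monodromy lies in $\PSL_n(\mathbb{R})$. Fixing a canonical value of $\lambda$ (say $\lambda=1$) and taking the monodromy defines a map $F([\mu]) = [\rho_{\mu}] \in \Rep(\pi_1(\Sigma), \PSL_n(\mathbb{R}))$. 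The unitary gauge ambiguity in the conjecture translates into overall conjugation of $\rho_\mu$, so that $F$ is well-defined on equivalence classes; continuity in $\mu$ follows from the continuous dependence of the elliptic system solving the flatness of $\mathcal{A}(\lambda)$ on its data.

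The next step is to show that the image of $F$ lies in the Hitchin component $\mathcal{T}^n$. At the trivial higher complex structure $\mu = 0$ (after local trivialization via Theorem \ref{loctrivial}), Proposition \ref{linktohiggs} identifies $\mathcal{A}(\lambda)$ with the flat connection produced by the non-abelian Hodge correspondence applied to a principal nilpotent Higgs field; its monodromy is therefore $n$-Fuchsian, hence in $\mathcal{T}^n$. Since $\bm\hat{\mathcal{T}}^n$ is connected (indeed contractible, by Theorem \ref{mainresultncomplex}), and the Hitchin component is an open and closed subset of the real-monodromy locus, continuity of $F$ forces the whole image to lie in $\mathcal{T}^n$. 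Moreover, restricting $F$ to the copy of Teichm\"uller space inside $\bm\hat{\mathcal{T}}^n$ given by Proposition \ref{copyteich} recovers precisely the principal Fuchsian embedding $\mathcal{T}^2 \hookrightarrow \mathcal{T}^n$.

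The key analytic step is to prove that $F$ is a local diffeomorphism. Both spaces are smooth real manifolds of the same real dimension $2(n^2-1)(g-1)$ (combining Theorem \ref{mainresultncomplex} with the dimension of Hitchin's component), so it suffices to show that $dF_{[\mu]}$ is injective at every point. A tangent vector to $\bm\hat{\mathcal{T}}^n$ is an infinitesimal variation $(\delta \mu_2, \dots, \delta \mu_n)$ modulo infinitesimal higher diffeomorphisms, and its image under $dF$ is the corresponding variation of the monodromy of $\mathcal{A}(1)$. If the variation is trivial in monodromy then, since the connection satisfies the reality condition and the uniqueness clause of Conjecture \ref{nahc} applies at $(\delta \mu, 0)$, the variation must be realizable by a gauge transformation, which by Theorem \ref{actionsymponlambdaconn} corresponds to the action of a higher diffeomorphism on the highest $\lambda$-term $\mu$. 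Hence $[\delta \mu] = 0$ in $T_{[\mu]}\bm\hat{\mathcal{T}}^n$, so $dF$ is injective and therefore an isomorphism by dimension count.

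Finally, to promote $F$ to a global diffeomorphism we use that $\bm\hat{\mathcal{T}}^n$ and $\mathcal{T}^n$ are both connected and contractible, so any local diffeomorphism between them that is proper is already a diffeomorphism. Properness is where the main obstacle lies: if $[\mu_k]$ escapes every compact set of $\bm\hat{\mathcal{T}}^n$, one must show that the flat connections $\mathcal{A}_k(1)$, equivalently the monodromies $\rho_{\mu_k}$, leave every compact set of $\mathcal{T}^n$. The natural route is to argue that the elliptic system implicit in the construction of $\mathcal{A}(\lambda)$ (the generalized Toda system of \ref{flatconnectionlambda}) degenerates exactly when $\mu$ degenerates, by a blow-up analysis analogous to the properness of the Hitchin fibration. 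This degeneration analysis, together with a careful treatment of the finite initial data entering Conjecture \ref{nahc} so that $F$ is genuinely well-defined modulo gauge, is the principal technical difficulty; the rest of the argument is formal once the conjecture is granted.
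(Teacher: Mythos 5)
Your proposal follows essentially the same route as the paper: associate to $[\mu]=[(\mu,0)]$ the flat connection $\mc{A}(\l=1)$ furnished by Conjecture \ref{nahc}, use Proposition \ref{monodromyreal} to get monodromy in $\PSL_n(\R)$, show the image is open and closed in $\Rep(\pi_1(\S),\PSL_n(\R))$ by a dimension count, and identify the component via the $\mu=0$ locus and Proposition \ref{linktohiggs}. The paper is in fact terser than you are --- it deduces closedness from $\T^n$ being closed in $\cotang$ together with a citation of Hitchin's theorem 7.5, and it does not spell out the injectivity of the differential --- so the properness and gauge/initial-data issues you flag are the genuine remaining gaps rather than defects specific to your argument.
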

\begin{proof}
With Conjecture \ref{nahc} we get a canonical way to associate a flat connection $\mc{A}(\l=1)$ to a point in $\cotang$. By Proposition \ref{monodromyreal} the monodromy of $\mc{A}(\l)$ for $t=0$ is in $\PSL_n(\R)$. Following Hitchin's argument from theorem 7.5  in \cite{Hit.1}, we prove that the zero-section in $\cotang$ where $t=0$ describes a connected component of $\Rep(\pi_1(\S), \PSL_n(\R))$. 

Since $\T^n$ is closed in $\cotang$, the image of the map $s:\T^n \rightarrow \Rep(\pi_1(\S), \PSL_n(\R))$ is a closed submanifold.
Furthermore both spaces have the same dimension by Theorem \ref{mainresultncomplex}.
Therefore the image of $s$ is an open and closed submanifold, i.e. a connected component.

Finally, for $\mu=0$ we get the same connection $\mc{A}(\l)$ as by the non-abelian Hodge correspondence of the principal nilpotent Higgs field. So the component described by $\T^n$ and Hitchin's component $\mc{T}^n$ coincide.
\end{proof}

Notice that the map between $\mc{T}^n$ and $\T^n$ is something like an exponential map. For $n=2$, Hitchin's description of Teichmüller space is exactly via the exponential map identifying a fiber of the cotangent bundle $T^*_\mu\mc{T}^2$ to $\mc{T}^2$. 

For $n=2$ we can explicitly give the correspondence between Hitchin's description of Teichmüller space and the description with the Beltrami differential. In \cite{Hit.2} Hitchin parameterizes $\mc{T}^2$ with holomorphic quadratic differentials as follows: given a hyperbolic metrix $\rho_0$ and $t\in H^0(K^2)$, one associates the metric $\rho:=\rho_0 + t\bar{t}/\rho_0+t+\bar{t}$. Locally write $\rho_0=e^\varphi dz d\bar{z}$. Then $\rho$ is of constant curvature -1 if $\varphi$ satisfies the $\sinh$-Gordon equation 
$$\del\delbar \varphi = e^\varphi-t\bar{t}e^{-\varphi}$$ which is elliptic for all $t$.
To find $\mu$, we have to write $\rho$ as $fdwd\bar{w}$ with $f$ some positive real function and $dw=dz+\mu d\bar{z}$. After some computation, we get a quadratic relation: 
$$t\mu^2-(e^\varphi+t\bar{t}e^{-\varphi})\mu+\bar{t}=0.$$

For $t=0$ and $\mu_k=0$ for $k=3,...,n$ (but $\mu_2\neq 0$), we conjecture that the monodromy of $\mc{A}(\l)$ is $n$-fuchsian, i.e. the composition of a fuchsian map with the principal map $\pi_1(\S)\rightarrow \PSL_2(\R)\rightarrow \PSL_n(\R)$. It is known that in the Hitchin parametrization, the fuchsian locus of $\mc{T}^n$ is described by $t_k=0$ for $k=3,...,n$.

\begin{coro}
Hitchin's component has a natural complex structure. Further, there is a natural action of the mapping class group on it, preserving the complex structure.
\end{coro}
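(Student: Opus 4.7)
The proof plan is to transport structures through the canonical diffeomorphism from Theorem \ref{mainthmm}. Assuming Conjecture \ref{nahc}, we have a canonical diffeomorphism $\Psi\colon\bm\hat{\mc{T}}^n\xrightarrow{\sim}\mc{T}^n$. Since canonicity means that the construction only uses the underlying smooth surface $\Sigma$ (no auxiliary complex structure on $\Sigma$ is chosen), every piece of natural geometric data on $\bm\hat{\mc{T}}^n$ transports to $\mc{T}^n$ equivariantly under any symmetry of $\Sigma$.

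First, I would recall from Theorem \ref{mainresultncomplex} that $\bm\hat{\mc{T}}^n$ carries a natural almost complex structure: its cotangent space at a point $\mu$ is $\bigoplus_{m=2}^n H^0(K^m)$, a complex vector space, and the complex structure varies smoothly because the forgetful map to Teichmüller space produces the required holomorphic bundle of complex vector spaces. The first conclusion of the corollary then follows by pushing this almost complex structure forward via $\Psi$. To see it is integrable (hence genuinely a complex structure), one can either invoke the Feix--Kaledin theorem mentioned in subsection \ref{cotangsection} applied to Teichmüller space as a Kähler manifold, or more directly appeal to the fact that under Conjecture \ref{nahc} the identification with flat connections endows $\bm\hat{\mc{T}}^n$ with holomorphic local coordinates coming from $(\mu_2,\dots,\mu_n)$.

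Second, for the mapping class group action: the moduli space $\bm\hat{\mc{T}}^n=\Gamma(\Hilb^n_0(T^{*\C}\Sigma))/\Symp_0(T^*\Sigma)$ is defined purely from the smooth surface $\Sigma$. Hence any diffeomorphism $\varphi\in\Diff(\Sigma)$ lifts to the cotangent bundle, acts on sections of $\Hilb^n_0(T^{*\C}\Sigma)$, and commutes with $\Symp_0(T^*\Sigma)$ (since the linear cotangent lift normalizes the group of higher diffeomorphisms). This gives a $\Diff(\Sigma)$-action on $\bm\hat{\mc{T}}^n$ that descends to an $\MCG(\Sigma)$-action, because $\Diff_0(\Sigma)$ acts trivially: its linear cotangent lift is itself a higher diffeomorphism of order~$1$, so it lies in $\Symp_0(T^*\Sigma)$ and is quotiented out.

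The remaining point is that $\MCG(\Sigma)$ preserves the complex structure, which is the only step requiring a small check. Here the key is that the almost complex structure on $\bm\hat{\mc{T}}^n$ was built intrinsically from the Hilbert scheme bundle $\Hilb^n_0(T^{*\C}\Sigma)$ and the complex structure on fibers of $T^{*\C}\Sigma$; any $\varphi\in\Diff(\Sigma)$ preserving the orientation of $\Sigma$ induces a $\C$-linear map on each complexified cotangent fiber, so the induced action on $\bm\hat{\mc{T}}^n$ is holomorphic. Transporting through $\Psi$, which is canonical and therefore $\MCG$-equivariant, yields the natural $\MCG$-action on $\mc{T}^n$ by biholomorphisms. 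The main subtlety, and the only place one might worry, is the equivariance of $\Psi$: this is automatic from the canonicity clause of Conjecture \ref{nahc}, since the data $(\mu,t)$ and the flat connection $\mc{A}(\l)$ are both pulled back consistently by any diffeomorphism of $\Sigma$.
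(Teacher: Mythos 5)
Your proposal follows essentially the same route as the paper: the complex structure is obtained from the complex cotangent spaces of Theorem \ref{mainresultncomplex} and transported through the canonical diffeomorphism of Theorem \ref{mainthmm}, and the mapping class group action comes from the description of Hitchin's component as the moduli space of a geometric structure on $\Sigma$. The paper's own proof is just a two-sentence version of this (it does not even address integrability of the almost complex structure, which you at least attempt to justify), so your argument is a correct and somewhat more careful elaboration of the same idea.
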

The first statement follows from Theorem \ref{mainresultncomplex} since we explicitly know the cotangent space at a point. The second simply follows by the description of Hitchin's component as moduli space of some geometric structure on the surface. Labourie describes this action in \cite{Lab.3} and shows that it is properly discontinuous using cross ratios.

%\bigskip
%\noindent \textcolor{red}{Questions to clearify.
%\begin{enumerate}
%item How can we write the $*$-operator in coordinates (when $\mu\neq 0$)? What is the complex conjugation when we do not have a fixed complex structure on $\Sigma$? Is it a conjugation with respect to the induced complex structure given by $\mu_2$ or is it the formal complex conjugation coming from the fact that everything lives in the complexified cotangent bundle $T^{*\C}\Sigma$?
%\item The gauge transform from higher diffeomorphisms, does it preserve the real form $\tau$? Equivalently, for $t=0$ but $\mu\neq 0$, why is the monodromy of $\mc{A}(\l)$ still in $\PSL_n(\R)$?
%\item How compute $\hat{v}_k(\l)$ in the action $\Symp_0(\Sigma)$ on $\mc{A}\sslash\mc{P}$? Is it really $\Symp_0(\Sigma)$ which acts or is it a deformation?
%\item How to choose the extra data such that for $\mu=0=t$ we get the same answer as the non-abelian Hodge correspondence? How to incorporate this data? 
%\end{enumerate}
%}

%%%%%%%%%%%%%%%%%%%%%%%%%%%%%%%%%%%%%%%%%%%%

\cleardoublepage
\vspace*{3cm}
\part{Generalization to other Lie groups}\label{part3}
\vspace*{2cm}
\begin{flushright}
\textit{En un mot, pour tirer la loi de l'expérience, il faut généraliser ;}

\textit{c'est une nécessité qui s'impose à l'observateur le plus circonspect.}

Henri Poincaré, La valeur de la science
\end{flushright}
\vspace*{1cm}

\noindent Nigel Hitchin's construction for components in character varieties works for any adjoint group $G$ associated with a split form of a simple complex Lie algebra. The construction of $n$-complex structures gives an approach to $\PSL_n(\R)$-Hitchin components.
In this part, we describe a generalization of the concepts from part \ref{part1} which might give a geometric approach to $G$-Hitchin components. We define a so-called $\g$-complex structure using a generalization of the punctual Hilbert scheme associated with a simple complex Lie algebra $\g$. At the beginning we treat an arbitrary simple Lie algebra $\g$, but at some point we restrict attention to classical Lie algebras.

%We generalize the strategy of part 1: we introduce a generalization of the punctual Hilbert scheme, which we call $\g$-Hilbert scheme, associated to a complex simple Lie algebra $\g$. We then define the so-called $\g$-complex structure in a similar way as we did for higher complex structures.

To read this part, you only need to know basic facts about the punctual Hilbert scheme (subsections \ref{hilbdef} and \ref{hilbmatrixviewpoint}). In order to see the analogy between the $\g$-complex structure and the $n$-complex structure (for which $\g=\mf{sl}_n$), you should know the construction of the higher complex structure, its moduli space and the spectral curve (section \ref{Highercomplexsection} and subsection \ref{sspectral}).

\medskip \noindent
Our inspiration how to define these new objects is twofold: on the one hand we use the various descriptions of the punctual Hilbert scheme, especially the matrix viewpoint, in order to generalize to an arbitrary $\g$. On the other, we got inspiration from Hitchin's original paper \cite{Hit.1} (section 5) where he starts with a principal nilpotent element and deforms it into an element of a principal slice, a generalized companion matrix. 

We signal to the reader that our definition of the $\g$-Hilbert scheme might get changed in the future, since as it is defined now, it is a non-Hausdorff space. There should be a way to get a nice topological space, using some adapted GIT quotient. This possible modification will not affect the $\g$-complex structure since only the regular part of the $\g$-Hilbert scheme plays a role in its construction.

\medskip
\noindent We first introduce the $\g$-Hilbert scheme and some interesting subspaces and analyze their properties. We then proceed to the construction of the $\g$-complex structure and define higher diffeomorphisms of type $\g$. The local theory is trivial, like for the $n$-complex structure (with a subtlety for $\g$ of type $D_n$). The moduli space of $\g$-complex structures $\bm\hat{\mc{T}}_{\g}$ enjoys similar properties as in the case $\g=\mf{sl}_n$, and so it shares several properties with Hitchin's component. In particular, both are contractible and there is a copy of Teichmüller space inside them. We then define a spectral curve which is equipped with some extra structure depending on $\g$ giving the spectral data described in \cite{Hit3}. 

\medskip
\noindent Most of the material of this part is published in the preprint \cite{Thomas}. A collection of facts on regular elements in semisimple Lie algebras which we need in this part can be found in appendix \ref{appendix:B}.

\vspace*{\fill}

\cleardoublepage

\section{Generalized punctual Hilbert scheme}\label{g-Hilbertscheme}

In this section, we generalize the punctual Hilbert scheme to a $\g$-Hilbert scheme and explore the properties of the new object. We analyze in detail its regular part in the case of a classical Lie algebra.
We use lots of analogies to the punctual Hilbert scheme and its properties from sections \ref{Hilbertscheme} and \ref{moreonhilbertschemes}.

\subsection{Definitions and first properties}

The punctual Hilbert scheme $\Hilb^n(\C^2)$ has several descriptions:
\begin{itemize}
\item as a space of ideals (the \textit{idealic viewpoint})
\item as a desingularization of the configuration space $\h^2/W$ for $\g=\mf{gl}_n$
\item as a space of commuting matrices (the \textit{matrix viewpoint}). 
\end{itemize}
It is the matrix viewpoint which will be generalized. So let us quickly recall it here:
$$\Hilb^n(\C^2) \cong \{(A,B) \in \mf{gl}_n^2 \mid [A,B]=0, (A,B) \text{ admits a cyclic vector}\} / GL_n.$$

The main difficulty is to find an intrinsic condition which generalizes the existence of a cyclic vector. Here is our proposal:

\begin{definition}\label{def-g-hilb}
The generalized punctual Hilbert scheme, or $\mathbf {\g}$\textbf{-Hilbert scheme}, denoted by $\Hilb(\g)$, is defined by
$$\Hilb(\g) = \{(A,B) \in \g^2 \mid [A,B]=0, \dim Z(A,B) = \rk \g\} /G$$
where $Z(A,B)$ denotes the common centralizer of $A$ and $B$, i.e. the set of elements $C \in \g$ which commute with both $A$ and $B$.
\end{definition}

The condition on the dimension of the common centralizer does not come from nowhere: Proposition \ref{doublecomm} of Appendix \ref{appendix:B} shows that $\rk \g$ is the minimal possible dimension for the centralizer of a commuting pair.
Define the \textit{commuting variety} by $$\Comm(\g)=\{(A,B)\in \g^2 \mid [A,B]=0\}.$$ The $\g$-Hilbert scheme is the set of all regular points of $\Comm(\g)$ modulo $G$.

\begin{Remark}
Ginzburg has defined the notion of a principal nilpotent pair in \cite{Ginzburg}, which is more restrictive than ours. He calls ``nil-pairs'' elements of our $\g$-Hilbert scheme, but it seems that he does not investigate them.
\hfill $\triangle$
\end{Remark}

Let us give two examples of elements in the $\g$-Hilbert scheme:
\begin{example}
Let $A \in \g$ be a regular element. Then by a theorem of Kostant (see \ref{thmKost}), its centralizer $Z(A)$ is abelian. So for any $B\in Z(A)$, we have $Z(A) \subset Z(B)$, thus $Z(A,B) = Z(A) \cap Z(B) = Z(A)$ is of dimension $\rk \g$. Therefore $[(A,B)] \in \Hilb(\g)$.

If $A$ is principal nilpotent, then $B\in Z(A)$ is also nilpotent. So $[(A,B)]\in \Hilb_0(\g)$, the zero-fiber defined below.

If $B=0$ then $[(A,0)]$ is in $\Hilb(\g)$ iff $A$ is regular.
\hfill $\triangle$
\end{example}

\begin{example}\label{Young}
Let $(A,B)$ be a commuting pair of matrices in $\mf{sl}_n$ admitting a cyclic vector, i.e. an element of the reduced Hilbert scheme. 
One way to get such a pair is the following construction: take a Young diagram (our convention is to put the origin in the upper left corner as for matrices) with $n$ boxes (see figure \ref{Youngdiag}). Associate to each box a vector of a basis of $\C^n$. Define $A$ to be the matrix which translates to the right, i.e. sends a vector to the vector in the box to the right or to 0 if there is none. Let $B$ be the matrix which translates to the bottom. Then $A$ and $B$ clearly commute and are nilpotent.
In Proposition \ref{cycliccentralizer} below, we show that $Z(A,B)$ is of minimal dimension in that case.
\hfill $\triangle$
\end{example}

\begin{figure}[h]
\centering
\includegraphics[height=2cm]{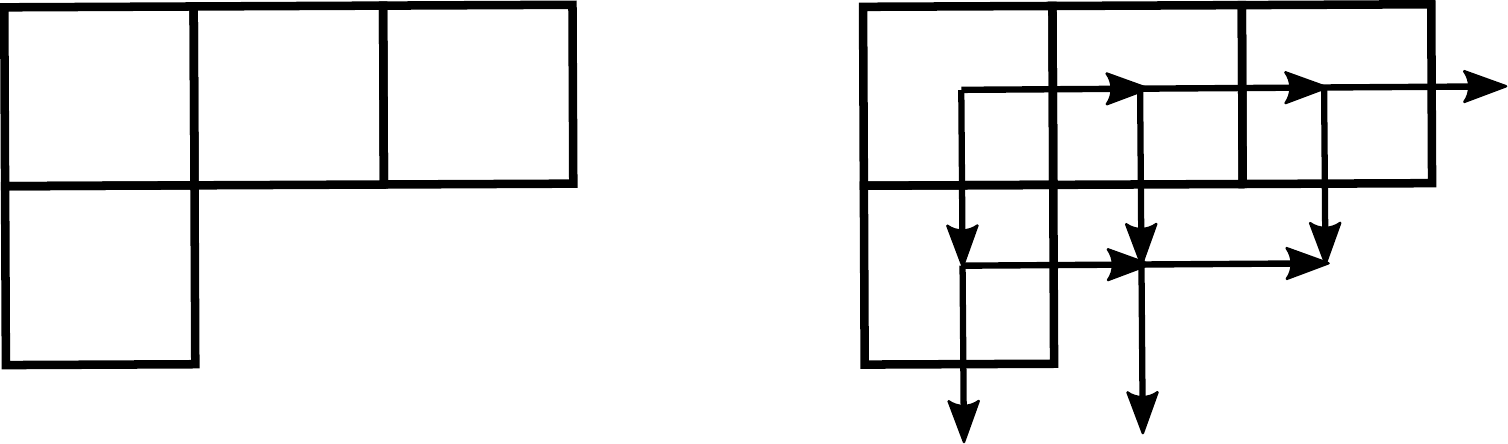}

\caption{Young diagram and commuting nilpotent matrices}
\label{Youngdiag}
\end{figure}

Guided by these examples, we define several subsets of the $\g$-Hilbert scheme and explore their relations.
First, we define the zero-fiber and the regular part which both play a mayor role in the definition of a $\g$-complex structure. We also define the cyclic part, which is not intrinsically defined since it uses a representation of $\g$. The cyclic part will be used to define a map to a space of ideals, getting a generalization of the original description of the punctual Hilbert scheme by ideals.

\begin{definition}\label{partshilb}
The \textbf{zero-fiber} of the $\g$-Hilbert scheme is defined by $$\Hilb_0(\g)=\{[(A,B)] \in \Hilb(\g) \mid A \text{ and } B \text{ nilpotent}\}.$$

We define the \textbf{regular part} of the $\g$-Hilbert scheme, denoted by $\Hilb^{reg}(\g)$, to be those conjugacy classes $[(A,B)]$ in which $A$ or $B$ is a regular element of $\g$.

Finally for classical $\g$, let $\rho$ denote the natural representation of $\g$ (i.e. $\mf{sl}_n \subset \mf{gl}_n, \mf{so}_n \subset \mf{gl}_n$ and $\mf{sp}_{2n} \subset \mf{gl}_{2n}$). Define the \textbf{cyclic part} of the $\g$-Hilbert scheme by 
$$\Hilb^{cycl}(\g) = \{(A,B) \in \g^2 \mid [A,B]=0, (\rho(A),\rho(B)) \text{ admits a cyclic vector}\}/G.$$
\end{definition}

\begin{Remark}
In the definition of the cyclic part, it would be more natural to consider the adjoint representation, but even in the case of $\mf{sl}_2$, this would give a map to a space of ideals, which is not the one of $\Hilb^2_{red}(\C^2)$.

Instead of the standard representation, one could also use a non-trivial representation of minimal dimension, which for classical $\g$ is always the standard representation, apart from types $D_3$ and $D_4$. For $D_3$, the two spin representations are of minimal dimension, and they give the exceptional isomorphism between $\mf{so}_6$ and $\mf{sl}_4$. For type $D_4$, there are three representations of minimal dimension, the standard one and the two spin representations. All of them are linked by outer automorphisms coming from the symmetry of the Dynkin diagram. Thus the cyclic part of the $\mf{so}_8$-Hilbert scheme is the same for all three representations. 

Taking the representation of minimal dimension has the additional advantage to be well-defined for all $\g$. This possibility has to be explored further.
\hfill $\triangle$
\end{Remark}

The first relation between the various Hilbert schemes is the inclusion of the cyclic part in the $\g$-Hilbert scheme, which justifies the name ``cyclic part'':
\begin{prop}\label{cycliccentralizer}
For $\g$ of classical type, we have $\Hilb^{cycl}(\g) \subset \Hilb(\g)$.
\end{prop}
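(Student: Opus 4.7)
The plan is to use the cyclic vector in the standard representation to identify $Z_{\mf{gl}_N}(\rho(A),\rho(B))$ explicitly with the commutative polynomial algebra $\C[\rho(A),\rho(B)]$ of dimension $N$, then cut it down to $\g$ via the involution defining $\g$ inside $\mf{gl}_N$, and finally invoke Proposition \ref{doublecomm} for the reverse inequality. First I would establish the well-known fact that if $v\in \C^N$ is cyclic for the commuting pair $(\rho(A),\rho(B))$ then $Z_{\mf{gl}_N}(\rho(A),\rho(B))=\C[\rho(A),\rho(B)]$: any $M$ in the centralizer is determined by $Mv$ (since $M$ commutes with every element of $\C[\rho(A),\rho(B)]$), so the evaluation map $M\mapsto Mv$ is injective into $\C^N$; cyclicity forces it to be onto, saturating the dimension count.

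Next I would realise $\g$ as a suitable eigenspace of an involution of $\mf{gl}_N$: the trace-zero hyperplane for $\mf{sl}_n$, the transpose $\tau(X)=X^{\top}$ (whose $(-1)$-eigenspace is $\mf{so}_N$), or the symplectic transpose $\tau(X)=J^{-1}X^{\top}J$ whose $(-1)$-eigenspace is $\mf{sp}_{2n}$. In both the orthogonal and symplectic cases $\tau$ is an anti-algebra involution of $\mf{gl}_N$ with $\tau(\rho(A))=-\rho(A)$ and $\tau(\rho(B))=-\rho(B)$; restricted to the commutative subalgebra $\C[\rho(A),\rho(B)]$ the anti-multiplicativity collapses and $\tau$ becomes an algebra involution acting on monomials by $\tau(\rho(A)^i\rho(B)^j)=(-1)^{i+j}\rho(A)^i\rho(B)^j$. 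The centralizer in $\g$ is then the corresponding eigenspace: for $\mf{sl}_n$ the trace-zero condition immediately cuts out a codimension-one subspace of dimension $n-1=\rk\mf{sl}_n$ (since $\tr(\mathrm{id})\neq 0$), while for $\mf{so}_N$ and $\mf{sp}_{2n}$ it is the $(-1)$-eigenspace of $\tau$ on $\C[\rho(A),\rho(B)]\cong \C[x,y]/I$, where $I$ is the annihilator ideal of $v$.

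The third step, which is the principal obstacle, is to show that this $(-1)$-eigenspace has dimension exactly $\lfloor N/2\rfloor=\rk\g$. Since $\tau$ acts diagonally on any monomial basis $\{x^iy^j:(i,j)\in D\}$ of $\C[x,y]/I$, this dimension equals the signed count $\#\{(i,j)\in D:i+j\text{ odd}\}$. The ideal $I$ is automatically $\tau$-invariant (if $P$ vanishes at $(\rho(A),\rho(B))$ then so does $P(-\rho(A),-\rho(B))=\tau(P(\rho(A),\rho(B)))$), hence the length-$N$ subscheme $V(I)\subset \C^2$ is invariant under $(x,y)\mapsto(-x,-y)$. Orbits of the form $\{p,-p\}$ with $p\neq 0$ on its support contribute equally to the $\pm 1$ eigenspaces of $\tau$, so the problem reduces to controlling the local contribution at the origin. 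The hard part will be this local analysis: I would exploit the type-specific constraints on Jordan partitions of nilpotent elements in classical Lie algebras (even parts have even multiplicity for $\mf{so}_N$; odd parts have even multiplicity for $\mf{sp}_{2n}$) to classify the possible $\tau$-invariant local ideals at $(0,0)$, and verify case by case that the signed count equals $\lfloor N/2\rfloor$. Combined with Proposition \ref{doublecomm}, which already gives $\dim Z_\g(A,B)\geq \rk\g$, this yields the equality $\dim Z_\g(A,B)=\rk\g$ and shows $[(A,B)]\in \Hilb(\g)$.
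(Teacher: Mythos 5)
Your argument is essentially the paper's: cyclicity of $v$ identifies $Z_{\mf{gl}_N}(\rho(A),\rho(B))$ with the $N$-dimensional commutative algebra $\C[\rho(A),\rho(B)]\cong\C[x,y]/I$, and intersecting with $\g$ amounts to taking the trace-zero hyperplane in type $A$ and the odd part under $(x,y)\mapsto(-x,-y)$ in types $B$, $C$, $D$ — and the dimension count you defer to a type-by-type analysis of the local ideal at the origin is exactly the step the paper compresses into ``one checks in each case that the dimension of $Z(A,B)$ equals the rank.'' Your two refinements are sound and worth keeping: invoking Proposition \ref{doublecomm} so that only the upper bound $\dim Z_\g(A,B)\le\rk\g$ needs checking, and splitting the signed count over the support so that off-origin points contribute nothing (just note that the target for the origin component is then $\lfloor N_0/2\rfloor$ in terms of its local length $N_0$, which recombines to $\lfloor N/2\rfloor$ overall).
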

\begin{proof}
Recall $\rho$ the natural representation of $\g$ on $\C^m$. For simplicity, we write $A$ instead of $\rho(A)$ here.

Let $(A,B) \in \g^2$ admitting a cyclic vector $v$. Let $C \in Z(A,B)$. Then $C$ is a polynomial in $A$ and $B$. Indeed, there is $P\in \C[x,y]$ such that $Cv = P(A,B)v$. Since $C$ commutes with $A$ and $B$, we get for any polynomial $Q$ that $CQ(A,B)v = Q(A,B)Cv = Q(A,B)P(A,B)v= P(A,B)Q(A,B)v$. Thus $C=P(A,B)$.

Therefore the common centralizer of $(A,B)$ in $\mf{gl}_m$ is $\C[A,B]/I$ where $I=\{P\in \C[x,y] \mid P(A,B) = 0\}$. We know from section \ref{Hilbertscheme} that $I$ is of codimension $m$ since $(A,B)$ admits a cyclic vector. Further, we have $Z(A,B) = Z_{\mf{gl}_m}(A,B) \cap \g$. One can easily check that for $\g$ of type $A_n$, a polynomial $P(A,B)$ is in $\g$ iff its constant term has a specific form, given by the other coefficients (to ensure trace zero). For type $B_n, C_n$ and $D_n$, $P(A,B)$ is in $\g$ iff $P$ is odd. One checks in each case that the dimension of $Z(A,B)$ equals the rank of $\g$.
\end{proof}

In general, the inclusion of the cyclic Hilbert scheme is strict as shows the following example:
\begin{example}
Consider $A=\left(\begin{smallmatrix} 0 & 1 & 0 \\ 0 & 0 & 0 \\ 0 & 0 & 0 \end{smallmatrix} \right)$ and $B=\left(\begin{smallmatrix} 0 & 0 & 1 \\ 0 & 0 & 0 \\ 0 & 0 & 0 \end{smallmatrix}\right)$ in $\mf{sl}_3$. One easily checks that the pair $(A,B)$ does not admit any cyclic vector, but that their common centralizer is of dimension 2. So $[(A,B)] \in \Hilb(\mf{sl}_3)\backslash \Hilb^{cycl}(\mf{sl}_3)$.
\hfill $\triangle$
\end{example}
This example will be used in subsection \ref{topology} to show that $\Hilb(\g)$ is not Hausdorff.

In general, there is no link between regular and cyclic part. Example \ref{Young} shows that cyclic elements are not always regular and the following example shows that regular element are not always cyclic:
\begin{example}\label{regnotcyclic}
For $\g$ of type $D_n$, let $f$ be a principal nilpotent element. Then one checks that $[(f,0)] \in \Hilb(\mf{so}_{2n})$ is regular but not cyclic (see also subsection \ref{Dn}).
\hfill $\triangle$
\end{example}

Let us turn to the regular part. It turns out that if one fixes a \textit{principal slice} $f+Z(e)$ in $\g$ (see Appendix \ref{appendix:B}), there is a preferred representative for regular classes:
\begin{prop}\label{paramit}
Any class $[(A,B)] \in \Hilb^{reg}(\g)$ where $A$ is regular can uniquely be conjugated to $(A \in f+Z(e), B \in Z(A))$.
\end{prop}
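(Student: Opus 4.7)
The plan is to deduce the proposition directly from Kostant's principal-slice theorem, recalled in Appendix~\ref{appendix:B}. Observe first that the condition $B \in Z(A)$ is automatic from $[A,B]=0$, which is built into the definition of $\Hilb(\g)$; the proposition really asserts only the existence and uniqueness of a conjugation bringing $A$ into the slice $f+Z(e)$, after which $B$ is carried along by the same element of $G$.

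For existence, I would apply the slice theorem to $A$: since $A$ is regular, there exists $g \in G$ with $A_0 := gAg^{-1} \in f+Z(e)$. Setting $B_0 := gBg^{-1}$ produces a representative of $[(A,B)]$ of the required form; the belonging $B_0 \in Z(A_0)$ follows from conjugating the relation $[A,B]=0$ by $g$.

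For uniqueness, suppose $g$ and $g'$ both conjugate $A$ into $f+Z(e)$. The uniqueness half of Kostant's theorem (every regular orbit meets $f+Z(e)$ in exactly one point) forces $gAg^{-1}=g'Ag'^{-1}=:A_0$, so $h := g'g^{-1}$ lies in $Z_G(A_0)$. One must then show $hB_0h^{-1}=B_0$ for every $B_0 \in Z(A_0)$, i.e.\ that the adjoint action of $Z_G(A_0)$ on $Z(A_0)$ is trivial. For regular $A_0$, Theorem~\ref{thmKost} says that $Z(A_0)$ is abelian of dimension $\rk\g$; the identity component $Z_G(A_0)^\circ$, whose Lie algebra is precisely $Z(A_0)$, is therefore connected abelian and acts trivially on $Z(A_0)$.

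The main technical obstacle is the finite component group $\pi_0(Z_G(A_0))$: one needs its adjoint action on $Z(A_0)$ to be trivial as well. I would handle this via the Jordan decomposition $A_0 = A_s + A_n$, reducing to the regular semisimple case (where $Z_G(A_0)$ is a possibly disconnected Cartan subgroup acting trivially on its own Lie algebra) and to the regular nilpotent case (where the component group is contained in the center of $G$, and hence acts trivially on $\g$), together with the observation that the $G$-action on $\g$ factors through the adjoint group. Once this is settled, the pair $(A_0, B_0) \in (f+Z(e)) \times Z(A_0)$ is uniquely determined by the class $[(A,B)]$.
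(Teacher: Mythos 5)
Your proof follows essentially the same route as the paper's: existence and uniqueness of the slice representative come from Kostant's theorem on $f+Z(e)$, and uniqueness of $B$ reduces to showing that $\Stab(A_0)$ acts trivially on $Z(A_0)$, with the identity component handled by the abelianness of $Z(A_0)$ (Theorem~\ref{thmKost}) and the residual issue being the component group, attacked via the Jordan decomposition. The one place where you genuinely diverge is the final step: the paper proves the stronger statement that $\Stab(A_0)$ is \emph{connected}, reducing to the principal nilpotent case and then checking the tables in Collingwood--McGovern type by type (and it explicitly remarks that a classification-free argument would be desirable). You instead invoke the structural fact that for a regular nilpotent element the component group of its stabilizer is a quotient of the center of $G$, hence trivial in the adjoint group and in any case acting trivially on $\g$. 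That fact is correct and admits a uniform proof (a regular nilpotent lies in a unique Borel, so its stabilizer sits inside $B=T\ltimes U$, and the torus part is $\bigcap_i\ker\alpha_i$, the center), so if you spell it out you obtain exactly the classification-free argument the paper asks for; as written, though, you should make the reduction from $\Stab(A_0)=\Stab_{\Stab(A_s)}(A_n)$ to the component-group computation precise, which is the content of the paper's appeal to Proposition 6.1.8 of \cite{Coll}.
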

\begin{proof}
By the property of the principal slice, there is a unique conjugate of $A$ which is in the principal slice $f+Z(e)$. Denote still by $A$ and $B$ these conjugates. The only thing to show is that $B$ is unique which is done in the next lemma. 
\end{proof}

\begin{lemma}
If $A\in \g$ is regular, $g\in G$ such that $Ad_g(A)=A$ and $B\in Z(A)$, then $Ad_g(B)=B$.
\end{lemma}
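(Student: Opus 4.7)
My strategy rests on Kostant's theorem (recalled in the appendix) that, for regular $A$, the centralizer $Z(A) \subset \g$ is an abelian Lie subalgebra of dimension $\rk \g$. The hypothesis $\mathrm{Ad}_g(A) = A$ places $g$ in the closed Lie subgroup $Z_G(A) := \{h \in G : \mathrm{Ad}_h(A) = A\}$, whose Lie algebra is precisely $Z(A)$. Since $\mathrm{Ad}_g$ is a Lie algebra automorphism commuting with $\mathrm{ad}_A$, it preserves $\ker(\mathrm{ad}_A) = Z(A)$, and so restricts to a linear automorphism $\mathrm{Ad}_g|_{Z(A)}$ fixing the element $A$; the goal is to show this restricted automorphism is the identity.

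For $g$ in the identity component $Z_G(A)^0$, the conclusion is immediate: $Z_G(A)^0$ is a connected Lie group with abelian Lie algebra $Z(A)$, hence itself abelian, and any connected abelian Lie group acts trivially on its own Lie algebra via $\mathrm{Ad}$. Explicitly, writing $g = \exp(X_1)\cdots \exp(X_k)$ with $X_i \in Z(A)$, one has $\mathrm{Ad}_g = e^{\mathrm{ad}_{X_1}}\cdots e^{\mathrm{ad}_{X_k}}$, and each $\mathrm{ad}_{X_i}$ vanishes on $Z(A)$ by abelianness, so $\mathrm{Ad}_g(B) = B$ for every $B \in Z(A)$.

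To extend to an arbitrary $g \in Z_G(A)$, I would realize $Z(A)$ as polynomials in $A$ inside a faithful representation. For $\g$ classical, fix the natural representation $\rho : \g \hookrightarrow \mf{gl}_N$: Kostant's description of the centralizer of a regular element identifies $Z(A)$ with a subspace of $\g \cap \C[\rho(A)]$, spanned by polynomials in $\rho(A)$ (type $A$) or odd polynomials in $\rho(A)$ (types $B$, $C$, $D$), as the author already exploits in the proof of Proposition \ref{cycliccentralizer}. Since $\rho(g)$ commutes with $\rho(A)$ by hypothesis, it commutes with every polynomial in $\rho(A)$; writing $\rho(B) = p(\rho(A))$, we obtain
$$\rho(\mathrm{Ad}_g B) \;=\; \rho(g)\, p(\rho(A))\, \rho(g)^{-1} \;=\; p(\rho(A)) \;=\; \rho(B),$$
and faithfulness of $\rho$ gives $\mathrm{Ad}_g(B) = B$, as desired.

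The main obstacle I anticipate is the passage from the identity component $Z_G(A)^0$ to the full centralizer $Z_G(A)$, which may have several components depending on the isogeny class of $G$ and on the Jordan type of $A$: the identity-component argument alone does not preclude a nontrivial $\mathrm{Ad}$-action of non-identity components on $Z(A)$. The polynomial trick above handles this cleanly for classical $\g$ of type $A$, $B$ or $C$; for type $D_n$ one must additionally check that the extra generator of $Z(A)$ beyond the odd polynomials in $\rho(A)$ (the one corresponding via Kostant's harmonic decomposition to the Pfaffian invariant) is likewise $\rho(g)$-invariant, and for exceptional $\g$ a separate argument via Kostant's harmonic decomposition of $S(\g^*)$ would be needed.
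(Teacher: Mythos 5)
Your identity-component argument is exactly the paper's infinitesimal step (Kostant's theorem that $Z(A)$ is abelian kills the action of $Z_G(A)^0$), but the way you try to reach the remaining components leaves a genuine gap. The lemma is stated and used for an arbitrary simple $\g$ — in particular it underlies Proposition \ref{paramit}, which is invoked throughout Part \ref{part3} including for type $D_n$ and would be needed for exceptional types. Your polynomial trick only closes the argument for types $A$, $B$, $C$. For $D_n$ it fails at two points you partly acknowledge but do not repair: first, $Z(A)$ is \emph{not} contained in $\C[\rho(A)]$ for a regular element of $\mf{so}_{2n}$ (the paper computes $Z(f)$ for the principal nilpotent $f$ and finds the extra generator $S$, which is not an odd polynomial in $f$; more generally the principal slice element with $\tau_n=0$ is regular but not cyclic, so its centralizer genuinely exceeds the polynomials in $A$); second, you give no argument that $\mathrm{Ad}_g$ fixes this extra generator. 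For exceptional $\g$ you have no representation-theoretic handle at all. So as written the proof establishes the lemma only for $\g$ of type $A_n$, $B_n$, $C_n$, which is not enough for the uses made of it.

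The paper closes the gap differently: it proves that the stabilizer $\Stab(A)$ of a regular element in the adjoint group is \emph{connected} (reducing via the Jordan decomposition to the principal nilpotent case and checking the tables in Collingwood--McGovern), after which the abelian-Lie-algebra argument you both use finishes the proof uniformly for all simple $\g$. That is the missing ingredient in your approach: either prove connectedness of $\Stab(A)$ (which makes the polynomial trick unnecessary), or supply the $D_n$ and exceptional cases by some other means. One further small point to make explicit in your classical-type argument: $G$ is the adjoint group, so $\rho$ does not literally apply to $g$; you should lift $g$ to the corresponding matrix group covering $G$ and note that $\mathrm{Ad}_g$ is conjugation by that lift — this is harmless but should be said.
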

\begin{proof}
By Kostant's theorem \ref{thmKost}, we know that $Z(A)$ is abelian. So the infinitesimal version of the lemma is true. We conclude by the connectedness of the stabilizer of $A$, given by the next lemma.
\end{proof}
\begin{lemma}
For a regular element $A \in \g$, its stabilizer $\Stab(A) = \{g \in G \mid Ad_g(A)=A\}$ in the adjoint group $G$ is connected.
\end{lemma}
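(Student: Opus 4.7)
The plan is to reduce the claim to the known connectedness of the centralizer of a principal nilpotent element in an adjoint group via the Jordan decomposition of $A$.

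First I would write $A = A_s + A_n$ with $A_s$ semisimple, $A_n$ nilpotent and $[A_s,A_n]=0$. Uniqueness of the Jordan decomposition forces any $g \in \Stab(A)$ to stabilize $A_s$ and $A_n$ separately (they are polynomials in $A$), yielding
\[
\Stab_G(A) \;=\; \Stab_G(A_s) \cap \Stab_G(A_n).
\]
I would then identify $L := \Stab_G(A_s)$ as a connected reductive Levi subgroup of $G$: fixing a Cartan $\h \ni A_s$, the centralizer $Z_\g(A_s) = \h \oplus \bigoplus_{\alpha(A_s)=0}\g_\alpha$ is a Levi subalgebra, and $L$ is the corresponding Levi subgroup, generated by the maximal torus and the relevant root subgroups, hence connected (independently of $G$ being adjoint).

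Next I would show that $A_n$ is a principal nilpotent inside $\Lie(L)$. The chain
\[
Z_\g(A) \;=\; Z_\g(A_s) \cap Z_\g(A_n) \;=\; Z_{\Lie(L)}(A_n)
\]
combined with $\rk L = \rk \g$ and the regularity of $A$ forces $\dim Z_{\Lie(L)}(A_n) = \rk L$, which is precisely regularity of $A_n$ in $\Lie(L)$. The problem therefore reduces to the following statement: for $L$ a Levi of the adjoint group $G$ and $e \in \Lie(L)$ principal nilpotent, the centralizer $Z_L(e)$ is connected.

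This last reduction is the main obstacle. The strategy is to extend $e$ to an $\mf{sl}_2$-triple $(e,h,f) \subset \Lie(L)$ and to use Kostant's structure theorem (Appendix \ref{appendix:B}): $Z_L(e)^\circ$ is abelian of dimension $\rk L$, and the $\mf{sl}_2$-theoretic retraction identifies the component group $Z_L(e)/Z_L(e)^\circ$ with a subquotient of the reductive piece $Z_L(e)\cap Z_L(h)\cap Z_L(f)$. The latter lies in the center of $L$ and, via $L \hookrightarrow G$, is controlled by $Z(G)$; since $G$ is adjoint, this forces the component group to be trivial, so $Z_L(A_n)$ is connected and we conclude $\Stab_G(A) = \Stab_G(A)^\circ$. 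The subtlety — and where care is genuinely needed — is that a Levi of an adjoint group need not itself be of adjoint type, so one cannot simply apply the standard result inside $L$; the adjointness hypothesis on the ambient $G$ has to be exploited directly to kill the potential component group.
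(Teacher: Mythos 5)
Your reduction to the principal nilpotent case is essentially the paper's: both arguments Jordan-decompose $A=A_s+A_n$, observe that $A_n$ is principal nilpotent in the reductive centralizer of $A_s$, and thereby transfer the question to the component group of the centralizer of a principal nilpotent element (the paper routes this through the semisimple part $\g_s$ of $Z(A_s)$ and an equivariant-fundamental-group statement from \cite{Coll}, where you simply intersect stabilizers inside $G$; both are fine). The genuine divergence is in how that last case is settled. The paper consults the classification — the tables 6.1.6 and the chapter 8 tables of \cite{Coll} — and its own Remark immediately afterwards laments the absence of a direct argument; your Levi-factor argument supplies exactly such a direct, type-independent proof, at the cost of some structure theory of centralizers that the table lookup avoids. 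Your plan is sound, but the one place you should be explicit is the phrase ``controlled by $Z(G)$'', since, as you rightly note, $L=Z_G(A_s)$ need not be adjoint. The precise statement is: the reductive factor $C=Z_L(e)\cap Z_L(h)\cap Z_L(f)$ carries the whole component group because $Z_L(e)=C\ltimes R_u(Z_L(e))$ with connected unipotent radical; since $h$ is regular in $\mf{l}$ one has $Z_L(h)=T$, so $C\subseteq T$ is cut out by the equations $\alpha(c)=1$ for $\alpha$ ranging over the simple roots $\Delta_L$ of $L$ (conjugate $e$ to $\sum_{\alpha\in\Delta_L}e_\alpha$ using Proposition \ref{prinnilp}), which identifies $C$ with $Z(L)$; and because $G$ is adjoint the simple roots of $G$ form a basis of $X^*(T)$, so the sublattice $\mathbb{Z}\Delta_L$ has free cokernel and $Z(L)=\ker\bigl(T\to(\C^*)^{\Delta_L}\bigr)$ is a subtorus, hence connected — not trivial, which is exactly why the ambient adjointness, and not adjointness of $L$, is what gets used. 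With that step made explicit, your proof is complete and arguably preferable to the paper's.
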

\begin{proof}
Decompose $A$ into Jordan form: $A=A_s+A_n$ with $A_s$ semisimple, $A_n$ nilpotent and $[A_s,A_n]=0$. So $A_n\in Z(A_s)$. The structure of the centralizer $Z(A_s)$ is well-known: it is a direct sum of a Cartan $\h$ containing $A_s$ with all root spaces $\g_{\alpha}$ where $\alpha$ is a root such that $\alpha(A_s)=0$. It is also known that $Z(A_s)$ is reductive, so a direct sum $Z(A_s)=\mf{c}\oplus \g_s$ where $\mf{c}$ is the center and $\g_s$ is the semisimple part of $Z(A_s)$. In particular the center $\mf{c}$ is included in $\h$. So $A_n \in \g_s$ since $A_n$ is nilpotent. Denote by $G_s$ the Lie group with trivial center with Lie algebra $\g_s$.

We know that $A$ is regular iff $A_n$ is principal nilpotent in $\g_s$ (see \cite{Kost2}, proposition 0.4). 
We also know that the $G$-equivariant fundamental group of the orbit of $A$ (which is the space of connected components of $\Stab(A)$) is the same as the $\Stab(A_s)$-equivariant fundamental group of the $\Stab(A_s)$-orbit of $A_n$ (see Proposition 6.1.8. of \cite{Coll} adapted to the adjoint group). In other words, the connected components of $\Stab_G(A)$ are the same as the connected components of $\Stab_{G_s}(A_n)$ since the $\Stab(A_s)$-orbit of $A_n$ is equal to the $G_s$-orbit of $A_n$.

So we are reduced to the principal nilpotent case. Using the classification of simple Lie algebras, one can explicitly check in Collingwood-McGovern's book \cite{Coll} the tables 6.1.6. for classical $\g$ and the tables at the end of chapter 8 for exceptional $\g$ that the stabilizer of a principal nilpotent element is always connected.
\end{proof}
\begin{Remark}
It is surprising that the last lemma has never been stated (at least not to our knowledge). It would be interesting to find a direct argument, without using the classification of simple Lie algebras.
\hfill $\triangle$
\end{Remark}

\begin{coro}\label{hilbg0affine}
The regular zero-fiber $\Hilb^{reg}_0(\g) = \Hilb^{reg}(\g)\cap \Hilb_0(\g)$ is an affine variety of dimension $\rk \g$.
\end{coro}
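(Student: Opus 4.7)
The plan is to use Proposition \ref{paramit} to establish a bijection $Z(f) \cong \Hilb^{reg}_0(\g)$, where $f$ is a fixed principal nilpotent element of $\g$, thereby identifying the regular zero-fiber with an affine space of dimension $\rk\g$.

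First I would take a class $[(A,B)] \in \Hilb^{reg}_0(\g)$ and assume, possibly after swapping the two entries, that $A$ is the regular one. Proposition \ref{paramit} then provides a canonical representative with $A \in f+Z(e)$ and $B \in Z(A)$. The zero-fiber condition forces $A$ to be both regular and nilpotent, hence principal nilpotent; combined with the Kostant-slice property, this pins down $A = f$. Indeed, the Chevalley map $\g \to \g/\!/G \cong \h/W$ restricts to an isomorphism on $f+Z(e)$, so the intersection of this slice with the nilpotent cone---the fiber over $0 \in \h/W$---reduces to the single point $f$.

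Next I would verify that the nilpotency condition on $B$ is automatic. Decomposing $\g$ under the principal $\mf{sl}_2$-triple $(e,h,f)$ as $\g = \bigoplus_{i=1}^{\rk\g} V_{2m_i}$, the centralizer $Z(f)$ is spanned by the lowest-weight vectors of these summands, which all sit in strictly negative $\mathrm{ad}(h)$-weight spaces. Hence every element of $Z(f)$ is $\mathrm{ad}$-nilpotent and a fortiori nilpotent in $\g$, so the condition ``$B$ nilpotent'' is vacuous.

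Combining these two points, the assignment $B \mapsto [(f, B)]$ defines a bijection $Z(f) \to \Hilb^{reg}_0(\g)$, with the symmetric ``$B$ regular'' case absorbed by the swap on representatives. Since $Z(f)$ is a complex vector space of dimension $\rk\g$, this realizes $\Hilb^{reg}_0(\g)$ as an affine variety of the stated dimension. The main delicate step is the Kostant-slice argument pinning down $A=f$; once the isomorphism $f+Z(e) \cong \g/\!/G$ is invoked, the rest follows mechanically from the principal $\mf{sl}_2$-theory collected in Appendix \ref{appendix:B}.
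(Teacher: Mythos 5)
Your proposal is correct and follows essentially the same route as the paper: invoke Proposition \ref{paramit} to reduce to a representative $(A\in f+Z(e),\,B\in Z(A))$, observe that $f$ is the unique nilpotent element of the Kostant slice, and identify the fiber with $Z(f)\cong\C^{\rk\g}$. Your two added justifications (the Chevalley-map argument pinning down $A=f$, and the principal $\mf{sl}_2$-weight argument showing every $B\in Z(f)$ is automatically nilpotent) are correct elaborations of steps the paper leaves implicit.
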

\begin{proof}
This directly follows from the previous proposition \ref{paramit} using the fact that $A \in f+Z(e)$ is nilpotent iff $A=f$. So $\Hilb^{reg}_0(\g)$ is described by $Z(f)$ which is a vector space of dimension $\rk \g$.
\end{proof}

We know that both the regular and the cyclic part are in general strictly included in the $\g$-Hilbert scheme. But they are dense subspaces:
\begin{prop}\label{density}
The regular part $\Hilb^{reg}(\g)$ is dense in $\Hilb(\g)$. For classical $\g$, the cyclic part is also dense in $\Hilb(\g)$.
\end{prop}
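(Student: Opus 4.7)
\emph{Plan.} The strategy for both density statements is identical: identify the subset in question as a nonempty Zariski open subset of the irreducible space $\Hilb(\g)$, so that density becomes automatic.

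\emph{Irreducibility of $\Hilb(\g)$.} By Proposition \ref{doublecomm} the function $(A,B)\mapsto \dim Z(A,B)$ on the commuting variety $\Comm(\g)=\{(A,B)\in\g^2\mid [A,B]=0\}$ is bounded below by $\rk \g$, and $\Hilb(\g)$ is by definition the quotient by $G$ of the locus where this lower bound is saturated. Upper semicontinuity of fiber dimension, applied to the tautological family $\{(A,B,C)\mid [A,C]=[B,C]=0\}\to\Comm(\g)$, makes this locus Zariski open in $\Comm(\g)$. A classical theorem of Richardson asserts that $\Comm(\g)$ is irreducible of dimension $\dim\g+\rk\g$ for semisimple $\g$, so the open saturation is irreducible and nonempty; since $G$ is connected, the quotient $\Hilb(\g)$ is irreducible as well.

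\emph{Density of $\Hilb^{reg}(\g)$.} The condition that $A$ (or $B$) is regular is Zariski open in $\g$. Moreover, if $A$ is regular and $[A,B]=0$, then by Kostant's theorem (Theorem \ref{thmKost}) $Z(A)$ is abelian of dimension $\rk \g$, and $B\in Z(A)$ then forces $Z(A,B)=Z(A)$, so automatically $[(A,B)]\in\Hilb(\g)$. Hence $\Hilb^{reg}(\g)$ is a Zariski open subset of $\Hilb(\g)$, and it is nonempty, for example by taking $A=f$ principal nilpotent and $B=0$. Density in the irreducible space $\Hilb(\g)$ is then immediate.

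\emph{Density of $\Hilb^{cycl}(\g)$.} The existence of a cyclic vector for a pair $(\rho(A),\rho(B))$ acting on $\C^m$ is a Zariski open condition: it is equivalent to the non-vanishing of the determinant of a suitable matrix whose columns list the images $\rho(A)^i\rho(B)^j v$ of a generic vector $v$. So $\Hilb^{cycl}(\g)$ is open in $\Hilb(\g)$, and we need only exhibit one element. For types $A_n$, $B_n$, $C_n$ the pair $(f,0)$ with $f$ principal nilpotent works, since in the standard representation $\rho(f)$ is a single Jordan block and hence already cyclic. The case $D_n$ is subtler because by Example \ref{regnotcyclic} the element $\rho(f)\in \mf{so}_{2n}$ has Jordan type $(2n-1,1)$ and is not cyclic. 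I will then exhibit an explicit commuting pair $(A,B)\in\mf{so}_{2n}^2$ with $\rho(A)$ of Jordan type $(2n-1,1)$ and $B$ interchanging the two Jordan blocks of $\rho(A)$, so that $(\rho(A),\rho(B))$ gains a cyclic vector while $\dim Z(A,B)=n$; this fits with the explicit description of $\Hilb(\mf{so}_{2n})$ carried out in the subsequent subsection on classical Lie algebras.

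\emph{Main obstacle.} The only genuinely nontrivial input is Richardson's irreducibility theorem for $\Comm(\g)$; once that is granted, the remaining arguments are formal, the only hand-work being the explicit construction of a cyclic commuting pair in type $D_n$ and the verification that its common centralizer has the minimal dimension $n$.
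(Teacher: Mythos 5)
Your proof is correct in strategy but takes a genuinely different route from the paper. The paper extracts from Richardson's theorem the density of semisimple commuting pairs in $\Comm(\g)$, observes that \emph{regular} semisimple pairs are still dense, and then simply notes that such pairs lie in $\Hilb(\g)$, are regular by definition, and (for classical $\g$) are cyclic because their joint eigenvalues in the standard representation are generically distinct. You instead use the other half of Richardson's result -- irreducibility of $\Comm(\g)$ -- together with the observation that both the minimal-centralizer locus and the regularity/cyclicity conditions are Zariski open, reducing everything to ``open plus nonempty in an irreducible space.'' Your route is more robust for the regular part (the witness $(f,0)$ is immediate, and the openness of $\{\dim Z(A,B)=\rk\g\}$ is exactly the semicontinuity argument the paper itself uses in the proof of Proposition \ref{doublecomm}); the paper's route has the advantage that a single dense family of witnesses (generic regular semisimple pairs) settles regularity and cyclicity for all classical types at once, with no case analysis.

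The one place where your write-up is not yet a proof is the nonemptiness of $\Hilb^{cycl}(\mf{so}_{2n})$: you correctly note that $(f,0)$ fails in type $D_n$ and then only \emph{promise} an explicit cyclic commuting pair. That pair does exist -- $(f,\sigma_n S)$ with $\sigma_n\neq 0$, as computed later in the paper -- but you should either carry out that computation (including the check that $\dim Z(f,\sigma_n S)=n$) or, more cheaply, take a regular semisimple pair whose $2n$ joint eigenvalue pairs $(x_i,y_i)$ are pairwise distinct: such a pair is visibly cyclic in the standard representation and lies in $\Hilb(\g)$ since its common centralizer is a Cartan. With that witness supplied, your argument is complete.
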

\begin{proof}
By a theorem of Richardson (see \ref{Richardson}), the set of semisimple commuting pairs is dense in the commuting variety $\Comm(\g)$. So the set of semisimple regular elements is also dense in $\Comm(\g)$. Passing to the quotient by $G$, we get that the classes of semisimple regular pairs are dense in $\Hilb(\g)$ since $\Hilb(\g)\subset \Comm(\g)/G$ and all semisimple regular pairs are in $\Hilb(\g)$. Since the semisimple regular pairs are in the regular part, we get the density of $\Hilb^{reg}(\g)$ in $\Hilb(\g)$.

For classical $\g$, we have the same argument for the cyclic part since semisimple regular pairs are cyclic.
\end{proof}

To end the section, we state an analog of Kostant's theorem about abelian subalgebras of centralizers:
\begin{prop}
For any commuting pair $(A,B) \in \Comm(\g)$, there is an abelian subspace of dimension $\rk \g$ in the common centralizer $Z(A,B)$.
\end{prop}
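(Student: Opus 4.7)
The plan is to use a density/limiting argument, starting from the case where one of the elements is regular (where the statement follows from Kostant's theorem) and extending to arbitrary commuting pairs by continuity in a suitable Grassmannian.

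\textbf{Step 1: The regular case.} Suppose first that $A$ is regular. By Kostant's theorem (see Theorem \ref{thmKost} of the appendix), the centralizer $Z(A)$ is an abelian subalgebra of dimension $\rk \g$. Since $[A,B]=0$ we have $B \in Z(A)$, and because $Z(A)$ is abelian every element of $Z(A)$ commutes with $B$ as well. Hence $Z(A) \subset Z(A,B) \subset Z(A)$, so $Z(A,B) = Z(A)$ is itself an abelian subspace of dimension $\rk \g$.

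\textbf{Step 2: Density of the regular locus.} Next I claim that the set $\Omega = \{(A,B) \in \Comm(\g) \mid A \text{ regular}\}$ is dense in $\Comm(\g)$. By Richardson's theorem (see the proof of Proposition \ref{density}), the commuting pairs of semisimple elements are dense in $\Comm(\g)$; such pairs lie in a common Cartan subalgebra $\h$. Inside any Cartan, the regular locus is open and dense, so by perturbing the first coordinate within a fixed Cartan containing both elements one approximates any semisimple pair by pairs with $A$ regular. Combining with Richardson's density, $\Omega$ is dense in $\Comm(\g)$.

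\textbf{Step 3: Limiting argument in the Grassmannian.} Let $r = \rk \g$ and let $\mathrm{Gr}(r, \g)$ denote the (compact) Grassmannian of $r$-dimensional subspaces of $\g$. Fix an arbitrary pair $(A,B) \in \Comm(\g)$ and a sequence $(A_n, B_n) \in \Omega$ converging to $(A,B)$. By Step~1, $V_n := Z(A_n,B_n) = Z(A_n)$ is an abelian subspace of dimension $r$, hence defines a point of $\mathrm{Gr}(r,\g)$. By compactness, a subsequence converges to some $V \in \mathrm{Gr}(r,\g)$. Abelianness and the centralizer conditions are closed: for $X,Y \in V$, choosing $X_n, Y_n \in V_n$ with $X_n \to X$ and $Y_n \to Y$, the identities $[X_n, Y_n] = 0$, $[X_n, A_n] = 0$ and $[X_n, B_n] = 0$ pass to the limit and yield $[X,Y]=0$, $[X,A]=0$, $[X,B]=0$. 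Thus $V \subset Z(A,B)$ is an abelian subspace of dimension exactly $r = \rk \g$, as required.

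The main subtlety lies in Step~2: one must ensure that the set of pairs with a regular component is genuinely dense in every irreducible component of the (reducible, singular) commuting variety $\Comm(\g)$, which is why appealing to Richardson's semisimple-pair density theorem, combined with the trivial Cartan-level perturbation, is the cleanest route. Once that is in place, the Grassmannian compactness argument handles all remaining pairs uniformly.
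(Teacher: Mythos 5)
Your proof is correct and follows essentially the same route as the paper: density of a nice locus in $\Comm(\g)$ via Richardson's theorem, followed by a compactness/limit argument in the Grassmannian $\mathrm{Gr}(\rk\g,\g)$. The only cosmetic difference is that you approximate by pairs with $A$ regular and invoke Kostant's theorem on $Z(A)$, whereas the paper approximates by regular semisimple pairs whose common centralizer is a Cartan; both reduce to the same density statement and the same closedness-of-commutativity argument in the limit.
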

\begin{proof}
The proof is completely analogous to Kostant's proof for Theorem \ref{thmKost}: we use a limit argument. Let $(A_n,B_n)$ be a sequence of regular semisimple pairs converging to $(A,B)$ (exists since regular semisimple pairs are dense). We know that $Z(A_n,B_n)$ is a $\rk \g$-dimensional abelian subspace of $\g$. Since the Grassmannian $Gr(\rk \g, \dim \g)$ is compact, there is a subsequence of $Z(A_n,B_n)$ which converges. It is easy to prove that the limit is included in $Z(A,B)$ and is commutative.
\end{proof}
\begin{coro}
For $[(A,B)] \in \Hilb(\g)$, the common centralizer $Z(A,B)$ is abelian.
\end{coro}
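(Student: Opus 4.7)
The plan is very short because the previous proposition does nearly all the work. First I would observe that the defining condition of $\Hilb(\g)$ is precisely that $\dim Z(A,B) = \rk \g$. So the hypothesis gives me a common centralizer of exactly the minimal possible dimension allowed by Proposition~\ref{doublecomm}.

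Next I would invoke the just-proved proposition, which produces inside $Z(A,B)$ an abelian subspace $\mathfrak{a}$ of dimension $\rk \g$. Combining this with $\dim Z(A,B) = \rk \g$ forces $\mathfrak{a} = Z(A,B)$ as vector subspaces of $\g$, and in particular $Z(A,B)$ inherits the abelianness of $\mathfrak{a}$. There is no real obstacle here: the only subtlety is making sure that the limit-based construction of the previous proposition really lands inside $Z(A,B)$ and not merely inside $Z(A,B)$-as-a-subspace-of-$\g$, but that was already verified in the proof of the proposition itself.

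So the whole proof reduces to a one-line dimension comparison: an abelian subspace of $Z(A,B)$ of dimension $\rk \g = \dim Z(A,B)$ must exhaust $Z(A,B)$, which therefore is abelian. In the write-up I would not redo the Kostant-style limit argument, but simply cite the preceding proposition and conclude by equality of dimensions.
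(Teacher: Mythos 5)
Your argument is correct and is exactly the intended one: the paper states this as an immediate corollary of the preceding proposition, and the implicit reasoning is precisely your dimension comparison — the abelian subspace of dimension $\rk \g$ produced by the proposition must equal $Z(A,B)$ since the latter has dimension $\rk\g$ by the definition of $\Hilb(\g)$. Nothing is missing.
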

%\begin{Remark}
%For classical $\g$, the corollary follows directly from the fact that $Z(A,B) = \C[x,y]/I \cap \g$ for the cyclic part which is abelian since $\C[x,y]$ is.
%\hfill $\triangle$
%\end{Remark}

In the following sections, we generalize as far as possible the other viewpoints of the usual Hilbert scheme (resolution of configuration space and idealic viewpoint) to our setting.

\subsection{Chow map}

We want to generalize the Chow map, which goes from $\Hilb^n(\C^2)$ to the configuration space (see subsection \ref{resofsing}).

Fix a Cartan subalgebra $\h$ in $\g$. Recall the Jordan decomposition in a semisimple Lie algebra: for $x \in \g$, there is a unique pair $(x_s, x_n)$ with $x=x_s+x_n$, $x_s$ semisimple, $x_n$ nilpotent and $[x_s,x_n]=0$. For a semisimple element $x$, denote by $x^*$ a conjugate of $x$ in the Cartan $\h$ (unique up to $W$-action).

The \textbf{Chow map}  $ch: \Hilb(\g) \rightarrow \h^2/W$ is defined by $$ch([(A,B)]) = [(A_s^*,B_s^*)]$$ where the brackets $[.]$ denotes the equivalence class. For semisimple regular pairs, this map corresponds to a simultaneous diagonalization.

\begin{prop}
The Chow map $ch$ is well-defined and continuous.
\end{prop}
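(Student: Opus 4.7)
The plan is to reduce well-definedness to three verifications and to deduce continuity from the algebraic nature of the construction once we factor the map through a categorical quotient.

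First I would establish well-definedness. The starting point is the classical fact that for any $A \in \g$, the components $A_s, A_n$ of the Jordan decomposition can be written as polynomials in $A$ without constant term. Hence $[A,B]=0$ forces $A_s, A_n, B_s, B_n$ to be pairwise commuting, so in particular $[A_s,B_s]=0$. Two commuting semisimple elements span an abelian subalgebra of $\g$ consisting of semisimple elements, which is contained in a common maximal toral subalgebra $\h'$; since all Cartans are $G$-conjugate, we can send $(A_s,B_s)$ into $\h\times\h$, producing $(A_s^*,B_s^*)$. The $G$-invariance of the construction (which handles independence of the choice of orbit representative) follows because $\Ad_g$ commutes with the Jordan decomposition: $(\Ad_g A)_s = \Ad_g(A_s)$.

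Next I would address the ambiguity in conjugating into $\h$. Given two $G$-conjugate pairs $(x,y),(x',y')\in \h^2$, Chevalley's theorem provides $w_1\in W$ with $w_1 x = x'$; after replacing the first pair by $(w_1 x, w_1 y)$ we may assume $x=x'$. Then $(x,w_1 y)$ and $(x,y')$ are $Z_G(x)$-conjugate, and $Z_G(x)$ is a connected reductive subgroup of $\g$ containing $\h$ as a Cartan, with Weyl group $W_x=\Stab_W(x)$. Applying Chevalley's theorem to $Z_G(x)$ produces $w_2\in W_x\subset W$ with $w_2(w_1 y)=y'$, and $w_2 x = x$, so $w=w_2 w_1 \in W$ satisfies $(wx,wy)=(x',y')$. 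This shows the image in $\h^2/W$ (diagonal action) is well-defined.

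For continuity, the crucial observation is that the Chow map extends naturally to the full commuting variety $\Comm(\g)$ and descends to a morphism of affine varieties $\Comm(\g)\sslash G \to \h^2/W$. Concretely, for every $f\in \C[\h^2]^W$ we need a $G$-invariant polynomial $F$ on $\Comm(\g)$ such that $F(A,B)=f(A_s^*,B_s^*)$; such $F$ then defines a polynomial map and is automatically continuous in the analytic topology. For classical $\g$, the natural choice is $F(A,B)=\tr p(\rho(A),\rho(B))$ for $p\in \C[x,y]$: the key computational lemma is that on $\Comm(\g)$ one has $\tr p(\rho(A),\rho(B))=\tr p(\rho(A_s),\rho(B_s))$, because any monomial mixing a factor of $\rho(A_n)$ or $\rho(B_n)$ with semisimple commuting factors is a product of pairwise commuting matrices at least one of which is nilpotent, hence nilpotent, hence traceless. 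These trace polynomials separate simultaneous spectra, giving enough invariants to generate $\C[\h^2]^W$.

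The main obstacle is the surjectivity of the restriction map $\C[\g\oplus\g]^G\to \C[\h\oplus\h]^W$ for general (non-classical) semisimple $\g$, where the argument via the natural representation is unavailable. This surjectivity (``polarization of Chevalley invariants'' on the commuting variety) can be handled via Joseph--Richardson-type arguments, using density of the semisimple commuting pairs (Proposition \ref{density}) together with the fact that $\phi$ is continuous and $G$-invariant there, hence extends continuously to the closure.
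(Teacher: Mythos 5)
Your proof of well-definedness is correct and in fact more complete than the paper's: besides the standard facts that $[A_s,B_s]=0$ and that commuting semisimple elements lie in a common Cartan, you supply the lemma that two $G$-conjugate pairs in $\h^2$ are already conjugate under the \emph{diagonal} $W$-action, via the centralizer $Z_G(x)$ and the fact that $\Stab_W(x)$ is generated by the reflections it contains. The paper only invokes the one-element statement that the adjoint action restricts to the $W$-action on $\h$, so this extra step is a genuine improvement.

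For continuity you depart from the paper, which disposes of it in one line by continuity of eigenvalues (the map $x\mapsto x_s^*$ is continuous, essentially because $\h/W\cong\C^r$ via the basic invariants and these are restrictions of invariant polynomials on $\g$ taking the same value on $x$ and $x_s$). Your algebraic route has two genuine gaps. First, for $\g$ of type $D_n$ the trace polynomials $\tr p(\rho(A),\rho(B))$ in the standard representation neither generate $\C[\h^2]^W$ nor separate $W$-orbits: the joint eigenvalues come in pairs $(x_i,y_i)$, $(-x_i,-y_i)$, so every trace is invariant under the single simultaneous sign change $(x_1,y_1)\mapsto(-x_1,-y_1)$, which lies in $W(B_n)$ but not in $W(D_n)$; the Pfaffian-type generator is invisible to traces. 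Your invariants therefore only control the image in the strictly coarser quotient $\h^2/W(B_n)$, and continuity of $ch$ into $\h^2/W(D_n)$ does not follow. Second, the fallback for general $\g$ is not an argument: surjectivity of $\C[\g\oplus\g]^G\to\C[\h\oplus\h]^W$ fails in general, the restricted statement on $\Comm(\g)$ is itself a difficult theorem rather than a consequence of density of semisimple pairs, and ``continuous and invariant on a dense subset, hence extends continuously to the closure'' is false as a general principle. If you want to keep the algebraic flavour you must add the missing type-$D$ and exceptional invariants by citing a Chevalley restriction theorem for the commuting variety; the elementary escape is the paper's, namely continuity of the (joint) spectrum.
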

\begin{proof}
Since $[A,B]=0$, we also have $[A_s,B_s]=0$ by a simultaneous Jordan decomposition in a faithful representation. Hence there is a conjugate of the pair $(A_s,B_s)$ which lies in $\h^2$. Since the adjoint action of $G$ on $\g$ restricts to the $W$-action on $\h$, the map $ch$ is well-defined.

The map $x\mapsto x_s^*$ is continuous which simply follows from the continuity of eigenvalues. Hence the Chow map is continuous as well.
\end{proof}
\begin{Remark}
The Jordan decomposition $x\mapsto (x_s,x_n)$ is not continuous at all, since semisimple elements are dense in $\g$ for which we have $x_n=0$ and for all non-semisimple elements we have $x_n\neq 0$. But the map $x\mapsto x_s$ is continuous.
\hfill $\triangle$
\end{Remark}

This map permits to think of a generic element of $\Hilb(\g)$ as a point in $\h^2/W$, or via a representation of $\g$ on $\C^m$, as a set of $m$ points in $\C^2$ with a certain symmetry. For $\g=\mf{sl}_n$ for example, these are $n$ points with barycenter 0.

Since $\Hilb(\g)$ as a topological space is not Hausdorff (see subsection \ref{topology}), it cannot be a non-singular variety. Nevertheless we conjecture the following:

\begin{conj}\label{conj1}
There is a modified version of $\Hilb(\g)$, identifying some points, which is a smooth projective variety such that the Chow morphism is a resolution of singularities.
\end{conj}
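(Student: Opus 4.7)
The plan is to construct the modification of $\Hilb(\g)$ as a GIT quotient of the regular part of the commuting variety, and then verify smoothness, projectivity, and the resolution property for the Chow map. Motivation comes from the classical case $\g=\mf{sl}_n$: there, Fogarty's theorem tells us that the reduced punctual Hilbert scheme is obtained as the GIT quotient of the cyclic commuting pairs by $\SL_n$, and the usual Hilbert--Chow morphism gives the desired resolution of $\h^2/W$. Our modification should reduce to this construction when $\g=\mf{sl}_n$.

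First, I would replace the set-theoretic quotient in Definition \ref{def-g-hilb} by a GIT quotient: equip the scheme $\Comm^{reg}(\g)=\{(A,B)\in\g^2\mid [A,B]=0,\; \dim Z(A,B)=\rk \g\}$ with the natural $G$-action and linearize by a suitable character, so that the equivalence relation ``$G$-orbit closures intersect'' produces a Hausdorff identification. The image of the Chow map already factors through this identification, since it only remembers the semisimple parts, and the fibers of a GIT quotient are precisely unions of orbits whose closures meet. Denote the resulting space $\overline{\Hilb}(\g)$. By Proposition \ref{density}, the dense open locus of semisimple regular pairs embeds, and on this locus $ch$ is manifestly an isomorphism onto its image in $\h^2/W$, giving birationality.

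Second, I would establish smoothness by local deformation theory. The Zariski tangent space at $[(A,B)]$ is computed as the middle cohomology of the Koszul-type complex
\begin{equation*}
\g\;\xrightarrow{\;(\ad_A,\ad_B)\;}\;\g\oplus\g\;\xrightarrow{\;\ad_B-\ad_A\;}\;\g,
\end{equation*}
whose Euler characteristic equals $2\rk\g$ by Proposition \ref{doublecomm}. The task is to show that the outer cohomologies vanish for all $[(A,B)]\in\Hilb^{reg}(\g)$; for the regular locus this follows from Kostant's theorem (the centralizer $Z(A)$ is abelian of minimal dimension), and one must propagate this to the full closure in the GIT quotient. For projectivity, I would use an orbit-closure compactification (for instance by embedding $\h^2$ into a toric compactification compatible with $W$) and then show that $ch$ is proper by the valuative criterion, exploiting the fact that nilpotent parts of a family of commuting pairs in $\g$ form a bounded subset.

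The main obstacle I anticipate is proving smoothness away from the regular locus. For $\g$ of classical type one can envisage a case-by-case analysis using the explicit description of $Z(A,B)$ via Young-diagram-like data as in Example \ref{Young}; however for exceptional types, and more delicately for type $D_n$ where regular points are not always cyclic (Example \ref{regnotcyclic}), there is no reason a priori why the GIT quotient should already be smooth. In those cases additional blowups along the non-regular, non-cyclic strata will probably be required, which means the ``modification'' of the conjecture is not simply an identification but also a desingularization of the boundary. Settling the correct modification in full generality — in particular matching the expected dimension $2\rk\g$ and the stratification by centralizer type — is the delicate point, and may well require a Hilbert-scheme-theoretic interpretation of $\Hilb(\g)$ in the spirit of moduli of certain sheaves on a Kleinian resolution associated to $\g$.
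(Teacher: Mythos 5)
This statement is a \emph{conjecture} in the paper (Conjecture \ref{conj1}); the paper offers no proof of it, only heuristic evidence, so there is nothing to compare your argument against. What you have written is a research programme rather than a proof, and it leaves the central difficulties unresolved. Most concretely: your first step, forming a GIT quotient of $\Comm^{reg}(\g)$ so that ``$G$-orbit closures intersect'' produces a Hausdorff space, runs directly into the obstruction the paper itself records in its discussion of the topology of $\Hilb(\g)$. The locus $\{(A,B)\mid [A,B]=0,\ \dim Z(A,B)=\rk\g\}$ is not closed in $\Comm(\g)$, and the explicit non-separated pairs in $\mf{sl}_3$ (the pairs $P_1(b)$ and $P_2(b)$ of subsection \ref{topology}) are both semistable with \emph{no polystable orbit in their common closure}, so the standard GIT identification does not produce the required quotient; the paper only conjectures that some \emph{generalized} GIT procedure exists. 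You cannot simply ``linearize by a suitable character'' and invoke the usual theory.

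Beyond that, the two remaining pillars are explicitly deferred in your own text: smoothness away from the regular (and, in type $D_n$, non-cyclic) locus is acknowledged as ``the delicate point'' and no mechanism is given for the vanishing of the outer cohomologies of your Koszul complex off the regular stratum, where $\dim Z(A,B)$ can jump and Kostant's theorem no longer applies; and the properness/projectivity argument is only gestured at (note also that the target $\h^2/W$ is affine, so one must be careful about what ``projective'' is supposed to mean here — the model case $\Hilb^n_{red}(\C^2)$ is only quasi-projective). As a statement of strategy your proposal is reasonable and consistent with the paper's own expectations, but it does not establish the conjecture, and its first step as written would fail without the new quotient construction that the conjecture itself is asking for.
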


\subsection{Idealic map}\label{idealic}

For $\g$ of classical type, we can associate to any regular element of the $\g$-Hilbert scheme an ideal, which we call \textit{idealic map}. In this subsection, $\g$ is a classical Lie algebra. Recall the natural representation $\rho$ of $\g$ on $\C^m$ (see Definition \ref{partshilb}). We write $A$ instead of $\rho(A)$.

We wish to define a map like in Proposition \ref{bijhilbert} between commuting matrices and ideals: \begin{equation}\label{ideal}[(A,B)] \mapsto I(A,B)=\{P\in \C[x,y] \mid P(A,B) = 0\}.\end{equation} 
If $[(A,B)] \in \Hilb^{cycl}(\g)$ is cyclic, this ideal is of codimension $m$. But if the pair is not cyclic, there is no reason why the codimension should be $m$. In fact, there are examples for $\g$ of type $D_n$ where the codimension is smaller. 

We wish the idealic map to be continuous, so $I$ has to be of constant codimension. A strategy would be to define the idealic map $I$ on the cyclic part $\Hilb^{cycl}(\g)$ (which is dense by Proposition \ref{density}) and to extend it by continuity. Unfortunately, the map can not be extended in a continuous way as the following example shows:
\begin{example}\label{idealnotcont}
Take $\g$ of type $D_n$. Denote by $f$ a principal nilpotent element. The pair $[(f,0)] \in \Hilb(\mf{so}_{2n})$ is not cyclic (see example \ref{regnotcyclic}). Using the matrix $S$ defined in equation \eqref{matrixS}, we can approach $(f,0)$ by $(f,tS)$ or by $(f+tS^\top,0)$ for $t\in\C^{\times}$ going to 0. These pairs are all cyclic. In the first case, the ideal is $I=\langle x^{2n-1}, xy, y^2=t^2x^{2n-2}\rangle$ which converges as $t$ goes to 0 to $\langle x^{2n-1}, xy, y^2\rangle$. In the second case, the ideal is $I=\langle x^{2n}+t^2, y \rangle$ converging to $\langle x^{2n},y \rangle$.
\hfill $\triangle$
\end{example}

Because of this difficulty, our strategy is to define a space of ideals $I_{\g}(\C^2)$, then a map $\Hilb^{cycl}(\g)\rightarrow I_{\g}(\C^2)$ and to extend it over the regular part $\Hilb^{reg}(\g)$ (in a non-continuous way). The last step is only necessary for $\g$ of type $D_n$ since for the other classical types the regular part is included in the cyclic part as we will see in the sequel. The extension for $D_n$ will be defined \textit{ad hoc} in subsection \ref{Dn}.

The previous section taught us to think of a generic element of $\Hilb(\g)$ as a $m$-tuple of points in $\C^2$ invariant under the Weyl group $W$. For type $A_n$ this means that the barycenter of the points is the origin. For the other classical types, this means that the set of points is symmetric with respect to the origin. Thus the defining ideal of these points is also invariant under the action of $W$. Hence the following definition.

\begin{definition}
We define the \textbf{space of ideals} of type $\g$, denoted by $I_{\g}(\C^2)$, to be the set of ideals in $\C[x,y]$ which are of codimension $m$ and $W$-invariant. For type $B_n, C_n$ and $D_n$ this means that $I$ is invariant under $(x,y)\mapsto (-x,-y)$.
\end{definition}

The map $I: \Hilb^{cycl}(\g)\rightarrow I_{\g}(\C^2)$ given by equation \eqref{ideal} above is well-defined. Indeed, the codimension is $m$ by cyclicity and the ideal is $W$-invariant since this is a closed condition and it is true on the dense subset of regular semisimple pairs.

Notice that $I_{\g}(\C^2)$ is the same for $\g$ of type $C_n$ or $D_n$. But we will see that the idealic map $I$ has not the same image in the two cases.
We will also see that for $\g$ of type $A_n, B_n$ or $C_n$ the idealic map is injective. But for type $D_n$ it is not (it is generically 2 to 1). This comes from the fact that the Weyl group acting on the generic $2n$ points, coming in $n$ pairs $(P_i, P_{i+1}=-P_i)$, cannot exchange $P_1$ and $P_2$ while leaving all other points fixed.

As for the usual Hilbert scheme, there is a direct link between the idealic map and the Chow morphism:
\begin{prop}
The Chow map $ch$ is the composition of the idealic map with the map which associates to an ideal its support, seen as an element of $\h^2/W$: 
$$ch([(A,B)]) = \supp I(A,B).$$
\end{prop}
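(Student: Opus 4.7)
The plan is to show that both sides describe the same multiset of $m$ points in $\C^2$, namely the joint spectrum of the commuting pair $(\rho(A),\rho(B))$. The implicit identification is the natural map $\h^2/W \hookrightarrow \Sym^m(\C^2)$ sending a $W$-orbit $[(h_1,h_2)]$ to the multiset $\{(\lambda(h_1),\lambda(h_2))\}_{\lambda}$, where $\lambda$ runs over the weights of $\rho$ counted with multiplicity. This map is well-defined because $W$ permutes the weights of $\rho$; it is injective for types $A_n, B_n, C_n$ and generically $2$-to-$1$ for type $D_n$, consistently with the $2$-to-$1$ character of the idealic map mentioned above.

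First, I would compute $ch([(A,B)])$ in this picture. Using the Jordan decompositions $A = A_s+A_n$ and $B = B_s+B_n$ (all four parts commuting), one conjugates the commuting semisimple pair $(A_s,B_s)$ into the Cartan $\h$. Since Jordan decomposition is compatible with any rational representation, $\rho(A_s)$ and $\rho(B_s)$ are simultaneously diagonalised by the weight decomposition $\C^m = \bigoplus_\lambda V_\lambda$ with eigenvalues $\lambda(A_s), \lambda(B_s)$, while $\rho(A_n), \rho(B_n)$ preserve each $V_\lambda$ and act nilpotently on it. Thus the joint generalised eigenvalues of $(\rho(A),\rho(B))$ form the multiset $\{(\lambda(A_s),\lambda(B_s))\}_\lambda$, which is the image of $ch([(A,B)]) = [(A_s^*,B_s^*)]$ under the embedding above.

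I would then compute $\supp I(A,B)$. For any $P \in I(A,B)$, Taylor-expanding on $V_\lambda$ around $(\lambda(A_s),\lambda(B_s))$ yields
$$P(\rho(A),\rho(B))|_{V_\lambda} = P(\lambda(A_s),\lambda(B_s))\cdot \id + N_\lambda,$$
with $N_\lambda$ a polynomial expression in the commuting nilpotents $\rho(A_n)|_{V_\lambda}, \rho(B_n)|_{V_\lambda}$, hence itself nilpotent. So $P(\rho(A),\rho(B))=0$ forces $P(\lambda(A_s),\lambda(B_s))=0$, which gives the inclusion of the joint spectrum into $V(I(A,B)) = \supp I(A,B)$. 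For the reverse inclusion, the finite-dimensional commutative algebra $\C[\rho(A),\rho(B)] \cong \C[x,y]/I(A,B)$ decomposes as a product of local artinian algebras indexed by its maximal ideals, and the decomposition $\C^m = \bigoplus_\lambda V_\lambda$ (after grouping weights that give the same pair) realises precisely this splitting: the maximal ideals are in bijection with the distinct pairs $(\lambda(A_s),\lambda(B_s))$, with multiplicities equal to the dimensions of the corresponding joint generalised eigenspaces.

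Combining the two computations yields $\supp I(A,B) = \{(\lambda(A_s),\lambda(B_s))\}_\lambda = ch([(A,B)])$ inside $\Sym^m(\C^2)$, which is the desired equality in $\h^2/W$ via the embedding above. The argument is essentially linear algebra on the weight decomposition; the only point requiring care is the interpretation for type $D_n$, where the non-injectivity of the weight map $\h^2/W \to \Sym^m(\C^2)$ is precisely the mechanism producing the generic $2$-to-$1$ behaviour of the idealic map, and the proposition then asserts the equality in the image $\Sym^m(\C^2)^W$, which is all that is intrinsically meaningful.
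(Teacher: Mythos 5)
Your proof is correct in substance but follows a genuinely different route from the paper's. The paper disposes of the proposition in three lines: the identity is immediate for regular semisimple pairs (where both sides amount to a simultaneous diagonalisation), these pairs are dense in the commuting variety by Richardson's theorem, and both $ch$ and the idealic map are continuous on the relevant locus, so the identity extends; for the non-cyclic regular pairs in type $D_n$, where $I$ is defined as a chosen limit, the support of that limit is still governed by the continuous Chow map. You instead compute both sides directly for an arbitrary cyclic pair via the Jordan and weight decompositions, identifying $\supp I(A,B)$ with the joint generalised spectrum of $(\rho(A),\rho(B))$. This buys something real: it avoids Richardson's density theorem entirely and does not rely on continuity of the idealic map, which the paper asserts but never carefully establishes. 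Two caveats are worth recording. First, for the equality of \emph{multiplicities} you need the dimension of each local factor of $\C[x,y]/I(A,B)$ to equal the dimension of the corresponding joint generalised eigenspace; this is exactly where cyclicity enters (the cyclic vector projects to a cyclic vector in each block, giving $\dim R_{(c,d)} \leq \dim V_{(c,d)}$ in each factor, and the total count $m = \codim I$ then forces equality), and since the Chow map in $\h^2/W$ does record multiplicities this step should be made explicit. Second, your computation applies verbatim only where $I(A,B)$ is literally the annihilator ideal $\{P \mid P(\rho(A),\rho(B))=0\}$, i.e.\ on the cyclic part; for the non-cyclic regular pairs in type $D_n$ the idealic map is defined ad hoc as a limit, and there one must still fall back on continuity of supports --- which is precisely the paper's mechanism.
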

\begin{proof}
The statement is true on regular semisimple pairs which is a dense subset. For $\g$ of type $A_n$, $B_n$ and $C_n$, it follows by continuity of both the Chow map and the idealic map. For $D_n$, our definition of the idealic map is to pick one of the various possible limits. In particular, the support of the ideal is still given by the Chow map.
\end{proof}

\subsection{Morphisms}\label{mu2}
In this subsection, we analyze the functorial behavior of the $\g$-Hilbert scheme. In particular we construct two maps linked to the zero-fiber of the Hilbert scheme of $\mf{sl}_2$ which will lead in the construction of the moduli space $\bm\hat{\mathcal{T}}_{\g}$ of $\g$-complex structures to maps from and to Teichm\"uller space.

Let $\psi: \g_1 \rightarrow \g_2$ be a morphism of Lie algebras. For $[(A,B)] \in \Hilb(\g_1)$, we can associate $[(\psi(A), \psi(B))]$ which is a well-defined map to $\Comm(\g_2)/G_2$. But there is no reason why $\dim Z(\psi(A), \psi(B))$ should be minimal. 
%In general it is wrong: consider the irreducible representation $\psi: \mf{sl}_2 \rightarrow \mf{sl}_4$. Consider $[(H,0)] \in \Hilb(\mf{sl}_2)$ where $H$ is the diagonal matrix with entries 1 and -1. The image $[(\psi(H), 0)]$ is not in $\Hilb(\mf{sl}_4)$ since $\psi(H)$ is not regular (it is diagonal with entries 1,0,0 and -1).

If we accept Conjecture \ref{conj1}, that there is a modified version of the $\g$-Hilbert scheme which is a resolution of $\h^2/W$, we have a functorial behavior:
\begin{prop}
Assuming Conjecture \ref{conj1}, there is an induced map $\Hilb(\g_1) \rightarrow \Hilb(\g_2)$.
\end{prop}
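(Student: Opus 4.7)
The strategy is to define the induced map first on the dense open subset of semisimple regular classes, where the Cartan subalgebras do all the work, and then to extend to all of $\Hilb(\g_1)$ using the smooth projective structure provided by Conjecture \ref{conj1}. First I would observe that $\psi$ sends semisimple elements to semisimple elements: semisimplicity is equivalent to ad-diagonalisability in any faithful representation, and this is preserved by Lie algebra morphisms between semisimple Lie algebras. Hence $\psi(\h_1)$ is an abelian subalgebra of semisimple elements in $\g_2$ and lies in some Cartan $\h_2$; two such choices differ by $N_{G_2}(\h_2)$-conjugation and thus coincide modulo $W_2$. This yields a canonical polynomial map $\Psi : \h_1^2/W_1 \to \h_2^2/W_2$ between the bases of the two Chow morphisms.

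Second I would check the map on the dense open subset $U \subset \Hilb(\g_1)$ of semisimple regular classes furnished by Proposition \ref{density}. For such a pair, simultaneous diagonalisation places $(A,B)$ in $\h_1^2$, so $(\psi(A),\psi(B))$ lies in $\h_2^2$; its common centraliser contains $\h_2$ and generically equals $\h_2$, because a generic pair in $\h_2^2$ separates the roots of $\g_2$. After shrinking $U$ if necessary, one obtains a morphism $U \to \Hilb(\g_2)$ whose composition with the Chow morphism of $\Hilb(\g_2)$ agrees with $\Psi$ precomposed with the Chow morphism of $\Hilb(\g_1)$.

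The decisive third step invokes Conjecture \ref{conj1}: both $\Hilb(\g_1)$ and $\Hilb(\g_2)$ are smooth projective, and their Chow morphisms are resolutions of singularities. The closure $Z \subset \Hilb(\g_1) \times \Hilb(\g_2)$ of the graph of the rational map constructed above is projective over $\Hilb(\g_1)$, birational to it, and compatible with $\Psi$ on the bases. The expected argument is that $Z \to \Hilb(\g_1)$ is a proper birational morphism onto a smooth variety whose fibres are contained in fibres of the Chow morphism of $\Hilb(\g_2)$; forcing these fibres to be singletons via a universal property of the modified $\Hilb(\g_2)$ and applying Zariski's main theorem would then make $Z \to \Hilb(\g_1)$ an isomorphism and produce the desired morphism. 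The main obstacle, and the reason this is stated as a conditional proposition rather than proved outright, is precisely that the lift of $\Psi$ through the resolution $\Hilb(\g_2) \to \h_2^2/W_2$ is \emph{a priori} only a rational map, and its single-valued extension to the non-regular, non-cyclic locus requires a precise universal property of the modified $\g$-Hilbert scheme which is not yet formulated.
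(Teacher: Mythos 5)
Your first step (the $W$-equivariant map $\Psi:\h_1^2/W_1\to\h_2^2/W_2$ induced by $\psi$) is exactly the paper's, and your final appeal to a universal property of the conjectural resolution is in the right spirit. But your second step contains a genuine gap that undermines the graph-closure argument built on it. You claim that for a regular semisimple pair $(A,B)\in\h_1^2$ the image $(\psi(A),\psi(B))$ generically has common centraliser equal to $\h_2$ ``because a generic pair in $\h_2^2$ separates the roots of $\g_2$''. The pair $(\psi(A),\psi(B))$ is not a generic pair in $\h_2^2$: it lies in $\psi(\h_1)^2$, and $\psi(\h_1)$ can be entirely contained in a root hyperplane of $\g_2$. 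The paper's own illustrative example, $\psi:\mf{sl}_2\to\mf{sl}_4$ with $\psi(h)=\diag(1,0,0,-1)$, is precisely such a case: the root $e_2-e_3$ of $\mf{sl}_4$ vanishes on all of $\psi(\h_1)$, so $Z(\psi(A),\psi(B))$ always contains the corresponding root spaces and has dimension at least $5>3=\rk\mf{sl}_4$. For such $\psi$ the assignment $[(A,B)]\mapsto[(\psi(A),\psi(B))]$ never lands in $\Hilb(\g_2)$, no shrinking of $U$ helps, and there is no rational map whose graph you can close up; Zariski's main theorem has nothing to apply to. This is exactly the obstruction the paper flags before the proposition (``there is no reason why $\dim Z(\psi(A),\psi(B))$ should be minimal'') and illustrates after it (the induced map is \emph{not} $[(\psi(h),\psi(0))]$, but rather the class obtained from the ideal, which acquires a nilpotent part).

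The repair is to abandon any attempt to define the map upstairs on a dense open set. The paper's route is to compose the resolution map $\Hilb(\g_1)\to\h_1^2/W_1$ (given by Conjecture \ref{conj1}) with your $\Psi$, obtaining a morphism $\Hilb(\g_1)\to\h_2^2/W_2$ from a smooth source, and then to lift this through the resolution $\Hilb(\g_2)\to\h_2^2/W_2$ by the universal property that the conjecture supplies. Your downstairs construction of $\Psi$ is correct and is the only part of the generic locus analysis that is actually needed; the lift, not the generic matrix formula, is where all the content sits.
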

\begin{proof}
Choose Cartan subalgebras $\h_1$ and $\h_2$ such that $\psi(\h_1)=\h_2$. Consider the composition $\h_1^2 \rightarrow \h_2^2 \rightarrow \h_2^2/W_2$ using $\psi$ for the first arrow. Since $\psi$ induces a homomorphism between the Weyl groups, we can factor the composition to get a map $\h_1^2/W_1 \rightarrow \h_2^2/W_2$. Finally, consider the composition $\Hilb(g_1) \rightarrow \h_1^2/W_1 \rightarrow \h_2^2/W_2$ where the first arrow comes from the minimal resolution. This is a continuous map and by the universal property of a minimal resolution, the map lifts to $\Hilb(\g_1) \rightarrow \Hilb(\g_2)$.
\end{proof}

Let us study this induced map in the case of the reduced Hilbert scheme $\Hilb^n_{red}(\C^2)$, which is a minimal resolution (see subsection \ref{resofsing}). Take $\psi:\mf{sl}_m\rightarrow \mf{sl}_n$  inducing a map $\Hilb^m_{red}(\C^2)\rightarrow \Hilb^n_{red}(\C^2)$. In the matrix viewpoint, this map is not given by $[(\psi(A),\psi(B))]$. Consider for example the map $\psi: \mf{sl}_2\rightarrow \mf{sl}_4$ given on the standard generators $(e,f,h)$ of $\mf{sl}_2$ by 
$$\psi(e)=\left(\begin{smallmatrix} 0&&&1 \\&0&&\\&&0&\\&&&0\end{smallmatrix}\right), \psi(f)=\left(\begin{smallmatrix} 0&&& \\&0&&\\&&0&\\1&&&0\end{smallmatrix}\right) \text{ and } \psi(h)=\left(\begin{smallmatrix} 1&&& \\&0&&\\&&0&\\&&&-1\end{smallmatrix}\right).$$
 The element $[(h,0)] \in \Hilb^2_{red}(\C^2)$ corresponds to the ideal $I=\langle x^2-1,y\rangle$ which through $\psi$ goes to $\langle x^4-x^2,y\rangle$ which in turn gives the matrices $[(M,0)]$ where $M=\left(\begin{smallmatrix} 1&&& \\&0&1&\\&0&0&\\&&&-1\end{smallmatrix}\right)$.
This is not $[(\psi(h), \psi(0))]$.
It would be interesting to describe the induced map in the matrix viewpoint.

Despite this complication, there are two cases where a map between $\g$-Hilbert schemes exists naturally.

The first one is linked to the principal map $\psi: \mf{sl}_2 \rightarrow \g$ which induces a map 
\begin{equation}\label{teichcopy} \Hilb(\mf{sl}_2) \rightarrow \Hilb^{reg}(\g). \end{equation} 
Indeed, any non-zero element of $\mf{sl_2}$ is regular and cyclic. So if $[(A,B)] \in \Hilb(\mf{sl}_2)$ such that $A$ is non-zero, there is by Proposition \ref{paramit} a unique representative $(f+te,B\in Z(e+tf))$ where $(e,f,h)$ denotes the standard generators of $\mf{sl}_2$ and $t\in \C$. So the image is $[(\psi(f)+t\psi(e),\psi(B))]$. Since $(\psi(e), \psi(f), \psi(h))$ is a principal $\mf{sl}_2$-triple (property of the principal map), we know that $\psi(f)+t\psi(e)$ is in the principal slice, thus it is regular, so we land in $\Hilb^{reg}(\g)$.

The second one is a sort of inverse map to the first one, but only on the level of the zero-fiber. Given $[(A,B)]\in \Hilb^{reg}_0(\g)$ where $A$ is regular, there is a principal $\mf{sl}_2$-subalgebra $\mc{S}$ with $A$ as nilpotent element. There is no reason why $B$ should be in $\mc{S}$ but there is a ``best approximation'' in the following sense:

\begin{prop}\label{mu2prop}
Let $A$ be a principal nilpotent element and $B \in Z(A)$. Then there is a unique $\mu_2 \in \C$ such that $B-\mu_2 A$ is not regular.
\end{prop}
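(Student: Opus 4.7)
The plan is to exploit Kostant's structure theorem for $Z(e)$ together with the principal $\mathfrak{sl}_2$-grading on $\g$ to reduce regularity to a single coefficient condition.

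First I would fix a principal $\mathfrak{sl}_2$-triple $(e,h,f)$ with $e = A$ and apply Kostant's theorem (\ref{thmKost}): $Z(A) = Z(e)$ is abelian of dimension $r = \rk \g$ and decomposes under $\mathrm{ad}(h)$ as $Z(e) = \bigoplus_{i=1}^{r} \C\, e_i$ with $\mathrm{ad}(h)\,e_i = 2 m_i e_i$, where $1 = m_1 < m_2 \le \cdots \le m_r$ are the exponents of $\g$. Since $m_1 = 1$ has multiplicity one in every simple Lie algebra, the $2$-eigenspace of $\mathrm{ad}(h)$ inside $Z(e)$ is one-dimensional, spanned by $e$; I normalize so that $e_1 = A$. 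Then every $B \in Z(A)$ admits a unique expansion
\[
B = \mu_2\, A + B', \qquad \mu_2 \in \C, \ B' \in V := \bigoplus_{i \ge 2} \C\, e_i \subset \g_{\ge 4},
\]
where $\g_{\ge k}$ denotes the sum of the $\mathrm{ad}(h)$-eigenspaces of eigenvalue $\ge k$. Since $B - \mu_2'\,A = (\mu_2 - \mu_2')\,A + B'$ still lies in $Z(e)$ with $e_1$-coefficient $\mu_2 - \mu_2'$, the proposition reduces to the following.

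\noindent\textbf{Key claim.} An element $x = \sum_{i=1}^r d_i e_i \in Z(e)$ is regular if and only if $d_1 \ne 0$.

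To establish the key claim I would argue as follows. Since $Z(e) \subset \mf{n}^+ := \bigoplus_{k\ge 1} \g_{2k}$ lies in an ad-nilpotent subalgebra, every $x \in Z(e)$ is nilpotent, so ``regular'' becomes ``principal nilpotent''. I would then invoke the classical characterization that $x \in \mf{n}^+$ is principal nilpotent iff its projection to $\g_2 = \bigoplus_{\alpha \text{ simple}} \g_\alpha$ has nonzero component in every simple root space. The short argument uses the action of a Borel subgroup on $\mf{n}^+$: the maximal torus rescales the simple-root components independently, while the unipotent radical only modifies components of strictly higher height (because $[\g_\beta, \g_\alpha] \subset \g_{\alpha+\beta}$ has height $> \mathrm{ht}(\alpha)$ for $\beta$ positive). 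Hence any element with all simple-root components nonzero is Borel-conjugate to $e = \sum_\alpha e_\alpha$, while elements of $\g_{\ge 4}$ form a proper closed Borel-stable subset disjoint from the principal orbit. Applied to $x = \sum d_i e_i$, whose $\g_2$-projection is $d_1\,e$: if $d_1 \ne 0$ all simple-root components of $d_1 e$ are nonzero and $x$ is principal, while if $d_1 = 0$ then $x \in \g_{\ge 4}$ and cannot be principal. Combining the claim with the decomposition above, $B - \mu_2' A$ is non-regular iff its $e_1$-coefficient $\mu_2 - \mu_2'$ vanishes, giving existence and uniqueness of $\mu_2$.

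\noindent\textbf{Main obstacle.} The substantive ingredient is the characterization of principal nilpotents in $\mf{n}^+$ by nonvanishing of all simple-root components; although classical (see \cite{Kost2} or \cite{Coll}), it needs to be cited or justified precisely (the sketch above suffices). A minor subtlety arises for $\g$ of type $D_{2k}$, where two exponents coincide so one $\mathrm{ad}(h)$-eigenspace of $Z(e)$ is two-dimensional; this is harmless here, since only the $e_1$-coefficient enters the statement, and $m_1 = 1$ has multiplicity one in every simple $\g$.
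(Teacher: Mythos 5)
Your proof is correct, but it reaches the key decomposition $B=\mu_2A+(\text{higher part})$ by a genuinely different route than the paper. Both arguments rest on the same regularity criterion (Proposition \ref{prinnilp}: a nilpotent element of $\mf{n}$ is principal iff all its simple-root coefficients are nonzero) and both conclude by noting that subtracting $\mu_2A$ kills exactly the simple-root part of $B$; the difference lies in how one sees that the simple-root part of $B$ is a scalar multiple of that of $A$. You invoke Kostant's graded structure of $Z(e)$ -- the $\mathrm{ad}(h)$-eigenspace decomposition with eigenvalues twice the exponents -- together with the multiplicity-one of the exponent $1$; be aware that Theorem \ref{thmKost} as stated in the appendix only gives that $Z(e)$ is abelian and nilpotent, so the finer statement about exponents needs a separate citation to \cite{Kost}. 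The paper instead argues elementarily: after conjugating $A$ to $\sum_\alpha v_\alpha$, the height-two component of $[A,B]=0$ forces $\alpha_1(B)=\alpha_2(B)$ whenever $\alpha_1+\alpha_2$ is a root, and connectedness of the Dynkin diagram makes all simple-root coefficients of $B$ equal. The two mechanisms are secretly the same -- the paper's linear system has a one-dimensional solution space precisely because the Dynkin diagram is a tree with $\rk\g-1$ edges, which is also why the exponent $1$ occurs with multiplicity one -- but the paper's version stays self-contained, while yours makes the direct-sum decomposition $Z(A)=\Span(A)\oplus Z(A)^{irreg}$ (stated immediately after the proposition) transparent from the outset.
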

\begin{proof}
The strategy of the proof is to use Proposition \ref{prinnilp} of the appendix which characterizes principal nilpotent elements $x$ as those nilpotent elements whose values $\alpha(x)$ for all simple roots $\alpha$ are non-zero. So the proposition is equivalent to the statement that $\alpha_1(B) = \alpha_2(B)$ for all simple roots $\alpha_1$ and $\alpha_2$.

Let $R$ be a root system in $\h^*$ and denote by $R_+$ and $R_s$ the positive and respectively the simple roots.
We can conjugate $A$ to the element given by $\alpha(A)=1$ if $\alpha \in R_s$ and $\alpha(A)=0$ otherwise.

For two simple roots $\alpha_1$ and $\alpha_2$ such that $\alpha_1+\alpha_2 \in R$, using $[A,B]=0$ we get:
$$0=(\alpha_1+\alpha_2)([A,B])=\alpha_1(A)\alpha_2(B)-\alpha_2(A)\alpha_1(B) = (\alpha_2-\alpha_1)(B).$$

Since $\g$ is simple, its Dynkin diagram is connected, so $\alpha_1(B) = \alpha_2(B)$ for all simple roots. The common value $\mu_2$ gives the unique complex number such that $B-\mu_2 A$ is not regular.
\end{proof}

With this proposition, we can now define a map \begin{equation}\label{mu} \mu: \Hilb^{reg}_0(\g) \rightarrow \Hilb_0(\mf{sl}_2)\end{equation} given by $\mu([(A,B)])=[(e,\mu_2 e)]$ or $[(\mu_2 e, e)]$ depending whether $A$ or $B$ is regular. 

An equivalent way to define the map $\mu$ is the following: we can use the previous proposition \ref{mu2prop} to show that the centralizer $Z(A)$ of a principal nilpotent element is a direct product $$Z(A) = \Span(A) \times Z(A)^{irreg}$$ where $Z(A)^{irreg}$ denotes the irregular elements of $Z(A)$. The map $\mu$ is nothing but the projection to the first factor.

\begin{Remark}
We can describe the regular part of the $\g$-Hilbert scheme $\Hilb^{reg}(\g)$ as those classes $[(A,B)]$ such that $\Span(A,B)$ intersects the regular part $\g^{reg}$ non-trivially. This description is more symmetric since it does not prefer $A$ or $B$. From Proposition \ref{mu2prop} we see that the intersection of $\Span(A,B)$ with $\g^{reg}$ is the whole two-dimensional $\Span(A,B)$ from which we have to take out a line. Hence, the intersection has two components. 
\hfill $\triangle$
\end{Remark}

In the following subsections, we study the regular part $\Hilb^{reg}(\g)$ and its zero-fiber case by case for classical $\g$.

\subsection{Case \texorpdfstring{$A_n$}{An}}
Consider $\g=\mf{sl}_n$ (of type $A_{n-1}$). We describe first $\Hilb^{reg}_0(\mf{sl}_n)$, its idealic map and then $\Hilb^{reg}(\mf{sl}_n)$ using Proposition \ref{paramit}.

Fix the following principal nilpotent element (with 1 on the diagonal line just under the main diagonal):
$$f=\begin{pmatrix} & & & \\ 1& & & \\ &\ddots & & \\ & &1 & \end{pmatrix}.$$

This element $f$ is cyclic, so we know from \ref{cycliccentralizer} that the centralizer is given by polynomials: 
$Z(f)=\{\mu_2 f+\mu_3 f^2+...+\mu_n f^{n-1}\}$.
So an element of $\Hilb^{reg}_0(\mf{sl}_n)$ can be represented by $(f,Q(f))$ where $Q$ is a polynomial without constant term of degree at most $n-1$. The coefficients $\mu_i$ are called \textit{higher Beltrami coefficients}.

Since here we have $\Hilb^{reg}_0(\mf{sl}_n) \subset \Hilb^{cycl}(\mf{sl}_n)$ (already $f$ is cyclic), the idealic map is given by $$I(f,Q(f))=\{P\in \C[x,y] \mid P(f,Q(f))=0\} = \langle x^n, -y+Q(x) \rangle.$$ We recognize the big cell of the zero-fiber of the punctual Hilbert scheme.

To describe the whole regular part $\Hilb^{reg}(\mf{sl}_n)$, we take the following principal slice given by companion matrices:
$$\begin{pmatrix} & & & t_n\\ 1& & & \vdots\\ &\ddots & &t_2\\ & &1 & \end{pmatrix}.$$

Let $A$ be a matrix of companion type. Notice that the characteristic polynomial of a companion matrix is given by $x^n+t_2x^{n-2}+...+t_n$. 
Since $A$ is still cyclic, its centralizer consists of polynomials in $A$ with constant term determined by the other coefficients (in order to ensure trace zero). Thus, a representative of $\Hilb^{reg}(\mf{sl}_n)$ is given by $(A,B=Q(A))$.

The idealic map is thus given by 
$$I(A,B)=\langle x^n+t_2x^{n-2}+...+t_n, -y+\mu_1+\mu_2x+...+\mu_nx^{n-1} \rangle$$ where $\mu_1$ is given by $\mu_1=\sum_{k=2}^{n-1}\frac{k}{n}t_k\mu_{k+1} \mod t^2$ (see equation \eqref{mu1value}).
One recognizes the big cell of the reduced punctual Hilbert scheme. Notice that the idealic map is injective here.

%For the whole $\Hilb(\mf{sl}_n)$, we conjecture that there is a natural way to identify the non-Hausdorff points such that the result gives the reduced Hilbert scheme, which is smooth, and which can be described by the set of all ideals of codimension $n$ with barycenter 0.

\subsection{Case \texorpdfstring{$B_n$}{Bn}}\label{Bn}

Consider $\g=\mf{so}_{2n+1}$. Represent $\g$ on $\C^{2n+1}$ using the metric given by $g(e_i,e_j)=\delta_{i,n-j}$ (where $e_i$ are standard vectors), i.e. $g=\left(\begin{smallmatrix} & & 1\\ & \udots & \\ 1& & \end{smallmatrix}\right)$.
A matrix $A$ is in $\g$ iff $\sigma(A)=-A$ where $\sigma$ is the involution given by a reflection along the anti-diagonal. In other words $A \in \g$ iff $A_{i,j}=A_{n+1-j,n+1-i}$ for all $i,j$.

We fix the following principal nilpotent element:
$$f=\begin{pmatrix} &&&&&& \\ 1&&&&&& \\ &\ddots &&&&&\\ &&1&&&&\\ &&&-1&&& \\ &&&& \ddots &&\\ &&&&&-1&\end{pmatrix}.$$

This element is cyclic, so its centralizer by \ref{cycliccentralizer} consists of all odd polynomials: $Z(f)=\{\mu_2f+\mu_4f^3+...+\mu_{2n}f^{2n-1}\}$. A representative of $\Hilb^{reg}_0(\g)$ is thus given by $(f,Q(f))$ where $Q$ is an odd polynomial of degree at most $2n-1$. The coefficients $\mu_{2i}$ are called the higher Beltrami coefficients for $B_n$.

A principal slice is given by $$\begin{pmatrix} &&&&&t_{2n}& \\ 1&&&&\udots && -t_{2n}\\ &\ddots &&t_2&&\udots& \\  &&1&&-t_2&&\\ &&&-1&&& \\ &&&& \ddots &&\\ &&&&&-1&\end{pmatrix}.$$
Let $A$ be a matrix of this type. Its characteristic polynomial is given by $x^{2n+1}-2t_2x^{2n-1}+2t_4x^{2n-3}\pm ... +(-1)^n\times 2t_{2n}x$. So we can really think of the principal slice as a generalized companion matrix. Changing slightly $t_{2i}$ we can get rid of signs and the factor 2 in the characteristic polynomial, which we will do in the sequel.

The matrix $A$ is still cyclic, so we have $\Hilb^{reg}(\g) \subset \Hilb^{cycl}(\g)$. A representative of $\Hilb^{reg}(\g)$ is given by $(A,B=Q(A))$ where $Q$ is still an odd polynomial of degree at most $2n-1$. The idealic map is then given by 
$$I(A,B)=\langle x^{2n+1}+t_2x^{2n-1}+t_4x^{2n-3}+...+t_{2n}x, -y+\mu_2x+\mu_4x^3+...+\mu_{2n}x^{2n-1}\rangle.$$
This ideal is invariant under the map $(x,y)\mapsto (-x,-y)$. This is not surprising since a generic element of the $\g$-Hilbert scheme is a pair of two diagonal matrices which for $\mf{so}_{2n+1}$ are of the form $\diag(x_1,...,x_n,0,-x_n,...,-x_1)$ and $\diag(y_1,...,y_n,0,-y_n,...,-y_1)$. So they can be thought of as $2n+1$ points in $\C^2$ with one point being the origin and the other points being symmetric with respect to the origin. This set is invariant under the map $-\id$, so is its defining ideal.

%For the whole $\Hilb(\mf{so}_{2n+1})$, we conjecture that there is a modified version of the $\g$-Hilbert scheme which is isomorphic to
%$$\{I \text{ ideal of } \C[x,y] \mid \codim I = 2n+1, I \text{ invariant under } -\id, \nu(I) \text{ of type } B_n\}$$
%where $\nu(I)$ is a partition of $2n+1$ associated to $I$ and where a partition of type $B_n$ is a partition where all even parts appear with even multiplicity. These partitions parameterize nilpotent orbits in $B_n$.

The next type, $C_n$, is quite similar to $B_n$.

\subsection{Case \texorpdfstring{$C_n$}{Cn}}
 
Let $\g=\mf{sp}_{2n}$. We use the symplectic structure $\omega=\sum_i e_i\wedge e_{n+i}$ of $\C^{2n}$ to represent $\g$. So a matrix 
$$\left(\begin{array}{c|c}
  A  & B \\
\hline
  C & D
\end{array}\right)$$
is in $\g$ iff $D=-A^\top$ and $B$ and $C$ are symmetric matrices.

Fix the principal nilpotent by

$$f=\left(\begin{array}{cccc|cccc}
  &&&&&&& \\
	1&&&&&&&\\
	&\ddots&&&&&&\\
	&&1&&&&&\\ \hline
	&&&&&-1&&\\
	&&&&&&\ddots&\\
	&&&&&&&-1\\
	&&&1&&&&
\end{array}\right).$$

This element is cyclic, so its centralizer is given by odd polynomials: $Z(f)=\{\mu_2f+\mu_4f^3+...+\mu_{2n}f^{2n-1}\}$. As for $B_n$ we call the $\mu_{2i}$ higher Beltrami coefficients.

A principal slice is given by 

$$\left(\begin{array}{cccc|cccc}
  &&&& t_{2n}&&& \\
	1&&&&&t_{2n-2}&&\\
	&\ddots&&&&&\ddots&\\
	&&1&&&&&t_2\\ \hline
	&&&&&-1&&\\
	&&&&&&\ddots&\\
	&&&&&&&-1\\
	&&&1&&&&
\end{array}\right).$$

Let $A$ be an element of this form. Its characteristic polynomial is given by $x^{2n}-t_2x^{2n-2}+t_4x^{2n-4}\pm...+(-1)^nt_{2n}$. By changing signs in the $t_{2i}$ we can omit the minus signs in the characteristic polynomial.

The matrix $A$ is still cyclic so a representative of $\Hilb^{reg}(\mf{sp}_{2n})$ is given by $(A,B=Q(A))$ where $Q$ is an odd polynomial of degree at most $2n-1$.

The idealic map reads $$I(A,B)= \langle x^{2n}+t_2x^{2n-2}+t_4x^{2n-4}+...+t_{2n}, -y+\mu_2x+\mu_4x^3+...+\mu_{2n}x^{2n-1} \rangle.$$

As for $B_n$, this ideal is invariant under $-\id$ which comes from the fact that two diagonal matrices in $\mf{sp}_{2n}$ give $2n$ points in $\C^2$ which are symmetric with respect to the origin.

%For the whole $\Hilb(\mf{sp}_{2n})$, we conjecture that there is a modified version of the $\g$-Hilbert scheme which is isomorphic to
%$$\{I \text{ ideal of } \C[x,y] \mid \codim I = 2n+1, I \text{ invariant under } -\id, \nu(I) \text{ of type } C_n\}$$
%where $\nu(I)$ is the partition of $2n$ associated to $I$  and where a partition of type $C_n$ is a partition where all odd parts appear with even multiplicity. These partitions parameterize nilpotent orbits in $C_n$.

The last classical type, $D_n$, has some surprises.

\subsection{Case \texorpdfstring{$D_n$}{Dn}}\label{Dn}

Let $\g=\mf{so}_{2n}$. We use the same representation as for $B_n$. 

Fix the following principal nilpotent element:
$$f=\begin{pmatrix}
&&&&&&& \\
1&&&&&&& \\
&\ddots&&&&&& \\
&&1&&&&& \\
&&1&0&&&& \\
&&&-1&-1&&& \\
&&&&&\ddots && \\
&&&&&&-1&\\
\end{pmatrix}.$$

This elements is not cyclic, since $f^{2n-1}=0$. A direct computation shows that $Z(f)=\{\mu_2f+\mu_4f^3+...+\mu_{2n-2}f^{2n-3}\} \cup \{\sigma_n S\}$ where $S$ is the matrix 

\begin{equation}\label{matrixS}
S=\left(\begin{array}{@{}ccc|ccc@{}}
  &&&&& \\
	&&&&&\\
	1&&&&& \\ \hline
	-1&&&&&\\
	&&&&&\\
	&&1&-1&&
\end{array}\right).
\end{equation}

We can give an intrinsic definition of the matrix $S$: let $R$ be a root system and $v_{\alpha}$ be a root vector in $\g$ for the root $\alpha \in R$. Choose a basis $\alpha_1, ..., \alpha_n$ of $R$ (the simple roots) such that $\alpha_{n-1}$ and $\alpha_n$ correspond to the two non-adjacent vertices in the Dynkin diagram of $D_n$ (see figure \ref{Dynkin}). We can choose $f$ to be $\sum_i v_{\alpha_i}$. The matrix $S$ is then given by 
$$S=v_{\alpha_1+...+\alpha_{n-1}} \pm v_{\alpha_1+...+\alpha_{n-2}+\alpha_n}$$ where the sign depends on the choice of the root vectors.
\begin{figure}[h]
\centering
\includegraphics[height=2cm]{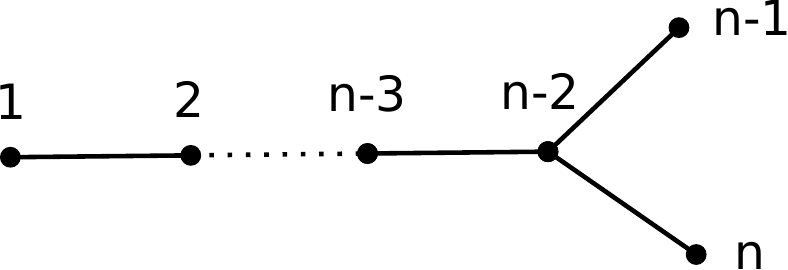}

\caption{Dynkin diagram for $D_n$}
\label{Dynkin}
\end{figure}
\vspace*{0.3cm}

A representative of $\Hilb^{reg}_0(\mf{so}_{2n})$ is given by $(A=f,B=Q(f)+\sigma_nS)$ where $Q$ is an odd polynomial of degree at most $2n-3$. Such a pair is cyclic iff $\sigma_n \neq 0$. 

Let us compute the ideal in the cyclic case. One easily checks that $fS=Sf$ and that $S^2=2f^{2n-2}$. Hence for $B=\mu_2f+...+\mu_{2n-2}f^{2n-2}+\sigma_nS$, we get $AB=fB=\mu_2f^2+...+\mu_{2n-2}f^{2n-2}$ and $B^2=(\mu_2f+...+\mu_{2n-2}f^{2n-3})^2+2\sigma_n^2f^{2n-2}$. Hence, the idealic map is given by
$$I(A,B)=\langle x^{2n-1}, xy=\mu_2x^2+\mu_4x^4+...+\mu_{2n-2}x^{2n-2}, y^2=\nu_2x^2+\nu_4x^4+...+\nu_{2n-2}x^{2n-2} \rangle$$
where $\nu_{2k}=\sum_{i=1}^{k}\mu_{2i}\mu_{2k+2-2i}$ for $k=1,...,n-2$ and $\nu_{2n-2}=2\sigma_n^2+\sum_{i=1}^{n-1}\mu_{2i}\mu_{2n-2i}$. So we see that $(\mu_2, \mu_4, ..., \mu_{2n-2}, \nu_{2n-2})$ is a set of independent variables which we call higher Beltrami differentials for $D_n$. We call $\sigma_n$ a higher Beltrami differential as well.
If $\sigma_n=0$, we define the idealic map to be the continuous extension of the above ideal which is still of the same form.
\begin{Remark}
We have seen in example \ref{idealnotcont} that inside $\Hilb^{cycl}(\mf{so}_{2n})$ there is no well-defined continuous extension of the idealic map. But inside the zero-fiber, the limit is unique.
\hfill $\triangle$
\end{Remark}

The Hilbert scheme is covered with charts indexed by partitions (see subsection \ref{structurehilbscheme}).
The chart in which $I$ is written corresponds to the partition $2n=(2n-1)+1$ which we write also $[2n-1,1]$. In fact, this is the highest partition of $2n$ of type $D_n$ (see \cite{Coll}, chapter 5 for special types of partitions).
%Indeed, a partition of type $D_n$ has all its even parts with even multiplicity. They nearly paramitrize nilpotent orbits of $\mf{so}_{2n}$ (some partitions give two orbits, see \cite{Coll}).

A principal slice is given by

$$\left(\begin{array}{@{}cccc|cccc@{}}
  &&&\tau_n&-\tau_n&&t_{2n-2}& \\
	1&&&&&\udots&&-t_{2n-2}\\
	&\ddots&&t_2&t_2& &\udots&\\ 
	&&1&&&-t_2&&\tau_n\\ \hline
	&&1&0&&-t_2&&-\tau_n\\
	&&&-1&-1&&&\\
	&&&&&\ddots&&\\
	&&&&&&-1&
\end{array}\right).$$

Notice that the matrix for $\tau_n$ is $S^\top$.
Let $A$ be a matrix of this type. Its characteristic polynomial is given by $$\chi(A)=x^{2n}-4t_2x^{2n-2}+4t_4x^{2n-4}\pm... +(-1)^{n-1}\times 4t_{2n-2}x^2+(-1)^n\tau_n^2.$$ By changing signs and factors in $t_{2i}$ and $\tau_n$, we can omit signs and the factor 4 in the characteristic polynomial.

One can compute that the minimal polynomial of $A$ is equal to the characteristic polynomial iff $\tau_n\neq 0$. So $A$ is cyclic iff $\tau_n\neq 0$ (by Proposition \ref{regularsln}). In that case, the centralizer consists of all odd polynomials in $A$ of degree at most $2n-1$. If $\tau_n=0$, the centralizer is given by $$Z(A)=\{\mu_2A+\mu_4A^3+...+\mu_{2n-2}A^{2n-3}\} \cup \{\sigma_n S_t\}$$ where the matrix $S_t$ is given by $S_t= S+t_{2n-2}S^\top$. 
The minimal polynomial is given by $\chi(x)/x$ (which is a polynomial since $\tau_n=0$).

The pair $(A,B)$ is cyclic iff either $\tau_n \neq 0$ or $\tau_n=0$ and $\sigma_n \neq 0$. 
In the first case, the idealic map is given by
$$I=\langle x^{2n}+t_2x^{2n-2}+t_4x^{2n-4}+...+t_{2n-2}x^2+\tau_n^2, -y+\mu_2x+\mu_4x^3+...+\mu_{2n}x^{2n-1}\rangle.$$
In the second case, we need three generators for the ideal, like for the zero-fiber. We can compute that
\begin{align*}
I(A,B)= \; \langle x^{2n-1} &= u_2x+u_4x^3+...+u_{2n-2}x^{2n-3}+uy,\\
  xy &=  v_0+v_2x^2+...+v_{2n-2}x^{2n-2}, \\
 y^2 &=  w_0+w_2x^2+...+w_{2n-2}x^{2n-2}\rangle
\end{align*}
where the coordinates can be chosen to be $$(u_2,u_4,...,u_{2n-2},u,v_2,...,v_{2n-2},w_{2n-2})$$ i.e. all the other variables are functions of these. These are Haiman coordinates as explained in figure \ref{Haimancoo}. In particular, we can see that the  coordinates $u$ and $\nu_{2n-2}$ are canonically conjugated.

The second ideal is in the chart corresponding to the partition $[2n-1,1]$ whereas the first corresponds to the trivial partition $[2n]$. If $u \neq 0$ we can write the second ideal in the first chart, i.e. perform a coordinate change in the Hilbert scheme. The link between the coordinates is given by 
$$
\left \{ \begin{array}{cl}
\tau_n^2 = uv_0 \\
\mu_{2n}=\frac{1}{u} \\
\mu_{2k} = -\frac{u_{2k}}{u} &\text{ for } 1\leq k < n\\
t_{2k} = u_{2n-2k}+uv_{2n-2k} & \text{ for } 1\leq k \leq n-1.
\end{array}\right.
$$

A regular pair $[(A,B)]$ which is not cyclic has both $\tau_n$ and $\sigma_n$ equal to 0. In that case, we define the idealic map $I(A,B)$ to be the limit of $I(A,B+tS_t)$ for $t\in\C$ goes to 0. So we stay in a chart associated to the partition $[2n-1,1]$.

Notice that the map from $\Hilb^{reg}(\g)$ to the space of ideals $I_{\g}(\C^2)$ is not injective, since for $\tau_n$ and $-\tau_n$ we get the same ideal. Even in the zero-fiber the map is not injective, since $\sigma_n$ and $-\sigma_n$ give the same ideal. In addition, the map is not surjective either. Indeed the ideal $I=\langle x^5-y, xy,y^2 \rangle \in I_{\g}(\C^2)$ is not in the image since with the notation above we have $v_0=0$ and $u\neq 0$. Changing the chart, we can compute that $\tau_n^2=uv_0 = 0$. But for a matrix in $\Hilb^{reg}(\g)$ with $\tau_n=0$ we get $u=0$.

\begin{Remark}
In the usual Hilbert scheme, there is only one cell of maximal dimension. Comparing type $C_n$ and type $D_n$, we see that the zero-fiber of $$\{I \text{ ideal of }\C[x,y] \mid \codim I=2n, I \text{ invariant under } -\id\}$$ has two components of maximal dimension, $\Hilb^{reg}_0(\mf{sp}_{2n})$ and $\Hilb^{reg}_0(\mf{so}_{2n})$.
\hfill $\triangle$
\end{Remark}

\begin{Remark}
We notice the following analog to Higgs bundles: the pair $[(f,0)] \in \Hilb(\mf{so}_{2n})$ corresponds to the Higgs field given by $\Phi = f$ on the bundle $V=K^2\oplus K\oplus K^0 \oplus K^{-2} \oplus K^{-1}\oplus K^0$. This Higgs bundle $(V,\Phi)$ is not stable, only polystable. This could explain why the idealic map can not be continuously extended to $[(f,0)]$.
\hfill $\triangle$
\end{Remark}

%We close the study of generalized Hilbert schemes by looking on its topology.

\subsection{Topology of \texorpdfstring{$\g$}{g}-Hilbert schemes*}\label{topology}

It is clear that $\Hilb(\g)$ is a topological space, as a quotient of a subset of $\g^2$.
In this section, we explore this topology of $\Hilb(\g)$, especially for $\g=\mf{sl}_n$. We then formulate some conjectures on its general structure.

For $\g=\mf{sl}_2$, every non-zero element $A\in \g$ is regular and cyclic. Since the centralizer of the pair $(0,0)$ is all of $\mf{sl}_2$, this pair is not in $\Hilb(\mf{sl}_2)$. Thus we have $\Hilb(\mf{sl}_2) = \Hilb^{cycl}(\mf{sl}_2) = \Hilb^2_{red}(\C^2)$ which is a smooth projective variety.

For $\g=\mf{sl}_3$, a detailed analysis, putting $A$ into Jordan normal form, shows that $(A,B)$ has minimal centralizer and is not cyclic iff it is conjugated to a pair $P_1(b) = \left(\left(\begin{smallmatrix} 0&1&0 \\ 0&0&0 \\ 0&0&0\end{smallmatrix}\right), \left(\begin{smallmatrix} 0&b&1 \\ 0&0&0 \\ 0&0&0\end{smallmatrix}\right)\right)$. So 
$$\Hilb(\mf{sl}_3) = \Hilb^3_{red}(\C^2) \cup \{P_1(b) \mid b\in\C\}.$$
At first sight, the topology seems to be a smooth variety (the reduced Hilbert scheme) and a complex line. But a closer look shows that each point of the extra line is infinitesimally close to a point in the variety, meaning that these two points cannot be separated by open sets, infringing the Hausdorff property. The pair $P_1(b)$ is infinitesimally close to $P_2(b) = \left(\left(\begin{smallmatrix} 0&1&0 \\ 0&0&0 \\ 0&0&0\end{smallmatrix}\right),\left( \begin{smallmatrix} 0&b&0 \\ 0&0&0 \\ 0&1&0\end{smallmatrix}\right)\right)$. Indeed any neighborhood of the first pair $P_1(b)$ contains $\left(\left(\begin{smallmatrix} 0&1&0 \\ 0&0&0 \\ 0&0&0\end{smallmatrix}\right), \left(\begin{smallmatrix} 0&b&1 \\ 0&0&0 \\ 0&s&0\end{smallmatrix}\right)\right)$ for some small $s\in \C$ which is conjugated to $\left(\left(\begin{smallmatrix} 0&1&0 \\ 0&0&0 \\ 0&0&0\end{smallmatrix}\right),\left( \begin{smallmatrix} 0&b&s \\ 0&0&0 \\ 0&1&0\end{smallmatrix}\right)\right)$ which lies in a neighborhood of the second pair$P_2(b)$.
Since the idealic map is continuous and for $\mf{sl}_n$ injective on the cyclic part, there cannot be another point of the cyclic part which is infinitesimally close to the first pair $P_1(b)$. Finally, two elements of the extra line can be separated by open sets.
Hence, the space $\Hilb(\mf{sl}_3)$ is obtained from a smooth variety by adding ``double points'' (here in the sense of infinitesimally close points) along a complex line.

Since the idealic map is injective on the cyclic part $\Hilb^{cycl}(\mf{sl}_n)$, the same analysis holds for $\mf{sl}_n$, i.e. $\Hilb(\mf{sl}_n)$ is obtained from a smooth variety (the reduced Hilbert scheme) by adding double points.

There should exist a procedure, like a GIT quotient, giving a modified $\g$-Hilbert scheme which is a Hausdorff space. The GIT quotient does not apply here since $\{(A,B)\in \g^2 \mid [A,B]=0, \dim Z(A,B)=\rk \g\}$ is not a closed variety. In the language of GIT quotients, the pairs $P_1$ and $P_2$ above are both semistable, but there is no polystable element in their closure. 

To give a feeling about what happens, consider the action of $\R_{>0}$ on $\R^2 \backslash \{(0,0)\}$ given by $\lambda.(x_1, x_2)=(\lambda x_1, \lambda^{-1} x_2)$. The orbits are drawn in figure \ref{nonhaus}.
The quotient space is a set of two lines $L_1$ and $L_2$ with origins $O_1$ and $O_2$ together with two extra points $O_3$ and $O_4$ such that the pairs $(O_1, O_3), (O_1, O_4), (O_2, O_3)$ and $(O_2, O_4)$ are infinitesimally close points (the four points $O_i$ correspond to the four half-axis). In the figure, the dashed lines indicate infinitesimally close points.
From the GIT perspective, all points are semistable (take the constant function 1), the four half-axis are semistable and all other orbits are stable. The orbits of the half-axis are closed in $\R^2 \backslash \{(0,0)\}$ so they should be polystable, but in the quotient the points are still infinitesimally close.
\begin{figure}[h]
\centering
\includegraphics[height=4cm]{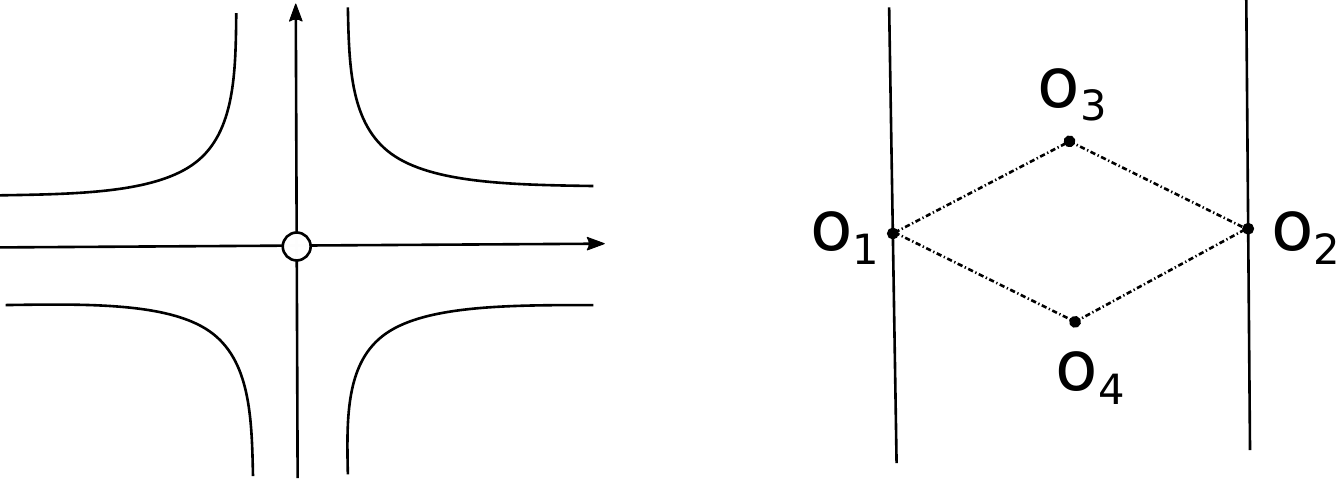}

\caption{Non-Hausdorff quotient}
\label{nonhaus}
\end{figure}

We conjecture the following:
\begin{conj}
There is a generalized GIT quotient procedure identifying infinitesimally close points in $\Hilb(\g)$, giving a modified $\g$-Hilbert scheme which is Hausdorff. 
\end{conj}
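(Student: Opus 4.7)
The plan is to realize the modification via affine GIT combined with S-equivalence, addressing the obstruction noted in the excerpt that $\{(A,B) : [A,B]=0,\ \dim Z(A,B) = \rk\g\}$ is only an open subvariety of $\Comm(\g)$. First I would form the affine GIT quotient $\Comm(\g)/\!/G := \Spec(\C[\Comm(\g)]^G)$. Since $G$ is reductive and $\Comm(\g)$ is affine, this is a well-defined affine variety, hence Hausdorff in the classical topology, and the natural morphism $\pi:\Comm(\g)\to \Comm(\g)/\!/G$ sends each pair $(A,B)$ to the (unique) closed $G$-orbit in $\overline{G\cdot(A,B)}$. One expects $\Comm(\g)/\!/G \cong \h^2/W$ via a Chevalley-type restriction, providing the right target for the Chow map.

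Next I would define the modified generalized Hilbert scheme as
\[
\Hilb_{mod}(\g) := \Hilb(\g)/\!\sim,
\]
where $[(A,B)]\sim[(A',B')]$ iff $\pi(A,B)=\pi(A',B')$, equivalently iff $\overline{G\cdot(A,B)} \cap \overline{G\cdot(A',B')} \neq \emptyset$ inside $\Comm(\g)$. This is the classical S-equivalence, and because $G$ is reductive and $\Comm(\g)$ is affine, it is genuinely an equivalence relation. The induced map $\bar\pi:\Hilb_{mod}(\g)\to\Comm(\g)/\!/G$ is injective by construction; equipping $\Hilb_{mod}(\g)$ with the subspace topology from the Hausdorff target immediately yields the desired separation property.

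To confirm that $\sim$ identifies exactly the infinitesimally close pairs of the excerpt, I would check the $\mf{sl}_3$ example: the pairs $P_1(b)$ and $P_2(b)$ should share a common element in their orbit closures (obtained by simultaneously degenerating the rank-one perturbations of $B$), so they are merged in $\Hilb_{mod}$. On the regular locus of $\Hilb(\mf{sl}_n)$, which already injects into $\Hilb^n_{red}(\C^2)$ via the idealic map, one recovers $\Hilb^n_{red}(\C^2)$. The type $D_n$ case requires more care because of the failure of continuity of the idealic map documented in Example~\ref{idealnotcont}, but the intrinsic definition through S-equivalence bypasses this issue since it is formulated purely in terms of orbit closures.

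The main obstacle is twofold. First, one must show that the quotient topology on $\Hilb_{mod}(\g)$ agrees with the subspace topology inherited through $\bar\pi$; this requires controlling how orbits in the boundary locus $\Hilb(\g)\setminus\Hilb^{reg}(\g)$ specialize, i.e.\ a careful analysis of the orbit closure structure of $\Comm(\g)$ near the non-regular stratum. Second, the conjecture only asks for a Hausdorff topological space, but to recover an algebraic object one should upgrade the construction to a good moduli space in the sense of Alper, since standard GIT requires properness that is absent here. Resolving this second point would also be the natural first step toward the stronger Conjecture~\ref{conj1} asserting smoothness and a resolution of $\h^2/W$.
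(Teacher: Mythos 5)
You are addressing a statement that the paper itself leaves as a conjecture: subsection \ref{topology} offers no proof, only a heuristic discussion that culminates in the explicit warning that ``the GIT quotient does not apply here'' together with the toy example of $\R_{>0}$ acting on $\R^2\setminus\{(0,0)\}$. So there is no proof in the paper to compare against; the only question is whether your construction actually settles the conjecture. It does not, and the failure is exactly the one the paper flags.

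The fatal step is defining the equivalence relation through the affine quotient $\Comm(\g)\to\Comm(\g)/\!/G$. For a reductive group acting on an affine variety, two points have the same image in the affine GIT quotient if and only if their orbit closures meet, and the closed orbits in $\Comm(\g)$ are exactly the semisimple commuting pairs; the invariant ring is $\C[\h\times\h]^W$, so $\Comm(\g)/\!/G\cong\h^2/W$ and your $\pi$ is nothing but the Chow map, which only remembers semisimple parts. Consequently your relation $\sim$ identifies two classes whenever they have the same Chow image: every commuting nilpotent pair has $(0,0)$ in its orbit closure (contract by $t\cdot(A,B)=(tA,tB)$), so the entire zero-fiber $\Hilb_0(\g)$ --- a variety of dimension $\rk\g$ by Corollary \ref{hilbg0affine}, and the object on which the whole construction of $\g$-complex structures rests --- collapses to a single point. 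The injectivity of $\bar\pi$ into $\h^2/W$ that you invoke to conclude Hausdorffness is therefore not a feature but the defect: it forces the modified space to be a subset of $\h^2/W$ rather than a resolution of it, contradicting Conjecture \ref{conj1} and the test case $\g=\mf{sl}_n$, where the expected answer $\Hilb^n_{red}(\C^2)$ has positive-dimensional fibers over the small diagonal of the configuration space. Your check on the $\mf{sl}_3$ example is symptomatic: $P_1(b)$ and $P_2(b)$ do get identified, but so do all points of the two-dimensional zero-fiber with one another. What the conjecture asks for is a quotient that identifies only the finitely many non-separated points lying over a given ideal while keeping the fibers of the Chow map intact; S-equivalence in the affine variety $\Comm(\g)$ is far too coarse for that, which is precisely why the paper appeals to a hypothetical \emph{generalized} GIT procedure (a finer stability notion, or a good moduli space in a stacky sense) rather than the classical one. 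Your closing remark about upgrading to a good moduli space gestures at the right difficulty, but as written the construction that precedes it produces the wrong space.
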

In particular one should find the reduced Hilbert scheme for $\g=\mf{sl}_n$. See also Conjecture \ref{conj1} for a modified $\g$-Hilbert scheme as a resolution of $\h^2/W$.

Assume a smooth version of the $\g$-Hilbert scheme exists. In the $\mf{sl}_n$-case the reduced Hilbert scheme is covered with charts parameterized by partitions of $n$, which also parameterizes nilpotent orbits of $\mf{sl}_n$. For $\g$ of classical type, the nilpotent orbits are parameterized by special partitions (see \cite{Coll}, chapter 5). In general, we conjecture the following for the zero-fiber of the $\g$-Hilbert scheme:

\begin{conj}
The smooth version of $\Hilb_0(\g)$ is covered with charts parameterized by nilpotent orbits and all these charts are necessary to cover $\Hilb_0(\g)$. 
In particular for classical $\g$, we conjecture that the modified version of $\Hilb_0(\g)$ is isomorphic to the space of ideals of $\C[x,y]$ which are of codimension $m$, $W$-invariant, supported at 0 and which lie in a chart associated to a partition of type $\g$.
%$$\{I \text{ ideal of }\C[x,y] \mid \codim I=m, W-\text{invariant}, \supp I=\{0\}, I \text{ in a chart of a partition of type }\g \}. $$
\end{conj}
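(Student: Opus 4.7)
The plan is to prove both parts of the conjecture simultaneously, treating the space of ideals on the right-hand side as a provisional definition of the modified Hilbert scheme and then building a stratification by nilpotent orbits on both sides. First, I would extend the idealic map $I: \Hilb^{cycl}(\g) \to I_{\g}(\C^2)$ of subsection~\ref{idealic} to all of $\Hilb_0(\g)$ by choosing, for each non-cyclic class $[(A,B)]$, a canonical limit of cyclic approximations. The example of $[(f,0)] \in \Hilb(\mf{so}_{2n})$ treated in subsection~\ref{Dn} shows that such limits do exist at least for the principal orbit of type $D_n$, where the deformation $(f,tS)$ yields the chart $[2n-1,1]$ in the limit. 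The key observation is that each $[(A,B)]$ lying in a given stratum determines a partition of $m$ via the Jordan type of $\rho(A)+t\rho(B)$ for generic $t \in \C$, and by the classification of nilpotent orbits in classical Lie algebras (Collingwood--McGovern, Chapter~5) this partition is always of type $\g$.

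Next I would match strata. For each partition $\pi$ of type $\g$, one obtains a Young-diagram chart $U_\pi$ of $\Hilb^m(\C^2)$ as in subsection~\ref{structurehilbscheme}, and the $W$-invariance of $I$ together with the support condition cut out an affine subspace $U_\pi^{W,0} \subset U_\pi$. Conversely, for a nilpotent orbit $\mc{O}_\pi$ corresponding to $\pi$, pick a representative pair $(A_\pi, B_\pi)$ with $A_\pi \in \mc{O}_\pi$ using the Young-diagram construction of Example~\ref{Young} adapted by the bilinear or symplectic form for types $B_n, C_n, D_n$. The multiplication operators $M_x, M_y$ on $\C[x,y]/I(A_\pi, B_\pi)$ are conjugate in $\mf{gl}_m$ to $(A_\pi, B_\pi)$ by cyclicity, and the $W$-invariance of the ideal together with the minimal-centralizer condition forces this conjugation to be by an element of $G$. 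In this way one obtains strata-wise isomorphisms $\Hilb_0(\g)^{(\pi)} \cong U_\pi^{W,0}$, whose disjoint union over partitions of type $\g$ would give the desired covering.

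For surjectivity and necessity of all charts I would argue dimensionally. Each stratum $U_\pi^{W,0}$ has dimension equal to the number of simple roots compatible with the orbit, and summing over all $\pi$ of type $\g$ should recover $\dim \Hilb_0(\g) = \rk \g$, confirming no chart is missed. Injectivity on the modified Hilbert scheme is built in: two classes $[(A,B)]$ and $[(A',B')]$ mapping to the same ideal produce isomorphic cyclic $\C[x,y]$-modules, hence are conjugate under $\GL_m$, and the $W$-invariance again restricts the conjugating element to $G$ (up to the type-$D_n$ sign ambiguity, which is precisely the kind of ``infinitesimally close points'' identification demanded by the modification of subsection~\ref{topology}).

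The main obstacle, and the reason the statement is conjectural, is that the modified Hilbert scheme is not yet rigorously defined: the type-$D_n$ phenomenon where $\sigma_n$ and $-\sigma_n$ produce the same ideal, together with the non-Hausdorff ``double points'' of subsection~\ref{topology}, must be handled by a generalized GIT quotient that has not been constructed. Any rigorous proof therefore depends on first establishing a canonical construction of such a quotient (the preceding conjecture), and then checking that this quotient coincides with the fiber product of the space of type-$\g$ ideals with the Chow morphism. I expect the hardest case to be $D_n$: there both the non-injectivity of the idealic map and the failure of surjectivity onto $I_{\g}(\C^2)$ (witnessed by the ideal $\langle x^5-y, xy, y^2\rangle$ in subsection~\ref{Dn}) must be absorbed into the definition of the modification, so the chart condition ``of type $\g$'' is doing essential work and must be verified partition by partition using the explicit centralizer computations for principal nilpotents in $\mf{so}_{2n}$.
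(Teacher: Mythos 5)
This statement is an open conjecture in the paper: no proof is given, and the only supporting material is the heuristic analogy with the $\mf{sl}_n$ case, where the charts of the reduced Hilbert scheme are indexed by Young diagrams and hence by partitions of $n$, which also index nilpotent orbits. So there is no proof in the paper to compare against, and your own text correctly identifies why a proof cannot currently be written down: the ``smooth/modified version'' of $\Hilb_0(\g)$ is not defined, so the statement is conditional on the preceding conjectures about a generalized GIT quotient. Presenting a conditional strategy here is therefore the honest thing to do, and your overall plan (extend the idealic map, stratify by the Jordan type of a generic element of the pencil $\rho(A)+t\rho(B)$, match strata with Young-diagram charts) is a reasonable refinement of the paper's analogy.

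Two steps of your strategy would fail as written, beyond the gap you already acknowledge. First, the dimension argument for ``all charts are necessary'': the charts $U_\pi$ are \emph{open} sets covering the space, each of full dimension, so summing their dimensions over partitions of type $\g$ cannot recover $\dim\Hilb_0(\g)=\rk\g$; necessity of a chart must instead be shown by exhibiting a point of $\Hilb_0(\g)$ lying in no other chart (as the paper does implicitly for $D_n$ with the ideal $\langle p^{2n-1}, p\bar p, \bar p^2-\nu_{2n-2}p^{2n-2}\rangle$, which needs three generators and hence the chart $[2n-1,1]$). Second, the claim that $W$-invariance of the ideal plus minimality of the centralizer forces a $\GL_m$-conjugation to be realized in $G$ is false in general: for $\mf{so}_{2n}$ the very even nilpotent orbits are exactly the case where a single $\GL_{2n}$-orbit splits into two $SO_{2n}$-orbits, and this is the source of the $2$-to-$1$ behaviour of the idealic map ($\sigma_n\mapsto-\sigma_n$) recorded in the paper. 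That failure is not merely a ``sign ambiguity to be absorbed''; it means the strata-wise maps $\Hilb_0(\g)^{(\pi)}\to U_\pi^{W,0}$ are not injective for type $D_n$, so the asserted isomorphism with the space of type-$\g$ ideals would need either a further quotient on the Hilbert-scheme side or a replacement of $I_\g(\C^2)$ by a double cover along the relevant locus. Any eventual proof must decide which, and your proposal leaves this unresolved.
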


\section{\texorpdfstring{$\g$}{g}-complex structures}\label{g-complexstructure}

Using the $\g$-Hilbert scheme we are able to construct a new geometric structure on a smooth surface, generalizing both complex and higher complex structures.

\subsection{Definition}

Recall from section \ref{Highercomplexsection} that the higher complex structure is defined as a section $I$ of $\Hilb^n_0(T^{*\C}\Sigma)$ satisfying $I(z)\oplus \overline{I}(z) = \langle p,\bar{p}\rangle$ at every point $z\in \Sigma$ (see Definition \ref{highercomplexdef}). We call the condition on $I$ the \textit{reality constraint}.
In this definition we exclusively use the idealic viewpoint of the punctual Hilbert scheme. Since the $\g$-Hilbert scheme uses the matrix viewpoint, we have to think of $n$-complex structures as special 1-forms. More precisely, an $n$-complex structure is a gauge class of a $\mf{sl}_n$-valued 1-form which can be locally written as $\Phi_1dz+\Phi_2d\bar{z}$ where $[(\Phi_1,\Phi_2)]$ is in the zero-fiber of the Hilbert scheme.

We are now ready to give the definition of a $\g$-complex structure, but one difficulty stays: we have to incorporate the reality constraint in the matrix viewpoint. Recall from Proposition \ref{mu2prop} the map $\mu_2: \Hilb^{reg}_0(\g) \rightarrow \C$ associating to $[(A,B)]$ the unique $\mu_2\in \C$ such that $B-\mu_2A$ is irregular. 

\begin{definition}\label{def-g-complex-1}
A $\mathbf{\g}$\textbf{-complex structure} is a gauge class of elements locally of the form $$A(z) dz+ B(z) d\bar{z} \in \Omega^1(\Sigma, \g) = \Omega^1(\Sigma,\C)\otimes \g$$ such that $$[(A(z), B(z))] \in \Hilb^{reg}_0(\g)$$ and $\mu_2(z)\bar{\mu}_2(z) \neq 1$ for all $z\in \Sigma$.
\end{definition}

Notice that for complex structures, the map $\mu_2(z)$ is nothing but the Beltrami differential. So our reality constraint coincides with the one for complex structures. In particular, for $\g=\mf{sl}_2$, we get a usual complex structure.
In the general case, we have the following:

\begin{prop}\label{inducedcomplex}
A $\g$-complex structure induces a complex structure on $\Sigma$.
\end{prop}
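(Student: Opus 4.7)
The plan is to extract from a $\g$-complex structure a section $\mu_2$ of $K^{-1}\otimes\bar K$ with $|\mu_2|<1$ pointwise, and then invoke the Gauss/Korn--Lichtenstein theorem (integrability of compatible almost complex structures on surfaces) to conclude.

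First I would check that the scalar $\mu_2(z)$ appearing in the reality constraint is intrinsically attached to the $\g$-complex structure, i.e.\ independent of the local representative $A(z)dz+B(z)d\bar z$. Proposition \ref{mu2prop} characterizes $\mu_2$ purely in terms of the two-dimensional subspace $\Span(A(z),B(z))\subset\g$: it is the unique element of $\C P^1$ whose corresponding line in that span is irregular. Since gauge transformations act by simultaneous conjugation on $(A(z),B(z))$, they preserve regularity of elements in $\Span(A,B)$, hence preserve $\mu_2(z)$. Smoothness of $\mu_2$ as a function of $z$ follows from its algebraic characterization and smoothness of the data.

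Next I would compute the behavior of $\mu_2$ under a holomorphic change of coordinates $z\mapsto w(z)$. Writing $A\,dz+B\,d\bar z = A\tfrac{dz}{dw}\,dw + B\tfrac{d\bar z}{d\bar w}\,d\bar w$, the new local matrices are $A' = A\tfrac{dz}{dw}$, $B' = B\tfrac{d\bar z}{d\bar w}$. The defining condition ``$B'-\mu_2'A'$ irregular'' is equivalent, after multiplying by $(d\bar z/d\bar w)^{-1}$, to ``$B-\mu_2'\tfrac{dz/dw}{d\bar z/d\bar w}A$ irregular'', so by uniqueness in Proposition \ref{mu2prop}
\[
\mu_2'(w) \;=\; \frac{d\bar z/d\bar w}{dz/dw}\,\mu_2(z),
\]
which is precisely the transformation law of a section of $K^{-1}\otimes\bar K$, i.e.\ a Beltrami differential.

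To complete the argument I would handle the reality constraint. A priori $[(A,B)]\in\Hilb^{reg}_0(\g)$ splits into two (possibly overlapping) strata depending on whether $A$ or $B$ is regular; the two associated values of the scalar are inverse to each other (as seen in the description following \eqref{mu}). The inequality $\mu_2\bar\mu_2\neq 1$ ensures that pointwise one of the two branches satisfies $|\mu_2|<1$; by continuity, and by replacing the original representative by its conjugate if necessary (exactly as in the proof of Proposition \ref{genericideal} for the case $n=2$), one obtains a globally defined Beltrami differential $\mu_2$ with $|\mu_2(z)|<1$ everywhere. The classical theorem of Gauss (and Korn--Lichtenstein in the $C^\infty$ setting) then integrates $\mu_2$ into a complex atlas on $\Sigma$, producing the desired complex structure.

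The main obstacle is the global choice of branch in the last step: one must check that the locus where both $A$ and $B$ are regular does not force a discontinuous switch between the two determinations of $\mu_2$. The relation $\mu_2^{\mathrm{dual}}=1/\bar{\mu}_2$ (identical to the $n=2$ conjugation formula from Section \ref{dualcomplexstructure}) together with the open condition $\mu_2\bar\mu_2\neq 1$ makes the two components disjoint and the branch selection locally constant, so this obstacle is mild and will be dispatched by the same argument as for ordinary Beltrami differentials.
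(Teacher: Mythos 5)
Your proof is correct and follows essentially the same route as the paper: the paper's own proof is a two-line invocation of the map $\mu:\Hilb^{reg}_0(\g)\to\Hilb_0(\mf{sl}_2)$ built from Proposition \ref{mu2prop}, whose implicit content (gauge-invariance of $\mu_2$, its type $(-1,1)$ transformation law, and the reality constraint) is exactly what you spell out before integrating via Gauss/Korn--Lichtenstein. One small slip that does not affect the argument: on the overlap where both $A$ and $B$ are regular the two branches of $\mu$ differ by $\mu_2\mapsto 1/\mu_2$, not by $1/\bar{\mu}_2$ (the latter is the conjugation formula of Section \ref{dualcomplexstructure}, which relates $I$ to $\overline{I}$ rather than the two charts of the same ideal).
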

\begin{proof}
Recall from \ref{mu2} equation \eqref{mu} the map $\mu:\Hilb^{reg}_0(\g) \rightarrow \Hilb_0(\mf{sl}_2)$ given by $\mu([(A,B)])=[(e,\mu_2 e)]$ or $[(\mu_2 e, e)]$ depending on whether $A$ or $B$ is regular. Since a $\mf{sl}_2$-complex structure is a complex structure, the map $\mu$ induces a map from $\g$-complex structures to complex structures.
\end{proof}
\begin{Remark}
To define the map $\mu$ in \ref{mu2}, we really need $\g$ to be simple. Thus, only for $\g$ simple we get a unique complex structure out of a $\g$-complex structure.
\hfill $\triangle$
\end{Remark}

In the definition of a higher complex structure from section \ref{Highercomplexsection}, we use the zero-fiber $\Hilb^n_0(\C^2)$, without imposing to be in the regular part. The fact that we actually are in the regular part follows from the reality constraint $I\oplus \overline{I}=\langle p,\bar{p} \rangle$.
The same can be obtained for $\g$ of classical type, where we can reformulate the definition of $\g$-complex structures in a nicer way using the idealic map.

\subsection{Idealic viewpoint}

Recall the space of ideals $I_{\g}(\C^2)$ constructed in \ref{idealic}. Denote by $I_{\g,0}(\C^2)$ the set of those ideals of $I_{\g}(\C^2)$ which are supported at the origin (the zero-fiber). We can rewrite the definition of a $\g$-complex structure in the following way:

\begin{definition}\label{def-g-complex}
For classical $\g$, a $\mathbf{\g}$\textbf{-complex structure} is a section $I$ of $I_{\g,0}(T^{*\C}\Sigma)$ such that for all $z\in\S$:
$$I(z) \oplus \overline{I}(z)= \left \{ \begin{array}{cl}
\langle p, \bar{p} \rangle & \text{ if  } \g \text{ of type } A_n, B_n, C_n \\
\langle p, \bar{p} \rangle^2 &\text{ if } \g \text{ of type } D_n.
\end{array} \right.$$
\end{definition}

Notice that the condition on the ideals does not depend on coordinates since $\langle p, \bar{p}\rangle$ is the maximal ideal associated to the origin.

We prove the equivalence of both definitions. For that recall that to an ideal $I$ one can associate a class of commuting matrices $[(A,B)]$ (see subsection \ref{hilbmatrixviewpoint}).
\begin{prop}
For classical $\g$, the condition on $I\oplus \overline{I}$ given in Definition \ref{def-g-complex} is equivalent to $[(A(z),B(z))]$ being in the regular part $\Hilb^{reg}_0(\g)$ and having $\mu_2\bar{\mu}_2\neq 1$, i.e. the condition in Definition \ref{def-g-complex-1}.
\end{prop}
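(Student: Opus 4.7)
The plan is to verify the equivalence case by case, using the explicit parameterizations of $\Hilb^{reg}_0(\g)$ and of the idealic map obtained in the preceding subsections. In each case, regularity of the representative is automatic because Corollary \ref{hilbg0affine} parameterizes $\Hilb^{reg}_0(\g)$ by $f + Z(f) \cap \mathcal{N} = \{f\} \times Z(f)$, so any representative already has $A = f$ principal nilpotent. Hence what has to be shown is that the direct-sum condition on $I$ is exactly $\mu_2\bar{\mu}_2 \neq 1$, and that conversely an ideal in $I_{\g,0}(\C^2)$ with this direct-sum property arises from a regular pair.

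For types $A_n$, $B_n$, $C_n$, I would start from a representative $(f,Q(f))$ with $Q$ a polynomial of the appropriate parity and degree, giving an ideal
\[
I = \langle p^m,\; -\bar p + \mu_2 p + \text{higher order in } p\rangle,
\]
with $m = n,\, 2n+1,\, 2n$ respectively. Complex conjugation yields $\overline{I}$ with linear part $-p + \bar\mu_2\bar p$. The two linear parts span $\langle p,\bar p\rangle/\langle p,\bar p\rangle^2$ precisely when $\det\!\left(\begin{smallmatrix} \mu_2 & -1\\ -1 & \bar\mu_2\end{smallmatrix}\right) = \mu_2\bar\mu_2 - 1 \neq 0$, and in that case the substitution argument from the proof of Proposition~\ref{genericideal} (iterate $\bar p \equiv \mu_2 p + \cdots$ and $p \equiv \bar\mu_2\bar p + \cdots$) produces every monomial of positive degree, so $I + \overline{I} = \langle p,\bar p\rangle$. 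If $\mu_2\bar\mu_2 = 1$, the two linear parts are proportional and $I+\overline{I}$ fails to contain any element of degree $1$.

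The case $D_n$ is the main difficulty and the only one where the direct-sum lands in $\langle p,\bar p\rangle^2$ rather than $\langle p,\bar p\rangle$. Here the representative $(f,\, Q(f)+\sigma_n S)$ gives a three-generator ideal whose generators, together with those of $\overline{I}$, all lie in $\langle p,\bar p\rangle^2$; so automatically $I + \overline{I} \subseteq \langle p,\bar p\rangle^2$. I would read the four degree-$2$ generators in the basis $(p^2, p\bar p, \bar p^2)$ as the rows of
\[
M = \begin{pmatrix} -\mu_2 & 1 & 0 \\ -\mu_2^2 & 0 & 1 \\ 0 & 1 & -\bar\mu_2 \\ 1 & 0 & -\bar\mu_2^2 \end{pmatrix},
\]
using $\nu_2 = \mu_2^2$. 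The $3\times 3$ minors on rows $1,2,3$ and on rows $2,3,4$ compute to $\mu_2(1-\mu_2\bar\mu_2)$ and $(\mu_2\bar\mu_2)^2 - 1$ respectively; at least one is non-zero exactly when $\mu_2\bar\mu_2 \neq 1$, and in that range $M$ has rank $3$. This gives $\langle p^2, p\bar p, \bar p^2\rangle \subseteq I + \overline{I} + \langle p,\bar p\rangle^3$, after which a straightforward induction, multiplying the obtained degree-$2$ elements by $p$ and $\bar p$ and using $p^{2n-1}, \bar p^{2n-1}$ to cut off the top, yields $I + \overline{I} = \langle p,\bar p\rangle^2$. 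Conversely, when $\mu_2\bar\mu_2 = 1$ the rows of $M$ span only a plane, and one of $p^2, p\bar p, \bar p^2$ cannot be recovered modulo higher order.

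The remaining point is the backward direction: an ideal $I \in I_{\g,0}(\C^2)$ satisfying the direct-sum condition must come from a regular pair. For types $A_n, B_n, C_n$ this is immediate since the codimension and generator-count of $I$ force cyclicity of $(\rho(A),\rho(B))$, and then Proposition~\ref{cycliccentralizer} supplies regularity. For $D_n$ the \emph{ad hoc} extension of the idealic map at $\sigma_n = 0$ from subsection~\ref{Dn} has to be invoked so that every such ideal sits in the chart associated to the partition $[2n-1,1]$ and is therefore the image of some $(f,\,Q(f)+\sigma_n S)$; this is the subtle point of the proof.
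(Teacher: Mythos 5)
Your handling of the easy implication---from a regular representative with $\mu_2\bar\mu_2\neq 1$ to the direct-sum condition---is correct, and the rank computation on the degree-two generators in type $D_n$ is a clean way to see exactly where $\mu_2\bar\mu_2\neq 1$ enters. But the paper dismisses precisely that direction as ``a direct computation using the preferred representatives'' and spends all of its effort on the converse, which is where your argument has a genuine gap.

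The gap is in your final paragraph. You claim that for types $A_n, B_n, C_n$ an ideal in $I_{\g,0}(\C^2)$ satisfying the direct-sum condition ``immediately'' comes from a regular pair, because its codimension forces cyclicity of $(\rho(A),\rho(B))$ and Proposition \ref{cycliccentralizer} then ``supplies regularity.'' Proposition \ref{cycliccentralizer} only places a cyclic pair in $\Hilb(\g)$, i.e.\ gives the common centralizer minimal dimension $\rk\g$; it says nothing about membership in $\Hilb^{reg}(\g)$, which requires $A$ or $B$ to be a regular element of $\g$, and the paper is explicit that neither inclusion between the cyclic and regular parts holds in general (Examples \ref{Young} and \ref{regnotcyclic}). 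Concretely, $\langle p^2,p\bar p,\bar p^2\rangle\in\Hilb^3_0(\C^2)$ is cyclic but its associated pair is not regular. What actually forces regularity is the direct-sum hypothesis, and extracting it is the substance of the proof: one runs the filtration argument of Proposition \ref{genericideal}, showing $\dim\bigl(I\bmod\langle p,\bar p\rangle^2\bigr)=1$, obtaining a relation $\bar p\equiv\mu_2p+\cdots$, and iterating to put $I$ in the two-generator form $\langle p^m,-\bar p+Q(p)\rangle$, so that $M_p$ is principal nilpotent; for $B_n,C_n$ the $W$-invariance then kills the odd coefficients of $Q$. You invoke that substitution argument only in the easy direction, where the generic form is assumed from the start. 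For $D_n$ the omission is worse: the hard direction requires pushing the filtration one degree further (showing $I\bmod\langle p,\bar p\rangle^3$ is exactly $2$-dimensional and deriving relations expressing $p\bar p$ and $\bar p^2$ as even polynomials in $p$), and your appeal to the \emph{ad hoc} extension of the idealic map at $\sigma_n=0$ does not substitute for this analysis.
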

\begin{proof}
The backwards direction is a direct computation using the preferred representatives for $\Hilb^{reg}_0(\g)$ from Proposition \ref{paramit}. So we concentrate on the direct implication.

\underline{Case $A_n$.} The case $\g$ of type $A_n$ has been treated in Proposition \ref{genericideal}. 

\underline{Case $B_n$.} For $\g$ of type $B_n$ the standard representation gives $\mf{so}_{2n+1} \hookrightarrow \mf{sl}_{2n+1}$. By virtue of the case $A_n$, we know that $I\oplus \overline{I} =\langle p, \bar{p} \rangle$ implies $\mu_2\bar{\mu}_2\neq 1$ and $(A,B)$ regular for $\mf{sl}_{2n+1}$, i.e. 
$$I(A,B)=\langle p^{2n+1}, -\bar{p}+\mu_2p+\mu_3p^2+...+\mu_{2n}p^{2n} \rangle.$$
Since we know that in case $B_n$, the ideal $I$ is invariant under the map $-\id$, we get $\mu_{2k+1}=0$ for all $k=1,...,n-1$. So $I$ corresponds to a pair $(f,Q(f))$ for $Q$ an odd polynomial of degree at most $2n-1$, which is precisely a representative of $\Hilb^{reg}_0(\mf{so}_{2n+1})$ (see subsection \ref{Bn}).

\underline{Case $C_n$.} This case is exactly analogous to $B_n$ via the injection $\mf{sp}_{2n}\hookrightarrow \mf{sl}_{2n}$.

\underline{Case $D_n$.} We imitate the strategy of the proof for case $A_n$ (see Proposition \ref{genericideal}) with only difference that we have to go further in the analysis, needing some computations. 
The main argument is an iteration process which always ends since $p^k\bar{p}^l = 0 \mod I$ for $k+l \geq 2n$.

Put $I_1 = (I \mod \langle p, \bar{p} \rangle^2)$, i.e. the set of all terms of degree at most 1 appearing in $I$.  If $I_1$ is of dimension 2, then $I=\langle p, \bar{p}\rangle$ since both $p$ and $\bar{p}$ can be expressed by higher terms which by iteration become 0. If $I_1$ is of dimension 1, then we have a relation of the form $\bar{p}=\mu_2p+p^2R(p,\bar{p})$ where $R$ is a polynomial, which gives $\bar{p}$ as a polynomial in $p$ by iteration. We can then explicitly check that $I\oplus \overline{I}$ is either $\langle p, \bar{p} \rangle$ or $\langle p=\bar{p}, p\bar{p}, p^2\rangle$. Since we suppose $I=\langle p, \bar{p} \rangle ^2$, we get $\dim I_1 = 0$, i.e. $I_1=\{0\}.$

Put $I_2 = (I \mod \langle p, \bar{p}\rangle^3)$. We have $I_2\oplus \overline{I}_2 = (I\oplus \overline{I})_2 = \langle p^2, p\bar{p}, \bar{p}^2\rangle$ by assumption on $I$. If $I_2$ is of dimension 3, then all of $p^2, p\bar{p}$ and $\bar{p}^2$ can be expressed by higher terms. By iteration, we get $I=\langle p^2, p\bar{p}, \bar{p}^2\rangle$ which is not of type $D_n$. If $\dim I_2 \leq 1$, then we also have $\dim \overline{I}_2 \leq 1$, so $2\geq \dim I_2+\dim \overline{I}_2 = \dim \langle p^2,p\bar{p}, \bar{p}^2 \rangle_2 = 3$, a contradiction. Hence $\dim I_2=2.$

There is a term containing $p\bar{p}$ in $I_2$ since if not, no such term would neither exist in $\overline{I}_2$, so neither in $I_2\oplus\overline{I}_2 = \langle p^2, p\bar{p}, \bar{p}^2 \rangle$, a contradiction. Without loss of generality, we can assume that there is another term containing $\bar{p}^2$ (if not change the role of $I$ and $\overline{I}$).

So there exist $\alpha, \beta, \gamma, \delta \in \C$ such that 
$$ \left \{ \begin{array}{cl}
\bar{p}^2 = \alpha p^2+\beta p\bar{p} &\mod I_2\\
p\bar{p} = \gamma p^2 +\delta \bar{p}^2  &\mod I_2.
\end{array}\right. $$

\noindent If $\beta\gamma \neq 1$, we can simplify by substitution one into the other to
$$\left \{ \begin{array}{cl}
\bar{p}^2 = \alpha' p^2 &\mod I_2 \\
p\bar{p} = \gamma' p^2 &\mod I_2.
\end{array}\right. $$
If $\beta\gamma = 1$, we have $p^2\in I_2$, so $p\bar{p}=\delta \bar{p}^2 \mod I_2$, so changing $I$ to $\overline{I}$ we are in the previous situation.

Iterating the substitution process we get that $\bar{p}^2$ and $p\bar{p}$ are polynomials in $p$. Using the invariance of $I$ under $-\id$, we see that these are polynomials in $p^2$, i.e. even polynomials. So the most generic ideal is given by 
$$I=\langle p^{2n-1}, p\bar{p}=\mu_2p^2+\mu_4p^4+...+\mu_{2n-2}p^{2n-2}, \bar{p}^2=\nu_2p^2+\nu_4p^4+...+\nu_{2n-2}p^{2n-2}\rangle$$
which corresponds to a regular element of $\Hilb^{reg}_0(\mf{so}_{2n})$.
One checks that $I\oplus \overline{I}$ with $I$ of the form above equals $\langle p,\bar{p}\rangle ^2$ iff $\mu_2\bar{\mu_2} \neq 1$.
\end{proof}

To end this section, we determine the geometric nature of the various higher Beltrami coefficients. Since $p$ and $\bar{p}$ are linear coordinates on $T^{*\C}\Sigma$, we can identify $p=\frac{\partial}{\partial z}= \partial$ and $\bar{p}=\frac{\partial}{\partial \bar{z}}=\bar{\partial}$. Denote by $K$ the canonical bundle, i.e. $K=T^{*(1,0)}\Sigma$, and by $\Gamma(B)$ the space of sections of a bundle $B$. 

Analyzing the behavior under a coordinate change $z \mapsto w(z,\bar{z})$ analogous to the computation in subsection \ref{basichighercomplex}, we get  \begin{equation}\label{naturemu}\mu_i\in \Gamma(K^{1-i}\otimes \bar{K}) \text{ and } \nu_{2i} \in \Gamma(K^{-2i}\otimes \bar{K}^2).\end{equation} 
Since $\sigma_n^2$ has the same nature as $\nu_{2n-2}$ (see \ref{Dn}), we get $\sigma_n\in \Gamma(K^{1-n}\otimes \bar{K})$.

\section{Moduli space}\label{g-modulispace}

In this section, we define the moduli space of $\g$-complex structures and explore its properties. In the whole section we suppose $\g$ of classical type. We first have to define an equivalence relation on $\g$-complex structures, which is accomplished by the notion of higher diffeomorphisms of type $\g$.

\subsection{Higher diffeomorphisms}

For higher complex structures, we defined in subsection \ref{higherdiffs} higher diffeomorphisms to be hamiltonian diffeomorphisms of $T^*\Sigma$ preserving the zero-section $\Sigma \subset T^*\Sigma$. This gives the higher diffeomorphisms for type $A_n$.
We generalize this idea to other classical $\g$. We use the standard representation of $\g$ on $\C^m$, i.e. $\mf{sl}_n \subset \mf{gl}_n, \mf{so}_n \subset \mf{gl}_n$ and $\mf{sp}_{2n} \subset \mf{gl}_{2n}$.

One way to think of a $\g$-complex structure is as a $m$-tuple of 1-forms with some symmetry, which collapses all to the zero-section. The space of higher diffeomorphisms which we are looking for has to preserve this symmetry. For example for $\mf{sl}_n$, we have $n$ sections whose barycenter at every fiber is the origin, i.e. their sum gives the zero-section. That is why we have to impose that the hamiltonian diffeomorphisms of $T^*\Sigma$  preserve the zero-section.

For $\g$ of type $B_n, C_n$ or $D_n$, the set of $m$ points is symmetric with respect to the origin. Thus we define:

\begin{definition}
A \textbf{higher diffeomorphism} of type $B_n, C_n$ or $D_n$ is a hamiltonian diffeomorphism of $T^*\Sigma$ invariant under the map 
$(z,p,\bar{p})\mapsto (z,-p,-\bar{p})$. We denote by $\Symp(\g,\Sigma)$ the space of higher diffeomorphisms of type $\g$.
\end{definition}

This group is the same for types $B_n, C_n$ and $D_n$, but differs from $\Symp_0(T^*\S)$.
In coordinates a hamiltonian diffeomorphism is generated by a function $H(z,\bar{z},p,\bar{p})$ which can be Taylor developed to 
$\sum_{k,l} w_{k,l}(z,\bar{z})p^k\bar{p}^l$. It is invariant under $-\id$ iff it has only odd terms, i.e. 
$w_{k,l}=0$ for all $k+l$ even. Notice that the associated flow automatically preserves the zero-section since $w_{0,0}=0$.

\subsection{Action on \texorpdfstring{$\g$}{g}-complex structures}\label{actiondiff}

We can now analyze how higher diffeomorphisms act on $\g$-complex structures.

Intuitively, hamiltonian diffeomorphisms of $T^*\Sigma$ act on the space of 1-forms, so also on $m$-tuples of them. The invariance condition implies that the symmetry of the collection of 1-forms is preserved. This action persists at the limit when the $m$-tuple of 1-forms is collapsed to the zero-section.

To compute the action, it is better to work in the idealic viewpoint. We imitate the steps from the higher complex structure (see section \ref{higherdiffs}).

Let $I$ be an ideal representing a $\g$-complex structure, with generators $\langle f_1, ..., f_r \rangle$. Each $f_k$ can be considered as a function on $T^{*\C}\Sigma$, so its variation under a Hamiltonian $H$ is given by the Poisson bracket $\{H, f_k\}$.
The tangent space at $I$ in the space of all ideals of codimension $m$ is the set of all ring homomorphisms from $I$ to $A/I$ (see Proposition \ref{idealvariation}). Thus a Hamiltonian $H$ changes $I$ to 
$\langle f_1+\varepsilon\{H,f_1\} \mod I, ..., f_r+\varepsilon \{H,f_r\} \mod I \rangle$.

For $n$-complex structures, the simplification lemma \ref{simplificationlemma} allowed us to reduce $H$ modulo $I$. This is still possible:

\begin{prop}
For classical $\g$, the generators $f_i$ of an ideal $I\in \Hilb^{reg}_0(\g)$ satisfy $\{f_i, f_j\} = 0 \mod I$.
\end{prop}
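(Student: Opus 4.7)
The proof is a case-by-case verification using the explicit form of the generators of $I\in\Hilb^{reg}_0(\g)$ established in the preceding subsections. In every case the Poisson bracket computation reduces the powers of $p$ and $\bar p$ in a controlled way, and the claim boils down to a power count (together with some algebraic identities in the $D_n$ case).

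For type $A_n$ this is essentially the computation already given for the simplification lemma \ref{simplificationlemma}: with $f_1=p^n$ and $f_2=-\bar p+\sum_{k=2}^n\mu_kp^{k-1}$, the only nonzero contribution to $\{f_1,f_2\}$ comes from $\partial_p f_1\cdot\partial_z f_2=np^{n-1}\sum_k\partial\mu_kp^{k-1}$, and every monomial has $p$-degree at least $n$. For types $B_n$ and $C_n$ the generators have exactly the same shape $(f_1=p^N,\ f_2=-\bar p+Q(z,\bar z,p))$ with $Q$ a polynomial in odd powers of $p$ starting at degree $1$, where $N=2n+1$ or $2n$ respectively; the same argument applies verbatim.

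The type $D_n$ case is the substantive one. The three generators are
$$f_1=p^{2n-1},\quad f_2=p\bar p-\sum_{k=1}^{n-1}\mu_{2k}p^{2k},\quad f_3=\bar p^{\,2}-\sum_{k=1}^{n-1}\nu_{2k}p^{2k}.$$
The brackets $\{f_1,f_2\}$ and $\{f_1,f_3\}$ are handled exactly as above: each equals $(2n-1)p^{2n-2}\partial_z(\cdot)$, and the inner polynomial has no constant term in $p$, so every monomial has $p$-degree $\geq 2n$, hence lies in $\langle p^{2n-1}\rangle\subset I$. The hard bracket is $\{f_2,f_3\}$. Expanding it directly yields a combination of terms of the form $p^{2l}\bar p$, $p^{2l+1}$, $\partial\nu_{2l}$, $\bar\partial\mu_{2k}$, $\mu_{2k}\partial\nu_{2l}$, $\nu_{2l}\partial\mu_{2k}$ and $\mu_{2k}\bar\partial\mu_{2l}\mu_{2m}$. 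Using the ideal relation $p\bar p\equiv\sum_m\mu_{2m}p^{2m}\mod I$ to eliminate the occurrences of $\bar p$ and then grouping by $p$-degree reduces the assertion to the vanishing of a polynomial expression in $(\mu_{2k},\nu_{2k})$ and their first derivatives. The crucial input is that $\nu_{2k}$ is \emph{not} independent but is determined by the $\mu_{2i}$ (plus $\sigma_n$ at the top index) via
$$\nu_{2k}=\sum_{i+j=k+1,\,i,j\geq 1}\mu_{2i}\mu_{2j}\quad(1\leq k\leq n-2),\qquad \nu_{2(n-1)}=2\sigma_n^2+\sum_{i+j=n,\,i,j\geq 1}\mu_{2i}\mu_{2j},$$
as follows from the explicit form of the commuting pair $(f,Q(f)+\sigma_nS)$ computed in subsection \ref{Dn}. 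Substituting these identities and their $\partial$- and $\bar\partial$-derivatives into the coefficient of $p^{2K-1}$ for each $K$, one checks that the $\bar\partial$-contributions cancel pairwise between the term coming from $\partial_{\bar p}f_3=2\bar p$ and the term $-\sum_l\bar\partial\nu_{2l}p^{2l+1}$, while the $\partial$-contributions cancel by the symmetry of the Leibniz-type identity for $\partial\nu_{2l}$.

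The main obstacle is precisely this last cancellation in the $D_n$ case: it is not a pure power count but requires the quadratic relations expressing $\nu_{2k}$ in terms of the $\mu_{2i}$, which encode the fact that $I$ really comes from a commuting pair in $\mf{so}_{2n}$ and not merely from an arbitrary $W$-invariant ideal in $\C[p,\bar p]$. A conceptual shortcut via the matrix viewpoint (differentiating $[\rho(A),\rho(B)]=0$ in $z$ and evaluating derivatives of $f_i$ on $(\rho(A),\rho(B))$) does not close up directly, because $(\partial_p f_i)(\rho(A),\rho(B))$ need not commute with $\partial_z\rho(A)$; so the case-by-case computation, with the quadratic constraints on the $\nu_{2k}$ as the non-trivial ingredient for $D_n$, appears to be the cleanest route.
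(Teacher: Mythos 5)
Your proof is correct and follows essentially the same route as the paper's: power counting disposes of types $A_n$, $B_n$, $C_n$ and of the brackets with $p^{2n-1}$, and the quadratic relations $\nu_{2k}=\sum_{i+j=k+1}\mu_{2i}\mu_{2j}$ are exactly the input the paper uses (in the form $R=\tilde{Q}^2+p^{2n-2}\tilde{R}$ with $\tilde{Q}=Q/p$) for the remaining $D_n$ bracket $\{f_2,f_3\}$. The only difference is organizational: instead of expanding and verifying the cancellations coefficient by coefficient, the paper factors the two generators as $-p(\bar{p}-\tilde{Q})$ and $-(\bar{p}-\tilde{Q})(\bar{p}+\tilde{Q})$ modulo $p^{2n-2}$, so that the bracket visibly acquires the factor $p\bar{p}-Q\equiv\mu_{2n-2}p^{2n-2}\bmod I$ together with a further factor of $p$ from $\bar{\partial}\tilde{Q}-(\tilde{Q}/p)\partial\tilde{Q}$, and dies by degree.
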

\begin{proof}
For $A_n$, we have $I=\langle p^n, \bar{p}=\mu_2p+...+\mu_np^{n-1}=Q(p) \rangle$. We compute $\{p^n, -\bar{p}+Q(p)\} = np^{n-1}\partial Q = 0 \mod I$ since there is no constant term in $Q$. 
The same argument holds for $B_n$ and $C_n$ since their ideals are special cases of the ideal of type $A_n$.

For $D_n$, the ideal $I$ is given by
\begin{align*}
\langle p^{2n-1}, p\bar{p} &= \mu_2p^2+\mu_4p^4+...+\mu_{2n-2}p^{2n-2}=Q(p)+\mu_{2n-2}p^{2n-2}, \\
 \bar{p}^2 &= \nu_2p^2+\nu_4p^4+...+\nu_{2n-2}p^{2n-2}=R(p)+\nu_{2n-2}p^{2n-2}\rangle.
\end{align*}
As before the Poisson brackets with the first generator $p^{2n-1}$ vanishes modulo $I$ since $Q$ and $R$ have no constant terms. To compute the last Poisson bracket, define $\tilde{Q}=Q/p$. By the relations in $I$, we have $R=\tilde{Q}^2+p^{2n-2}\tilde{R}$ for some polynomial $\tilde{R}$ (see subsection \ref{Dn}). Remark further that $\{a(z, \bar{z})p^k\bar{p}^l, b(z, \bar{z})p^{k'}\bar{p}^{l'}\}=0 \mod I$ whenever $k+l+k'+l' > n-1$ since any term of degree $n-1$ in $p$ and $\bar{p}$ is in $I$ and the Poisson bracket lowers this degree by 1.
With all this, we compute
\begin{align*}
&\{-p\bar{p}+Q+\mu_{2n-2}p^{2n-2}, -\bar{p}^2+R+\nu_{2n-2}p^{2n-2}\} \\
=& \; \{-p\bar{p}+p\tilde{Q}+\mu_{2n-2}p^{2n-2}, -\bar{p}^2+\tilde{Q}^2+p^{2n-2}(\tilde{R}+\nu_{2n-2})\}\\
=& \; \{-p\bar{p}+p\tilde{Q}, -\bar{p}^2+\tilde{Q}^2\} & \text{ by degree argument}  \\ 
=& \;  2(p\bar{p}-p\tilde{Q})\bar{\partial}\tilde{Q}-2(\bar{p}-\tilde{Q})\tilde{Q}\partial \tilde{Q} \\
=& \;  2(p\bar{p}-Q)(\bar{\partial}\tilde{Q}-\frac{\tilde{Q}}{p}\partial \tilde{Q}) \\
=& \;  2\mu_{2n-2}p^{2n-2}(\bar{\partial}\tilde{Q}-\frac{\tilde{Q}}{p}\partial \tilde{Q}) &\mod I \\
=& \;  0 &\mod I 
\end{align*}
where the last line comes from the fact that $p$ divides the polynomial $\bar{\partial}\tilde{Q}-\frac{\tilde{Q}}{p}\partial \tilde{Q}$.
\end{proof}

As a consequence, when computing the action of a Hamiltonian $H$ on a $\g$-complex structure, we can reduce it modulo $I$. In particular if $H \mod I = 0$, the higher diffeomorphism generated by $H$ does not act at all. For $\g$ of type $A_n, B_n$ or $C_n$ we can reduce $H$ to a polynomial in $p$, and for $D_n$ we can reduce it to $H=w_{-}\bar{p}+\sum_{k=0}^{n-2} w_{2k+1}p^{2k+1}$.

\subsection{Local theory}

Now, we can study the local theory of $\g$-complex structures. Let $z_0$ be a point on $\Sigma$ and take a small chart around it with image the unit disk $\Delta$ in the complex plane (with $z_0$ the origin). 
%We are able to determine the local theory only for type $A_n$, $B_n$ and $C_n$. We will give element for the type $D_n$.

\begin{thm}[Local theory]\label{thm1}
For $\g$ of type $A_n$, $B_n$ or $C_n$, the $\g$-complex structure can be locally trivialized, i.e. there is a higher diffeomorphism of type $\g$ which sends all higher Beltrami differentials to 0 for all small $z \in \C$.

For $\g$ of type $D_n$, all $\g$-complex structures with non-vanishing $\sigma_n$ on the unit disk $\Delta$ are equivalent under higher diffeomorphisms. However, the zero locus of $\sigma_n$ on $\Delta$ is an invariant.
\end{thm}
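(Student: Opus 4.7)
The plan is to mimic the inductive proof of Theorem \ref{loctrivial}, adapted to each classical type, treating the extra $\sigma_n$ invariant of type $D_n$ separately.

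For types $B_n$ and $C_n$, the ideal describing a $\g$-complex structure has the same form as in type $A_n$ but only the even-indexed Beltrami differentials $\mu_2, \mu_4, \ldots$ are non-zero, the odd-indexed ones vanishing by the symmetry $(p,\bar p)\mapsto(-p,-\bar p)$. The group $\Symp(\g,\S)$ is generated, after reduction modulo $I$ (simplification lemma \ref{simplificationlemma} still applies by subsection \ref{actiondiff}), by Hamiltonians of the form $H=v_{2k}p^{2k-1}$ for $k\geq 1$, which are odd polynomials in $p,\bar p$. Proposition \ref{varmu} then applies verbatim, and the crucial observation is that the action of $H=v_{2k}p^{2k-1}$ on $\mu_{2l}$ for $l>k$ involves only $\mu_{2(l-k)+1}$, whose odd index forces it to vanish. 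Thus the cross-terms decouple and one obtains $\delta\mu_{2k}=(\bar\partial-\mu_2\partial+(2k-1)\partial\mu_2)v_{2k}$ and $\delta\mu_{2l}=0$ for $l>k$ under the running induction hypothesis. Following the same scheme as in Theorem \ref{loctrivial} (killing $\mu_2$ first via the Korn--Lichtenstein theorem, then inductively solving $\bar\partial v_{2k}=-\mu_{2k}$ with the Cauchy kernel, together with a bump function cutoff), one produces a compactly supported hamiltonian flow trivializing all Beltrami differentials on a neighborhood of the origin.

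For type $D_n$, I first kill $\mu_2,\mu_4,\ldots,\mu_{2n-2}$ by the same scheme as above, using only the polynomial generators $v_{2k}p^{2k-1}$ for $k=1,\ldots,n-1$. A direct check shows these Hamiltonians preserve $\sigma_n$ at the relevant order, since modulo $I$ they affect the second generator $p\bar p-Q(p)-\mu_{2n-2}p^{2n-2}$ but not the third generator $\bar p^2-R(p)-\nu_{2n-2}p^{2n-2}$. This reduces the problem to ideals of the form $\langle p^{2n-1},\,p\bar p,\,\bar p^2-2\sigma_n^2 p^{2n-2}\rangle$, parameterized solely by $\sigma_n$. To connect two such ideals with non-vanishing $\sigma_n^{(1)},\sigma_n^{(2)}$, I use the extra Hamiltonian $H=w\bar p$. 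A Poisson-bracket calculation modulo $I$ gives $\delta\sigma_n=-\sigma_n\bar\partial w$ and $\delta\mu_{2n-2}=-2\sigma_n^2\partial w$. Since $\sigma_n\neq 0$, the $\bar\partial$-equation $\bar\partial w=\log(\sigma_n^{(1)}/\sigma_n^{(2)})$ can be solved locally; the induced $\mu_{2n-2}$ is then killed again by a subsequent $v_{2n-2}p^{2n-3}$, and a Moser-type iteration along a path $t\mapsto(1-t)\sigma_n^{(1)}+t\sigma_n^{(2)}$ produces the desired higher diffeomorphism.

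Finally, the invariance of the zero locus of $\sigma_n$ follows from the intrinsic characterization given in subsection \ref{Dn}: $\sigma_n(z)\neq 0$ is equivalent to $[(A(z),B(z))]$ being a cyclic element of $\Hilb^{reg}_0(\mf{so}_{2n})$, and higher diffeomorphisms act pointwise by conjugacy classes in the $\g$-Hilbert scheme, so cyclicity is preserved. The main obstacle is the $D_n$ case: unlike the other classical types where the inductive decoupling is clean, the $\bar p$-Hamiltonian required for $\sigma_n$ inevitably couples $\sigma_n$ and $\mu_{2n-2}$, forcing the Moser-type iteration rather than a one-shot induction. The hypothesis $\sigma_n\neq 0$ is essential because $\sigma_n^2$ is the leading coefficient in the transport equation for $w$; at zeros of $\sigma_n$ the PDE degenerates and the ideal changes type (from cyclic to non-cyclic), which is precisely the geometric reason the zero locus is a genuine invariant.
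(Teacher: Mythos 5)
Your trivialization arguments follow the paper's strategy closely. For $B_n$ and $C_n$ the paper simply observes that the structure is a substructure of type $A_n$ via the standard representation; your more explicit check that the odd Hamiltonians of $\Symp(\g,\S)$ suffice (because the cross-terms in Proposition \ref{varmu} involve only odd-indexed, hence vanishing, Beltrami differentials) is a welcome refinement of the same idea. For $D_n$ your two-stage scheme (kill the $\mu_{2i}$, then handle $\sigma_n$ with $H=w\bar p$ corrected by a $p^{2n-3}$-term, via a Moser path and the Cauchy transform) is exactly the paper's proof. One computational slip: the variation of $\sigma_n$ under $H=w\bar p$ is not $-\sigma_n\bar\partial w$ but $\delta\sigma_n=\bar\partial(w\sigma_n)=\sigma_n\bar\partial w+w\bar\partial\sigma_n$; the transport term $w\bar\partial\sigma_n$ matters, and the equation you actually solve is $w=\tfrac{1}{\sigma_n}T(\delta\sigma_n)$ rather than a $\bar\partial$-equation for $w$ alone. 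This does not change the viability of the method (the division by $\sigma_n$ is still where non-vanishing enters), but your displayed formulas are not the ones the flow satisfies.

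The genuine gap is in your proof that the zero locus of $\sigma_n$ is an invariant. You assert that ``higher diffeomorphisms act pointwise by conjugacy classes in the $\g$-Hilbert scheme, so cyclicity is preserved.'' This cannot be right: if the action were pointwise conjugation, then every higher Beltrami differential---being a coordinate on the zero-fiber of the Hilbert scheme, hence an invariant of the conjugacy class $[(A(z),B(z))]$---would be a pointwise invariant, contradicting the first half of the theorem you have just proved. The infinitesimal action is given by Poisson brackets modulo $I$, which involve derivatives in $z$ and $\bar z$ and do not preserve fibers; the $\Symp$-orbits in each fiber are strictly larger than conjugacy classes, so preservation of the cyclic stratum requires an argument. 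Your identification ``$\sigma_n(z)\neq 0$ iff the pair is cyclic'' is correct and isolates the right dichotomy, but the paper closes the gap differently: it identifies $I$ (up to a summand $\langle p,\bar p\rangle^{n-1}$) with the deformation ideal of the plane curve singularity $f=-\tfrac{\nu_{2n-2}}{2n-1}p^{2n-1}+p\bar p^2$, which is a Kleinian singularity of type $D_{2n}$ precisely when $\nu_{2n-2}\neq 0$, and invokes the invariance of singularity type under diffeomorphisms. You need this, or some equivalent argument showing the flow cannot cross the stratum $\{\sigma_n=0\}$, to complete the last claim.
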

\begin{proof}
The proof for $\g$ of type $A_n$ was done in Theorem \ref{loctrivial}, using a method in the spirit of the proof of Darboux's theorem in symplectic geometry. 

If $\g$ is of type $B_n$ or $C_n$, the standard representation realizes the $\g$-complex structure as a substructure of type $A_n$. Since the last is trivializable, so is the $\g$-complex structure in that case.

For $\g$ of type $D_n$, we use the same method as for type $A_n$ by a hamiltonian flow argument.
We start with an ideal $I$ determined by $(\mu_2, \mu_4, ..., \mu_{2n-2}, \nu_{2n-2})$, the higher Beltrami differentials. The action on $\mu_{2i}$ is the same as for $\g=\mf{sl}_{2n}$ so we can trivialize them using a Hamiltonian $H$ which is a polynomial in $p$. So we are left with 
\begin{equation}\label{idealdndn}
I=\langle p^{2n-1}, p\bar{p}, -\bar{p}^2+\nu_{2n-2}p^{2n-2}\rangle.
\end{equation}
We have seen at the end of subsection \ref{actiondiff} that in the case $D_n$, any Hamiltonian can be reduced to $H=w_{-}\bar{p}+\sum_{k=0}^{n-2}w_{2k+1}p^{2k+1}$.
The only part of this Hamiltonian acting on $\nu_{2n-2}$ is $H=w_{-}\bar{p}$, which also changes $\mu_{2n-2}$. So in order to assure that $\mu_{2n-2}$ stays zero, we use $$H=w_{-}\bar{p}+w_{2n-3}p^{2n-3}.$$

We compute the action of this Hamiltonian on the ideal $I$. For the second generator of $I$ we get:
$$\{w_{-}\bar{p}+w_{2n-3}p^{2n-3}, -p\bar{p}\} = p^{2n-2}(\bar{\partial}w_{2n-3}+\nu_{2n-2}\partial w_{-}) \mod I.$$
For the third generator of $I$ we get
$$\{w_{-}\bar{p}+w_{2n-3}p^{2n-3}, -\bar{p}^2+\nu_{2n-2}p^{2n-2}\} = p^{2n-2}(w_{-}\bar{\partial}\nu_{2n-2}+2\nu_{2n-2}\bar{\partial}w_{-}) \mod I.$$
Denote by $\mu_{2-2}^t$ and $\nu_{2n-2}^t$ the image of $\mu_{2n-2}$ and $\nu_{2n-2}$ under the flow generated by $H$ at time $t$.
From the above computation we get 
$$ \left \{\begin{array}{cl}
\frac{d}{dt} \mu_{2n-2}^t &= \bar{\partial}w_{2n-3}+\nu_{2n-2}^t\partial w_{-} \\
\frac{d}{dt} \nu_{2n-2}^t &= (w_{-}\bar{\partial}+2\bar{\partial}w_{-})\nu_{2n-2}^t.
\end{array}\right. $$

Instead of keeping $\nu_{2n-2}$, we work with the higher Beltrami differential $\sigma_n$. Since all the $\mu_{2i}$ are zero in $I$, we have $\nu_{2n-2}=\sigma_n^2$. Therefore we get from the second equation above $\frac{d}{dt}(\sigma_n^2)=(w_{-}\bar{\partial}+2\bar{\partial}w_{-})(\sigma_n^2)$ which gives
\begin{equation}\label{varsigma}
\frac{d}{dt}\sigma_n^t = \bar{\partial}(w_{-}^t\sigma_n^t).
\end{equation}

We wish to have $\frac{d}{dt} \mu_{2n-2}^t=0$ to keep $\mu_{2n-2}=0$. For $\sigma_n$, we show that we can deform it to the constant function 1 on the unit disk, assuming $\sigma_n$ vanishes nowhere on $\Delta$. We choose the path $\sigma_n^t=(1-t)\sigma_n^0+t$ from the initial $\sigma_n^0$ to the constant function 1. If $\sigma_n^t=0$ for some $t$, we have to slightly modify the path. We get $\frac{d}{dt}\sigma_n^t=1-\sigma_n^0$. 

Denote by $T$ the local inverse of the $\bar{\partial}$-operator, i.e. $\bar{\partial}(Tf)=f=T\bar{\partial}f$ for all $f\in L^2(\Delta)$. The operator $T$ is a pseudo-differential operator given by
$$Tf(z)=\frac{1}{2\pi i}\int_{\mathbb{C}}\frac{f(\zeta)}{\zeta-z}d\zeta \wedge d\bar{\zeta}.$$
We can solve equation \eqref{varsigma} using $T$: $$w_{-}^t=\frac{1}{\sigma_n^t}T(1-\sigma_n^0).$$
Notice that the Hamiltonian we need is time-dependent.
Putting this solution into the equation for $\frac{d}{dt}\mu_{2n-2}^t$, we can solve for $w_{2n-3}$:
$$w_{2n-3}^t=-T(\partial w_{-}^t\nu_{2n-2}^t).$$

Finally, we multiply $H$ by a bump function, a function on $\Delta$ which is 1 in a neighborhood of the origin and 0 outside a bigger neighborhood of the origin, which ensures that the hamiltonian vector field is compactly supported, so it can be integrated to all times. In particular for $t=1$ we get $\sigma_n(z)=1$ for all $z$ near the origin.

To show that the zero locus of $\sigma_n$ can not be changed by a higher diffeomorphism, consider the singularity defined by $$f(p, \bar{p})=-\frac{\nu_{2n-2}}{2n-1}p^{2n-1}+p\bar{p}^2=0$$ which is a Kleinian singularity of type $D_{2n}$ if $\nu_{2n-2}\neq 0$. Its deformation ideal $\left\langle \frac{\del f}{\del p}, \frac{\del f}{\del \bar{p}}  \right\rangle$ is directly linked to our ideal $I$ from equation \eqref{idealdndn} by $$\left\langle \frac{\del f}{\del p}, \frac{\del f}{\del \bar{p}}  \right\rangle\oplus \langle p, \bar{p}\rangle^{n-1} = \langle p^{2n-1}, p\bar{p}, -\bar{p}^2+\nu_{2n-2}p^{2n-2} \rangle.$$ 
Since the type of a singularity is invariant under diffeomorphisms, so is its deformation ideal. This is why we cannot change $\nu_{2n-2}=0$ to $\nu_{2n-2}\neq 0$ by higher diffeomorphisms. 
\end{proof}
\begin{Remark}
It is interesting to notice the appearance of Kleinian singularities, which have an $ADE$-classification. The fact that for $\g$ of type $D_n$ the singularity is of type $D_{2n}$ is linked to the representation of $\mf{so}_{2n}$ on $\C^{2n}$. There should be a more intrinsic way to link $\g$-complex structures to singularities of type $\g$.

An idea in this direction is the following: the singularity of type $\g$ appears inside the Lie algebra $\g$, more precisely inside the nilpotent variety along the subregular locus (see \cite{Steinberg}). A minimal resolution of this singularity is given by the Springer resolution. There should be a link between $\g$-Hilbert schemes and the Springer resolution.
\hfill $\triangle$
\end{Remark}

Since there are no local invariants for $\g$-complex structures, only their global geometry is non-trivial.

\subsection{Definition of the moduli space}

To define the moduli space of $\g$-complex structures, there is one more subtlety: in order to get one component, we have to fix an orientation on $\Sigma$. We then call a complex structure \textit{compatible} if the induced orientation coincides with the given orientation on $\Sigma$. We call a $\g$-complex structure \textit{compatible} if the induced complex structure is.
\begin{definition}
The \textbf{moduli space $\bm\hat{\mc{T}}_{\g}$} is the space of all compatible $\g$-complex structures modulo the action of higher diffeomorphisms of type $\g$.
\end{definition}

Notice that a $\g$-complex structure is compatible iff $\mu_2(z, \bar{z})\bar{\mu}_2(z, \bar{z}) < 1$. 
Reverting the orientation on $\Sigma$ we get another copy of $\bm\hat{\mc{T}}_{\g}$ corresponding to those $\g$-complex structures with $\mu_2(z, \bar{z})\bar{\mu}_2(z, \bar{z}) > 1$. 
%We noticed in remark \ref{} that for a regular class $[(A,B)]\in \Hilb^{reg}(\g)$, the intersection between $\Span(A,B)$ and $\g^{reg}$ has two components. This explains the two copies of $\bm\hat{\mc{T}}_{\g}$.

For $\g=\mf{sl}_2$ we get Teichm\"uller space since we can reduce any Hamiltonian to $H=w(z,\bar{z})p$ which generates a linear diffeomorphism of $T^*\Sigma$, coming from a diffeomorphism on $\Sigma$ isotopic to the identity.

The moduli space has the following properties:
\begin{thm}[Global theory]\label{thm2}
For $\g$ of type $A_n, B_n$ or $C_n$, and a surface $\Sigma$ of genus $g\geq 2$, the moduli space $\bm\hat{\mathcal{T}}_{\g}$ is a contractible manifold of complex dimension $(g-1)\dim \g$. In addition, its cotangent space at any point $I$ is given by 
$$T^*_{I}\bm\hat{\mathcal{T}}_{\g} = \bigoplus_{m=1}^{r} H^0(K^{m_i+1})$$ where $(m_1,...,m_r)$ are the exponents of $\g$ and $r=\rk \g$ denotes the rank of $\g$.

\noindent For type $D_n$, the moduli space $\bm\hat{\mathcal{T}}_{\g}$ is a contractible topological space. The locus where the zero-set of the higher Beltrami differential $\sigma_n$ is a discrete set on $\S$ is a smooth manifold with the same properties as above (dimension and cotangent space).
\end{thm}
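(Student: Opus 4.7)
The plan is to imitate closely the proof of Theorem \ref{mainresultncomplex} for the $\mf{sl}_n$ case, exploiting that the local theory is trivial in types $A_n$, $B_n$, $C_n$ (Theorem \ref{thm1}). First I would compute the infinitesimal action of $\Symp(\g,\S)$ on higher Beltrami differentials $\mu_\g = (\mu_{m_i+1})$ about an arbitrary $\g$-complex structure, using the same machinery as in Proposition \ref{varmu}: variation of the generators of the ideal $I_\g$ under a Hamiltonian $H$ is given by Poisson brackets, reduced modulo $I_\g$ thanks to the simplification lemma, which holds for $\g$ by the proposition in Section \ref{actiondiff}. Since the local theory is trivial, to describe the tangent space at any class $[I]$ it is enough to study the infinitesimal action at the trivial structure $\mu_\g = 0$.

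Next I would identify $\Lie(\Symp(\g,\S))$ with Hamiltonians $H$ reduced modulo the trivial ideal $I_\g^{0}$. In the classical cases $A_n$, $B_n$, $C_n$ the reduction sends $H$ to a polynomial $\sum_i v_{m_i+1} p^{m_i}$, where the exponents $m_i$ of $\g$ appear exactly as the allowed powers of $p$ (for $A_{n-1}$: all degrees $1,\dots,n-1$; for $B_n$ and $C_n$: only odd degrees $1,3,\dots,2n-1$). The same computation as in Proposition \ref{varmu} then yields, at the trivial structure,
\[
\delta \mu_{m_i+1} = \bar{\partial} v_{m_i+1},
\]
where $v_{m_i+1}\in \Gamma(K^{m_i})$ and $\mu_{m_i+1}\in \Gamma(K^{-m_i}\otimes \bar{K})$ by the global analysis \eqref{naturemu}. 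This decouples the tangent space into a direct sum indexed by exponents, and dualizing by the pairing $\int_\S t\,\delta\mu$ between $\Gamma(K^{-m_i}\otimes \bar{K})$ and $\Gamma(K^{m_i+1})$ together with integration by parts gives
\[
T^*_{[I]}\bm\hat{\mc{T}}_\g \;=\; \bigoplus_{i=1}^{r} \ker(\bar{\partial} : \Gamma(K^{m_i+1})\to\Gamma(K^{m_i+1}\otimes \bar{K})) \;=\; \bigoplus_{i=1}^{r} H^0(K^{m_i+1}).
\]
Riemann--Roch gives $\dim H^0(K^{m_i+1}) = (2m_i+1)(g-1)$ in genus $g\geq 2$, and the identity $\sum_i (2m_i+1) = \dim \g$ (a standard fact about exponents of a simple Lie algebra) yields $\dim_\C \bm\hat{\mc{T}}_\g = (g-1)\dim\g$. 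Contractibility follows from the fact that $[(t\mu_\g)]$ retracts $[\mu_\g]$ onto the trivial class as $t\to 0$; that this is well-defined on equivalence classes uses that if $\mu_\g$ and $\mu_\g'$ are equivalent under a higher diffeomorphism, so are $t\mu_\g$ and $t\mu_\g'$ (by rescaling the generating Hamiltonian accordingly).

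The case $D_n$ is the delicate one: an extra Beltrami differential $\sigma_n$ of type $K^{1-n}\otimes\bar{K}$ appears, with dual differential in $H^0(K^n)$ matching the extra exponent $n-1$, but Theorem \ref{thm1} only gives local triviality \emph{away from the zero locus of $\sigma_n$}. The plan is to restrict the analysis to the open subset $\bm\hat{\mc{T}}_\g^{\mathrm{sm}}\subset \bm\hat{\mc{T}}_\g$ of classes admitting a representative whose $\sigma_n$ has only isolated zeros; on this subset, the variation formula \eqref{varsigma} and the associated inverse-$\bar{\partial}$ argument from Theorem \ref{thm1} go through, so the infinitesimal computation is identical in structure to the $A,B,C$ cases and produces the same cotangent description with the exponents of $D_n$. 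The main obstacle will be the $D_n$ situation: verifying that $\bm\hat{\mc{T}}_\g^{\mathrm{sm}}$ is indeed smooth (not just formally, but as a quotient, requiring slice arguments for the infinite-dimensional group action), handling the strata where $\sigma_n$ has higher-order vanishing, and proving global contractibility of $\bm\hat{\mc{T}}_\g$ as a topological space despite the zero-locus stratification of $\sigma_n$ being a diffeomorphism invariant. The final contractibility for $D_n$ will use that the scaling $\mu_\g \mapsto t\mu_\g$ scales $\sigma_n$ to $t\sigma_n$ without creating new zeros, hence respects the stratification and retracts everything to the trivial class.
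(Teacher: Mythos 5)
Your proposal follows essentially the same route as the paper's proof: reduce to the trivial structure via the local theory, identify the reduced Hamiltonians with polynomials in $p$ whose allowed powers are the exponents of $\g$, compute $\delta\mu_{m_i+1}=\bar\partial v_{m_i+1}$, dualize to get $\bigoplus_i H^0(K^{m_i+1})$, apply Riemann--Roch together with $\sum_i(2m_i+1)=\dim\g$, and handle $D_n$ by restricting to the locus where $\sigma_n$ has discrete zeros and using the variation $\delta\sigma_n=\bar\partial(w_-\sigma_n)$. The paper's treatment is no more detailed than yours on the points you flag as delicate (smoothness of the quotient, the degenerate strata for $D_n$), so your outline matches its proof in both structure and level of rigor.
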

Notice that the differentials in $H^0(K^{m_i+1})$ are holomorphic with respect to the complex structure induced from the $\g$-complex structure (see Proposition \ref{inducedcomplex}).

For the case $D_n$, we conjecture that the moduli space $\bm\hat{\mathcal{T}}_{\g}$ is a topological manifold everywhere. The points where the zero-set of $\sigma_n$ is not discrete can have a cotangent space which is strictly bigger than the space of holomorphic differentials. One can think for example of the curve in $\R^2$ given by $t\mapsto (t^3,t^2)$, shown in figure \ref{curvett2}, which has a cusp at the origin, but is still a topological manifold.

\vspace*{0.3cm}
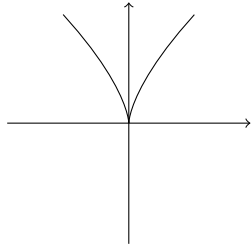
\begin{figure}
\begin{center}
\begin{tikzpicture}[scale=4]
	\draw[->] (0,-0.4)--(0,0.4);
	\draw[->] (-0.4,0)--(0.4,0);
\draw [domain=-0.6:0.6] plot (\x*\x*\x, \x*\x);
\end{tikzpicture}
\caption{Curve with cusp}
\label{curvett2}
\end{center}
\end{figure}

\begin{proof}
The case for $A_n$ has been treated in Theorem \ref{mainresultncomplex}. The cases $B_n$ and $C_n$ are exactly analogous:

One shows that at every point, the cotangent space exists and is of the form stated in the theorem. From this follows that $\bm\hat{\mc{T}}_{\g}$ is a manifold.
We have to check the appearance of the exponents of the Lie algebra. Since $\mu_{2i}$ is a section of $K^{1-2i}\otimes \bar{K}$ (see equation \eqref{naturemu}) its dual $t_{2i}$ is a section of $K^{2i}$. Since the exponents for $B_n$ and $C_n$ are the same and equal to $(1,3,...,2n-1)$, we get the desired form stated in the theorem.

For $\g$ of type $D_n$ we consider the subset on $\bm\hat{\mathcal{T}}_{\g}$ where the zero-locus of $\sigma_n$ is discrete in $\S$.
By the local theory, we know that there is a coordinate system in which $\mu_{2i}=0$ for all $i=1,..., n-1$. In that case, we know that the variation of $\mu_{2i}$ under a higher diffeomorphism generated by $H=w_{-}\bar{p}+\sum_{k=0}^{n-2}w_{2k+1}p^{2k+1}$ is given by $\delta \mu_{2i}=\bar{\partial}w_{2i-1}$ and equation \eqref{varsigma} gives $\delta \sigma_n=\bar{\partial}(w_{-}\sigma_n)$. The variation of $\mu_{2i}$ is the same as in the case of type $A_n$, so we know that these contribute to the cotangent bundle by a term $H^0(K^{2i})$. For the term $\sigma_n$ we use the pairing between differential of type $(1-n,1)$ and of type $(n,0)$ given by integration over the surface. We get 
\begin{align*}
(\{\delta\sigma_n\} / \bar{\partial}(w_{-}\sigma_n))^* &=  \{t_n \in \Gamma(K^n) \mid \textstyle\int t_n \bar{\partial}(w_{-}\sigma_n) = 0 \; \forall \, w_{-} \in \Gamma(\bar{K}) \} \\
&=  \{t_n \in \Gamma(K^n) \mid \textstyle\int \bar{\partial}t_n w_{-}\sigma_n = 0 \; \forall \, w_{-} \in \Gamma(\bar{K}) \} \\
&= \{t_n \in \Gamma(K^n) \mid \bar{\partial}t_n = 0 \} \\
&= H^0(K^n)
\end{align*}
where we used that $\sigma_n$ vanishes only on a discrete set.

Hence the cotangent bundle is given by $$T^*_I\bm\hat{\mathcal{T}}_{\g} = \bigoplus_{m=1}^{n-1}H^0(K^{2m})\oplus H^0(K^n).$$
The exponents of $\mf{so}_{2n}$ are precisely $(1,3,...,2n-3,n-1)$, so the cotangent bundle is of the form stated in the theorem.

For the dimension of $\bm\hat{\mathcal{T}}_{\g}$, we use $\dim H^0(K^{m_i+1})=(g-1)(2m_i+1)$ by Riemann-Roch (using $g \geq 2$). We get $$\dim \bm\hat{\mc{T}}_{\g}=(g-1)\sum_{i=1}^r (2m_i+1)=(g-1)\dim \g$$ using a well-known formula coming from the decomposition of $\g$ as $\mf{sl}_2$-module using the principal $\mf{sl}_2$-triple.

Contractibility for all types is analogous to the case $A_n$.
\end{proof}

The previous theorem gives lots of common features between our moduli space $\bm\hat{\mc{T}}_{\g}$ and the $G$-Hitchin component, in particular the contractibility and the dimension.
There is another common property to notice:
\begin{prop}\label{inclteich}
There is an injection of Teichm\"uller space into the moduli space $\bm\hat{\mc{T}}_{\g}$.
\end{prop}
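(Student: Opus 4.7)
The plan is to mimic the argument of Proposition \ref{copyteich}, replacing the ad hoc formula $[\mu_2]\mapsto[(\mu_2,0,\dots,0)]$ by the intrinsic map \eqref{teichcopy} coming from the principal embedding $\psi\colon\mf{sl}_2\hookrightarrow\g$. Fix a principal $\mf{sl}_2$-triple $(e,f,h)$ in $\g$. Pointwise, $\psi$ induces a map $\Hilb_0(\mf{sl}_2)\to\Hilb^{reg}_0(\g)$ which sends the class $[(e,\mu_2 e)]$ to $[(\psi(e),\mu_2\psi(e))]$, and applying this fiberwise to the bundle $\Hilb_0(\mf{sl}_2)(T^{*\C}\S)\to\Hilb^{reg}_0(\g)(T^{*\C}\S)$ gives a map $\psi_*$ sending a complex structure to a $\g$-complex structure. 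Concretely, in the matrix viewpoint, a complex structure locally represented by $\Phi_1 dz+\mu_2\Phi_1 d\bar z$ with $\Phi_1$ a principal nilpotent of $\mf{sl}_2$ is sent to $\psi(\Phi_1)dz+\mu_2\psi(\Phi_1)d\bar z$, where $\psi(\Phi_1)$ is now principal nilpotent in $\g$. The value of $\mu_2$ is unchanged, so the reality constraint $\mu_2\bar\mu_2\neq 1$ of Definition~\ref{def-g-complex-1} is automatic; in the idealic description this corresponds to $\mu_{2k}=0$ for $k\geq 2$ (and $\sigma_n=0$ in type $D_n$).

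Next I would check that $\psi_*$ descends to the moduli spaces. The group $\Diff_0(\S)$ acts on complex structures and embeds into $\Symp(\g,\S)$ via linear Hamiltonians $H=wp+\bar w\bar p$ (which are odd, hence admissible for all classical types). Using the variation formula of Proposition \ref{varmu} (and its analogues for types $B,C,D$), such a Hamiltonian changes $\mu_2$ in exactly the usual way and leaves every other higher Beltrami differential equal to zero along the image locus. Thus the action of $\Diff_0(\S)$ on $\mc{T}(\S)$ is intertwined with the action of the corresponding subgroup of $\Symp(\g,\S)$ on $\bm\hat{\mc{T}}_\g$, so $\psi_*$ descends to a well defined map $\mc{T}(\S)\to\bm\hat{\mc{T}}_\g$ that is independent of the reference complex structure.

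For injectivity, suppose two images $\psi_*[\mu_2]$ and $\psi_*[\mu_2']$ are equivalent via a higher diffeomorphism generated by some $H$. By the simplification lemma we may reduce $H$ modulo the ideal, and by Theorem~\ref{thm1} the higher-order pieces of $H$ act trivially on $\mu_2$; concretely, a Hamiltonian $H=\sum w_{k,l}p^k\bar p^l$ affects $\mu_2$ only through its linear part $w_{1,0}p+w_{0,1}\bar p$, as follows from the variation computations of subsection~\ref{actiondiff}. Hence the equivalence is already realized by the extension of a genuine diffeomorphism of $\S$, so $[\mu_2]=[\mu_2']$ in $\mc{T}(\S)$. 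The map is therefore injective.

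The main subtlety will be type $D_n$: there the image lies in the locus $\sigma_n=0$, which is precisely the singular stratum identified in Theorem~\ref{thm2}, so one has to be careful that $\psi_*$ lands in the contractible topological moduli space and that the injectivity argument above still applies despite the presence of the extra generator $w_{-}\bar p$ in the admissible Hamiltonians. The key check is that along the locus $\mu_{2k}=0=\sigma_n$, the Hamiltonian $w_{-}\bar p$ produces only variations proportional to $\bar\del(w_{-}\sigma_n)=0$ and does not disturb $\mu_2$ at leading order, so the reduction to a linear Hamiltonian still goes through. Once this is verified, the same argument as in types $A,B,C$ yields the injection.
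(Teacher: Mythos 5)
Your proposal is correct and follows essentially the same route as the paper: the injection is induced by the principal map via \eqref{teichcopy}, restricted to zero-fibers, extended fiberwise over $\S$, and descended to the quotients because $\Diff_0(\S)$ sits inside the higher diffeomorphisms of type $\g$; injectivity then follows, exactly as in Proposition \ref{copyteich}, from the fact that Hamiltonians of degree at least $2$ do not act on $\mu_2$. The extra care you take with the type $D_n$ stratum $\sigma_n=0$ is a reasonable refinement that the paper leaves implicit, but it does not change the argument.
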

\begin{proof}
The proposition follows from the map $\psi:\Hilb(\mf{sl}_2) \rightarrow \Hilb^{reg}(\g)$ constructed in equation \eqref{teichcopy}. This map restricts to a map between the zero-fibers and extends over the surface $\Sigma$. Finally the map descends to the quotient by higher diffeomorphisms since for $\mf{sl}_2$ we only quotient by diffeomorphisms of $\Sigma$.

For injectivity, it is possible to check for classical $\g$ that a Hamiltonian of degree at least 2 does not act on $\mu_2$. We then exactly reason as in the proof of Proposition \ref{copyteich}.
\end{proof}

The same property holds for $G$-Hitchin components which can be defined as the deformation space of representations of the form $\pi_1(\Sigma) \rightarrow \PSL_2(\R) \rightarrow G$ where the first map is a fuchsian representation and the second one is the principal map. So inside the $G$-Hitchin component sits a copy of Teichm\"uller space. 

We conjecture the equivalence of Hitchin's component and the moduli space of $\g$-complex structures:
\begin{conj}\label{conjmaj}
The moduli space $\bm\hat{\mathcal{T}}_{\g}$ is canonically homeomorphic to the Hitchin component in the character variety $\Rep(\pi_1(\Sigma),G)$ where $G$ is the real split Lie group associated to $\g$.
\end{conj}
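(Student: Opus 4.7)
The plan is to mimic, for a general simple $\g$, the strategy developed in Part \ref{part2} for $\g=\mf{sl}_n$. First, to a $\g$-complex structure $I$ I associate, in the matrix viewpoint of Definition \ref{def-g-complex-1}, a gauge class of $\g$-valued $1$-forms $\Phi=\Phi_1\,dz+\Phi_2\,d\bar z$ with $\Phi_1$ principal nilpotent and $\Phi_2\in Z(\Phi_1)$. By Proposition \ref{mu2prop} and the centralizer decomposition $Z(\Phi_1)=\Span(\Phi_1)\oplus Z(\Phi_1)^{irreg}$, the coefficient $\mu_2$ is intrinsic, and the higher coefficients parameterize $\Phi_2$ in a basis of $Z(\Phi_1)$ indexed by the exponents $(m_1,\dots,m_r)$. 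I then fix a compact real form $*$ of $\g$ (together with the split real form $\tau$ of Hitchin \cite{Hit.1}) and consider, as in Section \ref{finalstep}, the family of $\g$-connections
\[\mc{A}(\l)=\l\Phi+A+\l^{-1}\Phi^{*},\]
with reality constraint $-\mc{A}(-1/\bar\l)^{*}=\mc{A}(\l)$ and with a cotangent datum $(t_1,\dots,t_r)$, $t_i\in H^{0}(K^{m_i+1})$, encoded intrinsically via the principal slice (\emph{i.e.}\ using $\tr\Phi_1^{k-1}A_1$ replaced by the Kostant--invariant projections onto $Z(f)$). This gives a map $T^{*}\bm\hat{\mc{T}}_\g\dashrightarrow\Rep(\pi_1(\S),G^{\C})$.

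Next I carry out the $\g$-analog of the parabolic reduction of Section \ref{parabolicreduction}. Fix a Borel $\mf{b}\subset\g$ compatible with the principal nilpotent $\Phi_1$, and let $\mc{P}\subset\mc{G}$ be the group of gauge transformations taking values in the corresponding parabolic. The hamiltonian reduction $\mc{A}\sslash\mc{P}$ yields, by the existence of a Drinfeld--Sokolov (\emph{W}-)gauge, a space of $\g$-opers parameterized by $\bm\hat{t}_i\in\Gamma(K^{m_i+1})$ and $\bm\hat{\mu}_i$ (with the $D_n$-specific extra coordinate $\sigma_n$); the second reduction by $\Symp(\g,\S)$ imposes a generalized holomorphicity condition which, at the semi-classical limit $\l\to\infty$, collapses to the condition $(\mc{C})$ defining $T^{*}\bm\hat{\mc{T}}_\g$ (this is the $\g$-version of Theorems \ref{actionsymponlambdaconn} and \ref{conditioncinconnection}, and the Drinfeld--Sokolov formalism provides the needed generalization of the $\mf{sl}_n$ computation).

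The crucial step is then the $\g$-version of Conjecture \ref{nahc}: for every $[(\mu,t)]\in T^{*}\bm\hat{\mc{T}}_\g$ there exists, up to unitary gauge and a finite amount of initial data, a unique flat connection $\mc{A}(\l)$ of the above form satisfying the reality constraint. Locally trivializing the $\g$-complex structure by Theorem \ref{thm1} reduces the flatness equations, via the Higgs gauge of \ref{Higgsgauge}, to a Toda-type elliptic system on the affine Kac--Moody algebra associated with $\g$ (for $t=0$, it is the $\g$-Toda system, and ellipticity is preserved by Cauchy--Kowalewskaya for small~$t$). Assuming this NAH-analog, I restrict to the zero-section $\bm\hat{\mc{T}}_\g\hookrightarrow T^{*}\bm\hat{\mc{T}}_\g$, where Hitchin's real form $\tau$ gives $\tau^{*}\Phi_1=\Phi_1^{*}$ and $\tau^{*}\mc{A}(\l)=\mc{A}(\l)$, so the monodromy of $\mc{A}(\l=1)$ lies in the $\tau$-fixed real form $G$. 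This produces a map $s:\bm\hat{\mc{T}}_\g\to\Rep(\pi_1(\S),G)$.

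To conclude, I observe: (i) $s$ is continuous and injective since the flat connection and the $\g$-complex structure determine each other; (ii) $\dim\bm\hat{\mc{T}}_\g=(g-1)\dim\g=\dim\mc{T}^{Hit}_G$ by Theorem \ref{thm2}; (iii) by Proposition \ref{inclteich} the Teichmüller locus inside $\bm\hat{\mc{T}}_\g$ lands in the fuchsian locus of $\mc{T}^{Hit}_G$, so $s(\bm\hat{\mc{T}}_\g)$ meets the Hitchin component. Since $\bm\hat{\mc{T}}_\g$ is connected (being contractible) and $s$ is a local homeomorphism of equidimensional spaces with closed image, $s(\bm\hat{\mc{T}}_\g)$ is a whole connected component of $\Rep(\pi_1(\S),G)$, which must be the Hitchin component by the Teichmüller comparison. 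For $\g$ of type $D_n$ the singular locus where $\sigma_n$ has a non-discrete zero set has to be handled separately; one expects the map $s$ to extend continuously across it, using the $D_n$-specific deformation theory described after Theorem \ref{thm2}. The main obstacle is, as in Part \ref{part2}, the existence and uniqueness of real twistor lines, \emph{i.e.}\ the $\g$-Toda/NAH-type theorem for the affine system governing the flatness of $\mc{A}(\l)$; everything else is essentially a transcription of the $\mf{sl}_n$-argument, with the combinatorics of exponents and root spaces replacing the matrix computations, and with the $D_n$-case requiring an extra argument around the vanishing locus of $\sigma_n$.
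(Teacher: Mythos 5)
This statement is stated in the paper as a conjecture and is not proved there; your proposal does not close that gap, and you are candid about this yourself. The decisive step — the $\g$-analog of Conjecture \ref{nahc}, i.e.\ the existence and uniqueness of the real twistor line $\mc{A}(\l)=\l\Phi+A+\l^{-1}\Phi^*$ attached to a point of $T^*\bm\hat{\mc{T}}_\g$ — is assumed, not established, and even in the $\mf{sl}_n$ case the paper only obtains the corresponding diffeomorphism (Theorem \ref{mainthmm}) conditionally on Conjecture \ref{nahc}. So what you have written is a roadmap consistent with the paper's own intended strategy (Part \ref{part2} transcribed to general $\g$, as sketched in Part \ref{part4}), not a proof.

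Beyond that main gap, one intermediate step of your outline would fail as stated. You propose to run the parabolic reduction $\mc{A}\sslash\mc{P}$ with $\mc{P}$ the gauge group of ``the corresponding parabolic'' attached to a Borel compatible with $\Phi_1$. But the $\mf{sl}_n$ reduction works because the stabilizer of a line in the dual of the standard representation is a parabolic $\mf{p}$ with $\dim\g-\dim\mf{p}=\rk\g$, so that after gauge-fixing the residual curvature has exactly $\rk\g$ components, matching the number of coordinates $\bm\hat{t}_k$. The paper itself points out in Part \ref{part4} that for $\g\neq\mf{sl}_n$ no parabolic subalgebra satisfies this dimension count, so the reduction does not transcribe verbatim; a genuinely new mechanism (your suggestion of a Drinfeld--Sokolov gauge is plausible but would need to be constructed and shown to interact correctly with the second reduction by $\Symp(\g,\S)$) is required. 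Finally, your concluding topological argument asserts that $s$ is injective and a local homeomorphism; neither follows from the construction as described — in the $\mf{sl}_n$ case the paper derives the component statement from equidimensionality plus closedness of the image and the comparison on the Fuchsian locus, and the uniqueness part of the NAH-analog is exactly what would be needed to get injectivity. Until the generalized Toda/uniqueness statement is proved, the conjecture remains open.
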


\subsection{Spectral curve}\label{spectralcurve}

In this subsection, we construct a spectral curve in $T^{*\C}\Sigma$ associated to a cotangent vector to $\bm\hat{\mathcal{T}}_{\g}$, i.e. a $\g$-complex structure and a set of holomorphic differentials. The case for $\g$ of type $A_n$ was treated in section \ref{Spectralcurve1}.

In Proposition \ref{lagr} we proved that the zero-fiber $\Hilb^n_0(\C^2)$ is Lagrangian in the reduced Hilbert scheme $\Hilb^n_{red}(\C^2)$. This stays true for all classical $\g$:
\begin{prop}
For classical $\g$, the regular zero-fiber $\Hilb^{reg}_0(\g)$ is a Lagrangian subspace of $\Hilb^{reg}(\g)$.
\end{prop}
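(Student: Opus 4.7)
The plan is to mimic the strategy used for type $A_n$ in Proposition \ref{lagr}, but working intrinsically with the Lie algebra rather than with the coordinates $(t_i,\alpha_{i,j})$. The structural ingredients that carry over are the following: (i) a canonical symplectic form on the ambient space inherited from $T^*\g$, (ii) an explicit parametrization of $\Hilb^{reg}(\g)$ via a principal slice, and (iii) the characterization of $\Hilb^{reg}_0(\g)$ as an affine space of the correct dimension.

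First I would construct the symplectic structure on $\Hilb^{reg}(\g)$. Identifying $\g\times\g$ with $T^*\g$ through the Killing form $\langle\cdot,\cdot\rangle$, the canonical Liouville form $\lambda=\langle B,dA\rangle$ yields a holomorphic symplectic form $\omega=\langle dB\wedge dA\rangle$ on $\g\times\g$. The adjoint $G$-action is Hamiltonian with moment map $(A,B)\mapsto[A,B]$, so the commuting variety is precisely the zero level set of the moment map and $\omega$ descends to a symplectic form on the smooth locus of $\Comm(\g)/G$. By Proposition \ref{paramit} every class $[(A,B)]\in\Hilb^{reg}(\g)$ with $A$ regular admits the unique representative $(A\in f+Z(e),\,B\in Z(A))$; hence $\Hilb^{reg}(\g)$ is smooth there, with tangent space at such a point identified with $Z(e)\oplus Z(A)$. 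A dimension count then gives $\dim_\C\Hilb^{reg}(\g)=\dim Z(e)+\dim Z(A)=2\,\rk\g$, which is the expected dimension for a symplectic manifold containing the $\rk\g$-dimensional zero-fiber from Corollary \ref{hilbg0affine}.

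Next I would prove isotropy. An element $[(A,B)]\in\Hilb^{reg}_0(\g)$ with $A$ regular has $A$ both regular and nilpotent, hence is forced to lie in the principal slice $f+Z(e)$ and coincide with $f$. So every class in $\Hilb^{reg}_0(\g)$ admits a unique representative $(f,B)$ with $B\in Z(f)$, and a tangent vector to $\Hilb^{reg}_0(\g)$ is represented in the slice by a pair $(0,\delta B)$ with $\delta B\in Z(f)$. Computing the symplectic form on two such vectors yields
\[
\omega\bigl((0,\delta B_1),(0,\delta B_2)\bigr)=\langle\delta B_1,0\rangle-\langle\delta B_2,0\rangle=0.
\]
Combining this vanishing with the equality of dimensions $\dim\Hilb^{reg}_0(\g)=\rk\g=\tfrac12\dim\Hilb^{reg}(\g)$ gives that $\Hilb^{reg}_0(\g)$ is Lagrangian. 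One must still handle the case where $A$ is non-regular but $B$ is; by swapping the roles of $A$ and $B$ one obtains $(A,f)$ with $A\in Z(f)$, and the same computation works with the roles of the two factors exchanged (the two cases patch on their overlap, where both $A$ and $B$ are regular, because the symplectic form is well-defined independently of the chosen slice).

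The main obstacle I anticipate is the careful justification that the Killing form symplectic structure on $T^*\g//G$ descends to the non-Hausdorff space $\Hilb^{reg}(\g)$ as a genuine (holomorphic) symplectic form rather than merely a closed 2-form, and that the slice representatives furnished by Proposition \ref{paramit} give bona fide local coordinates compatible with this form. For classical $\g$ one can cross-check the computation explicitly in the matrix models of subsections \ref{Bn}, \ref{Dn}: the symplectic form $\omega=\tr\,dA\wedge dB$ restricted to the slice parametrized by $(t_{2i},\mu_{2j})$ (respectively $(t_{2i},\tau_n,\mu_{2j},\sigma_n)$) becomes a sum $\sum dt_{2i}\wedge d(\cdot)+\cdots$ in which every term carries a differential of some $t$-coordinate, so it vanishes identically on the zero-fiber $\{t=0,\tau_n=0\}$; this matches the intrinsic argument above and provides the case-by-case verification needed to complete the proof.
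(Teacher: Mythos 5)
Your proposal is correct in substance, and it is really two arguments in one: the closing ``cross-check'' paragraph \emph{is} the paper's proof, while the main body is a genuinely different, more intrinsic route. The paper simply uses the standard representation to embed $\Hilb^{reg}(\g)$ into $\Hilb^m_{red}(\C^2)$, takes the symplectic form $\tr\, dM_x\wedge dM_y=\sum_i dt_i\wedge d\alpha_{m,m+1-i}$ already established there, and observes that every term carries a $dt$, hence vanishes on the zero-fiber, with Corollary \ref{hilbg0affine} supplying the dimension count --- exactly your last paragraph. Your intrinsic argument (Killing-form identification $\g\times\g\cong T^*\g$, moment map $(A,B)\mapsto[A,B]$, reduction at $0$, principal-slice coordinates) is more conceptual and would generalize beyond classical $\g$, which is attractive given the conjectures at the end of Part 3; but the obstacle you flag is real and is precisely what the embedding into $\Hilb^m_{red}(\C^2)$ is there to avoid: the adjoint action on $\Comm(\g)$ is never free (stabilizers have dimension $\geq\rk\g$), $0$ is not a regular value of the moment map, and $\Comm(\g)$ is highly singular, so the descent and non-degeneracy of the reduced $2$-form on $\Hilb^{reg}(\g)$ do not follow from Marsden--Weinstein off the shelf. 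Non-degeneracy matters here, since without it the half-dimension count does not upgrade isotropic to Lagrangian. One smaller correction: the tangent space to the slice at $(A,B)$ is not literally $Z(e)\oplus Z(A)$, because the constraint $B\in Z(A)$ couples the two factors via $[\delta A,B]+[A,\delta B]=0$; this does not affect your isotropy computation (where $\delta A=0$ forces $\delta B\in Z(f)$), but it should be stated as the kernel condition rather than a direct sum. As written, the safe logical order is: take the matrix-model verification as the actual proof, and present the reduction picture as motivation.
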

\begin{proof}
Since we are in the regular part, Proposition \ref{paramit} gives a parametrization. For classical $\g$, the standard representation allows to consider $\Hilb^{reg}(\g)$ as subset of $\Hilb^m_{red}(\C^2)$ which remains symplectic and we can explicitly check that the zero-fiber $\Hilb^{reg}_0(\g)$ is Lagrangian.
\end{proof}

For general $\g$ we conjecture the following:
\begin{conj}
The modified smooth version of the $\g$-Hilbert scheme is symplectic and the zero-fiber is a Lagrangian subspace.
\end{conj}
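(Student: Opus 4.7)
The plan is to produce the symplectic structure by a Marsden--Weinstein reduction of $T^*\g$ and then verify the Lagrangian condition by a direct tangent-space calculation in the principal slice. First, identify $\g \cong \g^*$ via the Killing form, so $\g^2 \cong T^*\g$ carries the canonical symplectic form $\omega = \tr(dA \wedge dB)$. The adjoint action of $G$ on $\g$ lifts to a hamiltonian action on $T^*\g$ whose moment map is $\mu(A,B) = [A,B]$. Hence the commuting variety $\Comm(\g) = \mu^{-1}(0)$ is the zero level of the moment map, and the quotient $\Comm(\g) / G$ is a singular hamiltonian reduction of $T^*\g$. A standard fact is that the reduction is smooth and symplectic on the locus where $G$ acts with stabilizers of the smallest possible dimension, which by the very definition of $\Hilb(\g)$ is the regular part $\Hilb^{reg}(\g)$: pairs with $\dim Z(A,B) = \rk\g$. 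Assuming Conjecture~\ref{conj1} provides a Hausdorff smooth model, the same construction upgrades to a global symplectic form on the modified $\g$-Hilbert scheme.

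Next, I would verify the Lagrangian condition on the regular zero-fiber using the parametrization of Proposition~\ref{paramit}. At a class $[(f, B_0)] \in \Hilb^{reg}_0(\g)$ with $f$ principal nilpotent and $B_0 \in Z(f)$, tangent vectors to $\Hilb^{reg}(\g)$ are described by pairs $(\delta A, \delta B) \in Z(e) \oplus \g$ subject to the infinitesimal commutation constraint $[\delta A, B_0] + [f, \delta B] = 0$. Tangent vectors to the zero-fiber $\Hilb^{reg}_0(\g) \cong Z(f)$, described in Corollary~\ref{hilbg0affine}, are exactly those with $\delta A = 0$, so $\delta B \in Z(f)$. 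For any two such vectors, the reduced symplectic form inherited from $\omega = \tr(dA \wedge dB)$ evaluates as
\[
\omega\bigl((0, \delta B_1),(0, \delta B_2)\bigr) = \tr(0 \cdot \delta B_2) - \tr(0 \cdot \delta B_1) = 0,
\]
so the zero-fiber is isotropic. Since $\dim \Hilb^{reg}_0(\g) = \rk\g$ is exactly half of $\dim \Hilb^{reg}(\g) = 2\rk\g$, it is Lagrangian. This recovers the already-established classical cases but now via an intrinsic argument that does not rely on the standard representation.

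The main obstacle is to go from this regular and locally-Hausdorff picture to a \emph{global} statement on the modified $\g$-Hilbert scheme. Two difficulties arise: first, one must establish Conjecture~\ref{conj1}, producing a Hausdorff smooth resolution that identifies the infinitesimally close ``double points'' described in subsection~\ref{topology}; second, one must check that the symplectic form extends smoothly across the non-regular locus, in particular across strata where the stabilizer dimension jumps and where the naive reduction is singular. A natural strategy is to realize the modified $\g$-Hilbert scheme as a Nakajima-type (hyperkähler) quotient, or as a resolution fitting into a diagram compatible with the Chow map to $\h^2/W$, the latter carrying a well-defined Poisson structure induced from the flat symplectic form on $\h^2$. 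The Lagrangian property on the full zero-fiber would then follow from continuity from the regular part together with a stratum-by-stratum analysis of the non-regular locus, which for classical $\g$ can already be handled via the embedding $\Hilb^{reg}(\g) \hookrightarrow \Hilb^m_{red}(\C^2)$ used in the proposition preceding the conjecture.
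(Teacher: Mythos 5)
Since the statement is a conjecture, the paper has no complete proof to compare against: what it offers is (i) a proof of the Lagrangian property for \emph{classical} $\g$ only, by embedding $\Hilb^{reg}(\g)$ into $\Hilb^m_{red}(\C^2)$ via the standard representation and checking explicitly, and (ii) the remark that, assuming Conjecture \ref{conj1}, the $W$-invariant symplectic form on $\h^2=T^*\h$ lifts to the conjectured minimal resolution. Your route is genuinely different and, on the regular locus, more intrinsic: you produce the form directly as the descent of $\tr(dA\wedge dB)$ under the reduction $T^*\g\sslash G$ at moment level $0$ (whose zero level is exactly $\Comm(\g)$), and you verify isotropy of the zero-fiber by a tangent-space computation in the Kostant slice $f+Z(e)$, using Proposition \ref{paramit} and Corollary \ref{hilbg0affine}. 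This buys two things the paper's treatment does not: it avoids the standard representation, so it applies verbatim to exceptional $\g$; and it reproduces the explicit formula $\tr(dM_x\wedge dM_y)$ that the paper derives by hand for $\mf{sl}_n$ in subsection \ref{hilbertschemesympl}, so the two constructions agree in type $A$. The isotropy computation itself is sound: the slice sits inside $\mu^{-1}(0)$, changing a lift by an orbit direction does not change $\omega$ on $\ker d\mu$, tangent vectors to the zero-fiber have $\delta A=0$, and $\dim\Hilb^{reg}_0(\g)=\rk\g=\tfrac{1}{2}\dim\Hilb^{reg}(\g)$.

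One step is weaker than your phrase ``a standard fact'' suggests. The $G$-action on $\mu^{-1}(0)$ is \emph{nowhere} free on the locus defining $\Hilb(\g)$ — every stabilizer has dimension exactly $\rk\g$ — so Marsden--Weinstein does not apply directly, and the singular-reduction theorems of Sjamaar--Lerman produce symplectic structures on orbit-type strata, i.e.\ strata indexed by conjugacy classes of stabilizers, not merely by their dimension; the regular part mixes orbit types (Cartan stabilizers at regular semisimple pairs, unipotent ones at $[(f,B)]$). The gap can be closed by arguing pointwise: at a regular pair the rank of $d\mu$ equals $\dim\g-\rk\g$, so $\Comm(\g)$ is smooth there of dimension $\dim\g+\rk\g$; one always has $\ker d\mu=\bigl(T(G\cdot z)\bigr)^{\omega}$ and the orbit is isotropic, so the kernel of $\omega$ restricted to $\ker d\mu$ is exactly the orbit direction and the descended $2$-form is nondegenerate wherever the quotient is a manifold. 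With that supplement your argument is rigorous on $\Hilb^{reg}(\g)$. What remains genuinely open — as you acknowledge — is Conjecture \ref{conj1} and the extension across the non-regular locus and the non-Hausdorff identifications of subsection \ref{topology}, which is exactly where the paper leaves the question as well.
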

If we assume Conjecture \ref{conj1}, stating that the modified version of the $\g$-Hilbert scheme is a minimal resolution of $\h^2/W$, we get a symplectic structure. Indeed $\h^2=T^*\h$ has a canonical symplectic structure, which is invariant under the action of $W$. Hence it lifts to the minimal resolution.

\bigskip
\noindent Now we construct the spectral curve. First, we look at $\g$ of type $A_n$, $B_n$ or $C_n$.
We can write a point in $T^*\bm\hat{\mc{T}}_{\g}$ as an equivalence class of higher Beltrami differentials $\mu_i$ and holomorphic differentials $t_i$. To write in a uniform way, set $\mu_i$ or $t_i$ to 0 whenever it does not appear for $\g$. For example for type $B_n$ or $C_n$ all variables with odd index are 0.

Associate polynomials $P(p)=\sum_i t_ip^{m-i}$ and $Q(p)=\sum_i \mu_ip^{i-1}$ (where $m$ is the dimension of the standard representation of $\g$). Put $I=\langle -p^m+P, -\bar{p}+Q \rangle$. Define the \textbf{spectral curve} $\bm\tilde{\Sigma} \subset T^{*\C}\Sigma$ by the zero set of the two generators of $I$. It is a ramified cover over $\Sigma$ with $m$ sheets. 

For $\g$ of type $D_n$, a generic point in the cotangent bundle $T^*\bm\hat{\mc{T}}_{\g}$ corresponds to the ideal 
$$I=\langle p^{2n}+t_2p^{2n-2}+...+t_{2n-2}p^2+\tau_n^2, -\bar{p}+\mu_2p+...+\mu_{2n}p^{2n-1} \rangle$$ which can be seen as a special case of $A_n$. Thus we can proceed as above. In the case where $\tau_n = 0$ we have seen in \ref{Dn} that the ideal changes to an ideal with three generators. The zero-set of these generators still define a spectral curve in $T^{*\C}\Sigma$. It is the limit of the curve when $\tau_n \rightarrow 0$.

\begin{thm}
The spectral curve $\bm\tilde{\Sigma}$ is Lagrangian modulo $t^2$.
\end{thm}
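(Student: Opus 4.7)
The plan is to reduce everything to the computation already carried out for type $A_n$ in Theorem \ref{spectralcurveprop}, using that the standard representations $\mf{so}_{2n+1}, \mf{sp}_{2n}, \mf{so}_{2n} \hookrightarrow \mf{sl}_m$ realize the generators of the ideal $I$ of type $\g$ as special instances of the generic ideal $\langle -p^m+P,-\bar{p}+Q\rangle$ of $\Hilb^m_{red}(\C^2)$. Concretely, the statement that $\bm\tilde{\S}$ is Lagrangian modulo $t^2$ is equivalent, after the proof of Theorem \ref{spectralcurveprop}, to the vanishing $\{-p^m+P,-\bar{p}+Q\}\equiv 0 \mod I \mod t^2$, which decomposes coefficient by coefficient in $p$ into a family of linear relations $(\mc{C}_\g)$ among the $\mu_k$'s, the $t_k$'s and their derivatives.

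First I would verify that, for $\g$ of type $A_n, B_n, C_n$, the computation from Theorem \ref{spectralcurveprop} goes through verbatim: the Poisson bracket $\{P,-\bar{p}+Q-\mu_1\}$ is again treated as an infinitesimal hamiltonian action on $\mu$, and the relevant variation formula is just the restriction to the $\g$-stable subspace of parameters (odd polynomials for $B_n, C_n$). Combined with $\{p^m,-\bar{p}+Q\}$ one obtains a system of equations on the $t_k$'s which, by the computation of $T^*\bm\hat{\mc{T}}_\g$ in Theorem \ref{thm2} (dualizing the variation $\delta \mu_k$ by integration by parts exactly as in Theorem \ref{conditionC}), precisely characterizes cotangent vectors to $\bm\hat{\mc{T}}_\g$. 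In other words, the generalized holomorphicity condition $(\mc{C}_\g)$ is nothing but the moment map equation for the hamiltonian reduction
\[
T^*\bm\hat{\mc{T}}_\g \cong \Gamma\!\left(\Hilb^{reg}(\g, T^{*\C}\S)\right) \sslash \Symp(\g,\S) \quad\mod t^2,
\]
which is the $\g$-analog of equation \eqref{hkquotientofmodulispace}.

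For type $D_n$ the argument splits into two cases. In the generic locus $\tau_n\neq 0$, the ideal takes the standard two-generator form and the $A_n$-argument applies directly, producing the conditions $(\mc{C}_{D_n})$ on $(\mu_{2k}, t_{2k}, \tau_n)$ via the same integration-by-parts duality. For the locus $\tau_n=0$, where the ideal acquires a third generator, I would invoke continuity: Lagrangianity of $\bm\tilde{\S}$ modulo $t^2$ is a closed condition on sections of $\Hilb^{reg}(\g,T^{*\C}\S)$, so passing to the limit $\tau_n\to 0$ from the generic case (where the statement is already established) concludes.

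The main obstacle is the bookkeeping in the second step — namely checking that the variation formulas for the higher Beltrami differentials of type $\g$ under $\Symp(\g,\S)$ (restricted Hamiltonians as described in \S\ref{actiondiff}) dualize to the same shape of linear differential conditions on the $t_k$'s that appear as coefficients of $p^{m-k}$ in $\{-p^m+P,-\bar{p}+Q\}\mod I \mod t^2$. This is essentially a symmetry/parity check: the restriction of the $A_n$-variation to odd polynomials in $p$ for $B_n, C_n$ automatically preserves the relevant grading, and the only genuinely new ingredient appears in $D_n$, where the extra Beltrami differential $\sigma_n$ and its dual $\tau_n$ must be matched using the pairing already described in \S\ref{Dn} and at the end of the proof of Theorem \ref{thm2}.
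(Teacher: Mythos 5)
Your proposal is correct and follows essentially the same route as the paper: reduce types $A_n$, $B_n$, $C_n$ to the two-generator computation of Theorem \ref{spectralcurveprop} (the $B_n$, $C_n$ ideals being special cases of the generic $A_n$ ideal), and handle the three-generator locus $\tau_n=0$ in type $D_n$ by taking a limit of Lagrangian curves and using that the Lagrangian condition is closed. The extra detail you supply on dualizing the variation formulas is a more explicit account of what the paper dismisses as ``completely analogous,'' but it is not a different argument.
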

This is the precise analog of Theorem \ref{spectralcurveprop}. 
\begin{proof}
In the case where the ideal has two generators $-p^m+P$ and $-\bar{p}+Q$, this is equivalent to $\{-p^m+P,-\bar{p}+Q\} = 0 \mod I \mod t^2$ for $I\in T^*\bm\hat{\mc{T}}_{\g}$.
For $A_n$, the proof is given in Theorem \ref{spectralcurveprop}. For $B_n$ and $C_n$ it is completely analogous since the $\g$-complex structure can be seen as a special case of $A_n$.

For $\g$ of type $D_n$, a generic ideal has still two generators, so we have a special case of $A_n$. If the ideal has three generators, the spectral curve is still Lagrangian since it can be obtained as a limit of Lagrangian curves, and the property of being Lagrangian is closed.
\end{proof}

Since the spectral curve is Lagrangian to order 1, the periods are well-defined up to this order. The ratios of these periods should give coordinates on $T^*\bm\hat{\mc{T}}_{\g}$ and also on $\bm\hat{\mc{T}}_{\g}$.
For the trivial $\g$-complex structure (where all higher Beltrami differentials are 0) we recover Hitchin's spectral curve.

Finally, we can recover the same spectral data as Hitchin in his paper on stable bundles \cite{Hit3}. 
From a $\g$-complex structure we get a bundle $V$ over the surface $\Sigma$ whose fiber at a point $z\in \Sigma$ is $\C[p,\bar{p}]/I(z)$ where we use the idealic viewpoint. We also get a line bundle $L$ on $\bm\tilde{\Sigma}$ whose fiber is the eigenspace of $M_p$, the multiplication operator by $p$ in the quotient $\C[p,\bar{p}]/I$. This gives the spectral data for type $A_n$.

For $\g$ of type $C_n$, we get in addition an involution $\sigma$ on the spectral curve $\bm\tilde{\Sigma}$ given by $(p,\bar{p})\mapsto (-p, -\bar{p})$.
For $\g$ of type $D_n$, the spectral curve is singular, having a double point. The spectral data is given by a desingularization of $\bm\tilde{\Sigma}$, the involution $\sigma$ as for $C_n$ and the line bundle $L$.
For $\g$ of type $B_n$, there is a canonical subbundle $V_0 \subset V=\C[p,\bar{p}]/I$ generated by the span of the image of $1\in \C[p, \bar{p}]$ in the quotient $\C[p,\bar{p}]/I$ (since for $B_n$, we have $I\subset \langle p, \bar{p}\rangle$). Thus the vector bundle $V$ is an extension $V_0 \rightarrow V\rightarrow V_1$. The spectral data is given by $(V_0, V_1, \sigma, L, \bm\tilde{\Sigma})$.

%%%%%%%%%%%%%%%%%%%%%%%%%%%%%%%%%%%%

\cleardoublepage
\vspace*{3cm}
\part{Perspectives}\label{part4}
\vspace*{2cm}
\begin{flushright}
\textit{\glqq Ich habe bemerkt, sagte Herr K., das wir viele abschrecken von unserer Lehre dadurch, dass wir auf alles eine Antwort wissen. Könnten wir nicht im Interesse der Propaganda eine Liste der Fragen aufstellen, die uns ganz ungelöst erscheinen?\grqq{} }\footnote{\textit{``I have noticed,'' said Mr. K., ``that we put many people off our teaching because we have an answer to everything. Could we not, in the interests of propaganda, draw up a list of the questions that appear to us completely unsolved?''}, Translation by Martin Chalmers}

Bertolt Brecht, Geschichten vom Herrn Keuner
\end{flushright}
\vspace*{1cm}

\noindent In this last part of the thesis, we present open questions, conjectures and several links between higher complex structures and other domains.

In previous chapters we already pointed at several questions and conjectures which remain open. We present them in a uniform way and discuss possible approaches and answers. In particular we discuss the conjectural hyperkähler structure of $\cotang$ with its twistor space given in figure \ref{HK}, geometric interpretations of $\cotang$ as almost-Fuchsian representations, period coordinates on $\cotang$, the fiber of the forgetful map $\T^n\rightarrow \T^{n-1}$ which would partially answer a conjecture of Labourie from \cite{Lab}, and generalizations to Lie algebras other than $\mf{sl}_n$.

The field of higher Teichmüller theory is vast and studies numerous mathematical objects. We give an outlook on possible connections between higher complex structures and various fields.
In particular we discuss links to other components in the character variety, spectral networks, cluster varieties, opers, W-algebras and SYZ-mirror symmetry.

\vspace*{\fill}

\cleardoublepage

\section{Open questions and conjectures}

We have already encountered several open questions and conjectures in the previous chapters. We present them here in a uniform and organized way.

\paragraph{Hyperkähler structure on $\cotang$.}

The main conjecture to be solved is (see \ref{hkcotang}):
\begin{conj}
The space $\cotang$ is hyperkähler near the zero-section. Its twistor space is given by the space of flat parabolic $h$-connections.
\end{conj}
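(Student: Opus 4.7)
The plan is to apply the twistor construction of Hitchin--Karlhede--Lindstr\"om--Roček (Theorem~3.3 of \cite{HKLR}) with candidate twistor space $Z := \mc{A}(h)\sslash\mc{P}\sslash\Symp_0$, the space of flat parabolic $h$-connections. Almost all the machinery is already in place: parts~\ref{part1}--\ref{part2} establish the generic-fiber structure, the $\C P^1$-family, the real structure, and the symplectic form, so the conjecture reduces to verifying the four twistor axioms and to solving Conjecture~\ref{nahc} which supplies the real twistor lines.

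First I would verify the elementary axioms. \emph{(1) Bundle over $\C P^1$:} the parameter $h$ (or $\l = h^{-1}$) gives a holomorphic projection $Z \to \C P^1$ whose generic fiber is, by Corollary~\ref{flatparaconnections} together with Theorem~\ref{actionsymponlambdaconn}, identified with the character variety $\Rep(\pi_1\S,G^\C)$ in any complex structure other than $h=0,\infty$, while over $h=\infty$ one recovers $\cotang$ by Proposition~\ref{parametrisationlambda} and Theorem~\ref{conditioncinconnection}, and over $h=0$ the conjugated space $\overline{\cotang}$. The transition between the chart near $h=\infty$ (where one works with $\mc{A}=\Phi+hd+hA+h^2\Phi^*$) and the chart near $h=0$ (where one rescales by $h$ to get $d+\l\Phi+A+\l^{-1}\Phi^*$) is holomorphic on $\C^*$, giving $Z$ the structure of a holomorphic fiber bundle. \emph{(2) Symplectic form twisted by $\mc{O}(2)$:} subsection~\ref{sympstrconnections} shows that the restriction $\omega_{\mc{A}(h)\sslash\mc{P}}$ of the Atiyah--Bott form is \emph{quadratic} in $\l$ (because $\delta\Phi\wedge\delta\Phi = 0 = \delta\Phi^*\wedge\delta\Phi^*$), which is precisely the statement that it lives in $\Lambda^2T^*F\otimes\mc{O}(2)$; non-degeneracy on the fiber follows from Atiyah--Bott since the remaining $\Symp_0$-reduction is finite-dimensional. \emph{(3) Real structure:} the involution $\mc{A}(\l) \mapsto -\mc{A}(-1/\bar{\l})^*$ is antiholomorphic, covers the antipodal map on $\C P^1$, and preserves the symplectic form; the $*$-operator extends from the trivial case (hermitian conjugate) to general $n$-complex structures by choosing a hermitian structure on the induced bundle $V$ of subsection~\ref{indbundle} compatible with $\Symp_0$.

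The heart of the proof is then the \emph{existence and uniqueness of a real twistor line through every point of $\cotang$}, which is exactly Conjecture~\ref{nahc}. I would attack this via the generalized Toda system identified in subsection~\ref{flatconnectionlambda}: writing $\mc{A}(\l) = \l\Phi+A+\l^{-1}\Phi^*$ in the standard form of Lemmas~\ref{phi1lower}--\ref{a1upper}, the flatness equations form a quasilinear elliptic system in the variables $(\varphi_i, A_{ij})$ coupled to the data $(\mu_k, t_k)$, as verified explicitly for $n=2$ (cosh-Gordon) and $n=3$ (Toda/\c{T}i\c{t}eica) in subsection~\ref{n2n3}. Ellipticity at $t=0$ (Proposition~\ref{linktohiggs}) is an open condition, so the system remains elliptic for $(\mu,t)$ in a neighborhood of the zero-section. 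For the existence step, one invokes a continuity method: start at $\mu=t=0$ where the solution is the Toda/non-abelian-Hodge solution for the principal nilpotent Higgs field, then deform along a straight-line path in $\cotang$. The initial data appearing in Conjecture~\ref{nahc} must be rigidified using the intrinsic $\C^*$-action scaling $\l$ (the variation-of-Hodge-structure observed after Proposition~\ref{linktohiggs}), which distinguishes a canonical point in each $\C^*$-orbit. Uniqueness up to unitary gauge follows from the standard $\mc{L}^2$-energy argument applied to the difference of two solutions, using ellipticity and the $*$-invariance.

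Once real twistor lines exist, the final step is the normal bundle computation: one must show that the normal bundle $N_s$ of a real twistor line $s$ in $Z$ is isomorphic to $\C^{2m}\otimes \mc{O}(1)$ with $m = \dim\T^n$. The infinitesimal deformation space of $s$ is the tangent space $T_{[(\mu,t)]}\cotang$, which at $t=0$ equals $T_\mu\T^n \oplus T^*_\mu\T^n$ by Theorem~\ref{mainresultncomplex}; the $\mc{O}(1)$ factor comes from the pole structure in $\l$ of the deformations of $\l\Phi+A+\l^{-1}\Phi^*$ preserving flatness and reality. The main obstacle throughout is the analytic one: proving global existence of solutions to the generalized Toda system for arbitrary $(\mu,t)$ in a full neighborhood of the zero-section, in particular controlling the reality involution in presence of non-vanishing $\Phi_2$; here the bundle-with-hermitian-metric formulation of the $*$-operator, together with Donaldson--Corlette-type harmonic map techniques adapted to the parabolic setting of section~\ref{parabolicreduction}, should be the right framework.
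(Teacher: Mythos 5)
The statement you are addressing is a conjecture in the paper, and your proposal reproduces the paper's own program (end of section \ref{bigpicture}: apply Theorem 3.3 of \cite{HKLR} to $Z=$ the space of flat parabolic $h$-connections) rather than closing it. The genuine gaps are exactly the ones the paper itself flags as open, and your sketch does not resolve any of them. The heart of the argument is deferred to Conjecture \ref{nahc}, which is itself unproven; your continuity-method attack addresses only openness (ellipticity persists for small $t$) and says nothing about closedness, i.e.\ the a priori estimates on solutions of the generalized Toda system needed to continue a solution along a path in $\cotang$. This is the hard analytic content: already for $n=3$ the system in subsection \ref{n2n3} couples $(\varphi_1,\varphi_2)$ to auxiliary unknowns $(b_0,b_2)$ whose vanishing requires a choice of initial data that is not canonically determined, and your proposal to rigidify this via the $\C^*$-action is a suggestion, not a construction. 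The claimed uniqueness ``by the standard $L^2$-energy argument'' is likewise not available off the shelf: the system is not the Hitchin self-duality system, and the relevant scalar reduction is $\cosh$-Gordon rather than $\sinh$-Gordon, for which the usual maximum-principle and energy arguments do not apply verbatim.

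Two of the ``elementary axioms'' you dispatch quickly are also not in place. The reality involution $\mc{A}(\l)\mapsto -\mc{A}(-1/\bar{\l})^*$ requires a $*$-operator defined for non-trivial higher complex structure and compatible with the $\Symp_0$-action; the paper states explicitly that this is only understood for $\mu=0$, and ``choosing a hermitian structure on $V$ compatible with $\Symp_0$'' is precisely the unsolved problem rather than a step one may perform. Relatedly, the paper leaves open whether the infinitesimal $\Symp_0$-action on parabolic connections integrates to a group action at all, which undercuts your appeal to finite-dimensional symplectic reduction for the non-degeneracy of the fiberwise form. Finally, the normal bundle isomorphism $N_s\cong\C^{2m}\otimes\mc{O}(1)$ is asserted from ``the pole structure in $\l$'' but not computed; without it Theorem 3.3 of \cite{HKLR} does not apply even granting the twistor lines. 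In short, you have correctly identified what must be proven, but every load-bearing step remains conjectural, so this is a restatement of the strategy rather than a proof.
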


We have given three arguments for Conjecture \ref{hkcotang}.
In the twistor approach the main missing part is the existence and uniqueness of twistor lines, see Conjecture \ref{nahc}. In particular the generalized Toda system has to be studied.

In the Feix--Kaledin approach, it might be possible to directly prove that $\T^n$ is Kähler. For that, we have to find a symplectic structure compatible with its complex structure. Since on Hitchin's component there is the Goldman symplectic structure, we can ask the following:
\begin{oq}
Is it possible to recover the Goldman symplectic structure on $\T^n$? Does it combine with the complex structure to a Kähler structure?
\end{oq}

In the hyperkähler quotient approach, we use the HK structure of the Hilbert scheme to get a HK structure on $\Gamma(\Hilb^n_{red}(T^{*\C}\S))$. We conjecture: 
\begin{conj}
The spaces $\Gamma(\Hilb^n_{red}(T^{*\C}\S))$ and $\mc{A}\sslash\mc{P}$ are isomorphic as hyperkähler manifolds. Further there is a hyperkähler quotient of this space by $\Symp_0(T^*\S)$ (or a deformation) which gives a neighborhood of the zero-section of $\cotang$.
\end{conj}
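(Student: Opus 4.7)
The proof plan has two layers: a pointwise matching of hyperk\"ahler structures via the Hilbert scheme / Calogero--Moser correspondence, followed by a global integration over $\S$ and a hyperk\"ahler reduction by higher diffeomorphisms.

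First I would establish the pointwise identification. Fix $z\in \S$ and a germ of a parabolic connection with cyclic section $s$. In the basis $(s,\nabla s,\dots,\nabla^{n-1}s)$ the operators $\nabla$ and $\bar\nabla$ are represented by $n\times n$ matrices $M_\nabla, M_{\bar\nabla}$ whose commutator equals the matrix of the curvature $[\nabla,\bar\nabla]$. The parabolic condition forces this curvature to be concentrated in the last column, so $\rk\,[M_\nabla,M_{\bar\nabla}]\leq 1$, which is precisely the defining relation of the Calogero--Moser space $\Hilb^{n,q}(\C^2)=\{(A,B)\mid \rk([A,B]-q\id)\leq 1\}/\GL_n$ recalled in \ref{hkhilbertscheme}, with $q$ playing the role of the symbol of $\partial$. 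Combined with the asymptotic expansion of \ref{parabolicwithlambda}, where $\bm\hat{t}_k(\l),\bm\hat{\mu}_k(\l)$ degenerate as $\l\to\infty$ to the Hilbert scheme coordinates $t_k,\mu_k$, this locally identifies $\mc{A}\sslash\mc{P}$ with a nonzero-$\l$ fiber of the twistor family of $\Hilb^n_{red}(\C^2)$, that is, the same hyperk\"ahler manifold seen in a different complex structure.

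Next I would integrate over the surface. Both sides are spaces of sections, and the pointwise HK structures assemble into genuine HK structures because the Killing form pairs against the canonical volume form on $\S$. The complex symplectic forms already match up to the derivative-coupling terms identified in the local computation: the Atiyah--Bott form on $\mc{A}$ restricts in parabolic gauge to $\omega = \int_{\S}\sum_k d\bm\hat{t}_k\wedge d\bm\hat{\alpha}_{n,n+1-k}$ (see \ref{sympstrconnections}), which is the deformation of the Hilbert scheme form from \ref{hilbertschemesympl}. Matching the other two real symplectic forms, or equivalently the antiholomorphic involution identifying the Hilbert and Calogero--Moser pictures, then yields the desired global HK isomorphism.

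For the second statement, I would show that $\Symp_0(T^*\S)$ acts tri-hamiltonianly on both sides. On the Hilbert scheme bundle side this is immediate, since the action is generated by Hamiltonians on $T^*\S$ and preserves the three K\"ahler structures inherited pointwise from $\Hilb^n(\C^2)$. On the parabolic side, the infinitesimal action of \ref{symponconnections} is by gauge transformations with moment map the parabolic curvature, which acquires three components thanks to the $\l$-parametrization of \ref{parabolicwithlambda}; Theorem \ref{actionsymponlambdaconn} ensures compatibility of the action with the identification of coordinates. The HK quotient is then identified with $\cotang$ modulo $t^2$ via equation \eqref{hkquotientofmodulispace}, and the three K\"ahler structures descend to a neighborhood of the zero-section. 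The principal obstacle will be matching the real (as opposed to holomorphic) structure: while the complex symplectic structures agree in an essentially formal way, pinning down the compatible Riemannian metric, or equivalently the antiholomorphic involution relating the three complex structures across $\l\in\C P^1$, is the delicate point, and it is tied directly to Conjecture \ref{nahc} on the existence of real twistor lines and to analytic control of the generalized Toda system of \ref{flatconnectionlambda}; the ``or a deformation'' caveat in the statement anticipates that an appropriate renormalization of $\Symp_0(T^*\S)$ may be required to close the reduction.
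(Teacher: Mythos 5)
The statement you are proving is presented in the paper as a \emph{conjecture}, stated in the final part among the open questions; the paper offers no proof of it, only heuristic evidence (the three bullet points supporting Conjecture \ref{hkcotang}, the twistor picture of section \ref{bigpicture}, and the open question on the Calogero--Moser bundle). So there is no argument of the author's to compare yours against, and your text should be judged as a proof strategy rather than checked line by line against the paper.

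As a strategy, your outline essentially reassembles the paper's own heuristics, and the places where it goes beyond them contain genuine gaps. The central one is the pointwise identification with the Calogero--Moser space: the matrices $M_\nabla$, $M_{\bar\nabla}$ representing $\nabla$ and $\bar\nabla$ in the basis $(s,\nabla s,\dots,\nabla^{n-1}s)$ do \emph{not} have commutator equal to the curvature. The curvature is $\del A_2-\delbar A_1+[A_1,A_2]$, so the rank-one condition on $F(A)$ is a condition on a first-order differential expression in the matrix entries, not on the bracket of a pair of matrices at a point. The slogan that ``$q$ plays the role of the symbol of $\partial$'' is exactly the heuristic the paper records as an open question in Part \ref{part4}; turning it into an isomorphism of hyperkähler manifolds (even fiberwise) is the whole content of the conjecture, and your sketch does not supply the mechanism. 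A second gap: for the hyperkähler quotient by $\Symp_0(T^*\S)$ you need a moment map valued in $\Lie(\Symp_0)^*\otimes\R^3$, i.e.\ three real moment maps. The paper establishes only one (the parabolic curvature, equivalently condition $(\mc{C})$ in the semiclassical limit); your claim that the $\l$-parametrization of \ref{parabolicwithlambda} automatically furnishes the other two components is asserted, not argued, and is not obviously true since the $\l$-family describes the twistor fibration rather than a triple of symplectic structures on a fixed fiber. Finally, you correctly flag that matching the metric (equivalently the real structure) reduces to Conjecture \ref{nahc} and the analysis of the generalized Toda system; since that is itself unproven, your argument is at best conditional on it. In short, your proposal is a reasonable roadmap consistent with the paper's stated expectations, but it does not close any of the gaps that make the statement a conjecture.
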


\paragraph{Geometric interpretation of $\cotang$.}
The space $\T^n$ has a geometric origin, as it is the moduli space of higher complex structures. 
\begin{oq}
What is the geometric interpretation of $\cotang$?
\end{oq}
For $n=2$, the cotangent space to Teichmüller space $T^*\mc{T}^2$ is the space of half-translation surfaces (see subsection \ref{half-translation}). It seems difficult to generalize this notion to higher $n$.

For $n=2$, Donaldson explicitly constructs the Feix--Kaledin extension of Teichmüller space in \cite{Donald.2}. 
Trautwein deepens this study in \cite{Trautwein}. He shows that the Feix--Kaledin hyperkähler structure near the zero-section of $T^*\mc{T}^2$ describes the space of almost-Fuchsian representations. A representation $\pi_1(\S) \rightarrow \PSL_2(\C)$ is called \textbf{almost-Fuchsian} if there is a unique incompressible minimal surface in $\S\times \R$. Notice that an almost-Fuchsian representation is always quasi-Fuchsian, but the converse is not true.

We can generalize the notion of almost-Fuchsian representation as follows:
To a Lie group $G$ is associated its symmetric space $G/K$ (where $K$ is a maximal compact subgroup) whose isometry group is $G$. Thus, a representation $\pi_1(\S)\rightarrow G$ is equivalent to an action of $\pi_1(\S)$ on the symmetric space by isometries. In the seminal paper \cite{Corlette}, Corlette associates to every reductive representation of $\pi_1(\S)$ into some $G$ a harmonic map from the universal cover $\bm\tilde{\S}$ to $G/K$.
We call a representation $\pi_1(\S) \rightarrow G$ \textit{almost-Fuchsian} if the Corlette harmonic map embeds $\S$ as \textit{unique} minimal surface into the double quotient $\pi_1(\S)\backslash G/K$. 

For $n=3$ we have found in our setting a version of \c{T}i\c{t}eica's equation, see \eqref{Titeica}. In \cite{Loftin} it is shown that this equation describes almost-Fuchsian representations for $G=\SU(2,1)$ (with a minimal surface inside the complex hyperbolic plane).

\begin{conj}
The space $\cotang$ parameterizes near the zero-section the space of almost-Fuchsian representations. 
\end{conj}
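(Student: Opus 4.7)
\medskip\noindent
\textbf{Proof plan.} The strategy is to chain together three correspondences: first, the conjectural canonical identification $\cotang \cong \{\mathcal{A}(\lambda=1)\}$ from Conjecture \ref{nahc}, which attaches a flat $\PSL_n(\mathbb{C})$-connection to every point $[(\mu,t)] \in \cotang$; second, Corlette's theorem, which attaches to each reductive representation $\rho:\pi_1(\Sigma) \to \PSL_n(\mathbb{C})$ a $\rho$-equivariant harmonic map $f_\rho : \widetilde{\Sigma} \to \PSL_n(\mathbb{C})/\PSU_n$; and third, a minimal-surface interpretation of $f_\rho$ in which the higher complex structure plays the role of the induced conformal structure on the image. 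Concretely, at the zero-section $t=0$, Proposition \ref{monodromyreal} produces representations into $\PSL_n(\mathbb{R})$ whose associated harmonic map factors through the symmetric space of the split real form; for $\mu=0$ this is the Fuchsian fiber, and by Proposition \ref{copyteich} Teichmüller space sits inside $\T^n$ as the ``$n$-Fuchsian'' locus, which matches the minimal-surface description in \cite{Corlette}.

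\medskip\noindent
The key geometric identification I would exploit is that, in the symmetric gauge $\mathcal{A}(\lambda) = \lambda\Phi + A + \lambda^{-1}\Phi^*$ of section \ref{finalstep}, the form $\Phi = \Phi_1 dz + \Phi_2 d\bar{z}$ is (up to the equivariant isomorphism between the adjoint bundle and the complexified tangent bundle to $G/K$) the complexified differential of $f_\rho$. The nilpotency and commutation relations $\Phi \wedge \Phi = 0$ together with $\Phi_1$ principal nilpotent then force $df_\rho$ to have rank two at a generic point and to be conformal with respect to the induced complex structure extracted by Proposition \ref{inducedcomplex}: this is precisely the infinitesimal condition for $f_\rho$ to parameterize a minimal surface in $\PSL_n(\mathbb{C})/\PSU_n$, with the differentials $t_2,\dots,t_n$ playing the role of the higher holomorphic invariants of the immersion (a Hopf-differential-type obstruction to deforming the image off the minimal locus). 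The next step is to check that, on a neighborhood of the zero-section where $\mu_2\bar{\mu}_2 < 1$ and $\|t\|$ is small, the harmonic map $f_\rho$ is an immersion whose image is a stable minimal surface; this should follow by deforming from the Fuchsian locus (where the computation is classical) and applying an implicit-function argument using the ellipticity of the generalized Toda system discussed at the end of section \ref{finalstep}.

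\medskip\noindent
For the uniqueness of the minimal surface, which is the defining property of almost-Fuchsian representations in our generalized sense, I would follow the scheme of Uhlenbeck and Trautwein (\cite{Donald.2}, \cite{Trautwein}) in the $n=2$ case: show that the second fundamental form of the image is controlled by $\|t\|$, so that for small $t$ the operator governing Jacobi fields along the minimal surface is strictly positive, ruling out the existence of a nearby second minimal surface. The generalization of Uhlenbeck's curvature bound to higher rank should follow from the explicit structure of $A$ in the standard form of \ref{standard-form} together with the Toda-like flatness equations, because the off-diagonal entries of $A$ are exactly the quantities measured by $t_2,\dots,t_n$ (cf.\ Proposition \ref{holodifff}). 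Combined with the openness and closedness argument of Theorem \ref{mainthmm}, this would identify $\cotang$ near the zero-section with an open neighborhood of the real locus inside the almost-Fuchsian locus of $\Rep(\pi_1(\Sigma), \PSL_n(\mathbb{C}))$.

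\medskip\noindent
The main obstacles I anticipate are twofold. First, the whole argument is conditional on Conjecture \ref{nahc}: without canonical existence and uniqueness of the twistor line $\mathcal{A}(\lambda)$ attached to $[(\mu,t)]$, there is no well-defined representation to begin with, and a substantial part of the work reduces to analyzing the generalized affine Toda system sketched in \ref{flatconnectionlambda}. Second, and more serious, is the uniqueness of the minimal surface: in rank one this is a scalar maximum principle applied to the Jacobi operator, but in higher rank the Jacobi operator is a system and positivity requires a careful interplay between the principal nilpotent structure of $\Phi_1$ and the sizes of the individual differentials $t_k$. A sharp threshold (analogous to the ``$\|t\|_\infty < 1$'' criterion for $n=2$) in terms of the exponents $m_1, \dots, m_{n-1}$ of $\mathfrak{sl}_n$ would be the natural outcome, and pinning it down is where the bulk of the analytic work will lie.
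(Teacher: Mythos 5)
You should be aware that the paper does not prove this statement: it appears in Part \ref{part4} as an open conjecture, supported only by two pieces of evidence --- the $n=2$ case, where Donaldson \cite{Donald.2} and Trautwein \cite{Trautwein} identify the Feix--Kaledin neighborhood of the zero-section of $T^*\mc{T}^2$ with the almost-Fuchsian locus, and the $n=3$ computation in subsection \ref{n2n3}, where the flatness equations reduce to \c{T}i\c{t}eica's equation, which by \cite{Loftin} governs minimal surfaces for $\SU(2,1)$. So there is no proof to compare yours against, and what you have written is a research program rather than an argument. Within that caveat, your plan is consistent with the paper's own motivation, and your observation that the nilpotency of $\Phi_1$ kills the Hopf differential $\tr\Phi_1^2$ and hence forces conformality of the Corlette map is a genuinely useful remark that the paper does not make explicit.

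The gaps, however, are substantial and you have only partially acknowledged them. First, everything downstream of your first step is conditional on Conjecture \ref{nahc}, which is itself the central unresolved problem of Part \ref{part2}; without it there is no representation attached to $[(\mu,t)]$ at all. Second, your identification of $t_2,\dots,t_n$ with ``higher holomorphic invariants of the immersion'' is asserted, not derived: in this setting the differentials sit in $A_1$ via $t_k=\tr\Phi_1^{k-1}A_1$ rather than in the Higgs field, so they are not the classical higher Hopf differentials, and their relation to the second fundamental form of the minimal surface would have to be worked out before any Uhlenbeck-type curvature bound can even be formulated. Third, the paper's generalized definition of almost-Fuchsian requires the harmonic map to \emph{embed} $\S$ as the \emph{unique} minimal surface; you address neither the passage from branched conformal harmonic map to embedding for $n\geq 3$, nor --- beyond naming it as an obstacle --- the positivity of the Jacobi operator for the higher-rank system, which is exactly where the conjecture lives. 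Finally, note that the $n=3$ evidence concerns $\SU(2,1)$ and the complex hyperbolic plane, not the symmetric space of $\PSL_3(\C)$, so even the low-rank sanity check requires an argument you have not supplied.
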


In a generic fiber of the twistor space of $\cotang$, we have the space of flat parabolic connections. For $n=2$ these are precisely the developing maps of \textit{complex projective structures} on $\S$ (an atlas with charts in $\C P^1$ with coordinate changes which are Möbius transformations). 

\begin{conj}
There is a generalization if the complex projective structure, replacing $\C P^1$ by $\C P^{n-1}$, whose developing map is the monodromy of a flat parabolic connection.
\end{conj}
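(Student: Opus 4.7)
The plan is to introduce an $n$-projective structure on $\S$ as a maximal atlas of smooth charts $\phi_\alpha\colon U_\alpha \to \C P^{n-1}$ whose transition functions are restrictions of elements of $\PSL_n(\C)$, together with a non-degeneracy condition requiring that at every point the $(n-1)$-jet of each chart generates a complete osculating flag in $\C P^{n-1}$. For $n=2$ the osculating-flag condition reduces to local diffeomorphicity, so one recovers the classical complex projective structure. For $n>2$ the condition plays the role of a higher-jet immersivity: since $\dim_\R \S = 2 < 2n-2 = \dim_\R \C P^{n-1}$, no chart can be immersive in the usual sense, and the requirement is instead that each chart together with its derivatives up to order $n-1$ span the ambient projective space.

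Given such a structure, the first step is to construct the developing map $\mathrm{dev}\colon \bm\tilde{\S} \to \C P^{n-1}$ and the holonomy representation $\rho\colon \pi_1(\S)\to\PSL_n(\C)$ by analytic continuation of charts along paths in the universal cover, in exact parallel with the $n=2$ case. Pulling back via $\rho$ gives a flat $\PSL_n(\C)$-bundle $V$ on $\S$, and the developing map provides a distinguished section of the associated $\C P^{n-1}$-bundle $\bb{P}(V)$, equivalently a line subbundle $L \subset V$. The osculating-flag non-degeneracy translates directly into the assertion that the iterated covariant derivatives $L, \nabla L, \ldots, \nabla^{n-1}L$ form a full flag in $V$; choosing a local generator $s$ of $L$ provides a frame $(s,\nabla s,\ldots,\nabla^{n-1}s)$, which is exactly the basis in which a parabolic connection is written in Section \ref{parabolicreduction}. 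The flat $\PSL_n(\C)$-connection $\nabla$ paired with the line $L$ is therefore precisely a flat parabolic connection in the sense of Corollary \ref{flatparaconnections}. This generalizes the classical fact that $\PSL_2(\C)$-opers on a Riemann surface are complex projective structures.

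Conversely, one produces an $n$-projective structure from a flat parabolic connection by taking the developing map of its distinguished line $L$ in the projectivized bundle $\bb{P}(V)$. The non-degeneracy condition holds automatically because the parabolic gauge of Subsection \ref{settingparab} requires $(s,\nabla s,\ldots,\nabla^{n-1}s)$ to be a basis pointwise. The associated higher complex structure is then read off from how $\bar\nabla s$ decomposes in this frame: the coefficients appearing in $\bar\nabla s=\sum_{k=1}^n \bm\hat\mu_k \nabla^{k-1}s$ are, via the highest-$\l$-term analysis of Proposition \ref{parametrisationlambda}, the higher Beltrami differentials $\mu_k$ of the underlying $n$-complex structure, while the coefficients of $\nabla^n s$ in this basis give the holomorphic differentials $t_k$. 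When $\mu_k=0$ for all $k\geq 2$, the $n$-projective structure is holomorphic in the standard sense and the corresponding object is an ordinary $\PSL_n(\C)$-oper on the underlying Riemann surface.

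The main obstacle is pinning down the correct non-degeneracy condition so that the correspondence between $n$-projective structures and flat parabolic connections is bijective: one needs an osculating-flag genericity stable under $\PSL_n(\C)$-transitions and such that the induced higher Beltrami differentials and holomorphic differentials transform correctly under higher diffeomorphisms. A further subtlety is the integration of the infinitesimal action of $\Symp_0(T^*\S)$ to the group level, the same issue flagged after Proposition \ref{computeX}; without this one can only match infinitesimal moduli. Assuming Conjecture \ref{nahc}, however, the existence of a flat parabolic connection for every $[(\mu_k,t_k)]\in T^*\bm\hat{\mc{T}}^n$ furnishes enough $n$-projective structures to cover the moduli space and, combined with Theorem \ref{mainthmm}, should identify the moduli space of $n$-projective structures with a deformation of $\bm\hat{\mc{T}}^n$ extending the classical identification between projective structures and Teichmüller space in the case $n=2$.
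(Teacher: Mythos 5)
This statement is an open conjecture in the Perspectives part: the paper gives no proof, and the only evidence it offers is the remark after Corollary \ref{flatparaconnections} that for $n=2$ the flat parabolic connections are exactly the developing maps of complex projective structures (``for general $n$, we probably get a complicated subset''). So your proposal cannot be measured against an argument in the text; it has to stand on its own as a program, and as such it contains real gaps rather than a proof.

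The first gap is in the definition of the geometric structure. Your osculating flag is built from $L,\nabla L,\dots,\nabla^{n-1}L$, i.e.\ from iterated $(1,0)$-covariant derivatives; but the splitting of $\nabla$ into $(1,0)$- and $(0,1)$-parts is exactly the reference complex structure that Subsection \ref{settingparab} fixes before performing the parabolic reduction. A chart $\phi_\alpha\colon U_\alpha\to\C P^{n-1}$ on a smooth surface with $\PSL_n(\C)$ transitions does not come with such a splitting, so ``the $(n-1)$-jet generates a complete osculating flag'' is not yet a well-posed condition: the full real $(n-1)$-jet of a map from a $2$-real-dimensional domain produces far more than $n$ directions, and selecting the right $n$ of them is tantamount to already choosing the higher complex structure you are trying to induce. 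The second gap is the identification of $\mu_k$ and $t_k$: Proposition \ref{parametrisationlambda} extracts these as the \emph{highest $\l$-terms} of $\bm\hat{\mu}_k(\l)$ and $\bm\hat{t}_k(\l)$ for the whole family $\mc{A}(\l)=\l\Phi+A+\l^{-1}\Phi^*$, whereas a single flat parabolic connection (the object your developing map produces) only carries the coefficients $\bm\hat{\mu}_k$, $\bm\hat{t}_k$, which the paper points out at the end of \ref{settingparab} are not tensors and hence cannot be higher Beltrami or holomorphic differentials. Recovering the $\l$-family from one fiber is essentially Conjecture \ref{nahc}, so your construction is circular at that point rather than merely conditional. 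Finally, the two sides of your proposed bijection carry mismatched equivalence relations: Corollary \ref{flatparaconnections} describes $\mc{A}\sslash\mc{P}\sslash\Symp_0$, a quotient by higher diffeomorphisms whose group-level action is itself only defined infinitesimally (the issue after Proposition \ref{computeX}), while atlases of charts are naturally identified modulo isotopy of $\S$; you flag this but it is not a ``further subtlety'' --- without it the claimed correspondence of moduli spaces has no well-defined target. The overall picture (a non-degenerate developing curve in $\C P^{n-1}$ generalizing both projective structures and opers) is consistent with what the paper envisions, but none of these three points is resolved by your argument.
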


\paragraph{Period coordinates on $\T^n$ and $\cotang$.}
Another important open question is to construct coordinates on $\bm\hat{\mc{T}}^n$ and $\cotang$. 
%It should be possible to parameterize $\cotang$ by the periods of the Liouville form $\alpha = pdz+\bar{p}\bar{z}$ over the spectral curve. 
\begin{conj}
There is a local isomorphism $$\cotang \cong H^1(\bm\tilde{\S}, \C)/H^1(\S,\C)$$ given by restricting the Liouville form $\alpha$ (locally given by $\alpha = pdz+\bar{p}\bar{z}$) to the spectral curve $\bm\tilde{\S}$. Furthermore some ratios of periods should give a coordinate system on $\bm\hat{\mc{T}}^n$.
\end{conj}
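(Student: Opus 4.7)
The plan is to define the period map $\Pi : \cotang \to H^{1}(\bm\tilde{\S},\C)/H^{1}(\S,\C)$ by sending $[(\mu_k,t_k)]$ to the class of $\alpha|_{\bm\tilde{\S}}$, where $\alpha = pdz+\bar{p}d\bar{z}$ is the Liouville form on $T^{*\C}\S$ and $\bm\tilde{\S}$ is the spectral curve of Section~\ref{sspectral}. First I would check that $\Pi$ is well-defined: by Theorem~\ref{spectralcurveprop} the restriction $\alpha|_{\bm\tilde{\S}}$ is closed modulo $t^{2}$, so its de Rham class in $H^{1}(\bm\tilde{\S},\C)$ exists to that order. The ambiguity coming from the base $\S$ is killed by quotienting by $\pi^{*}H^{1}(\S,\C)$, where $\pi:\bm\tilde{\S}\to\S$ is the $n$-fold covering map; this quotient is also $\Symp_{0}$-invariant because higher diffeomorphisms act by hamiltonian symplectomorphisms of $T^{*}\S$ preserving the Liouville form modulo exact terms, so that $\Pi$ descends to $\cotang$.

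The next step is a dimension check. Riemann--Hurwitz gives $g(\bm\tilde{\S})=n^{2}(g-1)+1$, hence
\[
\dim\bigl(H^{1}(\bm\tilde{\S},\C)/\pi^{*}H^{1}(\S,\C)\bigr)=2g(\bm\tilde{\S})-2g=(n^{2}-1)(2g-2)=\dim_{\R}\cotang,
\]
so both spaces have the same dimension near the zero-section, where the cover $\bm\tilde{\S}$ degenerates to the $n$-fold thickening of $\S$. It then suffices to prove that $d\Pi$ is injective at a generic point. For this I would compute $d\Pi$ explicitly: an infinitesimal variation $(\delta\mu_{k},\delta t_{k})$ deforms $\bm\tilde{\S}$ into a neighboring cover, and the variation of the period along a cycle $\gamma$ is given, in view of the Lagrangian-modulo-$t^{2}$ property, by a pairing of $\gamma$ against $\delta\alpha|_{\bm\tilde{\S}}$ expressible as a residue sum at the ramification points. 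Non-degeneracy of this pairing should follow from Serre duality on $\bm\tilde{\S}$, combined with the explicit form of condition $(\mc{C})$ from Theorem~\ref{conditionC} which identifies the tangent space at $(\mu,t)$ with a direct sum of Dolbeault cohomology groups on $\S$ dual to $\bigoplus_{k}H^{0}(K^{k})$. Matching vanishing cycles of $\bm\tilde{\S}\to\S$ with the tensorial decomposition on the base should exhibit $d\Pi$ as a block upper-triangular isomorphism modulo $t^{2}$.

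The main obstacle is that the entire construction is only meaningful modulo $t^{2}$, because $\alpha|_{\bm\tilde{\S}}$ is only closed to that order. To get coordinates valid beyond first order, one needs a canonical way to extend $\bm\tilde{\S}$ to a genuinely Lagrangian subvariety of $T^{*\C}\S$. I would pursue two complementary strategies: (i) use the conjectural hyperk\"ahler structure of $\cotang$ (Conjecture~\ref{hkcotang}) and the twistor-line picture from part \ref{part2} to single out, for each $[(\mu,t)]\in\cotang$, a canonical flat parabolic connection $\mc{A}(\l)$ whose WKB-type semiclassical spectral curve is exactly Lagrangian and whose periods are the asymptotic expansion parameters in $\l^{-1}$; (ii) alternatively replace the naive spectral curve by the support of the coherent sheaf $\Hilb^{n}_{red}(T^{*\C}\S)\supset \bm\tilde{\S}$ cut out by the ideal $I$ itself and compute periods against relative homology classes of the pair $(\bm\tilde{\S},\del\bm\tilde{\S})$. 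Approach (i) is the most promising because the deformation $\bm\hat{t}_{k}(\l)$ of Proposition~\ref{parametrisationlambda} already packages the higher-order corrections into a flat connection.

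Finally, to descend from $\cotang$ to $\T^{n}$ one needs a limiting procedure as $t\to 0$, where the cover $\bm\tilde{\S}$ collapses onto $\S$ and all periods vanish; ratios of leading-order periods along a suitable chosen basis of vanishing cycles should give local coordinates on $\T^{n}$, generalizing the classical Fenchel--Nielsen / shear coordinates for $n=2$. The hard part, beyond the $t^{2}$ issue, will be to identify an intrinsic \emph{basis} of cycles in $H_{1}(\bm\tilde{\S})/\pi^{*}H_{1}(\S)$ that behaves well under both the mapping class group action and the limit $t\to 0$; spectral-network-type constructions of Gaiotto--Moore--Neitzke are the natural tool here and link this problem to the cluster-coordinate picture discussed in part~\ref{part4}.
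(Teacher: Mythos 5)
The statement you are addressing is labelled a \emph{conjecture} in the paper, and the paper offers no proof of it: the only supporting evidence given is exactly the Riemann--Hurwitz dimension count you reproduce (namely $g(\bm\tilde{\S})=n^{2}(g-1)+1$, hence $\dim H^{1}(\bm\tilde{\S},\C)-\dim H^{1}(\S,\C)=(2g-2)(n^{2}-1)=\dim\cotang$), together with the observation that the whole construction only makes sense modulo $t^{2}$ because $\bm\tilde{\S}$ is only Lagrangian to first order. Up to that point your proposal coincides with the paper's discussion. One small slip: the quantity $(n^{2}-1)(2g-2)$ is the \emph{complex} dimension of $\cotang$ (the paper gives $\dim_{\C}\T^{n}=(n^{2}-1)(g-1)$), matched against the complex dimension $2g(\bm\tilde{\S})-2g$ of the quotient of $H^{1}$'s, so your ``$\dim_{\R}\cotang$'' should read $\dim_{\C}\cotang$.

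Beyond the dimension count, however, your argument has genuine gaps and does not constitute a proof. The injectivity of $d\Pi$ is asserted to ``follow from Serre duality combined with condition $(\mc{C})$'' and a ``block upper-triangular'' structure, but none of this is carried out; in particular the residue computation at ramification points and the matching of vanishing cycles with the decomposition $\bigoplus_{k}H^{0}(K^{k})$ are precisely the nontrivial content of the conjecture, and it is not clear that the pairing you describe is even well defined beyond first order in $t$. More fundamentally, the obstruction that the periods are only defined modulo $t^{2}$ --- which the paper itself identifies as \emph{the} main difficulty, suggesting only that ``there might be a canonical deformation of $\bm\tilde{\S}$ which is Lagrangian'' --- is not resolved by either of your strategies: strategy (i) depends on Conjecture \ref{nahc} and the conjectural hyperk\"ahler/twistor picture of part \ref{part2}, and strategy (ii) replaces the curve by a sheaf-theoretic support without explaining why the resulting periods would be canonically defined. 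Your final paragraph on descending to $\T^{n}$ via ratios of periods in the limit $t\to 0$ likewise restates the paper's own open question (including the choice of a distinguished basis of cycles, which the paper defers to the spectral-network discussion of part \ref{part4}) rather than answering it. In short, your proposal is a reasonable research plan aligned with the paper's stated evidence, but the statement remains unproved both in the paper and in your write-up.
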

A simple computation gives $\dim \cotang = \dim H^1(\bm\tilde{\S}, \C)- \dim H^1(\S,\C)$ (see end of subsection \ref{sspectral}). In addition, for $n=2$, Fock and Goncharov in \cite{FockGoncharov} proved a local isomorphism of the above type for $T^*\mc{T}^2$.
The main difficulty is that the spectral curve $\bm\tilde{\S}$ is only Lagrangian modulo $t^2$. There might be a canonical deformation of $\bm\tilde{\S}$ which is Lagrangian.

\paragraph{Fiber of the forgetful map.}
Recall the forgetful map $\T^n\rightarrow \T^{n-1}$ from Theorem \ref{mainresultncomplex}. It would be interesting to compute its fiber.
\begin{conj}
The fiber of the forgetful map $\T^n\rightarrow \T^{n-1}$ is an affine space modeled on $H^0(K^n)$.
\end{conj}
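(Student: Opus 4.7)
The plan is to describe the fiber over $[\mu^{(n-1)}]=[(\mu_2,\dots,\mu_{n-1})]$ as the cokernel of an elliptic $\delbar$-type operator acting on the $\mu_n$-slot, and to identify this cokernel with $H^0(K^n)$ via Serre duality. I would first fix a representative $(\mu_2,\dots,\mu_{n-1})$ of the target class; the preimage in $\Gamma(\Hilb^n_0(T^{*\C}\Sigma))$ is the affine space of $n$-tuples $(\mu_2,\dots,\mu_{n-1},\mu_n)$ with $\mu_n\in\Gamma(K^{1-n}\otimes\bar K)$, on which a residual piece of $\Symp_0(T^*\Sigma)$ acts. By Proposition \ref{varmu}, a Hamiltonian $H=v_n p^{n-1}$ of top degree preserves $(\mu_2,\dots,\mu_{n-1})$ pointwise and acts on $\mu_n$ by $\delta\mu_n = D_\sigma v_n$, where $D_\sigma := \delbar-\mu_2\del+(n-1)\del\mu_2$ and $\sigma$ denotes the complex structure induced by $\mu_2$; crucially, $D_\sigma$ depends only on $\mu_2$, not on the $\mu_3,\dots,\mu_{n-1}$. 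A lower-degree Hamiltonian $v_k p^{k-1}$ ($k<n$) that stabilizes $(\mu_2,\dots,\mu_{n-1})$ acts on $\mu_n$ by a translation $\bigl((k-1)\del\mu_{n-k+1}-(n-k)\mu_{n-k+1}\del\bigr)v_k$, independent of $\mu_n$ itself. The combined action is therefore translation-only, so the fiber inherits the structure of an affine space over a vector quotient of $\Gamma(K^{1-n}\otimes\bar K)$.

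Next I would identify this quotient as $H^0(K^n)$ via a formal-adjoint computation. Pairing against $t_n\in\Gamma(K^n)$ and integrating by parts yields $D_\sigma^* t_n = -(\delbar-\mu_2\del-n\,\del\mu_2)t_n$, which is precisely the condition $(\mc{C})$ of Theorem \ref{conditionC} for $k=n$ specialized to $\mu_3=\cdots=\mu_{n-1}=0$, i.e.\ the Cauchy--Riemann operator for $K_\sigma^n$ in the complex structure $\sigma$. Ellipticity of $D_\sigma$ (a zeroth-order perturbation of $\delbar$, elliptic whenever $\mu_2\bar\mu_2<1$, which holds for compatible $n$-complex structures) together with Serre duality gives $\mathrm{coker}(D_\sigma)\cong(\ker D_\sigma^*)^*\cong H^0(\Sigma_\sigma,K_\sigma^n)^*$. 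Since the forgetful composition $\T^{n-1}\to\cdots\to\T^2$ sends $[\mu^{(n-1)}]$ to $[\mu_2]$, the complex structure $\sigma$ --- and hence $H^0(K_\sigma^n)$ --- depends only on the target class, so the resulting identification of the fiber with $H^0(K^n)$ (after identifying $H^0(K^n)^*$ with $H^0(K^n)$ via Serre duality or a Hermitian metric induced by the hyperbolic metric on $\Sigma_\sigma$) descends canonically to the base.

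Finally, I would verify two global points. First, that the translations arising from lower-degree stabilizing Hamiltonians always land in $\mathrm{im}(D_\sigma)$, so that they act trivially on $\mathrm{coker}(D_\sigma)$ and the fiber is a genuine affine space rather than merely a homogeneous one. Second, that the infinitesimal statements integrate, via a Moser-type argument, to a free and transitive action: given two lifts whose $\mu_n$'s differ by $D_\sigma v_n$, flow the time-dependent Hamiltonian $v_n(t) p^{n-1}$, using ellipticity of $D_\sigma$ to solve the relevant linear equation at each $t$. The main obstacle will be the first of these, namely the stabilizer analysis: one must control every lower-degree Hamiltonian preserving $(\mu_2,\dots,\mu_{n-1})$ and show its contribution on $\mu_n$ vanishes modulo $\mathrm{im}(D_\sigma)$. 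A cleaner alternative would bypass the stabilizer entirely by proving directly, via a Moser argument, that any two $n$-complex structures agreeing in $\mu_2,\dots,\mu_{n-1}$ whose $\mu_n$'s differ by an element of $\mathrm{im}(D_\sigma)$ are related by the flow of a single $v_n p^{n-1}$ Hamiltonian, reducing the conjecture to the elliptic identification of the previous paragraph together with the a priori affine structure on $\Gamma(K^{1-n}\otimes\bar K)$.
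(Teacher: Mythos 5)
Be aware at the outset that the paper offers no proof of this statement: it appears in Part \ref{part4} as an open conjecture, supported only by the dimension count $\dim\T^n=\dim\T^{n-1}+\dim H^0(K^n)$ and by Labourie's theorem for $n=3$, which concerns the Hitchin fibration $\mc{T}^3\to\mc{T}^2$ and is proved by entirely different means (affine spheres and the Blaschke metric). So your proposal is measured against the open problem, not against an argument in the text. Your infinitesimal analysis is the natural one and is essentially correct: by Proposition \ref{varmu} the top-degree Hamiltonians fix $(\mu_2,\dots,\mu_{n-1})$ and translate $\mu_n$ by the image of $D_\sigma=\delbar-\mu_2\del+(n-1)\del\mu_2$, the Cauchy--Riemann operator of $K_\sigma^{1-n}$, whose cokernel is $H^1(K_\sigma^{1-n})\cong H^0(K_\sigma^n)^*$. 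Two remarks, though. First, the ``main obstacle'' you flag --- lower-degree Hamiltonians stabilizing $(\mu_2,\dots,\mu_{n-1})$ --- is empty at the Lie-algebra level: the linearized action of $(v_2,\dots,v_{n-1})$ on $(\mu_2,\dots,\mu_{n-1})$ is triangular with diagonal terms $\delbar-\mu_2\del+(k-1)\del\mu_2$ acting on $\Gamma(K^{1-k})$, which are injective for $g\geq 2$ since $\deg K^{1-k}<0$; so the infinitesimal stabilizer is trivial and contributes nothing. Second, the model space you actually produce is $H^0(K_\sigma^n)^*$, not $H^0(K^n)$; you cannot invoke Serre duality a second time to undo the dual, and indeed the paper's own convention makes $H^0(K^n)$ the \emph{co}tangent direction, so the honest statement of your conclusion is that the fiber is modeled on $H^{0,1}(K_\sigma^{1-n})$.

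The genuine gap is at the group level, and your Moser argument does not reach it. The fiber is the quotient of $\{\mu_n\}$ by the full subgroup of $\Symp_0(T^*\S)$ fixing $(\mu_2,\dots,\mu_{n-1})$, and an element of that subgroup need not be the time-one flow of a stabilizing Hamiltonian: it can be a loop in the $\Symp_0$-orbit of the $(n-1)$-structure returning to the base point. Since $\delta\mu_n$ under $v_kp^{k-1}$ with $k<n$ depends on $\mu_{n-k+1}$, which moves along such a loop, the induced translation of $\mu_n$ is a path-ordered integral, and nothing in your argument places it in $\mathrm{im}(D_\sigma)$. If this holonomy subgroup of translations were strictly larger than $\mathrm{im}(D_\sigma)$, the fiber would be a further (possibly non-Hausdorff) quotient of your affine space; ruling that out is the freeness, not the transitivity, of the action, and it is precisely the open content of the conjecture. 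You should also record the analytic inputs used silently: that $\mathrm{im}(D_\sigma)$ is closed in a suitable completion of $\Gamma(K^{1-n}\otimes\bar{K})$ (true by ellipticity on a compact surface), and that the infinite-dimensional quotients defining $\T^n$ carry the manifold structure your argument presupposes, which the paper itself does not establish.
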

The dimension count is right: $\dim \T^n = \dim \T^{n-1} + \dim H^0(K^n)$. For $n=3$, Labourie showed in \cite{Lab.2} that the fiber of $\mc{T}^3 \rightarrow \mc{T}^2$ is modeled on holomorphic cubic differentials using affine spheres and the Blaschke metric.

Composing the forgetful maps gives $\T^n \rightarrow \mc{T}^2$. Combined with the previous conjecture, this means that $\T^n$ is a bundle over Teichmüller space with fiber modeled on $\bigoplus_{i=3}^n H^0(K^i)$. This would partially confirm a conjecture of Labourie (see \cite{Lab}): there is  a description of Hitchin's component via a fiber bundle over Teichmüller space (with fiber $\bigoplus_{i=3}^n H^0(K^i)$) which allows to associate a preferred complex structure to a representation in the Hitchin component. In addition Labourie conjectures that the preferred complex structure is a minimum of the energy functional on Teichmüller space.

%\paragraph{Hyperkähler quotient approach.}
%Our approach to link $\T^n$ to Hitchin's component is to get a twistor description of $\cotang$. Another way to get $\cotang$ would be by a hyperkähler quotient construction. 
%For the moduli space of Higgs bundles $\mc{M}_H$, such a description is known: starting from the cotangent space to all $\g$-connections $\mc{A}(\g)$ where $\g$ is a real Lie algebra of compact type, we get $\mc{M}_H$ as HK manifold by a HK quotient by all gauge $\mc{G}$:
%$$\mc{M}_H = T^*\mc{A}(\g)\sslash/\mc{G}.$$
%For Higgs bundles, a point in $T^*\mc{A}(\g)$ is a unitary connection $A$ and the cotangent vector is the Higgs field $\Phi$. In our setting the space $T^*\mc{A}(\g)$ should be replaced by a subset of the space of affine connections $\mc{A}(\bm\hat{\g})$. We need only a subset of affine connections of the form $\l\Phi + A + \l^{-1}\Phi^*$ where $\Phi=\Phi_1dz+\Phi_2d\bar{z}$ and both $\Phi_1$ and $\Phi_2$ are strictly lower triangular.
%We should also replace the reduction by all gauge by a double reduction by parabolic gauge and higher diffeomorphisms.
%\begin{conj}
%There is a subspace $\mc{A}_0(\bm\hat{\g})$ of affine connections which is hyperkähler, such that a double hyperkähler reduction by parabolic gauge $\mc{P}$ and higher diffeomorphisms $\Symp_0(T^*\S)$ gives $\cotang$.
%\end{conj}

\paragraph{Surfaces with boundary.}
In the whole thesis, we assumed $\S$ to be closed. 
\begin{oq}
How to define higher complex structures for surfaces with marked points or with boundary components?
\end{oq}
For a surface with boundary, one should analyze if it is necessary to put an extra condition on the $n$-complex structure at the boundary of $\S$. The character variety $\Rep(\pi_1(\S), G)$ is not symplectic any more, but carries a Poisson structure, and its symplectic leaves are given by representations with prescribed monodromy around the boundary components (which are topological circles). 

For a surface with marked points, lots of extra structures have been introduced in the theory of Higgs bundles. There is the notion of parabolic Higgs bundles using flag structures on the marked points (see \cite{Konno}). Such flag structures are also used by Fock and Goncharov in \cite{FockGoncharov} to study higher Teichmüller spaces.

\paragraph{Various hyperkähler structures.}
In subsection \ref{hkhilbertscheme} we have seen that the punctual Hilbert scheme $\Hilb^n(\C^2)$ is hyperkähler. In another complex structure, we get the Calogero--Moser space $\Hilb^{n,q}(\C^2)$. 
%The hyperkähler picture from figure \ref{HK} suggests to see flat parabolic connections as a deformation of $\cotang$ which is itself is linked to the punctual Hilbert scheme. 
\begin{oq}
Is there a link between the Calogero--Moser bundle $\Hilb^{n,q}(T^{*\C}\S)$ and the space of parabolic connections $\mc{A}\sslash\mc{P}$? 
\end{oq}
From the definition, an element of $\Hilb^{n,q}(\C^2)$ is a pair of matrices $(A,B)$ such that $$\rk ( [A,B]-q\id ) \leq 1.$$
A parabolic connection $d+A+\bar{A}$ has curvature of rank at most 1. So $$\rk (\del \bar A-\delbar A + [A,\bar{A}]) \leq 1.$$ These two conditions seem quite similar.

\paragraph{Generalizations to a simple Lie algebra $\g$.}
Finally there are several open questions for the generalization of punctual Hilbert schemes and higher complex structures to simple Lie algebras $\g$. 
\begin{conj}
There is a modified version of $\Hilb(\g)$, identifying some points, which is a smooth projective variety such that the Chow morphism is a resolution of singularities. Furthermore $\Hilb(\g)$ carries a complex symplectic structure for which the zero-fiber $\Hilb_0(\g)$ is Lagrangian.
\end{conj}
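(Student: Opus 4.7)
The plan is to realize the modified $\Hilb(\g)$ as a GIT quotient of the commuting variety $\Comm(\g) = \{(A,B) \in \g^2 \mid [A,B] = 0\}$ by the adjoint action of $G$. The starting observation is that $\g \times \g$ carries the complex symplectic form $\omega = \tr(dA \wedge dB)$ (using the Killing form to identify $\g \cong \g^*$), that $\omega$ is $G$-invariant, and that $\Comm(\g) = \mu^{-1}(0)$ where $\mu(A,B) = [A,B]$ is the moment map for simultaneous conjugation. Hence a Marsden--Weinstein reduction $\mu^{-1}(0)\sslash G$ inherits a complex symplectic structure on its smooth locus, and the $\C^{\times}$-scaling $(A,B) \mapsto (tA, tB)$ (which rescales $\omega$ by $t^2$) contracts the whole reduction onto the nilpotent zero-fiber; this immediately gives isotropy of $\Hilb_0(\g)$, and combined with the dimension count $\dim \Hilb^{reg}_0(\g) = \rk \g$ from Corollary \ref{hilbg0affine}, its Lagrangianness on the regular stratum.

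To repair the non-Hausdorff pathology analyzed in subsection \ref{topology}, I would introduce a stability condition. Concretely, one fixes a nontrivial character $\chi$ of $G$ together with an equivariant line bundle on $\Comm(\g)$, and defines the modified Hilbert scheme as $\Comm(\g)^{\chi\text{-ss}} \sslash_{\chi} G$, in analogy with King's construction of $\Hilb^n(\C^2)$ as a GIT quotient of the Jordan-quiver representation variety. The cyclicity condition implicit in $\Hilb^{cycl}(\g)$ should become a true stability condition via pairing with a cocharacter tailored to the standard representation, and the infinitesimally close pairs $P_1(b), P_2(b)$ exhibited for $\g = \mf{sl}_3$ should be separated because exactly one of them has a closed orbit in $\Comm(\g)^{\chi\text{-ss}}$.

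For the Chow morphism to be a resolution of singularities of $\h^2/W$, I would argue stratum by stratum using the Jordan decomposition. On the open locus of semisimple regular pairs, which is dense by the Richardson-type argument in Proposition \ref{density}, the Chow map is an isomorphism; over a stratum indexed by the conjugacy class of the semisimple part $(A_s, B_s)$, a Slodowy-slice reduction identifies the fiber with the corresponding nilpotent commuting variety of the centralizer $Z(A_s, B_s)$, and one invokes induction on $\dim \g$ together with Kostant's theorem. Properness of the Chow morphism should follow from a valuative criterion applied to $\chi$-semistability, using that characteristic polynomials of $A$ and $B$ are preserved under the scaling limit.

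The main obstacle will be the existence of a crepant symplectic resolution of $\h^2/W$ for the Weyl groups at hand. By the classification results of Ginzburg--Kaledin and Bellamy, crepant symplectic resolutions of $V/W$ for complex reflection groups are very restrictive, and $\h^2/W$ typically fails to admit a full symplectic resolution once $W$ lies outside type $A$; the $D_n$ phenomena already visible in subsection \ref{Dn} (the sheet exchange $\sigma_n \leftrightarrow -\sigma_n$ and the invariant zero locus of $\sigma_n$) are symptoms of exactly this obstruction. The hard part will therefore be to identify what the modified version of $\Hilb(\g)$ should actually be --- perhaps a Poisson deformation à la Namikawa, or a symplectic Deligne--Mumford stack rather than a genuine variety --- so that smoothness, properness of the Chow map and the Lagrangian property of $\Hilb_0(\g)$ all hold simultaneously; reconciling the matrix-theoretic definition with the intrinsic symplectic resolution side is where the real work lies.
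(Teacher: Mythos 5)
Be aware first that the statement you are addressing is an open conjecture in the paper; there is no proof to compare against. The paper's own supporting evidence is far more modest than your programme: the symplectic structure is expected to arise by lifting the canonical $W$-invariant form on $\h^2=T^*\h$ through the conjectural minimal resolution of Conjecture \ref{conj1}, and the Lagrangian property is actually \emph{verified} only for the regular zero-fiber of classical $\g$, by embedding $\Hilb^{reg}(\g)$ into $\Hilb^m_{red}(\C^2)$ via the standard representation and computing there (subsection \ref{spectralcurve}). Your alternative derivation of the symplectic form as the Marsden--Weinstein reduction of $\tr(dA\wedge dB)$ on $\g^2$ at the zero level of the moment map $(A,B)\mapsto[A,B]$ is sound on the smooth locus and more intrinsic than the paper's route, and the $\C^{\times}$-contraction argument for isotropy of the zero-fiber, combined with Corollary \ref{hilbg0affine}, is a genuine improvement in that it avoids choosing a representation. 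So on the symplectic/Lagrangian half of the conjecture your route is different from, and arguably cleaner than, what the paper sketches.

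The central step of your construction of the modified space, however, fails as written. A character-twisted quotient $\Comm(\g)^{\chi\text{-ss}}\sslash_{\chi}G$ requires a nontrivial character $\chi$ of $G$, and $G$ is the adjoint group of a simple Lie algebra, hence semisimple with $\Hom(G,\C^{\times})$ trivial. King's presentation of $\Hilb^n(\C^2)$ does not linearize the $\GL_n$-action on $\End(\C^n)^2$ alone: it adds a framing vector space (encoding the cyclic vector), so that the group acts on $\End(V)^{\oplus 2}\oplus V$, and it is this framing that both supplies the stability condition and separates orbits. For general $\g$ no intrinsic framing is available --- the paper's cyclic part depends on a choice of representation and is undefined for exceptional $\g$ --- and without one, GIT returns only the affine categorical quotient of $\Comm(\g)$, which is no resolution at all; the $\mf{sl}_3$ pairs $P_1(b),P_2(b)$ of subsection \ref{topology} are both semistable with no closed orbit in their common closure for \emph{any} linearization on $\Comm(\g)$ itself. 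Finally, the obstruction you raise in your last paragraph is not a side remark but the decisive issue: by the Ginzburg--Kaledin/Bellamy classification, $\h^2/W$ admits a symplectic resolution essentially only for $W$ of type $A$ and $B/C$; for $W(D_n)$ with $n\geq 4$ and for the exceptional Weyl groups it does not. Since the Chow map is an isomorphism over the regular semisimple locus and any reasonable symplectic structure on the modified $\Hilb(\g)$ must restrict there to the canonical one, the conjecture as literally stated (smooth variety, symplectic, Chow a resolution) cannot hold in those types; some weakening to an orbifold, a $\mathbb{Q}$-factorial terminalization, or a Poisson deformation is forced. The paper does not confront this, and your proposal correctly identifies it as the real obstacle rather than resolving it.
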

%The punctual Hilbert scheme is defined as a space of ideals in $\C[x,y]$. For the moment, the generalization to $\g$ has only an idealic interpretation for classical $\g$. 
%\begin{oq}
%Is there an idealic description of $\Hilb(\g)$ for a general $\g$?
%\end{oq}
To define the moduli space of $\g$-complex structures, we quotient out by higher diffeomorphisms of type $\g$, which we only defined for classical $\g$ (see section \ref{g-modulispace}).
\begin{oq}
How to intrinsically define the space of higher diffeomorphisms of type $\g$?
\end{oq}
To get a link to $G$-Hitchin components, we have to generalize part \ref{part2}. In particular a generalization of parabolic reduction for general $\g$, has not been carried out for the moment.
\begin{conj}
There is a generalized parabolic reduction process, giving an isomorphism between the moduli space $\bm\hat{T}_\g$ and the $G$-Hitchin component.
\end{conj}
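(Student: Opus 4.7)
The plan is to extend the two-step hamiltonian reduction of Part \ref{part2} from $\g = \mf{sl}_n$ to an arbitrary simple $\g$, then invoke a $\g$-analog of Conjecture \ref{nahc} to obtain a canonical flat $G$-connection from a point of $\bm\hat{\mc{T}}_\g$. First I would define a $\g$-parabolic subgroup $\mc{P}_\g \subset \mc{G}$ as follows: fix a principal $\mf{sl}_2$-triple $(e,h,f)$ in $\g$, and take as $\Lie(\mc{P}_\g)$ the $\g$-valued functions whose value at each $z$ lies in the nonnegative grading for $\ad h$. For $\g = \mf{sl}_n$ this recovers the $\mc{P}$ of \ref{settingparab}. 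The hamiltonian reduction $\mc{A}\sslash\mc{P}_\g$ should then admit a Drinfeld--Sokolov-type slice: each orbit has a unique representative $\mc{A}_1 \in f + Z(e)$ whose coefficients $(\bm\hat{t}_{m_i+1})$ are indexed by the exponents $(m_i)$ of $\g$, and $\mc{A}_2$ is determined by coefficients $(\bm\hat{\mu}_{m_i+1})$ together with the constraint that $[\nabla, \bar\nabla]$ is concentrated in the ``bottom'' graded piece $\g_{-\max}$ (dual, via the Killing form, to the line stabilized by $\mc{P}_\g$).

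Second, I would perform a second reduction by the group $\Symp(\g, \Sigma)$ of higher diffeomorphisms of type $\g$ from section \ref{g-modulispace}, with moment map given by the $\g$-parabolic curvature. Extending the reduction to $h$-connections and writing $\l = h^{-1}$, the analog of Proposition \ref{parametrisationlambda} should produce rational functions $\bm\hat{t}_{m_i+1}(\l)$ and $\bm\hat{\mu}_{m_i+1}(\l)$ whose highest terms in $\l \to \infty$ recover, respectively, the holomorphic differentials $t_{m_i+1} = \tr_\rho(\Phi_1^{m_i} A_1)$ (with $\rho$ a faithful representation) and the higher Beltrami differentials attached to a $\g$-complex structure. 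The analogs of Theorems \ref{actionsymponlambdaconn} and \ref{conditioncinconnection} should then identify the semiclassical limit of $\mc{A}(h)\sslash\mc{P}_\g\sslash\Symp(\g,\Sigma)$ with $T^*\bm\hat{\mc{T}}_\g$, so that a generic fiber of the resulting twistor family is a space of flat $G^\C$-connections.

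Third, I would establish the $\g$-analog of Conjecture \ref{nahc}: for $[(\mu, t)] \in T^*\bm\hat{\mc{T}}_\g$ and finite initial data, there is a unique unitary-gauge class of flat parabolic connections $\mc{A}(\l) = \l\Phi + A + \l^{-1}\Phi^*$ with $\Phi_1$ principal nilpotent in $\g$, $\Phi_2 \in Z(\Phi_1)$ encoding the higher Beltrami differentials, $t_{m_i+1}$ recovered from $\mc{A}_1(\l)$, and satisfying the reality condition $-\mc{A}(-1/\bar\l)^* = \mc{A}(\l)$. Along the zero-section $t = 0$ the flatness equations reduce, after the local trivialization of Theorem \ref{thm1}, to the affine $\g$-Toda system; at the trivial $\g$-complex structure this is solved by the Corlette--Hitchin--Simpson connection attached to the principal nilpotent Higgs field, and since ellipticity is open the system remains solvable in a neighborhood. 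Hitchin's antiholomorphic involution $\tau$ attached to the split real form of $\g$ preserves the equations locally, so the monodromy of $\mc{A}(1)$ on the zero-section lies in $G$. A dimension and connectedness argument, using $\dim \bm\hat{\mc{T}}_\g = (g-1)\dim \g = \tfrac{1}{2}\dim \mc{T}(G)$ from Theorem \ref{thm2} together with the copy of Teichmüller space from Proposition \ref{inclteich} landing in the fuchsian locus of $\mc{T}(G)$ via the principal map, then identifies the image of the zero-section with the $G$-Hitchin component.

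The main obstacle will be the $\g$-analog of Conjecture \ref{nahc}, especially for $\g$ of type $D_n$. There the regular pair $(f,0)$ is not cyclic (example \ref{regnotcyclic}), the idealic map fails to be continuous (example \ref{idealnotcont}), and $\Hilb^{reg}_0(\g)$ is non-Hausdorff along the locus $\sigma_n = 0$. This obstructs a uniform Drinfeld--Sokolov slice and suggests that the proof must either be carried out on the open stratum $\{\sigma_n \neq 0\}$ with a separate degeneration argument at $\sigma_n = 0$, or wait for the modified smooth $\g$-Hilbert scheme of the conjecture preceding \ref{spectralcurve}. Two secondary difficulties will be the integration of the infinitesimal $\Symp(\g,\Sigma)$-action to an honest group (unresolved even for $\g = \mf{sl}_n$), and the solvability of the affine $\g$-Toda system with non-trivial higher Beltrami data, where loss of ellipticity near $\mu_2 \bar\mu_2 = 1$ should be the boundary where the construction degenerates.
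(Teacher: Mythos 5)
You should first be aware that the paper does not prove this statement: it appears only as a conjecture in the concluding ``Perspectives'' part, and the author explicitly records the central obstruction, namely that for $\g$ other than $\mf{sl}_n$ there is no parabolic subalgebra $\mf{p}\subset\g$ with $\dim\g-\dim\mf{p}=\rk\g$. Your program runs directly into this obstruction. The subgroup $\mc{P}_\g$ you define via the nonnegative eigenspaces of $\operatorname{ad}h$ for a principal $\mf{sl}_2$-triple is the Borel subalgebra, whose codimension in $\g$ is $\tfrac{1}{2}(\dim\g-\rk\g)$, not $\rk\g$; in particular it does \emph{not} recover the $\mc{P}$ of subsection \ref{settingparab} for $\g=\mf{sl}_n$, where $\mc{P}$ is the maximal parabolic stabilizing a line in the dual of the standard representation (codimension $n-1$, versus $n(n-1)/2$ for the Borel — these agree only for $n=2$). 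A hamiltonian reduction $\mc{A}\sslash\mc{P}_\g$ by your $\mc{P}_\g$ at the zero level would leave $\dim\g-\rk\g$ functional coordinates rather than the $2\rk\g$ coordinates $(\bm\hat{t}_{m_i+1},\bm\hat{\mu}_{m_i+1})$ needed to match $T^*\bm\hat{\mc{T}}_\g$. To land on a Drinfeld--Sokolov slice $f+Z(e)$ one must instead reduce by the unipotent group at a level shifted by $f$ (first-class constraints $\mc{A}_1\in f+\mf{b}$), or — for classical $\g$, as the paper suggests — embed $\g$ into some $\mf{sl}_n$ via the standard representation and cut the $\mf{sl}_n$-parabolic reduction down to $\g$; in either case the construction, and the proof that both the $(1,0)$- and $(0,1)$-parts are simultaneously normalized, still has to be carried out and is not a routine transcription of Part \ref{part2}.

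Beyond that, your argument reduces one conjecture to several others rather than proving anything: the $\g$-analog of Conjecture \ref{nahc} (existence and uniqueness of real twistor lines) is open even for $\mf{sl}_n$, the integration of the infinitesimal $\Symp(\g,\S)$-action to an honest group is unresolved, and the solvability of the resulting affine $\g$-Toda-type system away from the trivial $\g$-complex structure is not established. You correctly identify these obstacles, and your remarks on type $D_n$ (working on the stratum where $\sigma_n$ has discrete zero locus, with a separate degeneration argument) are sensible and consistent with the difficulties the paper isolates, but none of this closes the gap; the statement remains conjectural.
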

It seems not difficult to solve most of these open questions for classical Lie algebras by using their natural inclusion into some $\mf{sl}_n$. There is still one difficulty: for $\g$ other than $\mf{sl}_n$, there is no parabolic subalgebra $\mf{p}$ satisfying $\dim \mf{g}-\dim\mf{p}=\rk \g$.
What is also missing is an understanding for general $\g$ in intrinsic terms.

\section{Links to related topics}
We present here possible connections between punctual Hilbert schemes and higher complex structures to other mathematical objects or areas.

\subsection{Geometric approaches to higher Teichm\"uller theory}

There are various geometric approaches to Hitchin components. The exploration of links between them should enrich their understanding.
It would be interesting to compare the rigid geometric structures constructed by Goldman \cite{Goldman}, Guichard--Wienhard \cite{Guichard} and \cite{Lab} to the flexible higher complex structure. In particular, locally there should be a preferred higher complex structure associated to a rigid structure. This would generalize the local existence of a complex structure associated to a hyperbolic structure. To get a hyperbolic structure out of a complex structure, one has to solve Liouville's differential equation. There should be a \textit{generalized differential equation for higher complex structures}.

Punctual Hilbert schemes arise in various sorts and can potentially be used to give geometric approaches to other components of character varieties. 
In particular, there is the notion of \textbf{maximal components} in character varieties of Hermitian type Lie groups (see for example \cite{Bradlow} and \cite{Burger}). Giving a geometric approach to these components by flexible geometric structures is challenging. Maximal components can have non-trivial topology whereas moduli spaces for $\g$-complex structures are always contractible.

Another application of Hilbert schemes might be the following: both in Hitchin's approach with Higgs bundles and in the geometric approach with Hilbert schemes, one uses complex methods. Only with a reality constraint, one gets to the $G$-character variety for real $G$. 
It is possible to define a Hilbert scheme $\Hilb(\g)$ for a real Lie algebra, especially the punctual Hilbert scheme of the real plane $\Hilb^n(\R^2)$. A new geometric structure can be defined as a section of $\Hilb^n_0(T^*\Sigma)$ where we use the real cotangent bundle. The exploration of its local and global geometry could give a direct link to $\PSL_n(\R)$-Hitchin components, without passing through complex numbers.

\subsection{Spectral networks, cluster coordinates and positivity}\label{cluster}

The spectral curve in the higher complex structure setting potentially gives a generalized framework for spectral networks. Furthermore, the periods of this spectral curve should give a cluster coordinate system on the moduli space, which would give a notion of positivity.

\paragraph{Spectral networks.}
Gaiotto, Moore and Neitzke define in \cite{GMN} the notion of  \textit{spectral networks}. These are special networks on Riemann surfaces allowing an abelianization of connections over a spectral curve, i.e. a bijection between flat $G$-connections (for complex $G$) over $\Sigma$ and flat abelian connections over a branched cover $\bm\tilde{\Sigma}$.
A typical example is constructed out of a collection of holomorphic differentials, from which one defines a branched cover $\bm\tilde{\Sigma}$ and a set of foliations on the surface $\Sigma$. For $\mf{sl}_2$, the periods of $\bm\tilde{\Sigma}$ parameterize all spectral networks of this kind.
But in general the periods of the spectral curve have more degrees of freedom than the spectral networks. To our understanding, the GMN-spectral networks depend on a complex structure and holomorphic differentials whereas the periods of the spectral curve should parameterize higher complex structures and holomorphic differentials. 

Starting with a higher complex structure and holomorphic differentials, we get a spectral curve from which we can define a set of foliations, generalizing the typical example of spectral networks. This construction has to be carried out in detail and the properties of the resulting networks, in particular the abelianization of connections, to be studied.

\paragraph{Cluster coordinates and positivity.}
Higher Teichm\"uller spaces admit cluster coordinates using the periods of the spectral curve (see \cite{FockGoncharov}).
In our setting, the spectral curve associated to a cotangent vector to higher complex structures is Lagrangian (modulo $t^2$), so we can compute its periods (which only depend on the homotopy class). The ratios of periods should give coordinates on the cotangent bundle $T^*\mathcal{T}^n$ and also on $\mathcal{T}^n$. As for Fock--Goncharov coordinates, this coordinate system should be a cluster algebra, generated by flips. 
Since in cluster mutations, all coefficients are positive (see \cite{Fomin}), one can define cluster algebras over semi-fields, in particular $\R_{>0}$. This gives the notion of positivity in cluster varieties which should also apply for $\T^n$.

%\paragraph{Tropicalisation and measured laminations.}

%In the paper \cite{FockGoncharov2}, Fock and Goncharov show that Thurston's space of measured laminations is a tropicalisation of Teichmüller space. Higher Teichmüller spaces $\mc{T}^n$ admit a cluster coordinate system, so a tropicalisation give a generalization of measured laminations. The higher complex structure could be involved in the description of this generalized measured lamination.

\subsection{Opers, Drinfeld--Sokolov reduction and W-geometry}

Higher complex structures and opers might be combined to a single uniformed object whose space is expected to be symplectic, which would give a two-dimensional analog of Drinfeld--Sokolov differential operators. This uniformed object should be the geometric structure needed in W-gravity theories.

\medskip
\noindent Over a Riemann surface $\Sigma$, there is the notion of a $G$-oper, generalizing the concept of a differential operator (see \cite{Drinfeld}). This space is in some sens transversal to Hitchin's section (using the non-abelian Hodge correspondence). The space of opers is parameterized by holomorphic differentials.

In dimension 1, on the circle $\mathbb{S}^1$,  the Drinfeld--Sokolov reduction of affine $\bm\hat{\g}$-connections gives a notion of $\g$-differential operators, see \cite{DS}. The space of these operators is symplectic, its deformation space is called \textit{the $W$-algebra of $\g$}.

It should be possible to combine both higher complex structures and opers to a single object on a smooth surface. The space of these objects would be parameterized by higher Beltrami differentials and holomorphic differentials, just like $T^*\mc{T}^n$. 
Roughly speaking, one expects to find Hitchin's component for $t=0$ and $\mu\neq 0$, whereas one should find opers for $t\neq 0$ and $\mu=0$.
For surfaces $\Sigma$ with boundary, one should be able to get Drinfeld--Sokolov differential operators on the boundary components out of generalized opers on $\Sigma$ by a restriction process.
In \cite{Kydonakis} and refined in \cite{Collier}, a procedure called \textit{conformal limit} is described which allows to identify Hitchin's component with the space of opers. It should be possible to understand the conformal limit in our setting.
Another question to explore is the following. It is known that $W$-transformations act on the space of opers. Can this action be extended to the space of generalized opers?

In conformal field theories for dimension 1+1, one considers classical fields on the circle $\mathbb{S}^1$ which evolve in time, sweeping out a Riemann surface with boundary. The diffeomorphism group of the circle acts as symmetry group. The infinitesimal symmetries are given by the vector fields on the circle, also called the \textit{Witt algebra}. On the quantum level, the infinitesimal symmetries form the \textit{Virasoro algebra} $\Vir$, the universal central extension of vector fields on $\mathbb{S}^1$. The Virasoro algebra is the W-algebra of $\mf{sl}_2$.
There is a generalization of this theory, called $W$-geometry or $W$-gravity (when applied to Einstein formalism), for which the symmetry algebra is the W-algebra of a simple Lie algebra $\g$ (see for instance \cite{Schoutens}). For this, the Riemann surface gets a stronger geometric structure. For the moment it is not clear what this structure should be. We conjecture that higher complex structures, or more generally $\g$-complex structures, should give a good framework for $W$-geometry.

\subsection{Quantization and mirror symmetry}

The moduli space of higher complex structures should admit a quantization. Moreover, Hitchin's integrable system is an example of mirror symmetry so one could investigate if an analog mirror symmetry can be observed within higher complex structures.

\paragraph{Quantization.}
Higher Teichm\"uller spaces are symplectic and have been quantized, using its cluster structure (see \cite{FockGoncharov}). Assuming the equivalence of Hitchin's component and the moduli space of higher complex structures, it would be interesting to describe this quantization in terms of higher complex structures. Alternatively, one could directly give a quantization once a cluster coordinate system by the periods of the spectral curve has been introduced (as explained in section \ref{cluster}).
In \cite{Teschner}, Teschner surveys the quantization of the moduli space of flat $\PSL(2,\C)$-connections. There should be generalizations of the various approaches to $\PSL(n,\C)$-connections and higher complex structures.

\paragraph{Mirror symmetry.}
In \cite{Hausel}, Hausel and Thaddeus show that the Hitchin integrable system is an example of SYZ-mirror symmetry using its hyperk\"ahler structure. In particular, complex Lagrangian torus fibers in the moduli space of $G$-Higgs bundles should correspond to hyperk\"ahler submanifolds in the space of ${}^LG$-Higgs bundles where ${}^LG$ denotes the Langlands dual group.

As explained in section \ref{bigpicture}, there should be a hyperk\"ahler manifold $\mc{M}$ associated to higher complex structures. In the complex structure $I$, the space $\mc{M}$ is the cotangent bundle to the moduli space of $\g$-complex structures $T^*\bm\hat{\mc{T}}_\g$. One should check if the SYZ-conditions are satisfied in this setting and explore the duality between $T^*\mc{T}_\g$ and $T^*\mc{T}_{{}^L\g}$.

%Finally, as pointed out in \cite{FockGoncharov}, there are two sorts of cluster varieties, the $\mc{A}$-model and the $\mc{X}$-model, and conjecturally both are in duality interchanging $G$ and its Langlands dual ${}^LG$. It would be interesting to analyze whether this duality is the same as the SYZ-mirror symmetry.

%%%%%%%%%%%%%%%%%%%%%%%%%%%%%%%%%

\cleardoublepage

\appendix

\section{Regular elements in semisimple Lie algebras}\label{appendix:B}

In this appendix, we gather all properties of regular elements in semisimple Lie algebras we need in part \ref{part3} and we give precise references for these results. The main references are the books of Collingwood and McGovern \cite{Coll}, Steinberg \cite{Steinberg} and Humphreys \cite{Hum}, as well as the papers \cite{Kost} and \cite{Kost2} by Kostant.

\begin{definition}
An element $x \in \g$ is called \textbf{regular} if the dimension of its centralizer $Z(x)$ is equal to the rank of the Lie algebra $\rk(\g)$. A regular nilpotent element is called \textbf{principal nilpotent}.
\end{definition}

\begin{Remark} 
Notice that in older literature, regular elements are defined in another way, using the characteristic polynomial of the adjoint map. The ``old'' notion only includes semisimple regular elements (in the sense above). 
\hfill $\triangle$
\end{Remark}

The condition that the dimension of the centralizer has to be equal to the rank, does not come from nowhere: in fact it is the minimal possible dimension.

\begin{prop}
For any $x \in \g$, we have $\dim Z(x) \geq \rk(\g)$.
\end{prop}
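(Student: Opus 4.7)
The plan is to prove the inequality by upper semicontinuity of the centralizer dimension, combined with a specific computation on regular semisimple elements. First I would observe that $Z(x) = \ker \ad(x)$, so $\dim Z(x) = \dim \g - \rk \ad(x)$. Since the matrix entries of $\ad(x)$ in any fixed basis depend polynomially (in fact linearly) on $x$, the rank of $\ad(x)$ is lower semicontinuous in the Zariski topology on $\g$ (it is controlled by the non-vanishing of minors). Hence the function $x \mapsto \dim Z(x)$ is upper semicontinuous, and for every integer $k$ the set $\{x \in \g : \dim Z(x) \geq k\}$ is Zariski closed in $\g$.

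Next I would exhibit an element where this dimension equals $\rk \g$. Fix a Cartan subalgebra $\h \subset \g$, of dimension $r = \rk \g$, and choose $h \in \h$ lying outside the finite union of the root hyperplanes $\ker \alpha$ (possible since $\h$ is not covered by finitely many proper subspaces). Using the root decomposition $\g = \h \oplus \bigoplus_{\alpha \in R} \g_\alpha$, the operator $\ad(h)$ acts on each $\g_\alpha$ by the non-zero scalar $\alpha(h)$, so $\ker \ad(h) = \h$ and therefore $\dim Z(h) = r$.

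Finally I would invoke density of the regular semisimple locus. The set $\g^{rs}$ of elements conjugate to some $h \in \h$ avoiding all root hyperplanes is the image under the adjoint action $G \times \h^{reg} \to \g$; it is classical (see for example Humphreys' book \cite{Hum}) that $\g^{rs}$ is Zariski open and dense in $\g$. Combining these ingredients, the closed subset $\{x \in \g : \dim Z(x) \geq r\}$ contains the dense subset $\g^{rs}$, hence equals all of $\g$, proving $\dim Z(x) \geq \rk \g$ for every $x \in \g$.

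The only mild obstacle is the density statement for $\g^{rs}$; one can avoid invoking it directly by instead taking the minimum of $\dim Z(x)$ over $\g$ as the definition of rank (noting that this minimum is attained on a Zariski open subset by upper semicontinuity) and then identifying it with $\dim \h$ via the computation above applied to any regular $h \in \h$.
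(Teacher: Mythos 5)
Your argument is correct, and it is essentially the standard proof: the paper itself gives no argument (it only cites Lemma~2.1.15 of \cite{Coll}), and that reference's proof is exactly your combination of upper semicontinuity of $x \mapsto \dim\ker\ad(x)$ with the density of the regular semisimple locus, on which the centralizer is a Cartan subalgebra. The only shaky point is your closing alternative: identifying the generic minimum $m$ of $\dim Z(x)$ with $\dim\h$ still requires the open dense set $\{\dim Z(x)=m\}$ to meet the regular semisimple elements (a dense open set can miss the proper closed subvariety $\h$ entirely), so you have not really avoided the density statement -- but since the main argument does not rely on this remark, the proof stands.
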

See for example lemma 2.1.15. in \cite{Coll}.

For the Lie algebras $\mf{gl}_n$ and $\mf{sl}_n$, we have the following characterization of regular elements from Steinberg \cite{Steinberg}, proposition 2 in section 3.5:

\begin{prop}\label{regularsln}
For $\g=\mf{gl}_n$ or $\mf{sl}_n$ and $x\in \g$, we have the following equivalence:
$$x \text{ is regular} \; \Leftrightarrow \; \mu_x = \chi_x \; \Leftrightarrow \; x \text{ admits a cyclic vector}$$ where $\mu_x$ and $\chi_x$ respectively denote the minimal and the characteristic polynomial of $x$, seen as a matrix. 
\end{prop}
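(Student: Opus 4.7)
The plan is to reduce everything to the case of $\g = \mf{gl}_n$: for $x \in \mf{sl}_n$ the centralizer in $\mf{sl}_n$ is just the centralizer in $\mf{gl}_n$ intersected with traceless matrices, and since the identity always commutes with $x$, we have $\dim Z_{\mf{sl}_n}(x) = \dim Z_{\mf{gl}_n}(x) - 1$. Since $\rk\mf{sl}_n = n-1$ and $\rk\mf{gl}_n = n$, in both cases the regularity condition $\dim Z(x) = \rk(\g)$ translates into the single statement $\dim Z_{\mf{gl}_n}(x) = n$. Also, a cyclic vector for $x \in \mf{sl}_n$ means exactly a cyclic vector for $x$ viewed in $\mf{gl}_n$. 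So it suffices to prove the equivalence for $\mf{gl}_n$.

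First I would show the equivalence \emph{``cyclic vector''} $\Leftrightarrow$ $\mu_x = \chi_x$. If $v \in \C^n$ is cyclic, then $v, xv, \dots, x^{n-1}v$ form a basis, so no non-zero polynomial of degree $< n$ annihilates $v$, hence $\deg \mu_x \geq n$; combined with Cayley--Hamilton, this forces $\mu_x = \chi_x$ (up to sign). Conversely, apply the invariant factor (rational canonical form) decomposition $\C^n \cong \bigoplus_{i=1}^k \C[t]/(f_i)$ with $f_1 \mid f_2 \mid \dots \mid f_k$, where $f_k = \mu_x$ and $\prod_i f_i = \chi_x$. The equality $\mu_x = \chi_x$ forces $k=1$, hence $\C^n \cong \C[t]/(\chi_x)$, which is generated as a $\C[t]$-module by $1$; any lift of $1$ is a cyclic vector for $x$.

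Next I would prove \emph{``cyclic vector''} $\Leftrightarrow$ $\dim Z(x) = n$. The easy direction: if $v$ is cyclic, then the map $Z(x) \to \C^n$, $y \mapsto yv$ is injective (because $y(x^i v) = x^i(yv)$ determines $y$ on a basis), so $\dim Z(x) \leq n$; and the independent elements $I, x, \dots, x^{n-1} \in Z(x)$ give $\dim Z(x) \geq n$. For the converse, use the invariant factor decomposition and the standard identification $Z_{\mf{gl}_n}(x) = \End_{\C[t]}(\C^n) = \bigoplus_{i,j} \Hom_{\C[t]}(\C[t]/(f_i), \C[t]/(f_j))$, together with the computation $\dim_\C \Hom_{\C[t]}(\C[t]/(f), \C[t]/(g)) = \deg \gcd(f,g)$. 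Since $f_i \mid f_j$ for $i \leq j$, this yields
\[
\dim Z(x) = \sum_{i,j} \deg\gcd(f_i,f_j) = \sum_{i,j} \deg f_{\min(i,j)} \; \geq \; \sum_i \deg f_i = n,
\]
with equality precisely when all off-diagonal contributions vanish, i.e.\ when $k=1$ (since each $\deg f_i \geq 1$), which is exactly the cyclic case.

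The main obstacle is not any single step, but choosing the right organizational principle so that the three conditions come out symmetrically; the computation in the last display is the technical heart. The clean way is to realize that the $\C[t]$-module structure on $\C^n$ induced by $x$ turns both the ``cyclic'' condition (the module is cyclic over $\C[t]$) and the ``centralizer'' condition ($\End_{\C[t]}(\C^n)$ has minimal dimension $n$) into statements about the invariant factor decomposition, linked through $\mu_x = f_k$ and $\chi_x = \prod f_i$. With that viewpoint the three equivalent conditions are all just saying $k=1$.
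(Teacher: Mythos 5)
Your proof is correct. Note that the paper itself gives no proof of this proposition, only a citation to Steinberg (\cite{Steinberg}, section 3.5, proposition 2); your argument --- reduction to $\mf{gl}_n$ via $\dim Z_{\mf{sl}_n}(x)=\dim Z_{\mf{gl}_n}(x)-1$, then the invariant-factor decomposition together with the Frobenius formula $\dim Z(x)=\sum_{i,j}\deg\gcd(f_i,f_j)$ --- is the standard route and is essentially the one taken in that reference.
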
 

Let us turn to the study of regular elements which are nilpotent. 
%Define $\mathcal{N}$ to be the set of nilpotent elements in $\g$. 

\begin{thm}
There is a unique open dense orbit (under the adjoint action) in the nilpotent variety, formed by principal nilpotent elements.
\end{thm}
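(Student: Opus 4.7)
The plan is to prove the theorem by combining three ingredients: the irreducibility of the nilpotent variety $\mathcal{N} \subset \g$, the dimension formula for adjoint orbits, and the existence and conjugacy of principal nilpotent elements. First I would recall that $\mathcal{N}$ is an irreducible algebraic subvariety of $\g$ of dimension $\dim \g - \mathrm{rk}(\g)$. This is classical: $\mathcal{N}$ is the fiber over $0$ of the Chevalley quotient $\g \to \g /\!/ G \cong \h/W$ (which is irreducible), and its dimension is computed from the codimension of this fiber. Alternatively, irreducibility follows from the Springer resolution $T^*(G/B) \twoheadrightarrow \mathcal{N}$, whose source is irreducible.

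Next I would exploit the dimension formula $\dim(G\cdot x) = \dim \g - \dim Z(x)$ together with the proposition stated just above, asserting $\dim Z(x) \geq \mathrm{rk}(\g)$ for every $x \in \g$. Consequently every adjoint orbit in $\mathcal{N}$ has dimension at most $\dim \g - \mathrm{rk}(\g) = \dim \mathcal{N}$, with equality precisely for principal nilpotent elements. An orbit is always locally closed, so a principal nilpotent orbit, being locally closed in $\mathcal{N}$ of full dimension, must be open in $\mathcal{N}$. Since $\mathcal{N}$ is irreducible, any non-empty open subset is automatically dense, and any two non-empty open subsets must intersect; combined with the fact that orbits are either equal or disjoint, this forces all principal nilpotent orbits to coincide, giving both density and uniqueness at once.

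It then remains to verify that a principal nilpotent element exists in $\g$. Fixing a Cartan subalgebra $\h$, a set of simple roots $\{\alpha_1,\dots,\alpha_r\}$ and root vectors $e_{\alpha_i}$, I would take $x = \sum_{i=1}^r e_{\alpha_i}$ and invoke Jacobson--Morozov to extend it to an $\mf{sl}_2$-triple $(x,h,y)$. A short computation with the grading induced by $h$ shows $\dim Z(x) = \mathrm{rk}(\g)$, so $x$ is regular nilpotent. (This is the standard Kostant construction; it also gives the principal slice $f + Z(e)$ used elsewhere in the thesis.)

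The main obstacle is really the irreducibility of $\mathcal{N}$: without it, one could imagine several open orbits living on disjoint irreducible components of $\mathcal{N}$. All the other steps are essentially dimension counts. An alternative route that bypasses irreducibility would be to prove directly, via conjugacy of $\mf{sl}_2$-triples (Kostant) together with the fact that every nilpotent element lies in some $\mf{sl}_2$-triple, that all principal nilpotents are $G$-conjugate; then one concludes by noting this single orbit is open (maximal dimension) and that its closure must be all of $\mathcal{N}$ because the other orbits have strictly smaller dimension and $\mathcal{N}$ is the union of finitely many orbits.
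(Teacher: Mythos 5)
Your argument is correct, but note that the paper does not actually prove this theorem: it states it and defers entirely to the literature (Kostant's original Corollary 5.5 in \cite{Kost} and Theorem 4.1.6 in \cite{Coll}). What you have written is the standard modern proof, and it is sound: irreducibility of the nilpotent cone $\mathcal{N}$, the orbit-dimension formula $\dim(G\cdot x)=\dim\g-\dim Z(x)$ combined with the bound $\dim Z(x)\geq\rk\g$ from the preceding proposition, openness of a locally closed orbit of full dimension in an irreducible variety, and the Kostant construction $x=\sum_i e_{\alpha_i}$ for existence. Two small remarks. First, your parenthetical justification of irreducibility of $\mathcal{N}$ via the Chevalley quotient is loose as stated: irreducibility of $\h/W$ does not by itself give irreducibility of the fiber over $0$ (fibers of maps between irreducible varieties need not be irreducible); but the Springer-resolution argument you offer as an alternative is airtight, since $\mathcal{N}$ is the image of the irreducible variety $T^*(G/B)$, and this also gives the dimension bound $\dim\mathcal{N}\leq\dim\g-\rk\g$ without any genericity input. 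Second, in your alternative route the claim that the closure of the principal orbit is all of $\mathcal{N}$ ``because the other orbits have strictly smaller dimension'' quietly invokes finiteness of the number of nilpotent orbits, which is itself a nontrivial theorem; the main route avoids this entirely, which is a point in its favour.
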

The original proof is due to Kostant, see corollary 5.5. in \cite{Kost}.
See also theorem 4.1.6. in \cite{Coll}.

There is a useful characterization of principal nilpotent elements in coordinates. For this, fix a root system $R$, fix a direction giving positive roots $R_+$. Denote by $\mf{n}$ the positive nilpotent elements (upper triangular for $\mf{sl}_n$).
\begin{prop}\label{prinnilp}
Let $A \in \mf{n}$. Then $A$ is principal nilpotent iff $\alpha(A)\neq 0$ for all simple roots $\alpha$.
\end{prop}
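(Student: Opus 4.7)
The plan is to analyze the orbits of the Borel group $B = TN$ (with torus $T$ and unipotent radical $N$) on $\mf{n}$, comparing the set
$$U := \{A \in \mf{n} : \alpha(A) \neq 0 \text{ for every } \alpha \in R_s\}$$
with the intersection $\mc{O}_{\mathrm{prin}} \cap \mf{n}$ of $\mf{n}$ with the principal nilpotent orbit. First, $U$ is $B$-stable: the torus acts on each simple coefficient by $\alpha(A) \mapsto \alpha(t)\alpha(A)$ and so preserves nonvanishing, while conjugation by $\exp(X)$ with $X \in \mf{n}$ modifies $A$ only by elements of $[\mf{n},\mf{n}] \subset \bigoplus_{\alpha \in R_+ \setminus R_s}\g_\alpha$ (since a sum of two positive roots is never simple), whose simple-root components vanish.

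For the implication ($\Leftarrow$) the plan is to show $U = B \cdot e$ for $e := \sum_{\alpha \in R_s} e_\alpha$. Given $A \in U$, since the simple roots form a basis of $\h^*$, a suitable $t \in T$ rescales every simple coefficient of $A$ to $1$, reducing to $A = e + A'$ with $A' \in [\mf{n},\mf{n}]$. Using the principal $\mathfrak{sl}_2$-triple $(e,h,f)$ with $h = 2\rho^\vee$ one gets the grading $\mf{n} = \bigoplus_{k \geq 1}\g_{2k}$ and $[\mf{n},\mf{n}] = \bigoplus_{k \geq 2}\g_{2k}$, and the decomposition $\g = \bigoplus_i V(2m_i)$ of $\g$ into $\mathfrak{sl}_2$-irreducibles (with $m_i$ the exponents of $\g$) implies that $[e,\cdot]\colon \g_{2k} \to \g_{2k+2}$ is surjective for every $k \geq 1$: on each summand $V(2m_i)$ the nilpositive generator raises the weight and is onto until it exits the module. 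Solving $[e, X_k] = A'_{k+1}$ grade by grade and absorbing at each step the higher-order terms $\tfrac{1}{2}[X, [X, e + A']] + \cdots$ (which live in strictly higher grades, so the recursion terminates by nilpotency of $\mf{n}$) produces $X \in \mf{n}$ with $\mathrm{Ad}(\exp(-X))(e + A') = e$.

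For the implication ($\Rightarrow$) the plan is a dimension count showing $B \cdot e = \mc{O}_{\mathrm{prin}} \cap \mf{n}$, so that every principal nilpotent in $\mf{n}$ lies in $B \cdot e = U$. By Kostant's theorem $Z(e) \subset \mf{n} \subset \mf{b}$, and the connectedness of the stabilizer of a regular element (proved just before the statement of the proposition) gives $Z_G(e) \subset B$, so $\Stab_B(e) = Z_G(e)$ has dimension $r = \rk\g$; hence $\dim(B \cdot e) = \dim B - r = \dim \mf{n}$. For any $A \in \mc{O}_{\mathrm{prin}} \cap \mf{n}$, the inclusion $\Stab_B(A) \subset Z_G(A)$ gives $\dim \Stab_B(A) \leq r$, while $B \cdot A \subset \mf{n}$ forces $\dim \Stab_B(A) \geq \dim B - \dim \mf{n} = r$; hence $\dim(B \cdot A) = \dim \mf{n}$ and $B \cdot A$ is open in $\mf{n}$. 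Two open $B$-orbits in the irreducible variety $\mf{n}$ must intersect and therefore coincide, so $A \in B \cdot e = U$, i.e.\ $\alpha(A) \neq 0$ for every simple root $\alpha$.

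The main difficulty is the surjectivity of $[e, \cdot]$ on positive graded pieces of $\g$ needed in ($\Leftarrow$), which is the genuine representation-theoretic input (the principal $\mathfrak{sl}_2$-decomposition of $\g$); once Kostant's structural result $Z(e) \subset \mf{n}$ is in hand, the reverse direction reduces to the short dimension count above.
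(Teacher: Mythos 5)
Your proof is correct and complete, and it is worth noting that the paper itself gives no argument here: it only refers to Steinberg's \emph{Conjugacy Classes in Algebraic Groups}, \S 3.7, for the group (regular unipotent) version. What you have written is essentially the Lie-algebra analogue of that argument, organized around the single dense $B$-orbit in $\mf{n}$: the set $U$ is $B$-stable because $[\mf{n},\mf{n}]$ has no simple-root components, the torus normalizes the simple coefficients to $1$, and the graded Newton-type iteration using the surjectivity of $\mathrm{ad}(e)$ on the nonnegative pieces of the principal grading kills the higher-height tail; the converse is the clean dimension count $\dim(B\cdot A)=\dim\mf{n}$ forcing every principal nilpotent in $\mf{n}$ into the open orbit $B\cdot e=U$. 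The genuine inputs are exactly the ones you flag: Kostant's principal $\mathfrak{sl}_2$-decomposition $\g=\bigoplus_i V(2m_i)$ with $\rk\g$ summands (which also gives $\dim Z(e)=\rk\g$, so $e$ really is principal nilpotent, and $Z(e)\subset\mf{n}$). One small economy: you do not actually need the connectedness of $\Stab_G(e)$ (which the paper proves only via case-checking in Collingwood--McGovern); for the dimension count it suffices that $\mathrm{Lie}(\Stab_B(e))=Z(e)\cap\mf{b}=Z(e)$ has dimension $\rk\g$, so your argument can be made independent of that classification-based lemma.
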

The group version of this can be found in section 3.7. of \cite{Steinberg}.

For a principal nilpotent element $f$, its centralizer $Z(f)$ has properties quite analogous to a Cartan, the centralizer of a regular semisimple element:

\begin{thm}\label{thmKost}
For $f$ a principal nilpotent element, its centralizer $Z(f)$ is abelian and nilpotent.
\end{thm}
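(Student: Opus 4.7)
The plan is to exploit the principal $\mathfrak{sl}_2$-decomposition of $\mathfrak{g}$ associated to $f$.

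First, by the Jacobson--Morozov theorem I would embed $f$ into an $\mathfrak{sl}_2$-triple $(e,h,f)$, with $[h,e]=2e$, $[h,f]=-2f$, $[e,f]=h$. Because $f$ is principal nilpotent, this triple is a principal $\mathfrak{sl}_2$, and the well-known decomposition of $\mathfrak{g}$ as an $\mathfrak{sl}_2$-module reads
$$\mathfrak{g} \;=\; \bigoplus_{i=1}^{r} V_{m_i},$$
where $V_{m}$ denotes the irreducible $\mathfrak{sl}_2$-representation of dimension $2m+1$, $r=\mathrm{rk}\,\mathfrak{g}$, and $m_1,\ldots,m_r$ are the exponents of $\mathfrak{g}$. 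This is the classical result recalled in Theorem \ref{mainresultncomplex}'s framework and also underpins the dimension count in Theorem \ref{thm2}.

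Next I would prove nilpotency. By general $\mathfrak{sl}_2$-theory, $\ker\mathrm{ad}(f)$ is exactly the sum of the lowest weight lines $\mathbb{C}v_i\subset V_{m_i}$, where $h\cdot v_i=-2m_i\,v_i$. Since every exponent satisfies $m_i\geq 1$, all of $Z(f)$ lies in the strictly negative $h$-eigenspace $\bigoplus_{k<0}\mathfrak{g}_k$. This latter space is a nilpotent subalgebra of $\mathfrak{g}$: indeed, the bracket of two elements of strictly negative $h$-weight has still more negative weight, and weights in $\mathfrak{g}$ are bounded below, so iterated brackets eventually vanish. Hence $Z(f)$ is nilpotent, and in fact each element of $Z(f)$ is ad-nilpotent.

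The commutativity is the main obstacle. For any two lowest weight vectors $v_i,v_j$, the bracket $[v_i,v_j]$ is still killed by $\mathrm{ad}(f)$, so it lies in $Z(f)$, and it has $h$-weight $-2(m_i+m_j)$. If $m_i+m_j$ exceeds the largest exponent $m_r = h_{\mathrm{Cox}}-1$, the weight $-2(m_i+m_j)$ does not occur in $\mathfrak{g}$, and the bracket is forced to vanish by weight considerations alone. For the remaining pairs (where $m_i+m_j\leq m_r$), a direct weight argument is insufficient, so the plan is to use the principal slice $f+Z(e)$: Kostant's slice theorem asserts that the map $Z(e)\to \mathfrak{g}/\!/G$ is an isomorphism onto the adjoint quotient, and that generic elements of $f+Z(e)$ are regular semisimple with abelian centralizer. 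By a limit/specialization argument along the slice, one then propagates commutativity from the generic (semisimple) fiber to the special point $f$, concluding that $Z(f)$ itself is abelian. This is the content of Kostant's original paper \cite{Kost}, from which the statement can also simply be cited.
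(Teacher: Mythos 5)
Your proposal is correct, and for the abelian part it follows the same route the paper itself indicates: the paper gives no self-contained proof of Theorem \ref{thmKost}, citing Kostant \cite{Kost} for commutativity (``using a limit argument'') and Steinberg \cite{Steinberg} for nilpotency, and it later reproduces exactly your limit argument when proving the commuting-pair analogue (``Let $(A_n,B_n)$ be a sequence of regular semisimple pairs converging to $(A,B)$\dots the Grassmannian $Gr(\rk\g,\dim\g)$ is compact\dots''). Your specialization along the principal slice is that argument; the only step worth making explicit is that the limit of the Cartan subalgebras is an abelian $\rk\g$-dimensional subspace of $Z(f)$, and it equals $Z(f)$ precisely because $f$ is regular, so $\dim Z(f)=\rk\g$. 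Where you genuinely add something is the nilpotency: instead of citing Steinberg you give a complete weight argument --- $Z(f)=\ker\mathrm{ad}(f)$ is the span of the lowest-weight vectors of the $\mf{sl}_2$-isotypic components, these have $h$-weights $-2m_i<0$ since every exponent satisfies $m_i\geq 1$, and the strictly negative part of the $h$-grading is a nilpotent subalgebra. That argument is correct and self-contained. Your observation that $[v_i,v_j]$ lies in $Z(f)$ with weight $-2(m_i+m_j)$ is also a nice partial commutativity statement (it kills all brackets unless $m_i+m_j$ is again an exponent), and you are right that it does not close the argument by itself, e.g.\ already for $\mf{sl}_4$ where $m_1+m_2=m_3$.
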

Kostant proves even more, using a limit argument: for any element $x\in \g$, there is an abelian subalgebra of $Z(x)$ of dimension $\rk \g$, see \cite{Kost}, theorem 5.7.
The nilpotency of $Z(f)$ can be found in \cite{Steinberg}, corollary in section 3.7. The more precise structure of $Z(x)$ for any nilpotent $x$ is described in \cite{Coll}, section 3.4.

A principal nilpotent element permits to give a preferred representative of a conjugacy class of regular elements. Given $f$ principal nilpotent, denote by $e$ the other nilpotent element in a principal $\mf{sl}_2$-triple constructed from $f$ (see Kostant \cite{Kost}). Then we get
\begin{prop}
Any regular orbit intersects $f+Z(e)$ in a unique point. So we have $\g^{reg}/G \cong f+Z(e)$.
\end{prop}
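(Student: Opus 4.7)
The plan is to follow Kostant's classical three-step argument about the principal slice (often called the \emph{Kostant--Steinberg slice}). Throughout, let $(e,f,h)$ be a principal $\mf{sl}_2$-triple and use the grading $\g=\bigoplus_{i\in\mathbb Z}\g_i$ by $h$-eigenvalues. Because the triple is principal, $\g_i\neq 0$ iff $i$ is even and lies in $[-2d,2d]$ where $d$ is the Coxeter number minus one, and moreover $Z(e)\subset \bigoplus_{i\geq 0}\g_{2m_j}$ where $m_1,\dots,m_r$ are the exponents. In particular $\dim Z(e)=r=\rk\g$, so $f+Z(e)$ is an affine subspace of dimension $r$.

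First I would prove that every element of $f+Z(e)$ is regular. Consider the $\mathbb{C}^*$-action $t\cdot x := t^{2}\mathrm{Ad}_{t^h}(x)$. Under this action $f$ is fixed (it has $h$-weight $-2$), while on $Z(e)\subset\bigoplus_j\g_{2m_j}$ each summand is scaled by $t^{2+2m_j}$, so all nonzero components are sent to $0$ as $t\to 0$. Hence every orbit in $f+Z(e)$ has $f$ in its closure. Regularity is a Zariski-open condition (the set $\{x\mid \dim Z(x)=r\}$ is open in $\g$), and $f$ is regular by assumption, so some neighborhood of $f$ in $f+Z(e)$ consists of regular elements; combined with $\mathbb C^*$-equivariance this shows \emph{all} of $f+Z(e)$ is regular.

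Next I would invoke Kostant's theorem that the restriction of the adjoint quotient map $\chi:\g\to\g/\!/G\cong\h/W$ to the slice,
\[
\chi\big|_{f+Z(e)}:\ f+Z(e)\ \longrightarrow\ \h/W,
\]
is an isomorphism of affine varieties. Both sides have dimension $r$; the map is $\mathbb C^*$-equivariant for a suitable weighting; and one checks that the differential at $f$ is an isomorphism by matching the weights $2,2m_2+2,\dots,2m_r+2$ on $Z(e)$ with the degrees of a system of homogeneous generators of $\mathbb C[\h]^W$ given by Chevalley's theorem. A degree/weight count then forces the map to be a bijective morphism between affine spaces and hence an isomorphism.

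Finally I would use the fact that each fiber of $\chi$ contains \emph{at most one} regular $G$-orbit. This is the technical heart: for a regular $x$, the orbit $G\cdot x$ has dimension $\dim\g-r$, the maximal possible, and any two regular elements with the same invariants lie in the same orbit (for regular semisimple elements this is diagonalization; in general it follows from Jordan decomposition combined with the fact that the regular nilpotent orbit in any Levi is unique). Combining the three steps: given a regular $x\in\g$, let $y$ be the unique point of $f+Z(e)$ with $\chi(y)=\chi(x)$; then $y$ is regular by Step~1, lies in the same $\chi$-fiber as $x$, and hence in the same $G$-orbit by Step~3. For uniqueness, if $x_1,x_2\in f+Z(e)$ are $G$-conjugate then $\chi(x_1)=\chi(x_2)$, so by Step~2 we conclude $x_1=x_2$. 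The main obstacle is Step~2 (Kostant's isomorphism theorem); the weight matching and the verification that the differential at $f$ is an isomorphism require the full structure of the principal $\mf{sl}_2$ and Chevalley's restriction theorem, and it is here that a self-contained proof becomes delicate.
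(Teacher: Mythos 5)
Your proposal is correct and matches the paper's approach: the paper offers no proof of its own, only a citation to Lemma 10 of Kostant's \emph{Lie group representations on polynomial rings} \cite{Kost2}, and your three-step outline (contraction of the slice to $f$ via the weighted $\C^*$-action, Kostant's isomorphism $\chi|_{f+Z(e)}\colon f+Z(e)\to\h/W$, and the fact that each fiber of the adjoint quotient contains exactly one regular orbit) is precisely the classical argument behind that citation. The only blemish is the weight list in Step~2, where the first weight on $Z(e)$ should be $2m_1+2=4$ rather than $2$; this does not affect the argument.
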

This follows from Lemma 10 of \cite{Kost2}. The set $f+Z(e)$ is called a \textit{principal slice} of $\g$ (also \textit{Kostant section}).

We are now going to ``double'' the previous setting. Define the commuting variety to be $\Comm(\g):=\{(A,B)\in \g^2 \mid [A,B]=0\}$. 

\begin{thm}[Richardson]\label{Richardson}
The set of commuting semisimple elements is dense in the commuting variety $\Comm(\g)$.
\end{thm}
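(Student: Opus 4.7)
The strategy is to prove that $\Comm(\g)$ is irreducible and that the locus $\Omega \subset \Comm(\g)$ of pairs with $A$ regular semisimple is both dense and consists entirely of commuting semisimple pairs. The first of these two observations is easy: if $A$ is regular semisimple then $Z(A)$ is a Cartan subalgebra $\h_A$, so $B \in \h_A$ is automatically semisimple. Hence $\Omega$ sits inside the set of semisimple commuting pairs, and it suffices to show $\overline{\Omega} = \Comm(\g)$.

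First I would identify $\Omega$ structurally: via the first projection $\Omega \to \g^{rss}$ onto the regular semisimple locus (smooth, irreducible, of dimension $\dim\g$), $\Omega$ is a vector bundle of rank $\rk\g$ whose fiber over $A$ is the Cartan $\h_A = Z(A)$. Hence $\Omega$ is smooth and irreducible of dimension $\dim\g + \rk\g$. In particular $\overline{\Omega}$ is an irreducible closed subvariety of $\Comm(\g)$ of that dimension.

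Next I would bound $\dim \Comm(\g)$ from above. Stratify $\g$ by the function $A \mapsto \dim Z(A)$; on the stratum $\g_k = \{A : \dim Z(A) = k\}$, the fiber of $\pi_1 \colon \Comm(\g) \to \g$ has dimension exactly $k$. The orbit--stabilizer identity $\dim G\cdot A + \dim Z(A) = \dim\g$ together with the classification of nilpotent orbits and their Jordan type gives, for each stratum, $\dim \g_k + k \leq \dim\g + \rk\g$, with equality only on the regular stratum. Summing (and using upper semicontinuity of the fiber dimension) yields $\dim \Comm(\g) \leq \dim\g + \rk\g$. Since $\overline{\Omega}$ already realizes this dimension, any other irreducible component of $\Comm(\g)$ would have to be of dimension $\dim\g + \rk\g$ and disjoint from $\Omega$.

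The main obstacle is ruling out such a second component, i.e.\ showing explicitly that every commuting pair $(A,B)$ is a limit of pairs in $\Omega$. My plan is to use Jordan decomposition $A = A_s + A_n$, $B = B_s + B_n$ (all four mutually commuting), and work inside the reductive centralizer $L = Z(A_s, B_s) \supset \{A_n, B_n\}$. Choose a Cartan $\h \subset L$ containing $A_s, B_s$; any Cartan of $L$ is also a Cartan of $\g$. The idea is to build a two--parameter deformation $(A(s,t), B(s,t))$, with $(A(0,0), B(0,0))=(A,B)$, using a principal $\mathfrak{sl}_2$--triple $(e,f,h)$ of $L$ adapted to the nilpotent parts in the Jacobson--Morozov way: the one--parameter group $\exp(t\,\mathrm{ad}\, h)$ simultaneously rescales the nilpotent parts so that $(A(s,t), B(s,t))$ remains a commuting pair, and a generic semisimple perturbation in $\h$ then makes the first coordinate regular semisimple. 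The technical heart is verifying that such a deformation exists with its image lying in $\Omega$ for generic values of the parameters; this is exactly the content of Richardson's original argument, and the cleanest conceptual formulation uses the Grassmannian of $\rk\g$--dimensional abelian subalgebras: a sequence of Cartans $\h_n$ of $\g$ converging to an abelian subspace $\mathfrak{a} \subset Z(A,B)$ of dimension $\rk\g$ (which exists by the analog of Kostant's theorem recalled just before the statement) yields the desired approximating sequence in $\Omega$. Combining the three steps gives density of $\Omega$, hence of semisimple commuting pairs, in $\Comm(\g)$.
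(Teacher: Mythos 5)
First, note that the paper does not prove this statement at all: it is quoted with a pointer to Richardson's article, so there is no internal proof to compare yours against. Judged on its own terms, your proposal correctly identifies the target ($\Omega$, the locus of commuting pairs whose first entry is regular semisimple, consists of semisimple pairs and is irreducible of dimension $\dim\g+\rk\g$), but the dimension count in your third step is logically superfluous: a hypothetical extra irreducible component of $\Comm(\g)$ is not forced to have dimension $\dim\g+\rk\g$ (the $\dim\g$ scalar equations $[A,B]=0$ only force each component to have dimension at least $\dim\g$), and no dimension bound can rule out a component disjoint from $\Omega$. Everything therefore rests on your final step --- showing that every commuting pair is a limit of pairs in $\Omega$ --- which is the full strength of the theorem.

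That step has two concrete gaps. (i) The deformation ``a generic semisimple perturbation in $\h$ then makes the first coordinate regular semisimple'' breaks commutativity whenever $B$ has a nonzero nilpotent part: one needs $[A+s,B]=0$, i.e.\ $s\in\h\cap Z(B)$, and this intersection can be zero. For $A=B=e$ a regular nilpotent in $\mf{sl}_2$ one has $Z(e)=\C e$, so no nonzero semisimple perturbation of $A$ alone is admissible; the approximating pairs (e.g.\ $A_t=B_t=e+\diag(t,-t)$) must deform \emph{both} coordinates simultaneously and compatibly. Producing such a simultaneous deformation when the whole pencil $\Span(A,B)$ consists of nilpotents is precisely the hard lemma of Richardson's paper, which you acknowledge but do not supply --- so the proposal defers rather than proves the essential point. (ii) The alternative route --- Cartan subalgebras $\h_n$ converging in the Grassmannian to an abelian subspace $\mathfrak{a}\subset Z(A,B)$ of dimension $\rk\g$ --- is circular in the context of this thesis: the existence of such a subspace for a commuting \emph{pair} (as opposed to Kostant's one-variable statement) is exactly the proposition that the paper deduces \emph{from} Richardson's density theorem via a limit of regular semisimple pairs. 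Moreover, even granted such an $\mathfrak{a}$, it need not contain $A$ and $B$, so the convergence $\h_n\to\mathfrak{a}$ does not by itself yield semisimple commuting pairs converging to $(A,B)$.
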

See the paper of Richardson \cite{Richardson} for a proof.
As a consequence, $\Comm(\g)$ is an irreducible variety, but highly singular.

With this, we can explore the minimal dimension of a centralizer of a commuting pair:
\begin{prop}\label{doublecomm}
For $(A,B) \in \Comm(\g)$, we have $\dim Z(A,B) \geq \rk \g.$
\end{prop}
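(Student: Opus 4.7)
My plan is to reduce to the semisimple case via Richardson's density theorem (Theorem \ref{Richardson}) and then use a compactness/semi-continuity argument on the Grassmannian, much in the spirit of Kostant's proof of Theorem \ref{thmKost}.

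First I would verify the bound for a commuting pair of semisimple elements $(A_s, B_s)$. Since $A_s$ and $B_s$ commute and are both semisimple, they can be simultaneously diagonalized: there exists $g \in G$ such that $\mathrm{Ad}_g(A_s)$ and $\mathrm{Ad}_g(B_s)$ both lie in a fixed Cartan subalgebra $\h$ (this is standard; pick a Cartan inside $Z(A_s)^0$, which is reductive, containing $B_s$). Then $\h \subseteq Z(A_s, B_s)$ after conjugation, so $\dim Z(A_s, B_s) \geq \dim \h = \rk \g$.

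Next, for an arbitrary $(A,B) \in \Comm(\g)$, I would invoke Richardson's theorem \ref{Richardson} to pick a sequence $(A_n, B_n)$ of semisimple commuting pairs converging to $(A,B)$. By the first step, each $Z(A_n, B_n)$ has dimension at least $r := \rk \g$. Inside each $Z(A_n, B_n)$ choose an $r$-dimensional subspace $V_n$; the $V_n$ form a sequence in the Grassmannian $\mathrm{Gr}(r, \dim \g)$, which is compact. Extracting a convergent subsequence, let $V_\infty$ be the limit. The conditions $[v, A_n] = 0$ and $[v, B_n] = 0$ for all $v \in V_n$ pass to the limit by continuity of the Lie bracket, so $V_\infty \subseteq Z(A,B)$. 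Hence $\dim Z(A,B) \geq \dim V_\infty = r$.

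The only genuinely non-formal step is the first one (simultaneous diagonalization of commuting semisimple elements), which is classical. The limit argument is standard and essentially identical to the one the paper itself uses a few lines later for the analog of Kostant's theorem; equivalently, one can phrase it as upper semi-continuity of $(X,Y) \mapsto \dim Z(X,Y)$ on the commuting variety. I do not foresee a serious obstacle.
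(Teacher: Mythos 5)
Your proof is correct, but it takes a different mechanism than the paper's. The paper's argument is purely Zariski-topological: it observes that $\dim Z(A,B)=\dim\g-\rk(\mathrm{ad}_A,\mathrm{ad}_B)$, so the locus $M\subset\Comm(\g)$ where the centralizer has minimal dimension is the maximal-rank locus, hence Zariski-open; by Richardson it meets the semisimple pairs, where the centralizer contains a Cartan, and the minimality of the dimension on $M$ finishes the proof in two lines. You instead run the Euclidean semicontinuity argument: approximate $(A,B)$ by semisimple commuting pairs, choose $r$-planes $V_n\subseteq Z(A_n,B_n)$, and use compactness of $\mathrm{Gr}(r,\dim\g)$ to produce a limit $r$-plane inside $Z(A,B)$. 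Both routes rest on the same two inputs (Richardson plus the observation that a commuting semisimple pair lies in a common Cartan), and your version is essentially the argument the paper itself deploys immediately afterwards for the Kostant-type statement about abelian subalgebras — indeed yours has the added benefit of exhibiting an explicit $r$-dimensional subspace of $Z(A,B)$, which, if the $V_n$ are taken to be Cartans, is automatically abelian and so proves the stronger proposition in the same breath. The one point to be slightly careful about in your formulation: Richardson's theorem as quoted gives Zariski density, and Zariski density alone does not yield a convergent sequence in the Euclidean topology; you should note that the semisimple (even regular semisimple) commuting pairs contain a Zariski-open dense subset of the irreducible variety $\Comm(\g)$, which does imply Euclidean density. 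The paper's Zariski-open formulation sidesteps this issue entirely, which is the main thing its shorter argument buys.
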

\begin{proof}
Consider the set $M$ of elements with centralizer of minimal dimension. Since $$M=\{(A,B) \in \Comm(\g) \mid \rk(ad_A, ad_B) \text{ maximal}\}$$ we see that $M$ is Zariski-open. By the theorem of Richardson it intersects the space of semisimple pairs for which the common centralizer is a Cartan $\h$, so of dimension $\rk \g$.
\end{proof}

%%%%%%%%%%%%%%%%%%%%%%%%%%%%%%%%%%%%%%%%

\cleardoublepage

%\begin{titlepage}
%\hbox{}
%\end{titlepage}
%\includepdf[page={2}]{couv.pdf}


\begin{thebibliography}{11111111}
\addcontentsline{toc}{section}{References}

\bibitem[Ah66]{Ahlfors}
Lars V. Ahlfors: \textit{Lectures on Quasiconformal Mappings}, Van Nostrand Math. Studies 10, Princeton, 1966, \href{https://zr9558.files.wordpress.com/2013/11/lectures-on-quasiconformal-mappings-ahlfors.pdf}{PDF}

\bibitem[AF95]{AF}
Ettore Aldrovandi and Gregorio Falqui: \textit{Geometry of Higgs and Toda Fields on Riemann Surfaces}, J.Geom.Phys. 17 (1995) p. 25 -- 48, \href{https://arxiv.org/abs/hep-th/9312093}{arxiv:9312093}

\bibitem[AB83]{AtBott}
Michael F. Atiyah and Raoul Bott: \textit{The Yang-Mills equations over Riemann surfaces}, Philos. Trans. Roy. Soc. London Ser. A, 308 (1983), p. 523 -- 615, \href{http://www.math.toronto.edu/mgualt/Morse%20Theory/Atiyah-Bott.pdf}{PDF}

\bibitem[Ba15]{Baraglia}
David Baraglia: \textit{Cyclic Higgs bundles and the affine Toda equations}, Geom. Dedicata, Vol. 174 (2015), p. 25 -- 42, \href{https://arxiv.org/abs/1011.6421}{arxiv:1011.6421}

\bibitem[BD05]{Drinfeld}
Alexander Beilinson and Vladimir Drinfeld: \textit{Opers}, 2005, \href{https://arxiv.org/abs/math/0501398}{arxiv:0501398}

\bibitem[Be08]{Bertin}
José Bertin: \textit{The Punctual Hilbert Scheme: an Introduction}, École thématique, Institut Fourier, 2008, pp. 99,  \href{https://cel.archives-ouvertes.fr/cel-00437713}{cel-00437713}

\bibitem[BFK91]{BFK}
A. Bilal, Vladimir V. Fock and I.I. Kogan: \textit{On the origins of W-algebras}, Nuc. Phys. B, Vol. 359, Issues 2 - 3 (1991), p. 635 -- 672, \href{http://cds.cern.ch/record/217043/files/199104167.pdf}{CERN:199104167}

\bibitem[Bo09]{Boalch}
Philip Boalch: \textit{Noncompact Complex Symplectic and Hyperk\"ahler Manifolds}, Lecture notes, \href{https://www.imo.universite-paris-saclay.fr/~boalch/cours09/hk-preliminary-incomplete-notes.pdf}{PDF}

\bibitem[BS93]{Schoutens}
Peter Bouwknegt and Kareljan Schoutens: \textit{$\mc{W}$-symmetry in conformal field theory}, Phys. Rept. 223 (1993) p. 183 -- 276, \href{https://arxiv.org/abs/hep-th/9210010}{arxiv:9210010}

\bibitem[BGG06]{Bradlow}
Steven Bradlow, Oscar Garc\'ia-Prada, Peter Gothen: \textit{Maximal Surface Group Representations in Isometry Groups of Classical Hermitian Symmetric Spaces}, Geometriae Dedicata 122 (2006), p. 185 -- 213, \href{https://arxiv.org/abs/math/0511415}{arxiv:0511415}

\bibitem[BIW06]{Burger}
Marc Burger, Alessandra Iozzi, Anna Wienhard: \textit{Surface group representations with maximal Toledo invariant}, 2006, \href{https://arxiv.org/abs/math/0605656}{arXiv:0605656}

\bibitem[CW19]{Collier}
Brian Collier and Richard Wentworth: \textit{Conformal Limits and the Bialynicki-Birula Stratification of the space of $\l$-connections}, Adv. in Maths. 350 (2019), p. 1193 -- 1225, \href{https://arxiv.org/abs/1808.01622}{arxiv:1808.01622}

\bibitem[CM93]{Coll}
David H. Collingwood and William M. McGovern: \textit{Nilpotent Orbits in Semisimple Lie Algebras}, Van Nostrand Reinhold Math. Series, New York, 1993

\bibitem[Co88]{Corlette}
Kevin Corlette: \textit{Flat $G$-bundles with canonical metrics}, J. Differential Geom. 28 (1988), p. 361 -- 382, \href{https://projecteuclid.org/download/pdf_1/euclid.jdg/1214442469}{Project-Euclid:1214442469}

\bibitem[Do87]{Donaldson}
Simon K. Donaldson: \textit{Twisted harmonic maps and the self-duality equations}, Proc. London Math. Soc. 55 (1987), p. 127 -- 131

\bibitem[Do03]{Donald.2}
Simon K. Donaldson: \textit{Moment maps in differential geometry}, Surveys in Diff. Geometry, Int. Press, Vol. 8 (2003), p. 171 -- 190, \href{https://www.intlpress.com/site/pub/files/_fulltext/journals/sdg/2003/0008/0001/SDG-2003-0008-0001-a006.pdf}{Intl-Press:2003-0008-0001-a006}

\bibitem[DS85]{DS}
Vladimir G. Drinfeld and V.V. Sokolov: \textit{Lie algebras and equations of Korteweg-de Vries type}, J. Sov. Math.30 (1985), p. 1975 -- 2035

\bibitem[DFK+]{Kydonakis}
O. Dumitrescu, L. Frederickson, G. Kydonakis, R. Mazzeo, M. Mulase and A. Neitzke: \textit{Opers versus nonabelian Hodge}, 2016, \href{https://arxiv.org/abs/1607.02172}{arxiv:1607.02172}

\bibitem[Et09]{Etingof}
Pavel Etingof: \textit{Lectures on Calogero--Moser Systems}, 2009, \href{https://arxiv.org/abs/math/0606233}{arxiv:0606233}

\bibitem[Fe01]{Feix}
Birte Feix: \textit{Hyperkähler metrics on cotangent bundles}, J. reine und angewandte Mathematik, Vol. 532 (2001), p. 33 -- 46

\bibitem[FG05]{FockGoncharov2}
Vladimir V. Fock and Alexander Goncharov: \textit{Dual Teichmüller and lamination spaces} in \textit{Handbook of Teichmüller Theory, Volume I}, Athanase Papadopoulos (ed), EMS Publishing House, Zurich, 2007, p. 647 -- 684,  \href{https://arxiv.org/abs/math/0510312}{arxiv:0510312} 

\bibitem[FG06]{FockGoncharov}
Vladimir V. Fock and Alexander Goncharov: \textit{Moduli spaces of local systems and higher Teichm\"uller theory}, Publ. Math. Inst. Hautes Etudes Sci. 103, 2006, p. 1 -- 211, \href{https://arxiv.org/abs/math/0311149}{arxiv:0311149}

\bibitem[Fo08]{Fock}
Vladimir V. Fock: \textit{Cosh-Gordon equation and quasi-Fuchsian groups}, 2008, \href{https://arxiv.org/abs/0811.3356}{arxiv:0811.3356}

\bibitem[FT19]{FockThomas}
Vladimir V. Fock and Alexander Thomas: \textit{Higher complex structures}, IMRN doi.org/10.1093/imrn/rnz283 (Nov. 2019), \href{https://arxiv.org/abs/1812.11199}{arxiv:1812.11199}

\bibitem[Fo68]{Fogarty}
John Fogarty: \textit{Algebraic families on an algebraic surface}, Amer. J. Math. Vol. 90 No. 2 (1968), p. 511 -- 521

\bibitem[FZ02]{Fomin}
Sergey Fomin and Andrei Zelevinsky: \textit{Cluster algebras I: Foundations}, J. Amer. Math. Soc. 15 (2002), p. 497 -- 529, \href{https://arxiv.org/abs/math/0104151}{arxiv:0104151}

\bibitem[GP20]{Garcia}
Oscar Garcia-Prada: \textit{Higgs bundles and higher Teichm\"uller spaces} in \textit{Handbook of Teichmüller theory, Volume VII},
Athanase Papadopoulos (ed),EMS Publishing House, Zurich, 2020, 
\href{https://arxiv.org/abs/1901.09086}{arXiv:1901.09086}

\bibitem[Gi00]{Ginzburg}
Victor Ginzburg: \textit{Principal Nilpotent pairs in a semisimple Lie algebras I}, Invent. Math. 140 (2000), p. 511 -- 561, \href{https://arxiv.org/abs/math/9903059}{arxiv:9903059}
%newline ?

\bibitem[GMN13]{GMN}
Davide Gaiotto, Gregory W. Moore, Andrew Neitzke: \textit{Spectral Networks}, Annales Henri Poincar\'e Vol. 14, Issue 7 (2013), p. 1643 -- 1731, \href{https://arxiv.org/abs/1204.4824}{arxiv:1204.4824}

%\bibitem[Go88]{Goldman}
%William M. Goldman: \textit{Topological Components of Spaces of Representations}, Invent. Math. 93 (1988), p. 557 -- 607, \href{https://gdz.sub.uni-goettingen.de/id/PPN356556735_0093?tify={%22view%22:%22info%22,%22pages%22:[563]}}{PDF}

\bibitem[Go84]{Goldman.2}
William M. Goldman: \textit{The symplectic nature of fundamental groups of surfaces}, Advances in Math. Vol. 54, No.2 (1984), p. 200 -- 225, \href{https://www.sciencedirect.com/science/article/pii/0001870884900409}{Science-direct:0001870884900409}

\bibitem[Go90]{Goldman}
William M. Goldman: \textit{Convex Real Projective Structures on Compact Surfaces}, J. Diff. Geo. 31 (1990), p. 791 --  845, \href{https://projecteuclid.org/download/pdf_1/euclid.jdg/1214444635}{Project-Euclid:1214444635}


\bibitem[GW08]{Guichard}
Olivier Guichard and Anna Wienhard: \textit{Convex Foliated Projective Structures and the Hitchin Component for  $\PSL(4,\mathbb{R})$}, Duke Math. Journal 144, No. 3 (2008), p. 381 -- 445, \href{https://arxiv.org/abs/math/0702184}{arxiv:0702184}

\bibitem[Ha98]{Haiman}
Mark Haiman: \textit{$t,q$-Catalan numbers and the Hilbert scheme}, Discrete Mathematics 193 (1998), p. 201 -- 224, \href{https://www.sciencedirect.com/science/article/pii/S0012365X98001411}{Science-direct:S0012365X98001411}

\bibitem[HT03]{Hausel}
Tam\'as Hausel and Michael Thaddeus: \textit{Mirror symmetry, Langlands duality, and the Hitchin system}, Inv. Math., Vol. 153, Issue 1 (2003), p. 197 -- 229, \href{https://arxiv.org/abs/math/0205236}{arxiv:0205236}

\bibitem[HKLR]{HKLR}
Nigel Hitchin, A. Karlhede, U. Lindstr\"om, M. Ro\v{c}ek: \textit{Hyper-K\"ahler metrics and supersymmetry}, Comm. Math. Phys., Volume 108, Number 4 (1987), p. 535 -- 589, \href{https://projecteuclid.org/download/pdf_1/euclid.cmp/1104116624}{Project-Euclid:1104116624}

\bibitem[Hi87a]{Hit.2}
Nigel Hitchin: \textit{The Self-Duality Equations on a Riemann Surface}, Proc. London Math. Soc., Vol. 55, Issue 1 (1987), p. 59 -- 126, \href{http://citeseerx.ist.psu.edu/viewdoc/download?doi=10.1.1.557.2243&rep=rep1&type=pdf}{PDF}

\bibitem[Hi87b]{Hit3}
Nigel Hitchin: \textit{Stable Bundles and Integrable Systems}, Duke Math. J. Vol. 54 No. 1 (1987), p. 91 -- 114, \href{https://math.mit.edu/conferences/talbot/2011/library/hitchin_stable_bundles_integrable_systems.pdf}{PDF}

\bibitem[Hi91]{Hit.4}
Nigel Hitchin: \textit{Hyperk\"ahler manifolds}, Sém. N. Bourbaki 1991-1992, exp. no 748, p. 137 -- 166, \href{http://www.numdam.org/article/SB_1991-1992__34__137_0.pdf}{Numdam-PDF}

\bibitem[Hi92]{Hit.1}
Nigel Hitchin: \textit{Lie Groups and Teichm\"uller Space}, Topology Vol. 31, No.3 (1992), p. 449 -- 473, \href{https://www.sciencedirect.com/science/article/pii/004093839290044I}{Science-direct:004093839290044I}

\bibitem[Hu06]{Hub}
John H. Hubbard: \textit{Teichmüller Theory and Applications to Geometry, Topo\-logy, and Dynamics, Volume 1: Teichmüller Theory}, Matrix Editions, Ithaca, NY, 2006

\bibitem[Hu95]{Hum}
James E. Humphreys: \textit{Conjugacy Classes in Semisimple Algebraic Groups}, Math. Surveys and Monographs Vol. 43 (1995), AMS

\bibitem[Ia72]{Iarrob}
Anthony Iarrobino: \textit{Punctual Hilbert schemes}, Bull. AMS Vol. 78, No. 5 (1972), p. 819 -- 823, \href{https://projecteuclid.org/download/pdf_1/euclid.bams/1183534002}{Project-Euclid:1183534002}

\bibitem[Ka97]{Kaledin}
D. Kaledin: \textit{Hyperkähler structures on total spaces of holomorphic cotangent bundles}, 1997, \href{https://arxiv.org/abs/alg-geom/9710026}{arxiv:9710026}

\bibitem[Ko93]{Konno}
Hiroshi Konno: \textit{Construction of the moduli space of stable parabolic Higgs bundles on a Riemann surface}, J. Math. Soc. Japan, Vol.45, No.2 (1993), p. 253 -- 276, \href{https://projecteuclid.org/download/pdf_1/euclid.jmsj/1227105257}{Project-Euclid:1227105257}

\bibitem[Ko59]{Kost}
Bertram Kostant: \textit{The principal three-dimensional subgroup and the Betti numbers of a complex simple Lie group}, Amer. J. Math. 81 (1959), p. 973 -- 1032

\bibitem[Ko63]{Kost2}
Bertram Kostant: \textit{Lie group representations on polynomial rings}, Amer. J. Math. 85 (1963), p. 327 -- 404, \href{https://projecteuclid.org/download/pdf_1/euclid.bams/1183525365}{Project-Euclid:1183525365}

\bibitem[Kr89]{Kronheimer}
P. B. Kronheimer: \textit{The Construction of ALE Spaces as Hyper-K\"ahler Quotients}, J. Diff. Geo. 29 (1989), p. 665 -- 683, \href{https://projecteuclid.org/download/pdf_1/euclid.jdg/1214443066}{Project-Euclid:1214443066}

\bibitem[La06]{Lab}
Fran\c cois Labourie: \textit{Anosov flows, surface groups and curves in projective space}, Invent. Math., 165, p. 51 -- 114, 2006, \href{https://arxiv.org/abs/math/0401230}{arxiv:0401230}

\bibitem[La07]{Lab.2}
Fran\c cois Labourie: \textit{Flat Projective Structures on Surfaces and Cubic Holomorphic Differentials}, Pure and Applied Mathematics Quaterly 3 no 4, p. 1057 -- 1099 (2007), \href{https://arxiv.org/abs/math/0611250}{arxiv:0611250}

\bibitem[La08]{Lab.3}
Fran\c cois Labourie: \textit{Cross Ratios, Anosov Representations and the Energy Functional on Teichmüller Space}, Annales Scientifiques de l'ENS, IV (2008), \href{https://arxiv.org/abs/math/0512070}{arxiv:0512070}

\bibitem[LMc16]{Loftin}
John Loftin and Ian McIntosh: \textit{Cubic Differentials in the Differential Geometry of Surfaces} in \textit{Handbook of Teichmüller Theory, Volume VI}, Athanase Papadopoulos (ed), EMS Publishing House, Zurich, 2016, \href{https://arxiv.org/abs/1310.5120}{arxiv:1310.5120}

\bibitem[MT02]{Masur}
Howard Masur and Serge Tabachnikov: \textit{Rational Billiards and Flat Structures}, chapter 13 in \textit{Handbook of Dynamical Systems}, B. Hasselblatt and A. Katok (ed), Vol. 1A, Elsevier Science B.V., p. 1015 -- 1089 (2002), \href{https://www-fourier.ujf-grenoble.fr/~lanneau/references/masur_tabachnikov_chap13.pdf}{PDF}

\bibitem[Na99]{Nakajima}
Hiraku Nakajima: \textit{Lectures on Hilbert Schemes of Points on Surfaces}, University Lecture Series 18, AMS, 1999, freely accessible via \href{https://www.researchgate.net/publication/259914805_Lectures_on_Hilbert_Schemes_of_Points_on_Surfaces}{Researchgate:259914805}

\bibitem[NS65]{NS}
M. S. Narasimhan and C. S. Seshadri: \textit{Stable and Unitary Vector Bundles on a Compact Riemann Surface}, Annals of Mathematics, Vol. 82, No. 3 (1965), p. 540 -- 567
%, \href{http://www.mathnet.ru/links/79c64b6b4b24ff5dabf1d65238d1724c/mat495.pdf}{PDF}

\bibitem[Ne16]{Neitzke}
Andrew Neitzke: \textit{Moduli of Higgs Bundles}, teaching material, 2016, \href{https://web.ma.utexas.edu/users/neitzke/teaching/392C-higgs-bundles/higgs-bundles.pdf}{PDF}

\bibitem[Ri79]{Richardson}
R. William Richardson: \textit{Commuting varieties of semisimple Lie algebras and algebraic groups},  Comp. Math., Tome 38 (1979) no. 3, p. 311 -- 327, \href{http://www.numdam.org/article/CM_1979__38_3_311_0.pdf}{Numdam-PDF}

\bibitem[Si88]{Simpson}
Carlos T. Simpson: \textit{Constructing  variations  of  Hodge  structure  using  Yang–Mills  theory  and  applications  to uniformization}, J. Amer. Math. Soc. 1 (1988), p. 867 -- 918, \href{https://faculty.math.illinois.edu/~dowdall/talk_resources--GEAR2012/Bradlow/some_resources/I.7-Simpson.pdf}{PDF}

\bibitem[St74]{Steinberg}
Robert Steinberg: \textit{Conjugacy Classes in Algebraic Groups}, Lecture Notes in Mathematics 366, Springer-Verlag, New York, 1974

\bibitem[Te14]{Teschner}
Jörg Teschner: \textit{Quantization of moduli spaces of flat connections and Liouville theory}, 2014, \href{https://arxiv.org/abs/1405.0359}{arXiv:1405.0359}


\bibitem[Th19]{Thomas}
Alexander Thomas: \textit{Generalized Punctual Hilbert schemes and $\g$-complex structures}, 2019, \href{https://arxiv.org/abs/1910.08504}{arxiv:1910.08504}

\bibitem[Th20]{FockThomas2}
Alexander Thomas: \textit{Higher Complex Structures and Flat Connections}, 2020, \href{https://arxiv.org/abs/2005.14445}{arxiv:2005.14445}

\bibitem[Tr19]{Trautwein}
Samuel Trautwein: \textit{The hyperkähler metric on the almost-Fuchsian moduli space}, 2019, \href{https://arxiv.org/abs/1809.00869}{arxiv:1809.00869}

\bibitem[We16]{Wentworth}
Richard A. Wentworth: \textit{Higgs Bundles and Local Systems on Riemann Surfaces} in \textit{Geometry and Quantization of Moduli Spaces}, L. Alvarez Consul et al. (ed), Springer Int. Publishing, Cham,  2016, p. 165 -- 219, \href{https://arxiv.org/abs/1402.4203}{arxiv:1402.4203} 

\bibitem[Wi19]{Wienhard}
Anna Wienhard: \textit{An invitation to higher Teichmüller theory}, Proc. of the ICM 2018, p. 1013 -- 1039 (2019), \href{https://arxiv.org/abs/1803.06870}{arxiv:1803.06870}

\bibitem[Wr16]{Wright}
Alex Wright: \textit{From rational billiards to dynamics on moduli spaces}, Bull. Amer. Math. Soc. 53 (2016), p. 41 -- 56, \href{https://arxiv.org/abs/1504.08290}{arxiv:1504.08290}

\end{thebibliography}
\end{document}